\newcommand{\sbullet}{\hbox{\fontfamily{lmr}\fontsize{.4\dimexpr(\f@size pt)}{0}\selectfont\textbullet}}
\DeclarePairedDelimiterX\setc[2]{\{}{\}}{\,#1 \;\delimsize\vert\; #2\,}
\newtheoremstyle{important-thm}
     {3pt}
     {3pt}
     {\slshape}
     {}
     {\bfseries}
     {.}
     {.5em}
     {}
\theoremstyle{plain}
\theoremstyle{important-thm}
\newtheorem{theorem}{Theorem}
\newtheorem{lemma}[theorem]{Lemma}
\newtheorem{proposition}[theorem]{Proposition}
\newtheorem{corollary}[theorem]{Corollary}
\theoremstyle{definition}
\newtheorem{remark}[theorem]{Remark}
\newtheorem{example}[theorem]{Example}
\newtheorem{definition}[theorem]{Definition}
 \numberwithin{equation}{section}
 \numberwithin{theorem}{section}
\DeclareMathOperator{\Aut}{Aut}
\DeclareMathOperator{\Out}{Out}
\DeclareMathOperator{\End}{End}
\DeclareMathOperator{\Hom}{Hom}
\DeclareMathOperator{\Sp}{Sp}
\DeclareMathOperator{\id}{id}
\DeclareMathOperator{\im}{im}
\DeclareMathOperator{\coker}{coker}
\DeclareMathOperator{\tr}{tr}
\DeclareMathOperator{\rank}{rank}
\renewcommand{\sp}{\mathrm{sp}}
\def\be#1\ee{\begin{equation}#1\end{equation}}
\def\ba#1\ea{\begin{align}#1\end{align}}
\newcommand\mapsfrom{\mathrel{\reflectbox{\ensuremath{\mapsto}}}}
\newcommand\Vectfd{{\mathcal{V}\hspace{-.5pt}ect^\mathrm{fd}}}
\newcommand\Bordarea{{\mathcal{B}\hspace{-.5pt}ord_2^{\hspace{2pt}area}}}
\newcommand\Bordna{{\mathcal{B}\hspace{-.5pt}ord_2}}
\newcommand\Borddef[1]{{\mathcal{B}\hspace{-.5pt}ord_{2, #1}^{\hspace{2pt}area,\hspace{1pt} def}}}
\newcommand\Borddefna[1]{{\mathcal{B}\hspace{-.5pt}ord_{2, #1}^{\hspace{2pt}def}}}
\newcommand\AQFT[1]{{\mathrm{aQFT}\hspace{-.5pt}\left( #1 \right)}}
\newcommand\aQFT{\text{aQFT}}
\newcommand\Hilb{{\mathcal{H}\hspace{-.5pt}ilb}}
\newcommand\Top{{\mathcal{T}\hspace{-2.0pt}op}}
\newcommand\RFrob[1]{{\mathcal{RF}\hspace{-.5pt}rob\hspace{-.5pt}\left( #1 \right)}}
\newcommand\cRFrob[1]{{c\mathcal{RF}\hspace{-.5pt}rob\hspace{-.5pt}\left( #1 \right)}}
\newcommand\dRFrob[1]{\dagger\text{-}{\mathcal{RF}\hspace{-.5pt}rob\hspace{-.5pt}\left( #1 \right)}}
\newcommand\dFrobf[1]{\dagger\text{-}{\mathcal{F}\hspace{-.5pt}rob^F\hspace{-.5pt}\left( #1 \right)}}
\newcommand\RAlg[1]{{\mathcal{RA}\hspace{-.5pt}lg\hspace{-.5pt}\left( #1 \right)}}
\newcommand\Mod[1]{{\mathcal{M}\hspace{-.5pt}od\hspace{-.5pt}\left( #1 \right)}}
\newcommand\Frobz[1]{{\mathcal{F}\hspace{-.5pt}rob^Z\hspace{-2.5pt}\left( #1 \right)}}
\newcommand\Algz[1]{{\mathcal{A}\hspace{-.0pt}lg^Z\hspace{-2.5pt}\left( #1 \right)}}
\newcommand\Modz[1]{{\mathcal{M}\hspace{-.0pt}od^Z\hspace{-2.5pt}\left( #1 \right)}}
\newcommand{\norm}[1]{ \left\|{#1}\right\| }
\newcommand{\ctimes}{ \circlearrowleft }
\newcommand\eps           {\varepsilon}
\newcommand\Ab            {\mathbb{A}}
\newcommand\Ac            {\mathcal{A}}
\newcommand\Bc            {\mathcal{B}}
\newcommand\Cc            {\mathcal{C}}
\newcommand\Cb            {\mathbb{C}}
\renewcommand\Dc            {\mathcal{D}}
\newcommand\Db            {\mathbb{D}}
\newcommand\Ec            {\mathcal{E}}
\newcommand\Fc            {\mathcal{F}}
\newcommand\Hc            {\mathcal{H}}
\newcommand\Ib            {\mathbb{I}}
\newcommand\Kc            {\mathcal{K}}
\renewcommand\Lc            {\mathcal{L}}
\newcommand\Nb            {\mathbb{N}}
\newcommand\Oc            {\mathcal{O}}
\newcommand\Rb            {\mathbb{R}}
\newcommand\Sc            {\mathcal{S}}
\newcommand\Sb            {\mathbb{S}^1}
\newcommand\Sbb            {\mathbb{S}^2}
\renewcommand\Yc            {\mathcal{Y}}
\newcommand\funZ            {\mathcal{Z}}
\newcommand\Zb            {\mathbb{Z}}
\newcommand{\void}[1]{}
\newcommand\doi[2]        {\href{http://dx.doi.org/#1}{#2}}
\renewcommand{\restriction}{\mathord{|}}
\begin{document}

\thispagestyle{empty}
\def\thefootnote{\fnsymbol{footnote}}
\begin{flushright}
ZMP-HH/18-13\\
Hamburger Beitr\"age zur Mathematik 743
\end{flushright}
\vskip 3em
\begin{center}\LARGE
Area-dependent quantum field theory with defects
\end{center}

\vskip 2em
\begin{center}
{\large 
Ingo Runkel~~and~~L\'or\'ant Szegedy~\footnote{Emails: {\tt ingo.runkel@uni-hamburg.de}~,~{\tt lorant.szegedy@uni-hamburg.de}}}
\\[1em]
Fachbereich Mathematik, Universit\"at Hamburg\\
Bundesstra\ss e 55, 20146 Hamburg, Germany
\end{center}

\vskip 2em
\begin{center}
  July 2018
\end{center}
\vskip 2em

\begin{abstract}
Area-dependent quantum field theory is a modification of two-dimensional topological quantum field theory, 
where one equips each connected component of a bordism with a positive real number -- interpreted as area -- which behaves additively under glueing. 
As opposed to topological theories, in area-dependent theories the state spaces can be 
infinite-dimensional. 

We introduce the notion of regularised Frobenius algebras and show that area-dependent theories are in one-to-one correspondence to 
commutative regularised Frobenius algebras.
We provide a state-sum construction for area-dependent theories, which includes theories with defects. 
Defect lines are labeled by dualisable bimodules over regularised algebras. We show that the tensor product of such bimodules agrees with the fusion of defect lines, which is defined as the limit where the area separating two defect lines is taken to zero.

All these constructions are exemplified by two-dimensional Yang-Mills theory with compact gauge group and with Wilson lines as defects, which we treat in detail.
\end{abstract}

\setcounter{footnote}{0}
\def\thefootnote{\arabic{footnote}}

\newpage

{\small

\tableofcontents

}

\section{Introduction and summary}

\newcommand\Bordn{{\mathcal{B}\hspace{-.5pt}ord_n}}
\newcommand\Bordnvol{{\mathcal{B}\hspace{-.5pt}ord_n^{\hspace{2pt}vol}}}

Volume-dependent quantum field theory is a modification of topological quantum field theory. 
The latter are symmetric monoidal functors from the category $\Bordn$ of $(n-1)$-dimensional (closed, compact, oriented) manifolds and $n$-dimensional bordisms to vector spaces, see e.g.\ \cite{Carqueville:2016def,Carqueville:2017tft} for a review. 
The modification consists of changing the morphism spaces of $\Bordn$ while keeping the objects the same. Namely, we equip each connected component of a bordism with a positive real number which we interpret as the volume of that component. One way to think about this is to start from Riemannian manifolds as bordisms and then -- instead of forgetting the metric entirely as one does in the topological case -- to remember the integral of the associated volume form.\footnote{
	This is equivalent to remembering the induced volume form up to diffeomorphism, see \cite{Moser:1965vol,Banyaga:1974vol}.}

Formally, morphisms in $\Bordnvol$ are pairs $(M,v)$, where $M$ is a morphism in $\Bordn$ and $v$ is a function $v : \pi_0(M) \to \Rb_{>0}$. The volumes of connected components add under glueing.
In order to have identities in $\Bordnvol$, we allow $v$ to take the value $0$ on connected components which are cylinders $U \times [0,1]$. A volume-dependent QFT is defined to be a symmetric monoidal functor
\be\label{eq:vol-dep-QFT}
	\funZ : \Bordnvol \to \Hilb \ ,
\ee
which is continuous on $\Hom$-spaces.
We will allow for more general target categories in the main text, but for the introduction let us stick with $\Hilb$, the category of complex Hilbert spaces and bounded linear maps, equipped with strong operator topology.
The topology on $\Bordnvol$ is that of the disjoint union over $M \in \Bordn$ of $\Rb_{>0}^{|\pi_0(M)|}$ (or $\Rb_{\ge 0}$ for components of $M$ that are cylinders). In other words, 
$\funZ(U \xrightarrow{M} U' , v)$
is jointly continuous as a function in the volume parameters assigned to the connected components of $M$ with values in bounded linear maps $B(\funZ(U),\funZ(U'))$.

The main change when passing from $\Bordn$ to $\Bordnvol$, and indeed the main motivation to look at this generalisation in the first place, is that the vector spaces $\funZ(U)$ can now be infinite-dimensional. This is in contrast to topological QFTs, where the usual zig-zag argument forces $\funZ(U)$ to be finite-dimensional for all objects $U \in \Bordn$ 
(see e.g.\ \cite[Sec.\,2.4]{Carqueville:2017tft}). 
The same argument in the case of $\Bordnvol$ merely requires each $\funZ(U)$ to be a separable Hilbert space (cf.\ Lemma~\ref{lem:patrclass} and Theorem~\ref{thm:aqftrfaequiv}). 

\medskip

In this paper we analyse in detail the case $n=2$. Since bordisms are now two-dimensional, we refer to such theories as \textsl{area-dependent QFTs}, or \textsl{{\aQFT}s}\footnote{We use the small `a' in \aQFT\ to set it apart from Algebraic QFT or Axiomatic QFT, which are often abbreviated as AQFT.
} 
for short, and we write $\funZ : \Bordarea \to \Hilb$. The precise definition can be found in Section~\ref{sec:aqft}. The main example of an {\aQFT} is two-dimensional Yang-Mills theory for a compact semisimple Lie group $G$ \cite{Migdal:1975re,Rusakov:1990wilson,Witten:1991gt}, 
in which case the Hilbert space assigned to a circle is $Cl^2(G)$, that is, square integrable class functions on $G$. We treat this example in detail in Section~\ref{sec:2dym}.
Area-dependent theories in general have been considered in \cite{Brunner:1995diploma} and briefly in
\cite[Sec.\,1.4]{Segal:1999sn} (see also \cite[Sec.\,4.5]{Bartlett:2005mt}).
A construction of area-dependent theories using triangulations with equal triangle area has been given in \cite{Cunha:1997qtop}.

Two-dimensional TQFTs are of course a special case of {\aQFT}, namely they are {\aQFT}s which are independent of the area parameters. Conversely one can show that if for all bordisms $\Sigma$ the zero area limit of $\funZ(\Sigma)$ exists, then all state spaces $\funZ(U)$ are necessarily finite dimensional, and the zero area limit of $\funZ$ is a TQFT (Remark~\ref{rem:aqft-all-zero-area-limits}).

Recall that 2d\,TQFTs are in one-to-one correspondence to commutative Frobenius algebras 
\cite{Dijkgraaf:1989phd, Abrams:1996fa}, 
and that there is a state-sum construction of 2d\,TQFTs which starts from a strongly separable (not necessarily commutative) Frobenius algebra $A$ as an input \cite{Bachas:1993lat,Fukuma:1994sts,Lauda:2007oc}. The commutative Frobenius algebra defining the TQFT obtained from this state-sum construction is just the centre $Z(A)$.

The generalisation of these results to {\aQFT}s is for the most part straightforward to the point of being mechanical: just add a positive real parameter to all maps in sight (``area parameters'') and impose the condition that everything just depends on the sum of these areas.  

Take, for example, the notion of an algebra. An algebra is an object $A$ in $\Hilb$ (or in your favourite monoidal category) together with morphisms (bounded linear maps in this case) $\mu : A \otimes A \to A$, the multiplication, and $\eta : \Cb \to A$, the unit. These have to satisfy
\be
	\mu \circ (\id_A \otimes \mu)  =  \mu \circ (\mu \otimes \id_A)
	\quad , \quad
	\mu \circ (\id_A \otimes \eta)  =  \id_A = \mu \circ (\eta \otimes \id_A) \ .
\ee
A \textsl{regularised algebra} is then defined as follows (see Section~\ref{sec:RA-RFA-def}). It is an object $A \in \Hilb$ together with two families of morphisms $\mu_a : A \otimes A \to A$ and $\eta_a : \Cb \to A$, for $a \in \Rb_{>0}$, such that, for all $a_1,a_2,b_1,b_2 \in \Rb_{>0}$ with $a_1+a_2=b_1+b_2$,
\be\label{eq:intro-regalg}
	\mu_{a_1} \circ (\id_A \otimes \mu_{a_2})  =  \mu_{b_1} \circ (\mu_{b_2} \otimes \id_A)
	\quad , \quad
	\mu_{a_1} \circ (\id_A \otimes \eta_{a_2})  =  \mu_{b_1} \circ (\eta_{b_1} \otimes \id_A) \ .
\ee
The unit condition is one of the places where a little more thought is required: note that we do not demand that we obtain $\id_A$. Instead we \textsl{define} $P_a := \mu_{a_1} \circ (\id_A \otimes \eta_{a_2}) : A \to A$, where $a = a_1+a_2$. By the second condition in \eqref{eq:intro-regalg} this is indeed independent of the choice of $a_1,a_2$ in the decomposition $a = a_1+a_2$. We now impose two conditions: $P_a$ has to be continuous\footnote{
For more general monoidal categories than $\Hilb$ we need to add another continuity condition. We refer to the main text for details and restrict ourselves to $\Hilb$ for the introduction, where this condition is automatic -- see \eqref{eq:racont} 
and Corollary~\ref{cor:semigrp}.
} 
in $a$ and it has to satisfy $\lim_{a \to 0} P_a  = \id_A$. 
It is important for the formalism to \textsl{not} require $\mu_a$ and $\eta_a$ to have zero-area limits on their own.
Simple consequences of this definition are that $\mu_a$ and $\eta_a$ are continuous in $a$, and that $P_a$ is a semigroup, $P_a \circ P_b = P_{a+b}$.

The algebraic cornerstone of this work is the notion of a \textsl{regularised Frobenius algebra} (RFA), which is a regularised algebra and coalgebra (with families $\Delta_a$ and $\eps_a$ for area-dependent coproduct and counit), subject to the usual compatibility condition, suitably decorated with area parameters 
(Definition~\ref{def:rfa}).
One small surprise is that RFAs, as opposed to Frobenius algebras, do not form a groupoid: it may happen that the inverse to a homomorphism of Frobenius algebras is not bounded, hence not a morphism in $\Hilb$ 
(Remark~\ref{rem:not-groupoid}).

An important example of an RFA is $L^2(G)$, the square integrable functions on a compact semisimple Lie group $G$. 
Here, the structure maps $\mu_a$ and $\Delta_a$ do have zero are limits given by the convolution product and by $\Delta_0(f)(g,h) := f(gh)$. The unit and counit families $\eta_a$ and $\eps_a$ on the other hand do not have $a\to0$ limits, see Section~\ref{sec:twoRFAsfromG} for details.
In fact, by the Peter-Weyl theorem we have $L^2(G) = \bigoplus_{V \in \widehat G} V \otimes V^*$, where the sum is a Hilbert space direct sum over isomorphism classes of irreducible unitary representations of $G$, and the RFA structure on $L^2(G)$ restricts to an infinite direct sum of finite-dimensional RFAs on $V \otimes V^*$.
This is a general result for Hermitian RFAs, i.e.\ RFAs for which 
$\mu_a^\dagger = \Delta_a$ and $\eta_a^\dagger = \eps_a$ for every $a\in\Rb_{>0}$
(Theorem~\ref{thm:daggerclassification}):

\begin{theorem}
Every Hermitian RFA is a Hilbert space direct sum of finite dimensional Hermitian RFAs.
\end{theorem}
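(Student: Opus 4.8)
The plan is to route everything through the family $P_a:=\mu_{a_1}\circ(\id_A\otimes\eta_{a_2})$ (with $a=a_1+a_2$, independent of the split), which by the basic properties of regularised (co)algebras is a strongly continuous semigroup, $P_a\circ P_b=P_{a+b}$, with $P_a\to\id_A$ as $a\to0$ (cf.\ Corollary~\ref{cor:semigrp}). First I would record the self-adjointness of $P_a$: by the defining relations of an RFA (Definition~\ref{def:rfa}) one also has $P_a=(\id_A\otimes\eps_{a_2})\circ\Delta_{a_1}$, so $\mu_a^\dagger=\Delta_a$ and $\eta_a^\dagger=\eps_a$ give $P_a^\dagger=(\id_A\otimes\eps_{a_2})\circ\Delta_{a_1}=P_a$. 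Then $P_a=P_{a/2}\circ P_{a/2}=P_{a/2}^\dagger\circ P_{a/2}\ge0$, and $P_a$ is injective: $P_a v=0$ forces $\|P_{a/2}v\|^2=\langle P_a v,v\rangle=0$, hence $v\in\ker P_{a/2}$, and iterating $v\in\ker P_{a/2^n}$ for every $n$, so $v=\lim_n P_{a/2^n}v=0$. Finally, by Lemma~\ref{lem:patrclass}, $P_a$ is trace class for each $a>0$, in particular compact.

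Next I would diagonalise the whole semigroup. Since $P_1$ is a positive, injective, compact operator, $A=\bigoplus_i E_i$ as a Hilbert space, the orthogonal direct sum of the finite-dimensional eigenspaces $E_i=\ker(P_1-\mu_i)$ with $\mu_i>0$, $\mu_i\to0$ (injectivity kills the kernel, so the sum is all of $A$). Each $P_a$ commutes with $P_1$, hence leaves each finite-dimensional $E_i$ invariant; since $P_{1/m}|_{E_i}$ is a positive operator with $(P_{1/m}|_{E_i})^m=\mu_i\,\id_{E_i}$ it must equal $\mu_i^{1/m}\,\id_{E_i}$, and continuity of $a\mapsto P_a$ then forces $P_a|_{E_i}=e^{-a\lambda_i}\,\id_{E_i}$ with $\lambda_i:=-\log\mu_i$ (so $\lambda_i\to\infty$). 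Thus the orthogonal projections $\Pi_i:A\to E_i$ are simultaneously spectral projections of the whole semigroup $\{P_a\}_{a>0}$, and by the continuous functional calculus each $\Pi_i$ is a norm-limit of polynomials in $P_1$ (the indicator of the isolated spectral value $\mu_i$ being continuous on the spectrum).

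It then remains to see that the RFA structure respects $A=\bigoplus_i E_i$. Regularised associativity together with the (co)unit axioms gives the purely formal identity $\mu_a\circ(P_b\otimes\id_A)=\mu_a\circ(\id_A\otimes P_b)=P_b\circ\mu_a$ for all $a,b>0$; approximating $\Pi_i$ by polynomials in $P_1$ turns this into $\Pi_i\circ\mu_a=\mu_a\circ(\Pi_i\otimes\id_A)=\mu_a\circ(\id_A\otimes\Pi_i)$, and taking $\dagger$ yields the analogous statements for $\Delta_a=\mu_a^\dagger$. Hence $\mu_a(E_i\otimes E_j)=0$ for $i\neq j$, $\mu_a(E_i\otimes E_i)\subseteq E_i$, and $\Delta_a(E_i)\subseteq E_i\otimes E_i$. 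Writing $\eta_a(1)=\sum_i\eta^{(i)}_a(1)$ with $\eta^{(i)}_a(1):=\Pi_i\eta_a(1)\in E_i$, the relation $\mu_{a_1}\circ(\id_A\otimes\eta_{a_2})=P_a$ restricted to $E_i$ reads $\mu_{a_1}(x\otimes\eta^{(i)}_{a_2}(1))=e^{-a\lambda_i}x$ for $x\in E_i$ (the other components being killed), so $\{\eta^{(i)}_a\}$ is a regularised unit for $(E_i,\mu_a|_{E_i})$ with associated semigroup $e^{-a\lambda_i}\id_{E_i}\to\id_{E_i}$; dually $\eps^{(i)}_a:=\eps_a|_{E_i}$ is a regularised counit. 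As restriction to a mutually invariant subspace commutes with $\dagger$, each $(E_i,\mu_a|_{E_i},\eta^{(i)}_a,\Delta_a|_{E_i},\eps^{(i)}_a)$ is a finite-dimensional Hermitian RFA, and these assemble $A$ as a Hilbert space direct sum of RFAs.

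The only genuinely non-formal ingredient is compactness of $P_a$, which is Lemma~\ref{lem:patrclass}; granted that, the rest is standard spectral theory for the positive compact semigroup $\{P_a\}_{a>0}$ plus bookkeeping with area parameters. Accordingly, the main thing to get right when writing this out is the behaviour of the unit and counit families under the decomposition -- in particular that the components $\Pi_i\eta_a(1)$ actually serve as regularised units for the blocks $E_i$, rather than merely projections of a unit, which is what upgrades each $E_i$ from a subspace closed under multiplication to a bona fide sub-RFA; the required zero-area limit $P_a|_{E_i}\to\id_{E_i}$ then comes for free from $P_a|_{E_i}=e^{-a\lambda_i}\id_{E_i}$.
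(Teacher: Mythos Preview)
Your proof is correct and follows essentially the same route as the paper: diagonalise the compact self-adjoint semigroup $\{P_a\}$ (compactness from Lemma~\ref{lem:patrclass}), show the eigenspaces are finite-dimensional and that each $P_a$ acts as a scalar $e^{-a\lambda_i}$ on them, then check that the RFA structure maps respect the decomposition. Your presentation is in places more explicit than the paper's --- e.g.\ your use of uniqueness of positive $m$-th roots to pin down $P_a|_{E_i}$, and your functional-calculus argument (approximating $\Pi_i$ by polynomials in $P_1$) to transport the commutation relation $P_b\circ\mu_a=\mu_a\circ(P_b\otimes\id)$ to the projections --- where the paper simply says the structure maps ``do not mix eigenspaces of $P_a$, because $P_a$ commutes with them''; but the underlying argument is the same.
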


Finite dimensional RFAs in turn are very simple: they are just usual (by definition finite dimensional) Frobenius algebras $A$ together with an element $H$ in the centre $Z(A)$ of $A$. The area-dependence is obtained by setting $P_a := \exp(a H)$ and defining $\mu_a := P_a \circ \mu$, etc., see  Proposition~\ref{prop:findimraclass}.
This makes RFAs sound not very interesting, but note that, conversely, for an infinite direct sum of finite-dimensional RFAs to again define an RFA one has to satisfy non-trivial bounds, as detailed in Proposition~\ref{prop:directsumrfa}.
And as the example of $L^2(G)$ shows, the direct sum decomposition may not always be the most natural perspective.

Our first main theorem generalises the classification of 2d\,TQFTs in terms of commutative Frobenius algebras as given in \cite{Dijkgraaf:1989phd, Abrams:1996fa}. Namely, in Theorem~\ref{thm:aqftrfaequiv} we show:\footnote{
	In \cite{Segal:1999sn,Bartlett:2005mt} the classification is instead in terms of algebras with a non-degenerate trace and an approximate unit. However, it is implicitly assumed there that the zero-area limit of the pair of pants with two in-going and one out-going boundary circles exists. This is not true for all examples as the commutative RFAs in Remark~\ref{rem:no-zero-area-limit} illustrate.}

\begin{theorem}\label{thm:introAQFTclass}
	There is a one-to-one correspondence between {\aQFT}s $\Bordarea \to\Hilb$ and commutative RFAs in $\Hilb$.
\end{theorem}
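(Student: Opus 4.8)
The plan is to mimic the classical generators-and-relations proof that 2d\,TQFTs are classified by commutative Frobenius algebras \cite{Dijkgraaf:1989phd,Abrams:1996fa}, carrying the area labels along throughout; this is what gets done in detail in Theorem~\ref{thm:aqftrfaequiv}.

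\emph{From an {\aQFT} to a commutative RFA.} Given $\funZ \colon \Bordarea \to \Hilb$, put $A := \funZ(\Sb)$ and define families of morphisms by evaluating $\funZ$ on the standard elementary bordisms carrying total area $a \in \Rb_{>0}$: $\mu_a$ on the pair of pants, $\eta_a$ on the cap, $\Delta_a$ on the reversed pair of pants, $\eps_a$ on the reversed cap, and $P_a := \funZ$ of the cylinder of area $a$ (allowing $a=0$). Because area is additive under glueing and the total area of a fixed surface is the same however it is cut up, the topological relations in $\Bordna$ among these bordisms translate verbatim into the area-decorated associativity, unit, coassociativity, counit, Frobenius and commutativity identities of an RFA. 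The identity axiom of $\funZ$ gives $P_0 = \id_A$, and continuity of $\funZ$ on $\Hom$-spaces gives continuity of $P_a$ in $a$, and hence (as recorded before the theorem) of all structure maps; one also checks that $A$ is separable. This yields a commutative RFA.

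\emph{From a commutative RFA to an {\aQFT}.} For the converse one needs a presentation of $\Bordarea$ by generators and relations, which I would obtain by identifying $\Bordarea$ with the cell-decomposition category $\Bordcell$: the generators are the cap, reversed cap, pair of pants and reversed pair of pants, each with an area parameter in $\Rb_{>0}$, the cylinder with parameter in $\Rb_{\ge 0}$, and the symmetry on two circles; the relations are the topological relations of $\Bordna$ with matching areas on both sides, the semigroup law for composing cylinders together with ``zero-area cylinder $=$ identity'', and the symmetric monoidal compatibilities. Given a commutative RFA $A$ one assigns to each generator the corresponding structure map ($\mu_a,\eta_a,\Delta_a,\eps_a,P_a$, and the flip) and extends by composition. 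Well-definedness holds because any two cell decompositions of a bordism are connected by the relations and each relation is an axiom of $A$; crucially, in such a decomposition every non-cylinder cell receives strictly positive area, so one never needs a zero-area limit of $\mu_a$ or $\eta_a$, whose existence is deliberately not assumed.

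The remaining points are continuity and bijectivity. Continuity of the reconstructed $\funZ$ on $\Hom$-spaces does not follow formally, since composition in $\Hilb$ is only separately strong-operator continuous; I would deduce it from continuity of the individual structure maps in their area parameters together with the semigroup property of $P_a$ and the local boundedness it forces. The two constructions are then mutually inverse: evaluating the {\aQFT} built from an RFA on the elementary bordisms returns the original structure maps, while an {\aQFT} coincides with the one reconstructed from its RFA because both respect the relations and agree on generators; if one wants the statement as an equivalence, homomorphisms of commutative RFAs match monoidal natural transformations of {\aQFT}s. I expect the main obstacle to be establishing the presentation of $\Bordarea$ --- that the elementary cells generate and the listed relations suffice, with the area decorations and the special status of zero-area cylinders handled correctly --- with the joint continuity in the area parameters a close second.
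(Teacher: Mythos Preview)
Your proposal is correct and follows essentially the same route as the paper: evaluate $\funZ$ on the elementary bordisms to obtain the RFA (Lemma~\ref{lem:aqft-crfa}), and for the converse use that every connected bordism has a normal form built from those generators (the paper uses \eqref{eq:normal-form} directly rather than formalising a generators-and-relations presentation of $\Bordarea$), with joint continuity handled by Lemma~\ref{lem:ra:properties} Part~\ref{lem:ra:generatedcat}. One correction: the RFA obtained from an {\aQFT} need not be algebraically separable (cf.\ Example~\ref{ex:cxd}), and separability plays no role in the classification.
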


In Sections~\ref{sec:latticedata} and \ref{sec:data-from-rfa} we furthermore generalise the state-sum construction of \cite{Bachas:1993lat,Fukuma:1994sts,Lauda:2007oc}. We find that a strongly separable symmetric RFA $A$ (as defined in Section~\ref{sec:RA-RFA-def}) provides the data for the state-sum construction of an {\aQFT}, and the resulting {\aQFT} corresponds, via Theorem~\ref{thm:introAQFTclass}, to the commutative RFA given by the centre of $A$, see Theorem~\ref{thm:latticecenter}.

\begin{remark}
The category $\Bordarea$, or more generally the category $\Bordnvol$, is enriched in $\Top$.
One could thus define volume-dependent theories to be $\Top$-enriched symmetric monoidal functors $\Bordnvol \to \Sc$ for some $\Top$-enriched symmetric monoidal target category $\Sc$. This would make the explicit mention of continuity in \eqref{eq:vol-dep-QFT} unnecessary.
The reason we do not do this here is that it restricts the choice of target category. In particular, our main example -- $\Hilb$ with strong operator topology -- is not $\Top$-enriched (Remark~\ref{rem:hilbcont}).
On the other hand, $\Hilb$ with norm topology is $\Top$-enriched, but this leads to another problem. Namely, the version of $\Bordnvol$ we use has identities in the form of zero-area cylinders (recall that only cylinder components are allowed to have zero area). 
This can be shown to imply that a volume-dependent QFT $\Bordnvol \to (\Hilb \text{ with norm-top.})$ must take values in finite-dimensional Hilbert spaces (Corollary~\ref{cor:finite-in-norm}). Hence, to have an interesting theory one has to remove the zero-volume cylinders. This can be done, but we do not pursue this further in the present work.
\end{remark}

We have seen that the first new feature one encounters when passing from 2d\,TQFTs to {\aQFT}s is the possibility of infinite-dimensional state spaces. When one develops the theory in the presence of line defects one encounters a second new feature, namely that line defects can be transmissive to area or not. Let us explain this point in more detail.

A 2d\,TQFT with defects is again a symmetric monoidal functor whose source is now a more complicated bordism category. Bordisms are decorated by 
an oriented 1-dimensional submanifold. 
Connected surface
components in the complement of this submanifold
are labeled by elements from a set $D_2$ (``world sheet phases''), and components of the submanifold itself 
are labeled by elements from $D_1$ (``defect conditions''). For details we refer to 
\cite{Davydov:2011dt,Carqueville:2016def}, as well as to Section~\ref{sec:daqft}. 

In the area-dependent case, it is natural to equip the connected components of the defect-submanifold with a length parameter $l \in \Rb_{>0}$. This is suggested by the motto: ``If in an $n$-dimensional volume-dependent theory with defects the surrounding $n$-dimensional theory is trivial, one should end up with an $(n-1)$-dimensional volume-dependent theory.''
We will attach an independent area parameter to each connected surface component of the complement of the defect-submanifold.
	
A \textsl{defect {\aQFT}} is defined to be a symmetric monoidal functor
\be
	\funZ:\Borddef{D_2,D_1} \to \Hilb \ ,
\ee
where $\Borddef{D_2,D_1}$ is the bordism category as just outlined, and where $\funZ$ is demanded to be continuous in the area and length parameters 
(Definition~\ref{def:daqft}).
An important example of a defect {\aQFT} is again provided by 2d Yang-Mills 
(2d~YM)
theory with
$G$ as above. In this case, the label set for two-dimensional connected components is just 
$D_2 = \{ G \}$ (corresponding to the 2d~YM theory without defects given by $G$),
and a possible choice for $D_1$ is finite-dimensional unitary representations of $G$. A defect line labeled by $R \in D_1$ corresponds to a Wilson line observable labeled by $R$. We investigate this situation in Section~\ref{sec:2dym:wilson} where we also consider symmetry defects in addition to Wilson lines.

Let $\funZ$ be defect {\aQFT} and consider a surface $\Sigma$ with defect lines where one such line (or circle) is labeled by $x \in D_1$. Suppose the area of the connected surface component to the right of that line is $a$ and that to the left is $b$. The defect condition $x$ is called \textsl{transmissive} if for all such surfaces $\Sigma$, $\funZ(\Sigma)$ only depends on $a+b$, and not on $a$ and $b$ separately (i.e.\ not on $a-b$). We interpret this as area flowing through the defect line labeled $x$ without affecting the value of $\funZ$. 
For example, in 2d~YM theory with $G$ connected, 
Wilson lines are transmissive if and only if the $G$-representation $R$ labeling it is 
a direct sum of trivial representations (Section~\ref{sec:2dym:wilson}). 

To construct examples of defect {\aQFT}s in a more systematic way, in Sections~\ref{sec:lattice:daqft}--\ref{sec:fusion-of-defects} we generalise the state-sum construction of defect TQFTs given in \cite{Davydov:2011dt} to accommodate area- and length-dependence. If defect {\aQFT}s are evaluated on bordisms without defects, one just obtains an {\aQFT} as before, though one which 
depends on the label from $D_2$ attached to the surface. Indeed, we will choose 
\be
	D_2^\mathrm{ss} = \{ \text{ strongly separable symmetric RFAs } \} \ .
\ee	
A defect line separating connected components of $\Sigma$ labeled by $A$ and $B$ in $D_2$ is in turn labeled by an \textsl{$A$-$B$-bimodule} $M$, which is in addition \textsl{dualisable}. 
Bimodules over regularised algebras are defined in Section~\ref{sec:modules}: they are objects $M \in \Hilb$ together with a bounded linear map $\rho_{a,l,b} : A \otimes M \otimes B \to M$, which now depends on
three parameters $a,l,b \in \Rb_{>0}$, subject to some natural conditions, see Definition~\ref{def:AB-bimodule-def}. In the state-sum construction $a,b$ are interpreted as area and $l$ as length in a rectangular plaquette bisected by the defect line. A bimodule is dualisable if it forms part of a dual pair of bimodules, we refer to Definition~\ref{def:dual-pair} for details. 
Altogether:
\be
D_1^\mathrm{ss} = \{ \text{ dualisable bimodules over regularised RFAs } \} \ .
\ee
Our main result with regard to defect {\aQFT}s is 
(Theorem~\ref{thm:state-sum-defect-aqft} and Proposition~\ref{prop:pdata-from-action}):

\begin{theorem}
	The state-sum construction defines a defect {\aQFT}
$$
\funZ^\mathrm{ss} : \Borddef{D_2^\mathrm{ss},D_1^\mathrm{ss}} \to \Hilb \ .
$$
\end{theorem}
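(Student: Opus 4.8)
The plan is to verify the axioms of a defect {\aQFT} (Definition~\ref{def:daqft}) for the functor $\funZ^\mathrm{ss}$ obtained by gluing together plaquette data. First I would recall the combinatorial setup: a bordism in $\Borddef{D_2^\mathrm{ss},D_1^\mathrm{ss}}$ with defect lines is presented by a cell decomposition in which the defect submanifold lies on edges, each $2$-cell carrying an area parameter, each defect edge a length parameter, and each $2$-cell labeled by a strongly separable symmetric RFA while each defect edge is labeled by a dualisable bimodule. To each such decorated cell I assign the local data built in Sections~\ref{sec:latticedata}--\ref{sec:fusion-of-defects}: the multiplication/comultiplication families $\mu_a,\Delta_a$ of the RFA on the bulk, the window element, and the bimodule action maps $\rho_{a,l,b}$ on defect plaquettes, together with the chosen (co)evaluation maps of the dual pair where defect lines meet bulk corners. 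The value $\funZ^\mathrm{ss}(\Sigma)$ is then the composite of these maps read off from the decomposition, contracted along the internal edges.

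The key steps, in order, are: (1) \textbf{Independence of the cell decomposition.} As in the defect-free case (Theorem~\ref{thm:latticecenter}) and in \cite{Davydov:2011dt}, this reduces to checking invariance under a finite list of local moves --- the $2$--$2$ and $1$--$3$ Pachner-type moves away from defects, together with the moves that slide a defect edge across a vertex, merge collinear defect edges, and move a bulk vertex through a defect line. Away from defects, invariance is exactly the strong separability and symmetry of the RFA as established earlier in the excerpt; on defect lines it follows from the bimodule associativity axioms in Definition~\ref{def:AB-bimodule-def} and the zig-zag identities of the dual pair in Definition~\ref{def:dual-pair}; crucially, the area/length parameters only ever enter through sums, so each move preserves the total area of every surface component and the total length of every defect component. (2) \textbf{Symmetric monoidal structure.} Disjoint union of bordisms corresponds to tensor product of the assigned composites, the empty bordism goes to $\Cb$, and the braiding is handled by choosing decompositions compatible with the cylinder-with-crossing, exactly as in the topological state-sum. (3) \textbf{Functoriality and identities.} Gluing of bordisms corresponds to composition because one may choose cell decompositions that restrict to a common decomposition of the glued boundary; the zero-area cylinders are sent to the identity because the projector family $P_a$ of each RFA satisfies $\lim_{a\to 0}P_a=\id$ and the analogous statement holds for the bimodule projectors (this is where Proposition~\ref{prop:pdata-from-action} enters, identifying the plaquette data with a bimodule action for which these limits behave correctly). (4) \textbf{Continuity.} The assigned maps are finite composites of the families $\mu_a,\Delta_a,\eta_a,\eps_a,\rho_{a,l,b}$ and fixed (co)evaluations; since each family is continuous in its parameters (a consequence of the RFA and bimodule axioms, cf.\ \eqref{eq:racont} and the module analogue) and composition is jointly continuous in the strong operator topology on bounded-operator spaces with a uniform bound, $\funZ^\mathrm{ss}(\Sigma)$ is jointly continuous in all area and length parameters.

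The main obstacle I expect is step~(1) on the defect-crossing and vertex-sliding moves: one has to check that the chosen normalisation of the dual-pair (co)evaluation maps is consistent across \emph{all} the local configurations where a defect line meets the bulk triangulation --- in particular that the bimodule action $\rho_{a,l,b}$ interacts correctly with the RFA window element on both sides when a bulk vertex is pushed through the defect, and that no spurious factor of a non-invertible operator (which, by Remark~\ref{rem:not-groupoid}, one cannot simply cancel) is introduced. This is the area-dependent refinement of the corresponding check in \cite{Davydov:2011dt}, and the dualisability hypothesis on the bimodules in $D_1^\mathrm{ss}$ is precisely what makes it go through; the role of Proposition~\ref{prop:pdata-from-action} is to repackage the plaquette data so that these verifications become the standard bimodule/zig-zag manipulations with area labels carried along additively. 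Once all moves are checked, steps~(2)--(4) are routine adaptations of the defect-free and topological arguments.
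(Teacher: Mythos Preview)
Your overall strategy---assign local weights, check invariance under local moves, then verify the symmetric-monoidal-functor axioms and continuity---is the paper's strategy. But two points of your setup do not match the paper and one of them is close to a genuine muddle.

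\medskip
\textbf{Combinatorial model.} You write that ``the defect submanifold lies on edges'' and that ``each defect edge is labeled by a dualisable bimodule'', yet you also speak of ``the bimodule action maps $\rho_{a,l,b}$ on defect plaquettes''. These two pictures are incompatible, and neither is the paper's. In the paper (Section~\ref{sec:PLCW-dec-defect}) defect lines are \emph{transversal} to the PLCW $1$-skeleton: each defect face $f\in\Sigma_2^{\mathrm{defect}}$ is a polygon crossed by a single arc of the defect, and the associated plaquette weight is the morphism $W^{x,n,m}_{a,l,b}$ of \eqref{eq:pdata-defect}, built from the bimodule (co)pairing and action as in \eqref{eq:pdata-from-action}. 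The moves you list (``slide a defect edge across a vertex'', ``move a bulk vertex through a defect line'') are not the paper's moves; the paper uses the PLCW elementary moves of Figure~\ref{fig:edgemove-aqft} plus the three defect moves of Figure~\ref{fig:elementary-defect}, and invariance under these is exactly Conditions~\ref{cond:glue-defect}--\ref{cond:W-zeta-defect} on state-sum data with defects. In particular there are no Pachner moves in the argument; PLCW cells are arbitrary polygons, and the relevant moves are edge removal and bivalent-vertex removal.

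\medskip
\textbf{Structure of the argument.} The paper factors the theorem into two independent pieces: (i)~Theorem~\ref{thm:state-sum-defect-aqft}, which takes as input abstract \emph{state-sum data with defects} (the list of conditions in Section~\ref{sec:lattice:daqft}) and produces a defect {\aQFT}, and (ii)~Proposition~\ref{prop:pdata-from-action}, which shows that strongly separable symmetric RFAs together with dual pairs of bimodules furnish such data (the two extra hypotheses there being automatic in $\Hilb$ by Lemmas~\ref{lem:semigrp} and~\ref{lem:D-limits}). Your sketch runs these together; that is fine, but the abstract Conditions~\ref{cond:glue-defect}--\ref{cond:Q-cont} are precisely what make the move-checks and limit arguments clean, and the existence of the cylinder idempotent $E_0^{\underline{x}}$ of \eqref{eq:E0-def} (whose image is the state space) is the non-obvious step that Proposition~\ref{prop:pdata-from-action} supplies.

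\medskip
\textbf{Continuity.} Your claim that ``composition is jointly continuous in the strong operator topology on bounded-operator spaces with a uniform bound'' is delicate: composition in $\Hilb$ is only separately continuous in general (Remark~\ref{rem:hilbcont}), and tensoring with an identity is not continuous at all. The paper does \emph{not} argue this way. Instead it reduces the continuity check to cylinders (Remark~\ref{rem:daqft-enough-to-check-cylinders}) and then uses the semigroup Lemma~\ref{lem:semigrp} to obtain joint continuity of tensor products of the $Q_{a,l,b}$ and $P_a$; this is encoded as Condition~\ref{cond:Q-cont}. You should replace your step~(4) by this argument.
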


\begin{figure}[tb]
	\centering
	\def\svgwidth{6cm}
	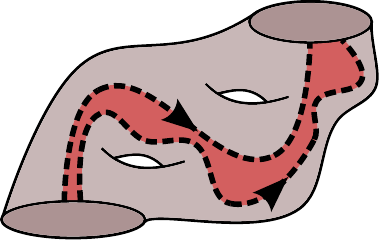
	\caption{Surface with parallel defect lines. The defect lines are the dotted lines in the figure. 
		In this figure they both start and end on a boundary component. 
		The defect lines both have length $l$ and the area of the surface component between them is $a$.}
	\label{fig:parallel-defect-lines}
\end{figure}

Crucially, one can define the tensor product $M \otimes_A N$ of 
bimodules. It satisfies a universal property (Definition~\ref{def:tensor-product-bimodule}),
and it can be shown to exist in some natural 
cases\footnote{These bimodules need to be left and right modules as well (which is not automatic). For details see Remark~\ref{rem:left-right-module-gives-bimodule} and Lemma~\ref{lem:left-right-module-bimodule}.\label{foot:bimod-lr-mod}}
at least in $\Hilb$ (Proposition~\ref{prop:tens-prod-lr-bimodules}).
The tensor product of bimodules is designed to model the ``fusion of defect lines'' in a defect {\aQFT} in the following sense. Let $\Sigma(a)$ be a bordism with two parallel defect lines, one labeled by $M \in D_1$ and the other by $N \in D_1$, and assume the connected surface component separating them is labeled by $A \in D_2$ (Figure~\ref{fig:parallel-defect-lines}).\footnote{
	These bimodules also need to be such that their tensor product has a dual, for more details see Lemma~\ref{lem:dual-pair-tensor-product}, the precise conditions of Theorem~\ref{thm:fusion} and Remark~\ref{rem:all-works-in-hilb}.
} 
Denote the area assigned to this component by $a$ and assume that the two defect lines have the same length label $l$. Let $\Sigma'$ be equal to $\Sigma(a)$, except that the component separating $M$ and $N$ has been collapsed, resulting in a single defect line which is now labeled by $M \otimes_A N$.
Then (Theorem~\ref{thm:fusion} and Remark~\ref{rem:all-works-in-hilb}):

\begin{theorem}
$\lim_{a \to 0} \funZ^\mathrm{ss}(\Sigma(a)) = \funZ^\mathrm{ss}(\Sigma')$.
\end{theorem}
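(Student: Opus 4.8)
The plan is to reduce the claim to a local statement near the strip of area $a$, and then to use the defining zero-area limit $P_a \to \id$ of a regularised algebra together with the universal property of $M \otimes_A N$.

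First I would fix a cell decomposition of $\Sigma'$ and build a compatible one for $\Sigma(a)$. Take a cell decomposition of $\Sigma'$ in which the single defect line labelled $M \otimes_A N$ is a chain of plaquettes carrying the bimodule structure maps of $M \otimes_A N$, and then ``blow up'' this defect line into the thin $A$-strip of $\Sigma(a)$: each such plaquette is replaced by a short column made of an $M$-plaquette, one or more $A$-plaquettes whose areas sum to the portion of $a$ in that column, and an $N$-plaquette, the $M$- and $N$-edges retaining the length label $l$. Away from the strip the two cell decompositions agree. Since, by Theorem~\ref{thm:state-sum-defect-aqft}, $\funZ^\mathrm{ss}$ is independent of the chosen cell decomposition, both sides may be computed with these.

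Next I would factor the state-sum contraction so that all $a$-dependence is concentrated in a ``strip operator'' $T_a$, with $\funZ^\mathrm{ss}(\Sigma(a)) = F(T_a)$ and $\funZ^\mathrm{ss}(\Sigma') = F(T_0)$ for one and the same contraction $F$ with the fixed bounded tensors coming from the part of $\Sigma'$ outside the defect line; here the leg carrying $M \otimes_A N$ is traded for the pair of legs carrying $M$ and $N$. Concretely, $T_a$ is a finite composition of the bimodule actions $\rho^M$ and $\rho^N$ with the propagators $P^A_{a_i}$ produced by the $A$-plaquettes (the strip being thin, these reduce to the regularised-algebra propagators of $A$), contracted over the shared $A$-legs. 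By the construction of the structure maps of $M \otimes_A N$ in the state sum (around Definition~\ref{def:tensor-product-bimodule}, via the left-right-module realisation of Lemma~\ref{lem:left-right-module-bimodule} and Proposition~\ref{prop:tens-prod-lr-bimodules}), $T_0$ is precisely $T_a$ with each $P^A_{a_i}$ replaced by $\id_A$, i.e.\ it is the $A$-balanced map defining $\otimes_A$.

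Finally I would show $\lim_{a\to 0} T_a = T_0$ and pass the limit through $F$. The convergence holds because each $P^A_{a_i} \to \id_A$ as $a_i\to 0$ (the defining property of a regularised algebra, Section~\ref{sec:RA-RFA-def}) and because $\rho^M$ and $\rho^N$ are continuous in their area arguments (Definition~\ref{def:AB-bimodule-def}); a finite composition together with separate continuity of composition in the strong operator topology yields $T_a \to T_0$ in that topology, the $T_a$ being uniformly bounded for $a$ in a bounded interval. Since $F$ is built from fixed bounded tensors, this strong-operator convergence transports to $\funZ^\mathrm{ss}(\Sigma(a)) \to \funZ^\mathrm{ss}(\Sigma')$. \emph{The main obstacle} is exactly this last interchange of limit and contraction in $\Hilb$ with the strong operator topology: since $M\otimes_A N$ can be infinite-dimensional and composition is only separately continuous there, one must organise the factorisation so that $T_a$ occupies a position from which its convergence, together with the uniform bound, genuinely propagates (e.g.\ sandwiched between fixed bounded maps). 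A secondary point is to identify the $a\to 0$ limit of the strip operator with the structure maps of $M\otimes_A N$ on the nose rather than only up to the canonical universal map; this is where the precise definition of $M\otimes_A N$ and the hypotheses guaranteeing it is a genuine bimodule with a dual (the conditions of Theorem~\ref{thm:fusion} and Remark~\ref{rem:all-works-in-hilb}) enter. The endpoints of the two defect lines on the boundary are handled routinely once the bulk case is settled, the strip there simply closing onto the boundary edge.
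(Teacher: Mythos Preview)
Your overall strategy—localise to the strip, isolate the $a$-dependence in a single factor, and take the limit—matches the paper's. But there is a genuine gap in how you identify where the $a$-dependence sits and why the limit exists.

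You write that ``$T_a$ is a finite composition of the bimodule actions $\rho^M$ and $\rho^N$ with the propagators $P^A_{a_i}$'' and that ``$T_0$ is precisely $T_a$ with each $P^A_{a_i}$ replaced by $\id_A$''. This is not correct: in a PLCW decomposition with area, the area $a$ of the strip component is distributed among \emph{all} cells touching it, in particular the $A$-side area parameters of the defect plaquettes for $M$ and $N$ themselves (see the map $\Ac_2^{\mathrm{defect}}$ in Section~\ref{sec:PLCW-dec-defect} and Figure~\ref{fig:face-with-defect}). So as $a\to 0$ you are forcing the inner area arguments of $\rho^M_{b,l,a_1}$ and $\rho^N_{a_2,l,c}$ to zero as well, and those individual limits need \emph{not} exist (Remark~\ref{rem:left-right-module-gives-bimodule} and the explicit counterexample in Appendix~\ref{app:bimod}). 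The hypotheses you call a ``secondary point'' are in fact the primary mechanism: the limits that do exist are those of the specific combined morphisms $\tilde\rho^{V,W}_{a,b,c,l}$, $\tilde\gamma^{V,W}_{a,b,c,l}$, $\tilde\beta^{V,W}_{a,b,c,l}$ of \eqref{eq:tensor-product-action} and \eqref{eq:dual-pair-tensor-product}, and their existence is precisely what Theorem~\ref{thm:fusion} assumes.

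The paper exploits this directly. Rather than a chain of columns, it takes a PLCW decomposition of $\Sigma$ in which each of the two parallel defect lines sits in a \emph{single} rectangle, so two adjacent rectangles with no extra $A$-plaquettes between them. The resulting local morphism is then literally one of $\tilde\rho^{V,W}_{a,b,c,l}$, $\tilde\gamma^{V,W}_{a,b,c,l}$ or $\tilde\beta^{V,W}_{a,b,c,l}$, composed with the fixed $\iota$, $\pi$ of the boundary idempotents (Figure~\ref{fig:defect-fusion-configurations}, cases $a')$--$d')$ according to whether the defect lines run between in-/out-going boundaries or close up). The $a\to 0$ limit of each is then the hypothesis itself, and by Proposition~\ref{prop:d0proj} and Lemma~\ref{lem:dual-pair-tensor-product} these limits \emph{are} the structure maps of $V\otimes_A W$ and its dual, so one lands on $\funZ^{\mathrm{ss}}(\Sigma')$ on the nose. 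Passing the limit through the remainder of the state sum is then just separate continuity of composition in $\Sc$; no uniform-boundedness argument is needed.
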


In the example of 2d~YM theory, the relevant strongly separable symmetric RFA is $L^2(G)$, and the defect describing a Wilson line labeled by a unitary $G$-representation $R$ is obtained from the bimodule $R \otimes L^2(G)$ (see Section~\ref{sec:2dym:wilson} for details). As expected, the fusion of Wilson lines labeled $R$ and $S$ is given by the $G$-representation $R \otimes S$, which in terms of the above theorem follows from the bimodule tensor product $(R \otimes L^2(G)) \otimes_{L^2(G)} (S \otimes L^2(G)) \cong (R \otimes S) \otimes L^2(G)$ (Proposition~\ref{prop:Wilson-bimodules}).

Examples of defects that are not Wilson lines can be obtained by twisting the action on the regular bimodule $L^2(G)$ by appropriate automorphisms of $G$ (Lemma~\ref{lem:bimodule-isomorphisms}).

A special case of 2d~YM theory is when the group $G$ is finite. Here all zero area limits exist and the theory is given by the state-sum construction using the group algebra $\Cb[G]$. The resulting topological field theory is described by the centre of $\Cb[G]$ which is the same as class functions on $G$.
The relation of this TFT to orbifolds, see e.g.\ \cite[Ex.\,1]{Brunner:2013orb}, is that state-sum models are orbifolds of the trivial theories.
The present investigation suggests that including area-dependence may be useful to treat orbifolds by compact Lie groups, such as the one investigated in \cite{Gaberdiel:2012corb}.

\bigskip

This paper is organized as follows.
In Section~\ref{sec:RFA}
we collect all the required algebraic preliminaries about regularised algebras and RFAs, as well as their modules. 
In Section~\ref{sec:aqft-with-and-without} 
we state the definition of an {\aQFT} without and with defects, and we show that {\aQFT}s without defects correspond to commutative RFAs. 
Section~\ref{sec:state-sum-construction} 
contains the state-sum constructions, first the one without defects and then the version with defects. It is shown how the data needed for the state-sum construction can be obtained from RFAs and from dualisable bimodules, and how the tensor product of bimodules and the fusion of defect lines are related.
Finally, in Section~\ref{sec:2dym} we give a detailed treatment of our main example, 2d~YM
theory with Wilson lines as defects.

\subsubsection*{Acknowledgments}
The authors thank 
	Yuki Arano, 
	Nils Carqueville,
	Alexei Davydov,
	Reiner Lauterbach,
	Pau Enrique Moliner, 
	Chris Heunen, 
	Andr\'e Henriques,
	Ehud Meir,
	Catherine Meusburger,
	Gregor Schaumann,
	Richard Szabo
and
	Stefan Wagner
for helpful discussions
	and comments.
LS is supported by the DFG Research Training Group 1670 ``Mathematics Inspired by String Theory and Quantum Field Theory''.

\section{Regularised Frobenius algebras}\label{sec:RFA}

\subsection{Definition of regularised algebras and Frobenius algebras}\label{sec:RA-RFA-def}

Let $(\Sc,\otimes,\Ib)$ be a 
	strict\footnote{Although our examples of such categories will not be strict, one can always find an equivalent strict monoidal category with these properties
such that the equivalence functor is a homeomorphism on hom-sets.
	}
monoidal category whose hom-sets are topological spaces
such that composition is separately continuous.

We stress that we do not require the composition of $\Sc$ to be jointly continuous, nor
the tensor product of $\Sc$ to be (jointly or separately) continuous.
The reason is that our main example -- the category of Hilbert spaces with bounded linear maps and strong operator topology -- has none of these properties, see  Remark~\ref{rem:hilbcont} below.

\begin{definition}\label{def:reg-alg}
A \textsl{regularised algebra} in $\Sc$ is an object $A\in\Sc$ 
together with families of morphisms 
\begin{align}
	\mu_a\in\Sc( A^{\otimes 2}, A)\quad\text{and}\quad\eta_a\in\Sc(\Ib, A)\vphantom{A^{\otimes 2}}
	\label{eq:ra:def}
\end{align}
for every $a\in\Rb_{>0}$, called \textsl{product} and \textsl{unit},
such that the following relations hold:
\begin{enumerate}
	\item for every $a,a_1,a_2,b_1,b_2\in\Rb_{>0}$, such that $a_1+a_2=b_1+b_2=a$, 
		\begin{align}
		\mu_{a_1}\circ\left( id_{A}\otimes\eta_{a_2} \right)
		&=\mu_{b_1}\circ\left(\eta_{b_2}\otimes id_{A}\right) \ ,
		\label{eq:ra:unit}\\
		\mu_{a_1}\circ\left( id_{A}\otimes\mu_{a_2} \right)
		&=\mu_{b_1}\circ\left(\mu_{b_2}\otimes id_{A}\right)\ , \label{eq:ra:assoc}
	\end{align}
	\item \label{def:reg-alg:2}
	Let $P_a \in \Sc(A,A)$ be given by \eqref{eq:ra:unit}, i.e.\ $P_a = \mu_{a_1}\circ\left( id_{A}\otimes\eta_{a_2} \right)$.
	We require:
	\begin{enumerate}
	\item $\lim_{a\to0}P_a=\id_A$.
	\item The assignments
		\begin{align}
			\Rb_{\ge0}^n&\to\Sc(A^{\otimes n},A^{\otimes n})\nonumber\\
       			 (a_1,\dots,a_n)&\mapsto P_{a_1}\otimes\dots\otimes P_{a_n}
			\label{eq:racont}
		\end{align}
		are jointly continuous for every $n\ge1$.
\end{enumerate}
\end{enumerate}
Let
$A,B\in\Sc$ be regularised algebras. A \textsl{morphism of regularised algebras} $A\xrightarrow{f}B$ is
a morphism in $\Sc$ such that for every $a\in\Rb_{>0}$
\begin{center}
\begin{tabular}[h]{c c}
	$\eta^{B}_a=f\circ\eta^{A}_a$,
	& $\mu^{B}_a\circ\left( f\otimes f \right)=f\circ\mu_{a}^{A}$.\\
\end{tabular}
\end{center}
\end{definition}

Note that continuity is imposed only on $P_a$ but not on $\mu_a$ or $\eta_a$. However,  we will see shortly that continuity of $\mu_a$ and $\eta_a$ is implied by the definition. On the other hand, it is important not to impose the existence of an $a\to 0$ limit on $\mu_a$ and $\eta_a$; in Section~\ref{sec:examples} we will see examples where this limit does not exist, which would then have been excluded.

\begin{figure}[tb]
	\centering
	\def\svgwidth{6cm}
	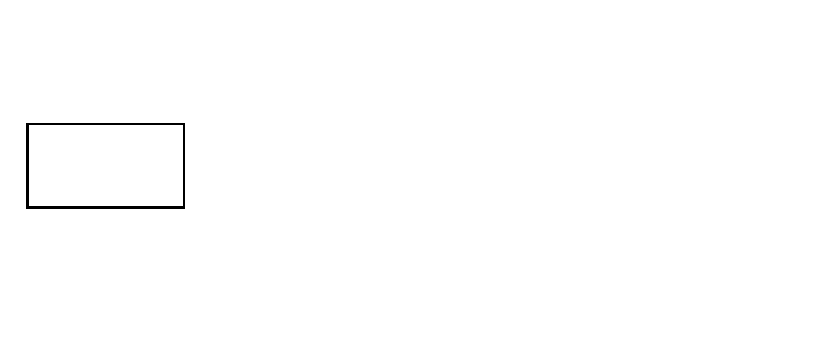
	\caption{
Graphical notation of morphisms in a strict (symmetric) monoidal category $\Sc$.
Here a morphism $f\in\Sc(A\otimes B,C\otimes D\otimes E)$, the identity $\id_{A}\in\Sc(A,A)$ and
the symmetric braiding $\sigma_{A,B}$ are shown.
The tensor product of morphisms is depicted by drawing the morphisms next to each other and 
composition of morphisms is stacking them on top of each other.
	}
	\label{fig:graphical-calc}
\end{figure}
We will often use string diagram notation to represent morphisms in strict monoidal categories, our conventions are given in Figure~\ref{fig:graphical-calc}.
The morphisms in \eqref{eq:ra:def} are drawn as 
\begin{align}
	\begin{aligned}
	\def\svgwidth{4.0cm}
\begingroup%
  \makeatletter%
  \providecommand\color[2][]{%
    \errmessage{(Inkscape) Color is used for the text in Inkscape, but the package 'color.sty' is not loaded}%
    \renewcommand\color[2][]{}%
  }%
  \providecommand\transparent[1]{%
    \errmessage{(Inkscape) Transparency is used (non-zero) for the text in Inkscape, but the package 'transparent.sty' is not loaded}%
    \renewcommand\transparent[1]{}%
  }%
  \providecommand\rotatebox[2]{#2}%
  \newcommand*\fsize{\dimexpr\f@size pt\relax}%
  \newcommand*\lineheight[1]{\fontsize{\fsize}{#1\fsize}\selectfont}%
  \ifx\svgwidth\undefined%
    \setlength{\unitlength}{164.51539567bp}%
    \ifx\svgscale\undefined%
      \relax%
    \else%
      \setlength{\unitlength}{\unitlength * \real{\svgscale}}%
    \fi%
  \else%
    \setlength{\unitlength}{\svgwidth}%
  \fi%
  \global\let\svgwidth\undefined%
  \global\let\svgscale\undefined%
  \makeatother%
  \begin{picture}(1,0.55564553)%
    \lineheight{1}%
    \setlength\tabcolsep{0pt}%
    \put(0,0){\includegraphics[width=\unitlength,page=1]{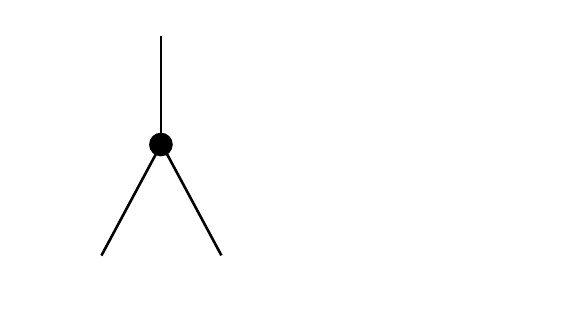}}%
    \put(-0.00454104,0.29131149){\color[rgb]{0,0,0}\makebox(0,0)[lt]{\lineheight{0}\smash{\begin{tabular}[t]{l}$\mu_a=$\end{tabular}}}}%
    \put(0,0){\includegraphics[width=\unitlength,page=2]{ra-morph.pdf}}%
    \put(0.58757931,0.29131149){\color[rgb]{0,0,0}\makebox(0,0)[lt]{\lineheight{0}\smash{\begin{tabular}[t]{l}$\eta_a=$\end{tabular}}}}%
    \put(0.90151591,0.16278382){\color[rgb]{0,0,0}\makebox(0,0)[lt]{\lineheight{0}\smash{\begin{tabular}[t]{l}\scriptsize{$a$}\end{tabular}}}}%
    \put(0.34939116,0.0080403){\color[rgb]{0,0,0}\makebox(0,0)[lt]{\lineheight{0}\smash{\begin{tabular}[t]{l}$A$\end{tabular}}}}%
    \put(0.1313396,0.0080403){\color[rgb]{0,0,0}\makebox(0,0)[lt]{\lineheight{0}\smash{\begin{tabular}[t]{l}$A$\end{tabular}}}}%
    \put(0.83340156,0.0080403){\color[rgb]{0,0,0}\makebox(0,0)[lt]{\lineheight{0}\smash{\begin{tabular}[t]{l}$\Ib$\end{tabular}}}}%
    \put(0.34051037,0.29257728){\color[rgb]{0,0,0}\makebox(0,0)[lt]{\lineheight{0}\smash{\begin{tabular}[t]{l}\scriptsize{$a$}\end{tabular}}}}%
    \put(0.24774848,0.51408159){\color[rgb]{0,0,0}\makebox(0,0)[lt]{\lineheight{0}\smash{\begin{tabular}[t]{l}$A$\end{tabular}}}}%
    \put(0.81304508,0.51408169){\color[rgb]{0,0,0}\makebox(0,0)[lt]{\lineheight{0}\smash{\begin{tabular}[t]{l}$A$\end{tabular}}}}%
  \end{picture}%
\endgroup%

	\end{aligned}
	\label{eq:ra:def-graphical}
\end{align}
and the relations in \eqref{eq:ra:unit} and \eqref{eq:ra:assoc} are
\begin{align}
	\begin{aligned}
	\def\svgwidth{11cm}
	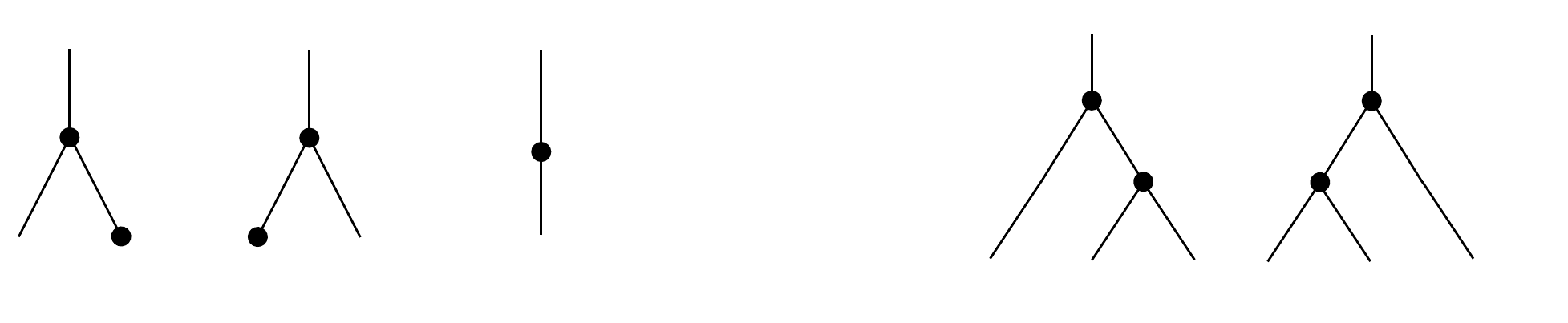
	\end{aligned}\ .
	\label{eq:ra:rel-graphical}
\end{align}

The next lemma gives some simple consequences of the above definition. In particular, part 4 shows that even though we imposed no continuity condition on the tensor product of $\Sc$, as far as morphisms built from a regularised algebra are concerned, everything is even jointly continuous.

\begin{lemma}\label{lem:ra:properties}
	Let $A$ be a regularised algebra. 
	Let $a_1,a_2,b_1,b_2,c_1,c_2\in\Rb_{>0}$
	such that $a_1+a_2=b_2+b_2=c_1+c_2$.
	\begin{enumerate}
		\item Let $\eta_a'\in\Sc(\Ib,A)$ be a family of morphisms which satisfy \eqref{eq:ra:unit}. 
			Then $\eta_a'=\eta_a$ for every $a\in\Rb_{>0}$. \label{lem:ra:uniqueunit}
		\item $P_{a_1}\circ\eta_{a_2}=\eta_{a_1+a_2}$ and $P_{a_1}\circ P_{a_2}=P_{a_1+a_2}$.\label{lem:ra:semigrp}
		\item $P_{a_1}\circ\mu_{a_2}=\mu_{b_1}\circ
			\left( P_{b_2}\otimes \id \right)
			=\mu_{c_1}\circ \left( \id\otimes P_{c_2}\right)=\mu_{a_1+a_2}$. \label{lem:ra:mupacommute}
		\item In the monoidal sub-category of $\Sc$ tensor generated by $A$, $\mu_a$ and $\eta_a$,
			every morphism is jointly continuous in the parameters.\label{lem:ra:generatedcat}
	\end{enumerate}
\end{lemma}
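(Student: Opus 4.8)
The plan is to establish the four parts in sequence, each feeding into the next.

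For part 1, I would use the unit relation \eqref{eq:ra:unit} applied to $\eta_a'$ together with the genuine $\eta_a$, and then take a zero-area limit. Concretely, fix $a\in\Rb_{>0}$ and split $a = a_1 + a_2$. Since $\eta_a'$ satisfies \eqref{eq:ra:unit}, we have $\mu_{a_1}\circ(\id_A\otimes\eta_{a_2}') = \mu_{b_1}\circ(\eta_{b_2}'\otimes\id_A)$, but more to the point the mixed relation $\mu_{a_1}\circ(\id_A\otimes\eta_{a_2}') = \mu_{b_1}\circ(\eta_{b_2}\otimes\id_A)$ should hold by the same argument that defines $P_a$; composing on the appropriate side with $\eta$ and sending the relevant parameter to $0$ using $\lim_{a\to0}P_a = \id_A$ collapses $\eta_a'$ to $\eta_a$. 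I would write $\eta_{a+\epsilon}' = \mu_\epsilon\circ(\eta_a'\otimes\eta_\delta)$-type identities, use associativity to rearrange, and conclude $\eta'_a = P_\epsilon\circ\eta_{a-\epsilon}$ matches $\eta_a = P_\epsilon\circ\eta_{a-\epsilon}$ in the limit; separate continuity of composition in $\Sc$ is what legitimizes passing the limit through.

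For part 2, the identity $P_{a_1}\circ\eta_{a_2} = \eta_{a_1+a_2}$ follows by writing $P_{a_1} = \mu_{c}\circ(\id_A\otimes\eta_{c'})$ with $c + c' = a_1$, so $P_{a_1}\circ\eta_{a_2} = \mu_c\circ(\eta_{a_2}\otimes\eta_{c'})$ (after using the symmetry-free form of the unit axiom to move $\eta_{a_2}$ into the left slot, i.e.\ \eqref{eq:ra:unit} again), and this equals $P_{a_2+c}\circ\eta_{c'}$, hence by induction/bootstrapping equals $\eta_{a_1+a_2}$; the cleanest route is to recognize $\mu_c\circ(\eta_{a_2}\otimes\eta_{c'})$ as $\mu_{c}\circ(\eta_{a_2}\otimes\id_A)\circ\eta_{c'} = P_{a_2+c}\circ\eta_{c'}$ and iterate, or simply note both sides satisfy the defining relation for $\eta_{a_1+a_2}$ and invoke part 1. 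Then $P_{a_1}\circ P_{a_2} = P_{a_1}\circ\mu_{d}\circ(\id_A\otimes\eta_{d'})$ with $d+d' = a_2$; using associativity \eqref{eq:ra:assoc} to push $P_{a_1}$ (itself of the form $\mu\circ(\id\otimes\eta)$) past $\mu_d$, one rewrites this as $\mu_{d_1'}\circ(\id_A\otimes\eta_{\dots})$ with total area $a_1 + a_2$, i.e.\ $P_{a_1+a_2}$. This is a purely diagrammatic manipulation with the string diagrams in \eqref{eq:ra:rel-graphical}.

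For part 3, the three expressions $P_{a_1}\circ\mu_{a_2}$, $\mu_{b_1}\circ(P_{b_2}\otimes\id)$, $\mu_{c_1}\circ(\id\otimes P_{c_2})$ should all be identified with $\mu_{a_1+a_2}$ by substituting the definition $P = \mu\circ(\id\otimes\eta)$ and repeatedly applying associativity \eqref{eq:ra:assoc} and the unit axiom \eqref{eq:ra:unit} to absorb the inserted unit into the remaining multiplication while tracking that the areas sum correctly; the equality of all three with each other then also confirms that $\mu$ is "area-balanced". Finally, for part 4, I would argue that every morphism in the monoidal subcategory tensor-generated by $A$, $\mu_a$, $\eta_a$ is, after using parts 2 and 3, expressible as a composite of the form $(\text{fixed topological structure maps})\circ(P_{t_1}\otimes\cdots\otimes P_{t_n})\circ(\cdots)$ where the $P_{t_i}$ carry all the area dependence — the point being that any string diagram built from $\mu_a,\eta_a$ can be "normal-ordered" so that all area parameters are pushed onto a single layer of $P$'s using part 3 (each $\mu_{a}$ becomes $P_{\text{area}}\circ\mu_{\epsilon}$ for a fixed small $\epsilon$, and $\eta_a$ becomes $P_{\text{area}-\epsilon}\circ\eta_{\epsilon}$). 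Joint continuity then reduces to joint continuity of $(a_1,\dots,a_n)\mapsto P_{a_1}\otimes\cdots\otimes P_{a_n}$, which is exactly hypothesis \eqref{eq:racont}, combined with separate continuity of composition in $\Sc$.

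The main obstacle I anticipate is part 4: making precise the "normal ordering" claim — that an arbitrary morphism in the generated subcategory genuinely factors through a single application of \eqref{eq:racont} — requires an induction on the structure of string diagrams, and one must be careful that the non-continuity of $\otimes$ in $\Sc$ never bites, i.e.\ that the only place a tensor product of $a$-dependent maps occurs is inside the single layer $P_{a_1}\otimes\cdots\otimes P_{a_n}$ governed by \eqref{eq:racont}, with everything else being either a fixed ($a$-independent) structure morphism or a composition (where only separate continuity is available but suffices once the $P$-layer is handled jointly). Parts 1–3 are routine diagrammatic calculations once part 1 gives uniqueness of the unit.
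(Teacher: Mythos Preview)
Your approach matches the paper's in all four parts, including the key part~4 idea of factoring any generated morphism as $\varphi^{(1)}_{\eps_1}\circ\big(\bigotimes_i P_{a_i-\eps_1-\eps_2}\big)\circ\varphi^{(2)}_{\eps_2}$ with the outer factors held at fixed parameters. Two places need tightening. First, in part~1 the ``mixed relation'' $\mu_{a_1}\circ(\id_A\otimes\eta_{a_2}') = \mu_{b_1}\circ(\eta_{b_2}\otimes\id_A)$ does not follow from the hypotheses: it asserts $P'_a=P_a$, which is essentially the conclusion. The paper instead computes $\mu_a\circ(\eta_b\otimes\eta_c')$ in two ways---once via the unit axiom for $\eta$, once via that for $\eta'$---to obtain $P_{a+b}\circ\eta_c' = P'_{a+b}\circ\eta_c$, and then sends $a+b\to 0$. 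Second, in part~3 the final equality $P_{a_1}\circ\mu_{a_2}=\mu_{a_1+a_2}$ is not obtainable by algebra alone: if you substitute $P=\mu\circ(\id\otimes\eta)$ and apply associativity you simply cycle back to an expression of the same shape $P_{\bullet}\circ\mu_{\bullet}$. The paper breaks this loop by first establishing $P_c\circ P_b\circ\mu_a = P_c\circ\mu_{a+b}$ algebraically (this \emph{does} go through, via associativity and part~2) and then taking $c\to 0$ using separate continuity---the same limit device you already invoke in parts~1 and~4.
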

\begin{proof}
Let $a,b,c\in\Rb_{>0}$.

\smallskip

\noindent
\textsl{Part~\ref{lem:ra:uniqueunit}:} 
	Let us write $P'_{a+b}:=\mu_a\circ\left( \eta_b'\otimes\id_A \right)$ for the morphism in \eqref{eq:ra:unit}.
From \eqref{eq:ra:unit} we have that
	\begin{align}
		\mu_{a}\circ\left( \eta_b\otimes\eta_c' \right)=\mu_{a}\circ\left( \eta_c\otimes\eta_b' \right)
		\label{eq:pfuniqueunit1}
	\end{align}
	as both sides only depend on the sum of the parameters.
	We then have that 
	\begin{align}
		P_{a+b}\circ\eta_c'=P'_{a+b}\circ\eta_c\ ,
		\label{eq:pfuniqueunit2}
	\end{align}
	and using that the composition is separately continuous together with 
	$\lim_{a,b\to0}P_{a+b}=\lim_{a,b\to0}P'_{a+b}=\id_A$ 
	we get that $\eta_c'=\eta_c$ for every $c\in\Rb_{>0}$.

\medskip

\noindent
\textsl{Part~\ref{lem:ra:semigrp}:} 
	The first equation follows from Part~\ref{lem:ra:uniqueunit},
	because $P_{b}\circ\eta_a$ satisfies \eqref{eq:ra:unit}.
	The second equation follows from 
	associativity~\eqref{eq:ra:assoc} and from
	the first one:
	\begin{align}
		P_a\circ P_b=&\mu_{a_1}\circ
		\left( \eta_{a_2}\otimes\mu_{b_1} \right)\circ
		\left( \eta_{b_2}\otimes\id  \right)=
		\mu_{b_1}\circ \left(P_{a}\circ\eta_{b_2}\otimes \id\right)\nonumber\\
		=& \mu_{b_1}\circ \left(\eta_{a+b_2}\otimes \id\right)= P_{a+b}\ ,
		\label{eq:pfpasemigrp}
	\end{align}
	where $a=a_1+a_2$ and $b=b_1+b_2$.

\medskip

\noindent
\textsl{Part~\ref{lem:ra:mupacommute}:} 
	The first two equalities
	follow from the associativity of $\mu_a$ and the definition of $P_a$.
	For the last equality note that with $c=c_1+c_2$ we have
	\begin{align}
		P_c\circ P_b\circ\mu_a=
		P_{c+b}\circ\mu_a=
		\mu_c\circ\left( \eta_b\otimes\mu_a \right)=
		\mu_{c_1}\circ\left( \eta_{c_2}\otimes\mu_{a+b} \right)=
		P_{c}\circ\mu_{a+b} \ .
		\label{eq:pfmuparam}
	\end{align}
	Finally we use separate continuity of the composition and $\lim_{c\to0}P_c=\id_A$.

\medskip

\noindent
\textsl{Part~\ref{lem:ra:generatedcat}:} 
	Let $\varphi_{a_1,\dots,a_N} : A^{\otimes n} \to A^{\otimes m}$ be a morphism in $\Sc$ tensor generated by $\mu_a$ and $\eta_a$, involving a total of $N$ copies of the latter two morphisms, with parameters $a_1,\dots,a_N$.
	One can write 
$\varphi_{a_1,\dots,a_N}$ in the form 
	$\varphi^{(1)}_{\eps_1}\circ \left( \bigotimes_{i=1}^{N}
	P_{a_i-\eps_1-\eps_2} \right)\circ\varphi^{(2)}_{\eps_2}$
	for some $\eps_1,\eps_2\in\Rb_{>0}$ and morphisms
	$\varphi^{(1)}_{\eps_1}$ and $\varphi^{(2)}_{\eps_2}$. Then by separate
	continuity of the composition of $\Sc$ and joint continuity of $\bigotimes_{i=1}^{N}P_{a_i}$
in \eqref{eq:racont},
	the morphism $\varphi_{a_1,\dots,a_N}$ is jointly continuous in the parameters 
	$a_1,\dots,a_N$.
\end{proof}

As a special case of Part~\ref{lem:ra:generatedcat} of the above lemma we get:

\begin{corollary}
	In a regularised algebra the maps $a\mapsto \mu_a$ and $a\mapsto \eta_a$ are continuous.	
\end{corollary}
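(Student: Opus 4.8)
The plan is to derive the continuity of $a \mapsto \mu_a$ and $a \mapsto \eta_a$ as a direct specialisation of Part~\ref{lem:ra:generatedcat} of Lemma~\ref{lem:ra:properties}. Both $\mu_a \in \Sc(A^{\otimes 2}, A)$ and $\eta_a \in \Sc(\Ib, A)$ are morphisms in the monoidal subcategory of $\Sc$ tensor generated by $A$, $\mu_a$ and $\eta_a$ — indeed they are the generating morphisms themselves, each involving exactly one copy of a structure map with a single parameter $a$. Hence, taking $n = 1$ in the statement of Part~\ref{lem:ra:generatedcat}, these maps are (jointly, hence in one variable simply) continuous in $a$.

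Concretely, I would spell out what the factorisation used in the proof of Part~\ref{lem:ra:generatedcat} gives in these two cases. For $\mu_a$: pick $\eps_1, \eps_2 > 0$ with $\eps_1 + \eps_2 < a$ and write, using Part~\ref{lem:ra:mupacommute} (and associativity),
\be
\mu_a = P_{\eps_1} \circ \mu_{a - \eps_1 - \eps_2} \circ (P_{\eps_2'} \otimes P_{\eps_2''}) \ ,
\ee
with $\eps_2' + \eps_2'' = \eps_2$; more simply, $\mu_a = P_{\eps_1} \circ \mu_{\delta} \circ (\id_A \otimes P_{a - \eps_1 - \delta})$ for fixed small $\delta, \eps_1$, so that the only $a$-dependence sits inside $P_{a - \eps_1 - \delta}$, which is continuous in $a$ by \eqref{eq:racont} with $n=1$; separate continuity of composition in $\Sc$ then gives continuity of $a \mapsto \mu_a$. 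For $\eta_a$: by Part~\ref{lem:ra:semigrp} we have $\eta_a = P_{a - \eps} \circ \eta_{\eps}$ for any fixed $\eps$ with $0 < \eps < a$, and again $a \mapsto P_{a-\eps}$ is continuous by \eqref{eq:racont} while $\eta_\eps$ is fixed, so separate continuity of composition yields continuity of $a \mapsto \eta_a$.

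There is essentially no obstacle here: the corollary is a bookkeeping consequence of results already established. The only point that warrants a sentence of care is that the continuity of $a \mapsto P_a$ in \eqref{eq:racont} is joint continuity of the multi-parameter assignment, so one must note that restricting to $n = 1$ (or fixing all but one parameter) gives continuity in the single remaining variable, which is immediate. One should also make sure that whatever fixed auxiliary parameters ($\eps$, $\delta$) are chosen lie in $\Rb_{>0}$ and are small enough that the remaining parameter stays positive in a neighbourhood of the point where continuity is being checked — a triviality, but worth stating so the factorisation is legitimate. Thus the proof is simply: "Apply Lemma~\ref{lem:ra:properties}\,\ref{lem:ra:generatedcat} with $n=1$ to the generating morphisms $\mu_a$ and $\eta_a$."
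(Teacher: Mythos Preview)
Your proposal is correct and matches the paper's approach exactly: the paper states this corollary as an immediate special case of Part~\ref{lem:ra:generatedcat} of Lemma~\ref{lem:ra:properties}, with no further argument given. Your explicit factorisations for $\mu_a$ and $\eta_a$ merely spell out what that general lemma's proof does in these two instances, so there is nothing to add.
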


Next we introduce the dual concept to a regularised algebra.
A \textsl{regularised coalgebra} in $\Sc$ is an object $A\in\Sc$ 
together with families of morphisms 
\begin{align}
	\Delta_a: A\to A^{\otimes 2}\quad\text{and}\quad\eps_a:A\to \Ib\vphantom{A^{\otimes 2}}
	\label{eq:rca:def}
\end{align}
for $a\in\Rb_{>0}$, called \textsl{coproduct} and \textsl{counit},
such that the following relations hold: for all $a,a_1,a_2,b_1,b_2>0$, such that $a_1+a_2=b_1+b_2=a$,
\begin{align}
	\left( id_{A}\otimes\eps_{a_2} \right)\circ\Delta_{a_1}
	&=\left(\eps_{b_2}\otimes id_{A}\right)\circ\Delta_{b_1}=:P'_a\ ,
	\label{eq:rca:counit}\\
	\left( id_{A}\otimes\Delta_{a_2} \right)\circ\Delta_{a_1}
	&=\left(\Delta_{b_2}\otimes id_{A}\right)\circ\Delta_{b_1}\ ,
	\label{eq:rca:coassoc}
\end{align}
$\lim_{a\to0}P'_a=\id_A$, and 
the assignments
\begin{align}
	\Rb_{\ge0}^n&\to\Sc(A^{\otimes n},A^{\otimes n})\nonumber\\
        (a_1,\dots,a_n)&\mapsto P'_{a_1}\otimes\dots\otimes P'_{a_n}
	\label{eq:rcacont}
\end{align}
are jointly continuous for every $n\ge1$.
A \textsl{morphism of regularised coalgebras}, is a morphism of the objects which
is compatible with $\Delta_a$ and $\eps_a$ in the obvious way.
Note that for a regularised coalgebra the dual statements of Lemma~\ref{lem:ra:properties} hold.
For the morphisms in \eqref{eq:rca:def} we introduce the following graphical notation:
\begin{align}
	\begin{aligned}
	\def\svgwidth{4.0cm}
\begingroup%
  \makeatletter%
  \providecommand\color[2][]{%
    \errmessage{(Inkscape) Color is used for the text in Inkscape, but the package 'color.sty' is not loaded}%
    \renewcommand\color[2][]{}%
  }%
  \providecommand\transparent[1]{%
    \errmessage{(Inkscape) Transparency is used (non-zero) for the text in Inkscape, but the package 'transparent.sty' is not loaded}%
    \renewcommand\transparent[1]{}%
  }%
  \providecommand\rotatebox[2]{#2}%
  \newcommand*\fsize{\dimexpr\f@size pt\relax}%
  \newcommand*\lineheight[1]{\fontsize{\fsize}{#1\fsize}\selectfont}%
  \ifx\svgwidth\undefined%
    \setlength{\unitlength}{156.42446855bp}%
    \ifx\svgscale\undefined%
      \relax%
    \else%
      \setlength{\unitlength}{\unitlength * \real{\svgscale}}%
    \fi%
  \else%
    \setlength{\unitlength}{\svgwidth}%
  \fi%
  \global\let\svgwidth\undefined%
  \global\let\svgscale\undefined%
  \makeatother%
  \begin{picture}(1,0.5848442)%
    \lineheight{1}%
    \setlength\tabcolsep{0pt}%
    \put(0,0){\includegraphics[width=\unitlength,page=1]{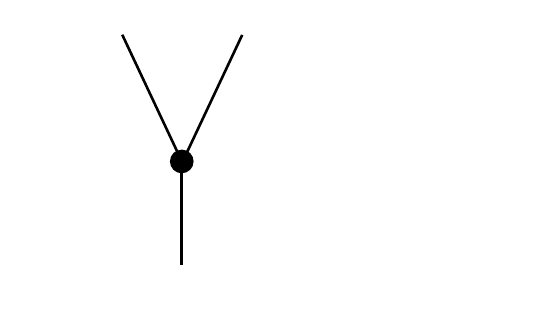}}%
    \put(0.376188,0.29017059){\color[rgb]{0,0,0}\makebox(0,0)[lt]{\lineheight{0}\smash{\begin{tabular}[t]{l}\scriptsize{$a$}\end{tabular}}}}%
    \put(-0.00477592,0.29017059){\color[rgb]{0,0,0}\makebox(0,0)[lt]{\lineheight{0}\smash{\begin{tabular}[t]{l}$\Delta_a=$\end{tabular}}}}%
    \put(0,0){\includegraphics[width=\unitlength,page=2]{rca-morph.pdf}}%
    \put(0.57002491,0.29017059){\color[rgb]{0,0,0}\makebox(0,0)[lt]{\lineheight{0}\smash{\begin{tabular}[t]{l}$\eps_a=$\end{tabular}}}}%
    \put(0.8964219,0.42321445){\color[rgb]{0,0,0}\makebox(0,0)[lt]{\lineheight{0}\smash{\begin{tabular}[t]{l}\scriptsize{$a$}\end{tabular}}}}%
    \put(0.18320425,0.54096627){\color[rgb]{0,0,0}\makebox(0,0)[lt]{\lineheight{0}\smash{\begin{tabular}[t]{l}$A$\end{tabular}}}}%
    \put(0.41039156,0.54113052){\color[rgb]{0,0,0}\makebox(0,0)[lt]{\lineheight{0}\smash{\begin{tabular}[t]{l}$A$\end{tabular}}}}%
    \put(0.29551977,0.00845618){\color[rgb]{0,0,0}\makebox(0,0)[lt]{\lineheight{0}\smash{\begin{tabular}[t]{l}$A$\end{tabular}}}}%
    \put(0.81038589,0.00862043){\color[rgb]{0,0,0}\makebox(0,0)[lt]{\lineheight{0}\smash{\begin{tabular}[t]{l}$A$\end{tabular}}}}%
    \put(0.82956447,0.47849573){\color[rgb]{0,0,0}\makebox(0,0)[lt]{\lineheight{0}\smash{\begin{tabular}[t]{l}$\Ib$\end{tabular}}}}%
  \end{picture}%
\endgroup%

	\end{aligned}\ .
	\label{eq:rca-morph-graphical}
\end{align}

A key notion in this paper is the following:

\begin{definition}\label{def:rfa}
A \textsl{regularised Frobenius algebra} (or RFA in short) in $\Sc$ is
a regularised algebra $A$, which is also a regularised coalgebra, such that
\begin{align}
	\begin{aligned}
		\def\svgwidth{8cm}
\begingroup%
  \makeatletter%
  \providecommand\color[2][]{%
    \errmessage{(Inkscape) Color is used for the text in Inkscape, but the package 'color.sty' is not loaded}%
    \renewcommand\color[2][]{}%
  }%
  \providecommand\transparent[1]{%
    \errmessage{(Inkscape) Transparency is used (non-zero) for the text in Inkscape, but the package 'transparent.sty' is not loaded}%
    \renewcommand\transparent[1]{}%
  }%
  \providecommand\rotatebox[2]{#2}%
  \ifx\svgwidth\undefined%
    \setlength{\unitlength}{257.67612305bp}%
    \ifx\svgscale\undefined%
      \relax%
    \else%
      \setlength{\unitlength}{\unitlength * \real{\svgscale}}%
    \fi%
  \else%
    \setlength{\unitlength}{\svgwidth}%
  \fi%
  \global\let\svgwidth\undefined%
  \global\let\svgscale\undefined%
  \makeatother%
  \begin{picture}(1,0.35161639)%
    \put(0,0){\includegraphics[width=\unitlength]{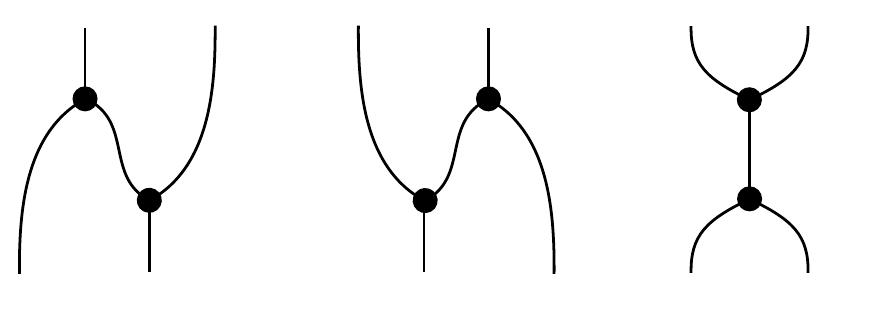}}%
    \put(0.1301166,0.23622226){\color[rgb]{0,0,0}\makebox(0,0)[lb]{\smash{\scriptsize{$a_1$}}}}%
    \put(0.30200142,0.16985526){\color[rgb]{0,0,0}\makebox(0,0)[lb]{\smash{$=$}}}%
    \put(0.49179793,0.2362223){\color[rgb]{0,0,0}\makebox(0,0)[lb]{\smash{\scriptsize{$b_1$}}}}%
    \put(0.5114265,0.12486634){\color[rgb]{0,0,0}\makebox(0,0)[lb]{\smash{\scriptsize{$b_2$}}}}%
    \put(0.67638668,0.16985531){\color[rgb]{0,0,0}\makebox(0,0)[lb]{\smash{$=$}}}%
    \put(0.76989807,0.23347447){\color[rgb]{0,0,0}\makebox(0,0)[lb]{\smash{\scriptsize{$c_1$}}}}%
    \put(0.10111068,0.12548842){\color[rgb]{0,0,0}\makebox(0,0)[lb]{\smash{\scriptsize{$a_2$}}}}%
    \put(0.77210975,0.12480148){\color[rgb]{0,0,0}\makebox(0,0)[lb]{\smash{\scriptsize{$c_2$}}}}%
    \put(0.07865979,0.33274569){\color[rgb]{0,0,0}\makebox(0,0)[lb]{\smash{$A$}}}%
    \put(0.22768408,0.33274581){\color[rgb]{0,0,0}\makebox(0,0)[lb]{\smash{$A$}}}%
    \put(0.38291771,0.33274569){\color[rgb]{0,0,0}\makebox(0,0)[lb]{\smash{$A$}}}%
    \put(0.53194199,0.33274581){\color[rgb]{0,0,0}\makebox(0,0)[lb]{\smash{$A$}}}%
    \put(0.76168776,0.33274569){\color[rgb]{0,0,0}\makebox(0,0)[lb]{\smash{$A$}}}%
    \put(0.88587467,0.33274581){\color[rgb]{0,0,0}\makebox(0,0)[lb]{\smash{$A$}}}%
    \put(-0.0020617,0.00365042){\color[rgb]{0,0,0}\makebox(0,0)[lb]{\smash{$A$}}}%
    \put(0.14696259,0.00365051){\color[rgb]{0,0,0}\makebox(0,0)[lb]{\smash{$A$}}}%
    \put(0.4512205,0.00365042){\color[rgb]{0,0,0}\makebox(0,0)[lb]{\smash{$A$}}}%
    \put(0.60024479,0.00365051){\color[rgb]{0,0,0}\makebox(0,0)[lb]{\smash{$A$}}}%
    \put(0.76168776,0.00365042){\color[rgb]{0,0,0}\makebox(0,0)[lb]{\smash{$A$}}}%
    \put(0.88587467,0.00365051){\color[rgb]{0,0,0}\makebox(0,0)[lb]{\smash{$A$}}}%
  \end{picture}%
\endgroup%

	\end{aligned}
	\label{eq:rfa:frobrel}
\end{align}
holds for all $a_1+a_2=b_1+b_2=c_1+c_2$.
A \textsl{morphism of RFAs}, 
is a morphism of regularised algebras and coalgebras.
\end{definition}

In an RFA the semigroup homomorphism $P_a$ from the algebra structure and $P_a'$ from the coalgebra structure coincide:

\begin{lemma}
For an RFA we have $P_a=P'_a$ for all $a>0$.
\end{lemma}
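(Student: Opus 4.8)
The only axiom linking the algebra and coalgebra structures of an RFA is the Frobenius relation \eqref{eq:rfa:frobrel}, so any argument must start there. The plan is to extract from \eqref{eq:rfa:frobrel} the identity
\[
	P_a \circ P'_\eps \;=\; P'_a \circ P_\eps
\]
valid for every $a>0$ and every sufficiently small $\eps>0$, and then to let $\eps\to 0$, using the limits $\lim_{\eps\to0}P_\eps=\id_A=\lim_{\eps\to0}P'_\eps$ from the definitions of regularised algebra and coalgebra.

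\textbf{Step 1: manipulating the Frobenius relation.} I will take the first equality in \eqref{eq:rfa:frobrel},
\[
	(\mu_{a_1}\otimes\id_A)\circ(\id_A\otimes\Delta_{a_2}) \;=\; \Delta_{b_1}\circ\mu_{b_2}
	\qquad (a_1+a_2=b_1+b_2),
\]
and compose it on the right with $\eta_\delta\otimes\id_A:A\to A^{\otimes 2}$ and on the left with $\id_A\otimes\eps_\delta:A^{\otimes 2}\to A$, for an auxiliary parameter $\delta>0$. On the left-hand side, $\mu_{a_1}\circ(\eta_\delta\otimes\id_A)=P_{a_1+\delta}$ by the unit axiom \eqref{eq:ra:unit}, while $(\id_A\otimes\eps_\delta)\circ\Delta_{a_2}=P'_{a_2+\delta}$ by the counit axiom \eqref{eq:rca:counit}; since the two insertions act on complementary tensor factors, the composite collapses to $P_{a_1+\delta}\circ P'_{a_2+\delta}$. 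On the right-hand side, the same two axioms give $\mu_{b_2}\circ(\eta_\delta\otimes\id_A)=P_{b_2+\delta}$ and $(\id_A\otimes\eps_\delta)\circ\Delta_{b_1}=P'_{b_1+\delta}$, yielding $P'_{b_1+\delta}\circ P_{b_2+\delta}$. Hence
\[
	P_{a_1+\delta}\circ P'_{a_2+\delta} \;=\; P'_{b_1+\delta}\circ P_{b_2+\delta}
	\qquad\text{whenever } a_1+a_2=b_1+b_2 .
\]

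\textbf{Step 2: specialisation and limit.} Fix $a>0$ and $0<\delta<a$, and choose $a_1=b_1=a-\delta$, $a_2=b_2=\delta$, which satisfies the constraint trivially; with $\eps:=2\delta$ this reads $P_a\circ P'_\eps=P'_a\circ P_\eps$ for all $0<\eps<2a$. Now let $\eps\to0$. Since composition in $\Sc$ is separately continuous, holding the left operand fixed gives $\lim_{\eps\to0}(P_a\circ P'_\eps)=P_a\circ\id_A=P_a$ and $\lim_{\eps\to0}(P'_a\circ P_\eps)=P'_a\circ\id_A=P'_a$, so $P_a=P'_a$.

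\textbf{Main obstacle.} The algebra in Step 1 is routine, so the only delicate point is the final limit: only separate, not joint, continuity of composition is available, so it is essential that in each of the two limits the factor being varied ($P'_\eps$, respectively $P_\eps$) sits on the free side while the other factor is held fixed — which is exactly the shape of the specialised identity, and is the reason for inserting the \emph{same} auxiliary parameter $\delta$ on the $\eta$- and $\eps$-side rather than two independent ones.
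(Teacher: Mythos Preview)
Your proof is correct and follows essentially the same strategy as the paper: compose the Frobenius relation with a unit on one tensor leg and a counit on the other to obtain an identity of the form $P_{\bullet}\circ P'_{\bullet}=P'_{\bullet}\circ P_{\bullet}$, then send the auxiliary parameter to zero using separate continuity of composition together with $\lim P_\eps=\id_A=\lim P'_\eps$. The only cosmetic difference is that the paper first rewrites the Frobenius relation as $(\mu_{a_2}\otimes\id_A)\circ(\id_A\otimes\Delta_{b_2})=\Delta_{a_1+a_2-b_1}\circ\mu_{b_1+b_2-a_1}$ under mild inequalities on the parameters before inserting $\eta$ and $\eps$, arriving at $P_a\circ P'_b=P'_a\circ P_b$ for all $a,b>0$, whereas you insert a common $\delta$ directly and obtain the same identity restricted to $\eps<2a$; either version suffices for the limit.
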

\begin{proof}
Let $a,b \in \Rb_{>0}$ be arbitrary. Choose $a_1,a_2,b_2,b_2\in\Rb_{>0}$ such that $a=a_1+a_2$, $b=b_1+b_2$ and $a>b_1$ and $b>a_1$ (e.g.\ $b_1 = \frac a2$, $a_1= \frac b2$).	
By relation \eqref{eq:rfa:frobrel} one has that
\begin{align}
	\left( \mu_{a_2}\otimes\id_A \right) \circ
	\left( \id_A \otimes \Delta_{b_2}\right)
	=\Delta_{a_1+a_2-b_1}\circ\mu_{b_1+b_2-a_1}
	\label{eq:frspec}
\end{align}
Composing \eqref{eq:frspec} with $\id_A\otimes\eps_{b_1}$ from the left
and with $\eta_{a_1}\otimes\id_A$ from the right yields
\begin{align}
	P_a\circ P'_b=P'_a\circ P_b.
	\label{eq:ppprime}
\end{align}
We can take the $b\to0$ limit on both sides of \eqref{eq:ppprime} and use
separate continuity of the composition in $\Sc$ to get $P_a=P'_a$.
\end{proof}

\begin{remark}\label{rem:quasi-invertible}
	Requiring that $\lim_{a\to0}P_a=\id$ does not imply that $P_a$ is mono or epi for every $a\in\Rb_{\ge0}$, 
	as the following example in $\Hilb$ illustrates.\footnote{We would like to thank Reiner Lauterbach for explaining this example to us.}
	Let $L\in\Rb_{>0}$ and $\Hc:=L^2([0,L])$. Define $P_a\in\Bc(\Hc)$ for $f\in\Hc$ to be right shift by $a$,
	\begin{align}
		\begin{aligned}
			(P_a(f))(x) =
			\begin{cases}
				0&\text{if $x< a$}\\
				f(x-a)&\text{if $x\ge a$}
			\end{cases}
		\end{aligned}\ .
		\label{eq:not-injective-nor-surjective-semigroup}
	\end{align}
	This is neither mono nor epi for any choice of $a\in\Rb_{>0}$, and for $a\ge L$ we even have $P_a=0$.
	\label{rem:not-injective-nor-surjective-semigroup}
\end{remark}

Usual (non-regularised) Frobenius algebras have an equivalent characterisation via a non-degenerate invariant pairing. The same is true in the regularised setting, as we now illustrate.
Let $A$ a regularised algebra $A$ together with a family of morphisms
$\eps_a:A\to\Ib$ for $a\in\Rb_{>0}$ such that for all $a_1+a_2 = b_1+b_2$ we have
$\eps_{a_1}\circ\mu_{a_2} = \eps_{b_1}\circ\mu_{b_2}$.
We call the pairing $\beta_a:=\eps_{a_1}\circ\mu_{a_2}$ 
\textsl{non-degenerate} if there is a family of morphisms
$\gamma_a:\Ib\to A^{\otimes 2}$ such that 
\begin{align}
	\left( \id_A\otimes \beta_{a_1} \right)\circ
	\left( \gamma_{a_2} \otimes\id_A\right)
	~=~P_a~=~
	\left( \beta_{b_1}\otimes\id_A\right)\circ
	\left(  \id_A\otimes\gamma_{b_2} \right)
	\label{eq:rfa:nondeg}
\end{align}
for all $a_1+a_2=b_1+b_2=a$.

\begin{lemma}\label{lem:copairing}
\begin{enumerate}
\item  \label{lem:copairing:1} For all $a,b>0$,
	\begin{align}
		\left(P_a\otimes\id_A\right)\circ\gamma_b
		=\gamma_{a+b} =
		\left(\id_A\otimes P_a\right)\circ\gamma_b\ ,
		\label{eq:copairing}
	\end{align}
\item \label{lem:copairing:2}
	The relation \eqref{eq:rfa:nondeg} defines $\gamma_a$ uniquely.
\item\label{lem:copairing:3}	The map $a \mapsto \gamma_a$ is continuous.
\end{enumerate}	
\end{lemma}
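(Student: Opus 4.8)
The plan is to mimic the standard Frobenius-algebra arguments, being careful to replace identities by the semigroup $P_a$ wherever a zero-area limit would otherwise be needed, and to obtain continuity from Lemma~\ref{lem:ra:properties}(4) plus the separate continuity of composition. For Part~\ref{lem:copairing:1}, I would compose the defining equation \eqref{eq:rfa:nondeg} with $P_a$ in the appropriate tensor slot and use Lemma~\ref{lem:ra:properties}(2),(3) to slide $P_a$ through $\mu$, $\eta$ and hence through $\beta$; more precisely, $\left(P_a\otimes\id_A\right)\circ\gamma_b$ should be rewritten using the right-hand snake identity in \eqref{eq:rfa:nondeg} applied to a suitable decomposition of $a+b$, together with the fact that $\beta_{a_1}\circ(\id_A\otimes P_c)=\beta_{a_1+c}$ (a consequence of \eqref{lem:ra:mupacommute}). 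The symmetric statement follows by using the left-hand snake identity instead. The point is that $P_c$ can be absorbed on either side, so both expressions equal $\gamma_{a+b}$.

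For Part~\ref{lem:copairing:2}, uniqueness, suppose $\gamma_a$ and $\gamma_a'$ both satisfy \eqref{eq:rfa:nondeg}. The usual Frobenius trick is to write $\gamma' = (\text{snake using }\gamma)\circ\gamma'$ and then collapse using the relation for $\gamma$; concretely one runs $\gamma'_a$ through the identity $\left(\id_A\otimes\beta_{b_1}\right)\circ\left(\gamma_{b_2}\otimes\id_A\right) = P_{b_1+b_2}$ tensored with an extra leg, and similarly uses the relation for $\gamma'$, to get $P_c\circ\gamma_{a} = P_c\circ\gamma'_{a}$ for suitable $c>0$ (some area is unavoidably produced in the manipulation). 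Then one takes $c\to 0$ and uses $\lim_{c\to0}P_c=\id_A$ together with separate continuity of composition to conclude $\gamma_a=\gamma'_a$. This is exactly the pattern already used in the proof of Lemma~\ref{lem:ra:properties}(1),(3) and in the proof that $P_a=P'_a$, so it should go through verbatim with the bookkeeping of parameters.

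For Part~\ref{lem:copairing:3}, continuity of $a\mapsto\gamma_a$, I would express $\gamma_a$ using \eqref{eq:copairing}: pick $\varepsilon>0$ small and write $\gamma_a = \left(P_{\varepsilon}\otimes\id_A\right)\circ\gamma_{a-\varepsilon}$ for $a>\varepsilon$; better, combine \eqref{eq:rfa:nondeg} and \eqref{eq:copairing} to present $\gamma_a$ as a composite of finitely many copies of $\mu$, $\eta$, $\Delta$, $\eps$, $\gamma_{\varepsilon}$ (at a fixed small parameter) and $P$'s whose parameters depend continuously on $a$, so that joint continuity follows from Lemma~\ref{lem:ra:properties}(4) (and its coalgebra analogue) applied to everything except the single fixed $\gamma_{\varepsilon}$, together with separate continuity of composition to attach that fixed morphism. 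Alternatively, and perhaps more cleanly, from \eqref{eq:copairing} we have $\gamma_{a} = (P_{a-\varepsilon}\otimes\id_A)\circ\gamma_\varepsilon$ on $a>\varepsilon$, and the right side is continuous in $a$ because $P_{a-\varepsilon}$ is (condition \eqref{eq:racont} with $n=1$) and composition with the fixed map $\gamma_\varepsilon$ is separately continuous; since $\varepsilon>0$ was arbitrary this gives continuity on all of $\Rb_{>0}$.

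The main obstacle I expect is the parameter bookkeeping in Parts~\ref{lem:copairing:1} and~\ref{lem:copairing:2}: one must choose the decompositions $a=a_1+a_2$ etc.\ so that every intermediate expression has strictly positive labels (no genuine zero-area morphisms are available), and so that after all the snake moves the leftover $P_c$ has a parameter $c$ that can independently be sent to $0$. This is precisely the kind of argument already rehearsed in the proof of the preceding lemma, so the difficulty is organisational rather than conceptual.
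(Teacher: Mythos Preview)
Your overall strategy and Parts~\ref{lem:copairing:2}--\ref{lem:copairing:3} are essentially what the paper does. For Part~\ref{lem:copairing:3} the paper argues exactly as in your ``alternatively'' clause: write $\gamma_a=(P_{a-\varepsilon}\otimes\id_A)\circ\gamma_\varepsilon$ using Part~\ref{lem:copairing:1} and invoke continuity of $P$ plus separate continuity of composition. For Part~\ref{lem:copairing:2} the paper is slightly slicker than your outline: once Part~\ref{lem:copairing:1} is known for \emph{both} $\gamma$ and the competitor $\Gamma$, one computes directly
\[
\Gamma_{a+b+c}=(\id_A\otimes P_{b+c})\circ\Gamma_a=(\id_A\otimes\beta_b\otimes\id_A)\circ(\Gamma_a\otimes\gamma_c)=(P_{a+b}\otimes\id_A)\circ\gamma_c=\gamma_{a+b+c},
\]
with no limit needed; your version with a residual $P_c\to\id$ also works but is a detour.

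There is, however, a genuine gap in your plan for Part~\ref{lem:copairing:1}. ``Sliding $P_a$ through $\beta$'' via $\beta_{a_1}\circ(\id\otimes P_c)=\beta_{a_1+c}$ only tells you that $(P_a\otimes\id_A)\circ\gamma_b$ satisfies the same snake relation \eqref{eq:rfa:nondeg} as $\gamma_{a+b}$; it does not produce an equality of copairings, and you cannot appeal to uniqueness here since your Part~\ref{lem:copairing:2} is meant to rest on Part~\ref{lem:copairing:1}. The paper's actual mechanism is different: write the left snake with two parameter splittings,
\[
(\id_A\otimes\beta_b)\circ(\gamma_{a+x}\otimes\id_A)=P_{a+b+x}=(\id_A\otimes\beta_{b+x})\circ(\gamma_a\otimes\id_A),
\]
then tensor with $\id_A$ on the right and postcompose with a \emph{fresh} $\gamma_c$. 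Reassociating and applying the right snake to the new pair $(\beta,\gamma_c)$ turns both sides into expressions of the form $(\id_A\otimes P_{?})\circ\gamma_{?}$, namely
\[
(\id_A\otimes P_{b+c})\circ\gamma_{a+x}=(\id_A\otimes P_{b+c+x})\circ\gamma_a,
\]
and only then does one send $b,c\to0$. The key move you are missing is this ``compose with an auxiliary $\gamma_c$ and reassociate'' step; without it your outline for Part~\ref{lem:copairing:1} does not close.
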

\begin{proof}
\textsl{Part~\ref{lem:copairing:1}:}
	From \eqref{eq:rfa:nondeg} one has that
\begin{align}
	\left( \id_A\otimes \beta_{b} \right)\circ
	\left( \gamma_{a+x} \otimes\id_A\right) 
	=P_{a+b+x}=
	\left( \id_A\otimes \beta_{b+x} \right)\circ
	\left( \gamma_{a} \otimes\id_A\right).
	\label{eq:copairing1}
\end{align}
Tensoring with $\id_A$ from the right and composing with $\gamma_c$
from the right gives
\begin{align}
	\left( \id_A\otimes P_{b+c} \right)\circ \gamma_{a+x}=
	\left( \id_A\otimes P_{b+c+x} \right)\circ \gamma_{a}.
	\label{eq:copairing2}
\end{align}
Taking the limit $b,c\to0$ gives the second equation of \eqref{eq:copairing}.
One obtains the first equation of \eqref{eq:copairing} similarly.

\noindent
\textsl{Part~\ref{lem:copairing:2}:}
If $\Gamma_a$ is a family of morphisms satisfying \eqref{eq:rfa:nondeg},
then it also satisfies \eqref{eq:copairing}. Then
\begin{align}
	\begin{aligned}
	\Gamma_{a+b+c}&=
	\left( \id_A\otimes P_{b+c} \right)\circ \Gamma_{a}=
	\left( \id_A\otimes \beta_{b}\otimes\id_A\right)\circ\left( \Gamma_{a}\otimes\gamma_{c}\right)
	=\left( P_{a+b} \otimes \id_A\right)\circ \gamma_{c}\\
	&=\gamma_{a+b+c}\ .
	\end{aligned}
	\label{eq:copairing3}
\end{align}

\noindent
\textsl{Part~\ref{lem:copairing:3}:}
Continuity of $\gamma_a$ is clear from Part~\ref{lem:copairing:1} by continuity of $P_a$ and separate continuity of the composition in $\Sc$.

\end{proof}

We can now give the alternative characterisation of an RFA.

\begin{proposition} \label{lem:rfa:altdef}
Let $A$ be a regularised algebra  
and let $\eps_a : A \to \Ib$ be a family of morphisms such that the pairing $\eps_{a_1} \circ \mu_{a_2}$ only depends on $a_1+a_2$. If $\eps_{a_1}\circ\mu_{a_2}$ 
is non-degenerate, then $A$ is a regularised Frobenius algebra with counit
$\eps_a$ and coproduct
\begin{align}
	\Delta_a:=\left( \mu_{a_1}\otimes\id_A \right)\circ
	\left( \id_A \otimes\gamma_{a_2}\right),
	\label{eq:rfa:deltaaltdef}
\end{align}
for some $a_1+a_2=a$.
\end{proposition}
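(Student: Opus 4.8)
The plan is to mimic the classical proof that a Frobenius algebra can be reconstructed from a non-degenerate invariant pairing, while carefully keeping track of the area parameters and making sure that each identity only depends on the relevant sums. First I would record the two basic tools: the uniqueness and continuity of $\gamma_a$ together with the ``sliding'' relation \eqref{eq:copairing} from Lemma~\ref{lem:copairing}, and the commutation relations for $P_a$ with $\mu_a$ and $\eta_a$ from Lemma~\ref{lem:ra:properties}, in particular Part~\ref{lem:ra:mupacommute}. Using \eqref{eq:copairing} and Lemma~\ref{lem:ra:mupacommute}, the very first thing to check is that the right-hand side of \eqref{eq:rfa:deltaaltdef} is independent of the decomposition $a_1+a_2=a$: moving a $P_\epsilon$ from the $\gamma$-leg onto the $\mu$-leg turns $(\mu_{a_1}\otimes\id)\circ(\id\otimes\gamma_{a_2})$ into $(\mu_{a_1+\epsilon}\otimes\id)\circ(\id\otimes\gamma_{a_2-\epsilon})$, so $\Delta_a$ is well-defined. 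Continuity of $a\mapsto\Delta_a$ then follows from continuity of $\mu_a$ (the Corollary after Lemma~\ref{lem:ra:properties}) and of $\gamma_a$ (Lemma~\ref{lem:copairing}, Part~\ref{lem:copairing:3}), together with separate continuity of composition.

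Next I would verify that $(A,\Delta_a,\eps_a)$ is a regularised coalgebra. For the counit axiom \eqref{eq:rca:counit} one computes $(\id_A\otimes\eps_{a_2})\circ\Delta_{a_1}=(\id_A\otimes\eps_{a_2})\circ(\mu_{a_1'}\otimes\id_A)\circ(\id_A\otimes\gamma_{a_1''})=(\beta_{a_1'+a_2}\otimes\id_A)\circ(\id_A\otimes\gamma_{a_1''})$, which by the second half of \eqref{eq:rfa:nondeg} equals $P_{a}$; the other side is symmetric via the first half of \eqref{eq:rfa:nondeg} (expressing $\Delta$ using the copairing on the other leg, which is legitimate because $\Delta_a$ is decomposition-independent). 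So $P'_a=P_a$, and the conditions $\lim_{a\to0}P'_a=\id_A$ and joint continuity of the $P'_{a_1}\otimes\cdots\otimes P'_{a_n}$ are inherited from the regularised-algebra structure. For coassociativity \eqref{eq:rca:coassoc}, I would expand both $(\id_A\otimes\Delta_{a_2})\circ\Delta_{a_1}$ and $(\Delta_{b_2}\otimes\id_A)\circ\Delta_{b_1}$ in terms of $\mu$, $\gamma$, using associativity \eqref{eq:ra:assoc}, the sliding relation \eqref{eq:copairing}, and Lemma~\ref{lem:ra:mupacommute} to reshuffle $P$'s and move parameters around; both sides should reduce to a single symmetric expression built from two $\mu$'s and two $\gamma$'s (graphically, the unique ``double tree'' connecting one input to three outputs), so they agree. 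This is the classical co-associativity computation with $\epsilon$-bookkeeping.

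Finally, the Frobenius relation \eqref{eq:rfa:frobrel}. Here I would substitute the definition \eqref{eq:rfa:deltaaltdef} of $\Delta$ into, say, $(\mu_{a_2}\otimes\id_A)\circ(\id_A\otimes\Delta_{a_1})$, obtaining an expression of the form $(\mu\otimes\id)\circ(\id\otimes\mu\otimes\id)\circ(\id\otimes\id\otimes\gamma)$, collapse the two consecutive $\mu$'s using associativity \eqref{eq:ra:assoc} into a single $\mu$ with the parameters added, and then recognise the result as $\Delta\circ\mu$ with appropriately combined parameters by reading \eqref{eq:rfa:deltaaltdef} backwards; the other equality in \eqref{eq:rfa:frobrel} is handled the same way starting from the other associated coproduct. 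Throughout, the key check is simply that every manipulation preserves the relevant \emph{sums} of area parameters, so that all three expressions in \eqref{eq:rfa:frobrel} carry the same total $a_1+a_2=b_1+b_2=c_1+c_2$; this is where decomposition-independence of $\Delta_a$, proven in the first step, does the real work.

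I expect the main obstacle to be purely organisational rather than conceptual: keeping the $\epsilon$'s and parameter decompositions consistent across the string-diagram moves so that ``only the sum matters'' is genuinely justified at each step, and making sure the various limits $b,c\to0$ used to strip off $P$-factors are applied only to one argument of a composition at a time, as required by separate (not joint) continuity in $\Sc$. There is no subtlety with non-degeneracy itself beyond invoking \eqref{eq:rfa:nondeg} and the uniqueness/continuity of $\gamma_a$ already established in Lemma~\ref{lem:copairing}.
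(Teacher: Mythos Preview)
Your proposal is correct and follows essentially the same approach as the paper: both first establish that $\Delta_a$ is independent of the decomposition via \eqref{eq:copairing} and Lemma~\ref{lem:ra:properties}\,\ref{lem:ra:mupacommute}, then verify the algebraic relations by the standard Frobenius-algebra argument with area bookkeeping, and finally deduce $P'_a=P_a$ so that the coalgebra continuity conditions are inherited from the algebra side. The paper simply compresses the middle step into the phrase ``analogous as for ordinary Frobenius algebras,'' whereas you spell out the strategy for each relation; your explicit check that $a\mapsto\Delta_a$ is continuous is harmless but not needed, since the definition of a regularised coalgebra only requires continuity of $P'_a$ and its tensor powers.
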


\begin{proof}
	By \eqref{eq:copairing} and Part~\ref{lem:ra:mupacommute} of Lemma~\ref{lem:ra:properties}, $\Delta_a$ indeed only depends on $a = a_1+a_2$.
	Checking the algebraic relations \eqref{eq:ra:unit}, \eqref{eq:ra:assoc}, 
	\eqref{eq:rca:counit}, \eqref{eq:rca:coassoc} and \eqref{eq:rfa:frobrel} of an RFA
	is analogous as for ordinary Frobenius algebras. From these follows that $P'_a=P_a$,
	so in particular 
	$\lim_{a\to0}P'_a=\id_A$ and \eqref{eq:rcacont} holds.
\end{proof}

Note that the converse of the proposition holds trivially:
if $A$ is a regularised Frobenius algebra then $\eps_a$ is 
non-degenerate in the above sense with $\gamma_a=\Delta_{a_1}\circ\eta_{a_2}$.

\medskip

Let the category $\Sc$ be in addition symmetric with braiding $\sigma$.
Then we call a regularised algebra $A\in\Sc$
\textsl{commutative} if $\mu_a\circ \sigma=\mu_a$ for all $a\in\Rb_{>0}$. 
The \textsl{centre} of a regularised algebra $A$ is 
an object $B\in\Sc$ and a morphism $i_B:B\to A$ such that
	\begin{align}
\mu_a\circ \sigma \circ \left( i_B \otimes \id_A \right)=
\mu_a\circ\left( i_B \otimes \id_A \right)
		\label{eq:central}
	\end{align} 
	for all $a\in\Rb_{>0}$,
	which is universal in the following sense.
	If there is an object $C$ and a morphism $f:C\to A$ satisfying the above
	equation then there is a unique morphism $\tilde{f}:C\to B$ such that
	the diagram
	\begin{equation}
	\begin{tikzcd}
		B \ar{r}{i_B} &A\\
		\ar{u}{\tilde{f}}C \ar{ru}[swap]{f} &
	\end{tikzcd}
		\label{eq:central2}
	\end{equation}
	commutes. This implies in particular that $i_B$ is mono
	\cite{Davydov:2010fc}.
If the centre exists then one has the induced morphism
$\tilde{P}_{a}\in\Sc(B,B)$ such that $P_a\circ i_B=i_B\circ\tilde{P}_a$.

\begin{lemma}
	If the centre of a regularised algebra exists,
	$\lim_{a\to0}\tilde{P}_{a}$ exists
	and the maps
	\begin{align}
		(a_1,\dots,a_n)\mapsto \tilde{P}_{a_1}
		\otimes\dots\otimes \tilde{P}_{a_n}
		\label{eq:centerind}
	\end{align}
	are jointly continuous for every $n\ge 1$, 
	then it is a commutative regularised algebra.
	\label{lem:ra:center}
\end{lemma}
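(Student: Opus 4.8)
The plan is to transfer the regularised-algebra axioms from $A$ to the centre $B$ along the mono $i_B$, using the universal property to produce the structure maps and continuity by hypothesis. First I would define the product and unit on $B$. Since $i_B$ is mono, it suffices to exhibit maps $\mu^B_a : B^{\otimes 2} \to B$ and $\eta^B_a : \Ib \to B$ with $i_B \circ \mu^B_a = \mu_a \circ (i_B \otimes i_B)$ and $i_B \circ \eta^B_a = \eta_a$. For $\eta^B_a$ one checks that $\eta_a : \Ib \to A$ satisfies the centrality condition \eqref{eq:central} — this follows from commutativity of $\mu_a$ on the image of $\eta_a$, which is immediate from the unit axiom \eqref{eq:ra:unit} rewritten via $\sigma$ — and then the universal property \eqref{eq:central2} gives the unique $\eta^B_a$. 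For $\mu^B_a$ one argues that $\mu_a \circ (i_B \otimes i_B) : B^{\otimes 2} \to A$ is central: here one uses that $i_B$ lands in the centre together with associativity \eqref{eq:ra:assoc} to slide a braiding past, exactly as in the classical Frobenius/centre arguments.

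Next I would verify the two algebraic relations \eqref{eq:ra:unit} and \eqref{eq:ra:assoc} for $(\mu^B_a, \eta^B_a)$. Because $i_B$ is mono and $i_B \otimes i_B \otimes i_B$, $i_B \otimes i_B$ are the composites through which everything factors, these relations on $B$ follow by composing with $i_B$ and reducing to the already-known relations on $A$; no new continuity input is needed at this stage. Unwinding the definition one finds that the resulting idempotent-family $P^B_a$ on $B$ satisfies $i_B \circ P^B_a = P_a \circ i_B$, so $P^B_a = \tilde P_a$, the induced map mentioned just before the statement. The remaining two requirements in Definition~\ref{def:reg-alg}\eqref{def:reg-alg:2} — namely $\lim_{a\to 0}\tilde P_a = \id_B$ and joint continuity of $(a_1,\dots,a_n) \mapsto \tilde P_{a_1} \otimes \dots \otimes \tilde P_{a_n}$ — are precisely the two hypotheses of the lemma, so there is nothing to prove there. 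Commutativity of the product on $B$, $\mu^B_a \circ \sigma = \mu^B_a$, follows by composing with the mono $i_B$ and using that $i_B$ factors through the centre, i.e.\ \eqref{eq:central} applied to both legs.

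The one genuinely delicate point is that $\mu_a \circ (i_B \otimes i_B)$ really is central, so that the universal property applies to produce $\mu^B_a$. In the non-regularised case this is standard, but here one must be careful that the centrality equation \eqref{eq:central} holds for \emph{all} area parameters simultaneously and that sliding a braiding through a product changes the decomposition of areas in a way that is harmless because everything depends only on sums; I expect this bookkeeping, rather than any analytic subtlety, to be the main obstacle. I would also double-check that the $\eta^B_a$ obtained from the universal property is actually the \emph{unique} family satisfying \eqref{eq:ra:unit} on $B$ — this is automatic from Lemma~\ref{lem:ra:properties}\eqref{lem:ra:uniqueunit} once one knows $(\mu^B_a,\eta^B_a)$ forms a regularised algebra, but it is worth recording since the universal property only pins down $\eta^B_a$ as a map making \eqref{eq:central2} commute, not a priori as the unit. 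With these pieces in place, $(B, i_B, \mu^B_a, \eta^B_a)$ is a regularised algebra which is commutative, completing the proof.
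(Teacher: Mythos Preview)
Your approach is essentially the same as the paper's, which is equally terse: produce $\tilde\mu_a$ and $\tilde\eta_a$ by the universal property, check the algebraic axioms by composing with the mono $i_B$, and invoke the hypotheses for the continuity part. One small slip: you write that ``$\lim_{a\to 0}\tilde P_a = \id_B$ \dots\ are precisely the two hypotheses of the lemma,'' but the lemma only assumes the limit \emph{exists}, not that it equals $\id_B$. The paper closes this gap by taking $a\to 0$ in $i_B\circ\tilde P_a = P_a\circ i_B$ (separate continuity of composition plus $P_a\to\id_A$ gives $i_B\circ\lim_{a\to 0}\tilde P_a = i_B$, and then $i_B$ mono forces the limit to be $\id_B$). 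With that one-line fix your argument is complete.
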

\begin{proof}
	Similarly as one gets $\tilde{P}_{a}$, one has 
	induced multiplication and unit $\tilde{\mu}$ and $\tilde{\eta}$.
	Checking associativity and unitality
	is now straightforward. 
	The limit $\lim_{a\to0}\tilde{P}_{a}=\id_{B}$ follows from $P_a\circ i_B=i_B\circ\tilde{P}_a$ by separate continuity of composition in $\Sc$.
\end{proof}

A regularised algebra is \textsl{separable} if there exists a family
of morphisms $e_a\in\Sc(\Ib,A\otimes A)$ for every $a\in\Rb_{>0}$ such that
\begin{enumerate}
	\item $(\mu_{a_1}\otimes\id_A)\circ(\id_A\otimes e_{a_2})=(\id_A\otimes\mu_{b_1})\circ(e_{b_2}\otimes\id_A)$
		and
	\item $\mu_{a_1}\circ e_{a_2}=\eta_a$.
\end{enumerate}
The $e_a$ are called
\textsl{separability idempotents}.
A regularised algebra $A$ is \textsl{strongly separable} if it is separable and
furthermore
\begin{enumerate}[resume]
	\item $\sigma_{A,A}\circ e_a=e_a$.
\end{enumerate}
These notions are direct generalisations of separability and strong separability for algebras, see e.g.\ \cite{Kanzaki:1964ss,Lauda:2007oc}. 

For an RFA $A$,
we call the family of morphisms 
$\tau_{a}:=\mu_{a_1}\circ\Delta_{a_2}\circ\eta_{a_3}$ 
for $a_1,a_2,a_3\in\Rb_{>0}$ with $a=a_1+a_2+a_3$ the 
\textsl{window element} of $A$, cf.~\cite[Def.\,2.12]{Lauda:2007oc}. 
We call the window element \textsl{invertible} if there exists
a family of morphisms $z_a\in\Sc(\Ib,A)$ for $a\in\Rb_{>0}$ 
(the \textsl{inverse}) such that
$\mu_{a_1}\circ(\tau_{a_2}\otimes z_{a_3})=\eta_{a_1+a_2+a_3}
= \mu_{a_1}\circ(z_{a_3}\otimes \tau_{a_3})$.
From a direct computation one can verify that if there exists another family
of morphisms $z'_a$ which satisfies the above equation then $z'_a=z_a$ for every 
$a\in\Rb_{>0}$, that is the inverse of the window element is unique.
In the following we write $\tau^{-1}_a$ for the inverse of $\tau_a$.
It is easy to check that the window element and its inverse satisfy 
\eqref{eq:central}, that is, they factorise through the centre if the centre exists.

An RFA is \textsl{symmetric} if $\eps_{a_1}\circ\mu_{a_2}\circ \sigma=\eps_{b_1}\circ\mu_{b_2}$.
The following is a direct translation of \cite[Thm.\,2.14]{Lauda:2007oc} 
for strong separability for
symmetric Frobenius algebras. 

\begin{proposition}\label{prop:symm-rfa-strongly-separable}
A symmetric RFA is  strongly separable
if and only if its window element is invertible.
\end{proposition}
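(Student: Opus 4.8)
The plan is to mimic the classical argument of \cite[Thm.\,2.14]{Lauda:2007oc}, translating each step into the area-dependent setting by sprinkling in area parameters and using Lemma~\ref{lem:ra:properties} and Lemma~\ref{lem:copairing} to argue that everything depends only on the relevant sums. First I would fix notation: for a symmetric RFA the counit $\eps_a$ gives a non-degenerate pairing $\beta_a = \eps_{a_1}\circ\mu_{a_2}$ with copairing $\gamma_a = \Delta_{a_1}\circ\eta_{a_2}$, the window element is $\tau_a = \mu_{a_1}\circ\Delta_{a_2}\circ\eta_{a_3}$, and its candidate inverse $\tau_a^{-1}$ (when it exists) is central and unique. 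The two directions of the equivalence are then proved separately.

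For the direction ``window element invertible $\Rightarrow$ strongly separable'', I would define the candidate separability idempotent by
\be
	e_a := (\mu_{b_1} \otimes \id_A) \circ (\id_A \otimes \Delta_{b_2}) \circ (\tau^{-1}_{b_3} \otimes \id_A) \circ \gamma_{b_4}
\ee
for $b_1+b_2+b_3+b_4 = a$ (or the more symmetric expression obtained by inserting $\tau^{-1}$ between the two factors of $\gamma_a$), and check the three axioms: independence of the decomposition follows from Lemma~\ref{lem:copairing}(1) and Part~\ref{lem:ra:mupacommute} of Lemma~\ref{lem:ra:properties}; the bimodule-type relation $(\mu_{a_1}\otimes\id)\circ(\id\otimes e_{a_2}) = (\id\otimes\mu_{b_1})\circ(e_{b_2}\otimes\id)$ follows from the Frobenius relation \eqref{eq:rfa:frobrel} and associativity; the relation $\mu_{a_1}\circ e_{a_2} = \eta_a$ follows because $\mu$ applied to the copairing yields the window element, which is then cancelled by $\tau^{-1}$, all up to the appropriate $P_a$ absorbed into $\eta_a$; and symmetry $\sigma_{A,A}\circ e_a = e_a$ follows from the symmetry of the RFA together with centrality of $\tau^{-1}_a$, exactly as in the non-regularised case. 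Each manipulation is a string-diagram identity that already holds for ordinary Frobenius algebras, with area bookkeeping handled by the two lemmas.

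For the converse, ``strongly separable $\Rightarrow$ window element invertible'', I would use the separability idempotents $e_a$ to build the candidate inverse, e.g.\ $z_a := (\eps_{a_1}\otimes\id_A)\circ e_{a_2}$ (thinking of it as ``$\eps$ applied to one leg of $e$''), and verify $\mu_{a_1}\circ(\tau_{a_2}\otimes z_{a_3}) = \eta_{a_1+a_2+a_3}$ by a diagram chase: expand $\tau$ as $\mu\circ\Delta\circ\eta$, use the Frobenius relation to slide the coproduct past the separability idempotent, use the defining relation of $e_a$ to collapse it via $\mu$, and finally use strong separability (the $\sigma$-invariance of $e_a$) together with symmetry of $\eps_a$ to identify the result with $\eta_a$ up to a factor of $P_a$. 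The other identity $\mu_{a_1}\circ(z_{a_3}\otimes\tau_{a_2}) = \eta$ follows symmetrically. Throughout, the fact that the expressions depend only on the sums of the area labels is guaranteed by Lemma~\ref{lem:ra:properties} and Lemma~\ref{lem:copairing}, and continuity is never needed beyond what those lemmas already provide.

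The main obstacle I anticipate is not conceptual but bookkeeping: in the regularised setting one never has a genuine unit, so every ``cancellation'' that in the classical proof uses $\mu\circ(\eta\otimes\id) = \id$ instead produces a factor $P_a$, and one must check that these $P_a$'s always land on a $\mu_a$, $\eta_a$, $\Delta_a$, $\eps_a$, or $\gamma_a$ so that they can be reabsorbed (using $P_{a_1}\circ\mu_{a_2} = \mu_{a_1+a_2}$, $P_{a_1}\circ\eta_{a_2} = \eta_{a_1+a_2}$, and their duals and the copairing analogue \eqref{eq:copairing}), and that after doing so both sides carry the same total area. Making sure the area labels on the two sides of each claimed identity actually match — rather than differing by some fixed shift — is the part that requires genuine care, but it is exactly the kind of routine verification that the earlier lemmas were set up to make mechanical, so I would simply remark that the computation is analogous to \cite[Thm.\,2.14]{Lauda:2007oc} and carry out one representative diagram chase in detail.
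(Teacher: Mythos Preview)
Your approach matches the paper's, whose entire proof consists of the two formulas $e_a := \Delta_{a_1} \circ \tau^{-1}_{a_2}$ and $\tau^{-1}_a := (\eps_{a_1} \otimes \id_A) \circ e_{a_2}$ with all verifications left implicit; your converse-direction candidate $z_a$ is literally the paper's. One caveat: your displayed formula for $e_a$ does not type-check, since $\tau^{-1}_{b_3} : \Ib \to A$ makes the composite $(\tau^{-1}_{b_3} \otimes \id_A) \circ \gamma_{b_4}$ ill-formed (the target of $\gamma_{b_4}$ is $A\otimes A$, not $A$). Your parenthetical alternative --- replacing $\eta$ by $\tau^{-1}$ in $\gamma_a = \Delta_{a_1}\circ\eta_{a_2}$ --- is exactly the paper's $\Delta_{a_1} \circ \tau^{-1}_{a_2}$, and with that choice the three separability axioms fall out in one line each as you outline: $\mu_{a_1}\circ e_{a_2}$ is the window element acting on its inverse, hence $\eta_a$; the bimodule relation is the Frobenius relation; and $\sigma$-invariance is symmetry of the RFA together with centrality of $\tau^{-1}$.
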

\begin{proof}
	Set $e_a:=\Delta_{a_1}\circ\tau^{-1}_{a_2}$. 
	Conversely set $\tau^{-1}_a:=(\eps_{a_1}\otimes\id_A)\circ e_{a_2}$.
\end{proof}

\subsection{RFAs in the category of Hilbert spaces}

Let $\Hilb$ denote the symmetric monoidal category of
Hilbert spaces and bounded linear maps with the strong operator topology
on the hom-sets and the Hilbert space tensor product.
We write $\Bc(\Hc,\Kc):=\Hilb(\Hc,\Kc)$ and $\Bc(\Hc):=\Hilb(\Hc,\Hc)$
for the hom-sets.
\begin{remark}\label{rem:hilbcont}
\begin{enumerate}
\item
	In $\Hilb$ the composition of morphisms is separately continuous, but not jointly continuous.
	The tensor product is not separately continuous, in fact
	even tensoring with the identity morphism of an infinite-dimensional Hilbert space
	is not continuous.
	Furthermore, taking adjoints is not continuous in $\Hilb$.
	For more details see \cite[Prob.\,211]{Halmos:2012hpb} and 
	\cite[Sec.\,2.6]{Kadison:1983oa1}.
\item
	Instead of the strong operator topology one could use the so called
	ultrastrong-$*$ operator topology in which taking adjoint
	is continuous and tensoring is separately continuous
	\cite[Prop.\,I.8.6.4]{Blackadar:2006op}.\footnote{We thank Yuki Arano for explaining this to us.}
	However in this topology composition of morphisms is still
	not jointly continuous \cite[Prop.\,46.1-2]{Reinhard:2015ph}.
\end{enumerate}
\end{remark}

We give the following technical lemma which will be useful later.
\begin{lemma}
	Let $f:X\to X'$ and $g:Y\to Y'$ be morphisms in $\Hilb$, 
	both mono (resp.\ epi).
	Then $f\otimes g:X\otimes Y\to X'\otimes Y'$ is mono (resp.\ epi).
	\label{lem:monoepi}
\end{lemma}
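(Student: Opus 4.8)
The plan is to reduce the two-variable statement to a one-variable statement by the usual factorisation $f \otimes g = (f \otimes \id_{Y'}) \circ (\id_X \otimes g)$, so that it suffices to show: if $f : X \to X'$ is mono (resp.\ epi) in $\Hilb$, then $f \otimes \id_Z : X \otimes Z \to X' \otimes Z$ is mono (resp.\ epi) for any Hilbert space $Z$; and similarly $\id_Z \otimes g$ is mono (resp.\ epi) when $g$ is. Since a composite of monos is mono and a composite of epis is epi, the lemma follows. Note that in $\Hilb$ a bounded linear map is mono iff it is injective and epi iff it has dense image (because $\Hilb$ has all these as the categorical notions: a map with dense image cannot be post-composed with two distinct maps agreeing on the image by continuity, and conversely a non-dense image admits the orthogonal projection onto its closure versus zero as a counterexample). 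So the task becomes the purely Hilbert-space-theoretic claim that injectivity and dense-image are preserved under $-\otimes\id_Z$.

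For the injective (mono) case: if $f$ is injective, pick an orthonormal basis $(e_i)_{i\in I}$ of $Z$; then every $\xi \in X\otimes Z$ has a unique expansion $\xi = \sum_i x_i \otimes e_i$ with $x_i\in X$ and $\sum_i\|x_i\|^2 < \infty$, and $(f\otimes\id_Z)\xi = \sum_i (f x_i)\otimes e_i$. If this is zero then $f x_i = 0$ for all $i$ by uniqueness of the expansion, hence $x_i = 0$ for all $i$ since $f$ is injective, hence $\xi = 0$. For the dense-image (epi) case: the algebraic tensor product $X' \odot Z$ is dense in $X'\otimes Z$, so it is enough to approximate elementary tensors $x'\otimes z$; since $f$ has dense image there is a sequence $x_n \in X$ with $f x_n \to x'$, and then $(f\otimes \id_Z)(x_n\otimes z) = (f x_n)\otimes z \to x'\otimes z$ because $\|(f x_n - x')\otimes z\| = \|f x_n - x'\|\,\|z\| \to 0$. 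Thus $\im(f\otimes\id_Z)$ is dense. The arguments for $\id_Z\otimes g$ are identical with the roles of the two factors swapped.

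The main obstacle — and the reason a separate lemma is warranted — is precisely the warning in Remark~\ref{rem:hilbcont} that $-\otimes\id_Z$ is \emph{not} continuous when $Z$ is infinite-dimensional, so one cannot prove density of the image by a naive continuity/limit argument applied to the closure of $\im f$; the fix is to work at the level of the dense algebraic tensor product $X'\odot Z$ and approximate elementary tensors one factor at a time, where continuity in the \emph{single} remaining variable is all that is needed (and that single-variable continuity is genuine, being just $\|(fx_n - x')\otimes z\| = \|fx_n - x'\|\|z\|$). For the mono part the subtlety is milder but still present: one should not argue by "right-cancellation of a closed embedding" but instead use the concrete orthonormal-basis expansion in $X\otimes Z$, which makes injectivity of $f\otimes\id_Z$ immediate from injectivity of $f$ componentwise.
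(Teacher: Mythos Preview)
Your argument is correct. The epi case is essentially the same as the paper's: both use that the algebraic tensor product of dense subspaces is dense in the Hilbert space tensor product, and that $\im(f)\odot\im(g)\subseteq\im(f\otimes g)$ (you do this one factor at a time via the factorisation $f\otimes g=(f\otimes\id)\circ(\id\otimes g)$, the paper does both factors at once, but the content is identical).

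The mono case is where the approaches genuinely differ. You give a direct coordinate computation using an orthonormal basis of the auxiliary factor $Z$: expand $\xi=\sum_i x_i\otimes e_i$, apply $f\otimes\id_Z$ termwise, and conclude from injectivity of $f$. The paper instead exploits the $\dagger$-structure of $\Hilb$: a bounded map is mono iff its adjoint is epi, and $(f\otimes g)^\dagger=f^\dagger\otimes g^\dagger$, so the mono case reduces immediately to the epi case already established. The paper's trick is shorter and avoids any coordinate manipulation; your argument is more elementary in that it does not appeal to adjoints, and would survive in settings without a convenient $\dagger$-structure. Both are perfectly valid here.
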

\begin{proof}
	If $f$ and $g$ are both epi, then their image is dense. 
	Then the algebraic tensor product of $\im(f)$ and $\im(g)$ is dense
	in $X\otimes Y$ and it is contained in $\im(f\otimes g)$, which is hence dense. 
	This means that $f\otimes g$ is epi.
	
	If $f$ and $g$ are both mono, then $f^{\dagger}$ and $g^{\dagger}$ are both epi.
	But then $f^{\dagger}\otimes g^{\dagger}=(f\otimes g)^{\dagger}$ is epi
	and hence $f\otimes g$ is mono.	
\end{proof} 

The next lemma shows in particular that an RFA in $\Hilb$ has a Hilbert basis with at most countably many elements.

\begin{lemma} \label{lem:patrclass}
	Let $A\in\Hilb$ be an RFA. 
	\begin{enumerate}
		\item 
		The Hilbert space underlying $A$ is separable.
		\label{lem:patrclass:1}
		\item For all $a\in\Rb_{>0}$, $P_a$ is a trace class operator (and hence compact).\label{lem:patrclass:2}
	\end{enumerate}

\end{lemma}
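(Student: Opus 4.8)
The plan is to use the semigroup property $P_a = P_{a/2} \circ P_{a/2}$ together with the fact that $\lim_{b\to 0} P_b = \id_A$ in the strong operator topology, and deduce compactness first, then trace-class. Both parts will be proved simultaneously, since separability of $A$ will follow as soon as we know $P_a$ is compact for some (hence all) $a > 0$.

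First I would prove Part~\ref{lem:patrclass:2}. The key structural input is the Frobenius relation, which lets us write $P_a$ in a ``factorised'' form. Concretely, using the copairing $\gamma_a = \Delta_{a_1}\circ\eta_{a_2}$ and the pairing $\beta_a = \eps_{a_1}\circ\mu_{a_2}$, equation~\eqref{eq:rfa:nondeg} gives $P_a = (\id_A \otimes \beta_{a_1})\circ(\gamma_{a_2}\otimes \id_A)$. More usefully, I would split $P_a = P_{a_1}\circ P_{a_2}\circ P_{a_3}$ for small $a_i$ and examine $P_{a_2}$ sandwiched between the maps coming from $\gamma$ and $\beta$; the point is that $\gamma_{a}(1) \in A\otimes A$ is a fixed vector, and pairing against it factors $P_a$ through a Hilbert--Schmidt-type operator. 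Precisely: define $r_a : A \to A\otimes A$, $r_a := \gamma_a \otimes \id_A$ evaluated appropriately, and $l_a : A\otimes A \to A$ via $\beta_a$; then $P_{2a}$ is of the form $l_a \circ (\text{rank considerations})\circ r_a$. The cleanest route: $\gamma_a : \Ib \to A\otimes A$ picks out an element $\omega_a \in A\otimes A$, and the operator $x \mapsto (\id_A\otimes\beta_{a})( \omega_a \otimes x)$ has the form $\sum_i \langle \xi_i, x\rangle \zeta_i$ — but since $\omega_a$ need not have finite rank, one instead observes that $\omega_a \in A \otimes A$ (Hilbert space tensor product) is always a Hilbert--Schmidt element, so this operator is automatically Hilbert--Schmidt, hence compact. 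Then $P_{2a} = P_a \circ P_a$ is a composition of two Hilbert--Schmidt operators (after also using that $P_a$ itself, via the dual relation with $\gamma$, is Hilbert--Schmidt), hence trace class. Since $2a$ was arbitrary, $P_a$ is trace class for all $a>0$.

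For Part~\ref{lem:patrclass:1}, once $P_a$ is compact for all $a > 0$, I would argue as follows. A compact operator has separable range. Taking a sequence $a_n \to 0$, the ranges $\overline{\im P_{a_n}}$ are separable closed subspaces; their closed linear span $V$ is therefore separable. But $\lim_{n} P_{a_n} x = x$ for every $x \in A$ in the strong operator topology, so every $x$ is a limit of elements of $V$, i.e.\ $V = A$. Hence $A$ is separable. (Alternatively: a single trace-class, hence compact, self-adjoint-modulo-nothing operator $P_a$ with the property that $P_b \to \id$ already forces separability, since $\ker P_a$ must be ``small'' in a suitable sense across $a$ — but the span-of-ranges argument is the most direct.)

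The main obstacle I anticipate is making rigorous the claim that the copairing $\gamma_a(1) \in A\otimes A$ is a Hilbert--Schmidt element and that the associated ``partial pairing'' operators are genuinely Hilbert--Schmidt, i.e.\ carefully identifying $A \otimes A$ (completed Hilbert-space tensor product) with the Hilbert--Schmidt operators $\Bc(A^*, A) \cong \Bc(A,A)$ and checking that the operator $P_a$ built from $\gamma$ and $\beta$ corresponds, under this identification, to the operator given by the vector $\gamma_a(1)$. One must be careful because the tensor product in $\Hilb$ is not continuous (Remark~\ref{rem:hilbcont}), so one cannot freely manipulate $\gamma_a$ as a limit; instead one works with fixed $a$ throughout and only uses separate continuity of composition at the very end when passing to the $a\to 0$ limit. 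Everything else — the semigroup identity, Lemma~\ref{lem:copairing}, the existence of $\gamma_a$ — is already available from the material above.
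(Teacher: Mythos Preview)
Your proposal is correct, and the route differs meaningfully from the paper's.

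For Part~\ref{lem:patrclass:2}, the paper computes directly that
\[
\sum_{j}\langle\phi_j,P_a\phi_j\rangle \;=\; \beta_{a_1}\circ\sigma_{A,A}\circ\gamma_{a_2}(1)
\]
is an absolutely convergent sum (equal to a fixed scalar) for \emph{every} orthonormal basis $\{\phi_j\}$, and then invokes the characterisation that an operator is trace class iff this diagonal sum converges absolutely in every basis. Your argument instead uses the isometric identification of the Hilbert tensor product $A\otimes A$ with the Hilbert--Schmidt class $HS(A)$: since $\gamma_{a_2}(1)\in A\otimes A$, and since $\beta_{a_1}$ contributes only a bounded linear operator when unpacked via Riesz, the zig-zag relation \eqref{eq:rfa:nondeg} exhibits $P_a$ as Hilbert--Schmidt; then $P_{2a}=P_a\circ P_a$ is a product of two Hilbert--Schmidt operators and hence trace class. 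The paper's computation has the advantage of yielding the explicit trace formula $\tr(P_a)=\beta_{a_1}\circ\sigma\circ\gamma_{a_2}(1)$, which is used implicitly later; your approach is conceptually cleaner and avoids the somewhat delicate basis-independent characterisation of trace class. The obstacle you flag --- that one must check the map $\omega\mapsto\big[x\mapsto(\beta_{a_1}\otimes\id_A)(x\otimes\omega)\big]$ is bounded from $A\otimes A$ into $HS(A)$ --- is real but routine: $\beta_{a_1}(-\otimes u)$ corresponds via Riesz to a bounded (anti)linear map $u\mapsto\xi_u$, and composing the standard isometry $\bar A\otimes A\cong HS(A)$ with $\xi_{(-)}\otimes\id_A$ gives the required bound.

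For Part~\ref{lem:patrclass:1}, the paper argues directly from the fact that $\gamma_{1/n}(1)\in A\otimes A$ has at most countably many nonzero coefficients in any orthonormal expansion, so each $P_{1/n}$ has range in a fixed separable subspace; then $P_{1/n}\to\id_A$ strongly forces $A$ to be separable. Your argument deduces separability \emph{a posteriori} from compactness of $P_a$ (compact operators have separable range, and the strong limit $P_{a_n}\to\id_A$ does the rest). Both are valid; the paper's order is logically independent of Part~\ref{lem:patrclass:2}, while yours reverses the dependency.
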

\begin{proof}
	\textsl{Part~\ref{lem:patrclass:1}:}
	Let $\setc*{\phi_j}{j\in I}$ be a complete set of orthonormal vectors in $A$
and
	write $\gamma_{a_2}(1)=\sum_{k,l\in I}\phi_k\otimes\phi_l\,\gamma_{a_2}^{kl}$.
	By \cite[Cor.\,5.28]{Kubrusly:2001ot}, 
	independently of the countability of the indexing set $I$,
	there are at most countably many non-zero terms in the above sum. 
Thus for a given $a_2$ there is a countable set of pairs $(k,l) \in I \times I$ such that $\gamma_{a_2}^{kl}\neq0$. Define $I(a_2)\subseteq I$ to be the countable set of all elements of $I$ that appear in such a pair.
	Let
	\begin{align}
		J:=\bigcup_{n\in\Zb_{>0}}I(1/n)\subseteq I\quad\text{ and }\quad A_J:=\overline{\mathrm{span}\setc*{ \phi_j}{j\in J }}\subseteq A\ .
		\label{eq:union-of-index-sets}
	\end{align}
	Note that $J$ is countable and $A_J$ is separable.
	By \eqref{eq:rfa:nondeg}, for every $v\in A$ and $n\in\Zb_{>0}$ we have that
	\begin{align}
		P_{1/n}(v)\in A_J\quad\text{ and }\quad \lim_{n\to \infty}P_{1/n}(v)=v\ ,
		\label{eq:P-1-over-n-v}
	\end{align}
	since $\lim_{n\to \infty}P_{1/n}=\id_A$ in the strong operator topology.
	Since $A_J$ is closed, $v$ is an element of $A_J$. 
	We have shown that $A_J=A$, and hence that $A$ is separable.

\medskip

\noindent
	\textsl{Part~\ref{lem:patrclass:2}:}
	First let us compute the following expression for some $a_1,a_2\in\Rb_{>0}$:
	\begin{align}
		\beta_{a_1}\circ\sigma\circ\gamma_{a_2}(1)
		=\sum_{j,k\in I}\beta_{a_1}(\phi_j\otimes\phi_k)\gamma_{a_2}^{kj}\ .
		\label{eq:beta-sigma-gamma}
	\end{align}
	This is an absolutely convergent sum, since the lhs is a composition of bounded linear maps.
	We can rewrite this expression using \eqref{eq:rfa:nondeg} to get
	\begin{align}
		\begin{aligned}
		\beta_{a_1}\circ\sigma\circ\gamma_{a_2}(1)
		=&\sum_{j,k,l\in I}\beta_{a_1}(\phi_j\otimes\phi_k)\gamma_{a_2}^{kl}
		\langle\phi_j|\phi_l\rangle\\
		=&\sum_{j,k,l\in I}\langle\phi_j|(\beta_{a_1}\otimes\id_A)\phi_j\otimes\phi_k\otimes\phi_l
		\gamma_{a_2}^{kl}\rangle\rangle\\
		=&\sum_{j\in I}\langle\phi_j|(\beta_{a_1}\otimes\id_A)\phi_j\otimes\sum_{k,l\in I}\phi_k\otimes\phi_l
		\gamma_{a_2}^{kl}\rangle\rangle\\
		=&\sum_{j\in I}\langle\phi_j|(\beta_{a_1}\otimes\id_A)
		\circ(\id_A\otimes\gamma_{a_2}(1))\phi_j\rangle\\
		=&\sum_{j\in I}\langle\phi_j|P_a\phi_j\rangle\ ,
		\end{aligned}
		\label{eq:eq:beta-sigma-gamma-tr-pa}
	\end{align}
	which is again an absolutely convergent sum.
	By \cite[Ex.\,18.2]{Conway:2000ot} $P_a$ is a trace class operator
	if and only if $\sum_{j\in I}\langle\phi_j|P_a\phi_j\rangle$ is absolutely convergent
	for every choice of orthonormal basis $\{ \phi_j \}$,
	which we just have shown. In this case we have that
	\begin{align}
	\tr(P_a)=\sum_{j\in I}\langle\phi_j|P_a\phi_j\rangle\ .
		\label{eq:tr-pa}
	\end{align}
\end{proof}

Let $A\in\Hilb$ be an RFA.
By 
the Part~\ref{lem:patrclass:2} of Lemma~\ref{lem:patrclass} and
\cite[Thm.\,II.4.29]{Engel:1999sg}, 
$a\mapsto P_a$ (for $a>0$) is norm continuous.
The following corollary shows that if we had defined $\Hilb$ to have 
the norm operator topology on hom-sets all examples of
RFAs in $\Hilb$ 
with self adjoint $P_a$
would be finite dimensional.

\begin{corollary}\label{cor:finite-in-norm}
	Let $A\in\Hilb$ be an RFA such that 
	$\lim_{a\to0}P_a=\id_A$ in the norm topology
	on $\Bc(A)$. Then $A$ is finite dimensional.
	\label{cor:normcontisbad}
\end{corollary}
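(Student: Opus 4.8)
The plan is to combine the fact that $a\mapsto P_a$ is norm continuous on $(0,\infty)$ (which follows from Part~\ref{lem:patrclass:2} of Lemma~\ref{lem:patrclass} together with \cite[Thm.\,II.4.29]{Engel:1999sg}, as noted just before the corollary) with the extra hypothesis $\lim_{a\to0}P_a=\id_A$ in norm. Together these say that $\{P_a\}_{a\ge 0}$ is a norm-continuous one-parameter semigroup on $A$, and by the semigroup property $P_a\circ P_b=P_{a+b}$ from Part~\ref{lem:ra:semigrp} of Lemma~\ref{lem:ra:properties}. A norm-continuous semigroup has a \emph{bounded} generator; equivalently, one can argue directly.

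First I would show that $P_a$ is invertible in $\Bc(A)$ for $a$ small. Since $\lim_{a\to0}P_a=\id_A$ in norm, there is $a_0>0$ with $\norm{\id_A-P_{a_0}}<1$, hence $P_{a_0}$ is invertible (Neumann series). But $P_{a_0}$ is trace class by Part~\ref{lem:patrclass:2} of Lemma~\ref{lem:patrclass}, hence compact. A compact operator on a Hilbert space is invertible only if the space is finite dimensional (the identity would then be compact, so the closed unit ball would be compact). Therefore $A$ is finite dimensional. This is the whole argument; the main (and only) subtle point is making sure the norm-continuity statement is actually in force, i.e.\ that the cited results genuinely give $a\mapsto P_a$ norm continuous on $(0,\infty)$ so that, combined with the norm limit at $0$, one gets an $a_0$ with $\norm{\id_A-P_{a_0}}<1$ — but in fact only the norm limit at $0$ is needed for this step, so there is no obstacle at all. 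I would write it as follows.

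\begin{proof}
By assumption $\lim_{a\to 0}P_a=\id_A$ in the norm topology on $\Bc(A)$, so there exists $a_0\in\Rb_{>0}$ with $\norm{\id_A-P_{a_0}}<1$. Hence $P_{a_0}=\id_A-(\id_A-P_{a_0})$ is invertible in $\Bc(A)$, its inverse being given by the Neumann series $\sum_{n\ge 0}(\id_A-P_{a_0})^n$. On the other hand, by Part~\ref{lem:patrclass:2} of Lemma~\ref{lem:patrclass}, $P_{a_0}$ is trace class and in particular compact. An invertible compact operator can only exist on a finite dimensional space: if $P_{a_0}$ is compact and invertible with bounded inverse $P_{a_0}^{-1}$, then $\id_A=P_{a_0}^{-1}\circ P_{a_0}$ is compact, so the closed unit ball of $A$ is compact, which forces $\dim A<\infty$. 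Thus $A$ is finite dimensional.
\end{proof}
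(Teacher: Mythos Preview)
Your proof is correct and follows the same overall strategy as the paper: show that some $P_{a_0}$ is simultaneously invertible and compact (the latter from Part~\ref{lem:patrclass:2} of Lemma~\ref{lem:patrclass}), and conclude that $\id_A$ is compact, hence $\dim A<\infty$.

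The difference lies in how invertibility is obtained. The paper invokes \cite[Thm.\,I.3.7]{Engel:1999sg} to write $P_a=e^{aH}$ for some bounded $H$, then uses the spectral mapping theorem to get $0\notin\sp(P_a)$. Your argument is more direct: from the norm limit $P_a\to\id_A$ you simply choose $a_0$ with $\norm{\id_A-P_{a_0}}<1$ and apply the Neumann series. This bypasses both the semigroup structure theorem and the spectral mapping theorem, using nothing beyond elementary Banach algebra facts. Your route is shorter and self-contained; the paper's route, while heavier, makes the bounded-generator structure explicit, which is conceptually informative but not needed for the conclusion.
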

\begin{proof}
By \cite[Thm.\,I.3.7]{Engel:1999sg},
	$P_a=e^{aH}$ for some $H\in\Bc(A)$. In particular
	the spectrum $\sp(H)$ of $H$ is bounded.
	By the spectral mapping theorem 
	\cite[Thm.\,VII.4.10]{Conway:1994fa} 
	one has $\sp(P_a)=\sp(e^{aH})=e^{a \cdot \sp(H)}$, in particular $0\not\in\sp(P_a)$.
	So altogether we have that $P_a$ is invertible.
	From Lemma~\ref{lem:patrclass}\,\eqref{lem:patrclass:2} we know that $P_a$ is compact.
	Since a bounded operator composed with a compact operator is compact,
	the identity $\id_A=P_a^{-1}P_a$ shows that $\id_A$ is compact.
	The identity on $A$ is compact if and only if $\dim(A)<\infty$.
\end{proof}

The following lemma will be instrumental in showing that various joint continuity conditions hold automatically in $\Hilb$.
A similar statement can be found in \cite[Sec.\,2]{Khalil:2010tensor-semigroups}.

\begin{lemma}
Let $\Hc_i\in\Hilb$ ($i=1,2$).
Let $X$ be a subset of a 
	finite dimensional
normed vector space (e.g.\ $X = \Rb_{\ge 0}$). Equip $X$ with the induced topology  and
let $a\mapsto S^{(i)}_a$ be two
continuous maps $X\to\Bc(\Hc_i)$.
Then the map $X^2\to\Bc(\Hc_1\otimes\Hc_2)$, 
$(a,b)\mapsto S^{(1)}_a\otimes S^{(2)}_b$ is jointly
continuous.
\label{lem:semigrp}
\end{lemma}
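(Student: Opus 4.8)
The plan is to reduce joint continuity to an estimate on the difference $S^{(1)}_a\otimes S^{(2)}_b - S^{(1)}_{a_0}\otimes S^{(2)}_{b_0}$ in the strong operator topology, i.e.\ to show that for each fixed $\xi\in\Hc_1\otimes\Hc_2$ the vector $(S^{(1)}_a\otimes S^{(2)}_b)\xi$ depends continuously on $(a,b)$ at an arbitrary point $(a_0,b_0)\in X^2$. The first step is the standard telescoping decomposition
\be
S^{(1)}_a\otimes S^{(2)}_b - S^{(1)}_{a_0}\otimes S^{(2)}_{b_0}
= \bigl(S^{(1)}_a - S^{(1)}_{a_0}\bigr)\otimes S^{(2)}_b
+ S^{(1)}_{a_0}\otimes\bigl(S^{(2)}_b - S^{(2)}_{b_0}\bigr)\ .
\ee
Applying this to $\xi$ and estimating norms, the second summand is controlled by the strong continuity of $b\mapsto S^{(2)}_b$ (with $S^{(1)}_{a_0}$ a fixed bounded operator), which is unproblematic. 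The genuine issue is the first summand: one would like to bound $\bigl\|\bigl((S^{(1)}_a - S^{(1)}_{a_0})\otimes S^{(2)}_b\bigr)\xi\bigr\|$, but tensoring with $S^{(2)}_b$ is \emph{not} continuous for varying $b$, so the naive estimate $\le \|S^{(1)}_a - S^{(1)}_{a_0}\|\cdot\|S^{(2)}_b\|\cdot\|\xi\|$ is useless in the strong topology, and even with $b$ fixed it only works if $S^{(1)}$ is norm-continuous, which we have not assumed.

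The key idea to get around this is \emph{local uniform boundedness}: since $X$ is a subset of a finite-dimensional normed space, it is locally compact, so we may choose a compact neighbourhood $K\subseteq X$ of $(a_0,b_0)$'s coordinates (more precisely compact neighbourhoods $K_1\ni a_0$, $K_2\ni b_0$). On $K_i$ the map $a\mapsto S^{(i)}_a$ is strongly continuous, hence by the uniform boundedness principle $\sup_{a\in K_i}\|S^{(i)}_a\| =: C_i < \infty$. Now approximate $\xi$ in $\Hc_1\otimes\Hc_2$ by a finite sum of elementary tensors $\xi_\varepsilon = \sum_{k=1}^{m}u_k\otimes v_k$ with $\|\xi-\xi_\varepsilon\|<\varepsilon$. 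Then
\be
\bigl\|\bigl((S^{(1)}_a - S^{(1)}_{a_0})\otimes S^{(2)}_b\bigr)\xi\bigr\|
\le \bigl\|\bigl((S^{(1)}_a - S^{(1)}_{a_0})\otimes S^{(2)}_b\bigr)\xi_\varepsilon\bigr\|
+ (C_1 + \|S^{(1)}_{a_0}\|)\,C_2\,\varepsilon\ ,
\ee
valid for $a\in K_1$, $b\in K_2$, and the first term on the right is $\le \sum_{k=1}^m \|(S^{(1)}_a - S^{(1)}_{a_0})u_k\|\cdot C_2\,\|v_k\|$, which tends to $0$ as $a\to a_0$ by strong continuity of $S^{(1)}$ on the finitely many vectors $u_k$. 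Combining this with the (easy) bound on the second summand of the telescoping identity — where one similarly uses $\|S^{(1)}_{a_0}\otimes(S^{(2)}_b - S^{(2)}_{b_0})\xi\|$ and approximates $\xi$ by elementary tensors, using strong continuity of $S^{(2)}$ on the $v_k$ — gives joint continuity at $(a_0,b_0)$. Since $(a_0,b_0)$ was arbitrary, the map $(a,b)\mapsto S^{(1)}_a\otimes S^{(2)}_b$ is jointly continuous.

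The main obstacle, as indicated, is precisely the failure of the tensor product to be continuous in $\Hilb$ (Remark~\ref{rem:hilbcont}): one cannot simply multiply operator norms and invoke continuity of one factor. The two ingredients that make the argument go through are (i) the finite-dimensionality of the parameter space, which yields local compactness and hence, via uniform boundedness, a \emph{uniform} operator-norm bound on a neighbourhood, and (ii) the density of the algebraic tensor product, which lets us reduce to elementary tensors where strong continuity applies factorwise. I would also remark that the same proof works verbatim for a finite tensor product of $n$ strongly continuous families by iterating the telescoping identity.
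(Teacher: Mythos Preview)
Your proof is correct and follows essentially the same strategy as the paper's: local uniform boundedness of $\|S^{(i)}_a\|$ via the uniform boundedness principle on compact sets, approximation of $\xi$ by finite sums of elementary tensors, and the telescoping identity to separate the two factors. The paper organises the estimate in a slightly different order (first splitting off the $\|T-T_n\|$ remainder and then telescoping on the finite sum $T_n$), but the ingredients and logic are identical; your remark that not every subset of a finite-dimensional space is locally compact is a fair caveat, but the paper's proof makes the same tacit assumption, and it holds in all the cases used (e.g.\ $X=\Rb_{\ge0}$).
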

\begin{proof}
We will first show that the map $a\mapsto\norm{S^{(i)}_a}$ is bounded on compact subsets of $X$.
	Let $K\subset X$ be compact.
	By strong continuity we have that for every $h\in \Hc_i$ the map $a\mapsto S^{(i)}_a(h)$ is continuous,
	so in particular the map $a\mapsto \norm{S^{(i)}_a(h)}$ is continuous, hence bounded on $K$.
	By the Uniform Boundedness Principle \cite[Ch.\,III.14]{Conway:1994fa} 
	the map $a\mapsto \norm{S^{(i)}_a}$ is bounded on $K$.

	\medskip

	Now we turn to the claim in the lemma.
	Let $a_0,b_0\in X$ and $\kappa,\eps\in\Rb_{>0}$ be fixed. 
	We will show that the map $(a,b)\mapsto S^{(1)}_a\otimes S^{(2)}_b$ is continuous at $(a_0,b_0)$.
	
		For $T\in\Hc_1\otimes\Hc_2$ take a sequence 
	$\left\{ T_n \right\}_{n}$ in the algebraic tensor product of 
	$\Hc_1$ and $\Hc_2$ such that $T_n\xrightarrow{n\to\infty} T$. 
	We have the estimate
	\begin{align}
		\begin{aligned}
		&\norm{(S^{(1)}_a\otimes S^{(2)}_b-S^{(1)}_{a_0}\otimes S^{(2)}_{b_0})T}\\
		\le& \norm{(S^{(1)}_a\otimes S^{(2)}_b-S^{(1)}_{a_0}\otimes S^{(2)}_{b_0})}\cdot
		\norm{T-T_n}+
		\norm{(S^{(1)}_a\otimes S^{(2)}_b-S^{(1)}_{a_0}\otimes S^{(2)}_{b_0})T_n}
		\ .
		\end{aligned}
		\label{eq:jcont1}
	\end{align}

	We give an estimate for the first term on the rhs of \eqref{eq:jcont1}.
Fix some $\delta_1>0$. Then by the above boundedness result there is a $\kappa>0$ such that
for every $a,b\in X$ with $|a-a_0|+|b-b_0|<\delta_1$ we have
	\begin{align}
		\norm{S^{(1)}_{a}}<\kappa\quad\text{ and }\quad \norm{S^{(2)}_{b}}<\kappa \ .
	\end{align}
	So we have
	\begin{align}
		\begin{aligned}
		\norm{S^{(1)}_a\otimes S^{(2)}_b-S^{(1)}_{a_0}\otimes S^{(2)}_{b_0}}
		\le &\norm{S^{(1)}_a}\cdot\norm{S^{(2)}_b}+\norm{S^{(1)}_{a_0}}\cdot\norm{S^{(2)}_{b_0}}\\
		\le &\ \kappa^2 +\norm{S^{(1)}_{a_0}}\cdot\norm{S^{(2)}_{b_0}}=:N_{a_0,b_0}^{\kappa}\ .
		\end{aligned}
		\label{eq:op-norm-estimate}
	\end{align}
	Since $T_n\xrightarrow{n\to\infty} T$, we can choose $n$ (which we keep fixed from now on) such that
	\begin{align}
		\norm{T-T_n}<\frac{\varepsilon}{2N_{a_0,b_0}^{\kappa}}\ .
		\label{eq:Tn-norm-estimate}
	\end{align}
	Putting \eqref{eq:op-norm-estimate} and \eqref{eq:Tn-norm-estimate} together we get
	\begin{align}
		\norm{(S^{(1)}_a\otimes S^{(2)}_b-S^{(1)}_{a_0}\otimes S^{(2)}_{b_0})}\cdot \norm{T-T_n}
		\le \frac{\varepsilon}{2}\ .
		\label{eq:middle-term-estimate}
	\end{align}

	We give an estimate for the last term in \eqref{eq:jcont1}.
	Recall that each $T_n$ was chosen in the algebraic tensor product of $\Hc_1$ and $\Hc_2$.
	Thus $T_n$ is a finite sum of elementary tensors, 
	\begin{align}
		T_n=\sum_{j=1}^{t_n}x_n^j\otimes y_n^j
		\label{eq:Tn-sum}
	\end{align}
	for $t_n\in\Zb_{\ge1}$, $x_n^j\in\Hc^{(1)}$ and $y_n^j\in\Hc^{(2)}$.
	Using this, we get:
	\begin{align}
		\begin{aligned}
		&\norm{(S^{(1)}_a-S^{(1)}_{a_0})\otimes S^{(2)}_b T_n +S^{(1)}_{a_0}\otimes (S^{(2)}_b-S^{(2)}_{b_0})T_n}\\
		\le&\sum_{j=1}^{t_n}\left( \norm{(S^{(1)}_a-S^{(1)}_{a_0})x_n^j}\cdot\norm{S^{(2)}_b}\cdot\norm{y_n^j}
		+\norm{(S^{(1)}_{a_0}}\cdot\norm{x_n^j}\cdot\norm{(S^{(2)}_b-S^{(2)}_{b_0})y_n^j}\right)\ .
		\end{aligned}
		\label{eq:last-term-estimate-1}
	\end{align}

	By strong continuity of $a\mapsto S^{(i)}_a$ we can chose $\delta_2 >0$ such that
	for every $a,b\in X$ with $|a-a_0|+|b-b_0|<\delta_2$ we have
	\begin{align}
		\norm{(S^{(1)}_a-S^{(1)}_{a_0})x_n^j}<\frac{\varepsilon}{4t_n \kappa \norm{y_n^j}} \quad\text{ and }\quad
		\norm{(S^{(2)}_b-S^{(2)}_{b_0})y_n^j}<\frac{\varepsilon}{4t_n \norm{S^{(1)}_{a_0}}\cdot \norm{x_n^j}}\ , 
	\end{align}
	for every $j=1,\dots,t_n$, since these are only finitely many conditions to satisfy. 
	Let $\delta:=\min\left\{ \delta_1,\delta_2 \right\}$. Then for every $a,b\in X$ with $|a-a_0|+|b-b_0|<\delta$ we have that 
	\begin{align}
		\begin{aligned}
			\norm{(S^{(1)}_a\otimes S^{(2)}_b-S^{(1)}_{a_0}\otimes S^{(2)}_{b_0})T_n} \le\frac{\varepsilon}{2}\ .
		\end{aligned}
		\label{eq:eq:last-term-estimate-2}
	\end{align}

	Finally, using \eqref{eq:middle-term-estimate} and \eqref{eq:eq:last-term-estimate-2} we have that
	\begin{align}
		\norm{(S^{(1)}_a\otimes S^{(2)}_b-S^{(1)}_{a_0}\otimes S^{(2)}_{b_0})T}<\varepsilon\ .
		\label{eq:final-estimate}
	\end{align}
	
\end{proof}

By iterating the previous lemma we see that
the definition of a regularised algebra simplifies in $\Hilb$. 
Namely it is enough to check that $a \mapsto P_a$ is continuous, 
rather than having to consider multiple tensor products.

\begin{corollary}\label{cor:semigrp}
	The continuity condition \eqref{eq:racont} in $\Hilb$
	is automatically satisfied for any $n\ge 2$ if it holds for $n=1$.
\end{corollary}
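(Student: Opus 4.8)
The plan is to reduce the $n\ge 2$ case to the $n=1$ case by iterating Lemma~\ref{lem:semigrp}. We are given that $a\mapsto P_a$ is a continuous map $\Rb_{\ge 0}\to\Bc(A)$; we must show that for every $n\ge 2$ the map $(a_1,\dots,a_n)\mapsto P_{a_1}\otimes\dots\otimes P_{a_n}$ is jointly continuous as a map $\Rb_{\ge 0}^n\to\Bc(A^{\otimes n})$.

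First I would set up an induction on $n$. For the base case $n=1$ there is nothing to prove, it is the hypothesis. For the inductive step, assume $(a_1,\dots,a_{n-1})\mapsto P_{a_1}\otimes\dots\otimes P_{a_{n-1}}$ is jointly continuous as a map into $\Bc(A^{\otimes(n-1)})$. I would then apply Lemma~\ref{lem:semigrp} with $\Hc_1 := A^{\otimes(n-1)}$, $\Hc_2 := A$, the finite-dimensional normed space being $\Rb^{n-1}$ with $X = \Rb_{\ge 0}^{n-1}$ for the first factor and $X=\Rb_{\ge 0}$ for the second, $S^{(1)}_{(a_1,\dots,a_{n-1})} := P_{a_1}\otimes\dots\otimes P_{a_{n-1}}$ (continuous by the induction hypothesis) and $S^{(2)}_{a_n} := P_{a_n}$ (continuous by hypothesis). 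The lemma then yields that $\big((a_1,\dots,a_{n-1}),a_n\big)\mapsto (P_{a_1}\otimes\dots\otimes P_{a_{n-1}})\otimes P_{a_n}$ is jointly continuous on $\Rb_{\ge0}^{n-1}\times\Rb_{\ge0} = \Rb_{\ge0}^n$, which is exactly the assertion \eqref{eq:racont} for $n$.

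The one point that needs a small remark is that Lemma~\ref{lem:semigrp} is stated for $X$ a subset of a single finite-dimensional normed vector space, whereas here the first argument ranges over $\Rb_{\ge0}^{n-1}$ and the second over $\Rb_{\ge0}$; but these are both subsets of finite-dimensional normed spaces and the product topology on $\Rb_{\ge0}^{n-1}\times\Rb_{\ge0}$ coincides with the subspace topology from $\Rb^n$, so the lemma applies verbatim with the two factors $X$ and $X'$ allowed to be subsets of possibly different such spaces (the proof of Lemma~\ref{lem:semigrp} only uses finite-dimensionality of the ambient spaces and continuity of the two families, never that the two parameter spaces are the same). I do not expect any genuine obstacle: the content is entirely in Lemma~\ref{lem:semigrp}, and this corollary is just the bookkeeping of feeding it into itself $n-1$ times.
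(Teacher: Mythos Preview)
Your proposal is correct and matches the paper's approach exactly: the paper simply says ``by iterating the previous lemma'' before stating the corollary, and your induction is precisely that iteration spelled out. Your remark about the parameter spaces is well taken; if one prefers to stay within the literal statement of Lemma~\ref{lem:semigrp} (same $X$ for both families), one can equivalently take $X=\Rb_{\ge0}^{n}$, extend $S^{(1)}$ and $S^{(2)}$ trivially in the unused coordinates, and then restrict the resulting jointly continuous map on $X^2$ to the diagonal, but as you observe the proof of the lemma never uses that the two parameter spaces coincide.
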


\subsection{Examples of regularised algebras and RFAs in $\Vectfd$ and $\Hilb$}\label{sec:examples}

Let $\Vectfd$ denote the symmetric monoidal category of
finite dimensional complex vector spaces with the usual tensor product of 
vector spaces and the topology on the hom-sets induced by any norm on the vector spaces.
In the following we list examples of regularised algebras and RFAs in $\Vectfd$ and $\Hilb$.
\begin{enumerate}
	\item Let $A$ be a algebra in $\Vectfd$ 
		with multiplication $\mu$ and unit $\eta$ and set
		$\mu_a:=\mu\cdot e^{a\sigma}$, $\eta_a:=\eta\cdot e^{a\sigma}$ 
		for some $\sigma\in\Cb$.  Then $A$ is a regularised algebra. 
		One can similarly obtain an RFA from a Frobenius algebra.
		\label{ex:trivial}
\end{enumerate}

A Frobenius algebra in $\Vectfd$ is always finite-dimensional. In Example~\ref{ex:trivial} we equipped them with an RFA structure for which all $a \to 0$ limits exist. The converse also holds in the following
sense.

\begin{proposition}\label{prop:rfafindim}
	Let $A\in\RFrob{\Hilb}$. The following are equivalent.
	\begin{enumerate}
		\item $A$ is finite dimensional.
		\item 
	All of the following limits exist:
	\begin{center}
	\begin{tabular}[h]{c c c c}
		$\lim_{a\to0}\eta_a$ , & $\lim_{a\to0}\mu_a$ , & 
		$\lim_{a\to0}\eps_a$ , & $\lim_{a\to0}\Delta_a$ .\\
	\end{tabular}
	\end{center}
	\end{enumerate}
\end{proposition}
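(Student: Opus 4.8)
The implication $(1)\Rightarrow(2)$ is the easier direction and essentially the content of Example~\ref{ex:trivial} run in reverse: on a finite-dimensional Hilbert space, $a\mapsto P_a$ is a norm-continuous semigroup (Lemma~\ref{lem:patrclass} plus the discussion after it), hence $P_a=e^{aH}$ for some $H\in\Bc(A)$, so $\lim_{a\to0}P_a=\id_A$ already in norm and in particular $P_a$ is invertible for small $a$. Then from $P_a\circ\eta_{a'}=\eta_{a+a'}$ and $P_a\circ\mu_{a'}=\mu_{a+a'}$ (Lemma~\ref{lem:ra:properties}, Parts~\ref{lem:ra:semigrp} and~\ref{lem:ra:mupacommute}) I would write $\eta_a=P_{a/2}\circ\eta_{a/2}$, etc., and use that for $a$ small $\eta_a = P_a^{-1}\circ\eta_{a_0}\circ(\text{fixed})$ — more precisely $\eta_{a+a_0}=P_a\circ\eta_{a_0}$ gives $\eta_a = P_{a-a_0}\circ\eta_{a_0}=e^{(a-a_0)H}\eta_{a_0}$ for $a$ near $a_0$, and letting $a\to0$ the limit $e^{-a_0 H}\eta_{a_0}$ exists since $e^{-a_0H}$ is a genuine bounded operator. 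The same argument handles $\mu_a=e^{(a-a_0)H}\circ\mu_{a_0}$, and dually $\eps_a=\eps_{a_0}\circ e^{(a-a_0)H}$ and $\Delta_a=\Delta_{a_0}\circ e^{(a-a_0)H}$ using $P=P'$. So all four limits exist.

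For $(2)\Rightarrow(1)$, the plan is to use the existence of the limits to manufacture a \emph{non-regularised} Frobenius algebra structure on $A$ and then invoke the zig-zag argument. Set $\mu_0:=\lim_{a\to0}\mu_a$, $\eta_0:=\lim_{a\to0}\eta_a$, $\eps_0:=\lim_{a\to0}\eps_a$, $\Delta_0:=\lim_{a\to0}\Delta_a$. Passing to the limit in the defining relations \eqref{eq:ra:unit}, \eqref{eq:ra:assoc}, \eqref{eq:rca:counit}, \eqref{eq:rca:coassoc}, \eqref{eq:rfa:frobrel} — using separate continuity of composition in $\Hilb$, and for the terms involving a tensor factor that $P_a\otimes P_a\to\id_A\otimes\id_A$ strongly so that $P_a\otimes\id$ etc.\ converge strongly when precomposed with the relevant bounded maps — one gets that $(A,\mu_0,\eta_0)$ is an associative algebra with unit $\eta_0$ (note $P_0=\id_A$ so the unit relation becomes the genuine one), $(A,\Delta_0,\eps_0)$ a coalgebra, and \eqref{eq:rfa:frobrel} in the $a_i\to0$ limit is exactly the Frobenius relation $(\mu_0\otimes\id)\circ(\id\otimes\Delta_0)=\Delta_0\circ\mu_0=(\id\otimes\mu_0)\circ(\Delta_0\otimes\id)$. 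Thus $A$ carries the structure of a Frobenius algebra in $\Hilb$, and the standard zig-zag/snake identity
$$
\id_A=(\eps_0\otimes\id_A)\circ(\mu_0\otimes\id_A)\circ(\id_A\otimes\Delta_0)\circ(\id_A\otimes\eta_0)
$$
exhibits $\id_A$ as factoring through $A\otimes A\cong\mathrm{(finite\ rank\ pieces)}$; concretely, writing $\Delta_0\circ\eta_0(1)=\sum_i x_i\otimes y_i$ (a \emph{convergent} sum in $A\otimes A$, but we only need that $\coev:=\Delta_0\circ\eta_0$ is a single fixed vector) one has $\id_A=(\beta_0\otimes\id_A)\circ(\id_A\otimes\coev)$ with $\beta_0=\eps_0\circ\mu_0$, so $\id_A$ factors through the dual pairing and hence $A$ is finite-dimensional — this is the same zig-zag argument that forces finite-dimensionality in 2d\,TQFT.

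The main obstacle is the justification of passing to the limit inside the tensor-decorated relations, since in $\Hilb$ neither the tensor product nor composition is jointly continuous (Remark~\ref{rem:hilbcont}). The way around this is to never take a limit of a bare $\mu_a\otimes\id$ or $P_a\otimes\id$; instead, in each relation first use Lemma~\ref{lem:ra:properties}\ref{lem:ra:mupacommute} and Lemma~\ref{lem:copairing}\ref{lem:copairing:1} to absorb all the area dependence of the ``inner'' structure maps into the single composite that has a limit — e.g.\ rewrite $\mu_{a_1}\circ(\id_A\otimes\mu_{a_2})=\mu_{a_1+a_2}\circ(\id_A\otimes\mu_0)\circ(\text{something with }P)$ style manipulations so that only $a\mapsto P_a$ acts strongly-continuously on a fixed vector and only the outer map's limit is needed — and then apply separate continuity of composition together with strong convergence $P_a\to\id_A$ on each argument. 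Alternatively, and perhaps more cleanly, I would first establish that $P_a$ is trace class hence compact (Lemma~\ref{lem:patrclass}) and that $\lim_{a\to0}P_a=\id_A$; if one can additionally argue this limit is in norm — which it is precisely when $A$ is finite-dimensional by Corollary~\ref{cor:finite-in-norm} — one is done, but since that is what we want to prove, the honest route is the Frobenius-structure-plus-zig-zag argument above, being careful at each tensor factor.
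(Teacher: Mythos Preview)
Your proposal is correct and follows essentially the same approach as the paper: for $(1)\Rightarrow(2)$ you use $P_a=e^{aH}$ with $H$ bounded to write $\eta_a=e^{(a-a_0)H}\eta_{a_0}$ and send $a\to 0$, and for $(2)\Rightarrow(1)$ you observe that the limits assemble into a genuine Frobenius algebra structure on $A$, which forces finite-dimensionality. The paper's proof is extremely terse (two sentences per direction) and does not address the continuity issues you worry about in your final paragraph; your care there is warranted but the simpler justification is that strong convergence $\mu_a\to\mu_0$ together with Banach--Steinhaus gives uniform boundedness of $\norm{\mu_a}$ near $0$, whence $\id_A\otimes\mu_a\to\id_A\otimes\mu_0$ strongly by density of algebraic tensors, and then separate continuity of composition finishes each relation.
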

\begin{proof}
	($1\Rightarrow2$): If $A$ is finite dimensional, then the map $a\mapsto P_a$
	is norm continuous, hence $P_a=e^{aH}$ for some $H\in\Bc(A)$.
	Then $\eta_0:=e^{-aH}\eta_a$ is independent of $a$ and $\eta_a=P_a \circ\eta_0$,
	hence $\lim_{a\to0}\eta_a=\eta_0$ exists. One similarly proves that
	the other limits exist as well.

	($1\Leftarrow2$): The morphisms given by these limits
	define a Frobenius algebra structure on $A$, hence $A$ is finite dimensional.
\end{proof}

\begin{enumerate}[resume]
	\item Consider the polynomial algebra $\Cb[x]$ and complete it
		with the Hilbert space structure
		given by $\langle x^n,x^m\rangle=\delta_{n,m}f(m)$ for some monotonously decreasing
		function $f: \Nb \to (0,1]\subset\Rb$ and denote by $\overline{\Cb[x]}$ its Hilbert space completion.			
		Let $P_a(x^n):=e^{a\sigma x}x^n$
	for $\sigma\in\Rb$
	(note the $x$ in the exponent).
	We now show that this defines a bounded operator. 
	Let $y\in\overline{\Cb[x]}$ with $y=\sum_{n\in\Nb}y_n x^n$.
	Then 
	\begin{align*}
		\norm{P_a(y)}^2=\sum_{n,m\in\Nb}\left( \frac{a^m}{m!} \right)^2 f(n+m)|y_n|^2\le
		\sum_{n,m\in\Nb}\frac{a^{2m}}{(2m)!} f(n)
		|y_n|^2\le e^a\norm{y}^2\ .
	\end{align*}
	where we used that $f$ is monotonously decreasing.

		Let us assume that 
		\begin{align}
                        \sup_{k\in\Nb}\left\{ \sum_{l=0}^{k}f(k)f(l)f(k-l) \right\}<\infty 
                        \label{eq:supcond}
                \end{align}
                holds, e.g.\ $f(m)=(1+m)^{-2}$ or $f(m)=e^{-m}$. 
                Then the operator
		\begin{align}
			\begin{aligned}
				M:\overline{\Cb[x]}&\to\overline{\Cb[x]}\otimes\overline{\Cb[x]}\\
				x^k&\mapsto \sum_{l=0}^{k} f(k) x^{k-l}\otimes x^l
			\end{aligned}
			\label{eq:mu-adjoint}
		\end{align}
		is bounded.  The adjoint of $M$ is the standard multiplication
		\begin{align}
			\begin{aligned}
				\mu:\overline{\Cb[x]}\otimes\overline{\Cb[x]}&\to\overline{\Cb[x]}\\
				x^k\otimes x^l&\mapsto x^{k+l}\ ,
			\end{aligned}
			\label{eq:mu-mult}
		\end{align}
		which is therefore also a bounded operator.
		Then defining $\mu_a:=P_a\circ\mu$ and $\eta_a(1):=P_a(1)$
		gives a regularised algebra in $\Hilb$.
		\label{ex:infdim-nonherm}
		Note, however, that this regularised algebra cannot be turned into
		a regularised Frobenius algebra because $P_a$ is not trace class,
		cf. Lemma~\ref{lem:patrclass}.

	\item Consider the Frobenius algebra
		$A:=\Cb[x]/\langle x^d\rangle$ in $\Vectfd$ with $\eps(x^k)=\delta_{k,d-1}$. Let $h\in A$ and define $P_a(f):=e^{ah}f$, 
		$\eps_a:=\eps\circ P_a$, $\eta_a:=P_a\circ\eta$ and $\mu_a:=P_a\circ\mu$.
		Then $\Cb[x]/\langle x^d\rangle$ is an RFA, denoted $A_{h}$.
		Unless $d=1$, this RFA is not separable.
		\label{ex:cxd}
\end{enumerate}
\begin{proposition}\label{prop:directsumrfa}
	Let $I$ be a countable (possibly infinite) set.
	For $k\in I$ let $F_k\in\Hilb$ be a (possibly infinite-dimensional) RFA. 
	Then $\bigoplus_{k\in I}F_k$ (the completed direct sum of Hilbert spaces) 
	is an RFA in $\Hilb$ if and only if, for every $a\in\Rb_{>0}$,
	\begin{align}
		\sup_{k\in I}\norm{\mu_a^k}<\infty\quad&\text{and}\quad \sup_{k\in I}\norm{\Delta_a^k}<\infty\ ,\\
		\sum_{k\in I}\norm{\eps_a^k}^2<\infty\quad&\text{and}\quad \sum_{k\in I}\norm{\eta_a^k}^2<\infty \ ,
	\end{align}
	where $\mu_a^k$, $\Delta_a^k$, $\eps_a^k$ and $\eta_a^k$
	denote the structure maps of $F_k$. 
\end{proposition}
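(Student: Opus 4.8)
The plan is to verify the RFA axioms on $F := \bigoplus_{k\in I} F_k$ directly, componentwise, with the norm bounds being exactly what is needed to promote the componentwise structure maps to bounded operators on the completed direct sum. First I would fix notation: a vector in $F$ is a family $v = (v_k)_{k\in I}$ with $v_k \in F_k$ and $\sum_k \|v_k\|^2 < \infty$, and $F \otimes F \cong \bigoplus_{k,l} F_k \otimes F_l$. I would then \emph{define} the candidate structure maps on $F$ as the block-diagonal assembly of the $F_k$-structure maps: $\mu_a := \bigoplus_k \mu_a^k$, $\eta_a := \bigoplus_k \eta_a^k$, $\Delta_a := \bigoplus_k \Delta_a^k$, $\eps_a := \bigoplus_k \eps_a^k$, where $\mu_a$ is understood to be supported on the "diagonal" blocks $F_k \otimes F_k$ and to kill the off-diagonal blocks $F_k \otimes F_l$ with $k \neq l$ (similarly $\Delta_a$ lands in the diagonal blocks). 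The first task is boundedness, and this is where the four conditions enter: $\|\mu_a v\|^2 = \sum_k \|\mu_a^k v_k\|^2 \le (\sup_k \|\mu_a^k\|)^2 \sum_k \|v_k\|^2$ shows $\mu_a$ is bounded iff $\sup_k \|\mu_a^k\| < \infty$ (and dually for $\Delta_a$), while $\eta_a(1) = (\eta_a^k(1))_k$ lies in $F$ iff $\sum_k \|\eta_a^k(1)\|^2 = \sum_k \|\eta_a^k\|^2 < \infty$, and $\eps_a$ is bounded iff $\sum_k \|\eps_a^k\|^2 < \infty$ by the same Cauchy–Schwarz estimate. This establishes that the four conditions are \emph{necessary} (take $v$ supported on a single component, or note that an unbounded block forces an unbounded direct sum) and that, when they hold, the maps are well-defined morphisms in $\Hilb$.

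Next I would check the algebraic relations \eqref{eq:ra:unit}, \eqref{eq:ra:assoc}, \eqref{eq:rca:counit}, \eqref{eq:rca:coassoc} and the Frobenius relation \eqref{eq:rfa:frobrel}. Since all the assembled maps are block-diagonal, any composite appearing in these relations decomposes as $\bigoplus_k (\text{same composite for } F_k)$, and the relation holds on $F$ because it holds on each $F_k$ by assumption; the only point to note is that the off-diagonal blocks are consistently annihilated on both sides, which is immediate from the block structure. In particular $P_a := \mu_{a_1}\circ(\id\otimes\eta_{a_2}) = \bigoplus_k P_a^k$, the block-diagonal assembly of the semigroups on the components.

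The remaining, and I expect the only genuinely delicate, point is the continuity requirement: $\lim_{a\to 0} P_a = \id_F$ in the strong operator topology, plus the joint-continuity condition \eqref{eq:racont}. By Corollary~\ref{cor:semigrp} it suffices to treat $n=1$, i.e.\ strong continuity of $a \mapsto P_a$ on $\Rb_{\ge 0}$. For a fixed $v = (v_k)_k \in F$ and $\varepsilon > 0$, choose a finite $S \subseteq I$ with $\sum_{k\notin S} \|v_k\|^2 < \varepsilon$; on the tail one estimates $\sum_{k\notin S}\|P_a^k v_k - v_k\|^2 \le (1 + \sup_k\|P_a^k\|)^2 \sum_{k\notin S}\|v_k\|^2$, so one needs $\sup_{k} \|P_a^k\|$ to be bounded uniformly for $a$ near $0$ — this follows because $\|P_a^k\| \le \|\mu_{a/2}^k\|\cdot\|\eta_{a/2}^k\| \le (\sup_k\|\mu_{a/2}^k\|)(\sum_k\|\eta_{a/2}^k\|^2)^{1/2}$, which is finite by hypothesis, and one can invoke the norm-continuity of $a\mapsto P_a^k$ from the finite-dimensional side together with the Lemma~\ref{lem:semigrp}-type uniform boundedness argument, or simply note the bound is locally uniform in $a$ by the hypothesis applied at a fixed small $a_0$ and the semigroup property $P_a^k = P_{a_0}^k P_{a-a_0}^k$. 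On the finite set $S$, strong continuity of each $P_a^k$ (which holds since each $F_k$ is an RFA) handles $\sum_{k\in S}\|P_a^k v_k - v_k\|^2 \to 0$ as $a \to 0$. Combining gives $\|P_a v - v\| \to 0$, and the same splitting argument gives continuity at any $a_0 > 0$. I would also remark that the joint-continuity condition \eqref{eq:racont} for the $F_k$'s is automatic in $\Hilb$ by Corollary~\ref{cor:semigrp}, so no extra hypothesis on the $F_k$ beyond being RFAs is needed. Finally, for the converse direction I would double-check that failure of any one of the four bounds produces a genuine obstruction: an unbounded family $\{\|\mu_a^k\|\}$ or $\{\|\Delta_a^k\|\}$ makes the assembled product or coproduct unbounded, hence not a morphism in $\Hilb$; a divergent $\sum_k \|\eta_a^k\|^2$ means $\eta_a(1)\notin F$; and a divergent $\sum_k \|\eps_a^k\|^2$ makes $\eps_a$ unbounded — and since the RFA structure on $F$, if it exists, must restrict to the given one on each summand (because the idempotents projecting onto the summands are morphisms, as one checks from the block structure), these obstructions are unavoidable.
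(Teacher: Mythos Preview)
Your overall strategy matches the paper's exactly: assemble block-diagonal structure maps, show that boundedness is equivalent to the four stated conditions, verify the RFA identities componentwise, and then establish strong continuity of $a\mapsto P_a$ by a finite-plus-tail splitting. The boundedness computations, the necessity direction, and the algebraic checks are all fine and essentially identical to what the paper does.

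The genuine gap is in your continuity step, specifically in your justifications for why $\sup_k\|P_a^k\|$ stays bounded uniformly for $a$ near $0$. Your estimate $\|P_a^k\|\le(\sup_l\|\mu_{a/2}^l\|)(\sum_l\|\eta_{a/2}^l\|^2)^{1/2}$ is finite for each fixed $a>0$, but the hypotheses are pointwise in $a$ and both factors may blow up as $a\to 0$, so this does not give the uniform control you need. Your two proposed remedies both fail: (i) you invoke ``norm-continuity of $a\mapsto P_a^k$ from the finite-dimensional side'', but the proposition explicitly allows each $F_k$ to be infinite-dimensional; (ii) the factorisation $P_a^k=P_{a_0}^kP_{a-a_0}^k$ is only available for $a>a_0$ and therefore says nothing about the behaviour as $a\downarrow 0$. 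Invoking a Lemma~\ref{lem:semigrp}-type uniform boundedness argument is circular here, since that argument already presupposes strong continuity of the semigroup, which is precisely what you are trying to prove.

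For comparison, the paper runs the same finite-plus-tail argument and handles the tail with the single clause ``Since $P_a-P_{a_0}$ is a bounded operator, one can find $J_{a'}\subset I$ finite such that for every $a<a'$ the tail is $<E$''. That sentence is itself rather terse about the uniformity in $a$, but you have replaced the terseness with explicit arguments that are incorrect as stated. You should either supply a correct justification for the uniform tail control (e.g.\ an argument that genuinely bounds $\sup_k\|P_a^k\|$ on a neighbourhood of $0$ from the four hypotheses) or present this step at the same level of abstraction as the paper, without the erroneous side-remarks.
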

\begin{proof}
	Let $F:=\bigoplus_{k\in I}F_k$ and fix the value of $a$.

\smallskip

\noindent
	($\Rightarrow$): 
	Let us write $x_k$ for the $k$'th component of $x\in F=\bigoplus_{k\in I}F_k$.
	Then for every $k\in I$
	\begin{align*}
		\norm{\Delta_a^k}=\sup_{\substack{x_k\in F_k\\ \norm{x_k}=1}}\norm{\Delta_a^k(x_k)}=
		\sup_{\substack{x_k\in F_k\\ \norm{x_k}=1}}\norm{\Delta_a(x_k)}\le
		\sup_{\substack{x_k\in F_k\\ \norm{x_k}=1}}\norm{\Delta_a}\cdot
	\norm{x_k}
	=\norm{\Delta_a}<\infty\ ,
	\end{align*}
	so in particular $\sup_{k}\norm{\Delta_a^k}<\infty$.
	A similar proof applies to the case of $\mu_a$.
	We calculate the norm of $\eta_a$:
	\begin{align*}
		\norm{\eta_a}^2=\norm{\eta_a(1)}^2=\sum_{k\in I}\norm{\eta_a^k(1)}^2=\sum_{k\in I}\norm{\eta_a^k}^2\ ,
	\end{align*}
	which is finite if and only if $\eta_a$ is a bounded operator.
	If $\eps_a$ is bounded, then by the Riesz Lemma there exists a unique $v\in F$ such that
	$\eps_a(x)=\langle v,x\rangle$ and $\norm{\eps_a}=\norm{v}$.
	Then $\langle v_k,x_k\rangle=\langle v,x_k\rangle=\eps_a(x_k)=\eps_a^k(x_k)$.
	So again by the Riesz Lemma $\norm{\eps_a^k}=\norm{v_k}$.
	We have that
	\begin{align*}
		\norm{\eps_a}^2=\norm{v}^2=\sum_{k\in I}\norm{v_k}^2=\sum_{k\in I}\norm{\eps_a^k}^2\ .
	\end{align*}

\smallskip

\noindent
	($\Leftarrow$): 
	The operators $\eta_a$ and $\eps_a$ are bounded by the previous discussion.
	For $\Delta_a$ one has that
	\begin{align*}
		\norm{\Delta_a}^2 &=
		\sup_{\substack{x\in F\\ \norm{x}=1}}\norm{\Delta_a(x)}^2=
		\sup_{\substack{x\in F\\ \norm{x}=1}}\norm{\sum_{k\in I}\Delta_a(x_k)}^2=
		\sup_{\substack{x\in F\\ \norm{x}=1}}\sum_{k\in I}\norm{\Delta^k_a(x_k)}^2\\&\le
		\sup_{\substack{x\in F\\ \norm{x}=1}}\sum_{k\in I}\norm{\Delta^k_a}^2\norm{x_k}^2
		\le
		\left(\sup_{l}\norm{\Delta_a^l}^2\right)\cdot
		\sup_{\substack{x\in F\\ \norm{x}=1}}\sum_{k\in I}\norm{x_k}^2=
		\sup_{l}\norm{\Delta_a^l}^2<\infty\ ,
	\end{align*}
	so $\Delta_a$ is bounded. For $\mu_a$ the proof is similar.

	Then one needs to check that $a\mapsto P_a:=\sum_{k\in I}P_a^k$ is continuous.
	Let $\eps\in\Rb_{>0}$, $a_0\in\Rb_{\ge0}$ and $f\in\bigoplus_{k\in I}F_k$ 
	with components $f_k$ be fixed.
	Let $a'>a_0$ and $0<E<\eps$ be arbitrary. Since $P_a-P_{a_0}$ is a bounded operator,
	one can find 
	$J_{a'}\subset I$ finite, 
	such that for every $a<a'$
	\begin{align*}
		\sum_{j\in I\setminus J_{a'}}\norm{(P_a^j-P_{a_0}^j)f_j}^2<E\ .
	\end{align*}
	Then let $\delta'>0$ be such that for every $|a-a_0|<\delta'$
	\begin{align*}
		\sum_{j\in J_{a'}}\norm{(P_a^j-P_{a_0}^j)f_j}^2<\eps-E\ ,
	\end{align*}
	which can be chosen since
	the sum is finite and each $P_a^j$ is continuous by assumption.
	Finally let $\delta:=\mathrm{min}\left\{ \delta',a'-a_0 \right\}$.
	By construction we have that for every $|a-a_0|<\delta$,
	\begin{align*}
		\norm{(P_a-P_{a_0})f}^2=\sum_{j\in I}\norm{(P_a^j-P_{a_0}^j)f_j}^2<\eps\ .
	\end{align*}
\end{proof}
	All examples of RFAs known to us are of the above form.
	For Hermitian RFAs, which we will introduce in Section~\ref{sec:dagger}, we can show that they are necessarily of the above form.
	Note that the same RFA
	$F_k$ cannot appear infinitely many times.

	Now we continue our list of examples with some special cases.
\begin{enumerate}[resume]
	\item Let $\left( \epsilon_k,\sigma_k \right)_{k\in I}$ be a countable family of
		pairs of complex numbers such that for all $a>0$
		\begin{align}
			\sup_{k\in I}\left|\epsilon_k e^{-a\sigma_k}\right|<\infty \quad \text{and}\quad
			\sum_{k\in I}\left|\frac{e^{-a\sigma_k}}{\epsilon_k}\right|^2<\infty\ .
			\label{eq:conditions-A-epsilon-example}
		\end{align}
		Then $A_{\epsilon,\sigma}:=\bigoplus_{k\in I}\Cb f_k$, 
		the Hilbert space generated by orthonormal vectors $f_k$, becomes an RFA
		by Proposition~\ref{prop:directsumrfa} via
		\begin{align}
			\mu_a(f_k\otimes f_j)&:=\delta_{k,j}\epsilon_k f_k e^{-a \sigma_k}\ ,&
			\eta_a(1)&:=\sum_{k\in I}\frac{f_k}{\epsilon_k}e^{-a\sigma_k}\ ,\\
			\Delta_a(f_k)&:=\frac{f_k\otimes f_k}{\epsilon_k}e^{-a\sigma_k}\ ,&
			\eps_a(f_k)&:=\epsilon_k e^{-a\sigma_k}\ .
		\end{align}
		This RFA is strongly separable (with $\tau_a=\eta_a$) and commutative. 
		\label{ex:dsep1dim}
	\item \label{ex:noninv}
		Let $I:=\Zb_{>0}$ and consider the one dimensional Hilbert spaces
		$\Cb f_k$ and $\Cb g_k$ 
		with $\norm{f_k}^2 = k^2$ and $\norm{g_k}^2 = k^{-1}$. 
		Let $F:=\bigoplus_{k=1}^{\infty}\Cb f_k$ and $G:=\bigoplus_{k=1}^{\infty}\Cb g_k$ 
		be the Hilbert space direct sums, so that
		\begin{align}
			\langle f_k,f_j\rangle_{F}=\delta_{k,j}k^2\quad\text{and}\quad
			\langle g_k,g_j\rangle_{G}=\delta_{k,j}k^{-1}\ .
		\end{align}
Define the maps
		\begin{align}
			\begin{aligned}
				\mu^F_a(f_k\otimes f_j):=&\delta_{k,j}e^{-ak^2}f_k\ ,&\eta^F_a(1):=&\sum_{k=1}^{\infty}e^{-ak^2}f_k\ ,\\
				\Delta^F_a(f_k):=&e^{-ak^2}f_k\otimes f_k\ ,&\eps^F_a(f_k):=&e^{-ak^2}\ ,
			\end{aligned}
			\label{eq:ex:noninv:structure-maps}
		\end{align}
		and similarly for $G$ by changing $f_k$ to $g_k$.
		These formulas define strongly separable (with $\tau_a=\eta_a$) 
		commutative RFAs by the previous example
		with $(\epsilon_k,\sigma_k)=(k^{-1},k^2)$ for $F$
		and with $(\epsilon_k,\sigma_k)=(k,k^2)$ for $G$. 
		Note that $\lim_{a\to0}\mu^F_a$ exists and has norm 1,
		but $\lim_{a\to0}\mu^G_a$ does not: the set 
		$\setc*{\norm{\mu^G_0(g_k\otimes g_k)}/\norm{g_k\otimes g_k}=k}{k\in\Zb_{>0}}$
		is not bounded.

	Define the morphism of RFAs $\psi:F\to G$ as
	\begin{align*}
		\psi(f_k)=g_k \quad \text{ for } ~~k=1,2,\dots \ .
	\end{align*}
	It is an operator with $\norm{\psi}=1$ and is mono and epi,
	but it does not have a bounded inverse,
	as the set $\setc*{\norm{\psi^{-1}(g_k)}/\norm{g_k}=k^{2}}{k\in\Zb_{>0}}$ is not bounded.
	This is an example illustrating that the category $\Hilb$ is not abelian: 
	a morphism can be mono and epi without being invertible.
	The example also shows that RFA morphisms which are mono and epi need
	not preserve the existence of zero-area limits. Isomorphisms, on the
	other hand, being continuous with continuous inverse, do preserve the existence of limits.

	\item Consider $L^2(G)$, the Hilbert space of square integrable functions
		on a compact semisimple Lie group $G$ with the following morphisms:
\begin{align}
	\begin{aligned}
		\eta_a(1)&:=\sum_{V\in \hat{G}}e^{-a\sigma_V}\dim(V) \chi_V\ ,\quad
		\mu(F)(x):=\int_G F(y,y^{-1}x)dy\ ,\\
		P_a(f)&:=\mu(\eta_a(1)\otimes f)\ ,\quad \mu_a:=P_a\circ\mu\ ,\\
		\eps_a(f)&:=\int_G \eta_a(1)(x)f(x^{-1})dx\ ,\quad
		\Delta(f)(x,y):=f(xy)\ ,\quad
		\Delta_a:=\Delta\circ P_a\ ,
	\end{aligned}
	\label{eq:L2G-structure-morphisms}
\end{align}
		where $f\in L^2(G)$, $F\in  L^2(G\times G)\cong L^2(G)\otimes  L^2(G)$, 
		$\hat{G}$ is a set of representatives 
		of isomorphism classes of finite dimensional 
		simple unitary $G$-modules,
		$\sigma_V$ is the value of the Casimir operator 
		of the Lie algebra of $G$ in the simple module $V$,
		$\chi_V$ is the character of $V$,
		and $\int_G$ denotes the Haar integral on $G$.
		These formulas define a strongly separable 
		RFA in $\Hilb$ (with $\tau_a=\eta_a$), 
		for details see Section~\ref{sec:twoRFAsfromG}.
		\label{ex:2dym:l2}

	\item The centre of the previous RFA is $Cl^2(G)$, the Hilbert space
		of square integrable class functions on $G$, with multiplication,
		unit and counit given by the same formulas, 
		but with the following coproduct:
		\begin{align}
			\Delta_a(f)=\sum_{V\in \hat{G}}e^{-a\sigma_V}
			\left( \dim(V) \right)^{-1} \chi_V\otimes\chi_V f_V\ ,
			\label{eq:Cl2G-coproduct}
		\end{align}
		where $f=\sum_{V\in \hat{G}}f_V \chi_V\in Cl^2(G)$.
		This is a strongly separable RFA in $\Hilb$ (with $\tau_a(1)=\sum_{V\in \hat{G}}e^{-a\sigma_V}
		\left( \dim(V) \right)^{-1}
		\chi_V$ and 
		$\tau^{-1}_a(1)=\sum_{V\in \hat{G}}e^{-a\sigma_V}
		\left( \dim(V) \right)^{3}
		\chi_V$). 
		For more details see Section~\ref{sec:twoRFAsfromG}.
		\label{ex:2dym:cl2}
\end{enumerate}

\subsection{Tensor products of RFAs and finite-dimensional RFAs}

We denote the \textsl{category of regularised algebras in $\Sc$} by $\RAlg{\Sc}$ and
the \textsl{category of RFAs in $\Sc$} by $\RFrob{\Sc}$. 
In this section we investigate under which conditions one can endow these categories with a monoidal structure. Then we describe the case $\Sc=\Vectfd$ in detail.

\begin{proposition}\label{prop:rfamor}
	Any morphism of RFAs is mono and epi.
\end{proposition}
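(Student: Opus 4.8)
The plan is to exploit the fact that an RFA morphism $f\colon A\to B$ is simultaneously a morphism of regularised algebras and of regularised coalgebras, and to extract a left and right inverse for $f$ (as a morphism in $\Sc$) from the available coevaluation/evaluation-type structure, the point being that these inverses are automatically morphisms in $\Sc$, which forces $f$ to be mono and epi. Concretely, let $\beta_a^A,\gamma_a^A$ (resp.\ $\beta_a^B,\gamma_a^B$) denote the non-degenerate pairing and copairing of $A$ (resp.\ $B$), which exist by Proposition~\ref{lem:rfa:altdef} and the remark following it, with $\gamma_a^A=\Delta_{a_1}^A\circ\eta_{a_2}^A$. The key observation is that $f$ being a coalgebra morphism gives $\eps_a^B\circ f=\eps_a^A$ and $\Delta_a^B\circ f=(f\otimes f)\circ\Delta_a^A$, while $f$ being an algebra morphism gives $f\circ\eta_a^A=\eta_a^B$ and $f\circ\mu_a^A=\mu_a^B\circ(f\otimes f)$; together these yield the two ``intertwining'' identities $\beta_a^B\circ(f\otimes f)=\beta_a^A$ and $(f\otimes f)\circ\gamma_a^A=\gamma_a^B$.

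First I would construct a candidate two-sided inverse (at positive area) $g_a:=(\id_A\otimes\beta_{a_1}^B)\circ(\gamma_{a_2}^A\otimes\id_B)\colon B\to A$. Using the intertwining identities and the zig-zag relations \eqref{eq:rfa:nondeg} for both $A$ and $B$, a direct string-diagram computation gives $f\circ g_a=P_a^B$ and $g_a\circ f=P_a^A$. (The computation is the usual ``bend the copairing of $A$ through $f$, turn it into the copairing of $B$, then snake'' manipulation; one has to keep track of area parameters and use Lemma~\ref{lem:ra:properties}\,(\ref{lem:ra:mupacommute}) and Lemma~\ref{lem:copairing}\,(\ref{lem:copairing:1}) to reshuffle them, but it is routine.) Now take the limit $a\to0$: by separate continuity of composition in $\Sc$ and $\lim_{a\to0}P_a=\id$, if $g_a$ converged we would be done immediately, but it need not — this is exactly the phenomenon flagged in Remark~\ref{rem:not-groupoid}. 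So instead I argue directly: if $f\circ h=0$ for some $h$, then $P_a^A\circ h=g_a\circ f\circ h=0$ for all $a>0$, hence $h=\lim_{a\to0}P_a^A\circ h=0$; thus $f$ is mono. Dually, if $h\circ f=0$ then $h\circ P_a^B=h\circ f\circ g_a=0$ for all $a>0$, so $h=\lim_{a\to0}h\circ P_a^B=0$, giving that $f$ is epi.

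The step I expect to require the most care is the string-diagram verification that $f\circ g_a=P_a^B$ and $g_a\circ f=P_a^A$ — specifically, checking that the area bookkeeping works out so that one genuinely lands on $P_a$ (the same $a$) on both sides, rather than on some $P_{a'}$ with a shifted parameter; this uses that $P$ is a semigroup and that $P_a$ commutes past $\mu$, $\Delta$, $\eta$, $\eps$ appropriately (Lemmas~\ref{lem:ra:properties} and~\ref{lem:copairing}). A secondary subtlety is that I only ever want to pass to the limit \emph{after} pre- or post-composing with a morphism that is being shown to vanish, so that I only use separate continuity of composition, which is all that is assumed of $\Sc$ (and which holds in $\Hilb$ with the strong operator topology by Remark~\ref{rem:hilbcont}); at no point do I need $g_a$ itself to have an $a\to0$ limit, which is crucial since in general it does not.
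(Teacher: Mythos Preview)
Your overall strategy is the paper's, but the formula you wrote for $g_a$ does not type-check: $\gamma_{a_2}^A\otimes\id_B$ lands in $A\otimes A\otimes B$, whereas $\id_A\otimes\beta_{a_1}^B$ has domain $A\otimes B\otimes B$. You are missing a copy of $f$ on the middle strand. The correct definition is
\[
g_a \;=\; (\id_A\otimes\beta_{a_1}^B)\circ(\id_A\otimes f\otimes\id_B)\circ(\gamma_{a_2}^A\otimes\id_B)\colon B\to A,
\]
which is exactly the paper's $\psi_{a,b}$. With this fix the two intertwining identities you stated do the work you claim: for $g_a\circ f$ one gets two copies of $f$ feeding into $\beta^B$, hence $\beta^B\circ(f\otimes f)=\beta^A$ collapses it to the zig-zag for $A$, giving $P_a^A$; for $f\circ g_a$ one gets two copies of $f$ coming out of $\gamma^A$, hence $(f\otimes f)\circ\gamma^A=\gamma^B$ collapses it to the zig-zag for $B$, giving $P_a^B$.

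Apart from this slip, your argument is line-for-line the paper's: same inverse-up-to-$P_a$, same use of separate continuity and $\lim_{a\to 0}P_a=\id$ to conclude mono and epi without ever needing $g_a$ itself to converge. Your closing remark that $g_a$ need not have an $a\to 0$ limit is exactly the content of Remark~\ref{rem:not-groupoid}.
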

\begin{proof}
	Let $\varphi:A\to B$ be a morphism of RFAs and
        let $\psi_{a,b}:=(\id_A\otimes\beta_b^B)\circ
        (\id_A\otimes\varphi\otimes\id_B)\circ(\gamma_a^A\otimes\id_B)$.
        Then $\varphi\circ\psi_{a,b}=P_{a+b}^B$
        and $\psi_{a,b}\circ\varphi=P_{a+b}^A$.
        We show that $\varphi$ is epi, showing that it is mono is similar.
        Let $f,g\in\Sc(B,X)$ for an object $X$ such that $f\circ\varphi=g\circ\varphi$.
        After composing with $\psi_{a,b}$ from the right for $a,b\in\Rb_{>0}$ we get
        $f\circ P^B_{a+b}=g\circ P^B_{a+b}$. This last equation holds for every $a,b\in\Rb_{>0}$,
        so we can take the limit $a,b\to0$ to get $f=g$.
\end{proof}

\begin{remark}\label{rem:not-groupoid}
		As we saw in 
		Example~\ref{ex:noninv}, 
		not every morphism of RFAs in $\Hilb$ is invertible,
	hence $\RFrob{\Hilb}$ is \textsl{not} a groupoid.
\end{remark}

However we have the following:
\begin{corollary}\label{cor:fdrfagroupoid}
	The category $\RFrob{\Vectfd}$ is a groupoid.
\end{corollary}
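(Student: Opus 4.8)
The plan is to deduce the corollary directly from Proposition~\ref{prop:rfamor}, which already tells us that every morphism of RFAs is both mono and epi, together with the fact that $\Vectfd$ is an abelian category in which ``mono $+$ epi $\Rightarrow$ iso''.

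First I would argue that any morphism $\varphi : A \to B$ of RFAs in $\Vectfd$ is an isomorphism of the underlying objects. By Proposition~\ref{prop:rfamor} it is mono and epi, hence an injective and surjective linear map between finite-dimensional vector spaces; its set-theoretic inverse $\varphi^{-1}$ is then linear, and it is automatically a morphism of $\Vectfd$ since every linear map between finite-dimensional normed spaces is continuous. (Alternatively, one can avoid invoking abelianness: in $\Vectfd$ the continuous semigroup $P_a$ is of the form $e^{aH}$ for some $H$, hence invertible, and the auxiliary morphisms $\psi_{a,b}$ from the proof of Proposition~\ref{prop:rfamor}, which satisfy $\varphi\circ\psi_{a,b}=P^B_{a+b}$ and $\psi_{a,b}\circ\varphi=P^A_{a+b}$, exhibit a right inverse $\psi_{a,b}\circ(P^B_{a+b})^{-1}$ and a left inverse $(P^A_{a+b})^{-1}\circ\psi_{a,b}$ of $\varphi$, which must coincide.) This is precisely the step that breaks down in $\Hilb$, where a bounded linear bijection need not have bounded inverse, cf.\ Remark~\ref{rem:not-groupoid} and Example~\ref{ex:noninv}.

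It then remains to check that $\varphi^{-1}$ is again a morphism of RFAs, which is routine. From $\eta^B_a=\varphi\circ\eta^A_a$ one gets $\varphi^{-1}\circ\eta^B_a=\eta^A_a$; using $(\varphi\otimes\varphi)^{-1}=\varphi^{-1}\otimes\varphi^{-1}$, the relation $\mu^B_a\circ(\varphi\otimes\varphi)=\varphi\circ\mu^A_a$ gives $\varphi^{-1}\circ\mu^B_a=\mu^A_a\circ(\varphi^{-1}\otimes\varphi^{-1})$; and the coalgebra compatibility relations $\eps^A_a=\eps^B_a\circ\varphi$ and $\Delta^B_a\circ\varphi=(\varphi\otimes\varphi)\circ\Delta^A_a$ yield $\eps^A_a\circ\varphi^{-1}=\eps^B_a$ and $\Delta^A_a\circ\varphi^{-1}=(\varphi^{-1}\otimes\varphi^{-1})\circ\Delta^B_a$ in the same way. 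Hence $\varphi^{-1}\in\RFrob{\Vectfd}(B,A)$ is a two-sided inverse of $\varphi$, so every morphism of $\RFrob{\Vectfd}$ is invertible and the category is a groupoid.

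The main (and essentially only) point of substance is the one flagged above: one must use finite-dimensionality to pass from a linear bijection to a morphism of the underlying category, a step with no analogue in $\Hilb$. Everything else — boundedness of $\varphi^{-1}$, and its compatibility with the structure maps — is immediate, so I do not expect any real obstacle.
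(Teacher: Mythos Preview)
Your argument is correct and is exactly what the paper has in mind: the corollary is stated immediately after Proposition~\ref{prop:rfamor} and Remark~\ref{rem:not-groupoid} with no separate proof, the implicit reasoning being that in $\Vectfd$ mono $+$ epi $\Rightarrow$ iso, and that the inverse of an RFA morphism is again an RFA morphism.

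One small correction to your aside about $\Hilb$: a bounded linear \emph{bijection} between Hilbert spaces \emph{does} have bounded inverse, by the open mapping theorem. The actual failure in $\Hilb$ is earlier: epi there only means dense image, not surjective, so mono $+$ epi does not give a bijection in the first place (this is precisely what happens in Example~\ref{ex:noninv}). Your main argument is unaffected.
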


\begin{proposition}\label{prop:rfa-tensorprod}
Assume that $\Sc$ has a symmetric structure $\sigma$ and that for 
$A,B\in\RFrob{\Sc}$ the assignments 
\begin{align}
	(a_1,\dots,a_n)\mapsto P_{a_1}^A\otimes P_{a_1}^B\otimes\dots\otimes P_{a_n}^A\otimes P_{a_n}^B
	\label{eq:rfamoncatcond}
\end{align}
are jointly continuous for every $n\ge1$. Then $A\otimes B$ is an RFAs by
\begin{align}
	\begin{aligned}
		\mu_{a}^{A\otimes B}&:=\left( \mu_a^A\otimes\mu_a^B \right)
		\circ \left( \id\otimes \sigma\otimes \id \right)\ ,
		&\eta_a^{A\otimes B}&:=\eta_a^A\otimes \eta_a^B\ ,\\
		\Delta_a^{A\otimes B}&:=\left( \id\otimes \sigma\otimes \id \right)\circ
                \left( \Delta_a^A\otimes\Delta_a^B \right)\ ,
                &\eps_a^{A\otimes B}&:=\eps_a^A\otimes \eps_a^B\ .
	\end{aligned}
	\label{eq:tensor-product-RFA}
\end{align}
\end{proposition}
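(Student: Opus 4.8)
The plan is to verify directly that the maps in \eqref{eq:tensor-product-RFA} satisfy all the axioms of a regularised Frobenius algebra, using the corresponding properties of $A$ and $B$ together with the naturality of the symmetric braiding $\sigma$. The key structural point is that $P_a^{A\otimes B} = P_a^A \otimes P_a^B$: indeed, computing $P_a^{A\otimes B} = \mu_{a_1}^{A\otimes B}\circ(\id\otimes\eta_{a_2}^{A\otimes B})$ and using the hexagon/naturality identities to slide the braiding past $\eta_{a_2}^A\otimes\eta_{a_2}^B$, the two braidings cancel and one is left with $P_{a}^A\otimes P_a^B$. Granting this, the hypothesis \eqref{eq:rfamoncatcond} is precisely the joint-continuity condition \eqref{eq:racont} (and its coalgebra analogue \eqref{eq:rcacont}) for $A\otimes B$, and $\lim_{a\to0}P_a^{A\otimes B} = \lim_{a\to0}(P_a^A\otimes P_a^B) = \id_A\otimes\id_B$ follows from the case $n=1$ of \eqref{eq:rfamoncatcond} applied at $a_1=a_2$, say, or more simply since $P^A_a\otimes P^B_a$ is the $n=1$ instance of the assignment with the two factors grouped.

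Next I would check the algebraic relations \eqref{eq:ra:unit}, \eqref{eq:ra:assoc}, \eqref{eq:rca:counit}, \eqref{eq:rca:coassoc} and the Frobenius relation \eqref{eq:rfa:frobrel}. These are the standard computations showing that the tensor product of two Frobenius algebras in a symmetric monoidal category is again a Frobenius algebra; the only modification is bookkeeping of area parameters, and here the crucial observation is that every relation in the definition of an RFA involves structure maps whose area labels sum to a \emph{fixed} total $a$, and in \eqref{eq:tensor-product-RFA} the $A$-part and $B$-part always carry the \emph{same} label. Hence associativity of $\mu^{A\otimes B}$ reduces, after inserting the definition and moving braidings around via naturality (exactly as in the non-regularised case), to associativity \eqref{eq:ra:assoc} of $\mu^A$ and of $\mu^B$ separately, applied with the matching decompositions of $a$; similarly for the unit, counit, coassociativity and Frobenius relations. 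No new continuity input is needed for these, since they are pointwise-in-$a$ identities.

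Finally, I would remark that once the algebra relations, the coalgebra relations and the Frobenius relation hold, Lemma~\ref{lem:ra:properties} (and its coalgebra dual) applies to $A\otimes B$, so $P_a^{A\otimes B}$ is automatically the semigroup attached to the algebra structure and agrees with the one from the coalgebra structure; thus the remaining conditions in Definition~\ref{def:reg-alg} and in the definition of a regularised coalgebra are exactly the limit and continuity statements established in the first paragraph. I expect the main obstacle to be purely notational: carefully tracking which copy of $\sigma$ sits where in the multi-fold tensor expressions when verifying \eqref{eq:rfa:frobrel}, and making sure each area label is consistently duplicated across the $A$- and $B$-factors so that one genuinely reduces to the axioms for $A$ and $B$ and not to some mismatched decomposition. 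This is entirely analogous to the finite-dimensional Frobenius-algebra computation and presents no conceptual difficulty, so I would present it briefly and refer to the standard diagrammatic manipulation.
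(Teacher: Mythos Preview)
Your proposal is correct and follows exactly the approach of the paper's proof, which simply states that the algebraic relations are straightforward and that the hypothesis \eqref{eq:rfamoncatcond} supplies the required continuity condition. You have merely spelled out in more detail the key identification $P_a^{A\otimes B}=P_a^A\otimes P_a^B$ and the reduction of each RFA axiom to the corresponding axioms for $A$ and $B$, which is precisely what the paper leaves implicit.
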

\begin{proof}
	Checking the algebraic relations is straightforward.
	The continuity of the maps in \eqref{eq:rfamoncatcond}
	assures that the continuity condition holds for the tensor product.
\end{proof}
	If condition \eqref{eq:rfamoncatcond} holds for every pair
	$A,B\in\RFrob{\Sc}$ we can define a symmetric monoidal structure on $\RFrob{\Sc}$,
	where the symmetric structure is inherited from $\Sc$.
	The tensor unit is the trivial RFA.

\begin{proposition}\label{prop:rfob-hilb-vect-mon-cat}
	$\RFrob{\Hilb}$ and $\RFrob{\Vectfd}$ are symmetric monoidal categories
	with the above tensor product.
\end{proposition}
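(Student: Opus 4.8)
The plan is to reduce everything, in both cases, to the single statement that the joint-continuity hypothesis \eqref{eq:rfamoncatcond} of Proposition~\ref{prop:rfa-tensorprod} holds for \emph{every} pair $A,B$ of RFAs. Once this is established, the tensor product \eqref{eq:tensor-product-RFA} together with the symmetric structure inherited from $\Sc$ turns $\RFrob{\Sc}$ into a symmetric monoidal category by the remark following Proposition~\ref{prop:rfa-tensorprod}, with tensor unit the trivial RFA on $\Ib$ (constant structure maps, so $P_a=\id_\Ib$ for all $a$ and all zero-area limits exist trivially). So the whole content of the proof is the verification of \eqref{eq:rfamoncatcond} in $\Hilb$ and in $\Vectfd$.

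For $\Sc=\Hilb$, recall that for an RFA $A\in\Hilb$ the assignment $a\mapsto P_a^A$ is continuous as a map $\Rb_{\ge0}\to\Bc(A)$ --- this is the $n=1$ case of \eqref{eq:racont} built into the definition of a regularised algebra, and in fact $a\mapsto P_a^A$ is norm continuous on $\Rb_{>0}$ by Lemma~\ref{lem:patrclass}\,\eqref{lem:patrclass:2}. Given $A,B\in\RFrob{\Hilb}$, I would apply Lemma~\ref{lem:semigrp} to the two continuous families $a\mapsto P_a^A$ and $a\mapsto P_a^B$ to get joint continuity of $(a,a')\mapsto P_a^A\otimes P_{a'}^B$ on $\Rb_{\ge0}^2$, and then iterate Lemma~\ref{lem:semigrp} --- exactly as in the proof of Corollary~\ref{cor:semigrp}, which is legitimate because a finite product $\Rb_{\ge0}^m$ is again a subset of a finite-dimensional normed space --- to the $2n$ families $P^A,P^B,P^A,P^B,\dots$ with parameters $a_1,a_1,a_2,a_2,\dots$. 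This yields joint continuity of $(a_1,a_1,\dots,a_n,a_n)\mapsto P_{a_1}^A\otimes P_{a_1}^B\otimes\dots\otimes P_{a_n}^A\otimes P_{a_n}^B$ on $\Rb_{\ge0}^{2n}$, and restricting along the diagonal embedding $\Rb_{\ge0}^n\hookrightarrow\Rb_{\ge0}^{2n}$, $(a_1,\dots,a_n)\mapsto(a_1,a_1,\dots,a_n,a_n)$, gives precisely the continuity of the assignment in \eqref{eq:rfamoncatcond} (up to the coherent, hence continuous, reordering of tensor factors). For $\Sc=\Vectfd$ the hypothesis is automatic: all spaces in sight are finite dimensional, the pairing $\Bc(V)\times\Bc(W)\to\Bc(V\otimes W)$ is bilinear between finite-dimensional normed spaces and hence jointly continuous, and $a\mapsto P_a^A$ is continuous by assumption (indeed $P_a^A=e^{aH}$ for suitable $H$, cf.\ Proposition~\ref{prop:rfafindim}); composing these finitely many continuous maps gives \eqref{eq:rfamoncatcond}.

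It then remains to record the routine bookkeeping that upgrades the data of Proposition~\ref{prop:rfa-tensorprod} to a genuine symmetric monoidal structure: the associativity and unit coherence isomorphisms of $\Sc$ lift to isomorphisms of RFAs, and the braiding $\sigma_{A,B}$ of $\Sc$ intertwines the structure maps of \eqref{eq:tensor-product-RFA} by naturality of $\sigma$ together with the hexagon axioms, so it is a morphism of RFAs. I do not expect any genuine obstacle: the only step carrying any content is the reduction of the multi-parameter joint continuity in \eqref{eq:rfamoncatcond} to continuity of the single families $a\mapsto P_a^A$ and $a\mapsto P_a^B$, which is precisely what the iterated application of Lemma~\ref{lem:semigrp} accomplishes; everything else is a straightforward check on top of the already-proved Proposition~\ref{prop:rfa-tensorprod}.
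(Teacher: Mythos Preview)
Your proposal is correct and follows essentially the same approach as the paper: for $\Hilb$ you iterate Lemma~\ref{lem:semigrp} on the families $P^A,P^B$ (which is exactly what the paper means when it invokes Corollary~\ref{cor:semigrp}), and for $\Vectfd$ you use that the tensor product of linear maps is jointly continuous in finite dimensions. The extra detail you provide on the diagonal restriction and the symmetric-monoidal bookkeeping is sound but already implicit in the remark following Proposition~\ref{prop:rfa-tensorprod}; the aside about norm continuity via Lemma~\ref{lem:patrclass}\,\eqref{lem:patrclass:2} is true but not needed, since strong continuity of $a\mapsto P_a$ suffices for Lemma~\ref{lem:semigrp}.
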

\begin{proof}
	In $\Vectfd$ the tensor product is continuous, 
	so there the statement is trivial.	
	In $\Hilb$ Corollary~\ref{cor:semigrp} assures that
	the condition \eqref{eq:rfamoncatcond} holds for every pair
        $A,B\in\RFrob{\Hilb}$.
\end{proof}

\subsubsection*{Finite dimensional regularised algebras and regularised Frobenius algebras}

In the rest of this section we classify finite dimensional regularised (Frobenius) algebras.
The forgetful functor from finite dimensional Hilbert spaces to $\Vectfd$
is an equivalence of categories, therefore in this subsection we will only consider 
regularised (Frobenius) algebras in the latter.

Denote with $\Algz{\Vectfd}$ the category with objects pairs $(F,H)$, where
$F\in\Vectfd$ is an algebra and $H\in Z(H)$ is an element in the centre of $F$,
and morphisms $\phi:(F,H)\to(F',H')$ such that 
$\phi:F\to F'$
is a morphism of algebras and $\phi(H)=H'$.
Analogously, 
denote with $\Frobz{\Vectfd}$ the category of pairs of
Frobenius algebras and elements in their centre.

We define a functor $\Dc:\RAlg{\Vectfd}\to\Algz{\Vectfd}$ as follows:
on objects as $\Dc(A):=(A,\frac{d}{da}\eta_a(1)\restriction_{a=0})$ and on morphisms as identity. 
The same definition also gives a functor $\Dc:\RFrob{\Vectfd}\to\Frobz{\Vectfd}$.

\begin{proposition}\label{prop:findimraclass}
	The functors $\Dc:\RAlg{\Vectfd}\to\Algz{\Vectfd}$ 
	and $\Dc:\RFrob{\Vectfd}\to\Frobz{\Vectfd}$ are equivalences of categories.
\end{proposition}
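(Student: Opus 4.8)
The plan is to construct an explicit inverse functor $\Ec$ to $\Dc$ and check that $\Ec$ and $\Dc$ are mutually inverse (or at least mutually naturally isomorphic; in fact on the nose since $\Dc$ is the identity on morphisms and on underlying objects). Given a pair $(F,H)$ with $F$ a finite-dimensional algebra (resp.\ Frobenius algebra) and $H\in Z(F)$, I would set $P_a:=\exp(aH)$ (multiplication by $\exp(aH)$, which makes sense as $H$ is central and $F$ is finite dimensional), and define
\begin{align*}
	\mu_a:=P_a\circ\mu\ ,\quad \eta_a:=P_a\circ\eta\ ,\quad \Delta_a:=\Delta\circ P_a\ ,\quad \eps_a:=\eps\circ P_a\ .
\end{align*}
First I would verify this is a regularised algebra: associativity \eqref{eq:ra:assoc} and the unit relation \eqref{eq:ra:unit} reduce to the corresponding relations for $(F,\mu,\eta)$ together with centrality of $H$ (which gives $P_a\circ\mu = \mu\circ(P_{a_1}\otimes P_{a_2})$ whenever $a_1+a_2=a$, so the area labels can be freely redistributed); the operator $\mu_{a_1}\circ(\id\otimes\eta_{a_2})$ computed from this data is exactly $P_a$, so condition \ref{def:reg-alg:2} holds because $\lim_{a\to0}\exp(aH)=\id_F$ and $a\mapsto\exp(aH)$ is (even norm-) continuous, hence so is any tensor power, giving \eqref{eq:racont}. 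Dually one checks the regularised coalgebra axioms, and the Frobenius relation \eqref{eq:rfa:frobrel} follows from the Frobenius relation for $F$ after pushing all the $P$'s to the outside using centrality of $H$. This defines $\Ec$ on objects; on morphisms $\Ec$ is again the identity, and I would check that an algebra (resp.\ Frobenius) morphism $\phi\colon F\to F'$ with $\phi(H)=H'$ automatically intertwines $P_a$ and $P_a'$ (because $\phi$ is an algebra map and $\phi(H)=H'$), hence intertwines all the $\mu_a,\eta_a,\Delta_a,\eps_a$, so it is a morphism of regularised (Frobenius) algebras.

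Next I would show $\Dc\circ\Ec=\id$ and $\Ec\circ\Dc=\id$. The composite $\Dc\circ\Ec$ sends $(F,H)$ to $(F,\tfrac{d}{da}\eta_a(1)|_{a=0})$, and $\tfrac{d}{da}(\exp(aH)\eta(1))|_{a=0}=H\cdot\eta(1)=H$, so $\Dc\circ\Ec=\id$ on the nose. For $\Ec\circ\Dc$, I need that an arbitrary regularised algebra $A$ in $\Vectfd$ is recovered from the pair $(A,H)$ with $H:=\tfrac{d}{da}\eta_a(1)|_{a=0}$. The key input here is that in $\Vectfd$ the map $a\mapsto P_a$ is norm continuous, so by the standard one-parameter semigroup theory (as invoked already in the proof of Proposition~\ref{prop:rfafindim} and Corollary~\ref{cor:finite-in-norm}) we have $P_a=\exp(aH_0)$ for a unique $H_0\in\End(A)$, namely $H_0=\tfrac{d}{da}P_a|_{a=0}$; and using $\eta_a=P_a\circ\eta_0$ with $\eta_0:=\lim_{a\to0}\eta_a$ (which exists, again by norm continuity, as in Proposition~\ref{prop:rfafindim}) one gets $H_0(\eta_0(1))=H$. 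One then has to see that $H_0$ is precisely multiplication by the central element $H$. This follows because $P_a$ is an algebra endomorphism-like map in the sense made precise by Lemma~\ref{lem:ra:mupacommute}: $P_a\circ\mu_b=\mu_b\circ(P_{b_1}\otimes\id)=\mu_{a+b}$ and similarly on the other leg, which at the infinitesimal level says $H_0\circ\mu=\mu\circ(H_0\otimes\id)=\mu\circ(\id\otimes H_0)$, i.e.\ $H_0$ is a derivation-free ``multiplication by a central element'' — more precisely $H_0(x)=H_0(x\cdot 1)=x\cdot H_0(1)$ wait, one must use $\mu_a$ and units carefully, but the upshot is $H_0 = (\text{mult.\ by } H_0(\eta_0(1))) = (\text{mult.\ by }H)$, and centrality of $H$ drops out of the two-sided relation. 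Once $P_a=\exp(aH)$ with $H$ central, the formulas $\mu_a=P_a\circ\mu_0$, $\eta_a=P_a\circ\eta_0$ (and dually for the Frobenius case) show that $A$ agrees with $\Ec(\Dc(A))$ as a regularised (Frobenius) algebra, and since $\Ec,\Dc$ are the identity on morphisms the natural transformations are trivial.

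The main obstacle I expect is the step identifying $H_0=\tfrac{d}{da}P_a|_{a=0}$ with multiplication by a central element: one must first justify that $P_a$ is differentiable at $0$ (norm continuity of a one-parameter semigroup on a finite-dimensional space gives analyticity, so this is fine but should be cited), then extract from Lemma~\ref{lem:ra:mupacommute} the infinitesimal intertwining relations, and finally convert ``$H_0\circ\mu=\mu\circ(H_0\otimes\id)=\mu\circ(\id\otimes H_0)$'' together with the unit into the statement $H_0(x)=H\cdot x$ with $H:=H_0(1)\in Z(A)$ — here one evaluates the first relation at $x\otimes 1$ (using that $\lim_{a\to0}\eta_a$ provides a genuine unit $1$ for the limiting algebra $(A,\mu_0,\eta_0)$, and that $P_a$, $\mu_a$, $\Delta_a$ are all built from $P_a$ composed with the $a\to0$ limits) to get $H_0(x)=x\cdot H$, and the two-sided version plus symmetry of the construction forces $H\in Z(A)$. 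Everything else is a routine but slightly lengthy verification of the axioms, which I would only sketch, emphasising that centrality of $H$ is exactly what allows area labels to be moved around freely in \eqref{eq:ra:unit}, \eqref{eq:ra:assoc}, \eqref{eq:rca:counit}, \eqref{eq:rca:coassoc}, \eqref{eq:rfa:frobrel}.
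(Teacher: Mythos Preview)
Your proposal is correct and follows essentially the same approach as the paper: the paper's proof simply states the inverse functor as sending $(A,H)$ to the regularised algebra with $P_a:=e^{aH}$, $\mu_a:=P_a\circ\mu$, $\eta_a:=P_a\circ\eta$, leaving all verifications implicit. You have supplied the details the paper omits --- in particular the argument that for an arbitrary $A\in\RAlg{\Vectfd}$ the generator $H_0=\tfrac{d}{da}P_a|_{a=0}$ is multiplication by a central element, via norm continuity of $P_a$ and the intertwining relations of Lemma~\ref{lem:ra:properties}\,(\ref{lem:ra:mupacommute}) --- and these details are sound (the informal ``wait'' aside, the computation $H_0(x)=x\cdot H$ and $H_0(x)=H\cdot x$ from evaluating the two-sided relation at $x\otimes 1$ and $1\otimes x$ is exactly what is needed).
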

\begin{proof}
	The inverse functor sends $(A,H)$ to 
	the regularised algebra $A$ with $P_a:=e^{aH}$, $\mu_a:=P_a\circ\mu$ and
	$\eta_a:=P_a\circ\eta$, where $\mu$ and $\eta$ are the multiplication and unit of $A$.
\end{proof}

\begin{remark}
	Let $(A,H)\in\Algz{Vect}$.  Then 
	\begin{align*}
		\Dc^{-1}(A,H)=\bigoplus_{\lambda\in \Sp H}\Dc^{-1}\left(A_{\lambda},Pr_{A_\lambda}(H)\right)
	\end{align*}
	as regularised algebras, where $A_{\lambda}$ denotes 
	the generalised eigenspace of $H$ 
	corresponding to the eigenvalue $\lambda$
	and $Pr_{A_\lambda}$ is the projection onto it.
	If $\Dc^{-1}(A,H)$ is furthermore an RFA then the above
	decomposition is valid as RFAs.
	\label{rem:fdRAdecomp}
\end{remark}

\subsection{Hermitian RFAs in $\Hilb$}\label{sec:dagger}

We start by recalling the notion of a dagger (or $\dagger$-) symmetric monoidal category $\Sc$, e.g.\ from \cite{Selinger:2007dc}.
A \textsl{dagger structure on $\Sc$} is a functor $(-)^{\dagger}:\Sc\to\Sc^{opp}$
which is identity on objects, $(-)^{\dagger\dagger}=\id_{\Sc}$,
$(f\otimes g)^{\dagger}=f^{\dagger}\otimes g^{\dagger}$ for any morphisms $f,g$ and
$\sigma_{U,V}^{\dagger}=\sigma_{V,U}$.

Let $\Sc$ be as in the beginning of Section~\ref{sec:RA-RFA-def} and
fix a $\dagger$-structure on $\Sc$. We do not require $(-)^\dagger$ to be continuous on hom-spaces, cf.\ Remark~\ref{rem:hilbcont}.

\begin{definition}\label{def:rfa:hermitean}
A \textsl{Hermitian regularised Frobenius algebra} (or $\dagger$-RFA for short) in $\Sc$ is an RFA in $\Sc$ for which
$\mu_a^{\dagger}=\Delta_a$ and $\eta_a^{\dagger}=\eps_a$ (and therefore $P_a=P_a^{\dagger}$).
We denote by $\dRFrob{\Sc}$ the full subcategory of $\RFrob{\Sc}$ with objects given by $\dagger$-RFAs.
\end{definition}

In the following we specialise to $\Sc=\Hilb$ with dagger structure given by the adjoint.
Note that $\dRFrob{\Hilb}$ is symmetric monoidal.

\begin{example}
Let us look at the examples from Section~\ref{sec:examples}.
In Example~\ref{ex:trivial}, if the Frobenius algebra $A\in\Hilb$ is a $\dagger$-Frobenius algebra (see e.g.\ \cite[Def.\,3.3]{Vicary:2008qa}) and if $\sigma\in\Rb$ then $P_a$ is self-adjoint and hence $A$ is a $\dagger$-RFA.
In Section~\ref{sec:twoRFAsfromG} we will show that the RFAs in Examples~\ref{ex:2dym:l2}~and~\ref{ex:2dym:cl2} are $\dagger$-RFAs.
The two RFAs in  Example~\ref{ex:noninv} are not $\dagger$-RFAs, as one can easily confirm that the summands $\Cb f_k$ and $\Cb g_k$ for $k>1$ are not $\dagger$-RFAs.
We compute e.g.\ for $\Cb f_k$ that
\begin{align}
	\langle f_k,\mu_a(f_k\otimes f_k)\rangle=e^{-ak^2}k^2\quad \text{ and }\quad \langle\Delta_a(f_k),f_k\otimes f_k\rangle=e^{-ak^2}k^4\ ,
	\label{eq:ex:noninv:notdagger}
\end{align}
so clearly, if $k>1$ then $\mu_a^{\dagger}\neq\Delta_a$. 
\end{example}

Let $\dFrobf{\Hilb}$ denote the category which has objects countable
families $\Phi=\left\{ F_j,\sigma_j \right\}_{j\in I}$ 
of $\dagger$-Frobenius algebras $F_j$
and real numbers $\sigma_j$, such that for every $a\in\Rb_{>0}$
\begin{align}
      \sup_{j\in I}\left\{ e^{-a\sigma_j}\norm{\mu_j} \right\}<\infty
      \quad\text{ and } \quad 
      \sum_{j\in I}e^{-2a\sigma_j}\norm{\eta_j}^2<\infty\ .
      \label{eq:drfaconvcond}
\end{align}
A morphism $\Psi:\Phi\to\Phi'$ consists of a bijection $f:I\xrightarrow{\sim}I'$ which satisfies $\sigma_j=\sigma_{f(j)}$ 
and a family of morphisms of Frobenius algebras $\psi_j:F_j\to F'_{f(j)}$
(which are automatically invertible \cite[Lem.\,2.4.5]{Kock:2004fa}).
We will write $\Psi=\left( f,\left\{ \psi_j \right\}_{j\in I} \right)$.

Let $\Phi\in\dFrobf{\Hilb}$ with the notation from above. Then by Proposition~\ref{prop:findimraclass}, $\Dc^{-1}(F_j,\sigma_j\id_{F_j})$ for $j\in I$ is an RFA.
Using Proposition~\ref{prop:directsumrfa}, we get an RFA structure on $\bigoplus_{j\in I}F_j$.
The next theorem shows that the resulting functor is an equivalence.

\begin{theorem} \label{thm:daggerclassification}
	There is an equivalence of categories $\dFrobf{\Hilb}\to\dRFrob{\Hilb}$
	given by $\Phi\mapsto \bigoplus_{j\in I}F_j$.\footnote{
	We would like to thank Andr\'e Henriques for explaining to us 
	this decomposition of $\dagger$-RFAs, or rather the corresponding decomposition of 
Hermitian area-dependent QFT's via Corollary~\ref{cor:hermaqftrfaequiv}.}
\end{theorem}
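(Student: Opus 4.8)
The plan is to construct the inverse functor explicitly and then check that the two composites are naturally isomorphic to the respective identity functors. The candidate inverse $\mathcal{G}:\dRFrob{\Hilb}\to\dFrobf{\Hilb}$ sends a $\dagger$-RFA $A$ to a decomposition into finite-dimensional $\dagger$-Frobenius algebras. To build this decomposition, I would use Lemma~\ref{lem:patrclass}: for a $\dagger$-RFA the operator $P_a$ is trace class, self-adjoint and positive (positivity because $P_a = P_{a/2}\circ P_{a/2} = P_{a/2}^\dagger\circ P_{a/2}$), hence by the spectral theorem it is diagonalisable with a discrete spectrum accumulating only at $0$, and finite-dimensional eigenspaces for each nonzero eigenvalue. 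Moreover the semigroup property $P_a\circ P_b = P_{a+b}$ together with strong continuity and $\lim_{a\to0}P_a=\id_A$ forces $P_a = e^{aH}$ on each joint eigenspace; since the $P_a$ all commute and are simultaneously diagonalisable, $A$ decomposes as a Hilbert space direct sum $A=\bigoplus_{j\in I}F_j$ where $H$ acts as a single real scalar $\sigma_j$ on each $F_j$, and each $F_j$ is finite-dimensional (being contained in an eigenspace of $P_a$ for any fixed $a>0$ with eigenvalue $e^{a\sigma_j}$).

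The next step is to show each $F_j$ inherits a $\dagger$-Frobenius algebra structure. Because $H$ is central (this follows as in Lemma~\ref{lem:ra:center}, or directly from $P_a\circ\mu_a = \mu_a\circ(P_a\otimes\id)$ etc.), the multiplication $\mu_a$ preserves the direct sum decomposition, and similarly for $\Delta_a,\eta_a,\eps_a$. Setting $\mu^{(j)} := e^{-a\sigma_j}\mu_a|_{F_j\otimes F_j}$ gives an $a$-independent multiplication on $F_j$ (independence follows from Part~\ref{lem:ra:mupacommute} of Lemma~\ref{lem:ra:properties} applied within $F_j$ where $P_a$ acts as $e^{a\sigma_j}\id$), and likewise one extracts $\eta^{(j)},\Delta^{(j)},\eps^{(j)}$; these satisfy the Frobenius relations because $A$ does, and $\dagger$ restricts to the adjoint on each $F_j$ so $(\mu^{(j)})^\dagger = \Delta^{(j)}$. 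Finally, the convergence conditions \eqref{eq:drfaconvcond} are exactly the translation of Proposition~\ref{prop:directsumrfa} for this family: $\sup_j\norm{\mu_a^j}<\infty$ becomes $\sup_j e^{a\sigma_j}\norm{\mu^{(j)}}<\infty$ — wait, one must be careful with signs. Writing $P_a = e^{-a\sigma_j}$ on $F_j$ as in Examples~\ref{ex:dsep1dim}--\ref{ex:2dym:cl2} (so the Casimir-type parameter enters with a minus), one reads off precisely \eqref{eq:drfaconvcond}. On morphisms, $\mathcal{G}$ sends an RFA morphism $\varphi:A\to A'$ to the induced bijection of index sets plus the restrictions $\varphi|_{F_j}$; here I would use Proposition~\ref{prop:rfamor} (every RFA morphism is mono and epi) together with the fact that $\varphi$ intertwines $P_a$ with $P'_a$, hence intertwines their spectral decompositions, so $\varphi$ must map $F_j$ isomorphically onto some $F'_{f(j)}$ with $\sigma_j = \sigma'_{f(j)}$, and the restriction is a morphism of finite-dimensional Frobenius algebras, automatically invertible.

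It then remains to verify that $\mathcal{G}$ and the functor $\Phi\mapsto\bigoplus_j F_j$ of the theorem statement are mutually inverse up to natural isomorphism. One composite, $\dFrobf{\Hilb}\to\dRFrob{\Hilb}\to\dFrobf{\Hilb}$, returns the same family $\{F_j,\sigma_j\}$ essentially on the nose (the spectral decomposition of $P_a=\bigoplus_j e^{-a\sigma_j}\id_{F_j}$ recovers exactly the given summands, grouping together any $j$ with equal $\sigma_j$ — one should either allow repeated $\sigma_j$ in $\dFrobf{\Hilb}$ as the definition does, in which case the recovered family is isomorphic but possibly regrouped, or note the natural isomorphism that splits eigenspaces back up). The other composite is the identity on $\dRFrob{\Hilb}$ because reassembling the extracted $(F_j,\sigma_j)$ via Proposition~\ref{prop:directsumrfa} and $\Dc^{-1}$ reproduces $\mu_a = P_a\circ\mu = \bigoplus_j e^{-a\sigma_j}\mu^{(j)}$, and similarly for the other structure maps, which is the original RFA structure on $A$.

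The main obstacle I anticipate is the spectral-theoretic core of the first step: showing rigorously that a strongly continuous positive self-adjoint contraction semigroup $(P_a)_{a>0}$ with $P_a$ trace class for all $a>0$ and $P_a\to\id_A$ strongly as $a\to0$ must be simultaneously diagonalisable with $P_a=e^{aH}$ for a (possibly unbounded, negative) self-adjoint generator $H$ whose nonzero "$e^{a\cdot}$"-eigenspaces are finite-dimensional and whose spectrum accumulates only at $0$. This is standard semigroup theory (the generator exists by Hille--Yosida, compactness of $P_a$ forces discrete spectrum by \cite[Thm.\,II.4.29]{Engel:1999sg} which is already invoked in the excerpt for norm-continuity), but assembling it carefully — in particular ruling out a continuous part of the spectrum and ensuring the algebra maps genuinely respect the decomposition at the level of the \emph{Hilbert space} direct sum rather than merely the algebraic one — is where the real work lies. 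Everything after that is the "mechanical" bookkeeping the introduction alludes to.
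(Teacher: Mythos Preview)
Your proposal is correct and follows essentially the same strategy as the paper's proof: use that $P_a$ is self-adjoint and trace class (hence compact) to decompose $A$ into finite-dimensional eigenspaces, show these eigenspaces are $\dagger$-Frobenius algebras because the structure maps commute with $P_a$, and verify the convergence conditions via Proposition~\ref{prop:directsumrfa}.

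The one place where the paper is more elementary than your outline is the spectral core you flag as the main obstacle. Rather than invoking Hille--Yosida to produce a global (unbounded) generator $H$ and then analysing its spectrum, the paper fixes a single $a>0$, decomposes $A$ into eigenspaces $F_\alpha$ of $P_a$, and then argues \emph{eigenspace by eigenspace}: since the $P_b$ all commute with $P_a$, they preserve each finite-dimensional $F_\alpha$, and on $F_\alpha$ one can simultaneously diagonalise $P_a,P_b,P_{a+b}$ to obtain scalars satisfying $c(a)c(b)=c(a+b)$ with $c(0)=1$, forcing $c(a)=e^{-a\sigma_\alpha}$. The kernel of $P_a$ is ruled out exactly by your positivity observation $P_a=P_{a/2}^\dagger P_{a/2}$ (so $P_a v=0\Rightarrow P_{a/2}v=0$, iterate and use $P_a\to\id$). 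This sidesteps any discussion of unbounded generators or the continuous spectrum entirely, which is both shorter and avoids the delicacies you anticipated. Your route via semigroup theory would also work, but the paper's argument shows that nothing beyond the spectral theorem for compact self-adjoint operators is really needed.
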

\begin{proof}
	We define the inverse functor.
	Let $F\in\dRFrob{\Hilb}$ and fix $a\in\Rb_{>0}$. 
	Then $P_a$ is self-adjoint and therefore can be diagonalised.
	Let $\sp_{\mathrm{pt}}(P_a)$ denote the point spectrum\footnote{
		The point spectrum of a bounded operator is the set of eigenvalues. Every compact operator on an infinite-dimensional Hilbert space has 0 in its spectrum, but it need not be an eigenvalue.} of $P_a$.
	Furthermore, by Lemma~\ref{lem:patrclass} $P_a$ is of trace class, and hence compact. 
	Thus it has at most countably many 
	eigenvalues and the eigenspaces with non-zero eigenvalues
	are finite dimensional. Let
	\begin{align}\label{eq:fdec}
		F=\bigoplus_{\alpha\in \sp_{\mathrm{pt}}(P_a) }F_{\alpha}
	\end{align}
	be the corresponding eigenspace decomposition 
	of $P_a$.
	
\medskip

\noindent	
\textsl{Claim:} The eigenvalue $\alpha$
	of $P_a$ on $F_{\alpha}$ 
	is of the form 
	$e^{-a \sigma_{\alpha}}$ for some $\sigma_{\alpha}\in\Rb$. 
	In particular 0 is not an eigenvalue.
\\
	To show this, first 
	assume that $c(a):=\alpha\neq0$, so that $F_{\alpha}$ is finite dimensional, and  
	simultaneously diagonalise $P_a$, $P_b$ and $P_{a+b}$ on $F_{\alpha}$.
	Then on a subspace where all three operators are constant with values $c(a)$, $c(b)$ and $c(a+b)$ one has that $c(a)c(b)=c(a+b)$. 
	Furthermore $a\mapsto c(a)$ is a continuous function $\Rb_{\ge0}\to\Rb$ and $c(0)=1$ since $a\mapsto P_a$ is strongly continuous at every $a\in\Rb_{\ge0}$ and $\lim_{a\to0}P_a=\id_F$. 
	So the unique solution to the above functional equation is $c(a)=e^{-a \sigma_{\alpha}}$ for some $\sigma_{\alpha}\in\Rb$.\\
	Finally let us assume that $\alpha=0$. Clearly, $\ker(P_{a})\subseteq \ker(P_{a+b})$ for every $b\in\Rb_{\ge0}$.
	Since $P_a$ is self adjoint, we have for $v\in F_0$ that $0=P_a(v)=P_{a/2}^{\dagger}\circ P_{a/2}(v)$.
	But then $P_{a/2}(v)=0$ and similarly, for every $n\in\Zb_{\ge0}$ we have that $P_{a/2^n}(v)=0$.
	Altogether we have that $F_{0}=\ker(P_{a})=\ker(P_{b})$ for every $b\in\Rb_{\ge0}$.
	So $\lim_{a\to0}P_a=\id_F$ implies that $F_0=\left\{ 0 \right\}$.

\medskip

\noindent	
	\textsl{Claim:} The eigenspaces are $\dagger$-Frobenius algebras by
	restricting and projecting the structure maps of $F$.
\\	
	To show this, first confirm that the structure maps do not mix
	eigenspaces of $P_a$, because $P_a$ commutes with them.
	Then checking $\dagger$-RFA relations is straightforward and 
	these are $\dagger$-Frobenius algebras,
	cf.\ Proposition~\ref{prop:rfafindim}.

\medskip

\noindent	
	\textsl{Claim:} The convergence conditions in \eqref{eq:drfaconvcond}
	are satisfied by the above obtained family of $\dagger$-Frobenius algebras
	$F_{\alpha}$ and real numbers 
	$\sigma_{\alpha}$.\\
This can be shown directly
	by computing the norm of the structure maps.

\medskip

	Showing that the two functors give an equivalence of categories is now straightforward.

\end{proof}

\begin{corollary}\label{cor:dagger-RFA-Pa-mono-epi}
	Let $A\in\Hilb$ be a $\dagger$-RFA. Then $P_a$ is mono and epi.
\end{corollary}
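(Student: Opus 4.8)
The plan is to reduce the categorical statement to two analytic facts about the bounded operator $P_a\in\Bc(A)$: in $\Hilb$ a morphism is mono exactly when it is injective and epi exactly when it has dense image (the latter characterisation is the one already used implicitly in the proof of Lemma~\ref{lem:monoepi}). The key extra input is that for a $\dagger$-RFA the operator $P_a$ is self-adjoint, as recorded in Definition~\ref{def:rfa:hermitean}.

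First I would prove injectivity of $P_a$, essentially repeating the kernel argument from the proof of Theorem~\ref{thm:daggerclassification}. Let $v\in A$ with $P_a(v)=0$. Using the semigroup identity $P_{a/2}\circ P_{a/2}=P_a$ of Lemma~\ref{lem:ra:properties}\,\ref{lem:ra:semigrp} together with $P_{a/2}^\dagger=P_{a/2}$, one gets $\|P_{a/2}(v)\|^2=\langle v,P_{a/2}^\dagger P_{a/2}(v)\rangle=\langle v,P_a(v)\rangle=0$, hence $P_{a/2}(v)=0$. Iterating gives $P_{a/2^n}(v)=0$ for all $n\ge 0$, and since $\lim_{a\to 0}P_a=\id_A$ in the strong operator topology, $v=\lim_{n\to\infty}P_{a/2^n}(v)=0$. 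So $P_a$ is mono.

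For epi I would use self-adjointness once more: by the standard Hilbert-space identity $\overline{\im(P_a)}=\ker(P_a^\dagger)^{\perp}$ together with the injectivity just shown, $\overline{\im(P_a)}=\ker(P_a)^{\perp}=\{0\}^{\perp}=A$, so $P_a$ has dense image and is therefore epi. Alternatively, one can invoke Theorem~\ref{thm:daggerclassification}: writing $A\cong\bigoplus_{j\in I}F_j$, the operator $P_a$ acts on the finite-dimensional summand $F_j$ as multiplication by the nonzero scalar $e^{-a\sigma_j}$, so it is injective and its image contains the dense algebraic direct sum of the $F_j$.

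I do not expect a genuine obstacle here; the only slightly delicate point is that the passage from $P_{a/2^n}(v)=0$ to $v=0$ relies on strong continuity of $a\mapsto P_a$ at $0$ rather than on a norm estimate (which is unavailable in general, since $P_a$ is only strongly, not norm, continuous down to $a=0$), together with the standard fact $\overline{\im T}=\ker(T^\dagger)^{\perp}$ for bounded operators. Everything else is formal.
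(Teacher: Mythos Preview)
Your proof is correct and follows essentially the same approach as the paper: the paper simply says that monicity comes from the proof of Theorem~\ref{thm:daggerclassification} (which is exactly the kernel/halving argument you spell out) and that epi then follows from $P_a$ being self-adjoint, which is precisely your $\overline{\im(P_a)}=\ker(P_a^\dagger)^\perp$ step. You have just made explicit what the paper leaves implicit.
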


\begin{proof}
	From the proof of Theorem~\ref{thm:daggerclassification} we see that $P_a$ is mono.
	Since $P_a$ is self-adjoint we get that $P_a$ is epi.
\end{proof}

\begin{lemma}
	Every $\dagger$-Frobenius algebra in $\Hilb$ is semisimple.
	\label{lem:dfa:ssi}
\end{lemma}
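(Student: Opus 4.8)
The statement to prove is: every $\dagger$-Frobenius algebra in $\Hilb$ is semisimple. Since such an algebra is automatically finite-dimensional (its identity morphism is the $a\to 0$ limit of trace-class $P_a$, or more simply a $\dagger$-Frobenius algebra is a Frobenius algebra in $\Vectfd$ up to the forgetful equivalence), I am really working with a finite-dimensional complex algebra $A$ equipped with a $\dagger$-structure, i.e.\ an antilinear involution $(-)^\dagger$ on $\mathrm{End}(A)$ compatible with the Frobenius pairing. The cleanest route is to show that the Jacobson radical $J=J(A)$ vanishes by exhibiting a faithful $*$-representation, or equivalently by showing that the radical is a $\dagger$-closed ideal on which a positive-definiteness argument forces it to be zero.

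First I would set up the relevant Hilbert-space structure: $A$ carries its given inner product $\langle -,-\rangle$, and the multiplication $\mu:A\otimes A\to A$ has an adjoint $\mu^\dagger=\Delta$; similarly $\eta^\dagger=\eps$. The key point is that left multiplication $L_x:A\to A$, $y\mapsto xy$, has a bounded adjoint $L_x^\dagger$, and one computes from $\langle L_x y, z\rangle = \langle \mu(x\otimes y),z\rangle = \langle x\otimes y,\Delta z\rangle$ that $L_x^\dagger = L_{x^*}$ for a suitable antilinear map $x\mapsto x^*$ on $A$ (read off from $\eta,\eps$ and the pairing). Then $A$ acting on itself by left multiplication is a $*$-algebra of operators on the finite-dimensional Hilbert space $A$: we have a faithful $*$-representation $A\hookrightarrow \mathrm{End}(A)=B(A)$.

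The decisive step is then standard: a finite-dimensional complex algebra that admits a faithful $*$-representation on a Hilbert space is semisimple. Concretely, if $0\ne x\in J(A)$, then $L_x$ is a nilpotent operator on $A$ (since $J$ is a nilpotent ideal, $x^n=0$ for some $n$, so $L_x^n=L_{x^n}=0$); but $L_x^\dagger L_x = L_{x^*x}$ and by faithfulness $x^*x\ne 0$ whenever $x\ne 0$ — here I use that $\langle x,x\rangle = \eps(x^* x)$ (or the analogous formula coming from $\eta^\dagger=\eps$) is nonzero, so $x^*x\neq0$, hence $L_{x^*x}=L_x^\dagger L_x\ne 0$, i.e.\ $L_x\ne 0$ as an operator and moreover $L_x$ is \emph{normal}? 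Not quite — instead I would argue that the $*$-subalgebra of $B(A)$ generated by $\{L_a\}$ is a finite-dimensional $C^*$-algebra (complete the argument: it is $*$-closed and finite-dimensional, and on a Hilbert space every such algebra is a $C^*$-algebra), hence semisimple, hence contains no nonzero nilpotent ideal; the image of $J(A)$ would be such an ideal, so $J(A)=0$.

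\textbf{Main obstacle.} The subtle point I expect to need care with is verifying that the formula $L_x^\dagger = L_{x^*}$ genuinely produces an \emph{algebra} anti-homomorphism-compatible $*$ on $A$ (antilinear, involutive, with $(xy)^* = y^* x^*$ and $\langle x, y\rangle = \langle x^* , y^*\rangle$-type identities), using only the $\dagger$-Frobenius relations $\mu^\dagger=\Delta$, $\eta^\dagger=\eps$, together with the Frobenius and symmetry axioms — this is where the ``$\dagger$'' hypothesis is actually consumed, and one must check it is not merely a positivity statement but a genuine involution making $(A,*)$ into a finite-dimensional $*$-algebra with positive-definite trace $x\mapsto\langle x,x\rangle$. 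Once that bookkeeping is done, semisimplicity is immediate from the classical structure theory of finite-dimensional $C^*$-algebras (Wedderburn/Artin--Wedderburn, or directly: positive-definiteness of the trace kills the radical).
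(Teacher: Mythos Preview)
Your approach is correct and genuinely different from the paper's. The paper never introduces a $*$-operation on $A$; instead it diagonalises the self-adjoint operator $\zeta:=\mu\circ\Delta=\Delta^\dagger\Delta$, observes that $\zeta$ is a bimodule and bicomodule endomorphism so that its eigenspaces $F_\alpha$ are Frobenius subalgebras, shows $0\notin\sp(\zeta)$ by counitality, and then exhibits the separability idempotent $\alpha^{-1}\Delta(1)|_{F_\alpha}$ directly. Your route via a faithful $*$-representation and finite-dimensional $C^*$-algebra structure is the argument behind Vicary's theorem (which the paper cites for the definition of $\dagger$-Frobenius algebra); it is more conceptual and immediately gives the full $C^*$-structure, whereas the paper's argument is self-contained and yields separability explicitly.

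Regarding your flagged obstacle: it does go through. Define $x^*:=(\langle x,-\rangle\otimes\id)(\Delta(1))$; this is antilinear. The Frobenius relation gives $\Delta(z)=(\id\otimes\mu)(\Delta(1)\otimes z)=\sum_i e_i\otimes f_i z$ (writing $\Delta(1)=\sum e_i\otimes f_i$), whence
\[
\langle xy,z\rangle=\langle x\otimes y,\Delta z\rangle=\sum_i\langle x,e_i\rangle\langle y,f_i z\rangle=\langle y,x^* z\rangle,
\]
so $L_x^\dagger=L_{x^*}$. Injectivity of $L$ then gives $(xy)^*=y^*x^*$ and $x^{**}=x$ for free, and $\langle x,x\rangle=\langle 1,x^*x\rangle=\eps(x^*x)$ shows faithfulness. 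So $L(A)\subset B(A)$ is a $*$-closed finite-dimensional subalgebra, hence a $C^*$-algebra, and $J(A)$ maps to a nilpotent ideal there, forcing $J(A)=0$.
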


\begin{proof}
Let $F$ denote a $\dagger$-Frobenius algebra in $\Hilb$ and let
$\zeta:=\mu\circ\Delta=\Delta^{*}\circ\Delta$, which is an $F$-$F$-bimodule morphism
and an $F$-$F$-bicomodule morphism. 
It is a self-adjoint operator, so it can be
diagonalised and $F$ decomposes into Hilbert spaces as
\begin{align}\label{eq:edec}
        F=\bigoplus_{\alpha\in\sp(\zeta)}F_{\alpha}\ ,
\end{align}
where $F_{\alpha}$ is the eigenspace of $\zeta$ with eigenvalue $\alpha$.

Now we show that \eqref{eq:edec} is a direct sum of Frobenius algebras.
Let $\alpha\ne\beta$ and take $a\in F_{\alpha}$, $b\in F_{\beta}$.
We have
\begin{align}
	\begin{aligned}
        \zeta(ab)=&a\zeta(b)={\beta}ab\\
        =&\zeta(a)b={\alpha}ab 
	\end{aligned}
        \label{eq:tfactalg}
\end{align}
since $\zeta$
is a bimodule morphism. Then \eqref{eq:tfactalg} shows that $ab=0$,
so \eqref{eq:edec} is a decomposition as algebras.

Similarly one shows that equation \ref{eq:edec} is a decomposition as coalgebras.
We have for $\forall a\in F_{\alpha}$, using Sweedler notation:
\begin{align}
	\begin{aligned}
        \Delta(\zeta(a))=&\zeta(a_{(1)})\otimes a_{(2)}=a_{(1)}\otimes \zeta(a_{(2)})\\
        =&\alpha\Delta(a)=\alpha a_{(1)} \otimes a_{(2)}\ ,
	\end{aligned}
        \label{eq:tfactcoalg}
\end{align}
which shows that the comultiplication restricted to $F_{\alpha}$ lands in
$F_{\alpha}\otimes F_{\alpha}$.

	We now show that $0$ is not in the spectrum. Let us assume otherwise. 
	Then $F_0$ is a Frobenius algebra.
We have $\zeta(x)=\Delta^*\circ\Delta(x)=0$ for every $x\in F_0$, 
and so also $\Delta(x)=0$, which is a contradiction to counitality. 
Therefore 0 is not in the spectrum of $\zeta$, i.e.\ $\zeta$ is injective.

Now the only thing left to show is that each summand $F_{\alpha}$ is semisimple.
Take $\Delta(1)\cdot \alpha^{-1}$ 
projected on $F_{\alpha}\otimes F_{\alpha}$.
This is a separability idempotent for the algebra $F_{\alpha}$, hence $F_{\alpha}$
is separable, hence semisimple.
\end{proof}

Let $\epsilon\in\Cb\setminus\left\{ 0 \right\}$, $\sigma\in\Rb$ and let $\Cb_{\epsilon,\sigma}$ denote the one dimensional 
$\dagger$-RFA structure on $\Cb$ given by
\begin{align}
	\begin{aligned}
		\eps_a(1)&=e^{-a\sigma}\epsilon\ , 
       		&\Delta_a(1)&=\frac{e^{-a\sigma}}{\epsilon}1\otimes 1\ , \\
		 \eta_a(1)&=e^{-a\sigma} \epsilon^* 1\ ,
       		& \mu_a(1\otimes 1)&=\frac{e^{-a\sigma}}{\epsilon^*}1\ . 
	\end{aligned}
	\label{eq:c-eps-sigma}
\end{align}
Let $C\in\Hilb$ be a 1-dimensional $\dagger$-RFA and $c\in C$.
	Then by Proposition~\ref{prop:findimraclass}, $\varepsilon_a=\varepsilon_0\circ P_a$.
	Set $\epsilon:=\varepsilon_0(c)\in\Cb$ and $\sigma\in\Rb$ 
	to be such that $P_a(c)=e^{-a\sigma}c$.
	Then 
	\begin{align}
		\begin{aligned}
		C&\to\Cb_{\epsilon,\sigma}\\
		c&\mapsto 1
		\end{aligned}
		\label{eq:1d-d-rfa-mor}
	\end{align}
	is an isomorphism of RFAs.

\begin{corollary}
	Let $C$ be a commutative $\dagger$-RFA in $\Hilb$. 
	Then there is a family of numbers 
	$\left\{ \epsilon_j,\sigma_j \right\}_{j\in I}$, 
	where $\epsilon_j\in\Cb$ and $\sigma_j\in\Rb$, satisfying
	\begin{align}
		\sup_{j\in I}\left\{ e^{-a\sigma_j}|\epsilon_j|^{-1} \right\}<\infty
       		\quad\text {and }\quad 
       		\sum_{j\in I}e^{-2a\sigma_j}|\epsilon_j|^2<\infty
		\label{eq:cor:commdaggerclassification}
	\end{align}
	for every $a\in\Rb_{>0}$ such that 
	$C\cong\bigoplus_{j\in I}\Cb_{\epsilon_j,\sigma_j}$
	as RFAs.
	\label{cor:commdaggerclassification}
\end{corollary}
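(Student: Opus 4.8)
The plan is to run $C$ through the classification of Hermitian RFAs (Theorem~\ref{thm:daggerclassification}) and then split each finite-dimensional summand into one-dimensional pieces, using commutativity together with the semisimplicity result of Lemma~\ref{lem:dfa:ssi}.

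First I would apply Theorem~\ref{thm:daggerclassification} to $C\in\dRFrob{\Hilb}$: there is a countable family $\{F_j,\sigma_j\}_{j\in I_0}\in\dFrobf{\Hilb}$ with $C\cong\bigoplus_{j\in I_0}F_j$ as RFAs, each $F_j$ a finite-dimensional $\dagger$-Frobenius algebra, $\sigma_j\in\Rb$, and with the bounds \eqref{eq:drfaconvcond} satisfied; moreover $P_a$ acts on $F_j$ as the scalar $e^{-a\sigma_j}$ (Proposition~\ref{prop:findimraclass}). Since the multiplication of $C$ is the Hilbert-space direct sum of the $\mu_a^{F_j}$ (it vanishes on $F_j\otimes F_{j'}$ for $j\ne j'$) and the braiding of $\Hilb$ restricts componentwise, commutativity of $C$ forces $\mu_a^{F_j}\circ\sigma_{F_j,F_j}=\mu_a^{F_j}$ for all $a$, hence (letting $a\to0$) each $F_j$ is a commutative $\dagger$-Frobenius algebra.

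Next I would decompose each commutative $\dagger$-Frobenius algebra $F_j$ into an \emph{orthogonal} direct sum of one-dimensional $\dagger$-Frobenius algebras. By Lemma~\ref{lem:dfa:ssi} it is semisimple, hence, being commutative over $\Cb$, isomorphic as an algebra to $\Cb^{n_j}$ with primitive idempotents $p_{j,1},\dots,p_{j,n_j}$. Writing $\Delta(p_{j,k})=c_{j,k}\,p_{j,k}\otimes p_{j,k}$ (the comultiplication restricts to $\Cb p_{j,k}\otimes\Cb p_{j,k}$ by the coalgebra decomposition in the proof of Lemma~\ref{lem:dfa:ssi}) and using $\mu^\dagger=\Delta$, for $l\ne k$ one gets $\langle p_{j,l},p_{j,k}\rangle\,\langle p_{j,k},p_{j,k}\rangle=\langle\mu(p_{j,l}\otimes p_{j,k}),p_{j,k}\rangle=\langle 0,p_{j,k}\rangle=0$; since $\langle p_{j,k},p_{j,k}\rangle\ne 0$ this gives $\langle p_{j,l},p_{j,k}\rangle=0$. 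Thus $F_j=\bigoplus_{k=1}^{n_j}\Cb\,p_{j,k}$ is an orthogonal direct sum of one-dimensional $\dagger$-Frobenius algebras, and because $P_a$ acts as the scalar $e^{-a\sigma_j}$ it preserves this decomposition, so each $\Cb\,p_{j,k}$ is a one-dimensional $\dagger$-RFA with parameter $\sigma_j$. Choosing $c$ to be a unit vector in \eqref{eq:1d-d-rfa-mor}, each such summand is \emph{unitarily} isomorphic as an RFA to $\Cb_{\epsilon_{j,k},\sigma_j}$ for a suitable $\epsilon_{j,k}\in\Cb\setminus\{0\}$.

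Finally I would re-index the countably many pairs $(j,k)$ by a single countable set $I$, put $\sigma_{(j,k)}:=\sigma_j$ and $\epsilon_{(j,k)}:=\epsilon_{j,k}$, and assemble the componentwise unitaries into an RFA isomorphism $C\cong\bigoplus_{i\in I}\Cb_{\epsilon_i,\sigma_i}$. It then remains to derive \eqref{eq:cor:commdaggerclassification} from \eqref{eq:drfaconvcond}: from \eqref{eq:c-eps-sigma} at $a=0$ one reads off $\norm{\mu_0}=|\epsilon|^{-1}$ and $\norm{\eta_0}=|\epsilon|$ for $\Cb_{\epsilon,\sigma}$, so $\norm{\mu^{F_j}}=\max_k|\epsilon_{j,k}|^{-1}$ and $\norm{\eta^{F_j}}^2=\sum_k|\epsilon_{j,k}|^2$; substituting into \eqref{eq:drfaconvcond} yields $\sup_i e^{-a\sigma_i}|\epsilon_i|^{-1}<\infty$ and $\sum_i e^{-2a\sigma_i}|\epsilon_i|^2<\infty$, which is \eqref{eq:cor:commdaggerclassification}. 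The only genuinely substantive step is the mutual orthogonality of the primitive idempotents in the third paragraph; the rest is bookkeeping with direct-sum decompositions, the main care being to match the operator norms of the Frobenius structure maps of the $F_j$ with the moduli $|\epsilon_i|$ as just indicated.
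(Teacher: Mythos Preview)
Your proof is correct and follows essentially the same route as the paper's: apply Theorem~\ref{thm:daggerclassification}, then Lemma~\ref{lem:dfa:ssi} together with Wedderburn--Artin for commutative semisimple algebras, then the identification \eqref{eq:1d-d-rfa-mor}, and finally read off \eqref{eq:cor:commdaggerclassification} from \eqref{eq:drfaconvcond}. You supply useful detail the paper leaves implicit (Hilbert-space orthogonality of the primitive idempotents and the explicit matching of operator norms); note only that your displayed identity $\langle p_{j,l},p_{j,k}\rangle\,\langle p_{j,k},p_{j,k}\rangle=\langle\mu(p_{j,l}\otimes p_{j,k}),p_{j,k}\rangle$ is off by a factor $\overline{c_{j,k}}$, which is harmless since $c_{j,k}\neq 0$ by counitality.
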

\begin{proof}
	By Theorem \ref{thm:daggerclassification} and Lemma~\ref{lem:dfa:ssi},
	$C$ is a direct sum of semisimple algebras.
	By the Wedderburn-Artin theorem every semisimple commutative algebra is 
	a direct sum of 1-dimensional algebras.
	Using  the isomorphism \eqref{eq:1d-d-rfa-mor} we get the above family of numbers.
	The finiteness conditions come from \eqref{eq:drfaconvcond}.
\end{proof}
\begin{remark} \label{rem:no-zero-area-limit}
	In some cases none of the structure maps of a commutative Hermitian RFA
	admit an $a\to0$ limit. A concrete example can be given as follows.
	Fix $1/2>\delta>0$. Then the family of numbers
	$\left\{ n^{1/2+\delta},n \right\}_{n\in\Zb_{>0}}$
	satisfies \eqref{eq:cor:commdaggerclassification} and
	the structure maps $\mu_a$, $\Delta_a$, $\eta_a$, $\eps_a$ of the corresponding 
	commutative $\dagger$-RFA from Corollary~\ref{cor:commdaggerclassification}
	do not have an $a\to0$ limit. 
\end{remark}

\begin{lemma}
	Let $\varphi:\Cb_{\epsilon,\sigma}\to\Cb_{\epsilon',\sigma'}$ be a morphism of 
	RFAs. Then $\varphi(1)=\epsilon/\epsilon'\in U(1)$ and $\sigma=\sigma'$.
	\label{lem:1drfamor}
\end{lemma}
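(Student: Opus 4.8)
The plan is to use the explicit structure maps \eqref{eq:c-eps-sigma} of the one-dimensional RFAs $\Cb_{\epsilon,\sigma}$ and $\Cb_{\epsilon',\sigma'}$ together with the fact that $\varphi$, being a linear endomorphism of $\Cb$, is multiplication by the scalar $\lambda:=\varphi(1)\in\Cb$. Since a morphism of RFAs is in particular a morphism of regularised coalgebras, I would first spell out the counit compatibility $\eps^{\Cb_{\epsilon,\sigma}}_a=\eps^{\Cb_{\epsilon',\sigma'}}_a\circ\varphi$. Evaluated on $1$ it reads
\[
	e^{-a\sigma}\epsilon \;=\; \lambda\, e^{-a\sigma'}\epsilon' \qquad\text{for all } a\in\Rb_{>0}.
\]
As $\epsilon,\epsilon'\neq 0$, the quotient $e^{-a(\sigma-\sigma')}=\lambda\epsilon'/\epsilon$ is independent of $a$, which forces $\sigma=\sigma'$ and hence $\lambda=\epsilon/\epsilon'$. (In particular $\lambda\neq 0$, consistent with Proposition~\ref{prop:rfamor}.)

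Next I would use that $\varphi$ is also a morphism of regularised algebras, so $\eta^{\Cb_{\epsilon',\sigma'}}_a=\varphi\circ\eta^{\Cb_{\epsilon,\sigma}}_a$; evaluated on $1$ and using the already established $\sigma=\sigma'$ this becomes $e^{-a\sigma}(\epsilon')^{*}=\lambda\,e^{-a\sigma}\epsilon^{*}$, i.e.\ $\lambda=(\epsilon')^{*}/\epsilon^{*}$. Comparing with $\lambda=\epsilon/\epsilon'$ gives $\epsilon/\epsilon'=(\epsilon')^{*}/\epsilon^{*}$, equivalently $|\epsilon|^{2}=|\epsilon'|^{2}$. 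Therefore $|\lambda|=|\epsilon/\epsilon'|=1$, so $\varphi(1)=\lambda\in U(1)$, and $\varphi(1)=\epsilon/\epsilon'$ as claimed.

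There is no genuine obstacle here: the statement follows from a short direct computation with the structure maps, and the only point worth flagging is that the $U(1)$ conclusion really uses that both objects are \emph{Hermitian} RFAs, since it is the relation $\eta_a^{\dagger}=\eps_a$ that produces the complex-conjugated scalar $\epsilon^{*}$ in $\eta_a$ and $\mu_a$ as opposed to $\epsilon$ in $\eps_a$ and $\Delta_a$; dropping Hermiticity one would only obtain $\sigma=\sigma'$ and $\varphi(1)=\epsilon/\epsilon'$ with no constraint on the modulus. One could just as well extract the modulus condition from the multiplication or comultiplication compatibility instead of the unit, as these likewise involve $\epsilon^{*}$.
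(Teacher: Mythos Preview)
Your proof is correct and essentially identical to the paper's: both use the unit and counit compatibilities to obtain the two relations $\lambda=\epsilon/\epsilon'$ and $\lambda=(\epsilon')^{*}/\epsilon^{*}$, then combine them to get $|\lambda|=1$ and $\sigma=\sigma'$. The only cosmetic difference is the order in which you invoke the unit versus the counit, and your added remark about Hermiticity being what forces the $U(1)$ conclusion is a valid observation not made explicit in the paper.
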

\begin{proof}
	From $\varphi\circ\eta_a=\eta_a'$ one has that for every $a\in\Rb_{\ge0}$,
$\varphi(1)\epsilon^* e^{-a\sigma}= (\epsilon')^* e^{-a\sigma'}$.
	Since $\epsilon\neq0$, $\epsilon'\neq0$ and $\varphi(1)\neq0$,
	one must have $\sigma=\sigma'$ and hence $\varphi(1)\epsilon^*  = (\epsilon')^*$.
	One similarly obtains from $\eps'_a\circ\varphi=\eps_a$ that
$\epsilon' \varphi(1)=\epsilon$.
	Combining these we get that $|\varphi(1)|=1$ and that $\varphi(1)=\epsilon/\epsilon'$.
\end{proof}

\begin{proposition}\label{prop:commhermrfacat}
	Every morphism of commutative $\dagger$-RFAs in $\Hilb$ is unitary, in particular
	the category of commutative $\dagger$-RFAs in $\Hilb$ is a groupoid.
\end{proposition}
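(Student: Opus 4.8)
The plan is to reduce everything to the one-dimensional case handled by Lemma~\ref{lem:1drfamor}, using the classification of commutative $\dagger$-RFAs. By Corollary~\ref{cor:commdaggerclassification}, a commutative $\dagger$-RFA $C$ is isomorphic as an RFA to $\bigoplus_{j\in I}\Cb_{\epsilon_j,\sigma_j}$, a Hilbert space direct sum of one-dimensional $\dagger$-RFAs. Each $\Cb_{\epsilon,\sigma}$ is a one-dimensional $\dagger$-Frobenius algebra carrying the real number $\sigma$, so the family $\{\Cb_{\epsilon_j,\sigma_j},\sigma_j\}_{j\in I}$ is an object of $\dFrobf{\Hilb}$ whose image under the equivalence of Theorem~\ref{thm:daggerclassification} is $C$; moreover, since in the proof of that theorem this decomposition is the (orthogonal) eigenspace decomposition of the self-adjoint operators $P_a$, further refined by the orthogonal minimal idempotents of the resulting finite-dimensional commutative $\dagger$-Frobenius algebras, the identification $C\cong\bigoplus_{j\in I}\Cb_{\epsilon_j,\sigma_j}$ may be taken to be unitary. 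Given a morphism $\varphi:C\to C'$ of commutative $\dagger$-RFAs, Theorem~\ref{thm:daggerclassification} then presents $\varphi$, up to these unitary identifications, as $\bigoplus_{j\in I}\psi_j$ for some bijection $f:I\to I'$ with $\sigma_j=\sigma'_{f(j)}$ and isomorphisms of Frobenius algebras $\psi_j:\Cb_{\epsilon_j,\sigma_j}\to\Cb_{\epsilon'_{f(j)},\sigma'_{f(j)}}$ (the $j$-th summand being sent to the $f(j)$-th summand).

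Next I would note that each $\psi_j$ is in fact a morphism of RFAs: the RFA structures on the two sides are obtained from the underlying Frobenius algebras via $P_a=e^{-a\sigma_j}\id$ with the \emph{same} $\sigma_j=\sigma'_{f(j)}$ (cf.\ Proposition~\ref{prop:findimraclass}), so compatibility with the area-$a$ structure maps follows from compatibility with the area-independent ones. Hence Lemma~\ref{lem:1drfamor} applies and gives $\psi_j(1)=\epsilon_j/\epsilon'_{f(j)}\in U(1)$. Since the underlying Hilbert space of each $\Cb_{\epsilon,\sigma}$ is $\Cb$ with its standard inner product, $\psi_j$ is a unitary, and because $f$ is a bijection, $\varphi=\bigoplus_{j\in I}\psi_j$ sends an orthonormal Hilbert basis of $C$ to an orthonormal Hilbert basis of $C'$; therefore $\varphi$ is unitary.

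Finally, a unitary morphism $\varphi$ is an isomorphism in $\Hilb$, and its inverse $\varphi^{-1}=\varphi^\dagger$ is again a morphism of RFAs: composing the intertwining relations for $\mu_a,\eta_a,\Delta_a,\eps_a$ with $\varphi$ on one side and $\varphi^{-1}$ on the other and using that $\varphi$ is mono shows that $\varphi^{-1}$ intertwines them as well. Thus $\varphi$ is invertible in the category of commutative $\dagger$-RFAs (a full subcategory of $\RFrob{\Hilb}$), which is therefore a groupoid. The step I expect to need the most care is the one flagged above — that the decompositions of Corollary~\ref{cor:commdaggerclassification} are orthogonal, so that they may be used as \emph{unitary} identifications. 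Without this one would only conclude that $\varphi$ is invertible, not that it is unitary: indeed $\varphi^\dagger\varphi$ preserves the copairing $\gamma_a$, but, as one can check on a two-dimensional example with $\bar\epsilon_1/\epsilon_1=-\bar\epsilon_2/\epsilon_2$, that weaker condition alone does not force $\varphi^\dagger\varphi=\id$; it is the algebra-morphism property of $\varphi$ (which makes it respect the one-dimensional decomposition) that is really used.
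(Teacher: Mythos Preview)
Your approach is correct and close in spirit to the paper's, but it leans on Theorem~\ref{thm:daggerclassification} at exactly the point where the paper does explicit work. The paper does \emph{not} invoke the equivalence to obtain the one-dimensional block form of $\varphi$; instead it argues directly. First, since $\varphi\circ P_a^C=P_a^{C'}\circ\varphi$, the map $\varphi$ respects the finite-dimensional $\sigma$-eigenspaces $C_\sigma$, $C'_\sigma$. On each $C_\sigma$ the restriction is a morphism of finite-dimensional RFAs, hence an isomorphism by Corollary~\ref{cor:fdrfagroupoid}. Writing this restriction as a matrix $(\varphi^{jl})$ in the basis of one-dimensional idempotents and expanding $\varphi\circ\mu=\mu'\circ(\varphi\otimes\varphi)$ gives $\delta_{jk}(\epsilon_j^*)^{-1}\varphi^{jl}=\varphi^{jl}\varphi^{kl}((\epsilon'_l)^*)^{-1}$, which forces the matrix to be a permutation times a diagonal; Lemma~\ref{lem:1drfamor} then makes the diagonal unitary.

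Your invocation of Theorem~\ref{thm:daggerclassification} packages this matrix computation into the \emph{fullness} of the equivalence functor applied to the one-dimensional families $\{\Cb_{\epsilon_j,\sigma_j}\}_j$. That is a legitimate citation, but note that the inverse functor described in the proof of Theorem~\ref{thm:daggerclassification} decomposes only down to $P_a$-eigenspaces, not to one-dimensional pieces; so fullness at the finer one-dimensional level --- i.e.\ that an RFA morphism permutes the one-dimensional summands rather than mixing them --- is precisely the algebraic step the paper supplies by hand. If one wanted a self-contained argument, that step would still need to be written out (``algebra isomorphisms permute minimal idempotents''). Your care about the identifications $C\cong\bigoplus_j\Cb_{\epsilon_j,\sigma_j}$ being unitary is well-placed; the paper uses the same fact implicitly when it writes $C=\bigoplus_j\Cb_{\epsilon_j,\sigma_j}$ and then concludes unitarity from the matrix form.
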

\begin{proof}
	Let $\phi:C\to C'$ be a morphism of commutative $\dagger$-RFAs.
	By Corollary~\ref{cor:commdaggerclassification} we assume that
	$C=\bigoplus_{j\in I}\Cb_{\epsilon_j,\sigma_j}$ and
	$C'=\bigoplus_{j\in I'}\Cb_{\epsilon'_j,\sigma'_j}$.
	By a similar argument as in the proof of Lemma~\ref{lem:1drfamor},
	we see that $\phi$ does not mix the $\Cb_{\epsilon_j,\sigma_j}$'s with different $\sigma$'s.
	Let $C_{\sigma}:=\bigoplus_{\substack{j\in I\\ \sigma_j=\sigma}}\Cb_{\epsilon_j,\sigma_j}$ and
	define $C'_{\sigma}$ similarly. These are both finite dimensional, since
	these are eigenspaces of the $P_a$'s with eigenvalue $e^{-a\sigma}$.
	Let $\varphi:=\phi\restriction_{C_{\sigma}}$.
	Then $\varphi$ is a morphism of finite dimensional RFAs so it is a bijection by 
	Corollary~\ref{cor:fdrfagroupoid}.
	Let us write $g_j=1$ $(j=1,n_{\sigma})$ for the generator of $\Cb_{\epsilon_j,\sigma_j}$ in $C_{\sigma}$
	and $g_j'=1$ $(j=1,n_{\sigma})$ for the generator of $\Cb_{\epsilon_j',\sigma_j'}$ in $C'_{\sigma}$
	and write $\varphi(g_j)=\sum_{k=1}^{n_\sigma}\varphi^{jk}g_k'$.

	From the equation $\varphi\circ\mu=\mu'\circ(\varphi\otimes\varphi)$ 
	one has for every $j,k,l$ that
	\begin{align*}
		\delta_{jk}(\epsilon_j^*)^{-1}\varphi^{jl}=
		\varphi^{jl}\varphi^{kl}\left( (\epsilon'_l)^* \right)^{-1}.
	\end{align*}
	\begin{itemize}
		\item If $j\neq k$ then $\varphi^{jl}\varphi^{kl}=0$ 
			for every such $k$ and for every $l$.
			This means that in the matrix $\varphi^{jl}$ 
			in every row there might be at most one nonzero element.
			Since $\varphi$ is bijective there is also at least one 
			nonzero element in every row in the latter matrix
			and the same holds for every column. 
			We conclude that the matrix of $\varphi$ is
			the product of a permutation matrix $\pi$ and a diagonal matrix $D$.

		\item If $j=k$ and if $\varphi^{jl}\neq 0$
			then $\varphi^{jl}=\left( \epsilon'_l/\epsilon_j \right)^*$,
			which give the nonzero elements of the diagonal matrix.
	\end{itemize}

	Now $\pi^{-1}\circ\varphi$ restricts to RFA morphisms of the 1-dimensional
	components, hence by Lemma~\ref{lem:1drfamor} the diagonal matrix $D$ is unitary.
	Therefore $\varphi$ is unitary,
	$\phi$ is the direct sum of unitary matrices so $\phi$ is unitary and in particular invertible.
\end{proof}

\subsection{Modules over regularised algebras}\label{sec:modules}

We define modules over a regularised algebra in such a way that the action map now depends on two real parameters. This may seem odd at first sight but is motivated by the application to area-dependent field theory later on, see Section~\ref{sec:daqft}.

\begin{definition}
A \textsl{left module} over a regularised algebra $A$ (or left $A$-module) in
$\Sc$ is an object $U\in\Sc$ together with a family of morphisms
\begin{align}
	\begin{aligned}
	\def\svgwidth{4cm}
\begingroup%
  \makeatletter%
  \providecommand\color[2][]{%
    \errmessage{(Inkscape) Color is used for the text in Inkscape, but the package 'color.sty' is not loaded}%
    \renewcommand\color[2][]{}%
  }%
  \providecommand\transparent[1]{%
    \errmessage{(Inkscape) Transparency is used (non-zero) for the text in Inkscape, but the package 'transparent.sty' is not loaded}%
    \renewcommand\transparent[1]{}%
  }%
  \providecommand\rotatebox[2]{#2}%
  \ifx\svgwidth\undefined%
    \setlength{\unitlength}{139.91796875bp}%
    \ifx\svgscale\undefined%
      \relax%
    \else%
      \setlength{\unitlength}{\unitlength * \real{\svgscale}}%
    \fi%
  \else%
    \setlength{\unitlength}{\svgwidth}%
  \fi%
  \global\let\svgwidth\undefined%
  \global\let\svgscale\undefined%
  \makeatother%
  \begin{picture}(1,0.57680002)%
    \put(0,0){\includegraphics[width=\unitlength]{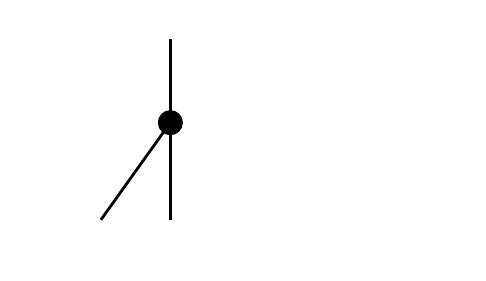}}%
    \put(0.32210838,0.01008403){\color[rgb]{0,0,0}\makebox(0,0)[lb]{\smash{$U$}}}%
    \put(0.38500237,0.35885982){\color[rgb]{0,0,0}\makebox(0,0)[lb]{\smash{\scriptsize
{$(a,l)$}}}}%
    \put(0.17916748,0.01008403){\color[rgb]{0,0,0}\makebox(0,0)[lb]{\smash{$A$}}}%
    \put(-0.00379687,0.29596583){\color[rgb]{0,0,0}\makebox(0,0)[lb]{\smash{$\rho_{a,l}=$}}}%
    \put(0.32210838,0.52467126){\color[rgb]{0,0,0}\makebox(0,0)[lb]{\smash{$U$}}}%
    \put(0.61370781,0.29596583){\color[rgb]{0,0,0}\makebox(0,0)[lb]{\smash{$\in\Sc(A\otimes U,U)$}}}%
  \end{picture}%
\endgroup%

	\end{aligned}
	\label{eq:modaction}
\end{align}
for every $a,l\in\Rb_{>0}$ called the \textsl{action}, such that they satisfy the following conditions.
\begin{enumerate}
	\item For every $a=a_1+a_2=b_1+b_2$ and $l=l_1+l_2$
\begin{align}
	\rho_{a_1,l_1}\circ\left( \id_A \otimes\rho_{a_2,l_2}\right)=
	\rho_{b_1,l}\circ\left( \mu_{b_2}\otimes\id_U \right)\ . 
	\label{eq:ra:module}
\end{align}
and
the morphisms
\be\label{eq:QU-leftmod-def}
Q^U_{a,l}:=\rho_{a_1,l_1}\circ \left( \eta_{a_2,l_2}\otimes id_U \right)
\ee
satisfy $\lim_{a,l\to0}Q^U_{a,l}=\id_U$.
\item The assignment
\begin{align}
\begin{aligned}
	(\Rb_{>0}^{2} \cup \{0\})
	&\to\Sc(U,U)\\
	(a,l)&\mapsto Q^U_{a,l}
\end{aligned}\label{eq:ra-module-cont}
\end{align} 
is jointly continuous.
\end{enumerate}
One similarly defines right modules. 
\end{definition}
Note that the morphisms $Q^U_{a,l}$ form a semigroup,
\begin{align}
	Q^U_{a_1,l_1}\circ Q^U_{a_2,l_2}=Q^U_{a_1+a_2,l_1+l_2} ,
	\label{eq:ra:modsemigrp}
\end{align}
and we have a continuous semigroup homomorphism $\Rb_{\ge 0}^2 \to \Sc(U,U)$, $(a,l) \to Q^U_{a,l}$.

\begin{remark}\label{rem:bimodule-continuity}
	As in the case of regularised algebras, one would want to impose \eqref{eq:ra-module-cont} for $n$-fold tensor products for every $n\ge1$.
	However in Section~\ref{sec:lattice:bimoduledata} we will see that the natural condition would be to have this for a set of different modules,
	which would lead to the notion of ``sets of mutually jointly continuous modules'', which is cumbersome to define.
	Instead, we will impose this condition later in Section~\ref{sec:lattice:bimoduledata}.
	When considering regularised algebras and modules in $\Hilb$, this continuity condition will be automatic, 
	see Lemma~\ref{lem:semigrp}.
\end{remark}

The definition of bimodules in terms of left and right modules 
requires an extra continuity assumption, so we spell it out in detail:

\begin{definition}\label{def:AB-bimodule-def}
An \textsl{$A$-$B$-bimodule} over 
regularised algebras $A$ and $B$ is an object $U\in\Sc$ together with a family of morphisms
\begin{align}
	\begin{aligned}
	\def\svgwidth{5cm}
\begingroup%
  \makeatletter%
  \providecommand\color[2][]{%
    \errmessage{(Inkscape) Color is used for the text in Inkscape, but the package 'color.sty' is not loaded}%
    \renewcommand\color[2][]{}%
  }%
  \providecommand\transparent[1]{%
    \errmessage{(Inkscape) Transparency is used (non-zero) for the text in Inkscape, but the package 'transparent.sty' is not loaded}%
    \renewcommand\transparent[1]{}%
  }%
  \providecommand\rotatebox[2]{#2}%
  \ifx\svgwidth\undefined%
    \setlength{\unitlength}{162.64296875bp}%
    \ifx\svgscale\undefined%
      \relax%
    \else%
      \setlength{\unitlength}{\unitlength * \real{\svgscale}}%
    \fi%
  \else%
    \setlength{\unitlength}{\svgwidth}%
  \fi%
  \global\let\svgwidth\undefined%
  \global\let\svgscale\undefined%
  \makeatother%
  \begin{picture}(1,0.49620766)%
    \put(0,0){\includegraphics[width=\unitlength]{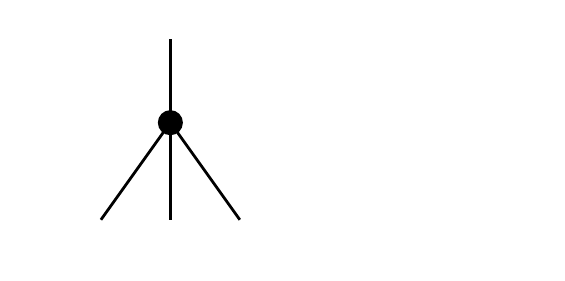}}%
    \put(0.27710236,0.00867506){\color[rgb]{0,0,0}\makebox(0,0)[lb]{\smash{$U$}}}%
    \put(0.3213711,0.30871877){\color[rgb]{0,0,0}\makebox(0,0)[lb]{\smash{\scriptsize
{$(a,l,b)$}}}}%
    \put(0.15413362,0.00867506){\color[rgb]{0,0,0}\makebox(0,0)[lb]{\smash{$A$}}}%
    \put(-0.00326636,0.25461253){\color[rgb]{0,0,0}\makebox(0,0)[lb]{\smash{$\rho_{a,l,b}=$}}}%
    \put(0.27710236,0.4513625){\color[rgb]{0,0,0}\makebox(0,0)[lb]{\smash{$U$}}}%
    \put(0.40007109,0.00867506){\color[rgb]{0,0,0}\makebox(0,0)[lb]{\smash{$B$}}}%
    \put(0.52795857,0.25461253){\color[rgb]{0,0,0}\makebox(0,0)[lb]{\smash{$\in\Sc(A\otimes U\otimes B,U)$}}}%
  \end{picture}%
\endgroup%

	\end{aligned}
	\label{eq:bimodaction}
\end{align}
for every $a,l,b\in\Rb_{>0}$ such that the following conditions hold.
\begin{enumerate}
	\item For every $a=a_1+a_2=a_1'+a_2'$, $b=b_1+b_2=b_1'+b_2'$ and $l=l_1+l_2$
\begin{align}
	\rho_{a_1,l_1,b_1}\circ\left( \id_A \otimes\rho_{a_2,l_2,b_2}\otimes \id_B\right)=
	\rho_{a_1',l,b_1'}\circ\left( \mu^A_{a_2'}\otimes\id_U\otimes\mu^B_{b_2'} \right)\ ,
	\label{eq:ra:bimodule}
\end{align}
the morphisms $Q^U_{a,l,b}:=\rho_{a_1,l,b_1}\circ \left( \eta_{a_2}^A\otimes id_U \otimes \eta_{b_2}^B\right)$ satisfies that
$\lim_{a,l,b\to0}Q^U_{a,l,b}=\id_U$ and
\item the map
\begin{align}
\begin{aligned}
	(\Rb_{> 0}^{3}\cup \left\{ 0 \right\})
	&\to\Sc(U,U)\\
	(a,l,b)&\mapsto Q^U_{a,l,b}
\end{aligned}\label{eq:ra-bimodule-cont}
\end{align} 
is jointly continuous.
\end{enumerate}
\end{definition}

A bimodule is called \textsl{transmissive}, 
if $\rho_{a,l,b}$ depends only on $a+b$, or, in other words, if $\rho_{a+u,l,a-u}$ is independent of $u$. As with the inclusion of the extra parameter $l$, the notion of transmissivity is motivated by the application to area-dependent quantum field theory, see Section~\ref{sec:daqft}.

\begin{remark}\label{rem:left-right-module-gives-bimodule}
Let $U$ be a left $A$-module and a right $B$-module such that 
the left and right actions $\rho_{a,l}^L$ and $\rho_{b,m}^R$ commute.
That is for every $a,b\in\Rb_{>0}$ and $l_1+l_2=m_1+m_2$
\begin{align}
	\rho_{a,l,b}:=\rho_{a,l_1}^L \circ \left( \rho_{b,l_2}^R\otimes \id_{B} \right)
	=\rho_{a,m_1}^R \circ \left( \id_{A}\otimes\rho_{b,m_2}^L \right)\ .
	\label{eq:ra:bimod}
\end{align}
If $\rho_{a,l,b}$ is jointly continuous in the parameters,
then $U$ is an $A$-$B$-bimodule. 
	Note that in contrast to the case of usual bimodules over associative algebras, 
	which are defined to be left- and right modules with commuting actions, 
	here we have to impose the extra condition of joint continuity.
\\
	Conversely, let $U$ be an $A$-$B$-bimodule with action $\rho_{a,l,b}$.
	If the limit 
		\begin{align}
			\rho_{a,l}^L:=\lim_{b\to0}\rho_{a,l,b_1}
			\circ(\id_{A\otimes U}\otimes \eta_{b_2}^{B})
			\label{eq:bimod-left-module-limit}
		\end{align}
		with $b=b_1+b_2$
	exists for every $a,l\in\Rb_{>0}$
	and remains jointly continuous in the limit,	 
	then $U$ becomes a left $A$-module with action $\rho_{a,l}^L$. 
	Similarly, if the analogous $a\to0$ limit 
	exists then $U$ becomes a right $B$-module.
	In Appendix~\ref{app:bimod} we give an example which illustrates that these limits do not always exist.
\end{remark}

\begin{example}\label{ex:twistedaction}
	Let $\Aut_{\RFrob{\Sc}}(A)$ denote the invertible morphisms 
	in $\RFrob{\Sc}(A,A)$. Then for $\alpha,\beta\in\Aut_{\RFrob{\Sc}}(A)$ we can define
	a transmissive bimodule structure ${}_{\alpha}A_{\beta}$ on $A$
	by twisting the multiplication from the two sides and letting the $l$-dependence be trivial.
	That is, for every $a,b,l\in\Rb_{>0}$ we define the action to be
	\begin{align}
		\rho_{a,l,b}:=\mu_a\circ(\id\otimes \mu_b)\circ(\alpha\otimes\id_A\otimes\beta)\ ,
		\label{eq:twistedaction}
	\end{align}
	which is jointly continuous in the parameters, since the composition in $\Sc$ is separately continuous,
	and since we can rewrite $\rho_{a,l,b} = P_c \circ \rho_{a',l,b'}$ with $a'+b'+c = a+b$.
	Note that $\beta^{-1}: {}_{\alpha}A_{\beta}\to {}_{\beta^{-1}\circ\alpha}A_{\id_A}$ is a bimodule isomorphism,
	so it is enough to consider twisting on one side.
\end{example}

The proof of the following proposition is similar to that of Proposition~\ref{prop:directsumrfa}.
\begin{proposition}
	Let $F=\bigoplus_{k\in I}F_k$ and $G=\bigoplus_{j\in J}G_j$ be RFAs in $\Hilb$ 
	as in Proposition~\ref{prop:directsumrfa}.
	Let $M_{kj}\in\Hilb$ be a 
$F_k$-$G_j$-bimodule with action $\rho_{a,l,b}^{M_{kj}}$ for $k\in I$ and $j\in J$. Then $M:=\bigoplus_{k\in I,\,j\in J}M_{kj}$ is a $F$-$G$-bimodule in $\Hilb$
	if and only if for every $a,l,b\in\Rb_{>0}$
	\begin{align}
		\sup_{k\in I,\,j\in J}\left\{ \norm{\rho_{a,l,b}^{M_{kj}}} \right\}<\infty\ .
		\label{eq:prop:directsum-bimodule}
	\end{align}
	\label{prop:directsum-bimodule}
\end{proposition}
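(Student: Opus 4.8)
The plan is to imitate the proof of Proposition~\ref{prop:directsumrfa}, with the simplification that a bimodule has a single ``structure-type'' map $\rho_{a,l,b}$ to bound, so only the supremum condition \eqref{eq:prop:directsum-bimodule} is needed (there is no analogue of the square-summability conditions on $\eta$ and $\eps$, since $\rho$ lands in $M$ itself, not in the tensor unit, and the continuity of $Q^U$ rather than of $\rho$ is what must be arranged). First I would fix $a,l,b\in\Rb_{>0}$ and show the ``$\Rightarrow$'' direction: if $M=\bigoplus_{k,j}M_{kj}$ is an $F$-$G$-bimodule, then restricting $\rho_{a,l,b}^M$ to the summand $M_{kj}$ (which is a closed subspace of $M$ preserved by $\rho_{a,l,b}^M$, since the $F_k$- and $G_j$-actions do not mix summands) gives $\norm{\rho_{a,l,b}^{M_{kj}}}\le\norm{\rho_{a,l,b}^M}<\infty$, uniformly in $k,j$. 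This is the easy half and is essentially identical to the $\Delta_a$ and $\mu_a$ bounds in the proof of Proposition~\ref{prop:directsumrfa}.

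For the ``$\Leftarrow$'' direction I would proceed in two steps. \emph{Boundedness of $\rho_{a,l,b}^M$:} for $x=\sum_{k,j}x_{kj}\in F\otimes M\otimes G$ — more precisely, since $F$ and $G$ themselves decompose, an element of $(\bigoplus_k F_k)\otimes(\bigoplus_{k,j}M_{kj})\otimes(\bigoplus_j G_j)$ has components indexed in a way compatible with the $M_{kj}$-grading of the output — one estimates
\begin{align*}
	\norm{\rho_{a,l,b}^M(x)}^2=\sum_{k,j}\norm{\rho_{a,l,b}^{M_{kj}}(x_{kj})}^2
	\le\Big(\sup_{k,j}\norm{\rho_{a,l,b}^{M_{kj}}}^2\Big)\sum_{k,j}\norm{x_{kj}}^2
	=\Big(\sup_{k,j}\norm{\rho_{a,l,b}^{M_{kj}}}^2\Big)\norm{x}^2\ ,
\end{align*}
exactly as for $\Delta_a$ in Proposition~\ref{prop:directsumrfa}, so $\rho_{a,l,b}^M$ is bounded. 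The algebraic relation \eqref{eq:ra:bimodule} holds componentwise and hence on $M$. \emph{Continuity of $Q^M_{a,l,b}$:} here $Q^M_{a,l,b}=\bigoplus_{k,j}Q^{M_{kj}}_{a,l,b}$, and I would show joint continuity of $(a,l,b)\mapsto Q^M_{a,l,b}$ at a point $(a_0,l_0,b_0)\in\Rb_{\ge0}^3$ by the same $\varepsilon$-split-the-sum argument as in Proposition~\ref{prop:directsumrfa}: using that $Q^M_{a,l,b}-Q^M_{a_0,l_0,b_0}$ is a bounded operator to make the tail over $I\times J\setminus(J_{a',l',b'})$ small uniformly for parameters in a compact neighbourhood, then choosing the parameters close enough to control the finite remaining sum, using that each $Q^{M_{kj}}_{a,l,b}$ is continuous (each $M_{kj}$ being a bimodule). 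The joint continuity condition \eqref{eq:ra-bimodule-cont} required in Definition~\ref{def:AB-bimodule-def} follows.

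The main obstacle is bookkeeping rather than anything deep: one must check that $F\otimes M\otimes G$, with $F=\bigoplus_k F_k$ and $G=\bigoplus_j G_j$, decomposes as a Hilbert-space direct sum in such a way that $\rho^M_{a,l,b}$ carries the $(k,j)$-summand of the input into the $M_{kj}$-summand of $M$ — i.e.\ that $F_{k'}\otimes M_{kj}\otimes G_{j'}$ is killed unless $k'=k$ and $j'=j$. This is where one uses that the $F_k$ (resp.\ $G_j$) are the \emph{algebra} summands, so $\eta^F_a=\sum_k\eta^{F_k}_a$ and the unitality/associativity relations of \eqref{eq:ra:bimodule} force the off-diagonal pieces to vanish; concretely one writes $Q^M_{a,l,b}=\rho_{a_1,l,b_1}\circ(\eta^F_{a_2}\otimes\id_M\otimes\eta^G_{b_2})$ and observes that $\eta^{F_k}_{a_2}\otimes\eta^{G_j}_{b_2}$ inserts only the matching idempotent on $M_{kj}$, together with the fact (as in Proposition~\ref{prop:directsumrfa}) that only countably many of the relevant inner products are nonzero so the sums make sense. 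Once this grading is in place, the two estimates above and the continuity argument go through verbatim from the proof of Proposition~\ref{prop:directsumrfa}, which is why the authors only say ``similar''.
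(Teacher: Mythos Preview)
Your proposal is correct and follows precisely the route the paper indicates: the paper gives no proof for this proposition beyond the remark that it is ``similar to that of Proposition~\ref{prop:directsumrfa}'', and you have carried out exactly that transfer, including the supremum bound for $\rho_{a,l,b}$ mirroring the one for $\mu_a$/$\Delta_a$ and the $\varepsilon$-split-the-tail argument for the continuity of $Q^M_{a,l,b}$. Your discussion of the grading bookkeeping (that $F_{k'}\otimes M_{kj}\otimes G_{j'}$ contributes only when $k'=k$, $j'=j$) is the one point that genuinely needs attention and you have identified and handled it correctly.
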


A \textsl{morphism $U\xrightarrow{\phi}V$ 
of left modules over a regularised algebra $A$} is a morphism in
$\Sc$ which respects the action:
\begin{align}
	\phi\circ\rho_{a,l}^U=\rho_{a,l}^V\circ\left( \id_A\otimes\phi \right),
	\label{eq:ramodmorph}
\end{align}
for all $a,l\in\Rb_{>0}$. 
One similarly defines morphisms of right modules and bimodules.
Denote with $A\text{-}\Mod{\Sc}$ 
the \textsl{category of left modules over $A$ in $\Sc$}.

Recall from Proposition~\ref{prop:findimraclass} that
for a regularised algebra $A\in\Vectfd$ the pair
$\Dc(A)=(A,H)$ consists of the underlying algebra of $A$ and an element $H$ in its centre.
Let $A\text{-}\Modz{\Vectfd}$ denote the following category.
Its objects are pairs $(U,H_U)$, where $U$ is a left $A$-module in $\Vectfd$ and 
$H_U\in\End_A(U)$. 
Its morphisms are left $A$-module morphisms 
$\phi:U\to V$, such that $H_V\circ\phi=\phi\circ H_U$.

As in the case of regularised algebras in $\Vectfd$
(cf.\ Proposition~\ref{prop:rfafindim}),
the semigroup $(a,l)\mapsto Q^U_{a,l}$
is norm continuous and hence $Q^U_{a,l}=e^{a \cdot H_A+l \cdot H_U}$
for $H_A,H_U\in\End_A(U)$ such that $H_A\circ H_U=H_U\circ H_A$.

Let us define a functor 
$\Dc:A\text{-}\Mod{\Vectfd}\to A\text{-}\Modz{\Vectfd}$ 
as follows.  The $A$-module structure on $\Dc(U)$ is given by
$\rho^U=Q_{-a,-l}^U\circ\rho_{a,l}^U$ and 
$H_U$ is defined as above.
On morphisms $\Dc$ is the identity.

\begin{proposition}\label{prop:ramodule:findim}
	The functor $\Dc:A\text{-}\Mod{\Vectfd}\to A\text{-}\Modz{\Vectfd}$ is an equivalence of categories.
\end{proposition}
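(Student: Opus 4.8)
The plan is to produce an explicit inverse functor $\Ec : A\text{-}\Modz{\Vectfd} \to A\text{-}\Mod{\Vectfd}$ and to check that $\Dc\circ\Ec$ and $\Ec\circ\Dc$ are the identity functors; this shows at once that $\Dc$ is an equivalence (in fact an isomorphism). Recall from Proposition~\ref{prop:findimraclass} that $\Dc(A)=(A,H)$ with $H=\tfrac{d}{da}\eta_a(1)\restriction_{a=0}\in Z(A)$, and that conversely the regularised algebra $A$ is reconstructed from $(A,H)$ via $\mu_a=P_a\circ\mu$, $\eta_a=P_a\circ\eta$ with $P_a=e^{aH}$. Given an object $(U,H_U)$ of $A\text{-}\Modz{\Vectfd}$, I would write $\rho^U$ for the underlying (non-regularised) $A$-action, set $H_A:=\rho^U\circ(H\otimes\id_U)$ — which lies in $\End_A(U)$ because $H$ is central and commutes with $H_U$ because $H_U$ is $A$-linear — and define $\Ec(U,H_U)$ to be $U$ equipped with the family
\be
	Q^U_{a,l}:=e^{aH_A+lH_U}\ ,\qquad \rho_{a,l}:=Q^U_{a,l}\circ\rho^U\ ,
\ee
taking $\Ec$ to be the identity on morphisms.

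First I would check that $\Ec(U,H_U)$ really is a regularised left $A$-module. The semigroup property $Q^U_{a_1,l_1}\circ Q^U_{a_2,l_2}=Q^U_{a_1+a_2,l_1+l_2}$, together with the fact that centrality of $H$ gives $\rho^U\circ\big((e^{cH}\,\cdot\,)\otimes\id_U\big)=e^{cH_A}\circ\rho^U$, reduces the module relation \eqref{eq:ra:module} to associativity of $\rho^U$ over $\mu$; similarly $\rho_{a_1,l}\circ(\eta_{a_2}\otimes\id_U)=Q^U_{a,l}\circ e^{a_2H_A}=Q^U_{a,l}$ reproduces \eqref{eq:QU-leftmod-def}, $\lim_{a,l\to0}Q^U_{a,l}=\id_U$ is immediate, and the joint continuity of $(a,l)\mapsto Q^U_{a,l}$ is automatic since in $\Vectfd$ every map here is norm-continuous. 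That $\Ec$ sends a morphism $\phi$ of $A\text{-}\Modz{\Vectfd}$ (an $A$-linear $\phi$ with $\phi\circ H_U=H_{U'}\circ\phi$, hence also $\phi\circ H_A=H_{A'}\circ\phi$ since $H_A,H_{A'}$ are built from the $A$-actions) to a morphism of regularised modules then follows from $\phi\circ\rho_{a,l}=\phi\circ e^{aH_A+lH_U}\circ\rho^U=e^{aH_{A'}+lH_{U'}}\circ\phi\circ\rho^U=\rho'_{a,l}\circ(\id_A\otimes\phi)$.

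Next I would compute the two composites. Applying $\Dc$ to $\Ec(U,H_U)$ recovers the $A$-action $Q^U_{-a,-l}\circ\rho_{a,l}=Q^U_{-a,-l}\circ Q^U_{a,l}\circ\rho^U=\rho^U$ and the endomorphism $\tfrac{\partial}{\partial l}Q^U_{a,l}\restriction_{a=l=0}=H_U$, so $\Dc\circ\Ec=\id$. For the reverse composite I would use the remarks preceding the statement: for a regularised left $A$-module $U$ in $\Vectfd$, finite-dimensionality makes the strongly continuous semigroup $(a,l)\mapsto Q^U_{a,l}$ norm-continuous, hence $Q^U_{a,l}=e^{aH_A+lH_U}$ with commuting $H_A,H_U\in\End_A(U)$; arguing as in Proposition~\ref{prop:findimraclass} one gets $\rho_{a,l}=Q^U_{a,l}\circ\rho^U$ for the induced $A$-action $\rho^U=Q^U_{-a,-l}\circ\rho_{a,l}$, and taking the limits $a_1\to0$, $l\to0$ in \eqref{eq:QU-leftmod-def} gives $Q^U_{a,0}=\rho^U\circ(\eta_a\otimes\id_U)$, whose $a$-derivative at $0$ is $\rho^U\circ(H\otimes\id_U)$, identifying $H_A=\rho^U\circ(H\otimes\id_U)$. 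Hence $\Ec(\Dc(U))$ carries the family $e^{aH_A+lH_U}\circ\rho^U=\rho_{a,l}$, i.e.\ $\Ec\circ\Dc=\id$, and $\Dc$ is an isomorphism of categories.

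The one point that requires more than a routine axiom check is this last identification, $H_A=\rho^U\circ(H\otimes\id_U)$ together with $\rho_{a,l}=Q^U_{a,l}\circ\rho^U$ for an arbitrary regularised module in $\Vectfd$: it is the module analogue of Proposition~\ref{prop:findimraclass} and of the relation $\mu_a=P_a\circ\mu$ used there, and is proved along exactly the same lines. Everything else — the module relations, the unit normalisation, continuity, and functoriality — is mechanical.
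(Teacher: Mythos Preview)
Your proof is correct and follows exactly the approach of the paper: you construct the inverse functor by sending $(U,H_U)$ to $U$ with action $\rho_{a,l}=e^{aH_A+lH_U}\circ\rho^U$ where $H_A=\rho^U(H\otimes-)$, which is precisely the paper's one-line proof. You simply supply the verifications (module axioms, functoriality, $\Dc\circ\Ec=\id$ and $\Ec\circ\Dc=\id$) that the paper leaves implicit; note a small slip in your displayed unit computation, where $Q^U_{a,l}\circ e^{a_2H_A}$ should read $Q^U_{a_1,l}\circ e^{a_2H_A}$, but the conclusion $Q^U_{a,l}$ is correct.
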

\begin{proof}
	The $A$-module structure on $\Dc^{-1}(U,H_U)$ is given by
	$\rho_{a,l}^U:=e^{aH_A+lH_U}\circ\rho^U$ 
	with $H_A=\rho^U(H\otimes -)\in\End_A(U)$,
	where $\rho^U$ is the action on $U$.
\end{proof}

\begin{remark}
	Let $A,B\in\Vectfd$ be regularised algebras and $U\in\Vectfd$ an $A$-$B$-bimodule $U\in\Vectfd$.
	As before we have $Q_{a,l,b}^{U}=e^{aH_A+lH_U+bH_B}$, where $H_A,H_U,H_B\in\End_{A,B}(U)$ are bimodule homomorphisms.
	Then $U$ is transmissive if and only if $H_A=H_B$.
\end{remark}

Let us assume now that $\Sc$ is symmetric.
We now introduce a notion of duals for bimodules.

\begin{definition}\label{def:dual-pair}
	Let $A,B\in\Sc$ be regularised algebras. 
	A \textsl{dual pair of bimodules} is an $A$-$B$-bimodule $U\in\Sc$ 
	and a $B$-$A$-bimodule $V\in\Sc$ together with families of morphisms
	for every $a,l,b\in\Rb_{>0}$
	\begin{align}
		\gamma_{a,l,b}&\in\Sc(\Ib,V\otimes U)\ ,&\beta_{a,l,b}&\in\Sc(U\otimes V,\Ib)\ ,
		\label{eq:ra-duality-morphisms}
	\end{align}
jointly continuous in the parameters,
	which we denote with
\begin{align}
	\begin{aligned}
	\def\svgwidth{6.5cm}
\begingroup%
  \makeatletter%
  \providecommand\color[2][]{%
    \errmessage{(Inkscape) Color is used for the text in Inkscape, but the package 'color.sty' is not loaded}%
    \renewcommand\color[2][]{}%
  }%
  \providecommand\transparent[1]{%
    \errmessage{(Inkscape) Transparency is used (non-zero) for the text in Inkscape, but the package 'transparent.sty' is not loaded}%
    \renewcommand\transparent[1]{}%
  }%
  \providecommand\rotatebox[2]{#2}%
  \ifx\svgwidth\undefined%
    \setlength{\unitlength}{229.74978027bp}%
    \ifx\svgscale\undefined%
      \relax%
    \else%
      \setlength{\unitlength}{\unitlength * \real{\svgscale}}%
    \fi%
  \else%
    \setlength{\unitlength}{\svgwidth}%
  \fi%
  \global\let\svgwidth\undefined%
  \global\let\svgscale\undefined%
  \makeatother%
  \begin{picture}(1,0.33743173)%
    \put(0,0){\includegraphics[width=\unitlength]{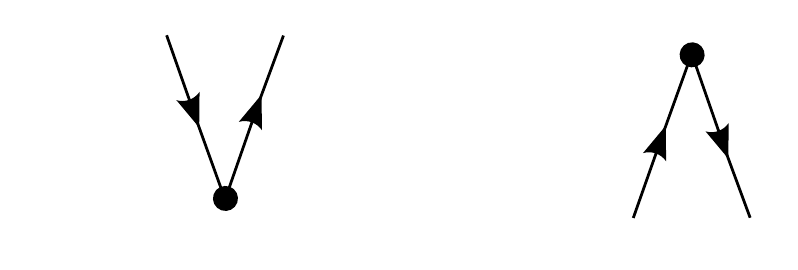}}%
    \put(0.90527643,0.2629486){\color[rgb]{0,0,0}\makebox(0,0)[lb]{\smash{\scriptsize
{$(a,l,b)$}}}}%
    \put(0.58267194,0.15039752){\color[rgb]{0,0,0}\makebox(0,0)[lb]{\smash{$\beta_{a,l,b}=$}}}%
    \put(0.76502285,0.00614119){\color[rgb]{0,0,0}\makebox(0,0)[lb]{\smash{$U$}}}%
    \put(0.9217151,0.00614119){\color[rgb]{0,0,0}\makebox(0,0)[lb]{\smash{$V$}}}%
    \put(0.32725629,0.0783325){\color[rgb]{0,0,0}\makebox(0,0)[lb]{\smash{\scriptsize{$(a,l,b)$}}}}%
    \put(-0.0023123,0.15039752){\color[rgb]{0,0,0}\makebox(0,0)[lb]{\smash{$\gamma_{a,l,b}=$}}}%
    \put(0.18003861,0.30473319){\color[rgb]{0,0,0}\makebox(0,0)[lb]{\smash{$V$}}}%
    \put(0.33673087,0.30473319){\color[rgb]{0,0,0}\makebox(0,0)[lb]{\smash{$U$}}}%
    \put(0.3528567,0.15039752){\color[rgb]{0,0,0}\makebox(0,0)[lb]{\smash{,}}}%
    \put(0.27057188,0.00614119){\color[rgb]{0,0,0}\makebox(0,0)[lb]{\smash{$\Ib$}}}%
    \put(0.85254275,0.30568524){\color[rgb]{0,0,0}\makebox(0,0)[lb]{\smash{$\Ib$}}}%
  \end{picture}%
\endgroup%

	\end{aligned}\ ,
	\label{eq:ra-duality-morphsisms-graphical}
\end{align}
	such that for $a_1+a_2=a$, $b_1+b_2=b$ and $l_1+l_2=l$ we have
\begin{align}
	\begin{aligned}
	\def\svgwidth{8cm}
\begingroup%
  \makeatletter%
  \providecommand\color[2][]{%
    \errmessage{(Inkscape) Color is used for the text in Inkscape, but the package 'color.sty' is not loaded}%
    \renewcommand\color[2][]{}%
  }%
  \providecommand\transparent[1]{%
    \errmessage{(Inkscape) Transparency is used (non-zero) for the text in Inkscape, but the package 'transparent.sty' is not loaded}%
    \renewcommand\transparent[1]{}%
  }%
  \providecommand\rotatebox[2]{#2}%
  \ifx\svgwidth\undefined%
    \setlength{\unitlength}{269.67312012bp}%
    \ifx\svgscale\undefined%
      \relax%
    \else%
      \setlength{\unitlength}{\unitlength * \real{\svgscale}}%
    \fi%
  \else%
    \setlength{\unitlength}{\svgwidth}%
  \fi%
  \global\let\svgwidth\undefined%
  \global\let\svgscale\undefined%
  \makeatother%
  \begin{picture}(1,0.31633162)%
    \put(0,0){\includegraphics[width=\unitlength]{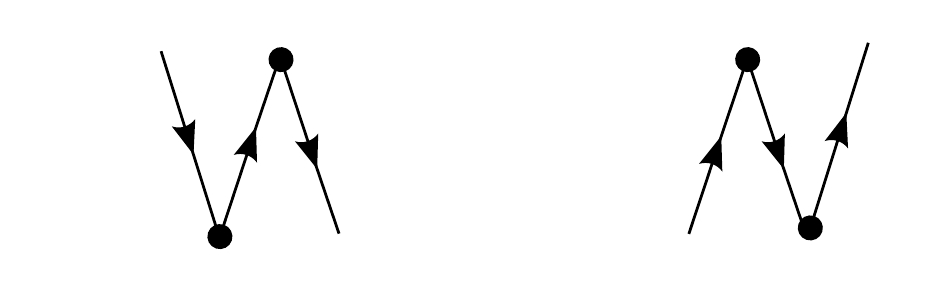}}%
    \put(0.61492099,0.26411445){\color[rgb]{0,0,0}\makebox(0,0)[lb]{\smash{\scriptsize
{$(a_2,l_2,b_2)$}}}}%
    \put(0.56760848,0.15779769){\color[rgb]{0,0,0}\makebox(0,0)[lb]{\smash{$Q^U_{a,l,b}=$}}}%
    \put(0.71109731,0.00523203){\color[rgb]{0,0,0}\makebox(0,0)[lb]{\smash{$U$}}}%
    \put(0.35214427,0.00523203){\color[rgb]{0,0,0}\makebox(0,0)[lb]{\smash{$V$}}}%
    \put(0.04291431,0.05642822){\color[rgb]{0,0,0}\makebox(0,0)[lb]{\smash{\scriptsize{$(a_1,l_1,b_1)$}}}}%
    \put(-0.00196998,0.15779769){\color[rgb]{0,0,0}\makebox(0,0)[lb]{\smash{$Q^V_{a,l,b}=$}}}%
    \put(0.14745196,0.28928498){\color[rgb]{0,0,0}\makebox(0,0)[lb]{\smash{$V$}}}%
    \put(0.91155172,0.28928498){\color[rgb]{0,0,0}\makebox(0,0)[lb]{\smash{$U$}}}%
    \put(0.38961521,0.15779769){\color[rgb]{0,0,0}\makebox(0,0)[lb]{\smash{,}}}%
    \put(0.8958751,0.06837728){\color[rgb]{0,0,0}\makebox(0,0)[lb]{\smash{\scriptsize
{$(a_1,l_1,b_1)$}}}}%
    \put(0.32627301,0.25961942){\color[rgb]{0,0,0}\makebox(0,0)[lb]{\smash{\scriptsize
{$(a_2,l_2,b_2)$}}}}%
    \put(0.28391347,0.28928498){\color[rgb]{0,0,0}\makebox(0,0)[lb]{\smash{$\Ib$}}}%
    \put(0.78229466,0.28928498){\color[rgb]{0,0,0}\makebox(0,0)[lb]{\smash{$\Ib$}}}%
    \put(0.22161587,0.00523203){\color[rgb]{0,0,0}\makebox(0,0)[lb]{\smash{$\Ib$}}}%
    \put(0.85052542,0.00523203){\color[rgb]{0,0,0}\makebox(0,0)[lb]{\smash{$\Ib$}}}%
  \end{picture}%
\endgroup%

	\end{aligned}\ ,
	\label{eq:ra:dual}
\end{align}
and for every $a_1+a_2=a_3+a_4$, $b_1+b_2=b_3+b_4$ and $l_1+l_2=l_3+l_4$  we have
	\begin{align}
	\begin{aligned}
	\def\svgwidth{11.5cm}
\begingroup%
  \makeatletter%
  \providecommand\color[2][]{%
    \errmessage{(Inkscape) Color is used for the text in Inkscape, but the package 'color.sty' is not loaded}%
    \renewcommand\color[2][]{}%
  }%
  \providecommand\transparent[1]{%
    \errmessage{(Inkscape) Transparency is used (non-zero) for the text in Inkscape, but the package 'transparent.sty' is not loaded}%
    \renewcommand\transparent[1]{}%
  }%
  \providecommand\rotatebox[2]{#2}%
  \ifx\svgwidth\undefined%
    \setlength{\unitlength}{359.98261719bp}%
    \ifx\svgscale\undefined%
      \relax%
    \else%
      \setlength{\unitlength}{\unitlength * \real{\svgscale}}%
    \fi%
  \else%
    \setlength{\unitlength}{\svgwidth}%
  \fi%
  \global\let\svgwidth\undefined%
  \global\let\svgscale\undefined%
  \makeatother%
  \begin{picture}(1,0.23351515)%
    \put(0,0){\includegraphics[width=\unitlength]{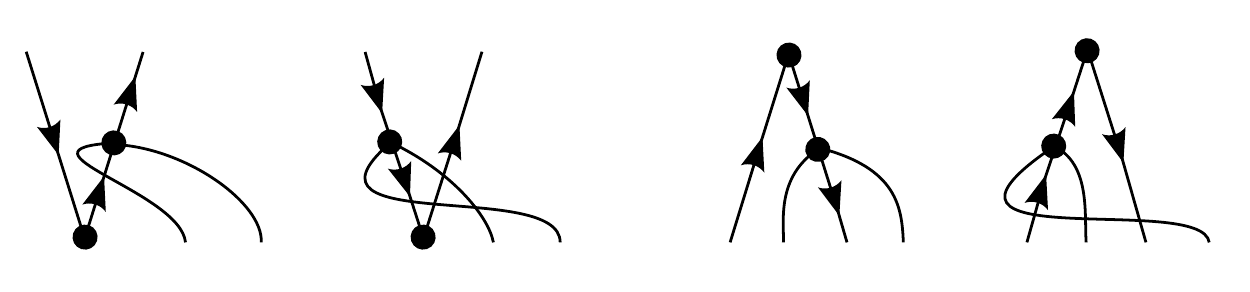}}%
    \put(0.10457502,0.21325376){\color[rgb]{0,0,0}\makebox(0,0)[lb]{\smash{$U$}}}%
    \put(0,0.00589245){\color[rgb]{0,0,0}\makebox(0,0)[lb]{\smash{\scriptsize{$(a_1,l_1,b_1)$}}}}%
    \put(0.00258025,0.21282934){\color[rgb]{0,0,0}\makebox(0,0)[lb]{\smash{$V$}}}%
    \put(0.1318655,0.00391946){\color[rgb]{0,0,0}\makebox(0,0)[lb]{\smash{$A$}}}%
    \put(0.1196202,0.117331){\color[rgb]{0,0,0}\rotatebox{58.82512787}{\makebox(0,0)[lb]{\smash{\scriptsize
{$(a_2,l_2,b_2)$}}}}}%
    \put(0.24541984,0.00483557){\color[rgb]{0,0,0}\makebox(0,0)[lb]{\smash{\scriptsize{$(a_4,l_4,b_4)$}}}}%
    \put(0.27370445,0.21282934){\color[rgb]{0,0,0}\makebox(0,0)[lb]{\smash{$V$}}}%
    \put(0.37165993,0.00391946){\color[rgb]{0,0,0}\makebox(0,0)[lb]{\smash{$A$}}}%
    \put(0.32915937,0.13237386){\color[rgb]{0,0,0}\rotatebox{73.11505517}{\makebox(0,0)[lb]{\smash{\scriptsize
{$(a_3,l_3,b_3)$}}}}}%
    \put(0.38373771,0.21322751){\color[rgb]{0,0,0}\makebox(0,0)[lb]{\smash{$U$}}}%
    \put(0.21774604,0.11183757){\color[rgb]{0,0,0}\makebox(0,0)[lb]{\smash{=}}}%
    \put(0.47266721,0.11183757){\color[rgb]{0,0,0}\makebox(0,0)[lb]{\smash{and}}}%
    \put(0.66774214,0.00391946){\color[rgb]{0,0,0}\makebox(0,0)[lb]{\smash{$V$}}}%
    \put(0.55580117,0.2086809){\color[rgb]{0,0,0}\makebox(0,0)[lb]{\smash{\scriptsize{$(a_1,l_1,b_1)$}}}}%
    \put(0.5657474,0.00391946){\color[rgb]{0,0,0}\makebox(0,0)[lb]{\smash{$U$}}}%
    \put(0.62087294,0.00391946){\color[rgb]{0,0,0}\makebox(0,0)[lb]{\smash{$B$}}}%
    \put(0.68251969,0.11315125){\color[rgb]{0,0,0}\rotatebox{67.90114179}{\makebox(0,0)[lb]{\smash{\scriptsize
{$(a_2,l_2,b_2)$}}}}}%
    \put(0.80780319,0.21202469){\color[rgb]{0,0,0}\rotatebox{-0.35358424}{\makebox(0,0)[lb]{\smash{\scriptsize{$(a_4,l_4,b_4)$}}}}}%
    \put(0.8044858,0.00391946){\color[rgb]{0,0,0}\makebox(0,0)[lb]{\smash{$U$}}}%
    \put(0.79014685,0.07893542){\color[rgb]{0,0,0}\rotatebox{71.89547785}{\makebox(0,0)[lb]{\smash{\scriptsize
{$(a_3,l_3,b_3)$}}}}}%
    \put(0.90581617,0.00391946){\color[rgb]{0,0,0}\makebox(0,0)[lb]{\smash{$V$}}}%
    \put(0.73544146,0.11183757){\color[rgb]{0,0,0}\makebox(0,0)[lb]{\smash{=}}}%
    \put(0.85620786,0.00391946){\color[rgb]{0,0,0}\makebox(0,0)[lb]{\smash{$B$}}}%
    \put(0.18768427,0.00391946){\color[rgb]{0,0,0}\makebox(0,0)[lb]{\smash{$B$}}}%
    \put(0.42909122,0.00436236){\color[rgb]{0,0,0}\makebox(0,0)[lb]{\smash{$B$}}}%
    \put(0.7057703,0.00391946){\color[rgb]{0,0,0}\makebox(0,0)[lb]{\smash{$A$}}}%
    \put(0.95003665,0.00836412){\color[rgb]{0,0,0}\makebox(0,0)[lb]{\smash{$A$}}}%
  \end{picture}%
\endgroup%

	\end{aligned}\ .
		\label{eq:duality-compatibility}
	\end{align}
\end{definition}

Let us compare this situation to Lemma~\ref{lem:copairing}. There the continuity of $\gamma_a$ in the parameter was automatic, but in Definition~\ref{def:dual-pair} we demanded continuity explicitly. The reason for this is that 
the argument in the proof of Lemma~\ref{lem:copairing} does not apply,
	as we have not required that $\id_V\otimes Q_{a,l,b}^U$ is continuous in the parameters, see Remark~\ref{rem:bimodule-continuity}.
However one can easily check that for every $a_1+a_2=a_3+a_4$, $l_1+l_2=l_3+l_4$ and $b_1+b_2=b_3+b_4$
\begin{align}
	(\id_V\otimes Q_{a_1,l_1,b_1}^U)\circ\gamma_{a_2,l_2,b_2}=(Q_{a_3,l_3,b_3}^V\otimes \id_U)\circ\gamma_{a_4,l_4,b_4}\ .
	\label{eq:Q-gamma-compatibility}
\end{align}
	Furthermore, in $\Hilb$ this is equal to $\gamma_{a_1+a_2,l_1+l_2,b_1+b_2}$.

Note that \eqref{eq:duality-compatibility} implies that the action on $V$ is determined by the action on $U$:
\begin{align}
	\begin{aligned}
	\def\svgwidth{4cm}
\begingroup%
  \makeatletter%
  \providecommand\color[2][]{%
    \errmessage{(Inkscape) Color is used for the text in Inkscape, but the package 'color.sty' is not loaded}%
    \renewcommand\color[2][]{}%
  }%
  \providecommand\transparent[1]{%
    \errmessage{(Inkscape) Transparency is used (non-zero) for the text in Inkscape, but the package 'transparent.sty' is not loaded}%
    \renewcommand\transparent[1]{}%
  }%
  \providecommand\rotatebox[2]{#2}%
  \ifx\svgwidth\undefined%
    \setlength{\unitlength}{138.54978027bp}%
    \ifx\svgscale\undefined%
      \relax%
    \else%
      \setlength{\unitlength}{\unitlength * \real{\svgscale}}%
    \fi%
  \else%
    \setlength{\unitlength}{\svgwidth}%
  \fi%
  \global\let\svgwidth\undefined%
  \global\let\svgscale\undefined%
  \makeatother%
  \begin{picture}(1,0.61570747)%
    \put(0,0){\includegraphics[width=\unitlength]{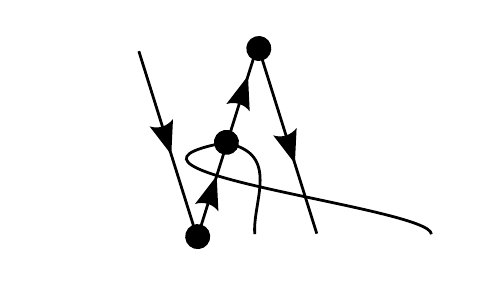}}%
    \put(0.63922038,0.01018361){\color[rgb]{0,0,0}\makebox(0,0)[lb]{\smash{$V$}}}%
    \put(0.01129817,0.10405227){\color[rgb]{0,0,0}\makebox(0,0)[lb]{\smash{\scriptsize{$(a_1,l_1,b_1)$}}}}%
    \put(-0.00383436,0.30713723){\color[rgb]{0,0,0}\makebox(0,0)[lb]{\smash{$\rho^V_{a,l,b}=$}}}%
    \put(0.24080753,0.56306393){\color[rgb]{0,0,0}\makebox(0,0)[lb]{\smash{$V$}}}%
    \put(0.5888646,0.50532291){\color[rgb]{0,0,0}\makebox(0,0)[lb]{\smash{\scriptsize
{$(a_3,l_3,b_3)$}}}}%
    \put(0.87018429,0.01018361){\color[rgb]{0,0,0}\makebox(0,0)[lb]{\smash{$A$}}}%
    \put(0.63347853,0.3099382){\color[rgb]{0,0,0}\makebox(0,0)[lb]{\smash{\scriptsize
{$(a_2,l_2,b_2)$}}}}%
    \put(0.46599744,0.01018361){\color[rgb]{0,0,0}\makebox(0,0)[lb]{\smash{$B$}}}%
  \end{picture}%
\endgroup%

	\end{aligned}\ .
	\label{eq:dualaction-bimodule}
\end{align}
We similarly define dual pairs of left and right modules and we omit the details here. 

\begin{example}\label{ex:twisted-bimodule-duals}
	Let $A\in\Sc$ be a symmetric RFA, $\alpha\in\Aut_{\RFrob{\Sc}}(A)$ and ${}_{\alpha}A_{\id}$ be the twisted transmissive bimodule from Example~\ref{ex:twistedaction}.
	Then $({}_{\alpha}A_{\id},{}_{\alpha^{-1}}A_{\id})$ is a dual pair of bimodules with duality morphisms
	\begin{align}
		\beta_{a,l,b}=\varepsilon_{a}\circ\mu_b\circ(\id_A\otimes\alpha)\quad\text{ and }\quad
		\gamma_{a,l,b}=(\alpha^{-1}\otimes\id_A)\circ\Delta_{a}\circ\eta_b
		\label{eq:twisted-bimodule-duals}
	\end{align}
	for $a,l,b\in\Rb_{>0}$.  Note that these morphisms only depend on $a+b$.
\end{example}

\begin{remark}
	If $(U,V)$ is a dual pair of bimodules with duality morphisms $\gamma_{a,l,b}$ and $\beta_{a,l,b}$, 
	then $(V,U)$ is also a dual pair of bimodules with duality morphisms 
	$\sigma_{V,U}\circ\gamma_{a,l,b}$ 
	and $\beta_{a,l,b}\circ\sigma_{V,U}$.
\end{remark}

Duals of bimodules
over associative algebras are unique up to unique isomorphism.
In the following we will see that under some assumptions this will be true for duals of 
bimodules over regularised algebras too. 
Let $(U,V)$ and $(U,W)$ be two dual pairs of bimodules and define
\begin{align}
\begin{aligned}
\def\svgwidth{8cm}
\begingroup%
  \makeatletter%
  \providecommand\color[2][]{%
    \errmessage{(Inkscape) Color is used for the text in Inkscape, but the package 'color.sty' is not loaded}%
    \renewcommand\color[2][]{}%
  }%
  \providecommand\transparent[1]{%
    \errmessage{(Inkscape) Transparency is used (non-zero) for the text in Inkscape, but the package 'transparent.sty' is not loaded}%
    \renewcommand\transparent[1]{}%
  }%
  \providecommand\rotatebox[2]{#2}%
  \ifx\svgwidth\undefined%
    \setlength{\unitlength}{216.2300354bp}%
    \ifx\svgscale\undefined%
      \relax%
    \else%
      \setlength{\unitlength}{\unitlength * \real{\svgscale}}%
    \fi%
  \else%
    \setlength{\unitlength}{\svgwidth}%
  \fi%
  \global\let\svgwidth\undefined%
  \global\let\svgscale\undefined%
  \makeatother%
  \begin{picture}(1,0.40298531)%
    \put(0,0){\includegraphics[width=\unitlength]{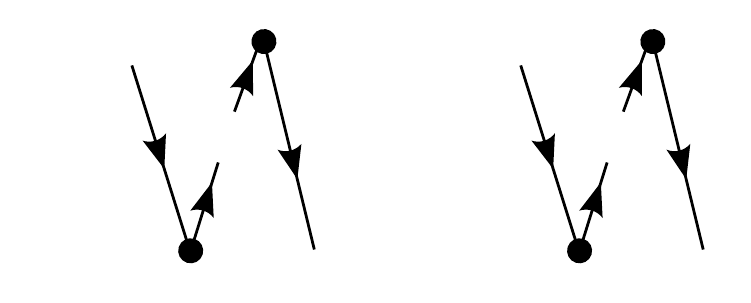}}%
    \put(0.28725229,0.2090485){\color[rgb]{0,0,0}\makebox(0,0)[lb]{\smash{$U$}}}%
    \put(0.15946638,0.34308397){\color[rgb]{0,0,0}\makebox(0,0)[lb]{\smash{$V$}}}%
    \put(0.26030425,0.3908188){\color[rgb]{0,0,0}\rotatebox{0.93255863}{\makebox(0,0)[lb]{\smash{\scriptsize
{$(a_2,l_2,b_2)$}}}}}%
    \put(-0.00245687,0.19507988){\color[rgb]{0,0,0}\makebox(0,0)[lb]{\smash{$\varphi_{a,l,b}:=$}}}%
    \put(0.15352935,0.00698041){\color[rgb]{0,0,0}\makebox(0,0)[lb]{\smash{\scriptsize{$(a_1,l_1,b_1)$}}}}%
    \put(0.39885346,0.01599847){\color[rgb]{0,0,0}\makebox(0,0)[lb]{\smash{$W$}}}%
    \put(0.80521921,0.2090485){\color[rgb]{0,0,0}\makebox(0,0)[lb]{\smash{$U$}}}%
    \put(0.6774333,0.34308397){\color[rgb]{0,0,0}\makebox(0,0)[lb]{\smash{$W$}}}%
    \put(0.51551005,0.19507988){\color[rgb]{0,0,0}\makebox(0,0)[lb]{\smash{$\psi_{a,l,b}:=$}}}%
    \put(0.91682036,0.01599847){\color[rgb]{0,0,0}\makebox(0,0)[lb]{\smash{$V$}}}%
    \put(0.47111289,0.19507988){\color[rgb]{0,0,0}\makebox(0,0)[lb]{\smash{,}}}%
    \put(0.78567185,0.39081988){\color[rgb]{0,0,0}\rotatebox{0.93255863}{\makebox(0,0)[lb]{\smash{\scriptsize
{$(a_2,l_2,b_2)$}}}}}%
    \put(0.6788958,0.00698041){\color[rgb]{0,0,0}\makebox(0,0)[lb]{\smash{\scriptsize{$(a_1,l_1,b_1)$}}}}%
  \end{picture}%
\endgroup%

\end{aligned}\ ,
	\label{eq:unique-dual}
\end{align}
which satisfy for $a=a_1+a_2$, $b=b_1+b_2$ and $l=l_1+l_2$ that
\begin{align}
	\varphi_{a_1,l_1,b_1}\circ\psi_{a_2,l_2,b_2}=Q_{a,l,b}^V
	\quad\text{ and }\quad
	\psi_{a_1,l_1,b_1}\circ\varphi_{a_2,l_2,b_2}=Q_{a,l,b}^W\ .
	\label{eq:phi-psi-comp}
\end{align}
Using separate continuity of the composition and \eqref{eq:phi-psi-comp} one can show the following
(we omit the details):
\begin{lemma} \label{lem:unique-dual}
	If the limits 
	\begin{align}
		\lim_{a,l,b\to0}\varphi_{a,l,b}\quad\text{ and }\quad\lim_{a,l,b\to0}\psi_{a,l,b}
		\label{eq:eq:unique-dual-limits}
	\end{align}
	exist, then $\varphi_{0,0,0}$ and $\psi_{0,0,0}$ are mutually inverse bimodule isomorphisms between $V$ and $W$.
\end{lemma}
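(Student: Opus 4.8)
The plan is to extract from \eqref{eq:phi-psi-comp} that $\varphi_{0,0,0}$ and $\psi_{0,0,0}$ are mutually inverse isomorphisms in $\Sc$, and then to upgrade this to an isomorphism of bimodules by exploiting how the duality morphisms interact with the actions.

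\emph{Mutual inverses.} I would start from $\varphi_{a_1,l_1,b_1}\circ\psi_{a_2,l_2,b_2}=Q^V_{a_1+a_2,\,l_1+l_2,\,b_1+b_2}$, keep $(a_1,l_1,b_1)$ fixed and let $(a_2,l_2,b_2)\to 0$. By separate continuity of the composition of $\Sc$ together with the joint continuity of $(a,l,b)\mapsto Q^V_{a,l,b}$ built into Definition~\ref{def:AB-bimodule-def}, this gives $\varphi_{a_1,l_1,b_1}\circ\psi_{0,0,0}=Q^V_{a_1,l_1,b_1}$; now letting $(a_1,l_1,b_1)\to 0$ and using $\lim_{a,l,b\to 0}Q^V_{a,l,b}=\id_V$ yields $\varphi_{0,0,0}\circ\psi_{0,0,0}=\id_V$. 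The same computation with the second identity in \eqref{eq:phi-psi-comp} gives $\psi_{0,0,0}\circ\varphi_{0,0,0}=\id_W$. Hence $\psi_{0,0,0}\colon V\to W$ and $\varphi_{0,0,0}\colon W\to V$ are mutually inverse isomorphisms in $\Sc$. This is a limit argument of the same flavour as in the proof of Lemma~\ref{lem:copairing}.

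\emph{Bimodule property.} It suffices to show that $\psi_{0,0,0}$ intertwines the actions, i.e.\ $\psi_{0,0,0}\circ\rho^V_{a,l,b}=\rho^W_{a,l,b}\circ(\id_B\otimes\psi_{0,0,0}\otimes\id_A)$ for all $a,l,b>0$; since the inverse of a bimodule isomorphism is again one, $\varphi_{0,0,0}$ is then automatically a bimodule morphism too. One cannot simply take the $(a,l,b)\to 0$ limit in the corresponding relation for $\psi_{a,l,b}$, because neither $\rho^V$ nor $\rho^W$ need admit a zero-parameter limit (cf.\ Remark~\ref{rem:bimodule-continuity}). Instead I would use the explicit formula \eqref{eq:unique-dual} for $\psi_{a,l,b}$ together with the compatibility relations \eqref{eq:ra:dual}, \eqref{eq:Q-gamma-compatibility}, \eqref{eq:duality-compatibility}, the expression \eqref{eq:dualaction-bimodule} of $\rho^V,\rho^W$ through $\rho^U$, and the bimodule associativity \eqref{eq:ra:bimodule}, to rearrange the string diagram for $\psi_{a_1,l_1,b_1}\circ\rho^V_{a_2,l_2,b_2}$ into an identity of the form
\begin{align*}
\psi_{a_1,l_1,b_1}\circ\rho^V_{a_2,l_2,b_2}
=\rho^W_{a_3,l_3,b_3}\circ\bigl(\id_B\otimes\psi_{a_4,l_4,b_4}\otimes\id_A\bigr),
\end{align*}
valid whenever $a_1+a_2=a_3+a_4$, $l_1+l_2=l_3+l_4$, $b_1+b_2=b_3+b_4$; in particular one may take $(a_2,l_2,b_2)=(a_3,l_3,b_3)=(a,l,b)$ arbitrary and positive and $(a_1,l_1,b_1)=(a_4,l_4,b_4)\to 0$. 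Passing to that limit and using separate continuity of the composition, $\lim\psi_{a,l,b}=\psi_{0,0,0}$, and the fact that tensoring this family with the identities $\id_B,\id_A$ preserves the limit (automatic in $\Hilb$ by the argument of Lemma~\ref{lem:semigrp}, since $\psi_{a,l,b}$ is jointly continuous and stays norm-bounded near $(0,0,0)$; for a general target one invokes the joint-continuity conditions flagged in Remark~\ref{rem:bimodule-continuity}) then yields the desired intertwining relation.

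The routine parts are the two limit passages of the first step, which use only separate continuity of composition and the continuity of the propagators $Q^V,Q^W$. The \emph{main obstacle} is the diagrammatic rearrangement in the second step: one has to redistribute every "shrinkable" area and length parameter so that in the end it sits only in factors whose $(a,l,b)\to 0$ behaviour is controlled — that is, in $\psi_{a,l,b}$ and in $Q$-type propagators — rather than in $\rho^V$ or $\rho^W$, for which no such limit is available; the relations \eqref{eq:duality-compatibility} and \eqref{eq:ra:bimodule} are precisely what makes this redistribution possible.
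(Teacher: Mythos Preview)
Your argument for mutual inverses is correct and is exactly what the paper has in mind: take limits in \eqref{eq:phi-psi-comp} using separate continuity of composition together with $\lim Q^V=\id_V$ and $\lim Q^W=\id_W$.

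For the bimodule property, the diagrammatic identity you write down is fine, but the way you pass to the limit introduces an unnecessary obstacle. You need $\id_B\otimes\psi_{a,l,b}\otimes\id_A\to\id_B\otimes\psi_{0,0,0}\otimes\id_A$, and in a general $\Sc$ tensoring with an identity is \emph{not} continuous (cf.\ Remark~\ref{rem:hilbcont}); your appeal to Remark~\ref{rem:bimodule-continuity} does not help, since the joint-continuity conditions discussed there concern the propagators $Q$, not an arbitrary family like $\psi_{a,l,b}$. In $\Hilb$ your argument via Lemma~\ref{lem:semigrp} is valid, but the lemma is stated for general $\Sc$.

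There is a cleaner route that needs only separate continuity of composition, in line with the paper's hint. From the mutual-inverse step you already have $\psi_{a,l,b}\circ\varphi_{0,0,0}=Q^W_{a,l,b}$, hence $\psi_{a,l,b}=Q^W_{a,l,b}\circ\psi_{0,0,0}$. Substitute this into the right-hand side of your parameterised intertwining relation and absorb $Q^W$ into $\rho^W$ via the bimodule axioms, obtaining
\[
\psi_{a_1,l_1,b_1}\circ\rho^V_{a_2,l_2,b_2}
=\rho^W_{a_1+a_2,\,l_1+l_2,\,b_1+b_2}\circ(\id_B\otimes\psi_{0,0,0}\otimes\id_A)
=Q^W_{a_1,l_1,b_1}\circ\rho^W_{a_2,l_2,b_2}\circ(\id_B\otimes\psi_{0,0,0}\otimes\id_A).
\]
Now the only remaining limit is $Q^W_{a_1,l_1,b_1}\to\id_W$, applied by \emph{composition}; separate continuity then gives $\psi_{0,0,0}\circ\rho^V_{a,l,b}=\rho^W_{a,l,b}\circ(\id_B\otimes\psi_{0,0,0}\otimes\id_A)$, with no tensor-with-identity limit anywhere.
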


In general we do not know if $V \cong W$, not even in $\Hilb$.

\begin{remark}
A related concept of duals was introduced in \cite{Abramsky:1999nt}
where duals are parametrised by Hilbert-Schmidt maps. 
The authors introduced the notion of a nuclear ideal in a symmetric monoidal category, which in $\Hilb$ consists of Hilbert-Schmidt maps $\mathbf{HSO}(\Hc,\Kc)$ for $\Hc,\Kc\in\Hilb$ \cite[Thm.\,5.9]{Abramsky:1999nt}.
Part of the data is an isomorphism $\theta:\mathbf{HSO}(\Hc,\Kc)\xrightarrow{\cong}\Bc(\Cb,\overline{\Hc}\otimes\Kc)$, where $\overline{\Hc}$ now denotes the conjugate Hilbert space. For $f\in\mathbf{HSO}(\Hc,\Kc)$ and $g\in\mathbf{HSO}(\Kc,\Lc)$ in a nuclear ideal the ``compactness'' relation holds:
\begin{align}
	(\id_{\Lc}\otimes\theta(f^{\dagger})^{\dagger})\circ(\theta(g)\otimes\id_{\Hc})=g\circ f\ .
	\label{eq:HSO-compactness}
\end{align}
Our definition of duals fits into this framework as follows.
Let $A,B$ be a regularised algebras and $U$ an $A$-$B$-bimodule in $\Hilb$ with dual $V$.
Then one can show that $Q_{a,l,b}^U$ is a trace class map, and hence Hilbert-Schmidt, cf.\ Lemma~\ref{lem:patrclass}.
Using the above notation let 
$\Hc=\Kc=\Lc:=U$,
\begin{align}
	\begin{aligned}
		f:=&Q_{a,l,b}^U\ , &g:=&Q_{a',l',b'}^U\ ,&
		\beta_{a,l,b}^U:=&\theta(f^{\dagger})^{\dagger} &\gamma_{a',l',b'}^U:=&\theta(g)\ .
	\end{aligned}
	\label{eq:HSO-compare-duality}
\end{align}
Then \eqref{eq:HSO-compactness} is exactly one half of the duality relation \eqref{eq:ra:dual} and $(U,\overline{U})$ is a dual pair of bimodules in the sense of Definition~\ref{def:dual-pair}.
\end{remark}

\subsection{Tensor product of modules over regularised algebras}\label{sec:tensor}

Let $A\in\Sc$ be a regularised algebra,
$M,N\in\Sc$ right and left $A$-modules
respectively and $U\in\Sc$ an $A$-$A$-bimodule.
Let $\rho^R_{a,l}:=\rho^U_{a',l,a}\circ(\eta_{a''}^A\otimes\id_{U\otimes A})$ and
$\rho^L_{a,l}:=\rho^U_{a,l,a'}\circ(\id_{A\otimes U}\otimes\eta_{a''}^A)$
for $a'+a''=a$.
\begin{definition} \label{def:tensor-prod-modules-cyclic}
	The \textsl{tensor product of $M$ and $N$ over $A$} 
	is an object $M\otimes_A N$ together with a morphism
	$\pi_{M \otimes_A N}:M\otimes N\to M\otimes_A N$ in $\Sc$,
	which is a coequaliser of $\rho^M_{a,l}\otimes Q_{a,l}^N$ and
	$Q_{a,l}^M\otimes\rho^N_{a,l}$ for every pair of parameters $(a,l)\in\Rb_{>0}^2$.

	If $\Sc$ is symmetric with braiding $\sigma$,
	one similarly defines the \textsl{cyclic tensor product} 
	$\left(\pi_{\ctimes_A U}:U\to\ctimes_A U\right)$
	to be a coequaliser of $\rho^L_{a,l}$ and $\rho^R_{a,l}\circ\sigma_{A,U}$
	for every $(a,l)\in\Rb_{>0}^2$.
\end{definition}

	Let $A,B,C\in\Sc$ be regularised algebras, 
	$V$ a $B$-$A$-module and $W$ an
	$A$-$C$-module.
	Let $\pi:V\otimes W\to V\otimes_A W$ denote a coequaliser of the morphisms
	\begin{align}
	\begin{aligned}
	\def\svgwidth{10cm}
\begingroup%
  \makeatletter%
  \providecommand\color[2][]{%
    \errmessage{(Inkscape) Color is used for the text in Inkscape, but the package 'color.sty' is not loaded}%
    \renewcommand\color[2][]{}%
  }%
  \providecommand\transparent[1]{%
    \errmessage{(Inkscape) Transparency is used (non-zero) for the text in Inkscape, but the package 'transparent.sty' is not loaded}%
    \renewcommand\transparent[1]{}%
  }%
  \providecommand\rotatebox[2]{#2}%
  \ifx\svgwidth\undefined%
    \setlength{\unitlength}{233.1109375bp}%
    \ifx\svgscale\undefined%
      \relax%
    \else%
      \setlength{\unitlength}{\unitlength * \real{\svgscale}}%
    \fi%
  \else%
    \setlength{\unitlength}{\svgwidth}%
  \fi%
  \global\let\svgwidth\undefined%
  \global\let\svgscale\undefined%
  \makeatother%
  \begin{picture}(1,0.26041115)%
    \put(0,0){\includegraphics[width=\unitlength]{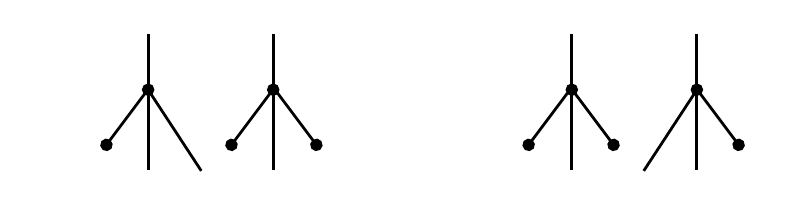}}%
    \put(0.16588132,0.00605264){\color[rgb]{0,0,0}\makebox(0,0)[lb]{\smash{$V$}}}%
    \put(0.23108633,0.00605264){\color[rgb]{0,0,0}\makebox(0,0)[lb]{\smash{$A$}}}%
    \put(0.32031423,0.00605264){\color[rgb]{0,0,0}\makebox(0,0)[lb]{\smash{$W$}}}%
    \put(0.69095321,0.00605264){\color[rgb]{0,0,0}\makebox(0,0)[lb]{\smash{$V$}}}%
    \put(0.77674927,0.00605264){\color[rgb]{0,0,0}\makebox(0,0)[lb]{\smash{$A$}}}%
    \put(0.84538612,0.00605264){\color[rgb]{0,0,0}\makebox(0,0)[lb]{\smash{$W$}}}%
    \put(0.16588132,0.2291224){\color[rgb]{0,0,0}\makebox(0,0)[lb]{\smash{$V$}}}%
    \put(0.32031423,0.2291224){\color[rgb]{0,0,0}\makebox(0,0)[lb]{\smash{$W$}}}%
    \put(0.69095321,0.2291224){\color[rgb]{0,0,0}\makebox(0,0)[lb]{\smash{$V$}}}%
    \put(0.84538612,0.2291224){\color[rgb]{0,0,0}\makebox(0,0)[lb]{\smash{$W$}}}%
    \put(0.1967679,0.16734924){\color[rgb]{0,0,0}\makebox(0,0)[lb]{\smash{\scriptsize
{$(b_2,l,a)$}}}}%
    \put(0.35120081,0.16734924){\color[rgb]{0,0,0}\makebox(0,0)[lb]{\smash{\scriptsize
{$(a_2,l,c_2)$}}}}%
    \put(0.72183979,0.16734924){\color[rgb]{0,0,0}\makebox(0,0)[lb]{\smash{\scriptsize
{$(b_2,l,a_2)$}}}}%
    \put(0.8762727,0.16734924){\color[rgb]{0,0,0}\makebox(0,0)[lb]{\smash{\scriptsize
{$(a,l,c_2)$}}}}%
    \put(0.93118218,0.09871239){\color[rgb]{0,0,0}\makebox(0,0)[lb]{\smash{\scriptsize
{$c_1$}}}}%
    \put(0.0835171,0.07812133){\color[rgb]{0,0,0}\makebox(0,0)[lb]{\smash{\scriptsize
{$b_1$}}}}%
    \put(0.45415604,0.13303081){\color[rgb]{0,0,0}\makebox(0,0)[lb]{\smash{,}}}%
    \put(0.25167738,0.10557607){\color[rgb]{0,0,0}\makebox(0,0)[lb]{\smash{\scriptsize
{$a_1$}}}}%
    \put(0.60858899,0.07812133){\color[rgb]{0,0,0}\makebox(0,0)[lb]{\smash{\scriptsize
{$b_1$}}}}%
    \put(0.76988558,0.10557607){\color[rgb]{0,0,0}\makebox(0,0)[lb]{\smash{\scriptsize
{$a_1$}}}}%
    \put(0.40611029,0.09871239){\color[rgb]{0,0,0}\makebox(0,0)[lb]{\smash{\scriptsize
{$c_1$}}}}%
    \put(-0.00227896,0.13303081){\color[rgb]{0,0,0}\makebox(0,0)[lb]{\smash{$\rho_{a,b,c,l}^{L}:=$}}}%
    \put(0.52279293,0.13303081){\color[rgb]{0,0,0}\makebox(0,0)[lb]{\smash{$\rho_{a,b,c,l}^{R}:=$}}}%
  \end{picture}%
\endgroup%

	\end{aligned}\ ,
		\label{eq:coeq-tensor-product}
	\end{align}
	for every parameter $a_1,a_2,b_1,b_2,c_1,c_2,l\in\Rb_{>0}$ with 
	$a=a_1+a_2$,
	$b=b_1+b_2$ and
	$c=c_1+c_2$
	(which of course may or may not exist).
	If the tensor product $B \otimes (-) \otimes C$ preserves 
	coequalisers of families of morphisms,
	then the universal property of the 
coequaliser
	induces a morphism $\bar{\rho}_{a,b,c,l}:B\otimes(V\otimes_A W)\otimes C\to V\otimes_A W$ form the morphism
	\begin{align}
	\begin{aligned}
	\def\svgwidth{4cm}
\begingroup%
  \makeatletter%
  \providecommand\color[2][]{%
    \errmessage{(Inkscape) Color is used for the text in Inkscape, but the package 'color.sty' is not loaded}%
    \renewcommand\color[2][]{}%
  }%
  \providecommand\transparent[1]{%
    \errmessage{(Inkscape) Transparency is used (non-zero) for the text in Inkscape, but the package 'transparent.sty' is not loaded}%
    \renewcommand\transparent[1]{}%
  }%
  \providecommand\rotatebox[2]{#2}%
  \ifx\svgwidth\undefined%
    \setlength{\unitlength}{90.75548096bp}%
    \ifx\svgscale\undefined%
      \relax%
    \else%
      \setlength{\unitlength}{\unitlength * \real{\svgscale}}%
    \fi%
  \else%
    \setlength{\unitlength}{\svgwidth}%
  \fi%
  \global\let\svgwidth\undefined%
  \global\let\svgscale\undefined%
  \makeatother%
  \begin{picture}(1,0.778935)%
    \put(0,0){\includegraphics[width=\unitlength]{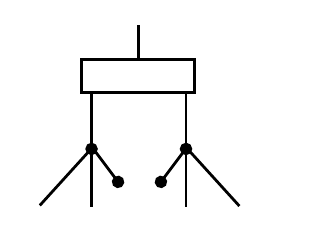}}%
    \put(0.66058352,0.29902623){\color[rgb]{0,0,0}\makebox(0,0)[lb]{\smash{\scriptsize
{$(a_3,l,c)$}}}}%
    \put(0,0.31918174){\color[rgb]{0,0,0}\makebox(0,0)[lb]{\smash{\scriptsize
{$(b,l,a_1)$}}}}%
    \put(0.40754644,0.50880097){\color[rgb]{0,0,0}\makebox(0,0)[lb]{\smash{$\pi$}}}%
    \put(0.2935909,0.72535697){\color[rgb]{0,0,0}\makebox(0,0)[lb]{\smash{$V\otimes_A W$}}}%
    \put(0.07082193,0.01036439){\color[rgb]{0,0,0}\makebox(0,0)[lb]{\smash{$B$}}}%
    \put(0.72637134,0.01036439){\color[rgb]{0,0,0}\makebox(0,0)[lb]{\smash{$C$}}}%
    \put(0.24603665,0.01036439){\color[rgb]{0,0,0}\makebox(0,0)[lb]{\smash{$V$}}}%
    \put(0.541739,0.01036439){\color[rgb]{0,0,0}\makebox(0,0)[lb]{\smash{$W$}}}%
    \put(0.48082939,0.12548595){\color[rgb]{0,0,0}\makebox(0,0)[lb]{\smash{\scriptsize{$a_4$}}}}%
    \put(0.33942926,0.12234029){\color[rgb]{0,0,0}\makebox(0,0)[lb]{\smash{\scriptsize{$a_2$}}}}%
  \end{picture}%
\endgroup%

	\end{aligned}\ ,
	\label{eq:def-tensor-product-action-before}
	\end{align}
where $a_1+a_2 = a_3 + a_4$.
	If the limit $\bar{\rho}_{b,l,c}:=\lim_{a\to0}\bar{\rho}_{a,b,c,l}$ exists and is jointly continuous in all three parameters, 
	then it gives a $B$-$C$-bimodule structure on $V\otimes_A W$.
	\begin{definition}\label{def:tensor-product-bimodule}
		The \textsl{tensor product of $V$ and $W$ over $A$} is the $B$-$C$-bimodule $V\otimes_A W$ with 
		the action $\bar{\rho}_{b,c,l}$ together with the coequaliser $\pi:V\otimes W\to V\otimes_A W$.
	\end{definition}

	The following proposition shows that in $\Vectfd$ the tensor product of modules over regularised algebras reduces to tensor product over ordinary associative algebras. The proof is straightforward and we omit it.
	
\begin{proposition}\label{prop:tensor-product-in-vect}
	Let $A$ be a regularised algebra in $\Vectfd$, 
	$M$ and $N$ right and left $A$-modules, respectively.
	Let $\Dc(M)=(M,H_M)$ and $\Dc(N)=(N,H_N)$ be the corresponding underlying modules and module morphisms from Proposition~\ref{prop:ramodule:findim}.
	Then $\Dc(M \otimes_A N)=(M\otimes_A N,H_M\otimes_A H_N)$, 
		where $M\otimes_A N$ is the tensor product of the underlying modules over the underlying algebra
		and $H_M\otimes_A H_N$ 
		is the induced morphism on the tensor product.
\end{proposition}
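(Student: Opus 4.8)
The plan is to reduce everything to the finite-dimensional classification of Propositions~\ref{prop:findimraclass} and \ref{prop:ramodule:findim}, apply the known structure theory of tensor products of modules over ordinary associative algebras, and then check that the regularised decorations (the central element $H$ and the module endomorphisms $H_M$, $H_N$) transport correctly through the coequaliser. First I would unwind the definitions: by Proposition~\ref{prop:findimraclass} a regularised algebra $A\in\Vectfd$ is the same datum as a pair $(A,H)$ with $H\in Z(A)$, and $P_a = e^{aH}$, $\mu_a = P_a\circ\mu$, $\eta_a = P_a\circ\eta$; by Proposition~\ref{prop:ramodule:findim} a right $A$-module $M$ is a pair $(M,H_M)$ with $H_M\in\End_A(M)$ and $Q^M_{a,l} = e^{a H_A + l H_M}$ where $H_A$ is the action of $H$ on $M$, and similarly $(N,H_N)$. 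In finite dimensions every coequaliser exists, and since $\dim<\infty$ the tensor product $(-)\otimes(-)$ is exact and preserves coequalisers, so all the existence hypotheses in Definition~\ref{def:tensor-product-bimodule} are automatic; in particular $M\otimes_A N$ as defined there exists.

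Next I would identify the underlying vector space. The coequaliser in Definition~\ref{def:tensor-prod-modules-cyclic} is of the family $\rho^M_{a,l}\otimes Q^N_{a,l}$ versus $Q^M_{a,l}\otimes\rho^N_{a,l}$. Writing $\rho^M_{a,l} = Q^M_{a,l}\circ\rho^M$ (action with the $\eta$'s absorbed into $Q$, which one sees from \eqref{eq:QU-leftmod-def} and the module axiom), and using that each $Q^M_{a,l}$, $Q^N_{a,l}$ is invertible in $\Vectfd$ (it is $e^{\text{something}}$), the coequaliser of the whole family coincides with the coequaliser of the single pair $\rho^M\otimes\id_N$ versus $\id_M\otimes\rho^N$ (the $l$-direction imposes no condition beyond $Q^M_{a,l}$ being an isomorphism intertwining the two maps in a fixed way; the $a$-direction reproduces exactly the usual balancing relation over the associative algebra $A$). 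Hence $\pi: M\otimes N\to M\otimes_A N$ is, as a map of vector spaces, the canonical projection onto the ordinary module tensor product $M\otimes_A N$. This is the step I expect to require the most care: one must argue cleanly that quotienting by the whole continuous family of relations gives the same quotient as the single associative balancing relation, using invertibility of the $Q$'s and separate continuity only in the mild way already exploited in Lemma~\ref{lem:ra:properties}; everything else is bookkeeping.

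Finally I would compute the induced $H$'s. The endomorphism $H_{M\otimes_A N}$ is obtained by differentiating $Q^{M\otimes_A N}_{a,l}$, or equivalently it is the map induced on $M\otimes_A N$ by $H_M\otimes\id_N$ and by $\id_M\otimes H_N$, which agree after passing to the quotient precisely because $H_M\in\End_A(M)$ and $H_N\in\End_A(N)$ are $A$-linear — this is the definition of $H_M\otimes_A H_N$ in the statement. One checks that under the identification of $M\otimes_A N$ with the associative tensor product, the limit $\bar\rho_{b,l,c}$ of \eqref{eq:def-tensor-product-action-before} descends to the ordinary bimodule action of $B$ and $C$ (again because $Q^V_{a,l,b}$ etc.\ are isomorphisms and the $a\to0$ limit simply removes the $P_a$-type factor), and that the decoration of this bimodule in the sense of Proposition~\ref{prop:ramodule:findim} is the pair $(M\otimes_A N,\,H_M\otimes_A H_N)$. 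Assembling these three observations gives $\Dc(M\otimes_A N) = (M\otimes_A N,\,H_M\otimes_A H_N)$, which is the claim. As the paper notes, none of the individual calculations is surprising, so beyond the reduction argument in the previous paragraph the proof is routine.
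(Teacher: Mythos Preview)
The paper omits the proof entirely, calling it straightforward, so your outline is effectively supplying what the authors left implicit. Your core reduction in step~2 is correct and is the heart of the matter: writing $\rho^M_{a,l}\otimes Q^N_{a,l}=(Q^M_{a,l}\otimes Q^N_{a,l})\circ(\rho^M\otimes\id_N)$ and likewise for the other map, together with the fact that each $Q^M_{a,l}$, $Q^N_{a,l}$ is an invertible $A$-module endomorphism (so $Q^M_{a,l}\otimes Q^N_{a,l}$ descends to an isomorphism of the ordinary $M\otimes_A N$), shows that the coequaliser of the parametrised family is the same as the ordinary balanced-tensor coequaliser. Taking the $a,l\to 0$ limit for the universal property, as you indicate, works cleanly in $\Vectfd$.

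There is, however, a genuine slip in your step~3. You assert that $H_M\otimes\id_N$ and $\id_M\otimes H_N$ ``agree after passing to the quotient precisely because $H_M\in\End_A(M)$ and $H_N\in\End_A(N)$ are $A$-linear''. This is false: $A$-linearity guarantees only that each of these maps \emph{descends} to $M\otimes_A N$, not that their descents coincide. (Take $A=\Cb$, $M=N=\Cb$, $H_M=1$, $H_N=2$.) What does agree on the quotient is the pair $H_A^M\otimes\id_N$ and $\id_M\otimes H_A^N$, by the balancing relation for the central element $H$. The $l$-generator of the descended semigroup $Q^M_{a,l}\otimes_A Q^N_{a,l}$ is the descent of $H_M\otimes\id_N+\id_M\otimes H_N$, not of either summand alone. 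Whatever precise meaning the paper attaches to the symbol $H_M\otimes_A H_N$, your justification for it is incorrect as written; fix this and the rest of your argument goes through.
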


\medskip

For the rest of the section let $\Sc$ be symmetric monoidal and idempotent complete, and $A\in\Sc$ a strongly separable regularised algebra with separability idempotents $e_a$.
We define the following morphisms
\begin{align}
	\begin{aligned}
	\def\svgwidth{14cm}
\begingroup%
  \makeatletter%
  \providecommand\color[2][]{%
    \errmessage{(Inkscape) Color is used for the text in Inkscape, but the package 'color.sty' is not loaded}%
    \renewcommand\color[2][]{}%
  }%
  \providecommand\transparent[1]{%
    \errmessage{(Inkscape) Transparency is used (non-zero) for the text in Inkscape, but the package 'transparent.sty' is not loaded}%
    \renewcommand\transparent[1]{}%
  }%
  \providecommand\rotatebox[2]{#2}%
  \ifx\svgwidth\undefined%
    \setlength{\unitlength}{255.33024902bp}%
    \ifx\svgscale\undefined%
      \relax%
    \else%
      \setlength{\unitlength}{\unitlength * \real{\svgscale}}%
    \fi%
  \else%
    \setlength{\unitlength}{\svgwidth}%
  \fi%
  \global\let\svgwidth\undefined%
  \global\let\svgscale\undefined%
  \makeatother%
  \begin{picture}(1,0.25654887)%
    \put(0,0){\includegraphics[width=\unitlength]{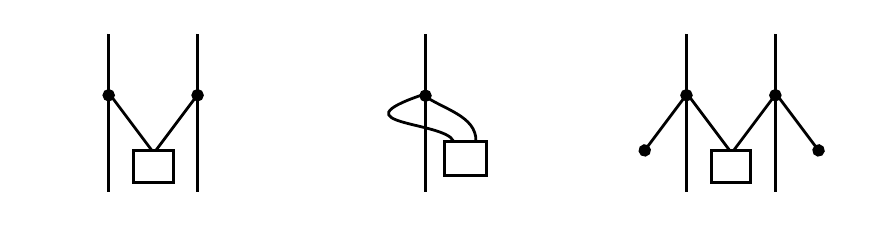}}%
    \put(0.8588409,0.00552593){\color[rgb]{0,0,0}\makebox(0,0)[lb]{\smash{$W$}}}%
    \put(0.8588409,0.22798293){\color[rgb]{0,0,0}\makebox(0,0)[lb]{\smash{$W$}}}%
    \put(0.88703968,0.16531898){\color[rgb]{0,0,0}\makebox(0,0)[lb]{\smash{\scriptsize
{$(a_3,l,c_1)$}}}}%
    \put(0.93717083,0.09012225){\color[rgb]{0,0,0}\makebox(0,0)[lb]{\smash{\scriptsize
{$c_2$}}}}%
    \put(0.80557655,0.06192348){\color[rgb]{0,0,0}\makebox(0,0)[lb]{\smash{$e_{a_2}$}}}%
    \put(0.78051097,0.17785177){\color[rgb]{0,0,0}\makebox(0,0)[lb]{\smash{\scriptsize
{$(b_2,l,a_1)$}}}}%
    \put(0.2071359,0.00552593){\color[rgb]{0,0,0}\makebox(0,0)[lb]{\smash{$N$}}}%
    \put(0.2071359,0.22798293){\color[rgb]{0,0,0}\makebox(0,0)[lb]{\smash{$N$}}}%
    \put(0.23533468,0.16531898){\color[rgb]{0,0,0}\makebox(0,0)[lb]{\smash{\scriptsize
{$(a_3,l)$}}}}%
    \put(0.26039953,0.12145419){\color[rgb]{0,0,0}\makebox(0,0)[lb]{\smash{,}}}%
    \put(0.69591465,0.06818987){\color[rgb]{0,0,0}\makebox(0,0)[lb]{\smash{\scriptsize
{$b_1$}}}}%
    \put(0.14133876,0.16531898){\color[rgb]{0,0,0}\makebox(0,0)[lb]{\smash{\scriptsize
{$(a_1,l)$}}}}%
    \put(0.10687359,0.00552593){\color[rgb]{0,0,0}\makebox(0,0)[lb]{\smash{$M$}}}%
    \put(0.10687359,0.22798293){\color[rgb]{0,0,0}\makebox(0,0)[lb]{\smash{$M$}}}%
    \put(0.76797819,0.00552593){\color[rgb]{0,0,0}\makebox(0,0)[lb]{\smash{$V$}}}%
    \put(0.76797819,0.22798293){\color[rgb]{0,0,0}\makebox(0,0)[lb]{\smash{$V$}}}%
    \put(0.49852323,0.16531898){\color[rgb]{0,0,0}\makebox(0,0)[lb]{\smash{\scriptsize
{$(a_1,l,a_3)$}}}}%
    \put(0.50427411,0.07201443){\color[rgb]{0,0,0}\makebox(0,0)[lb]{\smash{$e_{a_2}$}}}%
    \put(0.1537113,0.06186807){\color[rgb]{0,0,0}\makebox(0,0)[lb]{\smash{$e_{a_2}$}}}%
    \put(0.55820249,0.12101935){\color[rgb]{0,0,0}\makebox(0,0)[lb]{\smash{,}}}%
    \put(-0.00208064,0.11758807){\color[rgb]{0,0,0}\makebox(0,0)[lb]{\smash{$D_{a,l}^{M,N}:=$}}}%
    \put(0.33630465,0.11758807){\color[rgb]{0,0,0}\makebox(0,0)[lb]{\smash{$D_{a,l}^{U}:=$}}}%
    \put(0.62455878,0.11758807){\color[rgb]{0,0,0}\makebox(0,0)[lb]{\smash{$D_{a,b,c,l}^{V,W}:=$}}}%
    \put(0.47032446,0.22798293){\color[rgb]{0,0,0}\makebox(0,0)[lb]{\smash{$U$}}}%
    \put(0.47032446,0.00865913){\color[rgb]{0,0,0}\makebox(0,0)[lb]{\smash{$U$}}}%
  \end{picture}%
\endgroup%

	\end{aligned}\ ,
	\label{eq:D-definition-bimodule}
\end{align}
with $\sum_{i=1}^3 a_i=a$, $b=b_1+b_2$, $c=c_1+c_2$ and $l_1+l_2=l$.
{}From a direct computation it follows that
\begin{align}
D_{a_1,l_1}^{M,N}\circ D_{a_2,l_2}^{M,N}&= D_{a_1+a_2,l_1+l_2}^{M,N}
	\label{eq:D-MN-semigroup}\\
D_{a_1,l_1}^{U}\circ D_{a_2,l_2}^{U}&= D_{a_1+a_2,l_1+l_2}^{U}
	\label{eq:D-U-semigroup}\\
D_{a_1,b_1,c_1,l_1}^{V,W}\circ D_{a_2,b_2,c_2,l_1}^{V,W}&= D_{a_1+a_2,b_1+b_2,c_1+c_2,l_1+l_2}^{V,W}
	\label{eq:D-VW-semigroup}
\end{align}
for every $a_1,a_2,l_1,l_2,b_1,b_2\in\Rb_{>0}$.
So if $D_0^{M,N}:=\lim_{a,l\to0}D_{a,l}^{M,N}$
	exists, then it is an idempotent.
	In this case we write
	\begin{align}
		\begin{aligned}
			D_0^{M,N}&=\left[ M\otimes N\xrightarrow {\pi}\im(D_0^{M,N})\xrightarrow{\iota}M\otimes N \right]\\
			\id_{\im(D_0^{M,N})}&=\left[ \im(D_0^{M,N})\xrightarrow{\iota}M\otimes N \xrightarrow {\pi}\im(D_0^{M,N}) \right]
		\end{aligned}
		\label{eq:d0-image}
	\end{align}
	for the projection $\pi$ and embedding $\iota$ of its image $\im(D_0^{M,N})$.
	Similarly, if $D_0^U:=\lim_{a,l\to0}D_{a,l}^{U}$ 
	(resp.\ $D_0^{V,W}:=\lim_{a,b,c,l\to0}D_{a,b,c,l}^{V,W}$) exists, 
	then it is also an idempotent and we similarly write
	$\pi$, $\iota$ and $\im(D_0^U)$ (resp.\ $\im(D_0^{V,W})$).

	Let us assume that $\lim_{a,b,c,l\to0}D_{a,b,c,l}^{V,W}$ exists and define
	\begin{align}
	\begin{aligned}
	\def\svgwidth{6.5cm}
\begingroup%
  \makeatletter%
  \providecommand\color[2][]{%
    \errmessage{(Inkscape) Color is used for the text in Inkscape, but the package 'color.sty' is not loaded}%
    \renewcommand\color[2][]{}%
  }%
  \providecommand\transparent[1]{%
    \errmessage{(Inkscape) Transparency is used (non-zero) for the text in Inkscape, but the package 'transparent.sty' is not loaded}%
    \renewcommand\transparent[1]{}%
  }%
  \providecommand\rotatebox[2]{#2}%
  \ifx\svgwidth\undefined%
    \setlength{\unitlength}{138.42003174bp}%
    \ifx\svgscale\undefined%
      \relax%
    \else%
      \setlength{\unitlength}{\unitlength * \real{\svgscale}}%
    \fi%
  \else%
    \setlength{\unitlength}{\svgwidth}%
  \fi%
  \global\let\svgwidth\undefined%
  \global\let\svgscale\undefined%
  \makeatother%
  \begin{picture}(1,0.67645858)%
    \put(0,0){\includegraphics[width=\unitlength]{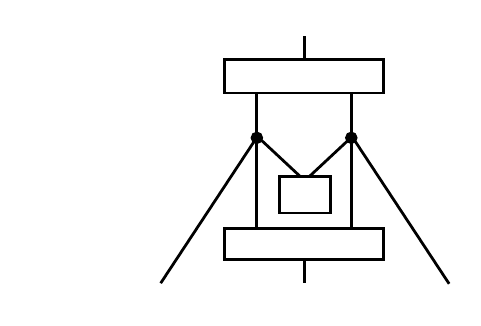}}%
    \put(0.7770465,0.38492309){\color[rgb]{0,0,0}\makebox(0,0)[lb]{\smash{\scriptsize
{$(a_3,l,c)$}}}}%
    \put(0.59210217,0.26355338){\color[rgb]{0,0,0}\makebox(0,0)[lb]{\smash{$e_{a_2}$}}}%
    \put(0.32081503,0.38657909){\color[rgb]{0,0,0}\makebox(0,0)[lb]{\smash{\scriptsize
{$(b,l,a_1)$}}}}%
    \put(0.61114203,0.49934445){\color[rgb]{0,0,0}\makebox(0,0)[lb]{\smash{$\pi$}}}%
    \put(0.61546837,0.1463921){\color[rgb]{0,0,0}\makebox(0,0)[lb]{\smash{$\iota$}}}%
    \put(0.53642676,0.64133){\color[rgb]{0,0,0}\makebox(0,0)[lb]{\smash{$\im(D_0^{V,W})$}}}%
    \put(0.52145891,0.01090431){\color[rgb]{0,0,0}\makebox(0,0)[lb]{\smash{$\im(D_0^{V,W})$}}}%
    \put(-0.00191898,0.34852063){\color[rgb]{0,0,0}\makebox(0,0)[lb]{\smash{$\tilde{\rho}_{a,b,c,l}^{V, W}:=$}}}%
    \put(0.30945452,0.01847775){\color[rgb]{0,0,0}\makebox(0,0)[lb]{\smash{$B$}}}%
    \put(0.91265264,0.01668576){\color[rgb]{0,0,0}\makebox(0,0)[lb]{\smash{$C$}}}%
  \end{picture}%
\endgroup%

	\end{aligned}
	\label{eq:tensor-product-action}
	\end{align}
	for $a_1,a_2,a_3,b,c,l\in\Rb_{>0}$ with $a=a_1+a_2+a_3$.
	
\begin{proposition}~
\label{prop:d0proj}
\begin{enumerate}
	\item If $D_0^{M,N}$ exists then $(\pi,\im(D_0)^{M,N})$ is the tensor product $M\otimes_A N$.
	\item If $D_0^{U}$ exists then $(\pi,\im(D_0^{U}))$ is the cyclic tensor product $\ctimes_A U$.
	\item If $D_0^{V,W}$ and 
	$\tilde{\rho}_{b,l,c}:=\lim_{a\to0}\tilde{\rho}_{a,b,c,l}^{V,W}$ 
		exists for every $b,c,l\in\Rb_{>0}$
		and is jointly continuous in the parameters
		then $(\pi,\im(D_0)^{V,W})$ with action 
		$\tilde{\rho}_{b,l,c}$ 
		is the tensor product $V\otimes_A W$.
\end{enumerate}
\end{proposition}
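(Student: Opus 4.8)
The three statements have the same structure, so the plan is to carry out part~1 in full and then indicate the (mild) modifications for parts~2 and~3. Throughout I would use only four ingredients: the balancing relation and the identity $\mu_{a_1}\circ e_{a_2}=\eta_a$ for the separability idempotents (plus $\sigma_{A,A}\circ e_a=e_a$ for part~2), the module associativity \eqref{eq:ra:module} (resp.\ its bimodule analogue), separate continuity of composition in $\Sc$, and $\lim_{a,l\to 0}Q^{(-)}_{a,l}=\id$. For part~1, write $D_0:=D_0^{M,N}$ and let $\pi,\iota$ be as in \eqref{eq:d0-image}, so that $\iota\circ\pi=D_0$ and $\pi\circ\iota=\id_{\im D_0}$; in particular $\pi\circ D_0=\pi$. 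First I would show that $\pi$ coequalises the family in Definition~\ref{def:tensor-prod-modules-cyclic}. Since $\pi\circ D_0=\pi$ it is enough to prove
\[
	D_0\circ(\rho^M_{a,l}\otimes Q^N_{a,l})=D_0\circ(Q^M_{a,l}\otimes\rho^N_{a,l})
\]
for every $(a,l)$, and since $D_0^{M,N}=\lim_{a',l'\to0}D^{M,N}_{a',l'}$ by hypothesis, separate continuity of composition reduces this to the corresponding identity with $D_0$ replaced by $D^{M,N}_{a',l'}$ for arbitrary auxiliary parameters $a',l'>0$. That identity is purely algebraic: expanding $D^{M,N}_{a',l'}$ via \eqref{eq:D-definition-bimodule}, one slides the $A$ produced by $\rho^M$ (resp.\ $\rho^N$) through the idempotent $e$ using the balancing relation and absorbs it by \eqref{eq:ra:module}, after which both sides become the same string diagram; the only real work is bookkeeping the area and length parameters.

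For the universal property, let $f:M\otimes N\to X$ coequalise the family and put $\hat f:=f\circ\iota$. Then $\hat f\circ\pi=f\circ D_0=\lim_{a,l\to0}\big(f\circ D^{M,N}_{a,l}\big)$ by separate continuity. The key algebraic step is that, using the hypothesis on $f$, one may split a copy of $Q^M$ off the $M$-action inside $D^{M,N}_{a,l}$, move it across via the coequalising relation so that both legs of $e$ now act on $M$, combine them by module associativity, and collapse them with $\mu\circ e=\eta$; the outcome is $f\circ D^{M,N}_{a,l}=f\circ\big(Q^M_{a_1,l_1}\otimes Q^N_{a_2,l_2}\big)$ with parameters summing to $(a,l)$. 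Letting $a,l\to0$ and using that $(p,q)\mapsto Q^M_p\otimes Q^N_q$ is jointly continuous at the origin---automatic in $\Hilb$ by Lemma~\ref{lem:semigrp}, and otherwise part of the standing continuity assumptions on the modules in play, cf.\ Remark~\ref{rem:bimodule-continuity}---yields $\hat f\circ\pi=f$. Uniqueness is immediate, since any $g$ with $g\circ\pi=f$ satisfies $g=g\circ\pi\circ\iota=f\circ\iota=\hat f$.

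Part~2 is identical with $D^{M,N}$ replaced by $D^{U}$ and \eqref{eq:D-U-semigroup} in place of \eqref{eq:D-MN-semigroup}, establishing $(\pi,\im D_0^U)=\ctimes_A U$; the one new point is that comparing $\rho^L_{a,l}$ with $\rho^R_{a,l}\circ\sigma_{A,U}$ requires swapping the two legs of $e$, which is exactly $\sigma_{A,A}\circ e_a=e_a$. For part~3, running the part~1 argument for the $B$-$C$-decorated family \eqref{eq:coeq-tensor-product} already exhibits $(\pi,\im D_0^{V,W})$ as the coequaliser underlying $V\otimes_A W$. It then remains to check that the action is the right one: from \eqref{eq:tensor-product-action} and \eqref{eq:D-VW-semigroup} one sees that $\tilde\rho^{V,W}_{a,b,c,l}$ is a continuous semigroup in its parameters and that $\tilde\rho^{V,W}_{a,b,c,l}\circ(\id_B\otimes\pi\otimes\id_C)$ equals $\pi$ post-composed with the morphism \eqref{eq:def-tensor-product-action-before}; since $\im D_0^{V,W}$ is a retract of $V\otimes W$, the functor $B\otimes(-)\otimes C$ preserves this (split, hence absolute) coequaliser, so the induced action $\bar\rho$ of Definition~\ref{def:tensor-product-bimodule} exists and, by the uniqueness clause of its universal property, coincides with the assumed limit $\tilde\rho_{b,l,c}$. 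The bimodule axioms \eqref{eq:ra:bimodule} and $\lim Q^{\im D_0^{V,W}}=\id$ for $\tilde\rho$ follow from the corresponding properties of $V$ and $W$ together with $\mu\circ e=\eta$ and separate continuity, while joint continuity of $\tilde\rho$ is the standing hypothesis of the statement.

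The main obstacle is the two string-diagram computations in part~1---the coequalising identity for $D^{M,N}_{a',l'}$ and the collapse $f\circ D^{M,N}_{a,l}=f\circ(Q^M\otimes Q^N)$. Once these are in hand, everything else is an exercise in separate continuity of composition, in $\lim_{a,l\to0}Q_{a,l}=\id$, and in the absoluteness of coequalisers that are split by an idempotent.
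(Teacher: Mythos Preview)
Your proposal is correct and follows essentially the same route as the paper: the heart of both arguments is the identity $f\circ D^{M,N}_{a,l}=f\circ(Q^M\otimes Q^N)$ for any coequalising $f$, obtained by feeding the separability idempotent into the coequalising relation and collapsing via $\mu\circ e=\eta$, followed by the limit $a,l\to 0$. You are in fact slightly more thorough than the paper in two places: you explicitly verify that $\pi$ itself coequalises the family (the paper jumps straight to the universal property), and for part~3 you note that the coequaliser is split and hence absolute, which cleanly handles the hypothesis in Definition~\ref{def:tensor-product-bimodule} that $B\otimes(-)\otimes C$ preserve the relevant coequaliser; you also correctly flag the joint-continuity issue for $Q^M\otimes Q^N$ at the origin (cf.\ Remark~\ref{rem:bimodule-continuity}), which the paper's proof uses without comment.
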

\begin{proof}
	We will only treat the third
	case, in the other two cases
	one proceeds analogously.
	We show that $(\pi,\im(D_0^{V,W}))$ 
	is a coequaliser of the morphisms in \eqref{eq:coeq-tensor-product}.
        Let $p:=(a,b,c,l)$, $p':=(a',b',c',l')$
        and $\varphi:V\otimes W\to Y$ be such that
	\begin{align}
		\varphi\circ \rho^L_{p} =
		\varphi\circ \rho^R_{p}\ .
		\label{eq:phiq}
	\end{align}
	Let $\tilde{\varphi}:=\varphi\circ\iota$. We need to show that $\varphi=\tilde{\varphi}\circ\pi$ and that $\tilde{\varphi}$ is unique.
	Compose both sides of \eqref{eq:phiq} with 
	\begin{align*}
	\begin{aligned}
	\def\svgwidth{5cm}
\begingroup%
  \makeatletter%
  \providecommand\color[2][]{%
    \errmessage{(Inkscape) Color is used for the text in Inkscape, but the package 'color.sty' is not loaded}%
    \renewcommand\color[2][]{}%
  }%
  \providecommand\transparent[1]{%
    \errmessage{(Inkscape) Transparency is used (non-zero) for the text in Inkscape, but the package 'transparent.sty' is not loaded}%
    \renewcommand\transparent[1]{}%
  }%
  \providecommand\rotatebox[2]{#2}%
  \ifx\svgwidth\undefined%
    \setlength{\unitlength}{84.4323044bp}%
    \ifx\svgscale\undefined%
      \relax%
    \else%
      \setlength{\unitlength}{\unitlength * \real{\svgscale}}%
    \fi%
  \else%
    \setlength{\unitlength}{\svgwidth}%
  \fi%
  \global\let\svgwidth\undefined%
  \global\let\svgscale\undefined%
  \makeatother%
  \begin{picture}(1,0.60110109)%
    \put(0,0){\includegraphics[width=\unitlength]{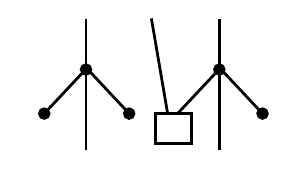}}%
    \put(0.8048054,0.35608946){\color[rgb]{0,0,0}\makebox(0,0)[lb]{\smash{\scriptsize{$(a_1',l',c_1')$}}}}%
    \put(0.88060577,0.12868835){\color[rgb]{0,0,0}\makebox(0,0)[lb]{\smash{\scriptsize{$c_2'$}}}}%
    \put(0.5395041,0.16191275){\color[rgb]{0,0,0}\makebox(0,0)[lb]{\smash{$e_{a_2'}$}}}%
    \put(0,0.35880435){\color[rgb]{0,0,0}\makebox(0,0)[lb]{\smash{\scriptsize{$(b_1',l',a_1')$}}}}%
    \put(0.03732665,0.21396377){\color[rgb]{0,0,0}\makebox(0,0)[lb]{\smash{\scriptsize{$b_2'$}}}}%
    \put(0.42459936,0.12078472){\color[rgb]{0,0,0}\makebox(0,0)[lb]{\smash{\scriptsize{$a_2'$}}}}%
    \put(0.25274299,0.55790805){\color[rgb]{0,0,0}\makebox(0,0)[lb]{\smash{$V$}}}%
    \put(0.48014396,0.5579082){\color[rgb]{0,0,0}\makebox(0,0)[lb]{\smash{$A$}}}%
    \put(0.70754507,0.5579082){\color[rgb]{0,0,0}\makebox(0,0)[lb]{\smash{$W$}}}%
    \put(0.25274299,0.00835544){\color[rgb]{0,0,0}\makebox(0,0)[lb]{\smash{$V$}}}%
    \put(0.70754507,0.00835544){\color[rgb]{0,0,0}\makebox(0,0)[lb]{\smash{$W$}}}%
  \end{picture}%
\endgroup%

	\end{aligned}
	\end{align*}
	with $a_1'+a_2'=a'$, $b_1'+b_2'=b'$ and $c_1'+c_2'=c'$ to get
	\begin{align*}
		\varphi\circ D_{p+p'}^{V,W}=\varphi\circ \left( Q_{p+p'}^V\otimes Q_{p+p'}^W\right).
	\end{align*}
	Now taking the limit $p,p'\to 0$ gives $\varphi\circ D_0^{V,W}=\varphi$, which we needed to show.
	Uniqueness of $\tilde{\varphi}$ follows from $\pi\circ\iota=\id_{\im(D_0^{V,W})}$.

	It is easy to see that the morphism $\bar{\rho}_{a,b,c,l}$ induced by \eqref{eq:def-tensor-product-action-before} is the morphism in \eqref{eq:tensor-product-action}.

\end{proof}

\begin{corollary}\label{cor:centercycltens}
	Consider $A$ as a 
	bimodule over itself.
	If $D_0^A$ exists then 
	$(\iota:\ctimes_A A\to A)$ is the centre of $A$.
\end{corollary}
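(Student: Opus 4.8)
The plan is to lean on Proposition~\ref{prop:d0proj}\,(2): since $D_0^A$ is assumed to exist, that proposition already identifies $(\pi,\im(D_0^A))$ with the cyclic tensor product $\ctimes_A A$, and by construction $D_0^A=\iota\circ\pi$ and $\pi\circ\iota=\id_{\ctimes_A A}$. It then suffices to verify that the map $\iota:\ctimes_A A\to A$ has the universal property \eqref{eq:central}--\eqref{eq:central2} of the centre of $A$. Here $A$ carries its regular bimodule structure ${}_{\id}A_{\id}$ from Example~\ref{ex:twistedaction}, so that the $A$-$A$-action on $A$ is built from $\mu$ with the $l$-dependence entering only through a factor $P_l$; consequently every spurious parameter introduced below carries a $P$- or $Q$-factor whose total argument tends to $0$, so that by separate continuity of composition in $\Sc$ such factors disappear in the relevant $a,l\to 0$ limits.

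First I would prove the auxiliary identity $\mu_c\circ(D_0^A\otimes\id_A)=\mu_c\circ\sigma_{A,A}\circ(D_0^A\otimes\id_A)$ for all $c>0$ --- which on elements just says that $D_0^A(x)$ is central. For $a,l>0$ the morphism $D_{a,l}^A$ is $\rho^A$ precomposed with the separability idempotent $e_{a_2}$ acting on the two sides of the middle $A$-strand; composing $\mu_c\circ(D_{a,l}^A\otimes\id_A)$, one slides the right leg of $e_{a_2}$ across that strand using the separability relations (in their parameter-indexed form, tensored with $\id_A$ and composed with the relevant multiplications) together with strong separability $\sigma_{A,A}\circ e_a=e_a$. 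This is the regularised string-diagram version of the classical fact that $\sum_i e_i' x e_i''$ lies in the centre; it shows that $\mu_c\circ(D_{a,l}^A\otimes\id_A)$ and $\mu_c\circ\sigma_{A,A}\circ(D_{a,l}^A\otimes\id_A)$ agree up to $P$-factors of vanishing total parameter, and taking $a,l\to0$ (using Lemma~\ref{lem:ra:properties}\,\ref{lem:ra:mupacommute} to collect those factors) yields the identity. Since $\pi$ is a split epimorphism, so is $\pi\otimes\id_A$; writing $D_0^A\otimes\id_A=(\iota\otimes\id_A)\circ(\pi\otimes\id_A)$ and cancelling $\pi\otimes\id_A$ gives $\mu_c\circ(\iota\otimes\id_A)=\mu_c\circ\sigma_{A,A}\circ(\iota\otimes\id_A)$, i.e.\ $\iota$ satisfies \eqref{eq:central}.

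Next I would check the universal property. Let $C\in\Sc$ and $f:C\to A$ satisfy \eqref{eq:central}, and put $\tilde f:=\pi\circ f:C\to\ctimes_A A$. To obtain $\iota\circ\tilde f=f$ it is enough to show $D_0^A\circ f=f$: for $a,l>0$, $D_{a,l}^A\circ f$ is $e_{a_2}$ acting on both sides of the output of $f$, and centrality of $f$ (i.e.\ \eqref{eq:central} applied to $f$) lets one move the left leg of $e_{a_2}$ past $f$, after which the two legs are merged by the separability relation $\mu_{a_1}\circ e_{a_2}=\eta_a$, leaving a composite $P_{(\cdot)}\circ f$ whose $P$-factor has total parameter tending to $0$; letting $a,l\to0$ gives $D_0^A\circ f=f$, hence $\iota\circ\tilde f=\iota\circ\pi\circ f=D_0^A\circ f=f$. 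Uniqueness is then formal: any $g$ with $\iota\circ g=f$ satisfies $g=(\pi\circ\iota)\circ g=\pi\circ f=\tilde f$ because $\pi\circ\iota=\id$ (and in particular $\iota$ is split mono, in agreement with the general statement that the structure map of a centre is mono). This shows that $(\iota:\ctimes_A A\to A)$ is the centre of $A$.

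I expect the only genuine obstacle to be the parameter bookkeeping in the auxiliary identity of the second paragraph: one must check that the move ``slide one leg of $e_{a_2}$ around the middle strand of $\rho^A$'' is a legitimate consequence of the parameter-indexed separability relations and strong separability, and that every auxiliary $P$- or $Q$-factor so produced has total parameter tending to $0$, so that separate continuity of composition permits passing to the limit. Once that identity and the analogous statement $D_{a,l}^A\circ f=P_{(\cdot)}\circ f$ for central $f$ are in place, everything else is formal diagram chasing with the split idempotent $D_0^A=\iota\circ\pi$.
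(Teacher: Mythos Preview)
Your proposal is correct and matches the paper's approach: both arguments identify $\ctimes_A A$ with $\im(D_0^A)$ via Proposition~\ref{prop:d0proj}, establish the factorisation by showing that centrality of $f$ forces $D_a^A\circ f=P_a\circ f$ (hence $D_0^A\circ f=f$ in the limit), and deduce uniqueness from $\pi\circ\iota=\id$. The paper's proof is terser and does not spell out that $\iota$ itself satisfies the centrality condition~\eqref{eq:central}; your second paragraph makes this step explicit via the standard strongly-separable ``slide $e_a$ around'' manipulation, which is a welcome addition but not a different strategy.
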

\begin{proof}
	Using the previous notation we show that $\iota:\ctimes_A A\to A$ satisfies the
	universal property of the centre. So let $\varphi:Y\to A$ be such that
	\begin{align}
		\mu_a\circ\left( \id_{A}\otimes\varphi \right)=
		\mu_a\circ\sigma\circ\left( \id_{A}\otimes\varphi \right).
		\label{eq:phip}
	\end{align}
	Set $\tilde{\varphi}:=\pi\circ\varphi$. We need to show that $\iota\circ\tilde{\varphi}=\varphi$.
	From \eqref{eq:phip} one obtains that $D_a\circ\varphi=P_a\circ\varphi$. 
	Then taking the limit $a\to0$ gives $D_0\circ\varphi=\varphi$ which is what we needed to show.
	Uniqueness of $\tilde{\varphi}$ follows again from $\pi\circ\iota=\id_{\ctimes_A A}$.
\end{proof}

\begin{example} \label{ex:twisted-bimodule-tensor-products}
	Let $A\in\Sc$ be a strongly separable symmetric
	RFA, $\alpha,\beta\in\Aut_{\RFrob{\Sc}}(A)$ 
	${}_{\alpha}A_{\id},{}_{\beta}A_{\id}$ be the twisted transmissive bimodules from Example~\ref{ex:twistedaction}.
	Let us assume that $\lim_{a\to0}D_a^{{}_{\alpha}A_{\id},{}_{\beta}A_{\id}}$ from \eqref{eq:D-definition-bimodule},
	$\lim_{a\to0}\mu_a$ and $\lim_{a\to0}\Delta_a$ exist. Then
	$({}_{\alpha}A_{\id})\otimes_A ({}_{\beta}A_{\id})={}_{\alpha\circ\beta}A_{\id}$ and
	the projection $\pi:{}_{\alpha}A_{\id}\otimes{}_{\beta}A_{\id}\to{}_{\alpha\circ\beta}A_{\id}$ is given by $\pi=\mu_0\circ(\beta\otimes\id_A)$.
\end{example}

\begin{remark}
For $A$ strongly separable symmetric Frobenius, the tensor product over $A$ actually automatically satisfies a stronger coequaliser condition. Let us illustrate this in the first case of Proposition~\ref{prop:d0proj}: define $L_{a_1,a_2,l} := \rho^M_{a_1,l} \otimes Q^M_{a_2,l}$ and $R_{a_3,a_4,l} := Q_{a_3,l}^M\otimes\rho^N_{a_4,l}$ for $a_i>0$. Then $\pi : M \otimes N \to M \otimes_A N$ is defined as the coequaliser of $L_{a,a,l}$ and $R_{a,a,l}$, but it is straightforward to verify that also
$\pi \circ L_{a_1,a_2,l} = \pi \circ R_{a_3,a_4,l}$ holds for all $a_i>0$ such that $a_1+a_2 = a_3+a_4$.
\end{remark}

Using Proposition~\ref{prop:d0proj} and the dual action in \eqref{eq:dualaction-bimodule} one can show the following.
\begin{lemma} \label{lem:dual-pair-tensor-product}
	Let $(V,\bar{V})$ be a dual pair of $B$-$A$-bimodules and $(W,\bar{W})$ a dual pair of $A$-$C$-bimodules.
	Let us assume that $D_0^{V,W}$ and $D_0^{\bar{W},\bar{V}}$ exist and that
	$\lim_{a\to0}\tilde{\rho}_{a,b,c,l}^{V,W}$ and 
	$\lim_{a\to0}\tilde{\rho}_{a,b,c,l}^{\bar{W},\bar{V}}$ exist and are jointly continuous in their parameters.
	Let for $a,b,c,l\in\Rb_{>0}$ 
	\begin{align}
		\begin{aligned}
		\def\svgwidth{12cm}
\begingroup%
  \makeatletter%
  \providecommand\color[2][]{%
    \errmessage{(Inkscape) Color is used for the text in Inkscape, but the package 'color.sty' is not loaded}%
    \renewcommand\color[2][]{}%
  }%
  \providecommand\transparent[1]{%
    \errmessage{(Inkscape) Transparency is used (non-zero) for the text in Inkscape, but the package 'transparent.sty' is not loaded}%
    \renewcommand\transparent[1]{}%
  }%
  \providecommand\rotatebox[2]{#2}%
  \ifx\svgwidth\undefined%
    \setlength{\unitlength}{298.27284069bp}%
    \ifx\svgscale\undefined%
      \relax%
    \else%
      \setlength{\unitlength}{\unitlength * \real{\svgscale}}%
    \fi%
  \else%
    \setlength{\unitlength}{\svgwidth}%
  \fi%
  \global\let\svgwidth\undefined%
  \global\let\svgscale\undefined%
  \makeatother%
  \begin{picture}(1,0.28562204)%
    \put(0,0){\includegraphics[width=\unitlength]{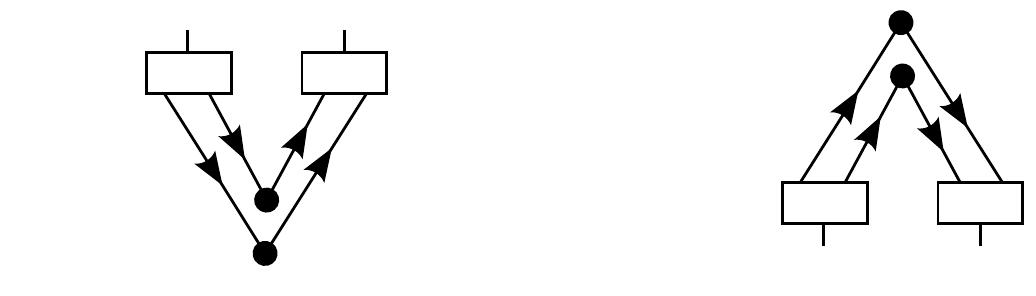}}%
    \put(0.60196445,0.13357685){\color[rgb]{0,0,0}\makebox(0,0)[lb]{\smash{$\tilde{\beta}_{a,b,c,l}^{V,W}:=$}}}%
    \put(0.89502364,0.26762905){\color[rgb]{0,0,0}\makebox(0,0)[lb]{\smash{\scriptsize
{$(b,l,a;V)$}}}}%
    \put(0.78221492,0.07740544){\color[rgb]{0,0,0}\makebox(0,0)[lb]{\smash{$\iota$}}}%
    \put(0.92656681,0.07565864){\color[rgb]{0,0,0}\makebox(0,0)[lb]{\smash{$\iota$}}}%
    \put(0.74981763,0.01594073){\color[rgb]{0,0,0}\makebox(0,0)[lb]{\smash{$V\otimes_A W$}}}%
    \put(0.89469763,0.01594073){\color[rgb]{0,0,0}\makebox(0,0)[lb]{\smash{$\bar{W}
\otimes_A \bar{V}$}}}%
    \put(0.91645775,0.21607958){\color[rgb]{0,0,0}\makebox(0,0)[lb]{\smash{\scriptsize
{$(a,l,c;W)$}}}}%
    \put(-0.00178109,0.13357685){\color[rgb]{0,0,0}\makebox(0,0)[lb]{\smash{$\tilde{\gamma}_{a,b,c,l}^{V,W}:=$}}}%
    \put(0.28101958,0.03567385){\color[rgb]{0,0,0}\makebox(0,0)[lb]{\smash{\scriptsize
{$(b,l,a;V)$}}}}%
    \put(0.15781323,0.20448739){\color[rgb]{0,0,0}\makebox(0,0)[lb]{\smash{$\pi$}}}%
    \put(0.3081066,0.20549153){\color[rgb]{0,0,0}\makebox(0,0)[lb]{\smash{$\pi$}}}%
    \put(0.27550638,0.26696773){\color[rgb]{0,0,0}\makebox(0,0)[lb]{\smash{$V\otimes_A W$}}}%
    \put(0.13364843,0.27747095){\color[rgb]{0,0,0}\makebox(0,0)[lb]{\smash{$\bar{W}
\otimes_A \bar{V}$}}}%
    \put(0.30245369,0.08617304){\color[rgb]{0,0,0}\makebox(0,0)[lb]{\smash{\scriptsize
{$(a,l,c;W)$}}}}%
    \put(0.47322326,0.13357685){\color[rgb]{0,0,0}\makebox(0,0)[lb]{\smash{and}}}%
  \end{picture}%
\endgroup%

		\end{aligned}\ .
		\label{eq:dual-pair-tensor-product}
	\end{align}
	If $\lim_{a\to0}\tilde{\gamma}_{a,b,c,l}^{V,W}$ and $\lim_{a\to0}\tilde{\beta}_{a,b,c,l}^{V,W}$
	exist for every $b,c,l\in\Rb_{>0}$ and are jointly continuous,
	then $(V\otimes_A W,\bar{W}\otimes_A \bar{V})$ is a dual pair of $B$-$C$-bimodules. 
\end{lemma}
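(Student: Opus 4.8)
The plan is to verify the duality axioms \eqref{eq:ra:dual} and \eqref{eq:duality-compatibility} for the pair $(V\otimes_A W,\bar W\otimes_A \bar V)$ with the morphisms $\tilde\gamma^{V,W}_{a,l}:=\lim_{a'\to0}\tilde\gamma^{V,W}_{a',a,c,l}$ and $\tilde\beta^{V,W}_{a,l}$ defined analogously (I am reindexing so that, as for the bimodule actions in Proposition~\ref{prop:d0proj}, one internal $A$-parameter is sent to zero and the remaining parameters are the $B$-, $C$- and length-parameters of the new bimodules). The key observation is that the string diagrams defining $\tilde\gamma^{V,W}$ and $\tilde\beta^{V,W}$ in \eqref{eq:dual-pair-tensor-product} are built, after precomposing/postcomposing with the projections $\pi$ and embeddings $\iota$, out of the original duality morphisms $\gamma^{V},\beta^{V},\gamma^{W},\beta^{W}$ of the two dual pairs, together with separability idempotents $e_a$ and the semigroups $Q^{-}$. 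So the strategy is: (i) express everything in terms of the ``upstairs'' morphisms on $V\otimes W$ and $\bar W\otimes\bar V$; (ii) use the duality relations for $(V,\bar V)$ and $(W,\bar W)$ together with the separability relation $\mu_{a_1}\circ e_{a_2}=\eta_a$ and $\sigma_{A,A}\circ e_a = e_a$ to collapse the composite into $D^{V,W}_{\bullet}$ (resp.\ $D^{\bar W,\bar V}_{\bullet}$) plus some $Q$'s; (iii) take the relevant $a\to0$ limits, using that $D^{V,W}_0$, $D^{\bar W,\bar V}_0$ are idempotents with $\pi\circ\iota=\id$ on their images, to obtain $Q^{V\otimes_A W}$ and $Q^{\bar W\otimes_A\bar V}$ on the right-hand sides.

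Concretely, for the snake identity \eqref{eq:ra:dual} I would compute $(\id\otimes\tilde\beta^{V,W})\circ(\tilde\gamma^{V,W}\otimes\id)$ on $V\otimes_A W$. Inserting the definitions, this equals $\pi$ composed with a morphism on $V\otimes W$ which, after sliding the $e_a$'s through $\mu$ via strong separability and applying the two upstairs snake identities for $\beta^W\circ(\ldots)$ and $\gamma^V$, reduces to $D^{V,W}_{a,b,c,l}$ for suitable parameters, composed with $\iota$. Since $\pi\circ D^{V,W}_{a,b,c,l}\circ\iota = \pi\circ D^{V,W}_{\ldots}\circ D^{V,W}_0\circ\iota$ equals (by the semigroup law \eqref{eq:D-VW-semigroup} and $D^{V,W}_0 = \iota\circ\pi$ on the image) a composite of the $Q$'s on $\im(D^{V,W}_0)$, taking $a,b,c,l$ with the internal $A$-parameter $\to0$ and using joint continuity — which holds by the hypothesis that $\lim_{a\to0}\tilde\rho^{V,W}_{a,b,c,l}$ is jointly continuous and by Proposition~\ref{prop:d0proj}(3), so that $V\otimes_A W$ really is the bimodule with action $\tilde\rho$ — yields exactly $Q^{V\otimes_A W}_{b,l,c}$, as required. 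The second half of \eqref{eq:ra:dual} is symmetric, done on $\bar W\otimes_A\bar V$ using $D^{\bar W,\bar V}_0$.

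For the compatibility with the bimodule action, \eqref{eq:duality-compatibility}, I would check that $\tilde\gamma^{V,W}$ and $\tilde\beta^{V,W}$ intertwine the $B$- and $C$-actions on $V\otimes_A W$ and $\bar W\otimes_A\bar V$ in the prescribed way. Upstairs, $\gamma^V,\gamma^W,\beta^V,\beta^W$ already satisfy \eqref{eq:duality-compatibility} for the individual pairs; the new actions $\bar\rho_{b,c,l}$ on $V\otimes_A W$ are, by Definition~\ref{def:tensor-product-bimodule} and Proposition~\ref{prop:d0proj}(3), induced from the actions on $V$ and $W$ through $\pi$. So the identity follows by precomposing with $\pi\otimes(\text{stuff})$, moving the action across $\pi$, invoking the upstairs compatibility, and re-collapsing via $D^{V,W}_0$ and $\pi\circ\iota=\id$. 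Throughout, joint continuity of the resulting families $(a,b,c,l)\mapsto\tilde\gamma^{V,W}$ etc.\ is the stated hypothesis of the lemma, so no continuity argument is needed beyond recording which limits are being taken.

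The main obstacle I anticipate is bookkeeping: keeping track of the many area parameters so that each application of the Frobenius relation \eqref{eq:rfa:frobrel}, the separability relations, and the module axiom \eqref{eq:ra:bimodule} uses genuinely positive parameters, and making sure the $a\to0$ limits are taken in a consistent order so that the intermediate expressions stay well-defined (one cannot set an internal $A$-parameter to $0$ before collapsing through $\pi$, since $\rho^A$, $\mu_a$ need not have $a\to0$ limits). The cleanest way to organise this is to do all manipulations with strictly positive parameters at the level of the upstairs morphisms on $V\otimes W$ and $\bar W\otimes\bar V$, reduce to a single composite of $D^{V,W}_{\bullet}$ (resp.\ $D^{\bar W,\bar V}_{\bullet}$) with $Q$'s and the $\pi$'s/$\iota$'s, and only then pass to the limit using the hypotheses of the lemma together with Proposition~\ref{prop:d0proj}. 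The actual diagrammatic identities are then routine, which is why I would present them as ``a direct computation'' rather than spelling out every isotopy.
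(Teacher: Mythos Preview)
Your proposal is correct and follows exactly the approach the paper indicates: the paper states the lemma without a detailed proof, merely remarking that it can be shown ``using Proposition~\ref{prop:d0proj} and the dual action in \eqref{eq:dualaction-bimodule}''. Your sketch fills in precisely these details --- lifting to $V\otimes W$ and $\bar W\otimes\bar V$, collapsing via the upstairs duality relations and separability into $D^{V,W}_{\bullet}$ and $D^{\bar W,\bar V}_{\bullet}$, and then passing to the image via $\pi,\iota$ and the limits guaranteed by the hypotheses --- so there is nothing to compare.
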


\subsection{Tensor products in $\Hilb$}

We now consider the case $\Sc=\Hilb$. 
Note that in $\Hilb$ cokernels exist. If $f:X\to Y$ is a morphism in $\Hilb$ then 
$\pi:Y\to Y/\overline{\im(f)}$ is a cokernel of $f$ where $\pi$ is the canonical projection
and $\overline{\im(f)}$ is the closure of $\im(f)$.

After some preparatory lemmas 
we will discuss tensor products of modules over regularised algebras.

\begin{lemma}
	Tensoring with identity in $\Hilb$ preserves cokernels.
	\label{lem:hilb-tensor-preserves-coker}
\end{lemma}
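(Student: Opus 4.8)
The plan is to work directly with the explicit description of cokernels in $\Hilb$ given just before the statement: for a morphism $f : X \to Y$, a cokernel is the canonical projection $\pi : Y \to Y/\overline{\im(f)}$. So let $f : X \to Y$ be a morphism in $\Hilb$ and let $Z \in \Hilb$ be arbitrary; I want to show that $\id_Z \otimes \pi : Z \otimes Y \to Z \otimes (Y/\overline{\im(f)})$ is a cokernel of $\id_Z \otimes f : Z \otimes X \to Z \otimes Y$. By the explicit description applied to $\id_Z \otimes f$, it suffices to identify $Z \otimes (Y/\overline{\im(f)})$, together with the map $\id_Z \otimes \pi$, with the canonical projection $Z \otimes Y \to (Z \otimes Y)/\overline{\im(\id_Z \otimes f)}$.

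The key computation is therefore to show $\overline{\im(\id_Z \otimes f)} = Z \otimes \overline{\im(f)}$ as closed subspaces of $Z \otimes Y$, and then that the induced map on quotients is (isometrically) the identification $Z \otimes (Y/\overline{\im(f)}) \cong (Z\otimes Y)/(Z \otimes \overline{\im(f)})$. For the first point: the algebraic tensor product of $Z$ and $\im(f)$ is contained in $\im(\id_Z \otimes f)$, and its closure is $Z \otimes \overline{\im(f)}$ (this is the standard fact that for a closed subspace $W \subseteq Y$ the Hilbert-space tensor product $Z \otimes W$ is the closure of the algebraic span of elementary tensors $z \otimes w$ inside $Z \otimes Y$; compare the density argument already used in the proof of Lemma~\ref{lem:monoepi}). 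Conversely $\im(\id_Z \otimes f) \subseteq Z \otimes \overline{\im(f)}$ since each $z \otimes f(x)$ lies there and the latter is a closed subspace, so taking closures gives equality. For the second point: orthogonal complements behave well, $Z \otimes Y = (Z \otimes \overline{\im(f)}) \oplus (Z \otimes \overline{\im(f)}^{\perp})$, and the quotient by a closed subspace is canonically the orthogonal complement; under the natural unitary $Z \otimes \overline{\im(f)}^\perp \cong Z \otimes (Y/\overline{\im(f)})$ induced by $\id_Z \otimes \pi|_{\overline{\im(f)}^\perp}$, one checks the obvious square commutes.

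Having established $\overline{\im(\id_Z\otimes f)} = Z\otimes\overline{\im(f)}$ and the compatibility of the projections, the universal property of the cokernel transfers immediately: any $g : Z \otimes Y \to V$ with $g \circ (\id_Z \otimes f) = 0$ vanishes on $\im(\id_Z \otimes f)$, hence by continuity on its closure $Z \otimes \overline{\im(f)}$, hence factors uniquely through the canonical projection, which we have identified with $\id_Z \otimes \pi$. I do not expect a serious obstacle here; the only point requiring slight care is the identification of the Hilbert tensor product $Z \otimes W$ (for $W$ a closed subspace of $Y$) with the closure of the elementary tensors inside $Z \otimes Y$ — this is routine but should be stated cleanly, and it is where one uses completeness of the tensor product in an essential way. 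Everything else is formal manipulation of closed subspaces and orthogonal complements in Hilbert spaces.
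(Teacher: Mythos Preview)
Your proposal is correct and follows essentially the same approach as the paper: both reduce the claim to the identity $\overline{\im(\id_Z \otimes f)} = Z \otimes \overline{\im(f)}$, established by noting that both sides are closed and contain the algebraic span of $Z \otimes \im(f)$ as a dense subset. The paper's proof is a single sentence to this effect; your version spells out both inclusions and the identification of the quotient via orthogonal complements more carefully, which is fine but not materially different.
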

\begin{proof}
	Let $f:X\to Y$, $\pi_{f}=\coker(f):Y\to Y/\overline{\im(f)}$, $Z\in\Hilb$ and $\pi_{f\otimes\id_Z}:=\coker(f\otimes\id_Z):Y\otimes Z\to Y\otimes Z/\overline{\im(f\otimes\id_Z)}$. 
The claim of the lemma boils down to the observation that 
$\overline{\im(f)} \otimes Z = \overline{\im(f\otimes\id_Z)}$, which in turn follows since both sides are closed and contain $\im(f) \otimes Z$ as a dense subset.
\end{proof}

\begin{lemma} \label{lem:equalimage}
Let $A$ be a regularised algebra, and let
	$M$ and $N$  be right and left $A$-modules and $U$ an $A$-$A$-bimodule.
Let $p,q\in (\Rb_{>0})^2$ arbitrary and set
	$\varphi_p:=\rho^M_{p}\otimes Q_{p}^N -Q_{p}^M\otimes\rho^N_{p}$.
	If $Q_r^N$ and $Q_r^M$ are epi for every $r\in (\Rb_{>0})^2$, 
	then $\overline{\im(\varphi_p)}=\overline{\im(\varphi_q)}$.
\end{lemma}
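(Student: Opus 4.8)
The goal is to show that the closed image $\overline{\im(\varphi_p)}$ does not depend on $p$. The key identity to establish first is
\begin{align*}
	\varphi_p\circ\bigl(Q^M_r\otimes\id_A\otimes Q^N_r\bigr)=\varphi_{p+r}
	\qquad\text{for all }p,r\in(\Rb_{>0})^2 ,
\end{align*}
where $p+r$ denotes the componentwise sum. Granting this, the statement will follow from the elementary functional-analytic fact that pre-composing a bounded operator with an operator of dense image does not change its closed image, together with the trivial observation that for arbitrary $p,q$ one has $p+q=q+p$, so that $p$ and $q$ can be interpolated through $p+q$.

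\textbf{Step 1: the absorption identities.} First I would record, exactly as in the proof of Lemma~\ref{lem:ra:properties}\,(3) (using the module relation \eqref{eq:ra:module} and its right-handed analogue for $M$, the semigroup property \eqref{eq:ra:modsemigrp}, separate continuity of composition in $\Hilb$ and $\lim_{a,l\to0}Q=\id$), the identities
\begin{align*}
	\rho^M_{p}\circ\bigl(Q^M_{r}\otimes\id_A\bigr)&=\rho^M_{p+r}, &
	\rho^N_{p}\circ\bigl(\id_A\otimes Q^N_{r}\bigr)&=\rho^N_{p+r}, \\
	Q^{M}_{p}\circ Q^{M}_{r}&=Q^{M}_{p+r}, &
	Q^{N}_{p}\circ Q^{N}_{r}&=Q^{N}_{p+r}.
\end{align*}
Plugging these into the tensor-factorwise expansion
\begin{align*}
	&\varphi_p\circ\bigl(Q^M_r\otimes\id_A\otimes Q^N_r\bigr)\\
	&\qquad=\bigl(\rho^M_p\circ(Q^M_r\otimes\id_A)\bigr)\otimes\bigl(Q^N_p\circ Q^N_r\bigr)
	-\bigl(Q^M_p\circ Q^M_r\bigr)\otimes\bigl(\rho^N_p\circ(\id_A\otimes Q^N_r)\bigr)
\end{align*}
gives the boxed relation, and in particular $\im(\varphi_{p+r})\subseteq\im(\varphi_p)$ for all $p,r$.

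\textbf{Step 2: using the epi hypothesis.} Since $Q^M_r$ and $Q^N_r$ are epi by assumption and $\id_A$ is epi, applying Lemma~\ref{lem:monoepi} twice shows that $T_r:=Q^M_r\otimes\id_A\otimes Q^N_r$ is epi, i.e.\ $D:=T_r(M\otimes A\otimes N)$ is a dense subspace of $M\otimes A\otimes N$. By Step~1, $\im(\varphi_{p+r})=\varphi_p(D)$. As $\varphi_p$ is a bounded operator and $D$ is dense, any $y=\varphi_p(x)\in\im(\varphi_p)$ is the limit of $\varphi_p(d_n)$ for a sequence $d_n\in D$ with $d_n\to x$, whence $\im(\varphi_p)\subseteq\overline{\varphi_p(D)}$; combined with $\varphi_p(D)\subseteq\im(\varphi_p)$ this yields $\overline{\im(\varphi_{p+r})}=\overline{\varphi_p(D)}=\overline{\im(\varphi_p)}$, for all $p,r\in(\Rb_{>0})^2$.

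\textbf{Step 3: conclusion.} For arbitrary $p,q\in(\Rb_{>0})^2$, apply Step~2 twice:
$\overline{\im(\varphi_p)}=\overline{\im(\varphi_{p+q})}=\overline{\im(\varphi_{q+p})}=\overline{\im(\varphi_q)}$, which is the claim.

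The only place requiring care is the parameter bookkeeping in Step~1, but this is a routine translation of Lemma~\ref{lem:ra:properties}; all the genuine content sits in Step~2, namely that a bounded map has the same closed image after restriction to a dense subspace, plus the interpolation of $p$ and $q$ via $p+q$ which makes the epi hypothesis on $Q^M,Q^N$ enough.
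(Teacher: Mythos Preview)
Your proof is correct and follows essentially the same approach as the paper: both hinge on the absorption identity $\varphi_p\circ(Q^M_r\otimes\id_A\otimes Q^N_r)=\varphi_{p+r}$ together with the density argument that a bounded operator has the same closed image after restriction to a dense subspace (using Lemma~\ref{lem:monoepi} for $T_r$). The only cosmetic difference is that the paper reduces to the case $p_1>q_1$, $p_2>q_2$ and writes $r=p-q$, whereas you interpolate symmetrically through $p+q$; your packaging is arguably cleaner since it makes the reduction step explicit, but the content is identical.
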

\begin{proof}
	Let $p:=(p_1,p_2)$ and $q:=(q_1,q_2)$.
	It is enough to show that $\overline{\im(\varphi_p)}=\overline{\im(\varphi_q)}$ 
	in the case when $p_1>q_1$ and $p_2>q_2$.  Then we have that
	\begin{align}
		\varphi_p&=\varphi_q\circ\left( Q^M_{p-q}\otimes\id_A\otimes Q^N_{p-q} \right)\ ,
		\label{eq:phi-phi-q}
	\end{align}
	from which we directly get that $\im(\varphi_p)\subset\im(\varphi_q)$.
	
	Now we show that $\im(\varphi_q)\subset \overline{\im(\varphi_p)}$.
	We write $R:=Q^M_{p-q}\otimes\id_A\otimes Q^N_{p-q}$ and choose an arbitrary
	$y\in M\otimes A\otimes N$. 
Let $x = \varphi_q(y)$.
	By Lemma~\ref{lem:monoepi}, $R$ is epi, 
so we can choose a sequence $(z_n)_{n\in\Nb}$ in $M\otimes A\otimes N$, for which
$\lim_{n\to\infty}R(z_n)=y$.
Applying $\varphi_q$ to both sides gives
$\lim_{n\to\infty}\varphi_p(z_n)=x$, and thus $x \in \overline{\im(\varphi_p)}$.
\end{proof}

\begin{proposition}\label{prop:hilb-tensor-prod-MNU}
Let $A$ be a regularised algebra, and let
$M$ and $N$  be right and left $A$-modules and $U$ an $A$-$A$-bimodule.
If $Q_{l,b}^M$, $Q_{a,l}^N$ and $Q_{a,l,b}^U$ are epi for every $a,l,b\in\Rb_{>0}$ 
then the following tensor products exist:
	\begin{align}
		M\otimes_A N\ ,\quad \ctimes_A U\ .
		\label{eq:hilb-tensor-prod-1}
	\end{align}
\end{proposition}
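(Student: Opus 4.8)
The plan is to realise both required simultaneous coequalisers as ordinary cokernels in $\Hilb$, using that the relevant families of parallel morphisms have a difference whose closed image does not depend on the parameters. Recall that $\Hilb$ is additive and has cokernels -- the cokernel of $f:X\to Y$ being the canonical projection $Y\to Y/\overline{\im(f)}$, as used just before Lemma~\ref{lem:hilb-tensor-preserves-coker} -- so that the coequaliser of a parallel pair is the cokernel of its difference.

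For $M\otimes_A N$, set $\varphi_p:=\rho^M_p\otimes Q^N_p-Q^M_p\otimes\rho^N_p$ for $p\in\Rb_{>0}^2$, as in Lemma~\ref{lem:equalimage}. By Definition~\ref{def:tensor-prod-modules-cyclic}, $M\otimes_A N$, if it exists, is the simultaneous coequaliser over all $p$ of the pairs $(\rho^M_p\otimes Q^N_p,\,Q^M_p\otimes\rho^N_p)$. Since $Q^M$ and $Q^N$ are epi for all parameter values, Lemma~\ref{lem:equalimage} yields a single closed subspace $K:=\overline{\im(\varphi_p)}\subseteq M\otimes N$ independent of $p$; this is precisely where the hypothesis enters. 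I would then take $M\otimes_A N$ to be $(M\otimes N)/K$ with $\pi$ the canonical projection -- equivalently, the cokernel of $\varphi_p$ for any single $p$ -- and verify the universal property: $\pi\circ\varphi_p=0$ for every $p$ because $\im(\varphi_p)\subseteq K=\ker(\pi)$, so $\pi$ coequalises every pair; and any morphism $h:M\otimes N\to Z$ in $\Hilb$ coequalising all the pairs annihilates each $\im(\varphi_p)$, hence, being bounded and therefore continuous, annihilates the closure $K$, so it factors uniquely through $\pi$ (uniqueness because $\pi$ is a cokernel, hence epi). This establishes the existence of $M\otimes_A N$.

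For $\ctimes_A U$ the argument is identical once one has the cyclic analogue of Lemma~\ref{lem:equalimage}. Writing $\psi_p:=\rho^L_p-\rho^R_p\circ\sigma_{A,U}:A\otimes U\to U$ for $p\in\Rb_{>0}^2$, I would show that if $Q^U_{a,l,b}$ is epi for all $a,l,b$ then $\overline{\im(\psi_p)}\subseteq U$ is independent of $p$: for $p$ coordinatewise larger than $q$ one writes $\psi_p=\psi_q\circ(\id_A\otimes Q^U_{r(p,q)})$ for a suitable parameter triple $r(p,q)$, using the bimodule relation~\eqref{eq:ra:bimodule}, the semigroup property of $Q^U$ (cf.~\eqref{eq:ra:modsemigrp}), and the definitions of $\rho^L$ and $\rho^R$ from $\rho^U$ by inserting units -- the same parameter-absorption bookkeeping as in Lemma~\ref{lem:ra:mupacommute}; the map $\id_A\otimes Q^U_{r(p,q)}$ is epi by Lemma~\ref{lem:monoepi}. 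Then $\im(\psi_p)\subseteq\im(\psi_q)$ is immediate, and the reverse density inclusion follows by approximating an arbitrary element of $A\otimes U$ by a sequence in the dense image of $\id_A\otimes Q^U_{r(p,q)}$ and applying $\psi_q$, exactly as in the proof of Lemma~\ref{lem:equalimage}. Granting this, $\ctimes_A U:=U/\overline{\im(\psi_p)}$ with the canonical projection is the simultaneous coequaliser of $(\rho^L_p,\,\rho^R_p\circ\sigma_{A,U})$ over all $p$, by the same universal-property check.

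I expect the only non-formal ingredient to be this cyclic version of Lemma~\ref{lem:equalimage}, namely exhibiting the factorisation of $\psi_p$ through the epimorphism $\id_A\otimes Q^U_{r(p,q)}$ and reading off the parameter shift $r(p,q)$ from the bimodule axioms; the $M\otimes_A N$ case is a direct application of the already-established Lemma~\ref{lem:equalimage}, and the remaining steps -- additivity of $\Hilb$, existence of cokernels, and boundedness giving the continuity needed to factor $h$ -- are formal.
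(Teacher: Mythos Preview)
Your proposal is correct and follows the same route as the paper: realise the simultaneous coequaliser as the cokernel $M\otimes N\to (M\otimes N)/\overline{\im(\varphi_p)}$, use Lemma~\ref{lem:equalimage} to see this is independent of $p$, and then argue analogously for $\ctimes_A U$. The paper disposes of the cyclic case with a single sentence (``a similar argument''), whereas you spell out the needed analogue of Lemma~\ref{lem:equalimage}; your factorisation $\psi_p=\psi_q\circ(\id_A\otimes Q^U_{r(p,q)})$ is indeed valid, with the parameter triple $r(p,q)=(a-a',\,l-l',\,a-a')$ for $p=(a,l)>q=(a',l')$ coordinatewise, so that the left and right parameter increments match as required by the definitions of $\rho^L$ and $\rho^R$.
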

\begin{proof}
	Let us use the notation of Lemma~\ref{lem:equalimage}.
	Let $\pi$ be the projection 
	\begin{align}
		M\otimes N\to M\otimes N/\overline{\im(\varphi_p)}
		\label{eq:tensor-product-reg-alg-projector}
	\end{align}
	for some $p\in(\Rb_{>0})^2$,
	which is independent of $p$ by Lemma~\ref{lem:equalimage}.
	But this means exactly that $\pi$ is the cokernel of $\varphi_p$ for every $p\in(\Rb_{>0})^2$, and is hence a tensor product $M\otimes_A N$.
	
	A similar argument shows that $\ctimes_A U$ exists.
\end{proof}

\begin{lemma}
	If $V\in\Hilb$ is a left $B$-module and a right $A$-module such that
	the two actions commute as in \eqref{eq:ra:bimod}
	then it is a $B$-$A$-bimodule via $\rho_{a,l,b}^V$ as in \eqref{eq:ra:bimod}.
	\label{lem:left-right-module-bimodule}
\end{lemma}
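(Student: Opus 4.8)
The plan is to reduce the statement to a single joint-continuity claim and then establish that claim by the standard ``sandwich'' argument used repeatedly in Section~\ref{sec:RA-RFA-def}. By Remark~\ref{rem:left-right-module-gives-bimodule} it is enough to show that the family $\rho^V_{a,l,b}$ defined in \eqref{eq:ra:bimod} from the commuting left $B$- and right $A$-actions is jointly continuous in $(a,l,b)\in\Rb_{>0}^3$; the remaining bimodule axioms of Definition~\ref{def:AB-bimodule-def} then follow from the purely categorical argument indicated there.

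First I would assemble the continuity ingredients. The one-parameter families $a\mapsto P^A_a$ and $b\mapsto P^B_b$ are norm- and hence strongly continuous. Writing $Q^{V,L}_{b,m}$ and $Q^{V,R}_{a,n}$ for the two-parameter semigroups on $V$ coming from the left $B$-action and the right $A$-action, each is jointly continuous in its parameters by the module axiom \eqref{eq:ra-module-cont}, and, since the two actions commute, the operators $Q^{V,L}_{b,m}$ and $Q^{V,R}_{a,n}$ commute. Then $(a,n,b,m)\mapsto Q^{V,L}_{b,m}\circ Q^{V,R}_{a,n}$ is jointly strongly continuous: by the uniform boundedness principle the operator norms are locally bounded, and one concludes from the estimate $\norm{TSv-T_0S_0v}\le\norm{T}\cdot\norm{Sv-S_0v}+\norm{(T-T_0)S_0v}$, exactly as in the proof of Lemma~\ref{lem:semigrp}.

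Next, mimicking the proof of Part~\ref{lem:ra:generatedcat} of Lemma~\ref{lem:ra:properties}, I would rewrite $\rho^V_{a,l,b}$, on the region where $a,l,b>3\eps$ for a fixed small $\eps>0$, in the form
\begin{align*}
	\rho^V_{a,l,b}=\psi^{(1)}_{\eps}\circ\Big(P^B_{f_1(b)}\otimes\big(Q^{V,L}_{f_2(b),g_1(l)}\circ Q^{V,R}_{f_3(a),g_2(l)}\big)\otimes P^A_{f_4(a)}\Big)\circ\psi^{(2)}_{\eps}\ ,
\end{align*}
with $\psi^{(1)}_{\eps},\psi^{(2)}_{\eps}$ independent of $(a,l,b)$ and $f_i,g_i$ affine functions with positive values on the region; this uses \eqref{eq:ra:module}, Part~\ref{lem:ra:mupacommute} of Lemma~\ref{lem:ra:properties}, and their module analogues to migrate all the area- and length-dependence onto the outermost semigroup factors. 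By Lemma~\ref{lem:semigrp}, applied twice so as to tensor over the three factors $B$, $V$, $A$, together with the joint continuity of $Q^{V,L}\circ Q^{V,R}$ established above, the operator in the middle is jointly continuous in $(a,l,b)$ on the region; pre- and post-composition with the fixed $\psi^{(i)}_{\eps}$ preserves this, since composition in $\Hilb$ is separately continuous. As every point of $\Rb_{>0}^3$ lies in such a region for $\eps$ small enough, $\rho^V_{a,l,b}$ is jointly continuous on $\Rb_{>0}^3$, and Remark~\ref{rem:left-right-module-gives-bimodule} finishes the argument.

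The hard part will be the bookkeeping in the displayed decomposition: distributing $a$, $l$, $b$ correctly among the $P^A$, $P^B$, $Q^{V,L}$, $Q^{V,R}$ factors and checking that all shifted arguments stay positive on the chosen region. A secondary, more conceptual point is that Lemma~\ref{lem:semigrp} is stated for tensor products over distinct factors, so the two semigroups $Q^{V,L}$ and $Q^{V,R}$ -- both acting on the single factor $V$ -- have to be combined by the elementary commuting-semigroup continuity argument rather than by a direct appeal to that lemma.
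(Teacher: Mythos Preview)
Your argument is correct, but it takes a longer route than the paper's. The paper also reduces to showing joint continuity of $\rho^V_{a,l,b}$, but then immediately observes that $\rho^V_{a,l,b}=Q^V_{a-a',\,l-l',\,b-b'}\circ\rho^V_{a',l',b'}$ for fixed small $(a',l',b')$, so by separate continuity of composition it is enough to show that $Q^V_{a,l,b}$ is jointly continuous. Writing $Q^V_{a,l,b}=Q^{V,L}_{a,l_1}\circ Q^{V,R}_{l_2,b}$ with $l_1+l_2=l$, the paper then gives exactly the $\varepsilon$--$\delta$ estimate you describe in your second paragraph (uniform boundedness plus the inequality $\norm{TSv-T_0S_0v}\le\norm{T}\cdot\norm{Sv-S_0v}+\norm{(T-T_0)S_0v}$), and stops there.

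In other words, your ``commuting-semigroup continuity argument'' already \emph{is} the paper's proof; everything after that---the sandwich decomposition of $\rho$, the affine bookkeeping $f_i,g_i$, and the appeal to Lemma~\ref{lem:semigrp} for the tensor factors $P^B\otimes(\cdot)\otimes P^A$---is unnecessary once you notice that the parameter dependence can be pushed entirely onto the single operator $Q^V$ acting on $V$, with no tensor products in sight. Your approach works, but it proves continuity of the middle factor twice over and carries the extra burden of the tensor-product continuity lemma, which the paper avoids altogether.
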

	\begin{proof}
The algebraic conditions are clear, and it remains to show
that the two sided action $\rho_{a,l,b}^V$ is jointly continuous in all three parameters.
		Since the composition is separately continuous 
		and we have $Q_{a,l,b}^V\circ\rho_{a',l',b'}^V=\rho_{a+a',l+l',b+b'}^V$,
		it is enough to show that $Q_{a,l,b}^V$ is jointly continuous in all 3 parameters.
		Let $\rho_{a,l}^L$ be the left action and $\rho_{l,b}^R$ be the right action.
		Then we have $Q_{a,l,b}^V=Q_{a,l_1}^L\circ Q_{l_2,b}^R$ for any $a,b,l,l_1,l_2\in\Rb_{>0}$ with $l_1+l_2=l$.

		Let $\varepsilon>0$ and $v\in V$. We show that $Q_{a,l,b}^V$ is continuous at $(a_0,l_0,b_0)\in(\Rb_{>0})^3$.
Let us fix $0<l_0'<l_0$. For $l>l_0'$ we have the estimate
		\begin{align}
			\begin{aligned}
				\norm{(Q_{a,l,b}^V-Q_{a_0,l_0,b_0}^V)v}
				&=
				\norm{\left((Q_{a,l-l_0'}^L-Q_{a_0,l_0-l_0'}^L)Q_{l_0',b_0}^R +
				 Q_{a,l-l_0'}^L (Q_{l_0',b}^R-Q_{l_0',b_0}^R) \right)v}
\\
				&\le
\norm{(Q_{a,l-l_0'}^L-Q_{a_0,l_0-l_0'}^L)Q_{l_0',b_0}^R v}+
\norm{Q_{a,l-l_0'}^L}\cdot\norm{(Q_{l_0',b}^R-Q_{l_0',b_0}^R)v}\ .
			\end{aligned}
			\label{eq:joint-cont-estimate}
		\end{align}
Using the joint continuity of $Q_{a,l}^L$ at the point $(a_0,l_0-l_0')$ we can find $\delta_1>0$ such that
for every $a,l\in\Rb_{>0}$ with $|a-a_0|+|(l-l_0')-(l_0-l_0')|<\delta_1$ the first term in the second line of \eqref{eq:joint-cont-estimate} is smaller than $\varepsilon/2$.
For $a,l\in\Rb_{>0}$ with $|a-a_0|+|l-l_0|\le \delta_1$ and $l_0'\le l$ 
there exists a $K>0$ such that $\norm{Q_{a,l-l_0'}^L}<K$ since $(a,l)\to\norm{Q_{a,l}^L}$ is continuous.
Finally since $Q_{l_0',b}^R$ is continuous in $b$ we can choose $\delta_2>0$ such that $\norm{(Q_{l_0',b}^R-Q_{l_0',b_0}^R)v}<\varepsilon/(2K)$ 
		for every $b\in\Rb_{>0}$ with $|b-b_0|<\delta_2$.
		Altogether we have that $\norm{(Q_{a,l,b}^V-Q_{a_0,l_0,b_0}^V)v}<\varepsilon$ for every $a,l,b\in\Rb_{>0}$ with 
	$|a-a_0|+|l-l_0|+|b-b_0|<\min\left\{ \delta_1,\delta_2,l_0-l_0' \right\}$.
	\end{proof}

	Recall that the converse statement of Lemma~\ref{lem:left-right-module-bimodule} is not true. In Appendix~\ref{app:bimod} we give an example of a bimodule in $\Hilb$ which is not a left module.

\begin{proposition}
	Let $A,B,C$ be regularised algebras in $\Hilb$, 
	$V$ a $B$-$A$-bimodule, $W$ an $A$-$C$-bimodule,
	both coming from left/right modules	
	as in Lemma~\ref{lem:left-right-module-bimodule}.
	If $Q_{b,l,a}^V$ and $Q_{a,l,c}^W$ are epi for every $a,b,c,l\in\Rb_{>0}$,
	then the tensor product of bimodules exists:
	\begin{align}
		V\otimes_A W\ .
		\label{eq:tens-prod-lr-bimodules}
	\end{align}
	\label{prop:tens-prod-lr-bimodules}
\end{proposition}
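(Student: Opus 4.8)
The plan is to check, one at a time, the ingredients that Definition~\ref{def:tensor-product-bimodule} requires for $V\otimes_A W$ to exist: the coequaliser $\pi:V\otimes W\to V\otimes_A W$ of the morphisms in \eqref{eq:coeq-tensor-product}, the fact that $B\otimes(-)\otimes C$ preserves it, the induced morphism $\bar\rho_{a,b,c,l}$, and finally -- the delicate part -- the existence and joint continuity of $\bar\rho_{b,l,c}=\lim_{a\to0}\bar\rho_{a,b,c,l}$.

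First I would reduce the epi-hypotheses to the one-sided module structures. Since $V$ comes from a left $B$-module and a right $A$-module with commuting actions as in \eqref{eq:ra:bimod}, its bimodule semigroup factors as $Q^V_{b,l,a}=Q^{V,A}_{a,l_2}\circ Q^{V,B}_{b,l_1}$ for any splitting $l=l_1+l_2$, with $Q^{V,A}$ the semigroup of the right $A$-module structure. Reading $Q^{V,A}$ as the outermost factor gives $\im(Q^V_{b,l,a})\subseteq\im(Q^{V,A}_{a,l_2})$, so density of $\im(Q^V_{b,l,a})$ forces $Q^{V,A}_{a,l}$ to be epi for all $a,l\in\Rb_{>0}$; likewise $Q^{W,A}$ is epi. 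Proposition~\ref{prop:hilb-tensor-prod-MNU}, applied to $V$ as a right $A$-module and $W$ as a left $A$-module, then shows that the object $V\otimes_A W$ exists, realised as the cokernel $\pi$ of the difference of $\rho^{V,A}_{a,l}\otimes Q^{W,A}_{a,l}$ and $Q^{V,A}_{a,l}\otimes\rho^{W,A}_{a,l}$, independently of $(a,l)$. The morphisms of \eqref{eq:coeq-tensor-product} differ from this pair only by the actions $\rho^{V,A},\rho^{W,A}$, the further semigroups $Q^{V,B},Q^{W,C}$, and units $\eta^B,\eta^C$ filling the outer $B$- and $C$-slots; a short computation using the coequaliser property of $\pi$ and the fact that these remaining factors commute past $\pi$ shows that $\pi$ is also a coequaliser of \eqref{eq:coeq-tensor-product} for every $(a,b,c,l)$. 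Because $\pi$ is a cokernel, Lemma~\ref{lem:hilb-tensor-preserves-coker} shows that $B\otimes(-)\otimes C$ preserves it, so the universal property produces $\bar\rho_{a,b,c,l}:B\otimes(V\otimes_A W)\otimes C\to V\otimes_A W$ from \eqref{eq:def-tensor-product-action-before}; its independence of the decomposition $a_1+a_2=a_3+a_4$ and the bimodule identities \eqref{eq:ra:bimodule} are a routine diagram chase which I would defer.

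The main obstacle is the $a\to0$ limit. Since $\pi$ is a quotient map in $\Hilb$, hence surjective, it is enough to prove the analogous statement for $\theta_{a,b,c,l}:=\bar\rho_{a,b,c,l}\circ(\id_B\otimes\pi\otimes\id_C)$, i.e.\ for the morphism \eqref{eq:def-tensor-product-action-before} itself: if $\lim_{a\to0}\theta_{a,b,c,l}=:\theta_{0,b,c,l}$ exists and is jointly continuous in $(b,l,c)$, then by separate continuity of composition $\theta_{0,b,c,l}$ still coequalises \eqref{eq:coeq-tensor-product} and so descends to a morphism $\bar\rho_{b,l,c}$ with $\bar\rho_{b,l,c}\circ(\id_B\otimes\pi\otimes\id_C)=\theta_{0,b,c,l}$; surjectivity of $\pi$ then yields both $\bar\rho_{a,b,c,l}\to\bar\rho_{b,l,c}$ as $a\to0$ and the joint continuity of $\bar\rho_{b,l,c}$. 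To handle $\theta_{a,b,c,l}$ I would use \eqref{eq:ra:bimod} to re-split the length $l$ freely and rewrite $\theta_{a,b,c,l}$ in the form $\pi\circ\big(\rho^{V,B}_{b,l-u}\otimes\rho^{W,C}_{l-u,c}\big)\circ\big(\id_B\otimes R^V_{a,u}\otimes R^W_{a,u}\otimes\id_C\big)$ for a small $u<l$, where $R^V_{a,u}$ and $R^W_{a,u}$ are assembled from the right/left $A$-module structure maps (actions and units) and carry all of the $A$-area $a$ together with a length $u$. Choosing $u=a$ and letting $a\to0$, continuity of the $A$-module semigroups at the origin (the continuity clause of Definition~\ref{def:AB-bimodule-def}) gives $R^V_{a,a}\to\id$ and $R^W_{a,a}\to\id$, while $\rho^{V,B}_{b,l-a}\to\rho^{V,B}_{b,l}$ and $\rho^{W,C}_{l-a,c}\to\rho^{W,C}_{l,c}$; a norm-boundedness estimate exactly as in the proof of Lemma~\ref{lem:semigrp} then combines these into $\lim_{a\to0}\theta_{a,b,c,l}=\pi\circ\big(\rho^{V,B}_{b,l}\otimes\rho^{W,C}_{l,c}\big)$. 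Joint continuity of this limit in $(b,l,c)$, and of $\theta_{a,b,c,l}$ in $(a,b,c,l)$ for $a>0$, then follows from the automatic joint continuity of tensor products of strongly continuous semigroups in $\Hilb$ (Lemma~\ref{lem:semigrp} and Corollary~\ref{cor:semigrp}), applied to $\rho^{V,B},\rho^{W,C}$ and the $Q$-semigroups as in the proof of Lemma~\ref{lem:ra:properties}.

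Finally, the bimodule axioms \eqref{eq:ra:bimodule} for $(V\otimes_A W,\bar\rho_{b,l,c})$ and the identity $\lim_{b,l,c\to0}Q^{V\otimes_A W}_{b,l,c}=\id$ transfer from $V$ and $W$ through $\pi$ by the same surjectivity argument; and since the formula for $\bar\rho_{b,l,c}$ obtained above is built from a left $B$-action and a right $C$-action that commute, $V\otimes_A W$ is again a bimodule of the kind produced by Lemma~\ref{lem:left-right-module-bimodule}. In summary, the only genuinely non-formal step is forcing the vanishing $A$-area $a$ to be carried along an equally vanishing slice $u=a$ of the length, so that the right/left $A$-module semigroups converge to the identity; everything else is bookkeeping with coequalisers and the continuity lemmas already established.
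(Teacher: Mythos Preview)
Your proposal is correct and follows the same route as the paper's proof: reduce the epi hypothesis to the one-sided $A$-module semigroups, apply Proposition~\ref{prop:hilb-tensor-prod-MNU} to get the coequaliser $\pi$, use Lemma~\ref{lem:hilb-tensor-preserves-coker} to induce $\bar\rho_{a,b,c,l}$, and then exploit the left $B$-module and right $C$-module structures to handle the $a\to0$ limit. The only difference is that the paper compresses your last step into one sentence: it simply observes that because $V$ is already a left $B$-module and $W$ a right $C$-module, the morphism \eqref{eq:def-tensor-product-action-before} can be written down directly at $a=0$ as $\pi\circ(\rho^{V,B}_{b,l}\otimes\rho^{W,C}_{l,c})$, and declares it ``clearly'' to be the limit and jointly continuous. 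Your argument with the length slice $u=a$ and the norm-boundedness estimate from Lemma~\ref{lem:semigrp} is precisely what makes that ``clearly'' rigorous, so you have unpacked rather than departed from the paper's reasoning.
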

\begin{proof}
By the assumption $V$ is a right $A$-module and $W$ is a left $A$-module.
	Since 
	$Q_{b,l,a}^V$ and $Q_{a,l,c}^W$ are epi, 
	the $Q$'s for the corresponding right and left module structures on $V$ and $W$ are epi as well.
	Let $\pi:V\otimes W\to V\otimes_A W$ be the tensor product of these right and left modules, which exists by Proposition~\ref{prop:hilb-tensor-prod-MNU}.
	Since the left and right actions for $V$ and $W$ commute as in \eqref{eq:ra:bimod}, $\pi$ is a coequaliser for \eqref{eq:coeq-tensor-product}.

	By Lemma~\ref{lem:hilb-tensor-preserves-coker}, tensoring with identity preserves cokernels, so the universal property of the cokernel induces a morphism 
	$\bar{\rho}_{a,b,c,l}:B\otimes(V\otimes_A W)\otimes C\to V\otimes_A W$ from
\eqref{eq:def-tensor-product-action-before}. Since $V$ is a left $B$-module and $W$ is a right $C$ module, the morphism in \eqref{eq:def-tensor-product-action-before} with parameter $a=0$ exists and induces the morphism $\bar{\rho}_{0,b,c,l}$, which is clearly the $a\to0$ limit of $\bar{\rho}_{a,b,c,l}$ and which is clearly jointly continuous in its parameters. So altogether we have shown that $V\otimes_A W$ as the tensor product of bimodules exists.
\end{proof}

\medskip

For the rest of this section we restrict our attention to tensor products over strongly separable regularised algebras.

\begin{lemma}
Let $A,M,N,U$ be as in Lemma~\ref{lem:equalimage} and suppose that $A$ is strongly separable.
	If $Q_{a,l}^M$, $Q_{a,l}^N$ and  $Q_{a,l}^U$
	are epi for every $a,l,b\in\Rb_{>0}$, then
	the following limits of the maps in \eqref{eq:D-definition-bimodule} exist:
	\begin{align}
		\lim_{a,l\to0}D_{a,l}^{M,N}\ ,\quad\lim_{a,l\to0}D_{a,l}^{U}\ .
		\label{eq:D-limits}
	\end{align}
The above idempotents are projectors onto the respective tensor products.
\label{lem:D-limits}
\end{lemma}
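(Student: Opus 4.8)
The plan is to reduce everything to the existence of the two limits. Indeed, once $D_0^{M,N}:=\lim_{a,l\to0}D_{a,l}^{M,N}$ and $D_0^{U}:=\lim_{a,l\to0}D_{a,l}^{U}$ are known to exist, they are idempotents (from \eqref{eq:D-MN-semigroup}, \eqref{eq:D-U-semigroup} and separate continuity of composition in $\Hilb$), and Proposition~\ref{prop:d0proj} then identifies $(\pi,\im D_0^{M,N})$ with $M\otimes_A N$ and $(\pi,\im D_0^{U})$ with $\ctimes_A U$; these tensor products exist by Proposition~\ref{prop:hilb-tensor-prod-MNU}, so this also yields the second assertion. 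I would carry out the argument for $D_{a,l}^{M,N}$; the case of $D_{a,l}^{U}$ is identical, with $\ctimes_A U$ and the coequaliser of Definition~\ref{def:tensor-prod-modules-cyclic} in place of $M\otimes_A N$.

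\emph{Descent through the cokernel.} Put $\varphi_p:=\rho^M_p\otimes Q^N_p-Q^M_p\otimes\rho^N_p$ for $p\in(\Rb_{>0})^2$. A direct string-diagram computation — using the module axiom \eqref{eq:ra:module}, the bimodule property of the separability idempotent, and $\sigma_{A,A}\circ e_a=e_a$ — shows $D_{a,l}^{M,N}\circ\varphi_p=0$ for all $(a,l),p$, so (kernels being closed) $K\subseteq\ker D_{a,l}^{M,N}$, where $K:=\overline{\im\varphi_p}$ is independent of $p$ by Lemma~\ref{lem:equalimage} (this is where the hypothesis that $Q^M,Q^N$ are epi enters). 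Since the cokernel $\pi:M\otimes N\to M\otimes_A N=(M\otimes N)/K$ exists by Proposition~\ref{prop:hilb-tensor-prod-MNU} and $D_{a,l}^{M,N}$ annihilates $K$, it factors as $D_{a,l}^{M,N}=\tilde D_{a,l}\circ\pi$. Fixing a bounded linear section $s$ of $\pi$ (available since $K$ is a closed subspace of a Hilbert space) gives $\tilde D_{a,l}=D_{a,l}^{M,N}\circ s$, and from \eqref{eq:D-MN-semigroup} together with $\pi\circ s=\id$ one gets $\tilde D_{a_1,l_1}\circ\pi\circ\tilde D_{a_2,l_2}=\tilde D_{a_1+a_2,l_1+l_2}$.

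\emph{Joint continuity and the induced semigroup.} Writing $D_{a,l}^{M,N}$ with a fixed splitting in the form $(Q^M_{\cdot}\otimes Q^N_{\cdot})\circ C\circ(\id_M\otimes e_{\cdot}\otimes\id_N)$ with $C=\rho^M_{\varepsilon,\delta}\otimes\rho^N_{\varepsilon,\delta}$ a fixed operator, Lemma~\ref{lem:semigrp} and the uniform-boundedness estimate in its proof show that $(a,l)\mapsto D_{a,l}^{M,N}$, hence also $\tilde D_{a,l}=D_{a,l}^{M,N}\circ s$ and $\hat Q_{a,l}:=\pi\circ\tilde D_{a,l}$, are jointly continuous on $(\Rb_{>0})^2$. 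A second direct computation, now using $\mu_{a_1}\circ e_{a_2}=\eta_a$ and the coequaliser relation for $\pi$, gives $\pi\circ D_{a,l}^{M,N}=\pi\circ(Q^M_{a_1,l_1}\otimes Q^N_{a_2,l_2})$ for a splitting with $a_1+a_2\le a$ and $l_1+l_2\le l$; since $Q^M,Q^N$ are continuous semigroups tending to $\id$, Lemma~\ref{lem:semigrp} and continuity of $\pi$ give $\hat Q_{a,l}\circ\pi=\pi\circ(Q^M_{a_1,l_1}\otimes Q^N_{a_2,l_2})\to\pi$, so $\hat Q_{a,l}\to\id_{M\otimes_A N}$ as $(a,l)\to0$, and each $\hat Q_{a,l}$ is epi (Lemma~\ref{lem:monoepi} and $\pi$ epi).

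\emph{Existence of the limit.} Combining the two steps, $\tilde D_p\circ\hat Q_q=\tilde D_{p+q}$. Fixing $q$ and letting $p\to0$, joint continuity gives $\tilde D_{p+q}\to\tilde D_q$, so $\tilde D_p$ converges pointwise on $\im\hat Q_q$, hence on the dense subspace $\bigcup_q\im\hat Q_q$. To upgrade this to convergence on all of $M\otimes_A N$ I would need a uniform bound $\sup_{p\le p_0}\|\tilde D_p\|<\infty$; granting it, $\lim_{p\to0}\tilde D_p$ exists, and therefore $\lim_{a,l\to0}D_{a,l}^{M,N}=(\lim_{p\to0}\tilde D_p)\circ\pi$ exists as well. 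The hard part is precisely this uniform bound near $0$: it has to be extracted by rewriting $\tilde D_{a,l}$ through the separability idempotent so that the growth of $e_{a_2}$ as $a_2\to0$ is absorbed into the smoothing built into the module actions and into the (locally bounded) semigroups $Q^M,Q^N$ and the bounded map $\pi$; everything else is bookkeeping with the separability identities and with Lemmas~\ref{lem:equalimage}, \ref{lem:semigrp} and \ref{lem:monoepi}. The argument for $D_{a,l}^{U}$ runs verbatim, and in each case Proposition~\ref{prop:d0proj} promotes the limit idempotent to a projector onto the corresponding tensor product.
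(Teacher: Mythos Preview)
Your proof has a genuine gap precisely where you yourself flag it: the uniform bound $\sup_{p\le p_0}\|\tilde D_p\|<\infty$ near $p=0$. Your reduction to this estimate is sound --- you correctly show that $D_p$ factors through $\pi$, that $\hat Q_p:=\pi\circ\tilde D_p\to\id_{M\otimes_A N}$, and that $\tilde D_p\circ\hat Q_q=\tilde D_{p+q}$ --- but the bound itself is only asserted, not proved. Your suggestion to ``absorb'' the blow-up of $e_{a_2}$ into the module actions and the locally bounded semigroups $Q^M,Q^N$ is not an argument: nothing in the separability identities or in Lemmas~\ref{lem:equalimage}, \ref{lem:semigrp}, \ref{lem:monoepi} yields such a uniform estimate, and without it pointwise convergence on the dense subspace $\bigcup_q\im\hat Q_q$ does not upgrade to strong convergence on all of $M\otimes_A N$.

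The paper sidesteps this entirely by a direct identity. Let $D_0$ denote the orthogonal projection of $M\otimes N$ onto $(\overline{\im\varphi_p})^\perp$, which is the tensor product $M\otimes_A N$ by Proposition~\ref{prop:hilb-tensor-prod-MNU}. On that subspace the left and right $A$-actions are identified, so the separability relation $\mu\circ e=\eta$ collapses the defining expression for $D_{a,l}^{M,N}$ to
\[
D_p \;=\; D_0\circ\bigl(Q^M_{p_1}\otimes Q^N_{p_2}\bigr)
\]
for a suitable split of the parameters. The limit $D_p\to D_0$ is then immediate from $Q^M_{p_1}\otimes Q^N_{p_2}\to\id$ (Lemma~\ref{lem:semigrp}) and separate continuity of composition. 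You already have one half of this: your formula $\pi\circ D_p=\pi\circ(Q^M\otimes Q^N)$ is exactly $D_0\circ D_p=D_0\circ(Q^M\otimes Q^N)$. What you are missing is the observation that $D_p$ already lands in $\im D_0$, so that $D_p=D_0\circ D_p$ and hence $\tilde D_p=\iota\circ\hat Q_p$; once you see this, the uniform bound reduces to the trivial estimate $\|\tilde D_p\|\le\|Q^M_{p_1}\otimes Q^N_{p_2}\|$, and your machinery of sections and dense-subspace arguments becomes unnecessary.
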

\begin{proof}
We will only show that the first limit exists, the second
can be shown similarly.
Abbreviate $p=(a,l)$ and $D_p = D_{a,l}^{M,N}$.
Recall the morphism $\varphi_p$ from Lemma~\ref{lem:equalimage}.
Let us identify $M\otimes N/\overline{\im(\varphi_p)}$ with the orthogonal complement of $\overline{\im(\varphi_p)}$ in $M\otimes N$ and let $D_0$ be the projection onto this closed subspace.
	Since on this subspace the left and right actions are identified, one has that
	$D_p= D_0 \circ \left( Q_{p_1}^M\otimes Q_{p_2}^N \right)$, 
	for appropriate $p,p_1,p_2\in(\Rb_{>0})^2$,
	so $\lim_{p\to0}D_p=D_0$.
	By Proposition~\ref{prop:hilb-tensor-prod-MNU} we know that the image of $D_0$ is the tensor product $M\otimes_A N$.
\end{proof}
\begin{proposition}\label{prop:hilb-tensor-prod}
Let $A$ be a strongly separable algebra and let
$V$ be a $B$-$A$-bimodule and $W$ an $A$-$C$-bimodule,
such that $Q_{b,l,a}^V$ and $Q_{a,l,c}^W$ are epi for every $a,b,c,l\in\Rb_{>0}$.
\begin{enumerate}
\item
If the limit $\lim_{a\to0}\tilde{\rho}_{a,b,c,l}$ of the morphism in \eqref{eq:tensor-product-action} exists, then
the tensor product 
	\begin{align}
		V\otimes_A W
		\label{eq:hilb-tensor-prod-2}
	\end{align}
	exists	and is a $B$-$C$-bimodule via $\tilde{\rho}_{0,b,c,l}^{V,W}$ as in Proposition~\ref{prop:d0proj}.
\item
If $V$ and $W$ are transmissive then $V\otimes_A W$ exists and
is transmissive as well.
\end{enumerate}
\end{proposition}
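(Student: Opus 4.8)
The plan is to reduce both statements to results obtained earlier in this section, principally the third part of Proposition~\ref{prop:d0proj}, Lemma~\ref{lem:D-limits} and the construction around \eqref{eq:def-tensor-product-action-before}.

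For part~1, the first point is that the hypotheses already force the idempotent $D_0^{V,W}=\lim_{a,b,c,l\to0}D_{a,b,c,l}^{V,W}$ of \eqref{eq:D-definition-bimodule} to exist, so that \eqref{eq:tensor-product-action} and the hypothesis on $\tilde\rho$ make sense. This is shown by transporting the proofs of Lemmas~\ref{lem:equalimage} and~\ref{lem:D-limits} to the bimodule setting: writing $\varphi_p=\rho^L_p-\rho^R_p$ for the difference of the two morphisms in \eqref{eq:coeq-tensor-product} (with $p$ a tuple of area/length parameters), the assumption that $Q_{b,l,a}^V$ and $Q_{a,l,c}^W$ are epi, together with Lemma~\ref{lem:monoepi}, shows that $\overline{\im(\varphi_p)}$ is independent of $p$; hence the cokernel $\pi:V\otimes W\to V\otimes W/\overline{\im(\varphi_p)}$ is a coequaliser for every $p$, the object $V\otimes_A W$ exists, and, identifying the quotient with the orthogonal complement of $\overline{\im(\varphi_p)}$ and writing $D_0^{V,W}$ for the orthogonal projection onto it, one has $D_{a,b,c,l}^{V,W}=D_0^{V,W}\circ(Q^V\otimes Q^W)$ for appropriate $Q$-factors, so that $\lim_{a,b,c,l\to0}D_{a,b,c,l}^{V,W}=D_0^{V,W}$ by Lemma~\ref{lem:semigrp} and separate continuity of composition. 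Then $\tilde\rho_{a,b,c,l}^{V,W}$ is defined, the hypothesis gives that $\tilde\rho_{0,b,c,l}:=\lim_{a\to0}\tilde\rho_{a,b,c,l}^{V,W}$ exists, and the only remaining point is the joint continuity of $(b,c,l)\mapsto\tilde\rho_{0,b,c,l}$; granting this, the third part of Proposition~\ref{prop:d0proj} immediately identifies $(\pi,\im(D_0^{V,W}))$ with action $\tilde\rho_{0,b,c,l}$ as the $B$-$C$-bimodule $V\otimes_A W$.

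For part~2, suppose $V$ and $W$ are transmissive; by part~1 it suffices to show that $\lim_{a\to0}\tilde\rho_{a,b,c,l}^{V,W}$ exists and, for transmissivity of the result, that it depends only on $b+c$ and $l$. In \eqref{eq:tensor-product-action} the parameter $a$ enters only through $a_1$ (the $A$-parameter of $\rho^V$), $a_2$ (in the separability idempotent $e_{a_2}$) and $a_3$ (the $A$-parameter of $\rho^W$), with $a_1+a_2+a_3=a$. Transmissivity of $V$ lets one trade the $A$-parameter of $\rho^V$ for an equal amount of $B$-parameter, and transmissivity of $W$ trades the $A$-parameter of $\rho^W$ for $C$-parameter; the idempotent $e_{a_2}$ is handled exactly as in the proof of the third part of Proposition~\ref{prop:d0proj}, where composing with $\pi$ replaces an $e$-insertion by $Q^V\otimes Q^W$-factors, whose $A$-parameters are again absorbed by transmissivity into $B$- and $C$-parameters. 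After these rewritings $\tilde\rho_{a,b,c,l}^{V,W}$ depends only on $b+c$, on $l$, and, in a jointly continuous way, on $a$; hence $\lim_{a\to0}\tilde\rho_{a,b,c,l}^{V,W}$ exists and is a function of $b+c$ and $l$ alone. Part~1 then gives that $V\otimes_A W$ exists, and the $(b+c)$-dependence of its action $\tilde\rho_{0,b,c,l}$ is precisely transmissivity. (Alternatively, when the $A$-collapsing limits \eqref{eq:bimod-left-module-limit} exist for $V$ and $W$ — which transmissivity arranges — one may instead invoke Proposition~\ref{prop:tens-prod-lr-bimodules}.)

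The main obstacle is the analytic step in part~1, namely the joint continuity of the limiting action $\tilde\rho_{0,b,c,l}$: a strong-operator-topology pointwise limit of jointly continuous operator-valued maps need not be jointly continuous, so this has to be obtained by a uniform boundedness argument in the spirit of Lemma~\ref{lem:semigrp} (using the Uniform Boundedness Principle on compact parameter sets, together with the module/semigroup identities that re-express $\tilde\rho_{0,b,c,l}$ as a composite involving jointly continuous $Q$-semigroups). Everything else is a matter of transporting the algebraic manipulations of Lemma~\ref{lem:D-limits} and Proposition~\ref{prop:d0proj} to the present bimodule situation and carefully bookkeeping the area and length parameters.
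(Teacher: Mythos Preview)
Your proposal is correct and follows essentially the same route as the paper: existence of $D_0^{V,W}$ via the cokernel argument (transporting Lemmas~\ref{lem:equalimage} and~\ref{lem:D-limits} to the bimodule setting), identification of the induced action with $\tilde\rho$ via Proposition~\ref{prop:d0proj}, and joint continuity handled by iterating Lemma~\ref{lem:semigrp}. For part~2 the paper is more direct than you are: it simply observes that transmissivity of $V$ and $W$ forces $\tilde\rho_{a,b,c,l}^{V,W}$ to depend only on the total $a+b+c$, so the $a\to0$ limit is immediate and the resulting action is transmissive; your parameter-trading argument arrives at the same place but with more bookkeeping than necessary.
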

\begin{proof}
Similarly as in Lemma~\ref{lem:D-limits},
	for $V$ and $W$ we again have that $V\otimes_A W$ exists as a cokernel.
	What is left to be shown is that we get an induced action on $V\otimes_A W$.
	By Lemma~\ref{lem:hilb-tensor-preserves-coker}, tensoring with identity preserves cokernels, so we get an induced morphism from \eqref{eq:def-tensor-product-action-before}, which coincides with the morphism in \eqref{eq:tensor-product-action}. By our assumptions the $a\to 0$ limit of this morphism exists.
	It can be shown by iterating Lemma~\ref{lem:semigrp} that 
	the action on $V\otimes_A W$ is jointly continuous in the parameters,
	so $V\otimes_A W$ is a bimodule.

	If the bimodules were transmissive then the morphism $\tilde{\rho}_{a,b,c,l}^{V,W}$ from \eqref{eq:tensor-product-action} depends only on $a+b+c$ and not on the differences of these parameters. In particular its $a\to0$ limit and hence the tensor product $V\otimes_A W$ exists and the tensor product is transmissive.
\end{proof}

We have seen two different conditions for the existence of tensor product of bimodules. In the state sum construction we will use both and our main examples will satisfy both of these conditions, too.
Note that the existence of tensor product does not automatically mean that it closes on bimodules with duals. For the natural candidate for the dual of $V\otimes_A W$ to exist one would need to establish the existence of the limits in \eqref{eq:dual-pair-tensor-product}.

\section{Area-dependent QFTs with and without defects as functors}\label{sec:aqft-with-and-without}

In this section we define the symmetric monoidal categories of two-dimensional bordisms with area, with and without defects. Using these, area-dependent QFTs are defined as symmetric monoidal functors from such bordisms to a suitable target category $\Sc$. 
In the case without defects we classify such functors in terms of commutative regularised Frobenius algebras in $\Sc$, mirroring the result for two-dimensional topological field theories.

\medskip

Below, by \textsl{manifold} we will always mean an oriented smooth manifold.

\subsection{Bordisms with area and {\aQFT}s}\label{sec:aqft}

We first recall the definition of the category of 2-dimensional oriented bordisms, 
and then extend this definition to include an assignment of an area to each connected component of a bordism. 
Using these notions we define area-dependent QFT as a symmetric monoidal functor with depends continuously on the area.

\medskip

Let $S$ be a compact closed 1-manifold. 
A \textsl{collar of $S$} is an open neighbourhood of $S$ in $S\times\Rb$.
An \textsl{ingoing (outgoing) collar of $S$} is the intersection of 
$S\times[0,+\infty)$ (respectively $S\times(-\infty,0]$) with
a collar of $S$. Let $S^{\mathrm{rev}}$ denote $S$ with the reversed orientation.
A \textsl{surface} is a compact 2-dimensional manifold.
A \textsl{boundary parametrisation} of a surface $\Sigma$ is:
\begin{enumerate}
	\item A pair of compact closed 1-manifolds 
		$S$ and $T$. 
	\item 
	A choice of an ingoing collar $U$ of $S$ and an outgoing collar $V$ of $T$. 	
	\item A pair of orientation preserving smooth	embeddings
\begin{align}
	\phi_\mathrm{in}:U\hookrightarrow\Sigma\hookleftarrow
	V:\phi_\mathrm{out} \ ,
	\label{eq:bdrparam-aqft}
\end{align}
	such that $\phi_\mathrm{in}\sqcup\phi_\mathrm{out}$ maps 
$(S\times\{0\})^{\mathrm{rev}}\sqcup (T\times\{0\})$ diffeomorphically to $\partial\Sigma$.
\end{enumerate}
For two compact closed 1-manifolds $S$, $T$, 	
	a \textsl{bordism} $\Sigma:S\to T$ is a surface $\Sigma$ together with a boundary parametrisation. The \textsl{in-out cylinder over $S$} 
is the bordism $S\times[0,1]:S\to S$.

Let $\Sigma:S\to T$ be a bordism as in \eqref{eq:bdrparam-aqft} and let
$\Sigma':S\to T$ with
\begin{align}
	\psi_{in}: X
	\hookrightarrow\Sigma'\hookleftarrow
	Y :\psi_{out} 
	\label{eq:bdrparam-2nd}
\end{align} 
be another bordism.
The two bordisms $\Sigma,\Sigma':S\to T$ are \textsl{equivalent} if there exist 
an orientation preserving diffeomorphism $f: \Sigma \to \Sigma'$, as well as
ingoing and outgoing collars $C$ and $D$
of $S$ and $T$ contained in the collars $U$, $X$ and $V$, $Y$, respectively, such that the diagram
\begin{equation}
	\begin{tikzcd}[row sep=small]
		& U \ar[hookrightarrow]{r}{\varphi_\mathrm{in}} & \Sigma \ar{dd}{f} &V\ar[left hook ->,swap]{l}{\varphi_\mathrm{out}}&\\
		C \ar[hookrightarrow]{ru} \ar[hookrightarrow]{rd} &&&& \ar[left hook ->]{lu} \ar[left hook ->]{ld} D\\
		& X \ar[hookrightarrow]{r}{\psi_\mathrm{in}} & \Sigma'&\ar[left hook ->,swap]{l}{\varphi_\mathrm{out}}Y&
	\end{tikzcd}
\end{equation}
commutes.

Given two bordisms $\Sigma :S \to T$ and $\Xi : T \to W$, we define 
	$\Xi \circ \Sigma : S \to W$ to be the surface glued using the boundary parametrisations
	$\phi_\mathrm{out}^{\Sigma}$ and $\phi_\mathrm{in}^{\Xi}$ and with 
	$\phi_\mathrm{in}^{\Sigma}$ and $\phi_\mathrm{out}^{\Xi}$ parametrising the remaining boundary.
	The composition $[\Xi]\circ[\Sigma]:=[\Xi\circ\Sigma]:S\to W$ is well defined, 
	that is, it is independent of the choice of representatives $\Xi$, $\Sigma$ of the classes to be glued.

The \textsl{category of bordisms} $\Bordna$ has compact closed
1-manifolds as objects and equivalence classes of bordisms as morphisms.

The category $\Bordna$ becomes a $\dagger$-category as follows.
Let the functor $\left( - \right)^{\dagger}:\Bordna\to\Bordna$ be
identity on objects. 
Let $S\in\Bordna$ and let us define the inversions
\begin{align}
\begin{aligned}
	\iota_{S}:S\times\Rb&\to S\times\Rb\\
	(s,t)&\mapsto(s,-t)\ .
\end{aligned}
	\label{eq:inversion-def}
\end{align}
Let $\Sigma:S\to T$ be a  bordism with boundary parametrisation maps 
as in \eqref{eq:bdrparam-aqft}. 
We define $\left( \Sigma \right)^{\dagger}:T\to S$ to be the bordism
\begin{align}
	\phi'_\mathrm{in}:=\phi_\mathrm{out}\circ\iota_T:\iota(V)\hookrightarrow\Sigma^{rev}\hookleftarrow
	\iota(U):\phi'_\mathrm{out}:=\phi_\mathrm{in}\circ\iota_U \ ,
	\label{eq:bordism-dagger}
\end{align}
with reversed orientation and new boundary parametrisation maps
$\phi'_\mathrm{in}$ and $\phi'_\mathrm{out}$.

\medskip

After this quick review we can introduce the bordism category we are interested in:

\begin{definition}
A \textsl{bordism with area} 
$(\Sigma,\Ac:\pi_0(\Sigma)\to\Rb_{\ge0}):S\to T$ consists of a bordism
$\Sigma:S\to T$ and 
an \textsl{area map} $\Ac$, which is only allowed to take value 0 
on connected components equivalent to in-out cylinders.
The value $\Ac(c)$ for $c\in\pi_0(\Sigma)$ is called the
\textsl{area} of the component $c$.
\end{definition}

Two bordisms with area $(\Sigma,\Ac),(\Sigma',\Ac'):S\to T$ 
are \textsl{equivalent} if
the underlying bordisms are equivalent
with diffeomorphism $f:\Sigma\to\Sigma'$ and if
the following diagram commutes:
\begin{equation}
	\begin{tikzcd}[row sep=small]
		\pi_0(\Sigma)\ar{dd}[swap]{f_*} \ar{dr}{\Ac}& \\
		 &\Rb_{\ge0}\ , \\
		 \pi_0(\Sigma') \ar{ru}[swap]{\Ac'} &
	\end{tikzcd}
	\label{eq:amapequiv}
\end{equation}
where $f_*:\pi_0(\Sigma)\to\pi_0(\Sigma')$ is the map induced by $f$.

\begin{remark}
Allowing zero area for connected components which are equivalent to in-out
cylinders will ensure that the category of bordisms with area defined below has identities. 
Allowing zero area for all surfaces,
in particular for ``in-in'' and ``out-out'' cylinders,
        would make state spaces of corresponding 
        area-dependent quantum field theories
        finite dimensional, see Remark~\ref{rem:aqft-all-zero-area-limits} below. 
        Requiring all surface components to have
        strictly positive area and adding identities to the category by hand
        would -- 
at least in the example that the area-dependent theory takes values in $\Hilb$ and under some natural assumptions        
        -- not give a richer theory.
        Hence we opted for the definition above.
\end{remark}

Given two bordisms with area $(\Sigma,\Ac_{\Sigma}) :X \to Y$ and $(\Xi,\Ac_{\Xi}) : Y \to Z$, 
the \textsl{glued bordism with area} 
$(\Xi \circ \Sigma,\Ac_{\Xi \circ \Sigma}) : X \to Z$ is the glued bordism
together with the new area map $\Ac_{\Xi \circ \Sigma}$ defined by
assigning to each new connected component
the sum of areas of the connected components which were glued together to 
build up the new connected component. 

Let $[(\Xi,\Ac_{\Xi})]:T\to W$ and $[(\Sigma,\Ac_{\Sigma})]:S\to T$ 
be equivalence classes of bordisms with area.
The composition $[(\Xi,\Ac_{\Xi})]\circ[(\Sigma,\Ac_{\Sigma})]:=[(\Xi\circ\Sigma,\Ac_{\Xi\circ\Sigma})]:S\to W$ is again independent of the choice of representatives $(\Xi,\Ac_{\Xi})$, 
$(\Sigma,\Ac_{\Sigma})$ of the classes to be glued.
In the following we will by abuse of notation write the same symbol 
$(\Sigma,\Ac)$ for a bordism with area $(\Sigma,\Ac)$ 
and its equivalence class $[(\Sigma,\Ac)]$.

\begin{definition}
The \textsl{category of bordisms with area} $\Bordarea$ has the same objects 
as $\Bordna$
and equivalence classes of bordisms with area as morphisms.
\end{definition}

Both $\Bordna$ and $\Bordarea$ are symmetric monoidal categories with tensor product 
on objects and morphisms given by disjoint union.
The identities and the symmetric structure are given by equivalence classes of in-out cylinders
(with zero area).
There is a forgetful functor 
\be
F:\Bordarea\to\Bordna \ ,
\ee
which forgets the area map.

Next we introduce the following topology on hom-sets of $\Bordarea$.
Fix a bordism $\Sigma:S\to T$ in $\Bordna$. 
Define the subset $U_\Sigma \subset \Bordarea(S,T)$ as
\be
	U_{\Sigma}
	~:=~ F^{-1}(\Sigma)
	~=~ \big\{ (\Sigma,\Ac) \,\big|\, \Ac : \pi_0(\Sigma) \to \Rb_{\ge 0} \big\}
	~\cong~ \left( \Rb_{>0} \right)^{N_n}\times
\left( \Rb_{\ge0} \right)^{N_c} \ ,
\ee
where $N_c$ is the number of 
connected components of $\Sigma$ equivalent to a cylinder over a connected 1-manifold and $N_n=|\pi_0(\Sigma)|-N_c$. 
The topology on $U_\Sigma$ is that of 
$\left( \Rb_{>0} \right)^{N_n}\times\left( \Rb_{\ge0} \right)^{N_c}$.
We define the topology on
$\Bordarea(S,T)$ to be the disjoint union topology of the sets $U_{\Sigma}$.
One can quickly convince oneself of the following fact:

\begin{lemma}
The composition and the tensor product of $\Bordarea$ are jointly continuous.
\end{lemma}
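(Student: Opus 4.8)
The plan is to verify joint continuity of the composition and of the tensor product of $\Bordarea$ directly from the definition of the topology on the hom-sets, which is the disjoint union topology over the sets $U_\Sigma = F^{-1}(\Sigma)$. The key observation is that both operations are compatible with the forgetful functor $F:\Bordarea\to\Bordna$ in the sense that they descend to operations on $\Bordna$, so on each ``sheet'' $U_\Sigma$ the area data is just a tuple of nonnegative reals, and composition/tensor product act on these tuples by taking sums of appropriate coordinates. Concretely, fix a composable pair of underlying bordisms $\Sigma:S\to T$ and $\Xi:T\to W$ in $\Bordna$, giving the glued bordism $\Xi\circ\Sigma:S\to W$ in $\Bordna$. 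Then composition restricts to a map $U_\Sigma\times U_\Xi \to U_{\Xi\circ\Sigma}$, and under the homeomorphisms $U_\Sigma\cong(\Rb_{>0})^{N_n}\times(\Rb_{\ge0})^{N_c}$ etc., this map is given componentwise: the area of a connected component $c$ of $\Xi\circ\Sigma$ is the (finite) sum of the areas of the components of $\Sigma$ and $\Xi$ that were glued to form $c$. This is manifestly a continuous map of (products of) Euclidean spaces, being a finite sum of coordinate projections. Since the topology on $\Bordarea(S,T)$ is the disjoint union over the $U_\Sigma$, and since a map out of a disjoint union (here, out of a disjoint union of products of disjoint unions) is continuous iff its restriction to each piece is continuous, joint continuity of composition follows.

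For the tensor product one argues identically: for underlying bordisms $\Sigma,\Sigma'$, the tensor (disjoint union) restricts to $U_\Sigma\times U_{\Sigma'}\to U_{\Sigma\sqcup\Sigma'}$, and in coordinates this is simply the canonical homeomorphism $\big((\Rb_{>0})^{N_n}\times(\Rb_{\ge0})^{N_c}\big)\times\big((\Rb_{>0})^{N_n'}\times(\Rb_{\ge0})^{N_c'}\big) \xrightarrow{\ \sim\ } (\Rb_{>0})^{N_n+N_n'}\times(\Rb_{\ge0})^{N_c+N_c'}$ (up to reordering factors), which is trivially continuous. One small point to check along the way is that under gluing and disjoint union, a component that is a cylinder-type component (allowed zero area) maps to a component of the same type, so that the decomposition into the $N_n$ strictly-positive coordinates and the $N_c$ possibly-zero coordinates is respected; this is immediate from the definitions, since gluing a cylinder to a cylinder along a full boundary circle yields a cylinder, and gluing anything with a non-cylinder component yields a non-cylinder component, with the summed area automatically strictly positive.

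I do not expect any real obstacle here — this is the ``one can quickly convince oneself'' lemma. The only thing requiring a moment's care is bookkeeping: one must be slightly careful that the homeomorphism $U_\Sigma\cong(\Rb_{>0})^{N_n}\times(\Rb_{\ge0})^{N_c}$ is well-defined on equivalence classes of bordisms with area (which it is, by the definition of equivalence via the commuting triangle with the area maps), and that composition and tensor product of equivalence classes are well-defined (already established in the text preceding the lemma). Given this, the proof is a one-line reduction to continuity of addition and of coordinate permutations on finite products of $\Rb_{\ge0}$ and $\Rb_{>0}$.

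\begin{proof}
Recall that $\Bordarea(S,T)=\coprod_{\Sigma}U_\Sigma$ with the disjoint union topology, where $\Sigma$ runs over the underlying bordisms $S\to T$ in $\Bordna$ and $U_\Sigma=F^{-1}(\Sigma)\cong(\Rb_{>0})^{N_n}\times(\Rb_{\ge0})^{N_c}$ as topological spaces. A map from a finite product of such hom-sets is continuous if and only if its restriction to each product of pieces $U_{\Sigma_1}\times\cdots\times U_{\Sigma_k}$ is continuous.

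For composition, fix composable underlying bordisms $\Sigma:S\to T$ and $\Xi:T\to W$ and let $\Xi\circ\Sigma:S\to W$ be the glued bordism in $\Bordna$. Composition in $\Bordarea$ restricts to a map $U_\Sigma\times U_\Xi\to U_{\Xi\circ\Sigma}$. Each connected component $c$ of $\Xi\circ\Sigma$ is obtained by gluing a finite collection of components of $\Sigma$ and $\Xi$, and the area assigned to $c$ is by definition the sum of their areas. Moreover $c$ is of cylinder type (the case where area $0$ is permitted) precisely when all the glued components are cylinders glued along full boundary circles; otherwise $c$ carries strictly positive area. Hence, under the homeomorphisms $U_\Sigma\cong(\Rb_{>0})^{N_n}\times(\Rb_{\ge0})^{N_c}$ (and similarly for $\Xi$ and $\Xi\circ\Sigma$), the composition map is, up to a permutation of coordinates, a map each of whose output coordinates is a finite sum of input coordinates. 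Addition $\Rb_{\ge0}^m\to\Rb_{\ge0}$ and coordinate permutations are continuous, so this restricted map is continuous. As this holds for every composable pair $(\Sigma,\Xi)$, composition is jointly continuous.

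For the tensor product, fix underlying bordisms $\Sigma,\Sigma'$; then $\sqcup$ restricts to a map $U_\Sigma\times U_{\Sigma'}\to U_{\Sigma\sqcup\Sigma'}$ which in coordinates is, up to reordering factors, the identity
\[
\big((\Rb_{>0})^{N_n}\times(\Rb_{\ge0})^{N_c}\big)\times\big((\Rb_{>0})^{N_n'}\times(\Rb_{\ge0})^{N_c'}\big)
\;\xrightarrow{\ \sim\ }\;(\Rb_{>0})^{N_n+N_n'}\times(\Rb_{\ge0})^{N_c+N_c'}\ ,
\]
which is a homeomorphism, in particular continuous. Since this holds on every piece, the tensor product is jointly continuous as well.
\end{proof}
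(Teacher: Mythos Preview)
Your proof is correct and is exactly the routine verification the paper has in mind; the paper itself omits the argument entirely, writing only ``One can quickly convince oneself of the following fact.'' The one point worth making explicit (which you do) is that the pieces $U_\Sigma\times U_\Xi$ are open in the product of hom-sets, so continuity can be checked piecewise; beyond that your coordinate description via finite sums is the intended content.
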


\medskip

After these preparations we can finally define:

\begin{definition}
	Let $\Sc$ be a symmetric monoidal category whose hom-sets 
	are topological spaces and whose composition is separately continuous.
	An \textsl{area-dependent quantum field theory with values in $\Sc$} or 
	\textsl{{\aQFT}} in short is a symmetric monoidal functor
	$\funZ:\Bordarea\to \Sc$, such that for every $S,T\in\Bordarea$ the map
	\begin{align}
		\funZ_{S,T}:\Bordarea(S,T)&\to
		\Sc(\funZ(S),\funZ(T))
		\label{eq:aqft:cont}\\
		(\Sigma,\Ac)&\mapsto \funZ(\Sigma,\Ac)\nonumber
	\end{align}
	is continuous.
\end{definition}

The continuity requirement can equivalently be stated as follows.
For every bordism $\Sigma:S\to T$ in $\Bordna$, the map 
\begin{align}
	 U_{\Sigma} \cong \left( \Rb_{>0} \right)^{ N_n}\times
	\left( \Rb_{\ge0} \right)^{ N_c} &\to \Sc(\funZ(S),\funZ(T))
	\label{eq:aqft:cont2}\\
	\left( \Ac(x) \right)_{x\in\pi_0(\Sigma)}&\mapsto
	\funZ(\Sigma,\Ac)\nonumber
\end{align}
is continuous.

The following lemma shows that it is enough to require this continuity condition to hold for cylinders with area.
The proof is similar to the proof of Part~\ref{lem:ra:generatedcat} of Lemma~\ref{lem:ra:properties} and we omit it.

\begin{lemma} \label{lem:enough-to-check-cylinders}
	Let $\funZ:\Bordarea\to\Sc$ be a symmetric monoidal functor and 
	for every $S\in\Bordarea$ let 
	$(S\times[0,1],\Ac)$ denote a cylinder with area.
	If for every $S\in\Bordarea$ the assignment
	\begin{align}
		\begin{aligned}
			(\Rb_{\ge0})^{|\pi_0(S)|}&\to\Sc(\funZ(S),\funZ(S))\\
			(\Ac(x))_{x\in\pi_0(S)}&\mapsto \funZ(S\times[0,1],\Ac) ,
		\end{aligned}
		\label{eq:lem:enough-to-check-cylinders}
	\end{align}
	is continuous, then $\funZ$ is an {\aQFT}.
\end{lemma}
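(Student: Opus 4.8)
The statement to prove is Lemma~\ref{lem:enough-to-check-cylinders}: if a symmetric monoidal functor $\funZ:\Bordarea\to\Sc$ is continuous on the hom-sets of cylinders with area, then it is continuous on all hom-sets, hence an {\aQFT}. The strategy, mirroring the proof of Part~\ref{lem:ra:generatedcat} of Lemma~\ref{lem:ra:properties}, is to express the value of $\funZ$ on an arbitrary bordism with area in terms of a fixed morphism composed with a tensor product of cylinder-values, and then invoke separate continuity of composition in $\Sc$ together with the (assumed) joint continuity of the cylinder assignments.

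\textbf{Key steps.} First I would fix a bordism $\Sigma:S\to T$ in $\Bordna$ with connected components $c_1,\dots,c_N$ and consider the continuity of the map in \eqref{eq:aqft:cont2} at a point $(a_1,\dots,a_N)\in(\Rb_{>0})^{N_n}\times(\Rb_{\ge0})^{N_c}$. The idea is to peel off a small collar: pick $\eps>0$ with $\eps<\tfrac12\min\{a_i : c_i \text{ is non-cylinder}\}$ and, using the glueing axiom for $\Bordarea$, write $(\Sigma,\Ac)$ as the composition of a fixed bordism $(\Sigma,\Ac')$ — where each non-cylinder component has had its area reduced by $\eps$ and each cylinder component kept at $0$ — with a disjoint union of cylinders $\bigsqcup_i (S_i\times[0,1],\eps_i)$ glued in along an appropriate intermediate $1$-manifold, carrying the remaining areas $a_i$ (minus $\eps$ on the non-cylinder pieces, $a_i$ on the cylinder pieces). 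More precisely one factors out, for every connected component, a collar cylinder over some circle inside that component; the surgery is possible because every component either is a cylinder (so already of the desired form up to zero-area identity) or has positive area, so one can always carve out a collar of small positive area. Then $\funZ(\Sigma,\Ac)=\funZ(\Sigma,\Ac')\circ\big(\bigotimes_i \funZ(S_i\times[0,1],(\text{area}_i))\big)$ by monoidality, with $\funZ(\Sigma,\Ac')$ a fixed morphism independent of the parameters. The tensor factor is jointly continuous in all the parameters by iterating the hypothesis — but here one must be careful that $\Sc$'s tensor product need not be continuous, so the honest route is to replace the genuine cylinder values by the operators $P_a$-type cylinder semigroups and use that joint continuity of $(a_1,\dots,a_n)\mapsto Q_{a_1}\otimes\dots\otimes Q_{a_n}$ is precisely what is built into the RFA axioms (or, in $\Hilb$, follows from Lemma~\ref{lem:semigrp}); in the general categorical setting one simply promotes the cylinder continuity hypothesis to the $n$-fold tensor version exactly as in Lemma~\ref{lem:ra:generatedcat}. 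Composing a jointly continuous family with a fixed morphism on the left is separately continuous, hence continuous in the joint parameter, so \eqref{eq:aqft:cont2} is continuous at the chosen point. Since the point was arbitrary and $\Bordarea(S,T)$ carries the disjoint-union topology over the $U_\Sigma$, this gives continuity of $\funZ_{S,T}$ for all $S,T$.

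\textbf{Main obstacle.} The delicate point is the geometric decomposition: one must verify that every bordism with area can indeed be cut into a \emph{fixed} piece precomposed (or postcomposed) with a disjoint union of area-carrying cylinders in such a way that \emph{all} area parameters live on the cylinder factors, and that the fixed piece's equivalence class does not depend on the parameters. For a connected component that is itself a cylinder this is the zero-area identity trick; for a component of positive area one must choose an embedded circle (e.g.\ a boundary-parallel curve if the component meets the boundary, or any non-separating or suitably chosen separating curve otherwise) along which to split off a collar of area $\eps_i$, and argue the splitting is compatible with the equivalence relation \eqref{eq:amapequiv} on bordisms with area and with the glueing of areas. Once this combinatorial-topological bookkeeping is set up — which is routine but requires care that the ``small collar'' can be chosen uniformly as the parameters vary in a neighbourhood — the analytic part is immediate from separate continuity of composition and the iterated cylinder-continuity hypothesis. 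Because this is entirely parallel to the already-proven Lemma~\ref{lem:ra:properties}(4), the paper is justified in omitting the details.
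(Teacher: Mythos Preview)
Your approach is correct and matches the paper's intended argument (the paper omits the proof, citing Lemma~\ref{lem:ra:properties}\,(\ref{lem:ra:generatedcat})). Two small clarifications: the hypothesis is stated for \emph{every} $S$, so taking $S=(\Sb)^{\sqcup n}$ and using monoidality of $\funZ$ already gives joint continuity of $\bigotimes_i P_{a_i}=\funZ\big((\Sb)^{\sqcup n}\times[0,1],(a_i)\big)$ with no ``promotion'' needed; and in your decomposition the \emph{fixed} piece should carry area $\eps$ on each non-cylinder component (your phrase ``reduced by $\eps$'' would leave the variable $a_i-\eps$ on the wrong factor), while the cylinders absorb the variable part $a_i-\eps$.
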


{\aQFT}s together with natural
transformations form a category $\AQFT{\Sc}$.
Assume that for $\funZ_1,\funZ_2\in\AQFT{\Sc}$ 
and all $S,T \in \Bordarea$ the map
\begin{align}
	(\funZ_1\otimes \funZ_2)_{S,T}:\Bordarea(S,T)&\to
	\Sc(\funZ_1(S)\otimes \funZ_2(S),\funZ_1(T)\otimes \funZ_2(T))
	\label{eq:aqft:tensor-cont}\\
	(\Sigma,\Ac)&\mapsto \funZ_1(\Sigma,\Ac)\otimes \funZ_2(\Sigma,\Ac)\nonumber
\end{align}
is continuous.  Then \eqref{eq:aqft:tensor-cont} defines an {\aQFT} which we denote with $\funZ_1\otimes \funZ_2$.
If the continuity condition
\eqref{eq:aqft:tensor-cont} holds for every $\funZ_1,\funZ_2\in\AQFT{\Sc}$ then $\AQFT{\Sc}$ becomes a symmetric monoidal category.
For example, combining 
Lemma~\ref{lem:semigrp} 
	and Lemma~\ref{lem:enough-to-check-cylinders}
	we see that $\AQFT{\Hilb}$ is symmetric monoidal.

The category $\Bordarea$ becomes a $\dagger$-category via the functor
which is the same as \eqref{eq:bordism-dagger} on the bordisms and which does not change the area maps.
Following the terminology of \cite[Sec.\,5.2]{Turaev:1994qi} we define:

\begin{definition}
	Let us assume that $\Sc$ is a $\dagger$-category.
	We call an {\aQFT} $\funZ:\Bordarea\to\Sc$ \textsl{Hermitian}, if 
	the diagram 
\begin{equation}
	\begin{tikzcd}
		\Bordarea\ar{d}[swap]{(-)^{\dagger}}\ar{r}{\funZ}&\Sc\ar{d}{(-)^{\dagger}}\\
		\Bordarea\ar{r}{\funZ}&\Sc
	\end{tikzcd}
	\label{eq:hermitean-aqft}
\end{equation}
	commutes.

\end{definition}

\subsection{Equivalence of {\aQFT}s and commutative RFAs}

Let $\Sbb_{n,m}:(\Sb)^{\sqcup m}\to(\Sb)^{\sqcup n}$ denote the $(n+m)$-holed sphere with $m$ ingoing 
and $n$ outgoing boundary components and
let $(\Sbb_{n,m},a):(\Sb)^{\sqcup m}\to(\Sb)^{\sqcup n}$ denote the corresponding bordism with area $a$.
Let us consider the following family of bordisms:
\begin{align}
	\bar{\eta}_a:=(\Sbb_{1,0},a)\ ,\quad\bar{\varepsilon}_a:=(\Sbb_{0,1},a)\ ,\quad\bar{\mu}_a:=(\Sbb_{1,2},a)\ ,\quad\bar{\Delta}_a:=(\Sbb_{2,1},a)\ ,
	\label{eq:bord_generators}
\end{align}
for $a\in\Rb_{>0}$.
In addition, it is useful to set
\be\label{eq:P_a-in-Bordarea}
	\bar{P}_a := \left( \Sbb_{1,1},a \right) \ .
\ee

\begin{lemma} \label{lem:rfa-in-bord}
	The morphisms in \eqref{eq:bord_generators} endow $\Sb\in\Bordarea$
	with the structure of a commutative regularised Frobenius algebra in $\Bordarea$.
\end{lemma}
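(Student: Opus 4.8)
The plan is to verify that the ten bordisms-with-area listed in \eqref{eq:bord_generators}, interpreted as morphisms in $\Bordarea$, satisfy exactly the defining relations of a commutative RFA (Definitions~\ref{def:reg-alg}, the dual for coalgebras, and \eqref{eq:rfa:frobrel}), together with the commutativity condition $\bar\mu_a\circ\sigma=\bar\mu_a$. The key observation, which I would state at the outset, is that in $\Bordarea$ two morphisms are equal precisely when the underlying bordisms are diffeomorphic rel boundary parametrisation \emph{and} the induced area maps on $\pi_0$ agree (diagram \eqref{eq:amapequiv}). Since glueing adds the areas of the glued components, every relation reduces to two checks: (i) the topological identity — the corresponding gluing of holed spheres is diffeomorphic to the holed sphere on the other side — which is the classical fact underlying the folk theorem that $\Sb$ carries a commutative Frobenius algebra structure in $\Bordna$ (see e.g.\ \cite{Abrams:1996fa}); and (ii) the bookkeeping that the total area matches. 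Point (ii) is immediate in each case because both sides are built by glueing the same number of pieces with area parameters constrained exactly so that the sums coincide (e.g.\ $a_1+a_2 = b_1+b_2 = a$).

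Concretely, I would proceed relation by relation. First, associativity \eqref{eq:ra:assoc}: both $\bar\mu_{a_1}\circ(\id\otimes\bar\mu_{a_2})$ and $\bar\mu_{b_1}\circ(\bar\mu_{b_2}\otimes\id)$ glue two copies of $\Sbb_{1,2}$ into the four-holed sphere $\Sbb_{1,3}$, with total area $a_1+a_2 = b_1+b_2$; diffeomorphism rel boundary is the standard pair-of-pants associativity, and the area maps agree, so they are equal as morphisms in $\Bordarea$, and this equals $(\Sbb_{1,3},a)$. Next, the unit relation \eqref{eq:ra:unit}: glueing $\Sbb_{1,0}$ (area $a_2$, resp.\ $b_2$) onto one input leg of $\Sbb_{1,2}$ (area $a_1$, resp.\ $b_1$) yields in both cases the cylinder $\Sbb_{1,1}$ with total area $a$; hence $\bar P_a = (\Sbb_{1,1},a)$ as defined in \eqref{eq:P_a-in-Bordarea}, and this is manifestly symmetric under swapping the two presentations. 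For the continuity/limit conditions in Definition~\ref{def:reg-alg}\eqref{def:reg-alg:2}: $\bar P_a = (\Sbb_{1,1},a)$ is a cylinder, so $\lim_{a\to0}\bar P_a$ is the zero-area cylinder, which is the identity on $\Sb$ in $\Bordarea$; and joint continuity of $(a_1,\dots,a_n)\mapsto \bar P_{a_1}\otimes\cdots\otimes\bar P_{a_n}$ in \eqref{eq:racont} is just continuity of the inclusion $(\Rb_{\ge0})^n \cong U_{(\Sb)^{\sqcup n}\times[0,1]} \hookrightarrow \Bordarea((\Sb)^{\sqcup n},(\Sb)^{\sqcup n})$, which holds by the very definition of the topology on the hom-sets. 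Then I would dualise all of this verbatim to obtain the regularised-coalgebra structure from $\bar\varepsilon_a,\bar\Delta_a$, using that $(-)^\dagger$ on $\Bordarea$ exchanges $\Sbb_{n,m}\leftrightarrow\Sbb_{m,n}$ without touching areas (so $\bar P'_a = \bar P_a$ automatically).

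It then remains to check the Frobenius relation \eqref{eq:rfa:frobrel} and commutativity. For \eqref{eq:rfa:frobrel}: the three composites $(\bar\mu_{a_1}\otimes\id)\circ(\id\otimes\bar\Delta_{a_2})$, $(\id\otimes\bar\mu_{b_1})\circ(\bar\Delta_{b_2}\otimes\id)$, and $\bar\Delta_{c_1}\circ\bar\mu_{c_2}$ all glue one copy of $\Sbb_{1,2}$ to one copy of $\Sbb_{2,1}$, producing the two-holed-in/two-holed-out sphere $\Sbb_{2,2}$; these three glued surfaces are pairwise diffeomorphic rel boundary (the classical Frobenius identity for surfaces), and all carry total area $a_1+a_2 = b_1+b_2 = c_1+c_2$, hence agree in $\Bordarea$. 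Commutativity $\bar\mu_a\circ\sigma = \bar\mu_a$ holds because precomposing $\Sbb_{1,2}$ with the braiding cylinder swaps the two input legs, and $\Sbb_{1,2}$ has a diffeomorphism rel the remaining data realising this swap, while the area (a single component, area $a$) is unchanged. Finally, I would remark that by Proposition~\ref{lem:rfa:altdef} one could alternatively present the argument via the non-degenerate pairing $\bar\beta_a := \bar\varepsilon_{a_1}\circ\bar\mu_{a_2} = (\Sbb_{0,2},a)$ with copairing $\bar\gamma_a := \bar\Delta_{a_1}\circ\bar\eta_{a_2} = (\Sbb_{2,0},a)$, checking the zig-zag \eqref{eq:rfa:nondeg} as the standard snake diffeomorphism of the cylinder with matching total area — but the direct check above is already complete. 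The only mild subtlety — and the single point I would be most careful about — is making sure that each claimed identity of bordisms really respects the boundary parametrisations (collars and in/out labelling) and not merely the underlying surfaces, so that equality holds in $\Bordarea$ and not just up to unparametrised diffeomorphism; this is where one invokes precisely the same care as in the proof of the classical 2d TQFT classification \cite{Abrams:1996fa,Kock:2004fa}, now with the trivial extra datum of the additive area map.
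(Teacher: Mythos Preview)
Your proposal is correct and follows essentially the same approach as the paper: reduce the algebraic identities to the classical Frobenius-algebra structure of $\Sb$ in $\Bordna$ together with additivity of area under glueing, then observe that $\bar P_a=(\Sbb_{1,1},a)$ has $a\to0$ limit the identity cylinder and that the continuity condition \eqref{eq:racont} is immediate from the definition of the topology on hom-sets. The paper's proof is a terse sketch of exactly this; you have simply spelled out the details (aside from the harmless slip ``ten'' for the four families of generators).
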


\begin{proof}
	Checking the algebraic relations \eqref{eq:ra:unit}, \eqref{eq:ra:assoc}, 
	\eqref{eq:rca:counit}, \eqref{eq:rca:coassoc} and \eqref{eq:rfa:frobrel} of an RFA
	and commutativity is analogous to the case of ordinary Frobenius algebras, see e.g.\ \cite[Sec.\,3.1]{Kock:2004fa}.
	The morphism $P_a$ from 
	Part~\ref{def:reg-alg:2} in 
	Definition~\ref{def:reg-alg} 
	is now given by $\bar P_a$ in \eqref{eq:P_a-in-Bordarea}.	
	
	The limit $\lim_{a\to0}\bar{P}_a=\id_{\Sb}$ is immediate as
the identities in $\Bordarea$ are cylinders with 0 area.
	The continuity condition in \eqref{eq:racont} follows equally directly	
	from the definition of the topology on hom-sets in $\Bordarea$.
\end{proof}

{}From the above lemma it is maybe not surprising that {\aQFT}s with values in $\Sc$ are in one-to-one correspondence with
commutative RFAs in $\Sc$, in complete analogy to topological field theory. To give the precise statement and the equivalence functors, we need to introduce some notation.

Let $A\in\Sc$ be a commutative RFA. 
Let $a\in\Rb_{>0}$, $\mu_a^{(0)}:=\eta_a$, $\mu_a^{(1)}:=P_a$, $\mu_a^{(2)}:=\mu_a$ and for $n\ge3$ 
\begin{align}
	\mu_a^{(n)}:=\mu_{a/2}^{(n-1)}\circ(\id_{A^{\otimes(n-2)}}\otimes\mu_{a/2})\ .
	\label{eq:nfold-mult}
\end{align}
Let $\Delta_a^{(0)}:=\eps_a$, $\Delta_a^{(1)}:=P_a$, $\Delta_a^{(2)}:=\Delta_a$ and for $n\ge3$ 
\begin{align}
\Delta_a^{(n)}:=(\id_{A^{\otimes(n-2)}}\otimes\Delta_{a/2})\circ\Delta_{a/2}^{(n-1)}\ \ .
	\label{eq:nfold-comult}
\end{align}
We will use the same notation for the structure maps 
\eqref{eq:bord_generators} of the commutative RFA $\Sb\in\Bordarea$.

For an object $S\in\Bordarea$ let
\begin{align}
	\funZ^A(S):=\bigotimes_{x\in \pi_0(S)}A^{(x)},
	\label{eq:ZA-on-objects}
\end{align}
where for every $x\in \pi_0(S)$ $A^{(x)}=A$ and the superscript keeps track of tensor factors.

Let $S,T\in\Bordarea$, $(\Sigma_{g,b},a)\in\Bordarea(S,T)$ a connected bordism with area $a$
whose underlying surface is of genus $g$ and has $b=|\pi_0(S)|+|\pi_0(T)|$ many boundary components.
We say that $(\Sigma_{g,b},a)$ is of \textsl{normal form}, if
\begin{align}
	(\Sigma_{g,b},a)=\left[ S\xrightarrow{\psi_S}(\Sb)^{\sqcup|\pi_0(S)|}\xrightarrow{\bar{\mu}_{a_1}^{(|\pi_0(S)|)}}\Sb\xrightarrow{(\bar{\Delta}_{a_2/(2g)}\circ\bar{\mu}_{a_2/(2g)})^g}\Sb\xrightarrow{\bar{\Delta}_{a_3}^{(|\pi_0(T)|)}}(\Sb)^{\sqcup|\pi_0(T)|}\xrightarrow{\psi_T}T \right]
	\label{eq:normal-form}
\end{align}
for some $a_1,a_2,a_3\in\Rb_{>0}$ such that $a_1+a_2+a_3=a$ and 
orientation preserving diffeomorphisms
$\psi_S$ and $\psi_T$.
Every connected bordism with area is equivalent to 
a bordism with area of normal form with the same area.
By forgetting about the area, this is the normal form for ordinary bordisms,
see e.g.\ \cite[Sec.\,1.4.16]{Kock:2004fa}.
Let us pick a representative of $(\Sigma_{g,b},a)$ which is of normal form and let
\begin{align}
	\funZ^A(\Sigma_{g,b},a):=
	\left[ \funZ^A(S)\xrightarrow{\Psi_S}A^{\otimes|\pi_0(S)|}\xrightarrow{\mu_{a_1}^{(|\pi_0(S)|)}}A\xrightarrow{(\Delta_{a_2/(2g)}\circ\mu_{a_2/(2g)})^g}A\xrightarrow{\Delta_{a_3}^{(|\pi_0(T)|)}}A^{\otimes|\pi_0(T)|}\xrightarrow{\Psi_T}\funZ^A(T) \right]\ ,
	\label{eq:ZA-on-conn-morphisms}
\end{align}
where $\Psi_S$ and $\Psi_T$ denote the permutation of 
tensor factors induced by the bijections $\psi_S$ and $\psi_T$ respectively.
For $g=0$ the morphisms in the middle of the compositions in 
\eqref{eq:normal-form} and \eqref{eq:ZA-on-conn-morphisms} are $\id_{\Sb}$ and $\id_A$ respectively.
For a bordism with area $(\Sigma,\Ac)$, where $\Sigma$ is not necessarily connected we define 
\begin{align}
	\funZ^A(\Sigma,\Ac):=\otimes_{c\in\pi_0(\Sigma)}\funZ^A(c,\Ac(c))\ .
	\label{eq:ZA-on-morph}
\end{align}

\begin{lemma}\label{lem:aqft-crfa}\leavevmode
	\begin{enumerate}
		\item Let 
			$\funZ\in\AQFT{\Sc}$
			Then $\funZ(\Sb)$, with structure maps given by 
			the images of the bordisms \eqref{eq:bord_generators} under $\funZ$,			
			is a commutative RFA.\label{lem:aqft-crfa:1}
			
		\item Let $A\in\Sc$ be a commutative RFA. Then the assignments in
			\eqref{eq:ZA-on-objects} and \eqref{eq:ZA-on-morph} define an {\aQFT}
			$\funZ^A$.\label{lem:aqft-crfa:2}
	\end{enumerate}
\end{lemma}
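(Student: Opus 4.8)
\textbf{Proof plan for Lemma~\ref{lem:aqft-crfa}.}

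\emph{Part~\ref{lem:aqft-crfa:1}.} This part is essentially immediate from Lemma~\ref{lem:rfa-in-bord} together with the functoriality and continuity of $\funZ$. The plan is: a symmetric monoidal functor preserves all the algebraic structure, so applying $\funZ$ to the commutative RFA structure on $\Sb\in\Bordarea$ (the bordisms $\bar\eta_a,\bar\eps_a,\bar\mu_a,\bar\Delta_a,\bar P_a$ of \eqref{eq:bord_generators} and \eqref{eq:P_a-in-Bordarea}) yields families of morphisms on $\funZ(\Sb)$ satisfying the algebraic relations \eqref{eq:ra:unit}, \eqref{eq:ra:assoc}, \eqref{eq:rca:counit}, \eqref{eq:rca:coassoc}, \eqref{eq:rfa:frobrel}, and commutativity. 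It remains only to check the topological conditions in Definition~\ref{def:reg-alg}: that $P_a:=\funZ(\bar P_a)$ satisfies $\lim_{a\to0}P_a=\id$ and that the maps $(a_1,\dots,a_n)\mapsto P_{a_1}\otimes\cdots\otimes P_{a_n}$ are jointly continuous. The first follows since $\bar P_0$ is the identity cylinder, $\funZ$ is a functor, and $\funZ_{\Sb,\Sb}$ is continuous (so $\lim_{a\to0}\funZ(\bar P_a)=\funZ(\bar P_0)=\id_{\funZ(\Sb)}$). For the joint continuity, note that $P_{a_1}\otimes\cdots\otimes P_{a_n}=\funZ\big((\bar P_{a_1},\dots,\bar P_{a_n})\big)$, i.e.\ the image under $\funZ_{(\Sb)^{\sqcup n},(\Sb)^{\sqcup n}}$ of the disjoint union of $n$ cylinders with areas $a_1,\dots,a_n$; continuity of $\funZ$ on this hom-set and the definition of the topology on $\Bordarea((\Sb)^{\sqcup n},(\Sb)^{\sqcup n})$ give joint continuity. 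The analogous statements for the coalgebra $P'_a$ hold for the same reason, and $P_a=P'_a$ holds in $\Bordarea$ (both are $\bar P_a$), hence it holds after applying $\funZ$. So $\funZ(\Sb)$ is a commutative RFA.

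\emph{Part~\ref{lem:aqft-crfa:2}.} Here the work is to show that $\funZ^A$ as defined by \eqref{eq:ZA-on-objects}, \eqref{eq:ZA-on-conn-morphisms}, \eqref{eq:ZA-on-morph} is (i) well-defined, i.e.\ independent of the chosen normal-form representative, (ii) a symmetric monoidal functor, and (iii) continuous on hom-sets. For (i) one argues exactly as in the topological case (see \cite[Ch.\,1--3]{Kock:2004fa}): two normal forms of the same connected bordism with area $a$ differ by a sequence of moves (sliding handles, using the commutative Frobenius relations, reparametrising the boundary), each of which preserves the total area and translates, via the RFA axioms for $A$ and commutativity, into an equality of the composites \eqref{eq:ZA-on-conn-morphisms}; the key point is that all the relations used carry over to the regularised setting with area parameters that only depend on their sum, which is exactly what Definition~\ref{def:reg-alg}, Definition~\ref{def:rfa} and commutativity of $A$ guarantee, together with the semigroup property $P_a\circ P_b=P_{a+b}$ and the relations in Lemma~\ref{lem:ra:properties}. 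For (ii), functoriality under gluing: given two composable bordisms with area, put each in normal form and check that the composite of the two normal-form composites equals the normal-form composite of the glued bordism with the summed area; again this is the standard TQFT computation, now with the area bookkeeping handled by the fact that $\mu_a,\Delta_a,P_a,\eta_a,\eps_a$ only depend on sums of areas. Compatibility with the symmetric monoidal structure (identities are zero-area cylinders, which go to identities since $P_0=\id$; disjoint union goes to $\otimes$; the braiding cylinders go to $\sigma$) is part of the same bookkeeping. For (iii), by Lemma~\ref{lem:enough-to-check-cylinders} it suffices to check continuity of $\funZ^A$ on cylinders $S\times[0,1]$ with area, and the value there is a tensor product of maps $P_{a_x}$ over $x\in\pi_0(S)$; continuity in the $a_x$ then follows from condition~\ref{def:reg-alg:2}(b) in Definition~\ref{def:reg-alg} (joint continuity of $\bigotimes P_{a_i}$).

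\emph{Main obstacle.} The genuinely new content compared with the topological case is bookkeeping of the area parameters in the normal-form argument: one must verify that the specific distribution of areas in \eqref{eq:normal-form}/\eqref{eq:ZA-on-conn-morphisms} ($a_1$ on the ``pair of pants going down'', $a_2/(2g)$ on each handle, $a_3$ on the ``pair of pants going up'') gives the \emph{same} morphism regardless of how the total area $a$ is split, and that gluing two such normal forms and re-normalising redistributes areas consistently. The tool for this is precisely Lemma~\ref{lem:ra:properties} (parts~\ref{lem:ra:semigrp} and~\ref{lem:ra:mupacommute}) and its coalgebra dual, which let one move $P_a$'s freely across $\mu$'s and $\Delta$'s while only the sum of the subscripts matters; I expect the argument to be routine but tedious, and it is worth isolating one or two lemmas of the form ``any composite of $\mu_a$'s, $\Delta_a$'s, $\eta_a$'s, $\eps_a$'s and $P_a$'s built along a fixed graph depends only on the total area'' to avoid case-by-case checking. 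Everything else is a direct transcription of the Dijkgraaf--Abrams classification of 2d TQFTs, cf.\ \cite{Dijkgraaf:1989phd, Abrams:1996fa}.
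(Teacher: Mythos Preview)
Your proposal is correct and follows essentially the same approach as the paper. For Part~\ref{lem:aqft-crfa:1} both you and the paper appeal to Lemma~\ref{lem:rfa-in-bord} plus continuity of $\funZ$; for Part~\ref{lem:aqft-crfa:2} both defer the algebraic/functoriality argument to the classical TQFT case (Abrams). The only minor difference is in the continuity step of Part~\ref{lem:aqft-crfa:2}: you reduce to cylinders via Lemma~\ref{lem:enough-to-check-cylinders} and then invoke condition~\eqref{eq:racont}, whereas the paper invokes Lemma~\ref{lem:ra:properties}\,(\ref{lem:ra:generatedcat}) directly, which already gives joint continuity for \emph{every} morphism built from $\mu_a,\eta_a$ (and by the dual argument $\Delta_a,\eps_a$), not just cylinders. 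Your route is a valid alternative and the two are closely related (the proof of Lemma~\ref{lem:enough-to-check-cylinders} is itself modeled on Lemma~\ref{lem:ra:properties}\,(\ref{lem:ra:generatedcat})), but the paper's is slightly more economical since it avoids the detour through cylinders.
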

\begin{proof}~\\
\noindent
	\textsl{Part~\ref{lem:aqft-crfa:1}} 
	follows directly from Lemma~\ref{lem:rfa-in-bord}
	and the continuity condition for $\funZ$.

\medskip
	
\noindent
	\textsl{Part~\ref{lem:aqft-crfa:2}:}
	Proving that $\funZ^A$ is a symmetric monoidal functor is similar to the
	case of topological field theories $\Bordna\to\Sc$ \cite[Thm.\,3]{Abrams:1996fa},
	and we omit it.
	The continuity condition \eqref{eq:aqft:cont} amounts to
	Lemma~\ref{lem:ra:properties} Part~\ref{lem:ra:generatedcat}.
\end{proof}

Now consider the functor 
\begin{align}
\begin{aligned}
	G: \AQFT{\Sc} &\to\cRFrob{\Sc}\\ T&\mapsto T(\Sb), \\
	\left(T\xrightarrow{\theta}T'\right)&\mapsto
	\left( T(\Sb)\xrightarrow{\theta_{T(\Sb)}}T'(\Sb) \right).
\end{aligned}\label{eq:aqftrfaequiv}
\end{align}

\begin{theorem}
	The functor $G$ defined in \eqref{eq:aqftrfaequiv} is an equivalence of categories.
	\label{thm:aqftrfaequiv}
\end{theorem}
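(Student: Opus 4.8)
The plan is to produce an explicit quasi-inverse to $G$ and then verify that the two composites are naturally isomorphic to the respective identity functors. The quasi-inverse is the functor $H:\cRFrob{\Sc}\to\AQFT{\Sc}$ which on objects sends a commutative RFA $A$ to the {\aQFT} $\funZ^A$ of Lemma~\ref{lem:aqft-crfa}, and on a morphism $f:A\to B$ of commutative RFAs sends $f$ to the natural transformation $\funZ^f:\funZ^A\to\funZ^B$ whose component at $S\in\Bordarea$ is the permutation-twisted tensor power $f^{\otimes|\pi_0(S)|}$ (in the bookkeeping of \eqref{eq:ZA-on-objects}). First I would check that $H$ is a well-defined functor: functoriality $H(g\circ f)=H(g)\circ H(f)$ and $H(\id_A)=\id_{\funZ^A}$ is immediate, and naturality of $\funZ^f$ with respect to a bordism with area brought into normal form \eqref{eq:normal-form} follows because $f$ intertwines $\mu_a,\eta_a,\Delta_a,\eps_a$ and hence all the composite structure maps $\mu_a^{(n)},\Delta_a^{(n)}$ occurring in \eqref{eq:ZA-on-conn-morphisms}.

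\textbf{One composite is the identity.} I claim $G\circ H=\id_{\cRFrob{\Sc}}$, essentially on the nose. By \eqref{eq:ZA-on-objects} we have $\funZ^A(\Sb)=A$ as an object, and since each generating bordism $\bar\eta_a,\bar\varepsilon_a,\bar\mu_a,\bar\Delta_a$ of \eqref{eq:bord_generators} is already in normal form with trivial ``middle'' part, evaluating $\funZ^A$ on it returns $\eta_a,\varepsilon_a,\mu_a,\Delta_a$ respectively; thus $G(H(A))=A$ with precisely its original RFA structure. On morphisms $G(H(f))=f$ for the same reason. So $G$ is (strictly) split epi on objects and faithful; it remains to show it is essentially surjective and full, which is the content of the other composite.

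\textbf{The substantive step: $H\circ G\cong\id_{\AQFT{\Sc}}$.} Fix $T\in\AQFT{\Sc}$ and write $A:=G(T)=T(\Sb)$ with the RFA structure of Lemma~\ref{lem:aqft-crfa}\,(1). For an object $S\in\Bordarea$ choose an orientation-preserving diffeomorphism $\psi_S:S\xrightarrow{\sim}(\Sb)^{\sqcup|\pi_0(S)|}$ and define $\Phi^T_S:\funZ^A(S)=\bigotimes_{x\in\pi_0(S)}A^{(x)}\to T(S)$ as the composite of the (inverse of the) monoidal structure isomorphism of $T$ with $T(\psi_S)^{-1}$. One checks $\Phi^T_S$ is independent of the choice of $\psi_S$: two choices differ by a permutation of the circle components, and compatibility of $T$ with the symmetric structures of $\Bordarea$ and $\Sc$ makes the two composites agree; this is where the symmetry coherence is used. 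Naturality of $\Phi^T$ in $S$ reduces, via the normal-form decomposition \eqref{eq:normal-form}, to the statement that $T$ applied to the pieces $\bar\mu_{a_1}^{(\cdot)}$, $(\bar\Delta\circ\bar\mu)^g$, $\bar\Delta_{a_3}^{(\cdot)}$ equals the corresponding composite of structure maps of $A=T(\Sb)$ appearing in \eqref{eq:ZA-on-conn-morphisms} — which holds by the very definition of that RFA structure; for disconnected bordisms one uses monoidality and \eqref{eq:ZA-on-morph}. Continuity of $\Phi^T$ is inherited from that of $T$, and naturality of $\Phi$ in $T$ (for $\theta:T\to T'$) follows because $\theta$ is monoidal. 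Each $\Phi^T_S$ is invertible since the structure morphisms of $T$ and $T(\psi_S)$ are. Hence $G$ is fully faithful and essentially surjective, so it is an equivalence.

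\textbf{Main obstacle.} The argument rests entirely on the normal-form presentation of connected surfaces and on the fact that $\funZ^A$ is well defined independently of the chosen normal form — exactly the classification input underlying Lemma~\ref{lem:aqft-crfa}, which in the topological case is \cite[Thm.\,3]{Abrams:1996fa} and \cite[Sec.\,1.4.16]{Kock:2004fa}. The only genuinely new point relative to that bookkeeping is that one must track the area split $a_1+a_2+a_3=a$ and invoke continuity (Lemma~\ref{lem:ra:properties}\,(4), Lemma~\ref{lem:enough-to-check-cylinders}) whenever comparing values of $\funZ^A$ on equivalent normal forms; the relevant area-moving identities are precisely the RFA axioms (associativity, the Frobenius relation with split areas, $P_a=P_a'$), so no further work is needed. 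I therefore expect no serious difficulty beyond careful diagram-chasing; the bulk of the proof is routine given the earlier lemmas.
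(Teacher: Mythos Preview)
Your proposal is correct and follows essentially the same approach as the paper: define the quasi-inverse $H$ via Lemma~\ref{lem:aqft-crfa}\,(\ref{lem:aqft-crfa:2}), observe $G\circ H\cong\id_{\cRFrob{\Sc}}$, and handle $H\circ G\cong\id_{\AQFT{\Sc}}$ by the standard normal-form argument from the topological case, with area-tracking handled by the RFA axioms. The paper's own proof is in fact much terser than yours --- it simply declares $G\circ H\cong\id$ easy and defers the remainder entirely to \cite[Thm.\,3]{Abrams:1996fa} --- so you have spelled out precisely the details the paper omits; one small quibble is that ``continuity of $\Phi^T$'' is not quite the right phrase (the components $\Phi^T_S$ are fixed morphisms, not area-dependent families), but this is cosmetic.
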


\begin{proof}
	Let the inverse functor $H$ 
	be given by the assignments in Lemma~\ref{lem:aqft-crfa}
	Part~\ref{lem:aqft-crfa:2}. Then it is easy to see that 
	$G\circ H\cong \id_{\cRFrob{\Sc}}$.
	The rest of the proof is very similar to the proof of \cite[Thm.\,3]{Abrams:1996fa},
	on the equivalence between 2-dimensional topological field theories and commutative Frobenius algebras, and we omit it.
\end{proof}

\begin{remark}\label{rem:aqft-all-zero-area-limits}
	If all zero area limits of 
	$\funZ\in\AQFT{\Hilb}$ 
	exist, then the RFA $\funZ(\Sb)$ 
	is finite dimensional. This follows from 
	Theorem~\ref{thm:aqftrfaequiv} and Proposition~\ref{prop:rfafindim}.
\end{remark}

\begin{proposition}\label{prop:aqft-sym-mon-cat}
	Assume that $\Sc$ is a symmetric monoidal category and that 
	the conditions of Proposition~\ref{prop:rfa-tensorprod} hold for every pair 
	$A_1,A_2\in\cRFrob{\Sc}$. Then
	\begin{enumerate}
		\item the categories $\cRFrob{\Sc}$ and $\AQFT{\Sc}$ 
			are symmetric monoidal,
		\item the functor $G$ in \eqref{eq:aqftrfaequiv} is an equivalence of symmetric monoidal categories.
	\end{enumerate}
\end{proposition}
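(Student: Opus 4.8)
The plan is to combine the equivalence $G$ of Theorem~\ref{thm:aqftrfaequiv} with the monoidal structures already established on both sides. First I would recall that under the hypothesis of Proposition~\ref{prop:rfa-tensorprod} (which we are assuming here for every pair $A_1,A_2\in\cRFrob{\Sc}$), the discussion after Proposition~\ref{prop:rfa-tensorprod} already gives $\RFrob{\Sc}$, and hence its full subcategory $\cRFrob{\Sc}$ (the tensor product of two commutative RFAs is commutative, as one checks immediately from \eqref{eq:tensor-product-RFA} and the symmetry axioms), the structure of a symmetric monoidal category with tensor unit the trivial RFA $\Ib$ and symmetry inherited from $\Sc$. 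So part~(1) for $\cRFrob{\Sc}$ needs only the remark that commutativity is preserved.

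For the statement about $\AQFT{\Sc}$, the key observation is that the continuity condition \eqref{eq:aqft:tensor-cont} needed to define $\funZ_1\otimes\funZ_2$ is, via Lemma~\ref{lem:enough-to-check-cylinders}, equivalent to joint continuity of $(a_x)_{x\in\pi_0(S)}\mapsto\bigotimes_x(P_{a_x}^{\funZ_1(\Sb)}\otimes P_{a_x}^{\funZ_2(\Sb)})$ applied to the appropriate objects, which is precisely the hypothesis \eqref{eq:rfamoncatcond} of Proposition~\ref{prop:rfa-tensorprod}. Hence under our assumption \eqref{eq:aqft:tensor-cont} holds for every pair of {\aQFT}s, so by the discussion following the definition of $\AQFT{\Sc}$ the category $\AQFT{\Sc}$ is symmetric monoidal, with tensor unit the {\aQFT} corresponding to $\Ib$ and symmetry given componentwise. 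This gives part~(1).

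For part~(2) I would show that the equivalence $G:\AQFT{\Sc}\to\cRFrob{\Sc}$ of Theorem~\ref{thm:aqftrfaequiv} is (strong) symmetric monoidal. On objects this is the statement $G(\funZ_1\otimes\funZ_2)=(\funZ_1\otimes\funZ_2)(\Sb)=\funZ_1(\Sb)\otimes\funZ_2(\Sb)=G(\funZ_1)\otimes G(\funZ_2)$, which holds on the nose because evaluation on the single circle $\Sb$ sends $\otimes$ of {\aQFT}s (defined pointwise) to $\otimes$ in $\Sc$, and because one checks from \eqref{eq:aqftrfaequiv} and \eqref{eq:tensor-product-RFA} that the induced RFA structure on $(\funZ_1\otimes\funZ_2)(\Sb)$ is exactly the tensor-product RFA structure of Proposition~\ref{prop:rfa-tensorprod} --- here one uses that $\Sb\sqcup\Sb$ is the object with which $\funZ_i$ is evaluated on the pair-of-pants, so the structure maps $\bar\mu_a$, $\bar\Delta_a$ etc.\ of $\Sb$ map under $\funZ_1\otimes\funZ_2$ precisely to the formulas in \eqref{eq:tensor-product-RFA}. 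The coherence isomorphisms (associators, unitors, the unit comparison) are identities or are inherited directly from $\Sc$, so the monoidal and symmetry hexagon/triangle coherence diagrams for $G$ reduce to those in $\Sc$. Since $G$ is already an equivalence of categories and is now strong symmetric monoidal, it is an equivalence of symmetric monoidal categories, and the inverse functor $H$ inherits a symmetric monoidal structure automatically.

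The only genuinely non-routine step is verifying that the RFA structure which Theorem~\ref{thm:aqftrfaequiv} (via Lemma~\ref{lem:aqft-crfa}\,\ref{lem:aqft-crfa:1}) puts on $(\funZ_1\otimes\funZ_2)(\Sb)$ coincides, including all area parameters, with the tensor-product RFA of Proposition~\ref{prop:rfa-tensorprod}; this is a diagram chase identifying $\funZ_i$ evaluated on $\bar\mu_a,\bar\Delta_a,\bar\eta_a,\bar\eps_a$ with $\mu^{A_i}_a$ etc., together with the fact that the pair-of-pants in $\Bordarea$ carrying a single area label $a$ decomposes, after forgetting area and using the normal form, into the building blocks that produce the $\sigma$-twisted product in \eqref{eq:tensor-product-RFA}. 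Everything else --- checking that commutativity survives the tensor product, and that the coherence data are the evident ones --- is mechanical and I would state it as such rather than spell it out, exactly in the spirit of the rest of Section~\ref{sec:aqft-with-and-without}.

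\begin{proof}
	Under the stated hypothesis, Proposition~\ref{prop:rfa-tensorprod} applies to all pairs in $\cRFrob{\Sc}$, and the tensor product \eqref{eq:tensor-product-RFA} of two commutative RFAs is again commutative by the symmetry axioms, so the discussion following Proposition~\ref{prop:rfa-tensorprod} endows $\cRFrob{\Sc}$ with a symmetric monoidal structure (tensor unit the trivial RFA, symmetry inherited from $\Sc$). By Lemma~\ref{lem:enough-to-check-cylinders} the continuity condition \eqref{eq:aqft:tensor-cont} for $\funZ_1,\funZ_2\in\AQFT{\Sc}$ is equivalent to joint continuity of the maps $(a_1,\dots,a_n)\mapsto\bigotimes_{i=1}^n\bigl(P^{\funZ_1(\Sb)}_{a_i}\otimes P^{\funZ_2(\Sb)}_{a_i}\bigr)$, which is exactly \eqref{eq:rfamoncatcond} for the pair $\funZ_1(\Sb),\funZ_2(\Sb)$ and hence holds. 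Thus \eqref{eq:aqft:tensor-cont} is satisfied for every pair of {\aQFT}s, and by the discussion after the definition of $\AQFT{\Sc}$ the category $\AQFT{\Sc}$ is symmetric monoidal. This proves~(1).

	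For~(2), the equivalence $G$ from Theorem~\ref{thm:aqftrfaequiv} is strong symmetric monoidal. On objects, $G(\funZ_1\otimes\funZ_2)=(\funZ_1\otimes\funZ_2)(\Sb)=\funZ_1(\Sb)\otimes\funZ_2(\Sb)$, and by comparing the images under $\funZ_1\otimes\funZ_2$ of the structure bordisms \eqref{eq:bord_generators} of $\Sb$ with the formulas \eqref{eq:tensor-product-RFA} one checks that the RFA structure on $(\funZ_1\otimes\funZ_2)(\Sb)$ given by Lemma~\ref{lem:aqft-crfa}\,\ref{lem:aqft-crfa:1} equals the tensor-product RFA $\funZ_1(\Sb)\otimes\funZ_2(\Sb)$ of Proposition~\ref{prop:rfa-tensorprod}; here one uses that $\funZ_i$ is evaluated on $\Sb\sqcup\Sb$ on the pair of pants, so that the braiding appearing in \eqref{eq:tensor-product-RFA} comes from the symmetry of $\Sc$. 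The coherence isomorphisms of $G$ are the evident ones inherited from $\Sc$ (they are identities on the nose for the strict parts and equal $\sigma$ for the symmetry), so all coherence diagrams for $G$ reduce to those of $\Sc$. An equivalence of categories which is strong symmetric monoidal is an equivalence of symmetric monoidal categories, whence~(2).
\end{proof}
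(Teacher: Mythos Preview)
Your proof is correct and follows essentially the same outline as the paper's. The one minor difference is in how you establish that $\AQFT{\Sc}$ is monoidal: the paper argues via the equivalence of Theorem~\ref{thm:aqftrfaequiv} (transporting the monoidal structure from $\cRFrob{\Sc}$), whereas you verify the continuity condition \eqref{eq:aqft:tensor-cont} directly by reducing to cylinders via Lemma~\ref{lem:enough-to-check-cylinders} and matching with the hypothesis \eqref{eq:rfamoncatcond}. Both routes are short and valid; yours is slightly more explicit, the paper's slightly more conceptual. For part~(2) you spell out more detail than the paper, which simply states that $G$ being symmetric monoidal is ``easy to see''.
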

\begin{proof}
	As we already discussed after Proposition~\ref{prop:rfa-tensorprod}, $\cRFrob{\Sc}$ is a symmetric monoidal category.
	The equivalence from Theorem~\ref{thm:aqftrfaequiv} shows that the tensor product of {\aQFT}s in \eqref{eq:aqft:tensor-cont} equally satisfies the continuity condition. Hence $\AQFT{\Sc}$ 
is monoidal (and clearly symmetric).
It is easy to see that the equivalence $G$ is symmetric monoidal.
\end{proof}

Combining the above proposition with Proposition~\ref{prop:rfob-hilb-vect-mon-cat}, 
we get:

\begin{corollary}
The categories $\AQFT{\Vectfd}$ and $\AQFT{\Hilb}$ are symmetric monoidal.
\end{corollary}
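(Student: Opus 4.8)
The plan is to deduce the corollary from Proposition~\ref{prop:aqft-sym-mon-cat} together with Proposition~\ref{prop:rfob-hilb-vect-mon-cat}. By Proposition~\ref{prop:aqft-sym-mon-cat}, it suffices to check that the hypotheses of Proposition~\ref{prop:rfa-tensorprod} hold for every pair of commutative RFAs $A_1,A_2$ in $\Sc$, for $\Sc=\Vectfd$ and for $\Sc=\Hilb$ respectively. Concretely, one must verify that the assignments
\begin{align}
	(a_1,\dots,a_n)\mapsto P_{a_1}^{A_1}\otimes P_{a_1}^{A_2}\otimes\dots\otimes P_{a_n}^{A_1}\otimes P_{a_n}^{A_2}
	\nonumber
\end{align}
are jointly continuous for every $n\ge1$.

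For $\Sc=\Vectfd$ this is immediate: the tensor product of finite-dimensional vector spaces is jointly continuous in both arguments (it is bilinear and everything is finite-dimensional), and composition is continuous, so any finite composite/tensor of the continuous maps $a\mapsto P_a^{A_i}$ is jointly continuous. For $\Sc=\Hilb$ the key input is Corollary~\ref{cor:semigrp}, which itself rests on Lemma~\ref{lem:semigrp}: since each $a\mapsto P_a^{A_i}$ is continuous in the strong operator topology (part of the RFA data), iterating Lemma~\ref{lem:semigrp} shows that the $(2n)$-fold tensor product of the maps $P^{A_1}$ and $P^{A_2}$ is jointly continuous. Thus the hypotheses of Proposition~\ref{prop:rfa-tensorprod} are satisfied in both cases, and Proposition~\ref{prop:aqft-sym-mon-cat} yields that $\cRFrob{\Vectfd}$, $\cRFrob{\Hilb}$, $\AQFT{\Vectfd}$ and $\AQFT{\Hilb}$ are all symmetric monoidal, which is the claim.

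There is really no serious obstacle here; the work has all been done in the preceding lemmas. The only minor point of care is to note that Proposition~\ref{prop:rfob-hilb-vect-mon-cat} already asserts that $\RFrob{\Hilb}$ and $\RFrob{\Vectfd}$ are symmetric monoidal, and that $\cRFrob$ is a full subcategory closed under the tensor product (the tensor product of commutative RFAs is commutative, by inspection of the formulas \eqref{eq:tensor-product-RFA} and commutativity of the braiding), so that restricting gives the symmetric monoidal structure on $\cRFrob$ needed as input to Proposition~\ref{prop:aqft-sym-mon-cat}. Hence the proof is a one-line invocation:

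\begin{proof}
	By Corollary~\ref{cor:semigrp} and Lemma~\ref{lem:semigrp} (iterated), the continuity condition \eqref{eq:rfamoncatcond} of Proposition~\ref{prop:rfa-tensorprod} holds for every pair of RFAs in $\Hilb$, and in $\Vectfd$ it holds since the tensor product there is jointly continuous. In particular it holds for every pair of commutative RFAs. The claim now follows from Proposition~\ref{prop:aqft-sym-mon-cat} together with Proposition~\ref{prop:rfob-hilb-vect-mon-cat}.
\end{proof}
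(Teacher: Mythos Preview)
Your proof is correct and follows essentially the same approach as the paper: the paper simply states that the corollary follows by combining Proposition~\ref{prop:aqft-sym-mon-cat} with Proposition~\ref{prop:rfob-hilb-vect-mon-cat}, and you have unpacked this in slightly more detail (including the observation that commutativity is preserved under the tensor product, which the paper leaves implicit).
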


\begin{corollary}
	The restriction of the functor $G$ in \eqref{eq:aqftrfaequiv}
	to the category of Hermitian {\aQFT}s with values in $\Sc$ 
	gives an equivalence to the category of $\dagger$-RFAs in $\Sc$.
	\label{cor:hermaqftrfaequiv}
\end{corollary}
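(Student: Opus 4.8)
The plan is to reuse the equivalence $G:\AQFT{\Sc}\to\cRFrob{\Sc}$ of Theorem~\ref{thm:aqftrfaequiv} (with quasi-inverse $H:A\mapsto\funZ^A$) and merely check that it matches up the two $\dagger$-structures. Since the Hermitian {\aQFT}s and the (commutative) $\dagger$-RFAs form \emph{full} subcategories of $\AQFT{\Sc}$ and $\cRFrob{\Sc}$ respectively, it suffices to prove two things: (i) $G$ sends a Hermitian {\aQFT} to a $\dagger$-RFA, and (ii) $H$ sends a commutative $\dagger$-RFA to a Hermitian {\aQFT}. Granting (i) and (ii), the natural isomorphisms $G\circ H\cong\id$ and $H\circ G\cong\id$ restrict to these full subcategories, so $G$ and $H$ restrict to quasi-inverse functors between them; fully faithfulness is inherited, and essential surjectivity of the restricted $G$ onto the $\dagger$-RFAs holds because $G(H(A))\cong A$ is then an isomorphism living in the full subcategory $\dRFrob{\Sc}$.

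For (i) I would first record that the generating object $\Sb\in\Bordarea$ of Lemma~\ref{lem:rfa-in-bord} is itself a $\dagger$-RFA. Indeed, the $\dagger$ on $\Bordarea$ reverses the orientation of a bordism and swaps its ingoing and outgoing boundary parametrisations while leaving the area map fixed, cf.\ \eqref{eq:bordism-dagger}; geometrically this sends $\bar\mu_a=(\Sbb_{1,2},a)$ to $\bar\Delta_a=(\Sbb_{2,1},a)$, $\bar\eta_a=(\Sbb_{1,0},a)$ to $\bar\varepsilon_a=(\Sbb_{0,1},a)$, and $\bar P_a=(\Sbb_{1,1},a)$ to itself. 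Hence $\bar\mu_a^{\dagger}=\bar\Delta_a$, $\bar\eta_a^{\dagger}=\bar\varepsilon_a$ and $\bar P_a^{\dagger}=\bar P_a$. Now for a Hermitian {\aQFT} $\funZ$, applying the commuting square \eqref{eq:hermitean-aqft} to these four bordisms yields $\funZ(\bar\mu_a)^{\dagger}=\funZ(\bar\Delta_a)$ and $\funZ(\bar\eta_a)^{\dagger}=\funZ(\bar\varepsilon_a)$, i.e.\ $G(\funZ)=\funZ(\Sb)$ is a $\dagger$-RFA.

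For (ii) I would show directly that $\funZ^A$ intertwines the daggers, i.e.\ $\funZ^A\big((\Sigma,\Ac)^{\dagger}\big)=\funZ^A(\Sigma,\Ac)^{\dagger}$. As both sides are tensor-multiplicative over connected components and $\funZ^A$ is independent of the chosen representative, I may take $(\Sigma,\Ac)=(\Sigma_{g,b},a)$ connected and in the normal form \eqref{eq:normal-form}. Taking the dagger interchanges the incoming and outgoing circles, so $(\Sigma_{g,b},a)^{\dagger}$ is again of normal form with the data attached to $\pi_0(S)$ and $\pi_0(T)$ exchanged, the composition reversed, the diffeomorphisms replaced by their inverses, and the handle factor $\big(\bar\Delta_{a_2/(2g)}\circ\bar\mu_{a_2/(2g)}\big)^g$ sent to $\big(\bar\mu_{a_2/(2g)}^{\dagger}\circ\bar\Delta_{a_2/(2g)}^{\dagger}\big)^g=\big(\bar\Delta_{a_2/(2g)}\circ\bar\mu_{a_2/(2g)}\big)^g$. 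Evaluating both sides with formula \eqref{eq:ZA-on-conn-morphisms} and pushing $(-)^{\dagger}$ through contravariantly and monoidally, the claim reduces to the identities $\mu_a^{\dagger}=\Delta_a$, $\eta_a^{\dagger}=\eps_a$, $P_a^{\dagger}=P_a$ (the $\dagger$-RFA axioms), $(\mu_a^{(n)})^{\dagger}=\Delta_a^{(n)}$ for all $n\ge0$ (an easy induction on the recursions \eqref{eq:nfold-mult} and \eqref{eq:nfold-comult}), $(\Delta_c\circ\mu_c)^{\dagger}=\Delta_c\circ\mu_c$ for the handle operator, and $\Psi_S^{\dagger}=\Psi_S^{-1}$ for the tensor-permutation isomorphisms (these being composites of $\sigma$ and $\sigma_{U,V}^{\dagger}=\sigma_{V,U}$). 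On morphisms nothing is to be checked, a natural transformation of {\aQFT}s being just a natural transformation.

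I expect the only genuinely delicate point to be the bookkeeping in step (ii): confirming that the $\dagger$ of a normal-form bordism is, \emph{as an equivalence class}, again of normal form with exactly the stated exchange of data (and that one may re-choose the area decomposition and diffeomorphisms freely, which is legitimate since $\funZ^A$ is well defined), and then carefully tracking the contravariant $\dagger$ through the explicit composite \eqref{eq:ZA-on-conn-morphisms}. Everything else — step (i) and the category-theoretic reduction at the start — is formal.
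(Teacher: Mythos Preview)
The paper states this corollary without proof, treating it as an immediate consequence of Theorem~\ref{thm:aqftrfaequiv} together with the definitions of the $\dagger$-structures on $\Bordarea$ and $\Sc$. Your proposal is correct and supplies exactly the verification the paper leaves implicit: the reduction to full subcategories, the observation that $\Sb$ is a $\dagger$-RFA in $\Bordarea$ (so a Hermitian $\funZ$ yields a $\dagger$-RFA), and the normal-form computation showing $\funZ^A$ is Hermitian when $A$ is a $\dagger$-RFA.
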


Corollary~\ref{cor:commdaggerclassification} together with 
Corollary~\ref{cor:hermaqftrfaequiv} 
shows that a Hermitian {\aQFT} in $\Hilb$ is determined by a countable family of numbers
$\left\{ \epsilon_i,\sigma_i \right\}_{i\in I}$
satisfying convergence conditions given in Corollary~\ref{cor:commdaggerclassification}.

\subsection{Bordisms and {\aQFT}s with defects}\label{sec:daqft}

We start by recalling some notions from field theories with defects  
\cite{Davydov:2011dt,Carqueville:2016def,Carqueville:2017orb}.
Let $D_1$ and $D_2$ denote sets, which we call 
\textsl{labels for defect lines} and \textsl{phases},
and $s,t:D_1\to D_2$ maps of sets which we call \textsl{source} and \textsl{target} respectively.
These maps describe the possible geometric configurations of defect lines and 
surface components,
which we will explain in the following in more detail.
We refer to this set of data as a \textsl{set of defect conditions} and write 
$\Db := (D_1,D_2,s,t)$.

\begin{figure}[tb]
	\centering
	\def\svgwidth{16cm}
	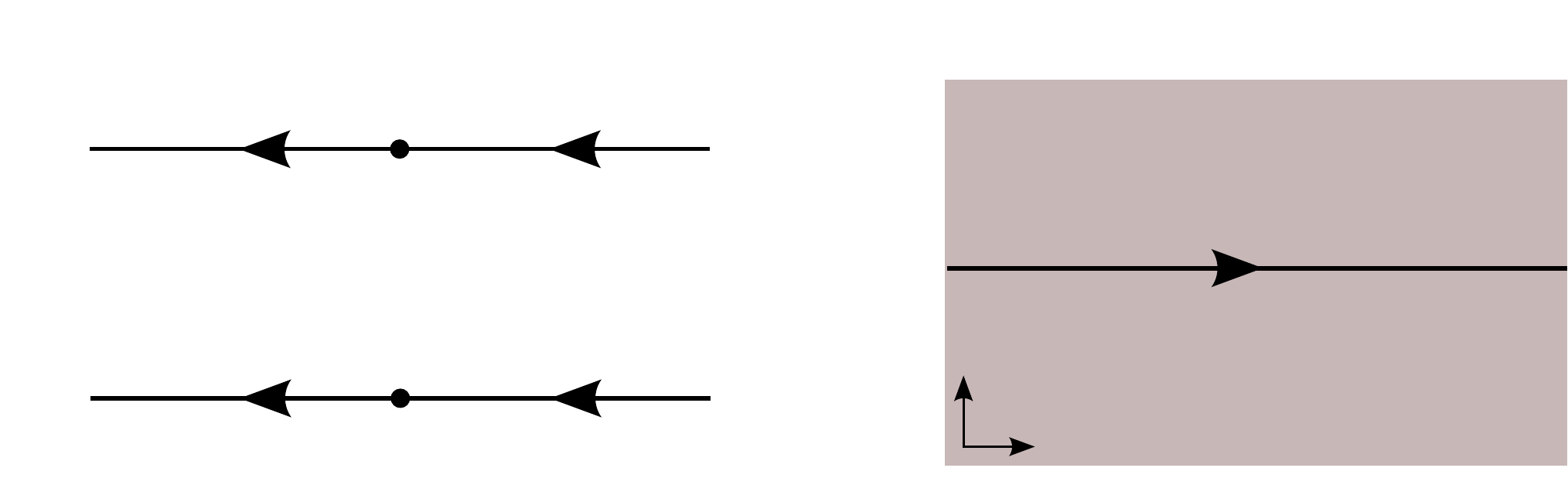
	\caption{
A neighbourhood of the submanifold $X_{[k-1]}$ in a $k$-manifold with defects.	
\\
$a)$ case $k=1$:	
	The arrows show the orientation of the 
	1-manifold $S$,
	$(p,+)$ denotes a positively oriented point $p\in S_{[0]}$ and $(p,-)$ denotes a negatively oriented point.
	These orientations allow us to define a left and right side $l_p,r_p\in \pi_0(S_{[1]})$ of $p$.
		We require for $(p,+)$
		that $t({d}_1(p)) ={d}_2(l_p)$ and that $s({d}_1(p)) ={d}_2(r_p)$
		and for $(p,-)$ that the $s$ and $t$ are exchanged:
		$s({d}_1(p)) ={d}_2(l_p)$ and $t({d}_1(p)) ={d}_2(r_p)$.
\\
$b)$ case $k=2$:	 The arrows marked with 1 and 2 show the orientation of the surface $\Sigma$,
	the arrow on the line shows the orientation of $c\in\pi_0(\Sigma_{[1]})$.
	The orientations of $\Sigma_{[1]}$ and $\Sigma_{[2]}$ allow us to define
	left and right side $l_c,r_c\in\pi_0(\Sigma_{[2]})$ of $c$.
	We require that for a defect line ${d}_1(c)$ the phase label
	on its right side is $s({d}_1(c))={d}_2(r_c)$ and
	that the phase label
	on its left side is $t({d}_1(c))={d}_2(l_c)$.
	}
	\label{fig:defcond2}
\end{figure}

Using a fixed set of defect conditions $\Db$ 
we introduce some notions.
Let $k\in\{1,2\}$. A \textsl{$k$-manifold with defects} is a compact $k$-manifold $X$, together with
	(see Figure~\ref{fig:defcond2})
\begin{enumerate}
	\item a finite decomposition into $(k-1)$- and $k$-dimensional submanifolds $X=X_{[k-1]}\cup X_{[k]}$ and 
	\item maps $d_l:\pi_0(X_{[k+l-2]})\to D_l$ for $l=1,2$,
\end{enumerate}
such that the following hold.
\begin{itemize}
	\item $X_{[k-1]}\cap X_{[k]}=\emptyset$,
	\item $X_{[k-1]}$ is an embedded oriented $(k-1)$-dimensional submanifold, which is either closed or $\partial X_{[k-1]}\subset \partial X$
	\item $X_{[k]}$ is a $k$-dimensional submanifold with orientation induced from $X$ and
	\item $d_1$ and $d_2$ are compatible with the maps $s$ and $t$ as shown in Figure~\ref{fig:defcond2}.
\end{itemize}
We call a closed 1-manifold with defects a \textsl{defect object}
and a 2-manifold with defects a \textsl{surface with defects}.
In particular, 
for a defect object $S$ the set $S_{[0]}$ is a finite set of distinct oriented points.
	For a surface with defects $\Sigma$,
	every connected component of $\Sigma_{[1]}$ is the image of a smooth embedding $[-1,1]\to\Sigma$ or $\Sb\to\Sigma$.

A \textsl{morphism of surfaces with defects} $f:\Sigma\to\Sigma'$
is an orientation preserving smooth map 
of surfaces such that the restrictions $f|_{\Sigma_{[k]}}$
map the submanifolds $\Sigma_{[k]}$ onto $\Sigma'_{[k]}$, they 
are diffeomorphisms onto their image, and they make the diagrams
\begin{equation}
\begin{tikzcd}
	\pi_0(  \Sigma_{[k]} )\ar{dr}[swap]{{d}_k}\ar{rr}{f_*}&&\pi_0( \Sigma'_{[k]} )\ar{dl}{{d}'_k}\\
	&D_k&
\end{tikzcd}
\label{eq:same-defect-labels}
\end{equation}
commute for $k=1,2$.

\begin{figure}[tb]
	\centering
	\def\svgwidth{7cm}
	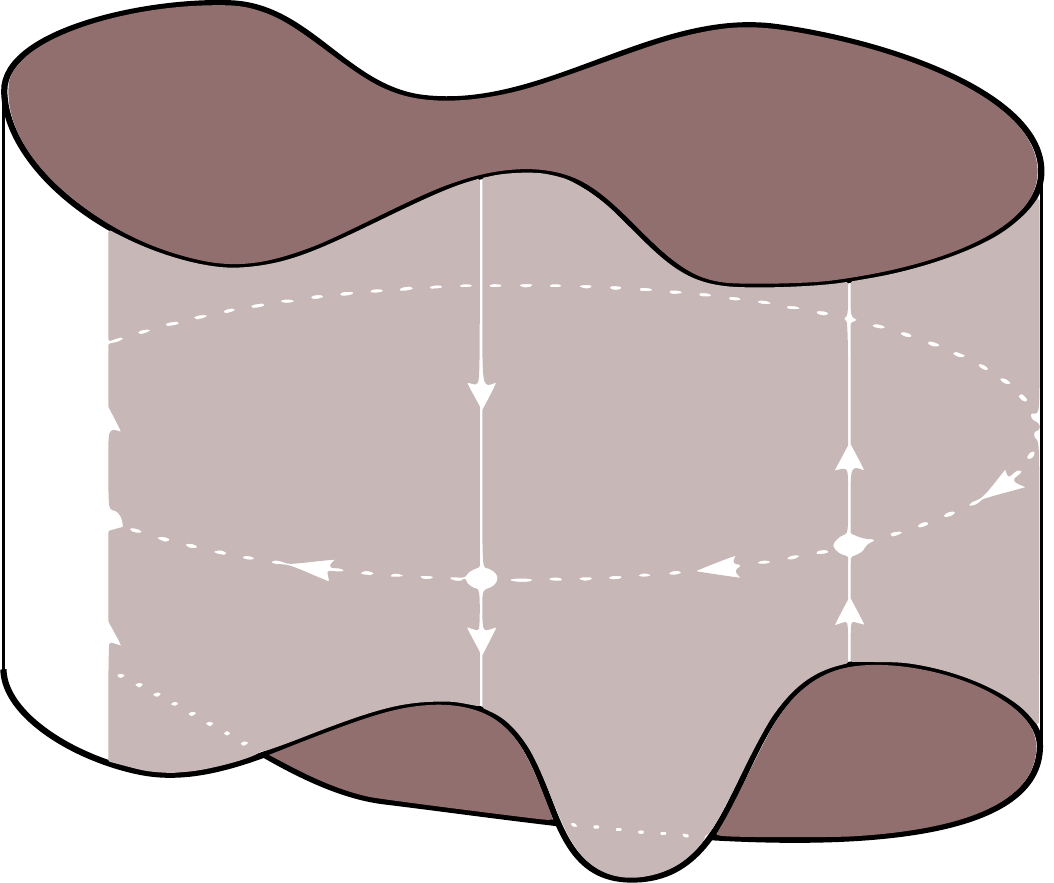
	\caption{A collar $C=C_{[1]}\cup C_{[2]}$ of $S$.
		The dotted circle in the middle shows $S\times\{0\}$ with its orientation, 
		the dots with labels $(p_i,\pm)$ for $i=1,\dots,3$ show $S_{[1]}$ with orientations,
		the straight lines with the arrows show the submanifold $C_{[1]}$ with its orientation.
		In the current figure
		both $C_{[1]}$ and $C_{[2]}$ have 3 connected components,
		the labels $w_i$ and $u_i$ for $i=1,\dots,3$ show 
		the values of ${d}_1$ and ${d}_2$ respectively.
	}
	\label{fig:collar}
\end{figure}

Let $S$ be a defect object.
A \textsl{collar of $S$} is a surface with defects $C=C_{[1]}\cup C_{[2]}$ such that
\begin{itemize}
	\item $C$ is an open neighbourhood of $S\times\{0\}$ in $S\times \Rb$ and
	\item $C_{[1]}$ is the intersection of $S_{[1]}\times \Rb$ with $C$ with orientation induced from the orientation of $S_{[1]}$ as shown in Figure~\ref{fig:collar},
	\item ${d}_k(c)=d_k\left( c \cap
	(S\times\{0\}) \right)$ for $c\in\pi_0(C_{[k]})$ and $k=1,2$.
\end{itemize}

An example of a collar is shown in Figure~\ref{fig:collar}.
An \textsl{ingoing (outgoing) collar with defects} is the intersection of 
a collar with defects and 
$S\times[0,+\infty)$ (respectively $S\times(-\infty,0]$).

A \textsl{boundary parametrisation} of a surface with defects $\Sigma$ consists of the following:
\begin{enumerate}
	\item A pair of defect objects $S$ and $T$.
	\item An ingoing collar $U$ of $S$ and an outgoing collar $V$ of $T$.
	\item A pair of morphisms of surfaces with defects
\begin{align}
	\phi_\mathrm{in}:U\hookrightarrow\Sigma\hookleftarrow
	V:\phi_\mathrm{out} \ ,
	\label{eq:bdrparam-maps}
\end{align}
We require that $\phi_\mathrm{in}\sqcup\phi_\mathrm{out}$ maps 
$(S\times\{0\})^{\mathrm{rev}}\sqcup T\times\{0\}$ diffeomorphically onto $\partial\Sigma$.
\end{enumerate}

A \textsl{bordism with defects} $\Sigma:S\to T$ is a surface $\Sigma$ together with a boundary parametrisation.
	The \textsl{in-out cylinder over $S$} is the bordism with defects $S\times[0,1]:S\to S$.
We define the equivalence of bordisms with defects similarly as in Section~\ref{sec:aqft},
now using diffeomorphisms of surfaces with defects that
are compatible with the boundary parametrisation on common collars of defect objects.
Given two bordisms with defects $\Sigma :S \to T$ and $\Xi : T \to W$, 
	we can glue them along the boundary parametrisations
	to obtain a bordism with defects $\Xi\circ\Sigma:S\to W$. 
	This glueing procedure is compatible with the above notion of equivalence.
	The \textsl{category of bordisms with defects} $\Borddefna{\Db}$ has defect objects
	as objects and equivalence classes of bordisms with defects as morphisms.

\medskip

After this preparation we turn to bordisms with area and defects.
\begin{definition}
A \textsl{bordism with area and defects} 
$(\Sigma,\Ac,\Lc):S\to T$ consists of a bordism with defects
$\Sigma:S\to T$,
an \textsl{area map} $\Ac:\pi_0(\Sigma_{[2]})\to\Rb_{\ge0}$
and a \textsl{length map} $\Lc:\pi_0(\Sigma_{[1]})\to\Rb_{\ge0}$,
which are only allowed to take value 0 
on connected components of $\Sigma$ equivalent to in-out cylinders with defects.
The value $\Ac(c)$ for $c\in\pi_0(\Sigma_{[2]})$ is called the
\textsl{area} of the component $c$
and the value of $\Lc(x)$ for $x\in\pi_0(\Sigma_{[1]})$ is called the
\textsl{length} of the defect line $x$.
\end{definition}

Two bordisms with area and defects $(\Sigma,\Ac,\Lc),(\Sigma',\Ac',\Lc'):S\to T$ 
are \textsl{equivalent} if
the underlying bordisms with defects are equivalent
with diffeomorphism $f:\Sigma\to\Sigma'$ and if
the following diagrams commute:
\begin{equation}
	\begin{tikzcd}[row sep=small]
		\pi_0(\Sigma_{[2]})\ar{dd}[swap]{f_*} \ar{dr}{\Ac}& \\
		 &\Rb_{\ge0} \\
		 \pi_0(\Sigma'_{[2]}) \ar{ru}[swap]{\Ac'} &
	\end{tikzcd}
	\quad\text{and}\quad
	\begin{tikzcd}[row sep=small]
		\pi_0(\Sigma_{[1]})\ar{dd}[swap]{f_*} \ar{dr}{\Lc}& \\
		 &\Rb_{\ge0}\ . \\
		 \pi_0(\Sigma'_{[1]}) \ar{ru}[swap]{\Lc'} &
	\end{tikzcd}
	\label{eq:amap-lmap-equiv}
\end{equation}

Given two bordisms with area and defects $(\Sigma,\Ac_{\Sigma},\Lc_{\Sigma}) :X \to Y$ and $(\Xi,\Ac_{\Xi},\Lc_{\Xi}) : Y \to Z$, 
the \textsl{glued bordism with area and defects} 
$(\Xi \circ \Sigma,\Ac_{\Xi \circ \Sigma},\Lc_{\Xi \circ \Sigma}
) : X \to Z$ 
is the the glued bordism with defects
together with the new area map $\Ac_{\Xi \circ \Sigma}$ defined by
assigning to each new connected component of $\left( \Xi \circ \Sigma \right)_{[2]}$
the sum of areas of the connected components which were glued together to 
build up the new connected component and with a similarly defined new length map
$\Lc_{\Xi \circ \Sigma}$.
	As before, 
this glueing procedure is compatible with the above notion of equivalence.

\begin{definition}
	The \textsl{category of bordisms with area and defects} 
	$\Borddef{\Db}$ 
	has the same objects
	as $\Borddefna{\Db}$
and equivalence classes of bordisms with area and defects as morphisms.
\end{definition}

Both $\Borddefna{\Db}$ and $\Borddef{\Db}$ are symmetric monoidal categories with tensor product 
on objects and morphisms given by disjoint union.
The identities and the symmetric structure are given by equivalence classes of in-out cylinders
(with zero area and length).

We introduce a similar topology on hom-sets of $\Borddef{\Db}$ as for $\Bordarea$ only that we now need to take into account the topology related to the lengths.
\begin{definition}\label{def:daqft}
	Let $\Sc$ be a symmetric monoidal category whose hom-sets 
	are topological spaces and composition is separately continuous.
	A \textsl{defect area-dependent quantum field theory with values in $\Sc$} (or 
	\textsl{defect {\aQFT}} for short) is a symmetric monoidal functor
	$\funZ:\Borddef{\Db} \to \Sc$, such that for every $S,T\in\Borddef{\Db}$ the map
	\begin{align}
		\funZ_{S,T}:\Borddef{\Db}(S,T)&\to
		\Sc(\funZ(S),\funZ(T))
		\label{eq:daqft:cont}\\
		(\Sigma,\Ac,\Lc)&\mapsto \funZ(\Sigma,\Ac,\Lc)\nonumber
	\end{align}
	is continuous.
\end{definition}

\begin{remark}
	Checking the continuity condition in \eqref{eq:daqft:cont} can be done by checking only for cylinders, similarly as in Lemma~\ref{lem:enough-to-check-cylinders} for {\aQFT}s without defects. To see this, one needs to cut surfaces with defects along circles which intersect with every defect line.
	\label{rem:daqft-enough-to-check-cylinders}
\end{remark}

\begin{figure}[tb]
	\centering
	\def\svgwidth{8cm}
	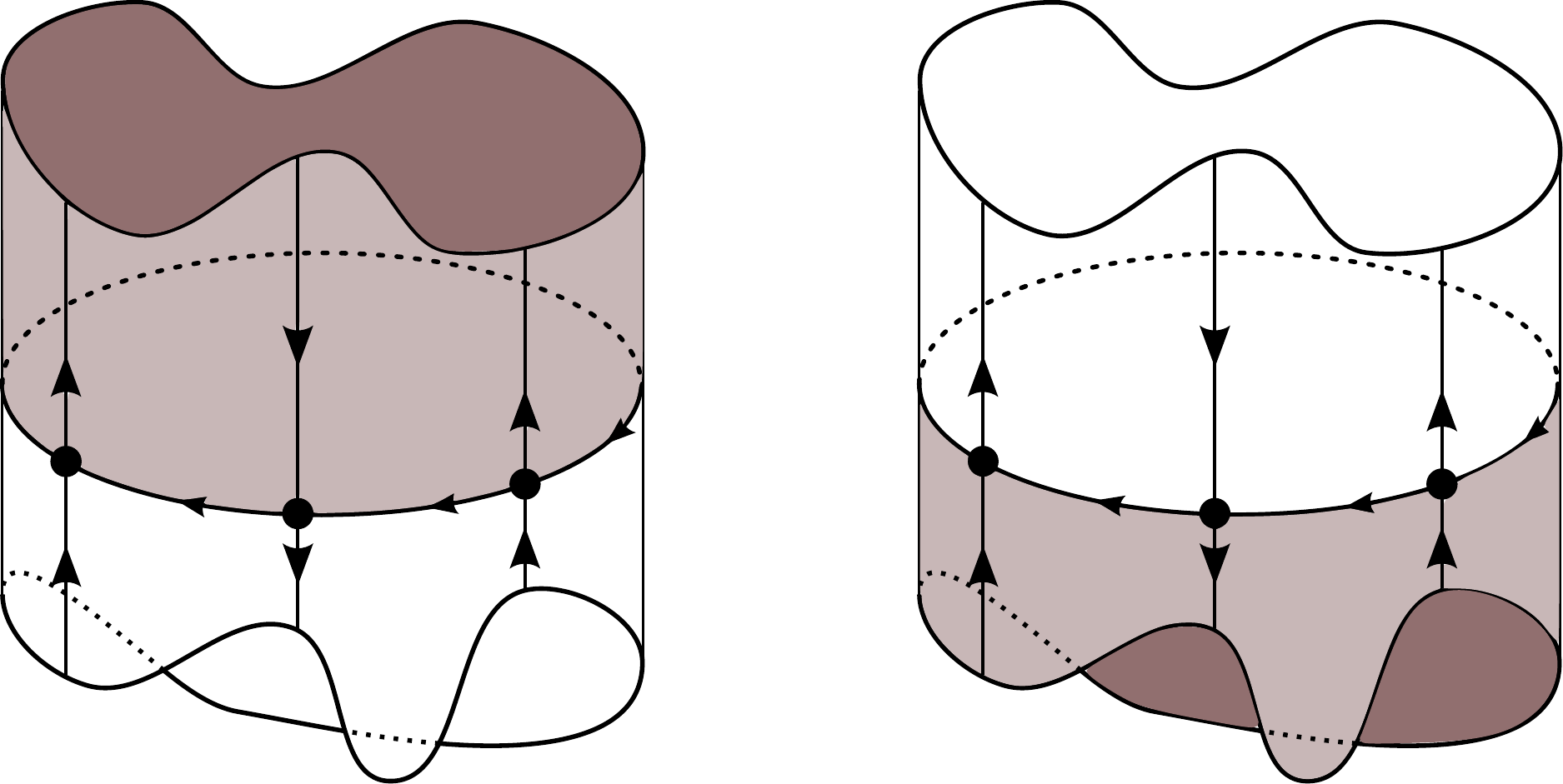
	\caption{Collars on the opposite sides of $\Sb\times\{0\}$.}
	\label{fig:dagger-collar}
\end{figure}

We turn the categories $\Borddefna{\Db}$ and$\Borddef{\Db}$ into $\dagger$-categories
in a similar way as $\Bordna$ and 
$\Bordarea$ in Section~\ref{sec:aqft}.
That is, if $M:S\to T$ is a bordism with area and defects, then $M^{\dagger}:T\to S$ is a bordism with area and defects with $(M^{\dagger})_{[k]}=M_{[k]}$ for $k=1,2$ with opposite orientation and with the same area maps and same defect labels. The boundary parametrisation is changed in the following way. The new collars are obtained from the old collars by extending the old ones and restricting them to the other side of $\Sb\times\{0\}$ as illustrated in Figure~\ref{fig:dagger-collar}. The boundary parametrisation maps are the old ones composed with the maps
$\iota_S$ and $\iota_T$ from \eqref{eq:inversion-def}.
We stress that in the definition of the dagger structure on $\Borddef{\Db}$ we have not included an involution on the set of defect labels $\Db$. This is important since we want the dagger to act as identity on objects. 
With these conventions it makes sense to consider $M^{\dagger}\circ M:T\to T$, which is relevant when considering reflection positivity, 
see e.g.\ \cite[Ch.\,6]{Glimm:1987qp}.
For a cylinder $C=S\times[0,1]$ we have that $C^{\dagger}=C$.

Let us assume that $\Sc$ is a dagger category.
We call a defect {\aQFT} $\funZ:\Borddef{\Db}\to\Sc$ \textsl{Hermitian} if 
it is compatible with the dagger structures.

In Section~\ref{sec:lattice:daqft} we give state-sum construction of defect {\aQFT}s, and in Section~\ref{sec:2dym:wilson} we discuss our main example, 2d~YM
theory with Wilson lines.

\section{State-sum construction of {\aQFT}s with defects}\label{sec:state-sum-construction}

The state-sum constructions of two-dimensional TFTs 
(see \cite{Bachas:1993lat,Fukuma:1994sts} and e.g.\ \cite{Lauda:2007oc,Davydov:2011dt}) has a straightforward generalisation to {\aQFT}s which we investigate in this section.
We start by giving the conditions on weights for plaquettes, edges and vertices in order to obtain state-sum {\aQFT} without defects, and we explain the relation of these weights to RFAs, as well as the connection to the classification of {\aQFT}s in terms of commutative RFAs assigned to $\Sb$ (Sections~\ref{sec:PLCW-dec-aqft}--\ref{sec:data-from-rfa}). Then we extend this state-sum construction to {\aQFT}s with defects and show that the weights for plaquettes traversed by defect lines are given by 
bimodules. We define the fusion of defect lines and show that it matches the tensor product of bimodules (Sections~\ref{sec:PLCW-dec-defect}--\ref{sec:fusion-of-defects}).

\subsection{PLCW decompositions with area}\label{sec:PLCW-dec-aqft}

In Section~\ref{sec:latticedata}
we will use \textsl{PLCW decompositions} \cite{Kirillov:2012pl} to build {\aQFT}s.
For a compact surface $\Sigma$ this consists of
three sets $\Sigma_0$, $\Sigma_1$ and $\Sigma_2$ whose elements are subsets of $\Sigma$.
Their elements 
are called \textsl{vertices}, \textsl{edges} and \textsl{faces}.
Faces are embeddings of polygons with $n\ge1$ edges, edges are embeddings of intervals and vertices are just points in $\Sigma$. 
Faces are glued along edges so that vertices are glued to vertices.
For example a PLCW decomposition of a cylinder $\Sb\times[0,1]$ could
consist of a rectangle with two opposite edges glued together.
{}From this one can obtain a PLCW decomposition of a torus $\Sb\times\Sb$ by glueing together the other two opposite edges.
For more details on PLCW decomposition we refer to \cite{Kirillov:2012pl} and for a short summary to \cite[Sec.\,2.2]{Runkel:2018:rs}.

We are going to need PLCW decompositions of surfaces with area, which we define now.
Let $(\Sigma,\Ac)$ be a surface with 
strictly positive area for each connected component and let 
$\Sigma_0$, $\Sigma_1$, $\Sigma_2$
be a PLCW decomposition of $\Sigma$. 
Let $\Ac_k:\Sigma_k\to\Rb_{>0}$ 
be maps for $k\in\{0,1,2\}$,
which assign to vertices, edges and faces an \textsl{area},
such that for every connected component $x\in\pi_0(\Sigma)$ the sum of the areas of
vertices, edges and faces of $x$ is equal to its area $\Ac(x)$.
A \textsl{PLCW decomposition of a surface with area} 
$(\Sigma,\Ac)$ 
consists of a choice of $\Sigma_k$ and $\Ac_k$ for $k\in\{0,1,2\}$.

	\begin{figure}[tb]
		\centering
		\def\svgwidth{16cm}
		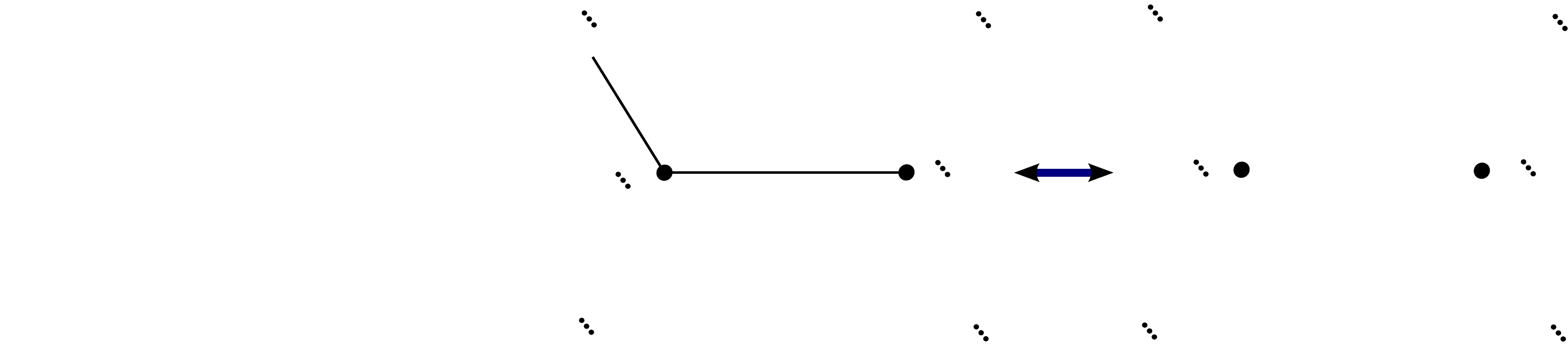
		\caption{Elementary moves of between PLCW decompositions with area.
			Figure~$a)$ shows edges $e$, $e'$ and between faces $f$ and $f'$.
			(The two faces are allowed to be the same.)
			When we remove the vertex $w''$ and the edge $e'$, the new area maps
			should be such that the area of the connected component of the surface does not change.
			In Figure~$b)$, shows an edge $e$ between two faces $f$ and $f'$.
			When we remove the edge $e$ and merge the faces $f$ and $f'$ to $f''$, the new area maps
			should be such that the area of the connected component of the surface does not change.
			}
		\label{fig:edgemove-aqft}
	\end{figure}

\begin{definition}
	An \textsl{elementary move} on a PLCW decomposition of a surface
	is either 
	\begin{itemize}
		\item removing or adding a bivalent vertex as shown in Figure~\ref{fig:edgemove-aqft}~$a$), or
		\item removing or adding an edge as shown in Figure~\ref{fig:edgemove-aqft}~$b$).
	\end{itemize}
\end{definition}

By \cite[Thm.\,7.4]{Kirillov:2012pl}, any two
PLCW decompositions can be
related by these elementary moves. 
The elementary moves in Figure~\ref{fig:edgemove-aqft} 
map PLCW decompositions with area to PLCW decompositions with area.

\subsection{State-sum construction without defects}
\label{sec:latticedata}

Let us fix a symmetric monoidal idempotent complete category $\Sc$ 
with symmetric structure $\sigma$ which has topological spaces as hom-sets 
and separately continuous composition of morphisms.

Let $A\in\Sc$ be an object and consider the following families of morphisms
\begin{align}
	\zeta_a\in\Sc(A,A)\ ,\quad\beta_a\in\Sc(A^{\otimes 2},\Ib)\quad\text{ and }\quad W_a^{n}\in\Sc(\Ib,A^{\otimes n})
	\label{eq:state-sum-data}
\end{align}
for $a\in\Rb_{>0}$ and $n\in\Zb_{\ge1}$.
We call $\beta_a$ the \textsl{contraction} and $W_a^{n}$ the \textsl{plaquette weights}.
We will use the following graphical notation for these morphisms:
\begin{align}
	\begin{aligned}
	\def\svgwidth{10cm}
\begingroup%
  \makeatletter%
  \providecommand\color[2][]{%
    \errmessage{(Inkscape) Color is used for the text in Inkscape, but the package 'color.sty' is not loaded}%
    \renewcommand\color[2][]{}%
  }%
  \providecommand\transparent[1]{%
    \errmessage{(Inkscape) Transparency is used (non-zero) for the text in Inkscape, but the package 'transparent.sty' is not loaded}%
    \renewcommand\transparent[1]{}%
  }%
  \providecommand\rotatebox[2]{#2}%
  \newcommand*\fsize{\dimexpr\f@size pt\relax}%
  \newcommand*\lineheight[1]{\fontsize{\fsize}{#1\fsize}\selectfont}%
  \ifx\svgwidth\undefined%
    \setlength{\unitlength}{319.06117344bp}%
    \ifx\svgscale\undefined%
      \relax%
    \else%
      \setlength{\unitlength}{\unitlength * \real{\svgscale}}%
    \fi%
  \else%
    \setlength{\unitlength}{\svgwidth}%
  \fi%
  \global\let\svgwidth\undefined%
  \global\let\svgscale\undefined%
  \makeatother%
  \begin{picture}(1,0.24653781)%
    \lineheight{1}%
    \setlength\tabcolsep{0pt}%
    \put(-0.00234146,0.11261132){\color[rgb]{0,0,0}\makebox(0,0)[lt]{\lineheight{0}\smash{\begin{tabular}[t]{l}$\zeta_a=$\end{tabular}}}}%
    \put(0.141225,0.12650677){\color[rgb]{0,0,0}\makebox(0,0)[lt]{\lineheight{0}\smash{\begin{tabular}[t]{l}\scriptsize{$a$}\end{tabular}}}}%
    \put(0,0){\includegraphics[width=\unitlength,page=1]{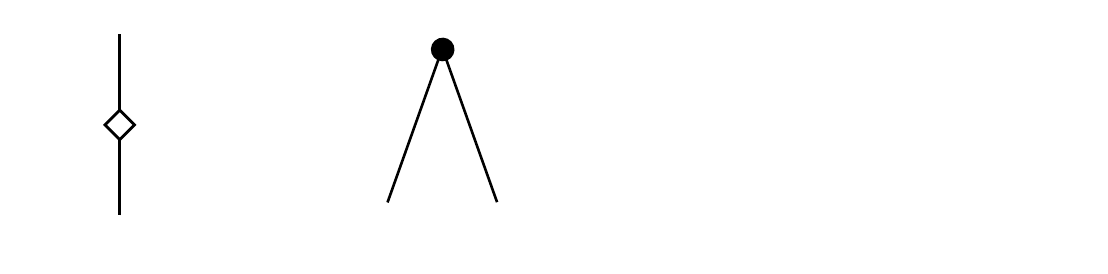}}%
    \put(0.42973087,0.19799442){\color[rgb]{0,0,0}\makebox(0,0)[lt]{\lineheight{0}\smash{\begin{tabular}[t]{l}\scriptsize{$a$}\end{tabular}}}}%
    \put(0.26366267,0.11261132){\color[rgb]{0,0,0}\makebox(0,0)[lt]{\lineheight{0}\smash{\begin{tabular}[t]{l}$\beta_a=$\end{tabular}}}}%
    \put(0,0){\includegraphics[width=\unitlength,page=2]{state-sum-data.pdf}}%
    \put(0.74405042,0.03998369){\color[rgb]{0,0,0}\makebox(0,0)[lt]{\lineheight{0}\smash{\begin{tabular}[t]{l}\scriptsize{$a;n$}\end{tabular}}}}%
    \put(0.76729096,0.11261132){\color[rgb]{0,0,0}\makebox(0,0)[lt]{\lineheight{0}\smash{\begin{tabular}[t]{l}$\dots$\end{tabular}}}}%
    \put(0.54453342,0.11261132){\color[rgb]{0,0,0}\makebox(0,0)[lt]{\lineheight{0}\smash{\begin{tabular}[t]{l}$W^n_a=$\end{tabular}}}}%
    \put(0.0913122,0.00414577){\color[rgb]{0,0,0}\makebox(0,0)[lt]{\lineheight{0}\smash{\begin{tabular}[t]{l}$A$\end{tabular}}}}%
    \put(0.33034781,0.02462999){\color[rgb]{0,0,0}\makebox(0,0)[lt]{\lineheight{0}\smash{\begin{tabular}[t]{l}$A$\end{tabular}}}}%
    \put(0.43612693,0.02462999){\color[rgb]{0,0,0}\makebox(0,0)[lt]{\lineheight{0}\smash{\begin{tabular}[t]{l}$A$\end{tabular}}}}%
    \put(0.0913122,0.22510653){\color[rgb]{0,0,0}\makebox(0,0)[lt]{\lineheight{0}\smash{\begin{tabular}[t]{l}$A$\end{tabular}}}}%
    \put(0.65546732,0.22040524){\color[rgb]{0,0,0}\makebox(0,0)[lt]{\lineheight{0}\smash{\begin{tabular}[t]{l}$A$\end{tabular}}}}%
    \put(0.69777895,0.22040524){\color[rgb]{0,0,0}\makebox(0,0)[lt]{\lineheight{0}\smash{\begin{tabular}[t]{l}$A$\end{tabular}}}}%
    \put(0.83411644,0.22040524){\color[rgb]{0,0,0}\makebox(0,0)[lt]{\lineheight{0}\smash{\begin{tabular}[t]{l}$A$\end{tabular}}}}%
  \end{picture}%
\endgroup%

	\end{aligned}
	\label{eq:state-sum-data-graphical}
\end{align}

We introduce the morphisms $P_a, D_a : A \to A$
in order to be able to state the conditions
these morphisms need to satisfy:
\begin{align}
	\begin{aligned}
	\def\svgwidth{10cm}
\begingroup%
  \makeatletter%
  \providecommand\color[2][]{%
    \errmessage{(Inkscape) Color is used for the text in Inkscape, but the package 'color.sty' is not loaded}%
    \renewcommand\color[2][]{}%
  }%
  \providecommand\transparent[1]{%
    \errmessage{(Inkscape) Transparency is used (non-zero) for the text in Inkscape, but the package 'transparent.sty' is not loaded}%
    \renewcommand\transparent[1]{}%
  }%
  \providecommand\rotatebox[2]{#2}%
  \newcommand*\fsize{\dimexpr\f@size pt\relax}%
  \newcommand*\lineheight[1]{\fontsize{\fsize}{#1\fsize}\selectfont}%
  \ifx\svgwidth\undefined%
    \setlength{\unitlength}{461.01708984bp}%
    \ifx\svgscale\undefined%
      \relax%
    \else%
      \setlength{\unitlength}{\unitlength * \real{\svgscale}}%
    \fi%
  \else%
    \setlength{\unitlength}{\svgwidth}%
  \fi%
  \global\let\svgwidth\undefined%
  \global\let\svgscale\undefined%
  \makeatother%
  \begin{picture}(1,0.16711139)%
    \lineheight{1}%
    \setlength\tabcolsep{0pt}%
    \put(0,0){\includegraphics[width=\unitlength,page=1]{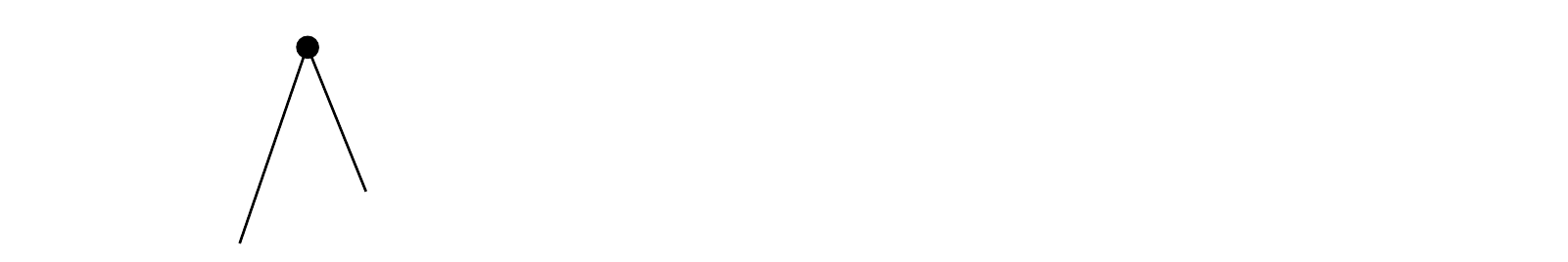}}%
    \put(-0.00162048,0.07588302){\color[rgb]{0,0,0}\makebox(0,0)[lt]{\lineheight{0}\smash{\begin{tabular}[t]{l}$P_{a_1+a_2}:=$\end{tabular}}}}%
    \put(0.21162474,0.12937897){\color[rgb]{0,0,0}\makebox(0,0)[lt]{\lineheight{0}\smash{\begin{tabular}[t]{l}\scriptsize{$a_1$}\end{tabular}}}}%
    \put(0,0){\includegraphics[width=\unitlength,page=2]{state-sum-pa.pdf}}%
    \put(0.23610754,0.01154444){\color[rgb]{0,0,0}\makebox(0,0)[lt]{\lineheight{0}\smash{\begin{tabular}[t]{l}\scriptsize{$a_2;2$}\end{tabular}}}}%
    \put(0,0){\includegraphics[width=\unitlength,page=3]{state-sum-pa.pdf}}%
    \put(0.36929852,0.07588302){\color[rgb]{0,0,0}\makebox(0,0)[lt]{\lineheight{0}\smash{\begin{tabular}[t]{l}and\end{tabular}}}}%
    \put(0.48643084,0.07588302){\color[rgb]{0,0,0}\makebox(0,0)[lt]{\lineheight{0}\smash{\begin{tabular}[t]{l}$D_{a_0+a_1+a_2+a_3}:=$\end{tabular}}}}%
    \put(0,0){\includegraphics[width=\unitlength,page=4]{state-sum-pa.pdf}}%
    \put(0.85119777,0.01156308){\color[rgb]{0,0,0}\makebox(0,0)[lt]{\lineheight{0}\smash{\begin{tabular}[t]{l}\scriptsize{$a_0;4$}\end{tabular}}}}%
    \put(0,0){\includegraphics[width=\unitlength,page=5]{state-sum-pa.pdf}}%
    \put(0.9454221,0.09516806){\color[rgb]{0,0,0}\makebox(0,0)[lt]{\lineheight{0}\smash{\begin{tabular}[t]{l}\scriptsize{$a_1$}\end{tabular}}}}%
    \put(0.87977288,0.15722327){\color[rgb]{0,0,0}\makebox(0,0)[lt]{\lineheight{0}\smash{\begin{tabular}[t]{l}\scriptsize{$a_2$}\end{tabular}}}}%
    \put(0.79796868,0.15507096){\color[rgb]{0,0,0}\makebox(0,0)[lt]{\lineheight{0}\smash{\begin{tabular}[t]{l}\scriptsize{$a_3$}\end{tabular}}}}%
  \end{picture}%
\endgroup%
\\
	\end{aligned}\ ,
	\label{eq:state-sum-data-notation}
\end{align}
for every $a_0,a_1,a_2,a_3\in\Rb_{>0}$.

Consider the following conditions on the morphisms in \eqref{eq:state-sum-data}:
for every $a,a_0,a_1,a_2,a_3\in\Rb_{>0}$,
and for every $n\in\Zb_{\ge1}$,
\begin{enumerate}
	\item 
		Cyclic symmetry:
\begin{align}
	\begin{aligned}
	\def\svgwidth{10cm}
\begingroup%
  \makeatletter%
  \providecommand\color[2][]{%
    \errmessage{(Inkscape) Color is used for the text in Inkscape, but the package 'color.sty' is not loaded}%
    \renewcommand\color[2][]{}%
  }%
  \providecommand\transparent[1]{%
    \errmessage{(Inkscape) Transparency is used (non-zero) for the text in Inkscape, but the package 'transparent.sty' is not loaded}%
    \renewcommand\transparent[1]{}%
  }%
  \providecommand\rotatebox[2]{#2}%
  \newcommand*\fsize{\dimexpr\f@size pt\relax}%
  \newcommand*\lineheight[1]{\fontsize{\fsize}{#1\fsize}\selectfont}%
  \ifx\svgwidth\undefined%
    \setlength{\unitlength}{345.0169988bp}%
    \ifx\svgscale\undefined%
      \relax%
    \else%
      \setlength{\unitlength}{\unitlength * \real{\svgscale}}%
    \fi%
  \else%
    \setlength{\unitlength}{\svgwidth}%
  \fi%
  \global\let\svgwidth\undefined%
  \global\let\svgscale\undefined%
  \makeatother%
  \begin{picture}(1,0.17237414)%
    \lineheight{1}%
    \setlength\tabcolsep{0pt}%
    \put(0,0){\includegraphics[width=\unitlength,page=1]{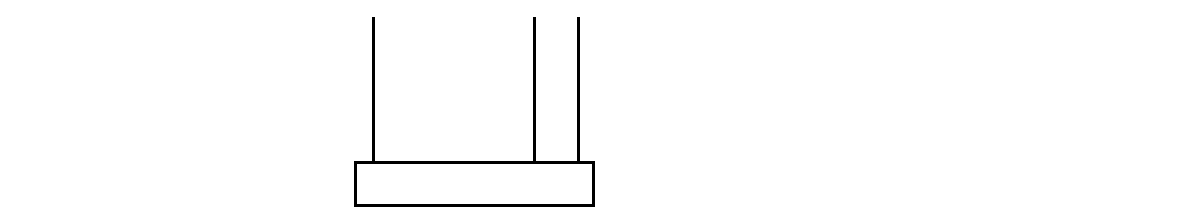}}%
    \put(0.37491691,0.01155902){\color[rgb]{0,0,0}\makebox(0,0)[lt]{\lineheight{0}\smash{\begin{tabular}[t]{l}\scriptsize{$a;n$}\end{tabular}}}}%
    \put(0.08813825,0.13930338){\color[rgb]{0,0,0}\makebox(0,0)[lt]{\lineheight{0}\smash{\begin{tabular}[t]{l}$\dots$\end{tabular}}}}%
    \put(0,0){\includegraphics[width=\unitlength,page=2]{cond-cyclicsymm.pdf}}%
    \put(0.0792793,0.01155902){\color[rgb]{0,0,0}\makebox(0,0)[lt]{\lineheight{0}\smash{\begin{tabular}[t]{l}\scriptsize{$a;n$}\end{tabular}}}}%
    \put(0.22269331,0.07872282){\color[rgb]{0,0,0}\makebox(0,0)[lt]{\lineheight{0}\smash{\begin{tabular}[t]{l}$=$\end{tabular}}}}%
    \put(0.05842369,0.04891384){\color[rgb]{0,0,0}\makebox(0,0)[lt]{\lineheight{0}\smash{\begin{tabular}[t]{l}$\dots$\end{tabular}}}}%
    \put(0.34370912,0.07872282){\color[rgb]{0,0,0}\makebox(0,0)[lt]{\lineheight{0}\smash{\begin{tabular}[t]{l}$\dots$\end{tabular}}}}%
    \put(0.56209687,0.07872282){\color[rgb]{0,0,0}\makebox(0,0)[lt]{\lineheight{0}\smash{\begin{tabular}[t]{l}and\end{tabular}}}}%
    \put(0,0){\includegraphics[width=\unitlength,page=3]{cond-cyclicsymm.pdf}}%
    \put(0.83078726,0.07872282){\color[rgb]{0,0,0}\makebox(0,0)[lt]{\lineheight{0}\smash{\begin{tabular}[t]{l}$=$\end{tabular}}}}%
    \put(0.76941983,0.15275187){\color[rgb]{0,0,0}\makebox(0,0)[lt]{\lineheight{0}\smash{\begin{tabular}[t]{l}\scriptsize{$a$}\end{tabular}}}}%
    \put(0.95303956,0.15275187){\color[rgb]{0,0,0}\makebox(0,0)[lt]{\lineheight{0}\smash{\begin{tabular}[t]{l}\scriptsize{$a$}\end{tabular}}}}%
  \end{picture}%
\endgroup%

	\end{aligned}\ .
	\label{eq:cond:cyclicsymm}
\end{align}
		\label{cond:cyclicsymm}
	\item
		Glueing plaquette weights:
\begin{align}
	\begin{aligned}
	\def\svgwidth{8cm}
\begingroup%
  \makeatletter%
  \providecommand\color[2][]{%
    \errmessage{(Inkscape) Color is used for the text in Inkscape, but the package 'color.sty' is not loaded}%
    \renewcommand\color[2][]{}%
  }%
  \providecommand\transparent[1]{%
    \errmessage{(Inkscape) Transparency is used (non-zero) for the text in Inkscape, but the package 'transparent.sty' is not loaded}%
    \renewcommand\transparent[1]{}%
  }%
  \providecommand\rotatebox[2]{#2}%
  \ifx\svgwidth\undefined%
    \setlength{\unitlength}{272.50478516bp}%
    \ifx\svgscale\undefined%
      \relax%
    \else%
      \setlength{\unitlength}{\unitlength * \real{\svgscale}}%
    \fi%
  \else%
    \setlength{\unitlength}{\svgwidth}%
  \fi%
  \global\let\svgwidth\undefined%
  \global\let\svgscale\undefined%
  \makeatother%
  \begin{picture}(1,0.22998879)%
    \put(0,0){\includegraphics[width=\unitlength]{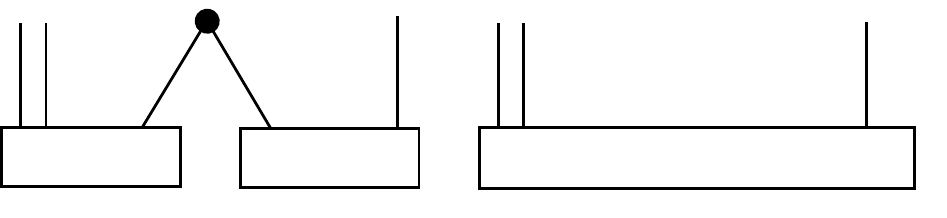}}%
    \put(0.25177122,0.20322321){\color[rgb]{0,0,0}\makebox(0,0)[lb]{\smash{\scriptsize{$a_0$}}}}%
    \put(0.29686782,0.05180685){\color[rgb]{0,0,0}\makebox(0,0)[lb]{\smash{\scriptsize{$a_2;m$}}}}%
    \put(0.04423119,0.05234426){\color[rgb]{0,0,0}\makebox(0,0)[lb]{\smash{\scriptsize{$a_1;n$}}}}%
    \put(0.0814201,0.1489883){\color[rgb]{0,0,0}\makebox(0,0)[lb]{\smash{$\dots$}}}%
    \put(0.31040686,0.1489883){\color[rgb]{0,0,0}\makebox(0,0)[lb]{\smash{$\dots$}}}%
    \put(0.53743358,0.05234426){\color[rgb]{0,0,0}\makebox(0,0)[lb]{\smash{\scriptsize
{$a_0+a_1+a_2;n+m-2$}}}}%
    \put(0.69792299,0.1489883){\color[rgb]{0,0,0}\makebox(0,0)[lb]{\smash{$\dots$}}}%
    \put(0.45719327,0.1489883){\color[rgb]{0,0,0}\makebox(0,0)[lb]{\smash{$=$}}}%
  \end{picture}%
\endgroup%

	\end{aligned}\ .
	\label{eq:cond:gluer}
\end{align}
		\label{cond:gluer}
	\item
		Removing a bubble:
\begin{align}
	\begin{aligned}
	\def\svgwidth{6cm}
\begingroup%
  \makeatletter%
  \providecommand\color[2][]{%
    \errmessage{(Inkscape) Color is used for the text in Inkscape, but the package 'color.sty' is not loaded}%
    \renewcommand\color[2][]{}%
  }%
  \providecommand\transparent[1]{%
    \errmessage{(Inkscape) Transparency is used (non-zero) for the text in Inkscape, but the package 'transparent.sty' is not loaded}%
    \renewcommand\transparent[1]{}%
  }%
  \providecommand\rotatebox[2]{#2}%
  \ifx\svgwidth\undefined%
    \setlength{\unitlength}{183.33798828bp}%
    \ifx\svgscale\undefined%
      \relax%
    \else%
      \setlength{\unitlength}{\unitlength * \real{\svgscale}}%
    \fi%
  \else%
    \setlength{\unitlength}{\svgwidth}%
  \fi%
  \global\let\svgwidth\undefined%
  \global\let\svgscale\undefined%
  \makeatother%
  \begin{picture}(1,0.27434026)%
    \put(0,0){\includegraphics[width=\unitlength]{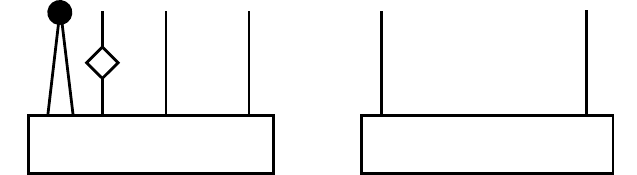}}%
    \put(0.59717074,0.03779261){\color[rgb]{0,0,0}\makebox(0,0)[lb]{\smash{\scriptsize{$a_1+a_2+a_3;n$}}}}%
    \put(0.73310133,0.17135973){\color[rgb]{0,0,0}\makebox(0,0)[lb]{\smash{$\dots$}}}%
    \put(0.46917834,0.17271261){\color[rgb]{0,0,0}\makebox(0,0)[lb]{\smash{$=$}}}%
    \put(0.13463703,0.02906556){\color[rgb]{0,0,0}\makebox(0,0)[lb]{\smash{\scriptsize{$a_3;n+2$}}}}%
    \put(0.29463732,0.17271261){\color[rgb]{0,0,0}\makebox(0,0)[lb]{\smash{$\dots$}}}%
    \put(0,0.22584611){\color[rgb]{0,0,0}\makebox(0,0)[lb]{\smash{\scriptsize{$a_1$}}}}%
    \put(0.19753476,0.18573454){\color[rgb]{0,0,0}\makebox(0,0)[lb]{\smash{\scriptsize{$a_2$}}}}%
  \end{picture}%
\endgroup%

	\end{aligned}\ .
	\label{eq:cond:selfgluer}
\end{align}
		\label{cond:selfgluer}
	\item 
		``Moving $\zeta_a$ around'':
\begin{align}
	\begin{aligned}
	\def\svgwidth{10cm}
\begingroup%
  \makeatletter%
  \providecommand\color[2][]{%
    \errmessage{(Inkscape) Color is used for the text in Inkscape, but the package 'color.sty' is not loaded}%
    \renewcommand\color[2][]{}%
  }%
  \providecommand\transparent[1]{%
    \errmessage{(Inkscape) Transparency is used (non-zero) for the text in Inkscape, but the package 'transparent.sty' is not loaded}%
    \renewcommand\transparent[1]{}%
  }%
  \providecommand\rotatebox[2]{#2}%
  \ifx\svgwidth\undefined%
    \setlength{\unitlength}{307.35424805bp}%
    \ifx\svgscale\undefined%
      \relax%
    \else%
      \setlength{\unitlength}{\unitlength * \real{\svgscale}}%
    \fi%
  \else%
    \setlength{\unitlength}{\svgwidth}%
  \fi%
  \global\let\svgwidth\undefined%
  \global\let\svgscale\undefined%
  \makeatother%
  \begin{picture}(1,0.15649588)%
    \put(0,0){\includegraphics[width=\unitlength]{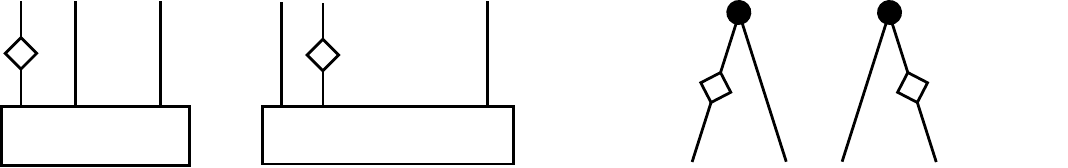}}%
    \put(0.263062,0.0189645){\color[rgb]{0,0,0}\makebox(0,0)[lb]{\smash{\scriptsize{$a_1+a_2-a_3;n$}}}}%
    \put(0.35814827,0.07963299){\color[rgb]{0,0,0}\makebox(0,0)[lb]{\smash{$\dots$}}}%
    \put(0.19111272,0.09254525){\color[rgb]{0,0,0}\makebox(0,0)[lb]{\smash{$=$}}}%
    \put(0.06003894,0.0189645){\color[rgb]{0,0,0}\makebox(0,0)[lb]{\smash{\scriptsize{$a_2;n$}}}}%
    \put(0.08974767,0.07479087){\color[rgb]{0,0,0}\makebox(0,0)[lb]{\smash{$\dots$}}}%
    \put(0.03343969,0.11968131){\color[rgb]{0,0,0}\makebox(0,0)[lb]{\smash{\scriptsize{$a_1$}}}}%
    \put(0.31507074,0.11939749){\color[rgb]{0,0,0}\makebox(0,0)[lb]{\smash{\scriptsize{$a_3$}}}}%
    \put(0.85489565,0.13749777){\color[rgb]{0,0,0}\makebox(0,0)[lb]{\smash{\scriptsize{$a_1+a_2-a_3$}}}}%
    \put(0.74827245,0.07340973){\color[rgb]{0,0,0}\makebox(0,0)[lb]{\smash{$=$}}}%
    \put(0.71298014,0.13749777){\color[rgb]{0,0,0}\makebox(0,0)[lb]{\smash{\scriptsize{$a_2$}}}}%
    \put(0.62006763,0.0636226){\color[rgb]{0,0,0}\makebox(0,0)[lb]{\smash{\scriptsize{$a_1$}}}}%
    \put(0.87466366,0.07033305){\color[rgb]{0,0,0}\makebox(0,0)[lb]{\smash{\scriptsize{
$a_
3$}}}}%
    \put(0.52216823,0.08740729){\color[rgb]{0,0,0}\makebox(0,0)[lb]{\smash{and}}}%
  \end{picture}%
\endgroup%

	\end{aligned}\ .
	\label{eq:cond:W-zeta}
\end{align}
		\label{cond:W-zeta}
	\item 
		$\lim_{a\to0}P_a=\id_A$ and 
		the assignments 
		\begin{align}
			\begin{aligned}
				(\Rb_{\ge0})^n&\to\Sc(A^{\otimes n},A^{\otimes n})\\
			(a_1,\dots,a_n)&\mapsto P_{a_1}\otimes\dots\otimes P_{a_n}
			\end{aligned}
			\label{eq:state-sum-pacont}
		\end{align}
		are jointly continuous for every $n\ge1$.
		\label{cond:approxid}
	\item The limit $\lim_{a\to0}{D}_a$ exists. \label{cond:zerocylinder}
\end{enumerate}
\begin{definition}\label{def:pdata}
	We call the family of morphisms in \eqref{eq:state-sum-data} satisfying the above conditions  
\textsl{state-sum data} and denote it with 
	\begin{align}
		\Ab=(A,\zeta_a,\beta_a,W_a^{n})\ .
		\label{eq:def:pdata}
	\end{align}
\end{definition}

\begin{lemma}\label{lem:beta-symm}
	Let $\Ab=(A,\zeta_a,\beta_a,W_a^{n})$
	denote state-sum data. 
	Then the assignments $a\mapsto\zeta_a$, $a\mapsto \beta_a$ and $a\mapsto W_a^{n}$
		are continuous for every $n\ge1$.
\end{lemma}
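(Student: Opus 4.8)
The plan is to derive continuity of each of the three families from conditions \ref{cond:cyclicsymm}--\ref{cond:zerocylinder} together with the already-established joint continuity of the morphisms $P_a$ (condition \ref{cond:approxid}), by mimicking the argument of Lemma~\ref{lem:ra:properties}, Part~\ref{lem:ra:generatedcat}: whenever a morphism built from the state-sum data can be written as a composition of the form $(\text{fixed morphism}) \circ \big(\bigotimes_i P_{a_i - 2\eps}\big) \circ (\text{fixed morphism})$ for small $\eps$, separate continuity of the composition together with joint continuity of $\bigotimes_i P_{a_i}$ forces it to be continuous in the remaining parameter. So for each of $\zeta_a$, $\beta_a$, $W^n_a$ I would first ``split off'' a copy of $P_{\delta}$ for small $\delta>0$ and then let $\delta\to 0$.

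First I would treat $\zeta_a$. By the second equation in \eqref{eq:cond:W-zeta} (``moving $\zeta_a$ around'') with suitable choice of the three parameters, $\zeta_{a}$ can be expressed in terms of $\zeta$ at a different parameter pre- and post-composed with $P$'s; more directly, I expect $\zeta_{a_1+a_2} = P_{a_1}\circ \zeta_{a_2}$ or a similarly-shaped identity to follow from \eqref{eq:cond:W-zeta} applied together with the relation between $P_a$, $\zeta_a$ and $W^2_a$ in \eqref{eq:state-sum-data-notation} (which defines $P_{a_1+a_2}$ as $\zeta_{a_1}$ composed with the $2$-plaquette weight $W^2_{a_2}$). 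Concretely: write $a = a_1 + a_2$ with both positive; using the moving-$\zeta$ relation and the definition of $P$, one obtains $\zeta_a = P_{a_1} \circ \zeta_{a_2}$ for fixed $a_2$, say $a_2 = a/2$ once $a$ is bounded away from $0$ — but since we want continuity at every $a>0$, it is cleaner to fix a small $\delta>0$, write $\zeta_a = P_\delta \circ \zeta_{a-\delta}$ for $a>\delta$, note the right-hand side is continuous in $a$ on $(\delta,\infty)$ because $P_\delta$ is a fixed morphism and composition is separately continuous while $a\mapsto\zeta_{a-\delta}$ is... — this is circular, so instead I would argue: from $\zeta_a = P_{a_1}\circ\zeta_{a_2}$, taking $a_2$ fixed and $a_1 = a - a_2$ variable, continuity of $a\mapsto\zeta_a$ on $(a_2,\infty)$ follows from \emph{joint} continuity of $a_1\mapsto P_{a_1}$ (condition \ref{cond:approxid}, $n=1$) composed with the fixed morphism $\zeta_{a_2}$; since $a_2>0$ is arbitrary, $\zeta$ is continuous on all of $\Rb_{>0}$.

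Next, $\beta_a$: the first equation in \eqref{eq:cond:W-zeta} and condition \ref{cond:cyclicsymm} (cyclic symmetry of $\beta_a$, i.e.\ $\beta_a = \beta_a \circ \sigma_{A,A}$) should give $\beta_a = \beta_{a_1}\circ(P_{a_2}\otimes\id_A) = \beta_{a-a_2}\circ(P_{a_2}\otimes\id_A)$ for a fixed small $a_2>0$; more usefully, $\beta_a = \beta_{a_2}\circ(P_{a-a_2}\otimes\id_A)$ with $a_2$ fixed, which exhibits $\beta_a$ as (fixed morphism)$\,\circ\,(P_{a-a_2}\otimes\id_A)$, and $a\mapsto P_{a-a_2}\otimes\id_A = (P_{a-a_2}\otimes\id_A)$ is continuous by \eqref{eq:state-sum-pacont} with $n=1$ tensored with the identity — or rather by using \eqref{eq:state-sum-pacont} directly in the form appropriate here. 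For $W^n_a$, condition \ref{cond:gluer} (glueing plaquette weights) lets me write $W^n_a$ as the contraction of $W^{n'}_{a_1}$ with a $2$-plaquette weight, and iterating/combining with the definition of $P$ I can extract a factor $\bigotimes_{i=1}^n P_{\eps}$ acting on the $n$ output legs: schematically $W^n_a = \big(\bigotimes_{i=1}^n P_{\eps}\big)\circ W^n_{a - n\eps}$ should hold — actually the cleanest route is $W^n_a = \big(\bigotimes_{i=1}^n P_{a_i}\big) \circ W^n_{a_0}$ for some fixed decomposition with $a_0>0$ fixed and $\sum a_i = a - a_0$, which follows by repeatedly glueing in $2$-plaquettes (each glueing of a $2$-plaquette onto an output leg of $W^n$ implements one $P$, by \eqref{eq:state-sum-data-notation} and \eqref{eq:cond:gluer}). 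Then continuity of $a\mapsto W^n_a$ on $(a_0,\infty)$ follows from joint continuity of $(a_1,\dots,a_n)\mapsto\bigotimes_i P_{a_i}$ (condition \ref{cond:approxid}) composed with the fixed morphism $W^n_{a_0}$, and letting $a_0\to 0$ covers all of $\Rb_{>0}$.

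The main obstacle I anticipate is purely bookkeeping: getting the graphical identities \eqref{eq:cond:cyclicsymm}--\eqref{eq:cond:W-zeta} into exactly the algebraic form ``fixed morphism composed with a tensor product of $P$'s composed with a fixed morphism'' with the variable area appearing only inside the $P$'s — in particular making sure, for $W^n_a$, that one can route \emph{all} of the variable area through $P$'s on the legs rather than being stuck with a residual $W^n$ whose parameter still varies. I expect condition \ref{cond:gluer} together with the definition of $P_a$ in \eqref{eq:state-sum-data-notation} to be exactly what makes this possible, so the argument should go through, but care is needed with the precise allocation of the $a_i$'s so that all of them stay strictly positive (which is why one works on $(a_0,\infty)$ for a fixed small $a_0$ and then takes $a_0\to0$). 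Condition \ref{cond:selfgluer} (removing a bubble) and condition \ref{cond:zerocylinder} are not needed for this lemma; only \ref{cond:cyclicsymm}, \ref{cond:gluer}, \ref{cond:W-zeta}, \ref{cond:approxid} and separate continuity of composition in $\Sc$ enter.
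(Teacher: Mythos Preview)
Your proposal is correct and follows essentially the same strategy as the paper: factor each family through a tensor product of $P$'s carrying the variable parameter and invoke condition~\ref{cond:approxid} together with separate continuity of composition. Two small points where the paper is slightly cleaner: for $W^n_a$ it suffices to place a single $P_\eps$ on \emph{one} leg (via condition~\ref{cond:gluer}) rather than $n$ of them, since condition~\ref{cond:approxid} with all but one parameter set to $0$ already gives continuity of $\id^{\otimes(n-1)}\otimes P_a$; and for $\zeta_a$ the paper first derives the auxiliary identity $\zeta_a\circ P_{b+c}=\zeta_{a+b}\circ P_c$ from conditions~\ref{cond:gluer} and~\ref{cond:W-zeta} and then takes $c\to 0$, which sidesteps having to establish $\zeta_{a_1+a_2}=P_{a_1}\circ\zeta_{a_2}$ (or its mirror) directly.
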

\begin{proof}
We only sketch that $a\mapsto W_a^n$ and $a\mapsto\zeta_a$ are
continuous.  By using Condition~\ref{cond:gluer}
	we have that 
	\begin{align}
		\begin{aligned}
			\def\svgwidth{10cm}
\begingroup%
  \makeatletter%
  \providecommand\color[2][]{%
    \errmessage{(Inkscape) Color is used for the text in Inkscape, but the package 'color.sty' is not loaded}%
    \renewcommand\color[2][]{}%
  }%
  \providecommand\transparent[1]{%
    \errmessage{(Inkscape) Transparency is used (non-zero) for the text in Inkscape, but the package 'transparent.sty' is not loaded}%
    \renewcommand\transparent[1]{}%
  }%
  \providecommand\rotatebox[2]{#2}%
  \newcommand*\fsize{\dimexpr\f@size pt\relax}%
  \newcommand*\lineheight[1]{\fontsize{\fsize}{#1\fsize}\selectfont}%
  \ifx\svgwidth\undefined%
    \setlength{\unitlength}{268.09593058bp}%
    \ifx\svgscale\undefined%
      \relax%
    \else%
      \setlength{\unitlength}{\unitlength * \real{\svgscale}}%
    \fi%
  \else%
    \setlength{\unitlength}{\svgwidth}%
  \fi%
  \global\let\svgwidth\undefined%
  \global\let\svgscale\undefined%
  \makeatother%
  \begin{picture}(1,0.18730011)%
    \lineheight{1}%
    \setlength\tabcolsep{0pt}%
    \put(0,0){\includegraphics[width=\unitlength,page=1]{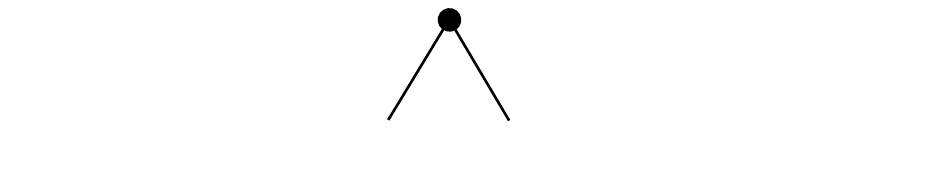}}%
    \put(0.51409698,0.16179472){\color[rgb]{0,0,0}\makebox(0,0)[lt]{\lineheight{0}\smash{\begin{tabular}[t]{l}\scriptsize{$\eps_1$}\end{tabular}}}}%
    \put(0,0){\includegraphics[width=\unitlength,page=2]{Wna-cont.pdf}}%
    \put(0.55707031,0.01750746){\color[rgb]{0,0,0}\makebox(0,0)[lt]{\lineheight{0}\smash{\begin{tabular}[t]{l}\scriptsize{$\eps_2,2$}\end{tabular}}}}%
    \put(0,0){\includegraphics[width=\unitlength,page=3]{Wna-cont.pdf}}%
    \put(0.31632851,0.01801957){\color[rgb]{0,0,0}\makebox(0,0)[lt]{\lineheight{0}\smash{\begin{tabular}[t]{l}\scriptsize{$a-\eps;n$}\end{tabular}}}}%
    \put(0,0){\includegraphics[width=\unitlength,page=4]{Wna-cont.pdf}}%
    \put(0.35176646,0.11011334){\color[rgb]{0,0,0}\makebox(0,0)[lt]{\lineheight{0}\smash{\begin{tabular}[t]{l}$\dots$\end{tabular}}}}%
    \put(0,0){\includegraphics[width=\unitlength,page=5]{Wna-cont.pdf}}%
    \put(0.08133804,0.01801957){\color[rgb]{0,0,0}\makebox(0,0)[lt]{\lineheight{0}\smash{\begin{tabular}[t]{l}\scriptsize{$a;n$}\end{tabular}}}}%
    \put(0,0){\includegraphics[width=\unitlength,page=6]{Wna-cont.pdf}}%
    \put(0.08880084,0.11011334){\color[rgb]{0,0,0}\makebox(0,0)[lt]{\lineheight{0}\smash{\begin{tabular}[t]{l}$\dots$\end{tabular}}}}%
    \put(0,0){\includegraphics[width=\unitlength,page=7]{Wna-cont.pdf}}%
    \put(0.22867615,0.07094826){\color[rgb]{0,0,0}\makebox(0,0)[lt]{\lineheight{0}\smash{\begin{tabular}[t]{l}$=$\end{tabular}}}}%
    \put(0,0){\includegraphics[width=\unitlength,page=8]{Wna-cont.pdf}}%
    \put(0.83666468,0.01801957){\color[rgb]{0,0,0}\makebox(0,0)[lt]{\lineheight{0}\smash{\begin{tabular}[t]{l}\scriptsize{$a-\eps;n$}\end{tabular}}}}%
    \put(0,0){\includegraphics[width=\unitlength,page=9]{Wna-cont.pdf}}%
    \put(0.83853247,0.11011334){\color[rgb]{0,0,0}\makebox(0,0)[lt]{\lineheight{0}\smash{\begin{tabular}[t]{l}$\dots$\end{tabular}}}}%
    \put(0,0){\includegraphics[width=\unitlength,page=10]{Wna-cont.pdf}}%
    \put(0.70984719,0.07094826){\color[rgb]{0,0,0}\makebox(0,0)[lt]{\lineheight{0}\smash{\begin{tabular}[t]{l}$=$\end{tabular}}}}%
    \put(0,0){\includegraphics[width=\unitlength,page=11]{Wna-cont.pdf}}%
    \put(0.94603225,0.10592225){\color[rgb]{0,0,0}\makebox(0,0)[lt]{\lineheight{0}\smash{\begin{tabular}[t]{l}$P_{\eps}$\end{tabular}}}}%
    \put(0,0){\includegraphics[width=\unitlength,page=12]{Wna-cont.pdf}}%
  \end{picture}%
\endgroup%

		\end{aligned}\ ,
		\label{eq:Wna-cont}
	\end{align}
	for every $a\ge\varepsilon\in\Rb_{>0}$ with $\eps=\eps_1+\eps_2$.
So by separate continuity of the composition and Condition~\ref{cond:approxid},
	the assignment $a\mapsto W_a^n$ is continuous. 
To see continuity of $\zeta_a$ we first use
Conditions~\ref{cond:gluer}~and~\ref{cond:W-zeta} and we get that
	\begin{align}
		\zeta_a\circ P_{b+c}=\zeta_{a+b}\circ P_c
		\label{eq:zeta-cont}
	\end{align}
	for every $a,b,c\in\Rb_{>0}$. Condition~\ref{cond:approxid} now allows us to take the limit $c\to0$, and continuity again follows from that of $P_{b+c}$.
\end{proof}

\medskip

Let us fix state-sum data $\Ab$
using the notation of \eqref{eq:def:pdata}.
In the rest of this section we define a symmetric monoidal functor $\funZ_{\Ab}:\Bordarea\to\Sc$
using this data.

The next lemma is best proved after having established the relation between state-sum data and RFAs in Lemma~\ref{lem:data2rfa} below, when it becomes a direct consequence of Lemma~\ref{lem:d0-split-idempot-center} and we omit the proof.

\begin{lemma}\label{lem:Da-additive}
	We have that
\begin{align}
	{D}_a\circ {D}_b={D}_{a+b}
	\label{eq:da-d0}
\end{align} 
for every $a,b\in\Rb_{\ge0}$.
In particular, the morphism ${D}_0:=\lim_{a\to0}{D}_a\in\Sc(A,A)$ is idempotent.
	\label{lem:D0-idempotent}

\end{lemma}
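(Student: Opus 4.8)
The plan is to read both sides of \eqref{eq:da-d0} as string diagrams built from the state-sum data via the definitions in \eqref{eq:state-sum-data-notation}, and to reduce $D_a\circ D_b$ to $D_{a+b}$ using only Conditions~\ref{cond:cyclicsymm}--\ref{cond:W-zeta}. Geometrically this is the statement that glueing two ``cylinder'' plaquette configurations end to end produces the same operator as a single cylinder with the combined area, so one expects it to follow purely from the glueing axiom together with bubble removal, independently of the later RFA machinery.

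Concretely, I would first split $a=a_0+a_1+a_2+a_3$ and $b=b_0+b_1+b_2+b_3$ and draw $D_a\circ D_b$ as the two corresponding configurations of \eqref{eq:state-sum-data-notation} stacked vertically, joined along the contraction $\beta$ that pairs the ``output'' leg of the lower $W^4$ with the ``input'' leg of the upper one. Applying the glueing rule for plaquette weights \eqref{eq:cond:gluer} fuses the two copies of $W^4$ into a single plaquette weight $W^6_{c}$, where $c$ is the sum of the two plaquette areas and the connecting contraction area. Next I would use cyclic symmetry \eqref{eq:cond:cyclicsymm} to bring each of the two ``loop'' contractions into a position where the bubble-removal rule \eqref{eq:cond:selfgluer} applies, and use it twice; this leaves a plaquette weight with four legs -- one output, one paired with the incoming object, one loop -- with all areas accumulated correctly. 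A final application of \eqref{eq:cond:selfgluer} (or, if a vertex weight $\zeta$ enters the picture for $D$, of \eqref{eq:cond:W-zeta}) together with \eqref{eq:cond:cyclicsymm} puts the diagram into the exact shape of \eqref{eq:state-sum-data-notation} for area $a+b$. Since Conditions~\ref{cond:cyclicsymm}--\ref{cond:W-zeta} guarantee that all intermediate morphisms depend only on the sums of the area labels, the choices of splitting drop out and one obtains $D_a\circ D_b=D_{a+b}$ for $a,b\in\Rb_{>0}$. The extension to $a,b\in\Rb_{\ge 0}$ is then obtained by taking limits and using that $\lim_{c\to 0}D_c$ exists (Condition~\ref{cond:zerocylinder}) together with separate continuity of composition; specialising to $a=b=0$ gives $D_0\circ D_0=D_0$.

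The main obstacle I anticipate is the area bookkeeping across the repeated rewrites: each application of \eqref{eq:cond:gluer} and \eqref{eq:cond:selfgluer} redistributes the area parameters, and one must verify that after all reductions the surviving free parameters can be chosen so that the result is \emph{literally} of the form \eqref{eq:state-sum-data-notation} with total area $a+b$ -- which in turn presupposes checking that $D_c$ is independent of the chosen decomposition of $c$. An alternative and cleaner route, which is the one the surrounding text adopts, is to first establish the dictionary between state-sum data and (not necessarily commutative) RFAs (Lemma~\ref{lem:data2rfa}): under it $D_a$ is identified with the composite of the semigroup element $P_a$ with the idempotent onto the centre, so that \eqref{eq:da-d0} reduces at once to $P_a\circ P_b=P_{a+b}$ (the analogue of Part~\ref{lem:ra:semigrp} of Lemma~\ref{lem:ra:properties}) together with the fact that this idempotent commutes with the $P_a$'s, cf.\ Corollary~\ref{cor:centercycltens}. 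I would record the diagrammatic argument but note that it is subsumed by the RFA picture.
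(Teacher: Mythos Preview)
Your second route, via the RFA dictionary, is exactly the paper's approach: the text immediately preceding the lemma says it ``is best proved after having established the relation between state-sum data and RFAs in Lemma~\ref{lem:data2rfa} below, when it becomes a direct consequence of Lemma~\ref{lem:d0-split-idempot-center}.'' Concretely, Lemma~\ref{lem:rfa2data} identifies $D_a$ with $\tilde D_a=\zeta_{a_1}\circ\mu_{a_2}\circ\sigma\circ\Delta_{a_3}$, and the semigroup relation is then the Frobenius-algebra computation recorded in \eqref{eq:D-U-semigroup} (specialised to $U=A$) and restated in Lemma~\ref{lem:d0-split-idempot-center}. One small correction: the description ``$D_a$ is $P_a$ composed with the centre idempotent'' is a \emph{consequence} of the semigroup property together with Corollary~\ref{cor:centercycltens}, not the starting point of the argument; the actual input is the direct RFA/separability computation.

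Your first route, the purely diagrammatic reduction using Conditions~\ref{cond:cyclicsymm}--\ref{cond:W-zeta}, has a genuine gap. After fusing the two copies of $W^4$ into a single $W^6$ via Condition~\ref{cond:gluer}, the two remaining self-contractions are between \emph{non-adjacent} legs of $W^6$: in cyclic order the legs read $(s_1,\mathrm{in},s_1',s_2,\mathrm{out},s_2')$ with the loops pairing $s_1\!\leftrightarrow\! s_1'$ and $s_2\!\leftrightarrow\! s_2'$, each straddling an external leg. Cyclic symmetry rotates all legs together and cannot change which pairs are contracted, so neither loop can be brought into the adjacent-leg form required by Condition~\ref{cond:selfgluer}. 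What you actually need here is the removal of a \emph{bivalent} vertex (the move in Figure~\ref{fig:edgemove-aqft}\,$a$), and, as the proof of Theorem~\ref{thm:state-sum-aqft} explains, that move is not primitive but is derived from Conditions~\ref{cond:gluer} and~\ref{cond:selfgluer} via the univalent-vertex trick of \cite[Lem.\,3.5]{Davydov:2011dt}. So the diagrammatic argument can be made to work, but it requires that more elaborate manoeuvre rather than two straightforward bubble removals.
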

Recall that we assumed that $\Sc$ is idempotent complete, so the idempotent $D_0$ splits:
let $Z(A)\in\Sc$ denote its image and let us write
\begin{align}
	D_0=
	&
	\left[ A\xrightarrow{\pi_{A}}Z(A)\xrightarrow{\iota_{A}}A \right]\ , 
	&& \left[ Z(A)\xrightarrow{\iota_{A}}A\xrightarrow{\pi_{A}}Z(A) \right] =\id_{Z(A)}\ ,
	\label{eq:d0-split-idempot}
\end{align}

We define the {\aQFT} $\funZ_{\Ab}$ on objects as follows:
Let $S\in\Bordarea$. Then
\begin{align}
	\funZ_{\Ab}(S):=\bigotimes_{x\in \pi_0(S)}Z(A)^{(x)}\ ,
	\label{eq:tft:obj-aqft}
\end{align}
where $Z(A)^{(x)}=Z(A)$ and the superscript is used to label the tensor factors.

In the remainder of this section we give the definition of $\funZ_{\Ab}$ on morphisms.
Let $(\Sigma,\Ac):S\to T$ be a bordism with area
and let us assume that $(\Sigma,\Ac)$ has no component with zero area.
Choose a PLCW decomposition with area $\Sigma_k$, $\Ac_k$ for $k\in\left\{ 0,1,2 \right\}$
of the surface with area $(\Sigma,\Ac)$,
such that the PLCW decomposition has exactly 1 edge on every boundary component.
By this convention $\pi_0(S)\sqcup\pi_0(T)$
is in bijection with vertices on the boundary and with edges on the boundary.

\begin{figure}[tb]
	\centering
	\def\svgwidth{9.5cm}
	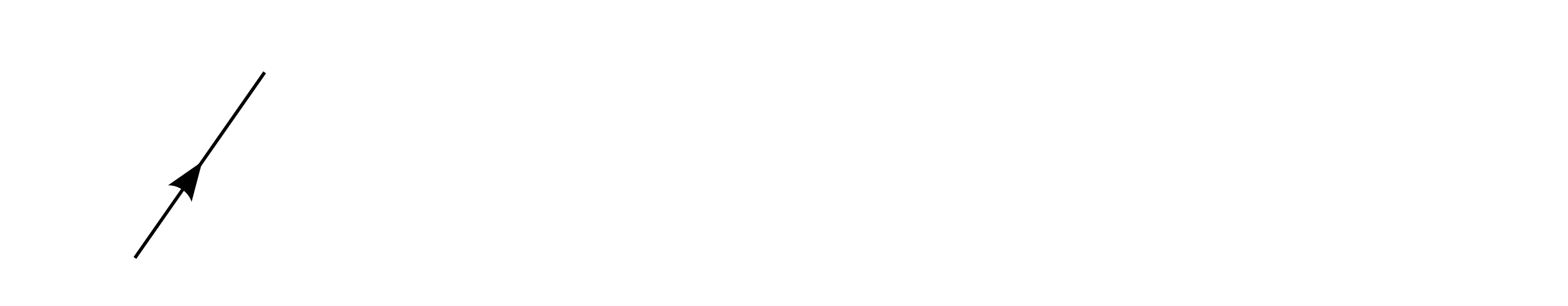
	\caption{$a)$ Left and right sides $(e,l)$ and $(e,r)$ of an inner edge $e$,
		determined by the orientation of $\Sigma$ (paper orientation) 
		and of $e$ (arrow).
		$b)$ Convention for connecting tensor factors belonging to edge sides $(e,l)$ and $(e,r)$ of an inner edge $e$ with the tensor factors belonging to the morphism $\beta_{\Ac_1(e)}^{(e)}$.
		$c)$ Conventions for the labels of the tensor factors for an ingoing boundary edge $f$ with $(f,l)\in E$.
	}
	\label{fig:connect-aqft}
\end{figure}

Let us choose an edge 
for every face before glueing, which we call \textsl{marked edge},
and let us choose an orientation of every edge.
For a face $f\in\Sigma_2$ which is an $n_f$-gon
let us write $(f,k)$, $k=1,\dots,n_f$ for the sides of $f$,
where $(f,1)$ denotes the  marked
edge of $f$, and the labeling proceeds counter-clockwise 
with respect to the orientation of $f$.
We collect the sides of all faces into a set:
\begin{align}
	F := \setc*{(f,k)}{f \in \Sigma_2 , k=1,\dots,n_f } \ .
\end{align}

We double the set of edges by considering $\Sigma_1 \times \{l,r\}$, where ``$l$'' and ``$r$'' stand for left and right, respectively. Let $E \subset \Sigma_1 \times \{l,r\}$ be the subset of all $(e,l)$ (resp.\ $(e,r)$), which 
have a face attached on the left (resp.\ right) side, cf.\ Figure~\ref{fig:connect-aqft}~$a)$. Thus for an inner edge $e\in\Sigma_1$ the set $E$ contains both $(e,l)$ and $(e,r)$,
but for a boundary edge $e'\in\Sigma_1$ the set $E$ contains either
$(e',l)$ or $(e',r)$.
By construction of $F$ and $E$ we obtain a bijection
	\begin{align}
		\Phi:F\xrightarrow{~\sim~}E
		\ , \quad (f,k)\mapsto(e,x) \ ,
		\label{eq:face-side-edge-bijection}
	\end{align}
where $e$ is the $k$'th edge on the boundary of the face $f$ lying on the side $x$ of $e$, counted counter-clockwise from the marked edge of $f$.

		For every vertex $v\in\Sigma_0$ in the interior of $\Sigma$ or
		on an 
		ingoing 
		boundary component of $\Sigma$ choose a side of an edge $(e,x)\in E$ 
		for which $v\in\partial(e)$.
Let 
\begin{align}
V : 	\Sigma_0 \setminus \pi_0(T) \to E
\label{eq:choose-vertex-edge}
\end{align}
be the resulting function.

To define $\funZ_{\Ab}(\Sigma,\Ac)$ we proceed with the following steps.
\begin{enumerate}
	\item Let us introduce the tensor products
		\begin{align}
			\begin{aligned}
			\Oc_{F}&:=\bigotimes_{(f,k) \in F}A^{(f,k)}\ ,&
			\Oc_{E}&:=\bigotimes_{(e,x) \in E}A^{(e,x)}\ ,\\
			\Oc_\mathrm{in}&:=\bigotimes_{b\in \pi_0(S)}A^{(b,in)}\ ,&
			\Oc_\mathrm{out}&:=\bigotimes_{c\in \pi_0(T)}A^{(c,out)}\ .
			\end{aligned}
			\label{eq:bigtensor-aqft}
		\end{align}
	Every tensor factor is equal to $A$, but
	the various superscripts will help us distinguish tensor factors in the source and target objects of the morphisms we define in the remaining steps.
	\item 
	Recall that by our conventions there is one edge in each boundary component and that we identified outgoing boundary edges with $\pi_0(T)$.
	Define the morphism
	\begin{align}
		\Cc:= \bigotimes_{e\in \Sigma_1\setminus \pi_0(T)} 
		\beta_{\Ac_1(e)}^{(e)}:
		\Oc_\mathrm{in}\otimes\Oc_E\to\Oc_\mathrm{out}\ ,
		\label{eq:step4-aqft}
	\end{align}
	where $\beta_{\Ac_1(e)}^{(e)}=\beta_{\Ac_1(e)}$, and where the
	tensor factors in $\Oc_\mathrm{in}\otimes\Oc_E$ are assigned to those of $\beta_{\Ac_1(e)}$ according to  Figure~\ref{fig:connect-aqft}~$b)$~and~$c)$.
	\item Define the morphism
		\begin{align}
			\Yc
			:=\prod_{v\in\Sigma_0\setminus \pi_0(T)}
			\zeta_{\Ac_0(v)}^{(V(v))}
			\in\Sc(\Oc_E,\Oc_E)\ ,
			\label{eq:step2-aqft}
		\end{align}
		where
		\begin{align}
			\zeta_a^{(e,x)} =  \id \otimes \cdots \otimes \zeta_a \otimes \cdots \otimes \id
			\in\Sc(\Oc_E,\Oc_E) \ ,
		\end{align}
		where $\zeta_a$ maps the tensor factor $A^{(e,x)}$ to itself. 
	\item 
		Assign to every face $f\in\Sigma_2$ obtained from an $n_f$-gon 
		the morphism 
		\begin{align}
			W_{\Ac_2(f)}^{f}=W_{\Ac_2(f)}^{(n_f)} :
			\Ib\to A_{(f,1)} \otimes \cdots \otimes A_{(f,n_f)}
			\label{eq:step1.1-aqft}
		\end{align}
		and take their tensor product:
		\begin{align}
			\Fc:=\bigotimes_{f\in\Sigma_2}\left( W_{\Ac_2(f)}^{f}\right):\Ib\to\Oc_{F} \ .
			\label{eq:step1-aqft}
		\end{align}
	\item 
		\label{step3-aqft}
We will now put the above morphisms together to obtain a morphism 
$\mathcal{L} : \mathcal{A}_\mathrm{in} \to \mathcal{A}_\mathrm{out}$. Denote by $\Uppi_\Phi$ the permutation of tensor factors
induced by $\Phi:F\to E$,
		\begin{align}
			\Uppi_\Phi:\Oc_F\to\Oc_E \ .
			\label{eq:step3-aqft}
		\end{align}
Using this, we define
	\begin{align}	
		\Kc&:=\left[\Ib\xrightarrow{\Fc}\Oc_F\xrightarrow{\Uppi_\Phi}\Oc_E
		\xrightarrow{\Yc}
		\Oc_E\right]\ ,\\
		\Lc&:=
		\left[ \Oc_\mathrm{in}\xrightarrow{\id_{\Oc_\mathrm{in}}\otimes\Kc} \Oc_\mathrm{in}\otimes\Oc_{E} \xrightarrow{\Cc} \Oc_\mathrm{out}  \right]\ .
		\label{eq:step6a-aqft}
	\end{align}
	\item 
Using the embedding and projection maps 
$\iota_{A}$, $\pi_{A}$ from \eqref{eq:d0-split-idempot}
we construct the morphisms:
		\begin{align}
			\Ec_\mathrm{in}:=& \bigotimes_{b\in \pi_0(S)}\iota_{A}^{(b)}:\funZ_{\Ab}(S)\to\Oc_\mathrm{in}\ ,&
			\Ec_\mathrm{out}:=&\bigotimes_{c\in \pi_0(T)}\pi_{A}^{(c)}:\Oc_\mathrm{out}\to \funZ_{\Ab}(T)\ ,
			\label{eq:step5out-aqft}
		\end{align}
		where $\iota_{A}^{(b)}=\iota_{A}:Z(A)^{(b)}\to A^{(b)}$ and
		$\pi_{A}^{(b)}=\pi_{A}:A^{(b)}\to Z(A)^{(b)}$.
		We have all ingredients to define the action of $\funZ_{\Ab}$ on morphisms:		
		\begin{align}				\funZ_{\Ab}(\Sigma,\Ac)&:=
			\left[ \funZ_{\Ab}(S)\xrightarrow{\Ec_\mathrm{in}} \Oc_\mathrm{in}\xrightarrow{\Lc} \Oc_\mathrm{out}\xrightarrow{\Ec_\mathrm{out}} \funZ_{\Ab}(T)\right]\ .
			\label{eq:step6b-aqft}
		\end{align}
\end{enumerate}

	\medskip

	Now that we defined $\funZ_{\Ab(\Db)}$ on bordisms with strictly positive area, we turn to the general case.
	Let $(\Sigma,\Ac):S\to T$ be a bordism with area and 
let $\Sigma_+:S_+\to{T}_+$ denote the connected component of $(\Sigma,\Ac)$ with strictly positive area. 
We have that in $\Bordarea$
\begin{align}
(\Sigma,\Ac)=({\Sigma}_+,\Ac_+)\sqcup(\Sigma\setminus{\Sigma}_+,0)\ ,
	\label{eq:decomp-perm-area}
\end{align}
where $\Ac_+$ denotes the restriction of $\Ac$ to $\pi_0(\Sigma_+)$.
The bordism with zero area $(\Sigma\setminus{\Sigma}_+,0)$ defines a permutation $\kappa:\pi_0(S\setminus{S}_+)\to\pi_0(T\setminus{T}_+)$.
Let $\funZ_{\Ab}(\Sigma\setminus{\Sigma}_+,0):\funZ_{\Ab}(S\setminus{S_+})\to \funZ_{\Ab}(T\setminus{T_+})$ be the induced permutation of tensor factors.
We define
\begin{align}
	\funZ_{\Ab}(\Sigma,\Ac):=\funZ_{\Ab}(\Sigma\setminus{\Sigma}_+,0)\otimes \funZ_{\Ab}({\Sigma}_+,\Ac_+)\ ,
	\label{eq:za-fulldef}
\end{align}
where $\funZ_{\Ab}({\Sigma}_+,\Ac_+)$ is defined in \eqref{eq:step6b-aqft}.

\begin{theorem} \label{thm:state-sum-aqft}
Let $\Ab$ be state-sum data.
	\begin{enumerate}
		\item The morphism defined in \eqref{eq:step6b-aqft} is 
			independent of the choice of the PLCW decomposition with area,
			the choice of marked
			edges of faces, 
			the choice of orientation of edges
			and the assignment $V$. \label{thm:aqft:1}
		\item The state-sum construction yields an 
			{\aQFT} $\funZ_{\Ab}:\Bordarea\to\Sc$
			whose action on objects and morphisms is given by 
			\eqref{eq:tft:obj-aqft} and \eqref{eq:za-fulldef}, respectively.  \label{thm:aqft:2}
	\end{enumerate}
\end{theorem}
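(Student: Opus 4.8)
The plan is to follow the well-trodden argument for state-sum 2d TFTs (as in \cite{Bachas:1993lat,Fukuma:1994sts,Lauda:2007oc,Davydov:2011dt}), but carefully tracking the area labels, and to reduce the new continuity statement to facts already established about the semigroups $P_a$ and $D_a$. First I would prove Part~\ref{thm:aqft:1}, i.e.\ independence of all auxiliary choices. The choice of marked edge of each face is handled by the cyclic symmetry of $W^n_a$ in Condition~\ref{cond:cyclicsymm} together with the cyclic symmetry of $\beta_a$; the choice of orientation of each edge is handled by the symmetry of $\beta_a$ (its invariance under $\sigma_{A,A}$, again Condition~\ref{cond:cyclicsymm}); and the choice of the vertex-to-edge-side assignment $V$ follows from Condition~\ref{cond:W-zeta} (``moving $\zeta_a$ around''), which lets one slide a $\zeta$-insertion along the boundary of a face and past contractions, so that the total product $\Yc$ only depends on how many $\zeta$'s sit on each face, not on where exactly they are placed. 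The splitting of areas among $\Sigma_0,\Sigma_1,\Sigma_2$ subject to fixed total area per component is irrelevant because all the structure maps $W^n_a$, $\zeta_a$, $\beta_a$ are linked by the glueing/bubble relations, so only the sum enters; concretely, Conditions~\ref{cond:gluer}, \ref{cond:selfgluer}, \ref{cond:W-zeta} express exactly that redistributing area across the three cells, or across adjacent cells, does not change the composite. Finally, invariance under the two elementary moves of Figure~\ref{fig:edgemove-aqft} is the heart of the matter: removing a bivalent vertex is Condition~\ref{cond:W-zeta} combined with Condition~\ref{cond:gluer} (glue two plaquette weights along one edge and absorb the extra $\zeta$), and removing an edge between two faces is Condition~\ref{cond:gluer} directly; removing an edge that bounds a single face on both sides is Condition~\ref{cond:selfgluer} (the bubble-removal relation). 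Since by \cite[Thm.\,7.4]{Kirillov:2012pl} any two PLCW decompositions with one edge per boundary component are related by such moves (keeping the boundary decomposition fixed), this gives well-definedness of \eqref{eq:step6b-aqft}.

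Next, for Part~\ref{thm:aqft:2}, I would check that $\funZ_{\Ab}$ is a symmetric monoidal functor. Monoidality on objects and on disjoint unions is immediate from \eqref{eq:tft:obj-aqft} and \eqref{eq:za-fulldef}, and symmetry is visible because the symmetric structure of $\Bordarea$ is realized by zero-area cylinders, which act on the target as permutations of tensor factors via \eqref{eq:za-fulldef}. The key points are: (i) $\funZ_{\Ab}$ preserves identities, i.e.\ the zero-area in-out cylinder over $\Sb$ is sent to $\id_{Z(A)}$; and (ii) $\funZ_{\Ab}$ is compatible with composition (glueing) of bordisms with area. For (i): a cylinder with strictly positive area $a$ decomposed with one edge on each boundary and one inner edge is evaluated to $D_a$ (this is essentially the definition of $D_a$ in \eqref{eq:state-sum-data-notation}, up to bubble moves), and by Lemma~\ref{lem:Da-additive} $D_a\circ D_b=D_{a+b}$, so conjugating by $\iota_A,\pi_A$ and using $\pi_A\circ\iota_A=\id_{Z(A)}$ and $\iota_A\circ\pi_A=D_0=\lim_{a\to0}D_a$ (which exists by Condition~\ref{cond:zerocylinder}), the value on a zero-area cylinder is $\pi_A\circ D_0\circ\iota_A=\pi_A\circ\iota_A=\id_{Z(A)}$; the general zero-area cylinder is a disjoint union of these, and non-cylinder zero-area components are not allowed. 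For (ii): given $(\Sigma,\Ac):S\to T$ and $(\Xi,\Bc):T\to W$, choose PLCW decompositions that are compatible along $T$ (one edge per boundary circle on each side, glued edge to edge, vertex to vertex); then the glued decomposition computes $\funZ_{\Ab}(\Xi,\Bc)\circ\funZ_{\Ab}(\Sigma,\Ac)$ once one inserts $\iota_A\circ\pi_A=D_0$ on the glued circles, and a bubble/edge move at each glued edge absorbs these $D_0$'s into the adjacent plaquette weights; invariance under the choice made in Part~\ref{thm:aqft:1} then identifies the result with $\funZ_{\Ab}(\Xi\circ\Sigma,\Ac+\Bc)$. The case where $\Sigma$ or $\Xi$ has zero-area components is reduced to the positive-area case via \eqref{eq:decomp-perm-area}--\eqref{eq:za-fulldef} and the fact that zero-area cylinders act as permutations, which are functorially compatible with composition.

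It remains to verify the continuity condition \eqref{eq:aqft:cont}. By Lemma~\ref{lem:enough-to-check-cylinders} it suffices to check continuity on cylinders with area, and since $\funZ_{\Ab}$ on a cylinder over $(\Sb)^{\sqcup n}$ with area parameters $(a_1,\dots,a_n)$ is (a conjugate by the fixed maps $\iota_A,\pi_A$ of) $D_{a_1}\otimes\dots\otimes D_{a_n}$, this reduces to joint continuity of $(a_1,\dots,a_n)\mapsto D_{a_1}\otimes\dots\otimes D_{a_n}$. Each $D_a$ is built from $W^4_a$, two copies of $\beta$, $\zeta$'s and $P$'s (cf.\ \eqref{eq:state-sum-data-notation}); by Lemma~\ref{lem:beta-symm} the maps $a\mapsto\zeta_a$, $a\mapsto\beta_a$, $a\mapsto W^n_a$ are continuous, so $a\mapsto D_a$ is continuous on $\Rb_{>0}$ by separate continuity of composition, and it extends continuously to $a=0$ by Condition~\ref{cond:zerocylinder}; writing $D_a = D_{a/2}\circ D_{a/2}$ and using the semigroup property one gets $D_a = D_0\circ(\text{something continuous})$ near $0$, so $\lim_{a\to 0}D_a=D_0$ in the required sense. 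Joint continuity of the $n$-fold tensor product then follows exactly as in Corollary~\ref{cor:semigrp} by iterating Lemma~\ref{lem:semigrp} in $\Hilb$, and in a general $\Sc$ it is precisely what Condition~\ref{cond:approxid}-type joint continuity of $\bigotimes P_{a_i}$ guarantees once one notes $D_a$ is a continuous function of finitely many $P$'s and the continuous maps $\zeta,\beta,W$. \textbf{The main obstacle} I anticipate is the bookkeeping in the glueing/composition step~(ii): one must argue that the PLCW decompositions on the two pieces can always be chosen compatibly along $T$, that the insertion of $D_0 = \iota_A\circ\pi_A$ at each glued boundary circle is correctly absorbed by a single elementary move, and that none of this disturbs the one-edge-per-boundary-component convention — this is routine for TFTs but needs to be redone keeping the area data and the $Z(A)$-vs-$A$ distinction straight. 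Everything else is an essentially mechanical translation of the topological argument with area labels carried along, which is why I would state it and refer to \cite{Abrams:1996fa,Lauda:2007oc,Davydov:2011dt} for the parts that are unchanged.
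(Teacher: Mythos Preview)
Your overall strategy matches the paper's proof closely: Part~\ref{thm:aqft:1} via the conditions on state-sum data and the elementary PLCW moves, Part~\ref{thm:aqft:2} via the cylinder computation $\Lc=D_a$, the idempotent $D_0=\iota_A\circ\pi_A$, and Lemma~\ref{lem:enough-to-check-cylinders} for continuity. The functoriality argument and the reduction of continuity to $\bigotimes P_{a_i}$ (via $D_a=D_\eps\circ P_{a-\eps}$-type identities and Condition~\ref{cond:approxid}) are exactly what the paper does.

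There is one genuine discrepancy. For the bivalent-vertex move (Figure~\ref{fig:edgemove-aqft}\,$a)$) you claim it follows from Conditions~\ref{cond:W-zeta} and~\ref{cond:gluer} alone (``glue two plaquette weights along one edge and absorb the extra~$\zeta$''). This is not enough: after glueing $f$ and $f'$ along $e$ you obtain a single plaquette in which the two $e'$-sides are adjacent outputs, and removing that self-loop together with the $\zeta$ at $w''$ is precisely Condition~\ref{cond:selfgluer}, not Condition~\ref{cond:gluer}. The paper handles this move by the trick of \cite[Lem.\,3.5]{Davydov:2011dt}: one reduces the bivalent-vertex move to the addition/removal of a \emph{univalent} vertex (Figure~\ref{fig:univalent-aqft}), which is exactly what Condition~\ref{cond:selfgluer} encodes, combined with edge moves (Condition~\ref{cond:gluer}). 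Relatedly, your item ``removing an edge that bounds a single face on both sides is Condition~\ref{cond:selfgluer}'' is not one of the elementary PLCW moves (it would produce a non-disk face); Condition~\ref{cond:selfgluer} instead governs the univalent-vertex configuration. So your list of which condition proves which move needs to be corrected: edge removal between two (possibly equal only in the sense of the caption of Figure~\ref{fig:edgemove-aqft}\,$a)$, not $b)$) faces is Condition~\ref{cond:gluer}; the bivalent-vertex move needs the univalent-vertex trick and hence Condition~\ref{cond:selfgluer}.
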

\begin{proof}
	\textsl{Part~\ref{thm:aqft:1}:}\\
	First let us fix a PLCW decomposition with area.
	Independence on the choice of edges for faces and orientation of edges follows directly from 
	Condition~\ref{cond:cyclicsymm}.
	Independence on the assignment $V$ follows 
	from iterating Conditions~\ref{cond:cyclicsymm}~and~\ref{cond:W-zeta}.

	\begin{figure}[tb]
		\centering
		\def\svgwidth{8cm}
		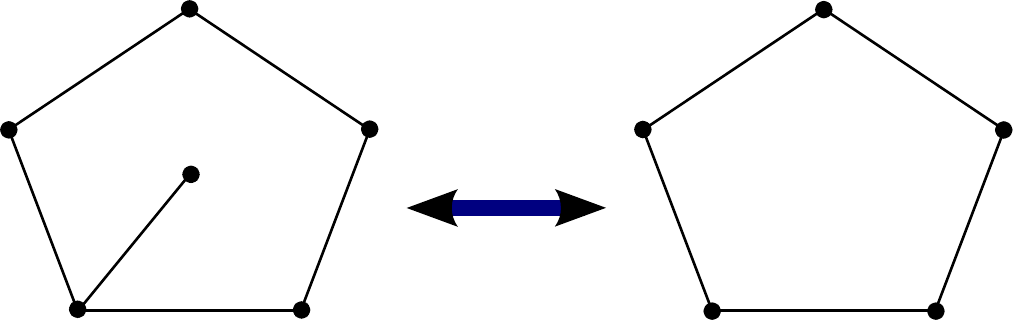
		\caption{A face with a univalent vertex.}
		\label{fig:univalent-aqft}
	\end{figure}

	In order to show independence on the PLCW decomposition with area
	first notice that all conditions on $\Ab$ depend on the sum of the parameters.
	This implies that the construction is independent of the distribution of area, 
	i.e.\ the maps $\Ac_k$ ($k\in\{0,1,2\}$).
	We need to check that the construction yields the same morphism
	for two different PLCW decompositions, but for this it is enough to
	check invariance under the elementary moves in Figure~\ref{fig:edgemove-aqft}.
	Invariance under removing or adding an edge (Figure~\ref{fig:edgemove-aqft}~$b)$)
	follows from Condition~\ref{cond:gluer}.
	To show invariance under splitting an edge by adding a vertex (Figure~\ref{fig:edgemove-aqft}~$a)$)
	we use the trick used in the proof of \cite[Lem.\,3.5]{Davydov:2011dt}.
	There the edge splitting is done inside a 2-gon, and that move in turn follows if one is allowed to add and remove univalent vertices as shown in Figure~\ref{fig:univalent-aqft} (together with adding edges as in Figure~\ref{fig:edgemove-aqft}~$b)$).
	But this follows from Condition~\ref{cond:selfgluer}.

	\medskip

\noindent
	\textsl{Part~\ref{thm:aqft:2}:}\\
	We start by showing
	that if $(S\times[0,1],\Ac):S\to S$ is an in-out cylinder
	with positive area then the assignment
	\begin{align}
		(\Ac(x))_{x\in\pi_0(S\times[0,1])}\mapsto \funZ_{\Ab}(S\times[0,1],\Ac)
		\label{eq:cylinder-cont}
	\end{align}
	is continuous and the limit
	\begin{align}
		\lim_{\Ac\to0}\funZ_{\Ab}(S\times[0,1],\Ac):\funZ_{\Ab}(S)\to \funZ_{\Ab}(S)
		\label{eq:cylinder-limit}
	\end{align}
	is a permutation of tensor factors.
	
	Let us consider one connected component of $S\times[0,1]$. 
	By Part~\ref{thm:aqft:1}, we can pick a PLCW decomposition of 
	this cylinder
	which consists of a square with two opposite edges identified, and the other two edges being the in- and outgoing boundary components.
	The morphism $\Lc$ from \eqref{eq:step6a-aqft} is exactly $D_a$ from \eqref{eq:state-sum-data-notation}, where $a$ is the area of this component.
		
	Now by looking at $\funZ_{\Ab}(S\times[0,1],\Ac)$ with different area maps, we see that
	the difference is in the $\Lc$ maps in \eqref{eq:step6a-aqft}, and is given by a
	factor of $\bigotimes_{x\in\pi_0(S)}P_{a_x}$ for some $a_x\in\Rb_{\ge_0}$.
	Therefore by separate continuity of the composition in $\Sc$ and by Condition~\ref{cond:approxid},
	the assignments in \eqref{eq:cylinder-cont} are continuous for all positive parameters.
	By Condition~\ref{cond:zerocylinder} the limits in \eqref{eq:cylinder-limit} exist,
	and we get the required permutations.

	Next we show functoriality.
	We now assume that all components of the following bordisms have positive area.
	This is not a restriction since we can always take the areas of in-out cylinders to zero to
	get arbitrary bordisms with area.  Let 
	\begin{align*}
		\left[ S\xrightarrow{(\Sigma,\Ac_\Sigma)}T\xrightarrow{(\Xi,\Ac_\Xi)} W\right]
	\end{align*}
	be two bordisms with area.
	Pick PLCW decompositions with area so that at every outgoing boundary component of $(\Sigma,\Ac_\Sigma)$
	there is a square with two opposite edges identified and one edge on the boundary.
	Applying $\funZ_{\Ab}$ on them we get
	\begin{align*}				
		\funZ_{\Ab}(\Sigma,\Ac_\Sigma)&:=
		\left[ \funZ_{\Ab}(S)\xrightarrow{\Ec_\mathrm{in}^{\Sigma}} \Oc_\mathrm{in}^{\Sigma}\xrightarrow{\psi\circ\Lc^{\Sigma}} \Oc_\mathrm{out}^{\Sigma}\xrightarrow{\Ec_\mathrm{out}^{\Sigma}} \funZ_{\Ab}(T)\right]\ ,\\
		\funZ_{\Ab}(\Xi,\Ac_\Xi)&:=
		\left[ \funZ_{\Ab}(T)\xrightarrow{\Ec_\mathrm{in}^{\Xi}} \Oc_\mathrm{in}^{\Xi}\xrightarrow{\Lc^{\Xi}} \Oc_\mathrm{out}^{\Xi}\xrightarrow{\Ec_\mathrm{out}^{\Xi}} \funZ_{\Ab}(W)\right]\ ,
	\end{align*}
	where $\psi=\bigotimes_{x\in\pi_0(T)}D_{a_x}$ for some $a_x\in\Rb_{>0}$.
	Note that $\Oc_\mathrm{out}^{\Sigma}=\Oc_\mathrm{in}^{\Xi}$.
	Composing these morphisms yields using \eqref{eq:d0-split-idempot} the morphism
	\begin{align*}				
		\funZ_{\Ab}(\Xi,\Ac_\Xi)\circ \funZ_{\Ab}(\Sigma,\Ac_\Sigma)&=
		\left[ \funZ_{\Ab}(S)\xrightarrow{\Ec_\mathrm{in}^{\Sigma}} \Oc_\mathrm{in}^{\Sigma}\xrightarrow{\psi\circ\Lc^{\Sigma}} \Oc_\mathrm{out}^{\Sigma}\xrightarrow{\psi_0 } \Oc_\mathrm{in}^{\Xi}\xrightarrow{\Lc^{\Xi}} \Oc_\mathrm{out}^{\Xi}\xrightarrow{\Ec_\mathrm{out}^{\Xi}} \funZ_{\Ab}(W)\right]\ ,
	\end{align*}
	where $\psi_0=\bigotimes_{x\in\pi_0(T)}D_{0}$. 
	For the composition of these bordisms with area $(\Xi\circ\Sigma,\Ac_{\Xi\circ\Sigma})$
	pick the PLCW decomposition with area
	obtained by glueing the two decompositions together at the boundary components corresponding to $T$.
By construction, $\Lc^{\Xi\circ\Sigma}$ contains a copy of $D_a$ for every connected component of $T$. Notice that when we compute $\funZ_{\Ab}(\Xi,\Ac_{\Xi})\circ \funZ_{\Ab}(\Sigma,\Ac_{\Sigma})$, by \eqref{eq:d0-split-idempot}, we also get a copy of $D_0$ for every connected component of $T$. 
Since $D_a \circ D_0 = D_a$ by Lemma~\ref{lem:Da-additive},  $D_0$ can be omitted and 	
	the above composition is equal to $\funZ_{\Ab}(\Xi\circ\Sigma,\Ac_{\Xi\circ\Sigma})$.

	The continuity conditions of Lemma~\ref{lem:enough-to-check-cylinders}
	hold, as we have already checked them before;
	monoidality and symmetry follow from the construction,
	so altogether we have shown that $\funZ_{\Ab}$ is indeed an {\aQFT}.
\end{proof}

\begin{remark}
	By looking at this proof we see that Conditions~\ref{cond:cyclicsymm}-\ref{cond:zerocylinder} are not only sufficient, but also necessary, at least if one requires independence
	under the elementary moves of PLCW decompositions locally, that is, for the corresponding maps $\Ib \to A^{\otimes m}$.
\end{remark}

\subsection{State-sum data from RFAs}\label{sec:data-from-rfa}

In this section we investigate the connection between state-sum data for 
	a given object in $\Sc$ and some particular RFA structures on the same object.
	We find in Theorem~\ref{thm:dataisrfa} that there is a one-to-one correspondence between these two.
	Finally we show in Theorem~\ref{thm:latticecenter} that 
	the state-sum {\aQFT} given in terms of an RFA is classified by
	the centre of this RFA, cf.\ Theorem~\ref{thm:aqftrfaequiv}.
	We will keep using the notation of the previous section.

\begin{lemma}\label{lem:data2rfa}
	The state-sum data $\Ab$
	determines a strongly separable symmetric regularised Frobenius 
	algebra structure on $A\in\Sc$ by setting
	\begin{align}
		\eta_{a_1}&:=W_{a_1}^{1}\ , & \mu_{a_1+a_2}&:=
		\left( id_{A}\otimes\tilde{B}_{a_1}^2 \right)
		\circ\left( W_{a_2}^{3}\otimes id_{A^{\otimes 2}} \right)\ , 
		\label{eq:lem:data2rfa:2}\\
		\varepsilon_{a_1+a_2}&:=\beta_{a_1}\circ\left( W_{a_2}^{1}\otimes 
		id_{A}\right)\ , &
		\Delta_{a_1+a_2}&:=\left( id_{A^{\otimes 2}}\otimes\beta_{a_1} \right)
		\circ\left( W_{a_2}^{3}\otimes id_{A} \right)\ , 
		\label{eq:lem:data2rfa:4}
	\end{align}
	for every $a_1,a_2\in\Rb_{>0}$.
	In terms of the graphical calculus these morphisms are:
		\begin{align}
		&\begin{aligned}
			\def\svgwidth{2.5cm}
\begingroup%
  \makeatletter%
  \providecommand\color[2][]{%
    \errmessage{(Inkscape) Color is used for the text in Inkscape, but the package 'color.sty' is not loaded}%
    \renewcommand\color[2][]{}%
  }%
  \providecommand\transparent[1]{%
    \errmessage{(Inkscape) Transparency is used (non-zero) for the text in Inkscape, but the package 'transparent.sty' is not loaded}%
    \renewcommand\transparent[1]{}%
  }%
  \providecommand\rotatebox[2]{#2}%
  \ifx\svgwidth\undefined%
    \setlength{\unitlength}{88.32706299bp}%
    \ifx\svgscale\undefined%
      \relax%
    \else%
      \setlength{\unitlength}{\unitlength * \real{\svgscale}}%
    \fi%
  \else%
    \setlength{\unitlength}{\svgwidth}%
  \fi%
  \global\let\svgwidth\undefined%
  \global\let\svgscale\undefined%
  \makeatother%
  \begin{picture}(1,0.57473811)%
    \put(0,0){\includegraphics[width=\unitlength]{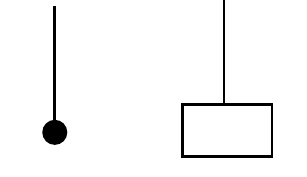}}%
    \put(0.63675488,0.10238934){\color[rgb]{0,0,0}\makebox(0,0)[lb]{\smash{\scriptsize
{$a;1$}}}}%
    \put(0.35297031,0.23752303){\color[rgb]{0,0,0}\makebox(0,0)[lb]{\smash{$=$}}}%
    \put(0,0.17405887){\color[rgb]{0,0,0}\makebox(0,0)[lb]{\smash{\scriptsize
{$a$}}}}%
  \end{picture}%
\endgroup%

		\end{aligned}\ ,
		&&\begin{aligned}
			\def\svgwidth{4.5cm}
\begingroup%
  \makeatletter%
  \providecommand\color[2][]{%
    \errmessage{(Inkscape) Color is used for the text in Inkscape, but the package 'color.sty' is not loaded}%
    \renewcommand\color[2][]{}%
  }%
  \providecommand\transparent[1]{%
    \errmessage{(Inkscape) Transparency is used (non-zero) for the text in Inkscape, but the package 'transparent.sty' is not loaded}%
    \renewcommand\transparent[1]{}%
  }%
  \providecommand\rotatebox[2]{#2}%
  \ifx\svgwidth\undefined%
    \setlength{\unitlength}{196.21150513bp}%
    \ifx\svgscale\undefined%
      \relax%
    \else%
      \setlength{\unitlength}{\unitlength * \real{\svgscale}}%
    \fi%
  \else%
    \setlength{\unitlength}{\svgwidth}%
  \fi%
  \global\let\svgwidth\undefined%
  \global\let\svgscale\undefined%
  \makeatother%
  \begin{picture}(1,0.26173462)%
    \put(0,0){\includegraphics[width=\unitlength]{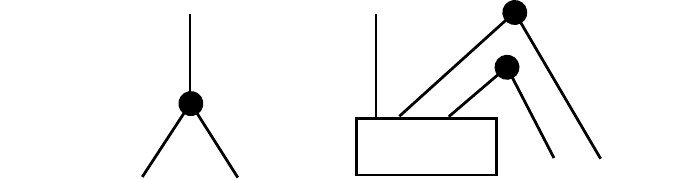}}%
    \put(0.55856967,0.02894234){\color[rgb]{0,0,0}\makebox(0,0)[lb]{\smash{\scriptsize{$a_2;3$}}}}%
    \put(0.38189371,0.1060834){\color[rgb]{0,0,0}\makebox(0,0)[lb]{\smash{$=$}}}%
    \put(0.79399933,0.22700428){\color[rgb]{0,0,0}\makebox(0,0)[lb]{\smash{\scriptsize
{$a_1/2$}}}}%
    \put(0.8321486,0.15579203){\color[rgb]{0,0,0}\makebox(0,0)[lb]{\smash{\scriptsize
{$a_1/2$}}}}%
    \put(0,0.11768238){\color[rgb]{0,0,0}\makebox(0,0)[lb]{\smash{\scriptsize
{$a_1+a_2$}}}}%
  \end{picture}%
\endgroup%

		\end{aligned}\ ,\\
		&\begin{aligned}
			\def\svgwidth{3.5cm}
\begingroup%
  \makeatletter%
  \providecommand\color[2][]{%
    \errmessage{(Inkscape) Color is used for the text in Inkscape, but the package 'color.sty' is not loaded}%
    \renewcommand\color[2][]{}%
  }%
  \providecommand\transparent[1]{%
    \errmessage{(Inkscape) Transparency is used (non-zero) for the text in Inkscape, but the package 'transparent.sty' is not loaded}%
    \renewcommand\transparent[1]{}%
  }%
  \providecommand\rotatebox[2]{#2}%
  \ifx\svgwidth\undefined%
    \setlength{\unitlength}{131.65615234bp}%
    \ifx\svgscale\undefined%
      \relax%
    \else%
      \setlength{\unitlength}{\unitlength * \real{\svgscale}}%
    \fi%
  \else%
    \setlength{\unitlength}{\svgwidth}%
  \fi%
  \global\let\svgwidth\undefined%
  \global\let\svgscale\undefined%
  \makeatother%
  \begin{picture}(1,0.3889149)%
    \put(0,0){\includegraphics[width=\unitlength]{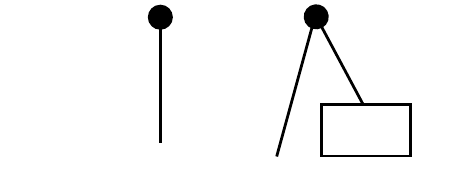}}%
    \put(0.71886263,0.07222887){\color[rgb]{0,0,0}\makebox(0,0)[lb]{\smash{\scriptsize
{$a_1;1$}}}}%
    \put(0.45555661,0.16288898){\color[rgb]{0,0,0}\makebox(0,0)[lb]{\smash{$=$}}}%
    \put(0,0.32691011){\color[rgb]{0,0,0}\makebox(0,0)[lb]{\smash{\scriptsize
{$a_1+a_2$}}}}%
    \put(0.75630174,0.35198157){\color[rgb]{0,0,0}\makebox(0,0)[lb]{\smash{\scriptsize
{$a_2$}}}}%
  \end{picture}%
\endgroup%

		\end{aligned}\ ,
		&&\begin{aligned}
			\def\svgwidth{4.5cm}
\begingroup%
  \makeatletter%
  \providecommand\color[2][]{%
    \errmessage{(Inkscape) Color is used for the text in Inkscape, but the package 'color.sty' is not loaded}%
    \renewcommand\color[2][]{}%
  }%
  \providecommand\transparent[1]{%
    \errmessage{(Inkscape) Transparency is used (non-zero) for the text in Inkscape, but the package 'transparent.sty' is not loaded}%
    \renewcommand\transparent[1]{}%
  }%
  \providecommand\rotatebox[2]{#2}%
  \ifx\svgwidth\undefined%
    \setlength{\unitlength}{195.36150513bp}%
    \ifx\svgscale\undefined%
      \relax%
    \else%
      \setlength{\unitlength}{\unitlength * \real{\svgscale}}%
    \fi%
  \else%
    \setlength{\unitlength}{\svgwidth}%
  \fi%
  \global\let\svgwidth\undefined%
  \global\let\svgscale\undefined%
  \makeatother%
  \begin{picture}(1,0.27743439)%
    \put(0,0){\includegraphics[width=\unitlength]{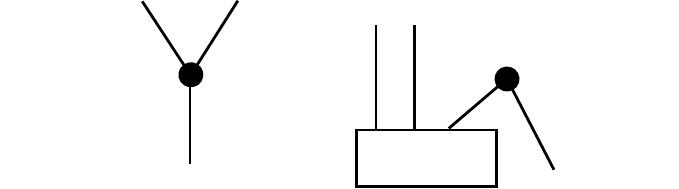}}%
    \put(0.56099996,0.02635982){\color[rgb]{0,0,0}\makebox(0,0)[lb]{\smash{\scriptsize{$a_2;3$}}}}%
    \put(0.38355529,0.14077705){\color[rgb]{0,0,0}\makebox(0,0)[lb]{\smash{$=$}}}%
    \put(0.83576921,0.17833126){\color[rgb]{0,0,0}\makebox(0,0)[lb]{\smash{\scriptsize
{$a_1$}}}}%
    \put(0,0.14207635){\color[rgb]{0,0,0}\makebox(0,0)[lb]{\smash{\scriptsize
{$a_1+a_2$}}}}%
  \end{picture}%
\endgroup%

		\end{aligned}
			\label{eq:lem:data2rfa:graphical}
		\end{align}
	Let $\kappa(\Ab)$ denote this RFA.
\end{lemma}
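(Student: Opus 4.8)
The plan is to verify directly that the structure maps defined in \eqref{eq:lem:data2rfa:2}--\eqref{eq:lem:data2rfa:4} satisfy the axioms of a regularised Frobenius algebra (Definitions~\ref{def:reg-alg}, \ref{def:rfa}), and then check strong separability and symmetry. The verification is of the standard ``state-sum model gives a Frobenius algebra'' type (cf.\ \cite{Lauda:2007oc,Davydov:2011dt}), with the only new ingredient being the bookkeeping of area parameters. The guiding principle is that all of Conditions~\ref{cond:cyclicsymm}--\ref{cond:zerocylinder} only constrain the sum of the area parameters involved, so every identity one wants is ``local'' and follows from gluing or cyclic symmetry once one has drawn the right picture.

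Concretely, I would proceed in the following order. First, observe that $\tilde B^2_a := \beta_a$ used in \eqref{eq:lem:data2rfa:2} together with $W^3_a$ and Condition~\ref{cond:gluer} let one reconstruct $W^n_a$ for all $n$ from $W^3_a$ and $W^1_a$, and that $W^2_a = P_a$ and $W^1_a = \eta_a$; this makes the maps $\mu_a,\Delta_a,\varepsilon_a,\eta_a$ well defined (independent of the split $a=a_1+a_2$) by Condition~\ref{cond:gluer} and Lemma~\ref{lem:beta-symm}. Second, check associativity \eqref{eq:ra:assoc} and coassociativity \eqref{eq:rca:coassoc}: both reduce, after using Condition~\ref{cond:gluer} to merge plaquette weights, to the statement that $W^4_a$ obtained by gluing two $W^3$'s is independent of which pair of legs was glued, which is again Condition~\ref{cond:gluer} together with Condition~\ref{cond:cyclicsymm}. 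Third, check the unit and counit axioms \eqref{eq:ra:unit}, \eqref{eq:rca:counit}: this is where $\zeta_a$ and Condition~\ref{cond:W-zeta} enter, together with Condition~\ref{cond:selfgluer} to cap off a leg; one identifies the resulting $P_a$-like map with the $P_a$ of \eqref{eq:state-sum-data-notation}, so that $\lim_{a\to0}P_a = \id_A$ and the joint continuity \eqref{eq:racont} are exactly Condition~\ref{cond:approxid}. Fourth, check the Frobenius relation \eqref{eq:rfa:frobrel}: expand both sides using \eqref{eq:lem:data2rfa:2}--\eqref{eq:lem:data2rfa:4}, merge plaquette weights with Condition~\ref{cond:gluer}, and recognise both sides as the same morphism built from a single $W^4_a$ and one contraction $\beta$, up to cyclic symmetry. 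Fifth, commutativity of the window-element-type picture, symmetry $\varepsilon_{a_1}\circ\mu_{a_2}\circ\sigma = \varepsilon_{b_1}\circ\mu_{b_2}$ follows from Condition~\ref{cond:cyclicsymm} applied to the plaquette weight feeding into $\beta$. Finally, strong separability: produce the separability idempotents $e_a$ explicitly from $W^2_a$ and $\beta$ — roughly $e_a := (\id_A\otimes\id_A)$ applied to a suitable $W^3$ contracted once — and verify conditions (1)--(3) in the definition of strong separability, again purely by gluing moves; by Proposition~\ref{prop:symm-rfa-strongly-separable} this is equivalent to invertibility of the window element, with $\tau_a = \eta_a$ being the natural guess to check.

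The one genuinely non-routine point — and the reason Lemma~\ref{lem:Da-additive}/\ref{lem:D0-idempotent} was deferred — is the interplay between the RFA structure just constructed and the operator $D_a$ of \eqref{eq:state-sum-data-notation}. I would prove, as the companion Lemma~\ref{lem:d0-split-idempot-center} referenced in the text, that $D_a = \mu_{a_1}\circ\Delta_{a_2}^{\text{appropriate}}$ expressed through the RFA maps, so that $D_a\circ D_b = D_{a+b}$ becomes associativity plus the Frobenius relation, and that $D_0$ is the central projector whose image is $Z(A) = \ctimes_A A$ (using Corollary~\ref{cor:centercycltens} and the strong separability). This identification is what closes the loop with Theorem~\ref{thm:aqftrfaequiv}, but for the present lemma one only needs the algebraic identities, not the centre interpretation.

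I expect the main obstacle to be purely organisational rather than conceptual: there are many axioms, each requiring a diagram chase through Conditions~\ref{cond:cyclicsymm}--\ref{cond:selfgluer} and \ref{cond:W-zeta}, and one must be careful that every area parameter is strictly positive at intermediate stages (splitting $a = a_1+a_2$ with $a_i>0$) before taking $a\to0$ limits only at the very end, invoking separate continuity of composition in $\Sc$ as in the proof of Lemma~\ref{lem:ra:properties}. Since the non-area version of all these computations is completely standard (see \cite[Sec.\,3]{Lauda:2007oc}), and adding area amounts to decorating each move with an additive label that the conditions are designed to respect, I would present the proof by indicating the graphical identity for each axiom and referring to the topological case for the combinatorial core, spelling out in full only the unit axiom and the Frobenius relation where the role of $\zeta_a$ and of Condition~\ref{cond:W-zeta} is genuinely new.
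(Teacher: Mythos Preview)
Your overall strategy matches the paper's, and most of the plan is sound. But you have swapped the roles of the conditions in two places, and one of these is a genuine gap.

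For the unit and counit axioms (your step three), $\zeta_a$ and Conditions~\ref{cond:selfgluer}, \ref{cond:W-zeta} are \emph{not} needed: the structure maps in \eqref{eq:lem:data2rfa:2}--\eqref{eq:lem:data2rfa:4} are built only from $W$ and $\beta$, with no $\zeta$ anywhere. The paper obtains $\mu_{a_1}\circ(\eta_{a_2}\otimes\id_A) = P_{a_1+a_2}$ using only Condition~\ref{cond:gluer} (merge $W^3$ with $W^1$ into $W^2$) together with Condition~\ref{cond:cyclicsymm}; the result is literally the $P_a$ of \eqref{eq:state-sum-data-notation}, so Condition~\ref{cond:approxid} gives the limit and continuity directly.

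The genuine gap is strong separability (your step six). Your phrase ``$\tau_a = \eta_a$ being the natural guess'' is confused: $\tau_a = \mu\circ\Delta\circ\eta$ is the window element by definition, and what you must \emph{produce} is its inverse. This is precisely where $\zeta_a$ --- the one piece of state-sum data not appearing in \eqref{eq:lem:data2rfa:2}--\eqref{eq:lem:data2rfa:4} --- enters: the paper shows that $\zeta_{a_1}\circ W^1_{a_2} = \zeta_{a_1}\circ\eta_{a_2}$ is the inverse of the window element, and the verification uses Condition~\ref{cond:selfgluer}. Without recognising that $\zeta$ encodes $\tau^{-1}$ you cannot complete the strong-separability argument, and Conditions~\ref{cond:selfgluer} and~\ref{cond:W-zeta} belong here rather than in the unit axiom.
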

\begin{proof}
	We are going to show that \eqref{eq:ra:unit} holds. 
	Checking the rest of the algebraic relations of an RFA is similar and it uses the algebraic relations listed in
	Conditions~\ref{cond:cyclicsymm}-\ref{cond:W-zeta}. 
	The rhs of \eqref{eq:ra:unit} is
	\begin{align*}
		\begin{aligned}
			\def\svgwidth{11cm}
\begingroup%
  \makeatletter%
  \providecommand\color[2][]{%
    \errmessage{(Inkscape) Color is used for the text in Inkscape, but the package 'color.sty' is not loaded}%
    \renewcommand\color[2][]{}%
  }%
  \providecommand\transparent[1]{%
    \errmessage{(Inkscape) Transparency is used (non-zero) for the text in Inkscape, but the package 'transparent.sty' is not loaded}%
    \renewcommand\transparent[1]{}%
  }%
  \providecommand\rotatebox[2]{#2}%
  \ifx\svgwidth\undefined%
    \setlength{\unitlength}{344.47885742bp}%
    \ifx\svgscale\undefined%
      \relax%
    \else%
      \setlength{\unitlength}{\unitlength * \real{\svgscale}}%
    \fi%
  \else%
    \setlength{\unitlength}{\svgwidth}%
  \fi%
  \global\let\svgwidth\undefined%
  \global\let\svgscale\undefined%
  \makeatother%
  \begin{picture}(1,0.18609985)%
    \put(0,0){\includegraphics[width=\unitlength]{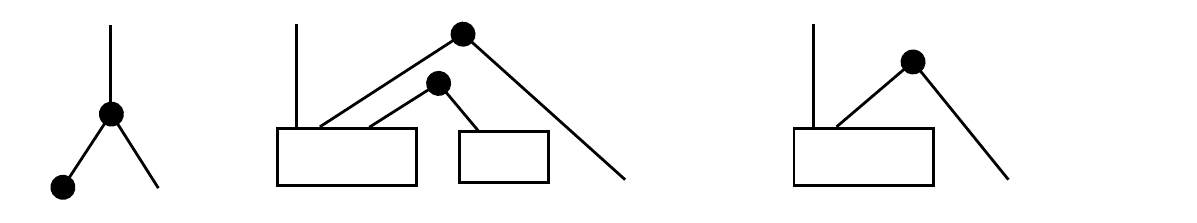}}%
    \put(0.25171455,0.04468402){\color[rgb]{0,0,0}\makebox(0,0)[lb]{\smash{\scriptsize{$a_1'';3$}}}}%
    \put(0.15108179,0.08862275){\color[rgb]{0,0,0}\makebox(0,0)[lb]{\smash{$\stackrel{\text{def.}}{=}$}}}%
    \put(0.41913141,0.17198433){\color[rgb]{0,0,0}\makebox(0,0)[lb]{\smash{\scriptsize
{$a_1'/2$}}}}%
    \put(0.46403896,0.11548762){\color[rgb]{0,0,0}\makebox(0,0)[lb]{\smash{\scriptsize
{$a_1'/2$}}}}%
    \put(0.03109784,0.09058471){\color[rgb]{0,0,0}\makebox(0,0)[lb]{\smash{\scriptsize
{$a_1$}}}}%
    \put(0,0.0276051){\color[rgb]{0,0,0}\makebox(0,0)[lb]{\smash{\scriptsize
{$a_2$}}}}%
    \put(0.39004578,0.04269587){\color[rgb]{0,0,0}\makebox(0,0)[lb]{\smash{\scriptsize
{$a_2;1$}}}}%
    \put(0.55052575,0.08862275){\color[rgb]{0,0,0}\makebox(0,0)[lb]{\smash{$\stackrel
{\text{Cond.\,\ref{cond:gluer}}}{=}$}}}%
    \put(0.68367143,0.04468402){\color[rgb]{0,0,0}\makebox(0,0)[lb]{\smash{\scriptsize{$a_1;2$}}}}%
    \put(0.83849698,0.08862275){\color[rgb]{0,0,0}\makebox(0,0)[lb]{\smash{$\stackrel
{\text{def.}}{=}
P_{a_1+a_2}$}}}%
    \put(0.78860355,0.13715677){\color[rgb]{0,0,0}\makebox(0,0)[lb]{\smash{\scriptsize
{$a_2$}}}}%
  \end{picture}%
\endgroup%

		\end{aligned}\ ,
	\end{align*}
	using Condition~\ref{cond:gluer}.
	The lhs is
	\begin{align*}
		\begin{aligned}
			\def\svgwidth{15cm}
\begingroup%
  \makeatletter%
  \providecommand\color[2][]{%
    \errmessage{(Inkscape) Color is used for the text in Inkscape, but the package 'color.sty' is not loaded}%
    \renewcommand\color[2][]{}%
  }%
  \providecommand\transparent[1]{%
    \errmessage{(Inkscape) Transparency is used (non-zero) for the text in Inkscape, but the package 'transparent.sty' is not loaded}%
    \renewcommand\transparent[1]{}%
  }%
  \providecommand\rotatebox[2]{#2}%
  \ifx\svgwidth\undefined%
    \setlength{\unitlength}{518.03914795bp}%
    \ifx\svgscale\undefined%
      \relax%
    \else%
      \setlength{\unitlength}{\unitlength * \real{\svgscale}}%
    \fi%
  \else%
    \setlength{\unitlength}{\svgwidth}%
  \fi%
  \global\let\svgwidth\undefined%
  \global\let\svgscale\undefined%
  \makeatother%
  \begin{picture}(1,0.12241261)%
    \put(0,0){\includegraphics[width=\unitlength]{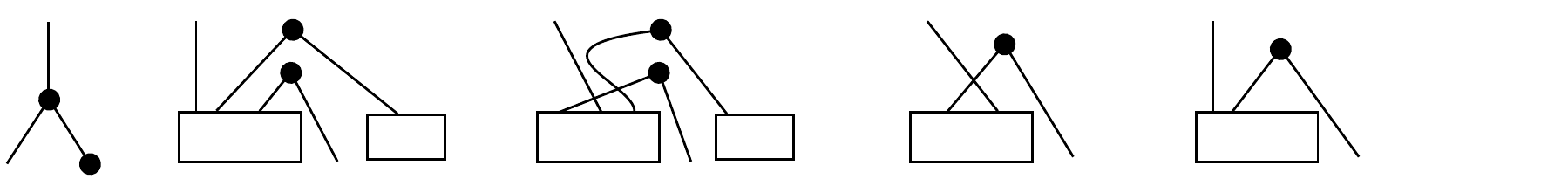}}%
    \put(0.06379919,0.05759358){\color[rgb]{0,0,0}\makebox(0,0)[lb]{\smash{$\stackrel{\text{def.}}{=}$}}}%
    \put(0,0.0655862){\color[rgb]{0,0,0}\makebox(0,0)[lb]{\smash{\scriptsize
{$a_1$}}}}%
    \put(0.07116915,0.01835648){\color[rgb]{0,0,0}\makebox(0,0)[lb]{\smash{\scriptsize
{$a_2$}}}}%
    \put(0.77622772,0.02837579){\color[rgb]{0,0,0}\makebox(0,0)[lb]{\smash{\scriptsize{$a_1;2$}}}}%
    \put(0.87918158,0.05759358){\color[rgb]{0,0,0}\makebox(0,0)[lb]{\smash{$\stackrel
{\text{def.}}{=}
P_{a_1+a_2}$}}}%
    \put(0.83364978,0.0898671){\color[rgb]{0,0,0}\makebox(0,0)[lb]{\smash{\scriptsize
{$a_2$}}}}%
    \put(0.12762809,0.02837579){\color[rgb]{0,0,0}\makebox(0,0)[lb]{\smash{\scriptsize{$a_1'';3$}}}}%
    \put(0.20806908,0.10684912){\color[rgb]{0,0,0}\makebox(0,0)[lb]{\smash{\scriptsize
{$a_1'/2$}}}}%
    \put(0.23484249,0.07545783){\color[rgb]{0,0,0}\makebox(0,0)[lb]{\smash{\scriptsize
{$a_1'/2$}}}}%
    \put(0.23814512,0.02705373){\color[rgb]{0,0,0}\makebox(0,0)[lb]{\smash{\scriptsize
{$a_2;1$}}}}%
    \put(0.35618224,0.02837579){\color[rgb]{0,0,0}\makebox(0,0)[lb]{\smash{\scriptsize{$a_1'';3$}}}}%
    \put(0.43662323,0.11302626){\color[rgb]{0,0,0}\makebox(0,0)[lb]{\smash{\scriptsize
{$a_1'/2$}}}}%
    \put(0.45413094,0.07545783){\color[rgb]{0,0,0}\makebox(0,0)[lb]{\smash{\scriptsize
{$a_1'/2$}}}}%
    \put(0.46052213,0.02705373){\color[rgb]{0,0,0}\makebox(0,0)[lb]{\smash{\scriptsize
{$a_2;1$}}}}%
    \put(0.51164179,0.05759358){\color[rgb]{0,0,0}\makebox(0,0)[lb]{\smash{$\stackrel
{\text{Cond.\,\ref{cond:gluer}}}{=}$}}}%
    \put(0.59400211,0.02837579){\color[rgb]{0,0,0}\makebox(0,0)[lb]{\smash{\scriptsize{$a_1;2$}}}}%
    \put(0.65760131,0.0898671){\color[rgb]{0,0,0}\makebox(0,0)[lb]{\smash{\scriptsize
{$a_2$}}}}%
    \put(0.29235334,0.05759358){\color[rgb]{0,0,0}\makebox(0,0)[lb]{\smash{$\stackrel
{\text{Cond.\,\ref{cond:cyclicsymm}}}{=}$}}}%
    \put(0.6938674,0.05759358){\color[rgb]{0,0,0}\makebox(0,0)[lb]{\smash{$\stackrel
{\text{Cond.\,\ref{cond:cyclicsymm}}}{=}$}}}%
  \end{picture}%
\endgroup%

		\end{aligned}\ .
	\end{align*}

	The continuity conditions for tensor products of $P_a$'s hold by
	Condition~\ref{cond:approxid}, 
	which also states that $\lim_{a\to0}P_a=\id_A$.
	We have now shown that $A$ is an RFA. 
	It is symmetric by Condition~\ref{cond:cyclicsymm}.
	To show that $A$ is strongly separable, 
	by Proposition~\ref{prop:symm-rfa-strongly-separable} we need to check that the window element of $A$ is invertible.
	Similar to the calculation above,
and using Condition~\ref{cond:selfgluer}, one checks
that $\zeta_{a_1}\circ W_{a_2}^{1}$ is inverse to the window element.

\end{proof}

\begin{lemma}\label{lem:rfa2data}
Let $A\in\Sc$ be a strongly separable symmetric regularised Frobenius algebra with separability idempotent
$e_a\in\Sc(\Ib,A^{\otimes 2})$.
Define the following families of morphisms for $a_1,a_2,a_3\in\Rb_{>0}$, $n\in\Zb_{\ge1}$,
\begin{align}
	\zeta_{a_1+a_2+a_3}&:=\left( \eps_{a_1}\otimes \mu_{a_2} \right)\circ\left( 
	e_{a_3}\otimes\id_A \right)\ ,&
	\beta_{a_1+a_2}&:=\varepsilon_{a_1}\circ\mu_{a_2}\ ,
	\label{eq:lem:rfa2data:1}\\
	W^{n}_{a_1+a_2}&:=\Delta_{a_1}^{(n)}\circ\eta_{a_2}\ ,&
\tilde{D}_{a_1+a_2+a_3}&:=\zeta_{a_1}\circ\mu_{a_2}\circ\sigma_{A,A}\circ\Delta_{a_3}\ ,
	\label{eq:lem:rfa2data:2}
\end{align}
where $\Delta_{a_1}^{(n)}$ is as in \eqref{eq:nfold-comult}.
Suppose that $\lim_{a\to_0}\tilde{D}_a$ exists.
Then the families of morphisms $\zeta_a$, $\beta_{a}$ and $W^{n}_{a}$
define state-sum data, which we denote with $\Omega(A)$.
Also, the morphism ${D}_a$ defined in \eqref{eq:state-sum-data-notation} 
	is the same as the morphism $\tilde{D}_a$ defined in \eqref{eq:lem:rfa2data:2}
	for every $a\in\Rb_{\ge0}$.
\end{lemma}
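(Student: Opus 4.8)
The plan is to verify in turn each of Conditions~\ref{cond:cyclicsymm}--\ref{cond:zerocylinder} of Definition~\ref{def:pdata} for the families $\zeta_a$, $\beta_a$, $W^n_a$ defined in \eqref{eq:lem:rfa2data:1}--\eqref{eq:lem:rfa2data:2}, and then to establish the identity ${D}_a=\tilde{D}_a$. All of these follow from the relations of a regularised Frobenius algebra (associativity \eqref{eq:ra:assoc}, coassociativity \eqref{eq:rca:coassoc}, the Frobenius relation \eqref{eq:rfa:frobrel}, the (co)unit relations), from symmetry of $A$, and from the defining properties of the separability idempotent $e_a$ together with $\sigma_{A,A}\circ e_a=e_a$. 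Throughout I would work in the graphical calculus and use that every relation of an RFA, and every condition of Definition~\ref{def:pdata}, constrains the morphisms only through the \emph{sum} of the area parameters, so the redistribution of areas in each step is automatic and I suppress it in this sketch.

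\textbf{Condition~\ref{cond:cyclicsymm}:} symmetry of $\beta_a=\eps_{a_1}\circ\mu_{a_2}$ is the hypothesis that $A$ is a symmetric RFA; cyclicity of $W^n_a=\Delta^{(n)}_{a_1}\circ\eta_{a_2}$ follows from commutativity of the window element and coassociativity, exactly as for ordinary symmetric Frobenius algebras. \textbf{Condition~\ref{cond:gluer}:} substituting the definitions, the left-hand side is an iterated coproduct applied to $\eta$ with two output legs contracted by $\beta=\eps\circ\mu$; using \eqref{eq:rfa:frobrel} to slide the multiplication past the comultiplications, then coassociativity and the counit relation, this collapses to $\Delta^{(n+m-2)}\circ\eta=W^{n+m-2}$. \textbf{Condition~\ref{cond:selfgluer}:} here the two contracted legs come from the \emph{same} $W^{n+2}$; applying \eqref{eq:rfa:frobrel} produces a factor $\mu\circ\Delta\circ\eta$, i.e.\ the window element, and $\zeta$ is designed (via $e_a=\Delta_{a_1}\circ\tau^{-1}_{a_2}$, cf.\ Proposition~\ref{prop:symm-rfa-strongly-separable}) so that $\zeta_{a_1}\circ\eta_{a_2}$ is the inverse of the window element, which makes the bubble cancel, leaving $W^n$. \textbf{Condition~\ref{cond:W-zeta}:} both identities express that $\zeta_a=(\eps_{a_1}\otimes\mu_{a_2})\circ(e_{a_3}\otimes\id_A)$ can be transported along an edge; this follows from the module property of $e_a$, associativity, and symmetry, and for the second identity additionally from symmetry of $\beta$.

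For the remaining items I would first unfold $P_a$ from \eqref{eq:state-sum-data-notation} via \eqref{eq:lem:rfa2data:2}: it is $W^2$ with one leg fed into $\beta$, which by \eqref{eq:rfa:frobrel} and the counit relation equals the RFA semigroup $P_a$ of $A$; hence $\lim_{a\to0}P_a=\id_A$ and the joint continuity \eqref{eq:state-sum-pacont} hold because $A$ is an RFA, giving \textbf{Condition~\ref{cond:approxid}}. Next I would compute $D_a$ from \eqref{eq:state-sum-data-notation}: it is $W^4_{a_0}$ with three of its four legs closed into a loop by $\beta$-contractions and the fourth carrying $\zeta$. Substituting $W^4=\Delta^{(4)}\circ\eta$ and $\beta=\eps\circ\mu$ and iterating \eqref{eq:rfa:frobrel}, the loop of comultiplications and multiplications reduces to $\mu\circ\sigma_{A,A}\circ\Delta$, and inserting \eqref{eq:lem:rfa2data:1} for the single $\zeta$ produces exactly the $\zeta_{a_1}$ prefactor, so $D_a=\zeta_{a_1}\circ\mu_{a_2}\circ\sigma_{A,A}\circ\Delta_{a_3}=\tilde{D}_a$. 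This identity then yields \textbf{Condition~\ref{cond:zerocylinder}} directly from the hypothesis that $\lim_{a\to0}\tilde{D}_a$ exists, and finishes the proof.

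The main obstacle I expect is the graphical bookkeeping in the two places where the Frobenius relation has to be iterated, namely Condition~\ref{cond:gluer} and the computation $D_a=\tilde{D}_a$: one must keep track of which output legs of $\Delta^{(n)}$ get contracted and in which cyclic order, and confirm that the diagram reduces to the claimed form using only relations available in an RFA (in particular, without assuming zero-area limits of $\mu_a$, $\Delta_a$, $\eta_a$, $\eps_a$). Matching the asymmetry of formula \eqref{eq:lem:rfa2data:2} — which $\beta$ in $D_a$ is the one replaced by $\zeta$ — also takes a little care, but is handled by Conditions~\ref{cond:cyclicsymm} and~\ref{cond:W-zeta} once those are established; everything else is routine manipulation in a strict symmetric monoidal category.
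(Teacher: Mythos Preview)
Your proposal is correct and follows essentially the same approach as the paper's (brief) proof, namely verifying Conditions~\ref{cond:cyclicsymm}--\ref{cond:zerocylinder} by direct computation from the RFA axioms and then identifying $D_a$ with $\tilde D_a$. One small remark: the cyclicity of $W^n_a$ in Condition~\ref{cond:cyclicsymm} follows from the symmetry of $A$ together with the Frobenius relation (i.e.\ $\sigma\circ\Delta_{a_1}\circ\eta_{a_2}=\Delta_{a_1}\circ\eta_{a_2}$ because the copairing dual to a symmetric pairing is symmetric), not from centrality of the window element; the latter is what the paper invokes for Condition~\ref{cond:W-zeta}, which is equivalent to your argument via the bimodule property of $e_a$.
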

\begin{proof}
Conditions~\ref{cond:approxid}~and~\ref{cond:zerocylinder} 
	are satisfied by our assumptions.
	The algebraic conditions can be checked by direct computation, here we only give the ideas how one can do this.
	Cyclicity in Condition~\ref{cond:cyclicsymm} follows from the Frobenius relation \eqref{eq:rfa:frobrel} and $A$ being symmetric.
	The glueing condition Condition~\ref{cond:gluer} follows from the Frobenius relation \eqref{eq:rfa:frobrel} 
	from counitality \eqref{eq:rca:counit} and from coassociativity \eqref{eq:rca:coassoc}.
	Condition~\ref{cond:selfgluer} follows from $A$ being strongly separable,
	Condition~\ref{cond:W-zeta} 
	follows from the fact that the window element of $A$ is commutative.
	
\end{proof}
Let us fix an object $A\in\Sc$ and denote the sets
\begin{itemize}
	\item $\mathbf{L}:=\{$state-sum data on $A$ $\}$,
	\item $\mathbf{F}:=\{$strongly separable symmetric 
		RFA structures on $A$
	such that $\lim_{a\to0}\tilde{D}_a$ exists$\}$.
\end{itemize}
{}From a direct calculation one can show the following theorem.

\begin{theorem}\label{thm:dataisrfa}
	Let $\kappa$ and $\Omega$ denote the maps of sets
\begin{equation}
	\begin{tikzcd}
		\mathbf{L}\ar[bend left=50]{r}{\kappa}&\mathbf{F}\ar[bend left=50,swap]{l}{\Omega}
	\end{tikzcd}
\label{eq:mapofsetsreg}
\end{equation}
	defined by Lemmas~\ref{lem:data2rfa}~and~\ref{lem:rfa2data} respectively.
	Then $\kappa$ and $\Omega$ are inverse to each other.
\end{theorem}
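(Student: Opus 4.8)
The statement asserts that the two maps of sets $\kappa:\mathbf{L}\to\mathbf{F}$ and $\Omega:\mathbf{F}\to\mathbf{L}$ from Lemmas~\ref{lem:data2rfa} and~\ref{lem:rfa2data} are mutually inverse. My plan is to verify the two composite identities $\Omega\circ\kappa=\id_{\mathbf{L}}$ and $\kappa\circ\Omega=\id_{\mathbf{F}}$ by direct graphical computation, using the string-calculus conventions of Figure~\ref{fig:graphical-calc} and the structural relations available: the RFA axioms \eqref{eq:ra:unit}--\eqref{eq:rfa:frobrel}, strong separability in the form of Proposition~\ref{prop:symm-rfa-strongly-separable} (window element invertible, $e_a=\Delta_{a_1}\circ\tau^{-1}_{a_2}$), and the state-sum Conditions~\ref{cond:cyclicsymm}--\ref{cond:zerocylinder}. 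Throughout I will repeatedly use that every morphism in sight depends only on the sum of its area parameters (so one may freely redistribute areas and insert or absorb factors of $P_a$), together with separate continuity of composition and the limits $\lim_{a\to0}P_a=\id_A$, $\lim_{a\to0}\tilde D_a$ to pass to zero-area limits when needed.

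\textbf{Step 1: $\kappa\circ\Omega=\id_{\mathbf{F}}$.} Starting from a strongly separable symmetric RFA $A$ with separability idempotent $e_a$, I would form the state-sum data $\Omega(A)=(A,\zeta_a,\beta_a,W^n_a)$ via \eqref{eq:lem:rfa2data:1}--\eqref{eq:lem:rfa2data:2}, then feed it back through the formulas \eqref{eq:lem:data2rfa:2}--\eqref{eq:lem:data2rfa:4} of Lemma~\ref{lem:data2rfa} and check that the resulting RFA structure equals the one we started with. For the counit: $\beta_{a_1}\circ(W^1_{a_2}\otimes\id_A) = \varepsilon_{a_1'}\circ\mu_{a_2'}\circ(\eta_{a_3}\otimes\id_A) = \varepsilon_{a_1'}\circ P_{a_2'+a_3}$, which by Lemma~\ref{lem:ra:properties}\eqref{lem:ra:mupacommute} and Lemma~\ref{lem:ra:semigrp} is $\varepsilon_a$. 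The coproduct and multiplication are recovered similarly from counitality \eqref{eq:rca:counit}, the Frobenius relation \eqref{eq:rfa:frobrel}, and the fact that $W^3_a=\Delta^{(3)}_{a_1}\circ\eta_{a_2}$ expands into a pair-of-pants-type morphism; the unit $\eta_a=W^1_a=\Delta^{(1)}_{a_1}\circ\eta_{a_2}=P_{a_1}\circ\eta_{a_2}=\eta_a$ is immediate. I would also record that $\tilde D_a$ from \eqref{eq:lem:rfa2data:2} equals the $D_a$ built from $\Omega(A)$ via \eqref{eq:state-sum-data-notation}, which is the last assertion of Lemma~\ref{lem:rfa2data} and guarantees that $\Omega(A)$ genuinely lands in $\mathbf{L}$ (Condition~\ref{cond:zerocylinder}).

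\textbf{Step 2: $\Omega\circ\kappa=\id_{\mathbf{L}}$.} This is the direction where a little care is needed. Starting from state-sum data $\Ab=(A,\zeta_a,\beta_a,W^n_a)$, Lemma~\ref{lem:data2rfa} produces the RFA $\kappa(\Ab)$, and I must check that $\Omega(\kappa(\Ab))$ returns the same $\zeta_a$, $\beta_a$, $W^n_a$. For $\beta_a$: $\Omega$ gives $\varepsilon_{a_1}\circ\mu_{a_2}$, and substituting the formulas \eqref{eq:lem:data2rfa:4}, \eqref{eq:lem:data2rfa:2} for $\varepsilon$ and $\mu$ and using Condition~\ref{cond:gluer} (glueing plaquette weights) together with Condition~\ref{cond:cyclicsymm} collapses this back to $\beta_a$. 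For the plaquette weights $W^n_a$: $\Omega$ gives $\Delta^{(n)}_{a_1}\circ\eta_{a_2}$, which unfolds via repeated use of Condition~\ref{cond:gluer} into a single $W^n$-weight (this is precisely the kind of iterated-glueing identity already used in the proof of Lemma~\ref{lem:data2rfa} and in \eqref{eq:Wna-cont}); the case $n=1,2$ must be handled via the conventions $W^1_a=\eta_a$, and one uses $\lim_{a\to0}P_a=\id_A$ plus separate continuity to absorb the left-over $P_\varepsilon$ factors. For $\zeta_a$: $\Omega$ gives $(\varepsilon_{a_1}\otimes\mu_{a_2})\circ(e_{a_3}\otimes\id_A)$ where, by Proposition~\ref{prop:symm-rfa-strongly-separable}, $e_a=\Delta_{a_1}\circ\tau^{-1}_{a_2}$ and $\tau^{-1}_a=\zeta_{a_1}\circ W^1_{a_2}$ (the inverse window element identified at the end of the proof of Lemma~\ref{lem:data2rfa}); substituting and simplifying with Conditions~\ref{cond:selfgluer} and~\ref{cond:W-zeta} should return $\zeta_a$.

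\textbf{Main obstacle.} The substantive work — and the only place I expect real friction — is Step~2, specifically untangling the recovery of $\zeta_a$: one has to trace through the chain ``state-sum $\zeta_a$ $\mapsto$ window element $\mapsto$ inverse window element $\tau^{-1}_a$ $\mapsto$ separability idempotent $e_a$ $\mapsto$ $\zeta_a$ again'' and confirm that Conditions~\ref{cond:selfgluer} and~\ref{cond:W-zeta} precisely close the loop, with all area parameters matching after the allowed redistributions. The plaquette-weight recovery for small $n$ and the bookkeeping of spurious $P_\varepsilon$-factors (removed in the zero-area limit) is the other fiddly point. Everything else is a routine diagram chase of the sort the paper elsewhere declares ``follows from a direct calculation,'' so I would present Step~1 and the $\beta_a$, $W^n_a$ parts of Step~2 briefly and spell out only the $\zeta_a$ computation in Step~2 in detail.
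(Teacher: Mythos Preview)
Your proposal is correct and matches the paper's approach: the paper states the theorem with only the remark ``From a direct calculation one can show the following theorem'' and gives no further details, so your plan simply makes that direct calculation explicit. Your identification of the $\zeta_a$-recovery in Step~2 as the point requiring most care is reasonable, and the ingredients you list (Conditions~\ref{cond:selfgluer}, \ref{cond:W-zeta}, and the identification $\tau^{-1}_a=\zeta_{a_1}\circ W^1_{a_2}$ from the end of the proof of Lemma~\ref{lem:data2rfa}) are exactly what is needed to close that loop.
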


\medskip

	In the following we make use of the notion of RFAs in order to prove some technical results used in the state-sum construction in Section~\ref{sec:latticedata}.
The following lemma is a direct generalisation of \cite[Prop.\,2.20]{Lauda:2007oc}
and was partially proved in Corollary~\ref{cor:centercycltens}.
\begin{lemma}\label{lem:d0-split-idempot-center}
	Let $\Ab$ be state-sum data and let $A$ denote the
	corresponding RFA from Lemma~\ref{lem:data2rfa}. Then
$D_a\circ D_b=D_{a+b}$ for every $a,b\in\Rb_{>0}$ and
		the image of the idempotent $D_0:=\lim_{a\to0}D_a$ is the centre $Z(A)$ of the RFA $A$.
	It is an RFA with the restricted structure maps of $A$.
\end{lemma}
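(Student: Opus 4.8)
The statement has three parts, and my plan is to dispatch each in turn, leaning on the dictionary between state-sum data and RFAs established in Theorem~\ref{thm:dataisrfa}. Recall that under $\kappa$ the morphism $D_a$ built from $\zeta, \beta, W$ in \eqref{eq:state-sum-data-notation} equals the morphism $\tilde D_a = \zeta_{a_1}\circ\mu_{a_2}\circ\sigma_{A,A}\circ\Delta_{a_3}$ of \eqref{eq:lem:rfa2data:2} (this equality is part of the conclusion of Lemma~\ref{lem:rfa2data}, and it transfers to the present setting via $\kappa = \Omega^{-1}$). So the first task is purely algebraic: write $\zeta_{a}$ in RFA terms as $\zeta_{a} = (\eps_{a_1'} \otimes \mu_{a_2'})\circ(e_{a_3'}\otimes\id_A)$, substitute into $D_a$, and verify the semigroup law $D_a\circ D_b = D_{a+b}$ using the Frobenius relation \eqref{eq:rfa:frobrel}, symmetry, strong separability of the idempotent $e_a$ (in particular $\sigma_{A,A}\circ e_a = e_a$ and $\mu_{a_1}\circ e_{a_2} = \eta_a$), and coassociativity/counitality. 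This is the same kind of graphical manipulation as in \cite[Prop.\,2.20]{Lauda:2007oc}; the area parameters only bookkeep additively, so the computation goes through unchanged once one checks that every intermediate morphism depends only on the relevant sums. Once $D_a\circ D_b = D_{a+b}$ holds for $a,b>0$, continuity of $P_a$ (Condition~\ref{cond:approxid}) together with $\lim_{a\to 0}D_a = D_0$ (Condition~\ref{cond:zerocylinder}) and separate continuity of composition in $\Sc$ gives $D_0\circ D_0 = D_0$, i.e.\ $D_0$ is idempotent. This also retroactively supplies the proof of Lemma~\ref{lem:Da-additive}, as promised in the text.

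\textbf{The centre.} For the second claim, I would show that the split $D_0 = \iota_A \circ \pi_A$ exhibits $Z(A)$ (the image of $D_0$) as the centre of the RFA $A$ in the sense of \eqref{eq:central}--\eqref{eq:central2}. Observe that $D_a = \zeta_{a_0}\circ\mu_{a_1}\circ\sigma_{A,A}\circ\Delta_{a_2}$ can be rewritten, using the Frobenius relation and strong separability of $A$, so that $\mu_a\circ(\id_A\otimes D_b) = \mu_a\circ\sigma\circ(\id_A\otimes D_b)$ holds for all $a,b$; dually $D_a$ ``absorbs'' the obstruction to centrality on the $A$-factor it is composed with. Concretely: if $f\colon C\to A$ satisfies $\mu_a\circ\sigma\circ(f\otimes\id_A) = \mu_a\circ(f\otimes\id_A)$ for all $a>0$, then a direct graphical computation (of the type already carried out in Corollary~\ref{cor:centercycltens} for the cyclic tensor product) gives $D_a\circ f = P_a\circ f$; taking $a\to 0$ yields $D_0\circ f = f$, so $f$ factors uniquely through $\iota_A$ via $\tilde f := \pi_A\circ f$, uniqueness coming from $\pi_A\circ\iota_A = \id_{Z(A)}$. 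Conversely $\iota_A$ itself satisfies the centrality equation \eqref{eq:central} because $D_0$ does, which is again the identity $\mu_a\circ(\id_A\otimes D_0) = \mu_a\circ\sigma\circ(\id_A\otimes D_0)$ read off from the Frobenius/separability manipulation. Since $\pi_0(\Sb)$ classifies the relevant moduli, this is precisely Corollary~\ref{cor:centercycltens} applied to $A$ regarded as a bimodule over itself — indeed $D_0 = D_0^A$ — so one option is simply to invoke that corollary and the identification $D_a = D_a^A$.

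\textbf{RFA structure on $Z(A)$.} For the last claim I would apply Lemma~\ref{lem:ra:center} and its coalgebra dual: the centre carries induced maps $\tilde P_a$ with $P_a\circ\iota_A = \iota_A\circ\tilde P_a$, and since $P_a$ is continuous in $a$ (Corollary after Lemma~\ref{lem:ra:properties}, or the semigroup $P_a$ directly) and $\pi_A$ is a fixed morphism, $\lim_{a\to 0}\tilde P_a = \id_{Z(A)}$ follows from separate continuity of composition; the joint continuity conditions \eqref{eq:centerind} hold in $\Sc$ because they hold for $P_a$ (Condition~\ref{cond:approxid}) and $\tilde P_a$ is just the restriction. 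Thus $Z(A)$ is a commutative regularised algebra by Lemma~\ref{lem:ra:center}. To upgrade to an RFA, note that $\iota_A$ and $\pi_A = \iota_A^\dagger$-analogue (here the split of an idempotent) let one transport $\Delta_a, \eps_a$: set $\eps_a^{Z} := \eps_a\circ\iota_A$ and $\Delta_a^{Z} := (\pi_A\otimes\pi_A)\circ\Delta_a\circ\iota_A$, $\eta_a^Z := \pi_A\circ\eta_a$, $\mu_a^Z := \pi_A\circ\mu_a\circ(\iota_A\otimes\iota_A)$. One checks the Frobenius relations restrict because $D_0$ commutes with all structure maps of $A$ — this commutation is where one uses that $D_0$ is built from $\mu,\Delta,\zeta$ together with the Frobenius relation, exactly as in the proof that eigenspaces of $P_a$ are Frobenius subalgebras in Lemma~\ref{lem:dfa:ssi}. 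The main obstacle is the bookkeeping in the graphical computations of part one and the centrality identity $D_a\circ f = P_a\circ f$ in part two: getting the area parameters to line up so that every morphism visibly depends only on the correct sum, while repeatedly applying the Frobenius relation and $\sigma_{A,A}\circ e_a = e_a$. Everything else is a routine transcription of the corresponding non-regularised statements in \cite{Lauda:2007oc}, decorated with area labels.
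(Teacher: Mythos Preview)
Your proposal is correct and matches the paper's approach: the paper itself does not give a detailed proof but simply states that the lemma is a direct generalisation of \cite[Prop.\,2.20]{Lauda:2007oc} and was partially proved in Corollary~\ref{cor:centercycltens}, which are precisely the two ingredients you invoke (graphical Frobenius/separability manipulations for the semigroup law, and the cyclic-tensor-product corollary applied to $A$ as a bimodule over itself for the centre identification). Your added detail on transporting the RFA structure via Lemma~\ref{lem:ra:center} and restriction of the structure maps is the natural way to fill in the third claim; the one sentence about ``$\pi_0(\Sb)$ classifies the relevant moduli'' is a non sequitur you should delete, but it does not affect the argument.
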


Using Lemma~\ref{lem:d0-split-idempot-center} and Theorem~\ref{thm:aqftrfaequiv},
we have the direct translation of \cite[Thm.\,4.7]{Lauda:2007oc}.
\begin{theorem}\label{thm:latticecenter}
	Let $\Ab$ be
	state-sum data, 
	let $A$ denote the corresponding RFA and let $Z(A)$ denote its centre.
	Let $\funZ_{\Ab}$ denote the state-sum {\aQFT} of Theorem~\ref{thm:state-sum-aqft} and 
	let $G$ be the equivalence in \eqref{eq:aqftrfaequiv}.
	Then
	\begin{align}
		Z(A)=G(\funZ_{\Ab})\ .
		\label{eq:thm:latticecenter}
	\end{align}
\end{theorem}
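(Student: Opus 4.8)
The plan is to prove $G(\funZ_{\Ab}) = Z(A)$ by unpacking both sides and using the characterisation of $G$ together with the state-sum construction. Recall that $G(\funZ_{\Ab}) = \funZ_{\Ab}(\Sb)$ as a commutative RFA, where the structure maps are the images under $\funZ_{\Ab}$ of the standard bordisms $\bar\eta_a, \bar\eps_a, \bar\mu_a, \bar\Delta_a$ in \eqref{eq:bord_generators}, and where $\bar P_a = (\Sbb_{1,1},a)$. On the other hand $Z(A)$ is the image of the idempotent $D_0$ (which splits because $\Sc$ is idempotent complete), equipped with the RFA structure obtained by restricting the structure maps of $A$ along $\iota_A, \pi_A$ in \eqref{eq:d0-split-idempot}, as established in Lemma~\ref{lem:d0-split-idempot-center}. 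So the statement has two parts: first, the underlying objects $\funZ_{\Ab}(\Sb)$ and $Z(A)$ coincide; second, the induced RFA structures match.

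For the object: by definition \eqref{eq:tft:obj-aqft} we have $\funZ_{\Ab}(\Sb) = Z(A)$ since $\Sb$ is connected, so this is immediate from the construction. The content is therefore entirely in matching the structure maps. First I would compute $\funZ_{\Ab}(\bar P_a)$: pick a PLCW decomposition of the cylinder $(\Sbb_{1,1},a)$ consisting of a single square with two opposite edges identified, one edge as the ingoing boundary and one as the outgoing boundary; then by the analysis in the proof of Theorem~\ref{thm:state-sum-aqft} (the discussion of $\funZ_{\Ab}$ on cylinders) the map $\Lc$ is precisely $D_a$ from \eqref{eq:state-sum-data-notation}, and after pre-composing with $\iota_A$ and post-composing with $\pi_A$ one gets $\pi_A \circ D_a \circ \iota_A$. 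Using $D_a = D_a \circ D_0$ from Lemma~\ref{lem:d0-split-idempot-center} and $D_0 = \iota_A \circ \pi_A$, this equals the operator on $Z(A)$ induced by $D_a$ — i.e. exactly $\tilde P_a$ from Lemma~\ref{lem:ra:center} and the discussion preceding it. In particular $\funZ_{\Ab}(\bar P_a) = \widetilde{P_a}$, the $P_a$ of the RFA $Z(A)$.

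Next I would do the same for $\bar\mu_a, \bar\Delta_a, \bar\eta_a, \bar\eps_a$: choose the normal-form PLCW decompositions of $(\Sbb_{1,2},a)$, $(\Sbb_{2,1},a)$, $(\Sbb_{1,0},a)$, $(\Sbb_{0,1},a)$ with one edge per boundary component, apply the state-sum recipe \eqref{eq:step6b-aqft}, and identify the resulting morphism $\Lc$ with the corresponding structure map of the RFA $A$ built in Lemma~\ref{lem:data2rfa} — using the explicit formulas \eqref{eq:lem:data2rfa:2}–\eqref{eq:lem:data2rfa:4}, which is exactly the standard state-sum computation (e.g. a pair of pants decomposed into one triangle gives $\mu_a$ after contracting along the internal edge). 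Conjugating by $\iota_A, \pi_A$ on the boundary circles then yields $\pi_A \circ \mu_a \circ (\iota_A \otimes \iota_A)$ etc., which by Lemma~\ref{lem:d0-split-idempot-center} (the centre being an RFA with restricted structure maps) is precisely the RFA structure of $Z(A)$. Since a commutative RFA is determined by these five families of morphisms (continuity then being automatic), we conclude $G(\funZ_{\Ab}) = Z(A)$ as commutative RFAs, which is \eqref{eq:thm:latticecenter}.

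The main obstacle I anticipate is purely bookkeeping: carefully tracking the boundary parametrisations and the permutation maps $\Uppi_\Phi$, $\Psi_S$ so that the state-sum output on the normal-form bordisms lines up on the nose with the algebraically-defined structure maps \eqref{eq:lem:data2rfa:graphical}, and making sure the $\iota_A$/$\pi_A$ insertions on boundary circles produce exactly the restricted (centre) structure maps rather than merely conjugate ones — this is where Lemma~\ref{lem:d0-split-idempot-center} and the relations $D_a \circ D_0 = D_a = D_0 \circ D_a$ do the real work. All of this is the area-dependent analogue of \cite[Thm.\,4.7]{Lauda:2007oc}, so no genuinely new idea is needed beyond threading the area parameters through, and one can invoke that proof for the routine parts and simply note that every condition there holds with the areas carried along additively.
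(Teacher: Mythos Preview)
Your proposal is correct and follows essentially the same approach as the paper, which simply records the theorem as a direct translation of \cite[Thm.\,4.7]{Lauda:2007oc} via Lemma~\ref{lem:d0-split-idempot-center} and Theorem~\ref{thm:aqftrfaequiv}. Your explicit unpacking --- identifying $\funZ_{\Ab}(\Sb)=Z(A)$ on objects from \eqref{eq:tft:obj-aqft}, computing $\funZ_{\Ab}$ on the generating bordisms via suitable PLCW decompositions, and matching the results with the restricted structure maps of $A$ using $D_a\circ D_0=D_a$ --- is exactly the content of that translation with the area parameters threaded through.
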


The following lemma gives a concise expression for the value of a state-sum area-dependent QFT on a genus $g$ surface with $b$ outgoing boundary components.

\begin{lemma}
	Let $\Ab$ be state-sum data and $A$ the corresponding RFA.
	\begin{enumerate}
		\item We have the following identities:
			\begin{align}
				&\begin{aligned}
				\def\svgwidth{5cm}
\begingroup%
  \makeatletter%
  \providecommand\color[2][]{%
    \errmessage{(Inkscape) Color is used for the text in Inkscape, but the package 'color.sty' is not loaded}%
    \renewcommand\color[2][]{}%
  }%
  \providecommand\transparent[1]{%
    \errmessage{(Inkscape) Transparency is used (non-zero) for the text in Inkscape, but the package 'transparent.sty' is not loaded}%
    \renewcommand\transparent[1]{}%
  }%
  \providecommand\rotatebox[2]{#2}%
  \newcommand*\fsize{\dimexpr\f@size pt\relax}%
  \newcommand*\lineheight[1]{\fontsize{\fsize}{#1\fsize}\selectfont}%
  \ifx\svgwidth\undefined%
    \setlength{\unitlength}{192.74754674bp}%
    \ifx\svgscale\undefined%
      \relax%
    \else%
      \setlength{\unitlength}{\unitlength * \real{\svgscale}}%
    \fi%
  \else%
    \setlength{\unitlength}{\svgwidth}%
  \fi%
  \global\let\svgwidth\undefined%
  \global\let\svgscale\undefined%
  \makeatother%
  \begin{picture}(1,0.48757138)%
    \lineheight{1}%
    \setlength\tabcolsep{0pt}%
    \put(0,0){\includegraphics[width=\unitlength,page=1]{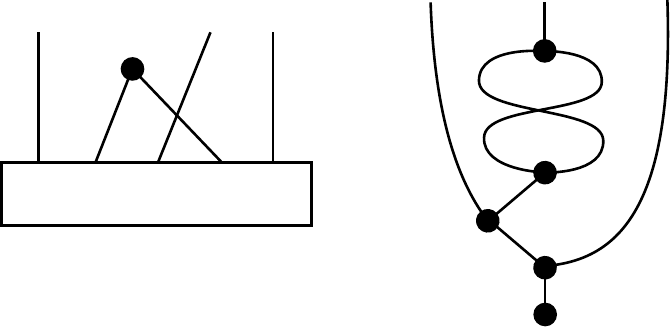}}%
    \put(0.54169236,0.27363729){\color[rgb]{0,0,0}\makebox(0,0)[lt]{\lineheight{1.25}\smash{\begin{tabular}[t]{l}=\end{tabular}}}}%
  \end{picture}%
\endgroup%

				\end{aligned}\ ,
				&&\begin{aligned}
				\def\svgwidth{6cm}
\begingroup%
  \makeatletter%
  \providecommand\color[2][]{%
    \errmessage{(Inkscape) Color is used for the text in Inkscape, but the package 'color.sty' is not loaded}%
    \renewcommand\color[2][]{}%
  }%
  \providecommand\transparent[1]{%
    \errmessage{(Inkscape) Transparency is used (non-zero) for the text in Inkscape, but the package 'transparent.sty' is not loaded}%
    \renewcommand\transparent[1]{}%
  }%
  \providecommand\rotatebox[2]{#2}%
  \newcommand*\fsize{\dimexpr\f@size pt\relax}%
  \newcommand*\lineheight[1]{\fontsize{\fsize}{#1\fsize}\selectfont}%
  \ifx\svgwidth\undefined%
    \setlength{\unitlength}{218.71682277bp}%
    \ifx\svgscale\undefined%
      \relax%
    \else%
      \setlength{\unitlength}{\unitlength * \real{\svgscale}}%
    \fi%
  \else%
    \setlength{\unitlength}{\svgwidth}%
  \fi%
  \global\let\svgwidth\undefined%
  \global\let\svgscale\undefined%
  \makeatother%
  \begin{picture}(1,0.42095869)%
    \lineheight{1}%
    \setlength\tabcolsep{0pt}%
    \put(0,0){\includegraphics[width=\unitlength,page=1]{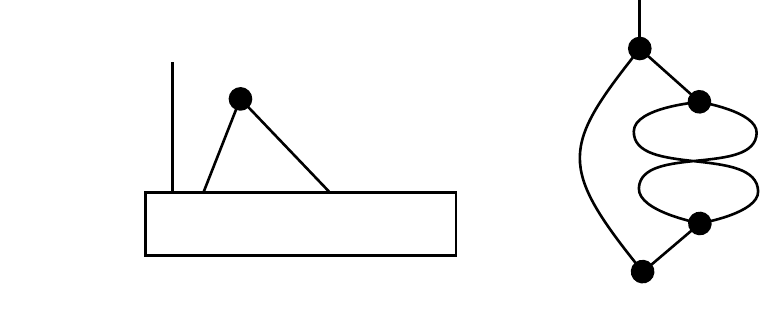}}%
    \put(0.68126401,0.19294149){\color[rgb]{0,0,0}\makebox(0,0)[lt]{\lineheight{1.25}\smash{\begin{tabular}[t]{l}=\end{tabular}}}}%
    \put(0,0){\includegraphics[width=\unitlength,page=2]{phi-rfa.pdf}}%
    \put(-0.00455426,0.19294149){\color[rgb]{0,0,0}\makebox(0,0)[lt]{\lineheight{1.25}\smash{\begin{tabular}[t]{l}$\varphi_a:=$\end{tabular}}}}%
  \end{picture}%
\endgroup%

				\end{aligned}\ ,
				\label{eq:phi-rfa}
			\end{align}
	\label{lem:ZA-from-rfa:1}
		\item Let $(\Sigma_{g,b},a):\emptyset\to(\Sb)^{\sqcup b}$ be a connected bordism 
			of genus $g$ with $b\ge1$ outgoing boundary components and area $a$. Then
			\begin{align}
				\begin{aligned}
					\funZ_{\Ab}(\Sigma_{g,b},a)=\pi^{\otimes b}\circ\Delta_{a_g}^{(b)}\circ\prod_{j=1}^{g}\varphi_{a_j}\circ
						(\zeta_{a'})^{1-b}\circ
					\eta_{a_0}\ ,
				\end{aligned}
				\label{eq:ZA-sigma-gb-rfa}
			\end{align}
			with $a=b\cdot a'+\sum_{j=0}^{g+1}a_j$.
	\label{lem:ZA-from-rfa:2}
\item Let $(C,a):\Sb\sqcup\Sb\to\emptyset$ be a cylinder with two ingoing components and area $a$. Then 
	\begin{align}
		\funZ_{\Ab}(C,a)=\varepsilon_{a_0}\circ\zeta_{a_1}\circ\mu_{a_2}\circ(\iota\otimes\iota)\ ,
		\label{eq:state-sum-in-in-cylinder}
	\end{align}
	with $a=a_0+a_1+a_2$.
	\label{lem:ZA-from-rfa:3}
	\end{enumerate}
	\label{lem:ZA-from-rfa}
\end{lemma}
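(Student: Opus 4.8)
\textbf{Proof proposal for Lemma~\ref{lem:ZA-from-rfa}.} The plan is to compute the state-sum invariant of each of the three bordisms by choosing a convenient PLCW decomposition and unfolding the definitions of Section~\ref{sec:latticedata}, translating the resulting morphism into RFA language via Lemma~\ref{lem:data2rfa} (equivalently Lemma~\ref{lem:rfa2data}/Theorem~\ref{thm:dataisrfa}). By Theorem~\ref{thm:state-sum-aqft}\,\eqref{thm:aqft:1} the answer is independent of all choices, so we are free to pick the most economical decomposition in each case.

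First, for Part~\ref{lem:ZA-from-rfa:1}: the two identities in \eqref{eq:phi-rfa} are purely algebraic facts about the RFA $A = \kappa(\Ab)$ and do not involve bordisms. The first one says that a certain composite of $W^5$-weights (or of $\Delta$'s, $\mu$'s and $\zeta$'s) rearranges into the ``handle plus outputs'' normal form; this follows from the Frobenius relation \eqref{eq:rfa:frobrel}, coassociativity \eqref{eq:rca:coassoc}, and Condition~\ref{cond:W-zeta} (moving $\zeta_a$ around), exactly as in the topological case. The definition $\varphi_a := (\text{handle operator})$ is then read off, and the second identity records that $\varphi_a$ can be written either as $\mu\circ\Delta$-with-a-$\zeta$ inserted or in the mirrored way; this is again the Frobenius relation together with symmetry of $A$ (Condition~\ref{cond:cyclicsymm}). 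I would just cite \cite[Sec.\,3]{Kock:2004fa} or \cite{Lauda:2007oc} for the shape of the manipulation and note that each step only depends on the sum of the area parameters, so the bookkeeping of areas is harmless.

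For Part~\ref{lem:ZA-from-rfa:2}: choose a PLCW decomposition of $\Sigma_{g,b}$ of ``normal form'' type --- one face that is a large polygon whose edge identifications produce the standard $\prod[x_j,y_j]$-word for the genus together with $b$ one-gons (or the decomposition obtained by glueing $g$ handle-pieces and $b$ boundary caps to a central sphere-with-holes), with one edge on each boundary circle as required by the construction. Apply the recipe \eqref{eq:step6b-aqft}: the plaquette weights $W^{n}$ combine via Condition~\ref{cond:gluer} into $\Delta^{(\cdot)}\circ\eta$, each handle contributes a factor of $\varphi_{a_j}$ (Part~\ref{lem:ZA-from-rfa:1} and Lemma~\ref{lem:data2rfa}), the vertex weights $\zeta$ supply the Euler-characteristic power $(\zeta_{a'})^{1-b}$ (for a connected genus-$g$ surface with $b$ boundary circles and the chosen cell count, $V-E+F = 2-2g-b$, which after accounting for the inserted $\mu$/$\Delta$ leaves exactly $1-b$ stray $\zeta$'s --- I would compute this Euler-characteristic count carefully), and finally $\Ec_{\mathrm{out}} = \pi^{\otimes b}$ projects onto $Z(A)^{\otimes b}$. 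Collecting factors and using that everything depends only on total areas gives \eqref{eq:ZA-sigma-gb-rfa} with $a = b\cdot a' + \sum_{j=0}^{g+1} a_j$. An alternative, perhaps cleaner route is to use the normal-form description \eqref{eq:ZA-on-conn-morphisms} of $\funZ^A$ together with Theorem~\ref{thm:latticecenter} (which identifies $G(\funZ_{\Ab})$ with $Z(A)$) to reduce this to the same computation inside the commutative RFA $Z(A)$; but since $\varphi_a$ and $\zeta_a$ live naturally in $A$ rather than $Z(A)$, I would keep the direct PLCW computation and just insert $\iota\circ\pi = D_0$ where needed.

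For Part~\ref{lem:ZA-from-rfa:3}: take the cylinder $C:\Sb\sqcup\Sb\to\emptyset$ with the one-face decomposition (a rectangle with the two boundary edges identified to form the two ingoing circles, or rather with one internal edge contracted), one internal edge carrying a $\beta$, and a single vertex carrying $\zeta$. The ingoing boundaries bring in $\Ec_{\mathrm{in}} = \iota\otimes\iota$; the internal edge contributes $\beta_{a_2} = \eps\circ\mu$ (after splitting $\beta_{a_1+a_2} = \eps_{a_1}\circ\mu_{a_2}$ as in \eqref{eq:lem:rfa2data:1}); the vertex contributes $\zeta_{a_1}$. Assembling via \eqref{eq:step6b-aqft} yields $\eps_{a_0}\circ\zeta_{a_1}\circ\mu_{a_2}\circ(\iota\otimes\iota)$, matching \eqref{eq:state-sum-in-in-cylinder}. (One can cross-check this against the general formula for an in-in cylinder obtained from $\funZ^{Z(A)}$, since such a cylinder is $\bar\varepsilon$ composed with $\bar\mu$ on $Z(A)$, and the $\zeta$-factor encodes the passage from $A$ to $Z(A)$.)

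The main obstacle I anticipate is purely organisational rather than conceptual: getting the Euler-characteristic bookkeeping in Part~\ref{lem:ZA-from-rfa:2} exactly right so that precisely $1-b$ copies of $\zeta_{a'}$ survive (as opposed to $2-2g-b$ or some other count), and tracking how the inserted $\mu$'s and $\Delta$'s from Condition~\ref{cond:gluer} and from the handle operators $\varphi_{a_j}$ absorb the remaining vertex and face weights. Once the combinatorics of one explicitly chosen PLCW decomposition is pinned down, all three identities reduce to the algebraic relations Conditions~\ref{cond:cyclicsymm}--\ref{cond:W-zeta} (equivalently the RFA axioms), and invariance under the choice is guaranteed by Theorem~\ref{thm:state-sum-aqft}. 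Since these are all routine string-diagram manipulations generalising \cite[Thm.\,4.7, Prop.\,2.20]{Lauda:2007oc} with area parameters carried along additively, I would present the $g=0$, $b=1$ case in detail and indicate how handles and extra boundary components are added.
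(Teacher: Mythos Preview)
Your overall strategy coincides with the paper's: choose a single-polygon PLCW decomposition of $\Sigma_{g,b}$ (the paper uses a $(4g+3b)$-gon), unfold the state-sum recipe, and translate the resulting morphism into RFA language via the dictionary of Lemma~\ref{lem:data2rfa}/Lemma~\ref{lem:rfa2data}. Parts~\ref{lem:ZA-from-rfa:1} and~\ref{lem:ZA-from-rfa:3} are indeed routine diagram manipulations, and the paper says as much.

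There is, however, a genuine gap in your plan for Part~\ref{lem:ZA-from-rfa:2}. You propose that the factor $(\zeta_{a'})^{1-b}$ arises from ``Euler-characteristic bookkeeping'' of the vertex weights. But vertex weights only ever contribute \emph{non-negative} powers of $\zeta$, whereas for $b\ge 2$ the exponent $1-b$ is negative. So no amount of cell-counting will produce $(\zeta_{a'})^{1-b}$ directly. The missing idea, which the paper states explicitly, is an algebraic insertion trick: one multiplies and divides by $b-1$ copies of $\zeta_{a'}$ (this uses that $A$ is strongly separable, so $\zeta_a$ is invertible with inverse given by multiplication with the window element). The inserted $\zeta$'s are distributed to the $b$ output legs of $\Delta^{(b)}$ so that each leg carries a copy of $D_0 = \iota\circ\pi$ (recall $\tilde D_a = \zeta\circ\mu\circ\sigma\circ\Delta$ from Lemma~\ref{lem:rfa2data}); these $D_0$'s then cancel against the projections $\pi^{\otimes b}$ coming from $\Ec_{\mathrm{out}}$. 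What remains is precisely the $(\zeta_{a'})^{1-b}$ in \eqref{eq:ZA-sigma-gb-rfa}. Once you make this insertion step explicit, the rest of your argument goes through.
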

\begin{proof}
Parts~\ref{lem:ZA-from-rfa:1}~and~\ref{lem:ZA-from-rfa:3} follow from a simple calculation.

	\begin{figure}[tb]
		\centering
		\def\svgwidth{5cm}
                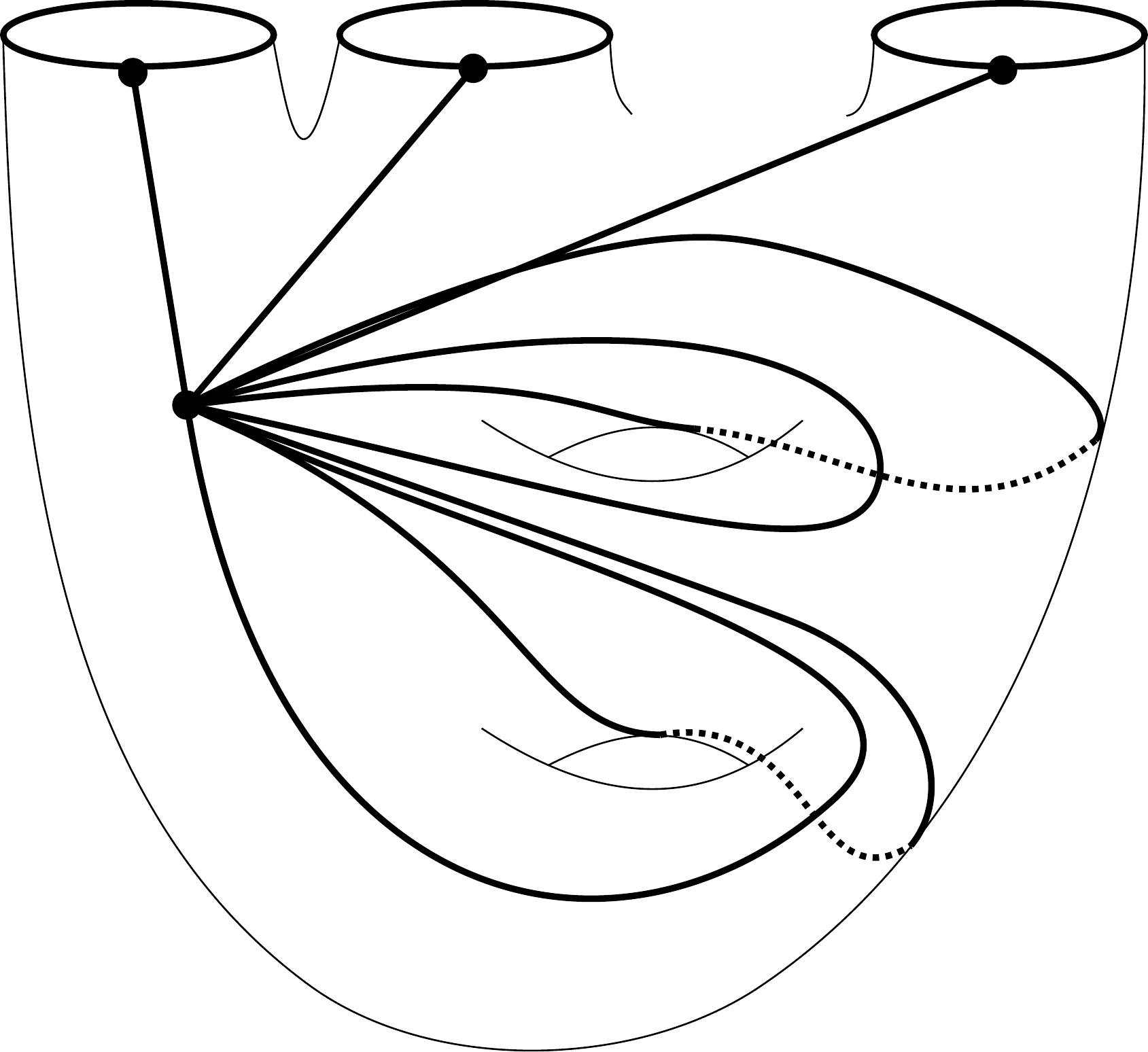
		\caption{A convenient PLCW decomposition of $\Sigma_{g,b}$ using a single face, which is a $(4g+3b)$-gon.
		The dots show the vertices and the thick lines the edges.}
		\label{fig:sigmagb-decomp}
	\end{figure}
	We will only sketch the proof for Part~\ref{lem:ZA-from-rfa:2}. 
	Pick a PLCW decomposition of $\Sigma_{g,b}$
	as shown on Figure~\ref{fig:sigmagb-decomp} and apply the state-sum construction to get 
	the morphism $\Lc$ of \eqref{eq:step6a-aqft}, which is:
	\begin{align}
		\begin{aligned}
		\def\svgwidth{7cm}
\begingroup%
  \makeatletter%
  \providecommand\color[2][]{%
    \errmessage{(Inkscape) Color is used for the text in Inkscape, but the package 'color.sty' is not loaded}%
    \renewcommand\color[2][]{}%
  }%
  \providecommand\transparent[1]{%
    \errmessage{(Inkscape) Transparency is used (non-zero) for the text in Inkscape, but the package 'transparent.sty' is not loaded}%
    \renewcommand\transparent[1]{}%
  }%
  \providecommand\rotatebox[2]{#2}%
  \ifx\svgwidth\undefined%
    \setlength{\unitlength}{308.02922363bp}%
    \ifx\svgscale\undefined%
      \relax%
    \else%
      \setlength{\unitlength}{\unitlength * \real{\svgscale}}%
    \fi%
  \else%
    \setlength{\unitlength}{\svgwidth}%
  \fi%
  \global\let\svgwidth\undefined%
  \global\let\svgscale\undefined%
  \makeatother%
  \begin{picture}(1,0.17514572)%
    \put(0,0){\includegraphics[width=\unitlength]{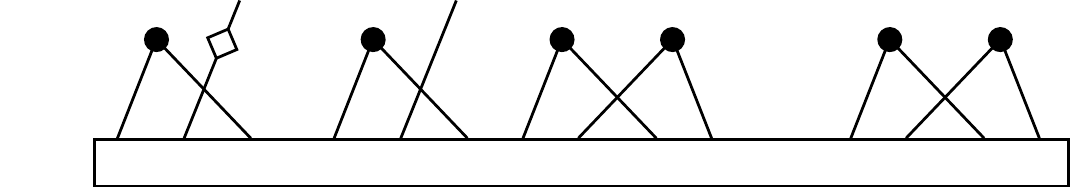}}%
    \put(-0.00344935,0.06409375){\color[rgb]{0,0,0}\makebox(0,0)[lb]{\smash{$\Lc=$}}}%
    \put(0.22577782,0.07804083){\color[rgb]{0,0,0}\makebox(0,0)[lb]{\smash{$\dots$}}}%
    \put(0.66729437,0.07804083){\color[rgb]{0,0,0}\makebox(0,0)[lb]{\smash{$\dots$}}}%
  \end{picture}%
\endgroup%

		\end{aligned}\ .
		\label{eq:Lc-state-sum-sigmagb}
	\end{align}
	The rest of the calculation is straightforward, but tedious, therefore we omit it here.
	Note that in order to get the $D_0$'s at the boundary components, which then cancel with the $\pi$'s, we need to insert $b-1$ factors of $\zeta_{a'}$'s and their inverses.
\end{proof}

In the rest of this section we discuss how one can build Hermitian {\aQFT}s via the state-sum construction.
Let us assume that $\Sc$ is equipped with a $\dagger$-structure.
The state-sum data $\Ab$ is called \textsl{Hermitian} if it satisfies
\begin{align}
	\zeta_a^{\dagger}=\zeta_a\ ,\quad
	\beta_a^{\dagger}=W_a^{2}\quad\text{ and }\quad
	\begin{aligned}
	\def\svgwidth{6cm}
\begingroup%
  \makeatletter%
  \providecommand\color[2][]{%
    \errmessage{(Inkscape) Color is used for the text in Inkscape, but the package 'color.sty' is not loaded}%
    \renewcommand\color[2][]{}%
  }%
  \providecommand\transparent[1]{%
    \errmessage{(Inkscape) Transparency is used (non-zero) for the text in Inkscape, but the package 'transparent.sty' is not loaded}%
    \renewcommand\transparent[1]{}%
  }%
  \providecommand\rotatebox[2]{#2}%
  \ifx\svgwidth\undefined%
    \setlength{\unitlength}{220.61977539bp}%
    \ifx\svgscale\undefined%
      \relax%
    \else%
      \setlength{\unitlength}{\unitlength * \real{\svgscale}}%
    \fi%
  \else%
    \setlength{\unitlength}{\svgwidth}%
  \fi%
  \global\let\svgwidth\undefined%
  \global\let\svgscale\undefined%
  \makeatother%
  \begin{picture}(1,0.35205696)%
    \put(0,0){\includegraphics[width=\unitlength]{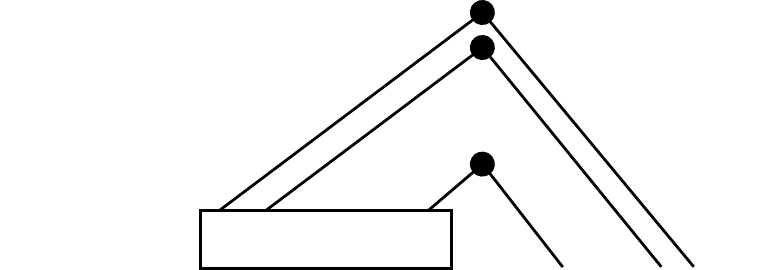}}%
    \put(0.32392654,0.02027442){\color[rgb]{0,0,0}\makebox(0,0)[lb]{\smash{\scriptsize{$a-a_1;n$}}}}%
    \put(0.44697765,0.10790021){\color[rgb]{0,0,0}\makebox(0,0)[lb]{\smash{$\dots$}}}%
    \put(0.74186114,0.02440629){\color[rgb]{0,0,0}\makebox(0,0)[lb]{\smash{$\dots$}}}%
    \put(0.81346938,0.13140654){\color[rgb]{0,0,0}\makebox(0,0)[lb]{\smash{\scriptsize
{$a_1/n$}}}}%
    \put(0.69686074,0.27035528){\color[rgb]{0,0,0}\makebox(0,0)[lb]{\smash{\scriptsize{$a_1/n$}}}}%
    \put(0.65664832,0.32843164){\color[rgb]{0,0,0}\makebox(0,0)[lb]{\smash{\scriptsize{$a_1/n$}}}}%
    \put(-0.00240799,0.15451112){\color[rgb]{0,0,0}\makebox(0,0)[lb]{\smash{$\left( W_a^{n} \right)^{\dagger}=$}}}%
  \end{picture}%
\endgroup%

	\end{aligned}\ ,
	\label{eq:cond:adjoints}
\end{align}
for every $a,a_0\in\Rb_{>0}$ and $n\in\Zb_{\ge1}$.

One can easily check the following statements.
If $\Ab$ is Hermitian, then the RFA $\kappa(\Ab)$ from Lemma~\ref{lem:data2rfa} is a $\dagger$-RFA.
Conversely, let $A$ be a $\dagger$-RFA. Then the state-sum data $\Omega(A)$ is Hermitian.
Also, if $\Ab$ is Hermitian, then the state-sum {\aQFT} $\funZ_{\Ab}$ is Hermitian.

\subsection{PLCW decompositions with defects}\label{sec:PLCW-dec-defect}

In this section we introduce a cell decomposition of bordisms with defects, which is used in Section~\ref{sec:lattice:daqft} to build defect {\aQFT}s.
We will use the notation of Sections~\ref{sec:daqft}~and~\ref{sec:PLCW-dec-aqft} and fix a set of defect conditions 
$\Db=(D_1,D_2,s,t)$.
\begin{figure}[tb]
	\centering
	\def\svgwidth{4cm}
	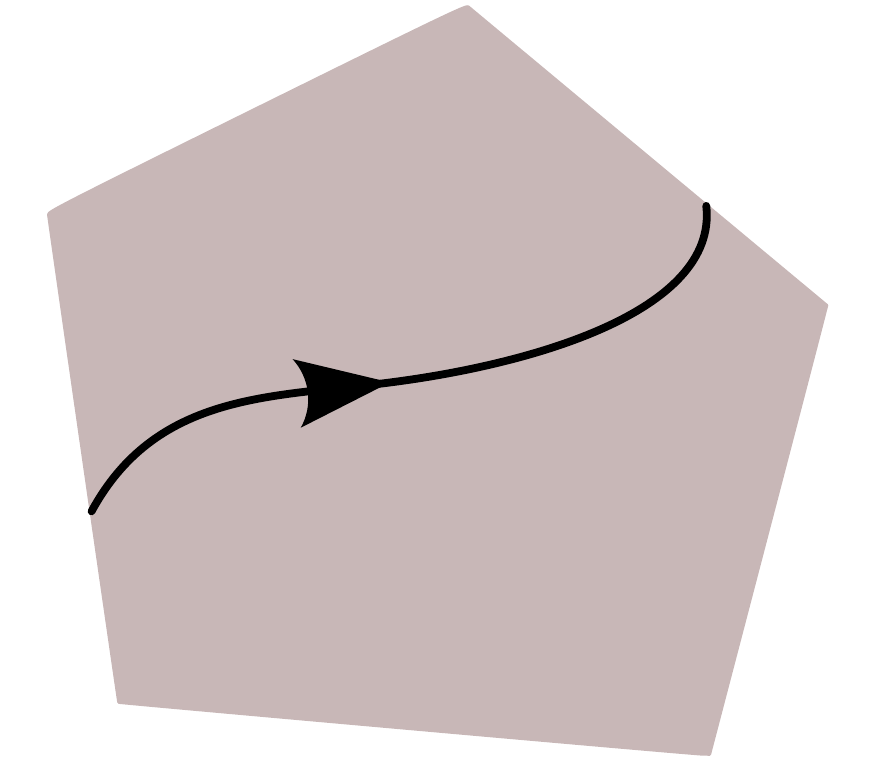
	\caption{A face with defect $f\in\Sigma_2^\mathrm{defect}$, the defect line is marked with a thick line with the arrow
	showing its orientation. 
	For example, the edge $e$ lies in $\Sigma_1^\mathrm{defect}$ and the edge $e'$ in $\Sigma_1^\mathrm{empty}$.
	The area maps have values
	$\Ac_2^\mathrm{defect}(f)=(a_1,l,a_2)$, $\Ac_1^\mathrm{defect}(e)=(a_1',l',a_2')$ and 
	$\Ac_1^\mathrm{empty}(e')=b$ for this particular face and two edges.
}
	\label{fig:face-with-defect}
\end{figure}

Let $\Sigma=\Sigma_{[1]}\cup\Sigma_{[2]}$ be a surface with defects.
A \textsl{PLCW decomposition with defects of $\Sigma$} is a PLCW decomposition 
$\Sigma_0$, $\Sigma_1$, $\Sigma_2$ 
of the surface $\Sigma$ which satisfies the following conditions.
\begin{enumerate}
	\item For every $p\in\Sigma_0$ the intersection $p \cap\Sigma_{[1]}=\emptyset$ is empty.

	\item Every intersection of an element of $\Sigma_1$ and $\Sigma_{[1]}$ is transversal.

	\item For every $e\in\Sigma_1$, $e\cap \Sigma_{[1]}$
	is either empty or consists of precisely one point.

	\item For every $f\in\Sigma_2$, 
	if $f\cap\Sigma_{[1]}\neq\emptyset$, 
	then it is diffeomorphic to an interval
		with the boundary points on edges of $f$.
	\item For every boundary component $b\in\pi_0(\partial\Sigma)$ 
	for which $b\cap \Sigma_{[1]}=\emptyset$,
		there is 1 boundary edge.

	\item For every boundary component $b\in\pi_0(\Sigma)$ for which 
	$b\cap \Sigma_{[1]}\neq \emptyset$,
		every boundary edge contains exactly one point in $\Sigma_{[1]}$.
\end{enumerate}
If the above conditions hold then the sets of faces and edges split in two disjoint sets.
For $k\in\{1,2\}$ let
$\Sigma_k^\mathrm{empty}\subseteq\Sigma_k$ be the subset of cells
which do not intersect with $\Sigma_{[1]}$ (\textsl{empty cells}) and let
$\Sigma_k^\mathrm{defect}:=\Sigma_k\setminus\Sigma_k^\mathrm{empty}$ 
be the subset of cells
which intersect with $\Sigma_{[1]}$ (\textsl{cells with a defect}).
An example is shown in Figure~\ref{fig:face-with-defect}.
Let 
	\begin{align}
		\bar{V}:\Sigma_0\to \Sigma_1 
		\label{eq:choose-edge-to-vertex-V-bar}
	\end{align}
	be a map which assigns to a vertex $v$ an edge $e$ for which $v\in\partial(e)$, similarly as in \eqref{eq:choose-vertex-edge}. This map splits $\Sigma_0$ into two disjoint sets $\Sigma_0^\mathrm{empty}:=\bar{V}^{-1}(\Sigma_1^\mathrm{empty})$ and $\Sigma_0^\mathrm{defect}:=\bar{V}^{-1}(\Sigma_1^\mathrm{defect})$.

Let $(\Sigma,\Ac,\Lc)$ be a surface with area and defects 
with strictly positive areas and lengths
and let $\Sigma_0$, $\Sigma_1$, $\Sigma_2$
be a PLCW decomposition with defects of $\Sigma=\Sigma_{[1]}\cup\Sigma_{[2]}$. 
Let 
\begin{align}
\Ac_k^\mathrm{empty}:&\Sigma_k^\mathrm{empty}\to\Rb_{>0}\ ,
	\label{eq:area-map-empty}
\end{align}
for $k\in\{0,1,2\}$ be maps which assign to empty vertices, edges and faces an 
\textsl{area}, and let
\begin{align}
	\Ac_k^\mathrm{defect}:&\Sigma_k^\mathrm{defect}\to(\Rb_{>0})^{3}\ ,
	\label{eq:area-map-defect}
\end{align}
for $k\in\{0,1,2\}$ be maps which assign to vertices, edges and faces with defects 
an \textsl{area} on the 
half edges and half faces at the 
two sides of a defect line and 
a \textsl{length} of a defect line as explained in Figure~\ref{fig:face-with-defect}.
We require of the maps in 
\eqref{eq:area-map-empty} and \eqref{eq:area-map-defect}
that for every connected component $x\in\pi_0(\Sigma_{[2]})$ the sum of the areas of corresponding
vertices, (half) edges and (half)
faces of $x$ is equal to its area $\Ac(x)$.
We require the analogous condition on lengths of defect lines.
	In the case $k=0$ in \eqref{eq:area-map-defect} the three parameters $\Ac_k^\mathrm{defect}(v)$ for a vertex $v$ contribute to the same components as those of the edge $\bar V(v)$.

In the following we will write $\Ac_k(x)$ for both $\Ac_k^\mathrm{defect}(x)$ and $\Ac_k^\mathrm{empty}(x)$
and mean the latter depending on the type of $x\in\Sigma_k$.
A \textsl{PLCW decomposition of a surface with area and defects} $(\Sigma,\Ac,\Lc)$
consists of a choice of a PLCW decomposition with defects and a choice of maps
as in \eqref{eq:area-map-empty} and \eqref{eq:area-map-defect}.

\begin{figure}[tb]
	\centering
	\def\svgwidth{12cm}
\begingroup%
  \makeatletter%
  \providecommand\color[2][]{%
    \errmessage{(Inkscape) Color is used for the text in Inkscape, but the package 'color.sty' is not loaded}%
    \renewcommand\color[2][]{}%
  }%
  \providecommand\transparent[1]{%
    \errmessage{(Inkscape) Transparency is used (non-zero) for the text in Inkscape, but the package 'transparent.sty' is not loaded}%
    \renewcommand\transparent[1]{}%
  }%
  \providecommand\rotatebox[2]{#2}%
  \newcommand*\fsize{\dimexpr\f@size pt\relax}%
  \newcommand*\lineheight[1]{\fontsize{\fsize}{#1\fsize}\selectfont}%
  \ifx\svgwidth\undefined%
    \setlength{\unitlength}{701.04560753bp}%
    \ifx\svgscale\undefined%
      \relax%
    \else%
      \setlength{\unitlength}{\unitlength * \real{\svgscale}}%
    \fi%
  \else%
    \setlength{\unitlength}{\svgwidth}%
  \fi%
  \global\let\svgwidth\undefined%
  \global\let\svgscale\undefined%
  \makeatother%
  \begin{picture}(1,0.45031346)%
    \lineheight{1}%
    \setlength\tabcolsep{0pt}%
    \put(0,0){\includegraphics[width=\unitlength,page=1]{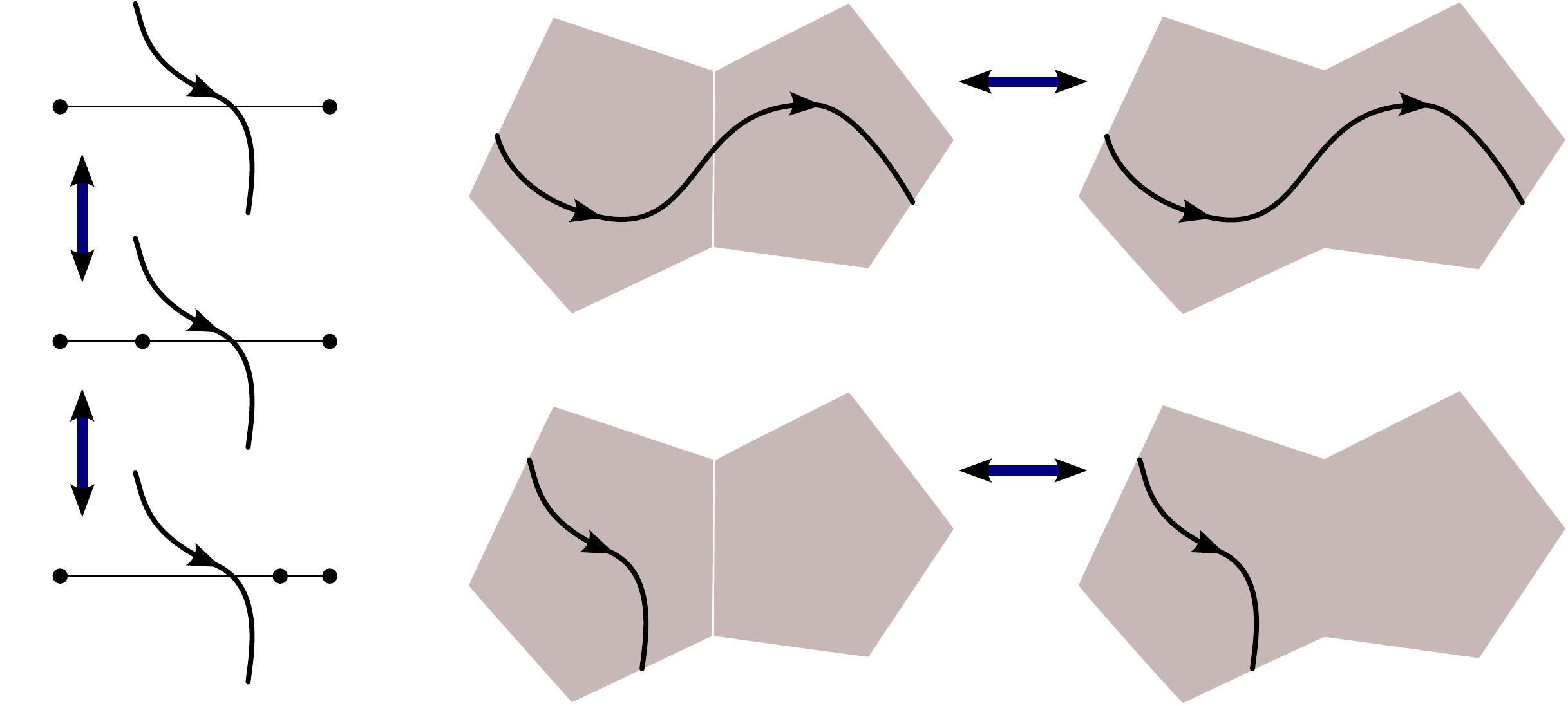}}%
    \put(-0.00106565,0.43041306){\color[rgb]{0,0,0}\makebox(0,0)[lt]{\lineheight{0}\smash{\begin{tabular}[t]{l}$a)$\end{tabular}}}}%
    \put(0.27281097,0.43041306){\color[rgb]{0,0,0}\makebox(0,0)[lt]{\lineheight{0}\smash{\begin{tabular}[t]{l}$b)$\end{tabular}}}}%
    \put(0.27281097,0.1757934){\color[rgb]{0,0,0}\makebox(0,0)[lt]{\lineheight{0}\smash{\begin{tabular}[t]{l}$c)$\end{tabular}}}}%
    \put(0,0){\includegraphics[width=\unitlength,page=2]{elementary-defect.pdf}}%
  \end{picture}%
\endgroup%

	\caption{
		Additional elementary moves of PLCW decompositions with defects.
		In Figure~$a)$, an edge which is crossed by a defect line (curved line with arrow) 
		is split in two by adding a new vertex (denoted with a dot).
		In Figure~$b)$, an edge which is crossed by a defect line is removed.
		In Figure~$c)$, an edge which is not crossed by any defect line is removed.
		Note that here only one of the faces can be crossed by defect lines.
}
	\label{fig:elementary-defect}
\end{figure}

\textsl{Elementary moves} on a PLCW decomposition with defects of a surface with defects (and area)
are elementary moves of PLCW decompositions which respect the conditions listed above.
The additional moves are shown if Figure~\ref{fig:elementary-defect}.
In \cite[Lem.\,3.6]{Davydov:2011dt} it is argued that any two PLCW decompositions with defects can be related by these elementary moves.

\subsection{State-sum construction with defects}\label{sec:lattice:daqft}

After introducing PLCW decompositions with defects let us turn to the state-sum construction of defect {\aQFT}s.
We will again use the notation of Sections~\ref{sec:daqft}~and~\ref{sec:PLCW-dec-defect}
and fix a set of defect conditions $\Db=(D_1,D_2,s,t)$.

\subsubsection*{State sum data and some preparatory notions}

As for the state-sum construction without defects in Section~\ref{sec:latticedata},
we start with giving \textsl{state-sum data with defects} $\Ab(\Db)$.
This consists of 
\begin{enumerate}
	\item state-sum data 
	$\Ab_y=(A_y,\zeta_a^{y},\beta_a^{y},W_a^{y,n})$
		for every $y\in D_2$ as in Definition~\ref{def:pdata}, 
	\item a pair of objects $X_x,\bar{X}_x\in\Sc$
		for every $x\in D_1$ together with the following families of morphisms:
\begin{equation}
\begin{gathered}
		\zeta_{a,l,b}^{x,\epsilon}\in\Sc(X_x^{\epsilon},X_x^{\epsilon})\ ,\quad
	\beta^{x}_{a,l,b}\in\Sc(X_x\otimes \bar{X}_x,\Ib)\ ,\\
	W^{x,n,m}_{a,l,b}\in\Sc(\Ib, \bar{X}_x\otimes A_{t(x)}^{\otimes n}\otimes
		X_x\otimes A_{s(x)}^{\otimes m})
\end{gathered}
	\label{eq:pdata-defect}
\end{equation}
for every $a,l,b\in\Rb_{>0}$, every $n,m\ge 0$ and $\epsilon\in\left\{ \pm \right\}$,
where we used the following  notation:
	\begin{align}
		X_x^{+}:={X}_x\quad\text{ and }\quad X_x^{-}:=\bar{X}_x\ .
		\label{eq:X-sign}
	\end{align}
\end{enumerate}
We will use the following graphical notation for these morphisms.
For $y\in D_2$,
\begin{align}
	\begin{aligned}
	\def\svgwidth{10cm}
\begingroup%
  \makeatletter%
  \providecommand\color[2][]{%
    \errmessage{(Inkscape) Color is used for the text in Inkscape, but the package 'color.sty' is not loaded}%
    \renewcommand\color[2][]{}%
  }%
  \providecommand\transparent[1]{%
    \errmessage{(Inkscape) Transparency is used (non-zero) for the text in Inkscape, but the package 'transparent.sty' is not loaded}%
    \renewcommand\transparent[1]{}%
  }%
  \providecommand\rotatebox[2]{#2}%
  \ifx\svgwidth\undefined%
    \setlength{\unitlength}{238.50266113bp}%
    \ifx\svgscale\undefined%
      \relax%
    \else%
      \setlength{\unitlength}{\unitlength * \real{\svgscale}}%
    \fi%
  \else%
    \setlength{\unitlength}{\svgwidth}%
  \fi%
  \global\let\svgwidth\undefined%
  \global\let\svgscale\undefined%
  \makeatother%
  \begin{picture}(1,0.22298422)%
    \put(0,0){\includegraphics[width=\unitlength]{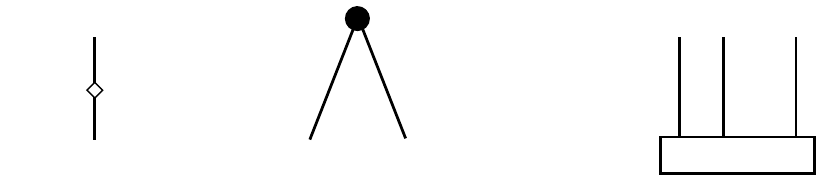}}%
    \put(0.45540767,0.19602618){\color[rgb]{0,0,0}\makebox(0,0)[lb]{\smash{\scriptsize{$(a;y)$}}}}%
    \put(0.29294747,0.11552396){\color[rgb]{0,0,0}\makebox(0,0)[lb]{\smash{$\beta^y_{a}=$}}}%
    \put(0.83911241,0.02934323){\color[rgb]{0,0,0}\makebox(0,0)[lb]{\smash{\scriptsize
{$a;y,n$}}}}%
    \put(0.90103622,0.09901158){\color[rgb]{0,0,0}\makebox(0,0)[lb]{\smash{$\dots$}}}%
    \put(0.65348551,0.11552396){\color[rgb]{0,0,0}\makebox(0,0)[lb]{\smash{$W^
{y,n}
_{a}=$}}}%
    \put(0.34316356,0.01010434){\color[rgb]{0,0,0}\makebox(0,0)[lb]{\smash{$A_y$}}}%
    \put(0.4759324,0.01010434){\color[rgb]{0,0,0}\makebox(0,0)[lb]{\smash{$A_y$}}}%
    \put(0.85203401,0.19554699){\color[rgb]{0,0,0}\makebox(0,0)[lb]{\smash{$A_
{y}$}}}%
    \put(0.93220633,0.19627646){\color[rgb]{0,0,0}\makebox(0,0)[lb]{\smash{$A_
{y}$}}}%
    \put(0.53944066,0.11552396){\color[rgb]{0,0,0}\makebox(0,0)[lb]{\smash{and}}}%
    \put(0.79836584,0.19554696){\color[rgb]{0,0,0}\makebox(0,0)[lb]{\smash{$A_
{y}$}}}%
    \put(-0.00222744,0.11552396){\color[rgb]{0,0,0}\makebox(0,0)[lb]{\smash{$\zeta^y_{a}=$}}}%
    \put(0.14370699,0.10770852){\color[rgb]{0,0,0}\makebox(0,0)[lb]{\smash{\scriptsize{$(a;y)$}}}}%
    \put(0.24426575,0.11552396){\color[rgb]{0,0,0}\makebox(0,0)[lb]{\smash{,}}}%
    \put(0.08372602,0.01370269){\color[rgb]{0,0,0}\makebox(0,0)[lb]{\smash{$A_y$}}}%
    \put(0.08271643,0.1924028){\color[rgb]{0,0,0}\makebox(0,0)[lb]{\smash{$A_y$}}}%
  \end{picture}%
\endgroup%

	\end{aligned}\ ,
	\label{eq:pdata-defect-graphical0}
\end{align}
$P_a^{(y)}\in\Sc(A_y,A_y)$ from \eqref{eq:state-sum-data-notation}
and for $x\in D_1$
\begin{align}
	\begin{aligned}
	\def\svgwidth{14cm}
	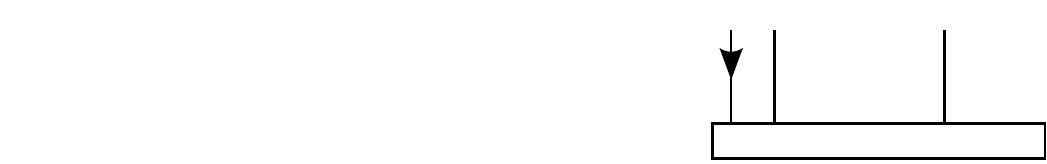
	\end{aligned}\ .
	\label{eq:pdata-defect-graphical}
\end{align}

Let us define
\begin{align}
	\begin{aligned}
		s(x,+)&:=s(x)\ ,&t(x,+)&:=t(x)\ ,\\
		s(x,-)&:=t(x)\ ,&t(x,-)&:=s(x)\ .
	\end{aligned}
	\label{eq:source-target-notation}
\end{align}
By a \textsl{defect list of length $n$} we mean an equivalence class of 
ordered lists
\begin{align}
	\underline{x}:=[(x_1,\epsilon_1,\dots,x_n,\epsilon_n)]\ ,
	\label{eq:defect-list}
\end{align}
where $x_i \in D_1$ and $\epsilon_i\in\{\pm\}$ ($i=1,\dots,n$). The $x_i,\eps_i$ have to satisfy, for $i=1,\dots,n$ and setting $x_{n+1}:=x_1$, $\epsilon_{n+1}:=\epsilon_1$,
\begin{align}
	s(x_i,\epsilon_i)=t(x_{i+1},\epsilon_{i+1})\ .
	\label{eq:source-target-simple}
\end{align}
	Two such lists $(x_1,\epsilon_1,\dots,x_n,\epsilon_n)$ and $(x_1',\epsilon_1',\dots,x_n',\epsilon_n')$ are equivalent
	if they are related by a cyclic permutation.
Let us introduce the shorthand, for a chosen representative of $\underline{x}$,
\begin{align}
	X_{\underline{x}}:=X_{x_1}^{\epsilon_1}\otimes\dots\otimes X_{x_n}^{\epsilon_n}\ .
	\label{eq:defect-list-tensor}
\end{align}
Different choices of representatives are
related by cyclic permutations of tensor factors.
Let us introduce the following morphisms:
\begin{align}
	\begin{aligned}
	\def\svgwidth{10cm}
\begingroup%
  \makeatletter%
  \providecommand\color[2][]{%
    \errmessage{(Inkscape) Color is used for the text in Inkscape, but the package 'color.sty' is not loaded}%
    \renewcommand\color[2][]{}%
  }%
  \providecommand\transparent[1]{%
    \errmessage{(Inkscape) Transparency is used (non-zero) for the text in Inkscape, but the package 'transparent.sty' is not loaded}%
    \renewcommand\transparent[1]{}%
  }%
  \providecommand\rotatebox[2]{#2}%
  \ifx\svgwidth\undefined%
    \setlength{\unitlength}{361.9671814bp}%
    \ifx\svgscale\undefined%
      \relax%
    \else%
      \setlength{\unitlength}{\unitlength * \real{\svgscale}}%
    \fi%
  \else%
    \setlength{\unitlength}{\svgwidth}%
  \fi%
  \global\let\svgwidth\undefined%
  \global\let\svgscale\undefined%
  \makeatother%
  \begin{picture}(1,0.27010487)%
    \put(0,0){\includegraphics[width=\unitlength]{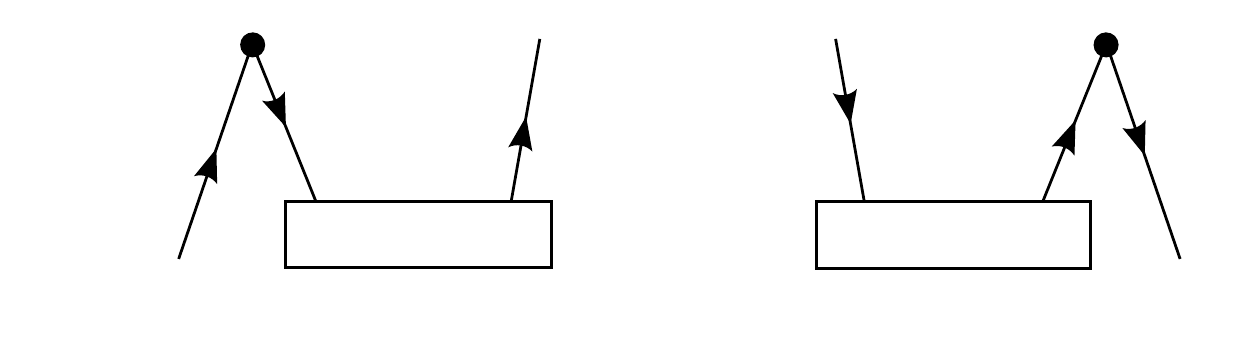}}%
    \put(0.50247993,0.14682141){\color[rgb]{0,0,0}\makebox(0,0)[lb]{\smash{$Q_{a,l,b}^{(x,-)}:=$}}}%
    \put(0.70426042,0.2178767){\color[rgb]{0,0,0}\makebox(0,0)[lb]{\smash{\scriptsize{$(a_1,l_1,b_1,x)$}}}}%
    \put(0.65927991,0.07175485){\color[rgb]{0,0,0}\makebox(0,0)[lb]{\smash{\scriptsize{$a_2,l_2,b_2;x,0,0$}}}}%
    \put(-0.00220151,0.14682141){\color[rgb]{0,0,0}\makebox(0,0)[lb]{\smash{$Q_{a,l,b}^{(x,+)}:=$}}}%
    \put(0.22347487,0.22671728){\color[rgb]{0,0,0}\makebox(0,0)[lb]{\smash{\scriptsize{$(a_1,l_1,b_1,x)$}}}}%
    \put(0.2366194,0.07162811){\color[rgb]{0,0,0}\makebox(0,0)[lb]{\smash{\scriptsize{$a_2,l_2,b_2;x,0,0$}}}}%
    \put(0.1207675,0.02627137){\color[rgb]{0,0,0}\makebox(0,0)[lb]{\smash{$X_x$}}}%
    \put(0.42107661,0.25667133){\color[rgb]{0,0,0}\makebox(0,0)[lb]{\smash{$X_x$}}}%
    \put(0.6539809,0.25667133){\color[rgb]{0,0,0}\makebox(0,0)[lb]{\smash{$\bar{X}_x$}}}%
    \put(0.93627703,0.02627137){\color[rgb]{0,0,0}\makebox(0,0)[lb]{\smash{$\bar
{X}_x$}}}%
    \put(0.47595819,0.14682141){\color[rgb]{0,0,0}\makebox(0,0)[lb]{\smash{,}}}%
  \end{picture}%
\endgroup%

	\end{aligned}\ ,
	\label{eq:state-sum-pa-defect}
\end{align}
where $a=a_1+a_2$, $b=b_1+b_2$ and $l=l_1+l_2$,
\begin{align}
	\begin{aligned}
	\def\svgwidth{12cm}
\begingroup%
  \makeatletter%
  \providecommand\color[2][]{%
    \errmessage{(Inkscape) Color is used for the text in Inkscape, but the package 'color.sty' is not loaded}%
    \renewcommand\color[2][]{}%
  }%
  \providecommand\transparent[1]{%
    \errmessage{(Inkscape) Transparency is used (non-zero) for the text in Inkscape, but the package 'transparent.sty' is not loaded}%
    \renewcommand\transparent[1]{}%
  }%
  \providecommand\rotatebox[2]{#2}%
  \ifx\svgwidth\undefined%
    \setlength{\unitlength}{387.78637085bp}%
    \ifx\svgscale\undefined%
      \relax%
    \else%
      \setlength{\unitlength}{\unitlength * \real{\svgscale}}%
    \fi%
  \else%
    \setlength{\unitlength}{\svgwidth}%
  \fi%
  \global\let\svgwidth\undefined%
  \global\let\svgscale\undefined%
  \makeatother%
  \begin{picture}(1,0.25753444)%
    \put(0,0){\includegraphics[width=\unitlength]{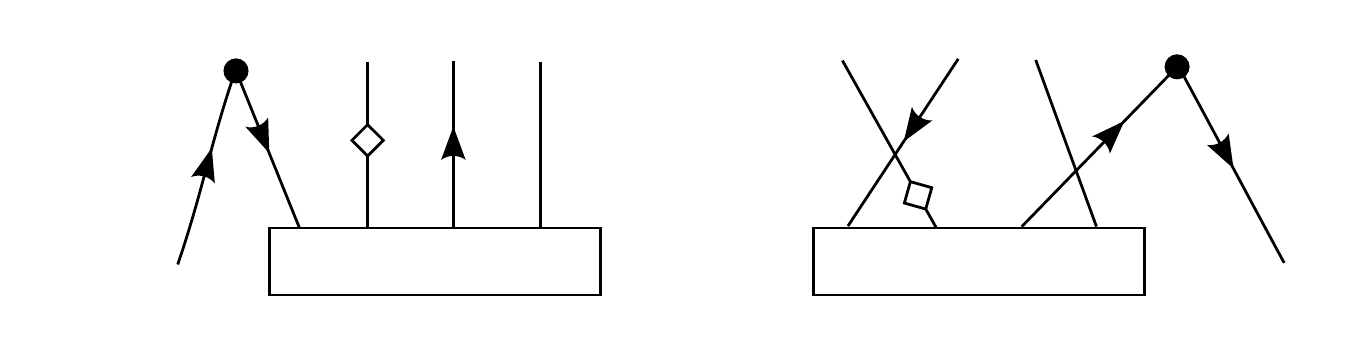}}%
    \put(-0.00205493,0.12313855){\color[rgb]{0,0,0}\makebox(0,0)[lb]{\smash{$F_{a,l,b}^{(x,+)}:=$}}}%
    \put(0.10128803,0.22567456){\color[rgb]{0,0,0}\makebox(0,0)[lb]{\smash{{\scriptsize$(a_1,l_1,b_1,x)$}}}}%
    \put(0.24562095,0.0529517){\color[rgb]{0,0,0}\makebox(0,0)[lb]{\smash{{\scriptsize
$a_2,l_2,b_2;x,1,1$}}}}%
    \put(0.10034874,0.01886682){\color[rgb]{0,0,0}\makebox(0,0)[lb]{\smash{$X_x$}}}%
    \put(0.39669063,0.22567456){\color[rgb]{0,0,0}\makebox(0,0)[lb]{\smash{$A_{s(x)}$}}}%
    \put(0.46077237,0.12313855){\color[rgb]{0,0,0}\makebox(0,0)[lb]{\smash{,}}}%
    \put(0.33019258,0.22567456){\color[rgb]{0,0,0}\makebox(0,0)[lb]{\smash{$X_x$}}}%
    \put(0.259036,0.22567456){\color[rgb]{0,0,0}\makebox(0,0)[lb]{\smash{$A_{t(x)}$}}}%
    \put(0.50131499,0.12313855){\color[rgb]{0,0,0}\makebox(0,0)[lb]{\smash{$F_{a,l,b}^{(x,-)}:=$}}}%
    \put(0.83818253,0.22567456){\color[rgb]{0,0,0}\makebox(0,0)[lb]{\smash{{\scriptsize
$(a_1,l_1
,b_1,x)$}}}}%
    \put(0.64996729,0.0529517){\color[rgb]{0,0,0}\makebox(0,0)[lb]{\smash{{\scriptsize
$a_2,l_2,b_2;x,1,1$}}}}%
    \put(0.75565109,0.22567455){\color[rgb]{0,0,0}\makebox(0,0)[lb]{\smash{$A_{s(x)}$}}}%
    \put(0.69814539,0.22567456){\color[rgb]{0,0,0}\makebox(0,0)[lb]{\smash{$\bar{X}_x$}}}%
    \put(0.60013875,0.22567456){\color[rgb]{0,0,0}\makebox(0,0)[lb]{\smash{$A_{t(x)}$}}}%
    \put(0.94051976,0.0245222){\color[rgb]{0,0,0}\makebox(0,0)[lb]{\smash{$\bar
{X}_x$}}}%
    \put(0.69357506,0.12987394){\color[rgb]{0,0,0}\makebox(0,0)[lb]{\smash{{\scriptsize
$(a_0,t(x))$}}}}%
    \put(0.30774131,0.09909911){\color[rgb]{0,0,0}\rotatebox{90}{\makebox(0,0)[lb]{\smash{{\scriptsize
$(a_0,t(x))$}}}}}%
  \end{picture}%
\endgroup%

	\end{aligned}\ ,
	\label{eq:state-sum-Fx-defect}
\end{align}
where $a=a_0+a_1+a_2$, $b=b_1+b_2$ and $l=l_1+l_2$ and
\begin{align}
	\begin{aligned}
	\def\svgwidth{10cm}
\begingroup%
  \makeatletter%
  \providecommand\color[2][]{%
    \errmessage{(Inkscape) Color is used for the text in Inkscape, but the package 'color.sty' is not loaded}%
    \renewcommand\color[2][]{}%
  }%
  \providecommand\transparent[1]{%
    \errmessage{(Inkscape) Transparency is used (non-zero) for the text in Inkscape, but the package 'transparent.sty' is not loaded}%
    \renewcommand\transparent[1]{}%
  }%
  \providecommand\rotatebox[2]{#2}%
  \ifx\svgwidth\undefined%
    \setlength{\unitlength}{316.17186279bp}%
    \ifx\svgscale\undefined%
      \relax%
    \else%
      \setlength{\unitlength}{\unitlength * \real{\svgscale}}%
    \fi%
  \else%
    \setlength{\unitlength}{\svgwidth}%
  \fi%
  \global\let\svgwidth\undefined%
  \global\let\svgscale\undefined%
  \makeatother%
  \begin{picture}(1,0.33896715)%
    \put(0,0){\includegraphics[width=\unitlength]{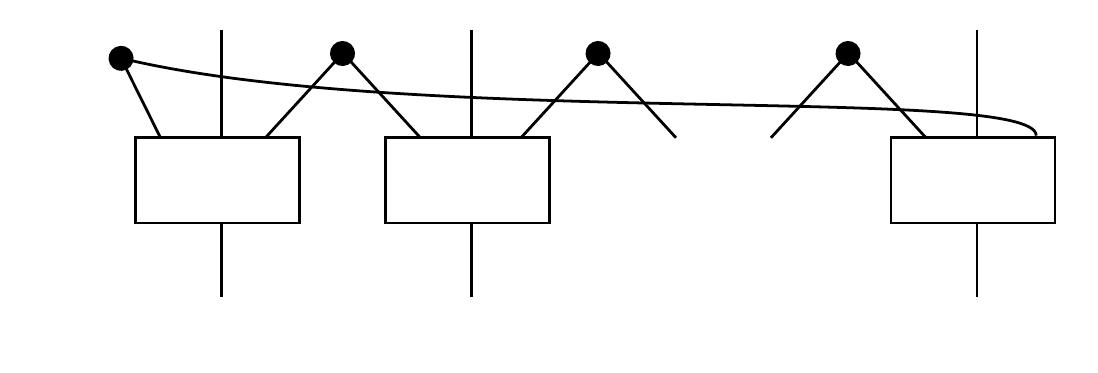}}%
    \put(0.275633,0.32021093){\color[rgb]{0,0,0}\makebox(0,0)[lb]{\smash{{\scriptsize
$(a_2',y_2)$}}}}%
    \put(0.18129688,0.0300766){\color[rgb]{0,0,0}\makebox(0,0)[lb]{\smash{$X_{x_1}^{\eps_1}$}}}%
    \put(0.13495429,0.16168134){\color[rgb]{0,0,0}\makebox(0,0)[lb]{\smash{$F_{a_1'',l,a_2}^{(x_1,\eps_1)}$}}}%
    \put(0.3626785,0.16168134){\color[rgb]{0,0,0}\makebox(0,0)[lb]{\smash{$F_{a_2'',l,a_3}^{(x_2,\eps_2)}$}}}%
    \put(0.18129688,0.32358786){\color[rgb]{0,0,0}\makebox(0,0)[lb]{\smash{$X_{x_1}
^{\eps_1}$}}}%
    \put(0.40902117,0.0300766){\color[rgb]{0,0,0}\makebox(0,0)[lb]{\smash{$X_{x_2}^{\eps_2}$}}}%
    \put(0.40902117,0.32358786){\color[rgb]{0,0,0}\makebox(0,0)[lb]{\smash{$X_{x_2}
^{\eps_2}$}}}%
    \put(0.505247,0.32021093){\color[rgb]{0,0,0}\makebox(0,0)[lb]{\smash{{\scriptsize
$(a_3',y_3)$}}}}%
    \put(0.72187329,0.32021093){\color[rgb]{0,0,0}\makebox(0,0)[lb]{\smash{{\scriptsize
$(a_n',y_n)$}}}}%
    \put(0.8695302,0.0300766){\color[rgb]{0,0,0}\makebox(0,0)[lb]{\smash{$X_{x_n}
^{\eps_n}$}}}%
    \put(0.8695302,0.32358786){\color[rgb]{0,0,0}\makebox(0,0)[lb]{\smash{$X_{x_n}
^{\eps_n}$}}}%
    \put(0.63750493,0.21386688){\color[rgb]{0,0,0}\makebox(0,0)[lb]{\smash{$\dots$}}}%
    \put(0.82318754,0.16168134){\color[rgb]{0,0,0}\makebox(0,0)[lb]{\smash{$F_{a_n'',l,a_1}
^{(x_n,\eps_n)}$}}}%
    \put(0.05485904,0.32021093){\color[rgb]{0,0,0}\makebox(0,0)[lb]{\smash{{\scriptsize
$(a_1',y_1)$}}}}%
    \put(-0.00168026,0.16168134){\color[rgb]{0,0,0}\makebox(0,0)[lb]{\smash{$E_{\underline{a}}
^{\underline{x}}:=$}}}%
  \end{picture}%
\endgroup%

	\end{aligned}\ ,
	\label{eq:state-sum-Ex-defect}
\end{align}
where $\underline{a}=(a_1+a_1'+a_1'',\dots,a_n+a_n'+a_n'',l)\in(\Rb_{>0})^{n+1}$
and the values of $y_i$ are determined by $\underline{x}$ via 
\eqref{eq:source-target-simple}, i.e.\ $y_{i}=s(x_i,\epsilon_i)=t(x_{i+1},\epsilon_{i+1})$.
Different choices of representatives  of $\underline{x}$
	induce different morphisms via \eqref{eq:state-sum-Ex-defect},
	which are related by conjugating with cyclic permutations of tensor factors.

With these preparations, we can now state the conditions 
state-sum data with defects $\Ab(\Db)$ has to satisfy. Namely, let 
$x\in D_1$, $\epsilon\in\left\{ \pm \right\}$
and let $\underline{x}$ be a defect list. Then:

\begin{enumerate}
	\item Glueing plaquette weights with defects:
\begin{align}
	\begin{aligned}
	\def\svgwidth{13cm}
\begingroup%
  \makeatletter%
  \providecommand\color[2][]{%
    \errmessage{(Inkscape) Color is used for the text in Inkscape, but the package 'color.sty' is not loaded}%
    \renewcommand\color[2][]{}%
  }%
  \providecommand\transparent[1]{%
    \errmessage{(Inkscape) Transparency is used (non-zero) for the text in Inkscape, but the package 'transparent.sty' is not loaded}%
    \renewcommand\transparent[1]{}%
  }%
  \providecommand\rotatebox[2]{#2}%
  \ifx\svgwidth\undefined%
    \setlength{\unitlength}{331.71508789bp}%
    \ifx\svgscale\undefined%
      \relax%
    \else%
      \setlength{\unitlength}{\unitlength * \real{\svgscale}}%
    \fi%
  \else%
    \setlength{\unitlength}{\svgwidth}%
  \fi%
  \global\let\svgwidth\undefined%
  \global\let\svgscale\undefined%
  \makeatother%
  \begin{picture}(1,0.17408021)%
    \put(0,0){\includegraphics[width=\unitlength]{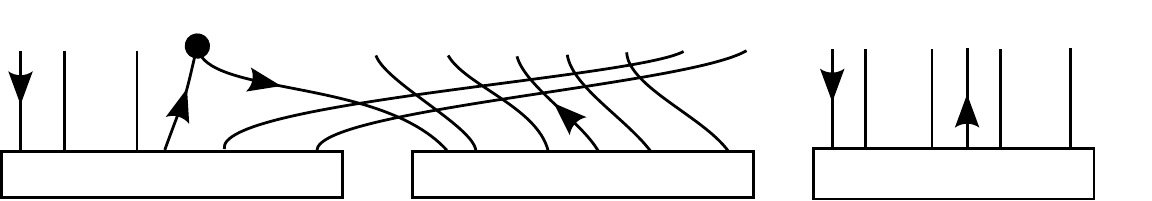}}%
    \put(0.14564803,0.15209221){\color[rgb]{0,0,0}\makebox(0,0)[lb]{\smash{\scriptsize{$(a_0,l_0,b_0;x)$}}}}%
    \put(0.66538285,0.0759882){\color[rgb]{0,0,0}\makebox(0,0)[lb]{\smash{$=$}}}%
    \put(0.05503211,0.01750321){\color[rgb]{0,0,0}\makebox(0,0)[lb]{\smash{\scriptsize{$a_1,l_1,b_1;x,n_1,m_1$}}}}%
    \put(0.07571474,0.072676){\color[rgb]{0,0,0}\makebox(0,0)[lb]{\smash{$\dots$}}}%
    \put(0.22128262,0.04967889){\color[rgb]{0,0,0}\makebox(0,0)[lb]{\smash{$\dots$}}}%
    \put(0.42643517,0.01750321){\color[rgb]{0,0,0}\makebox(0,0)[lb]{\smash{\scriptsize{$a_2,l_2,b_2;x,n_2,m_2$}}}}%
    \put(0.41695394,0.04842221){\color[rgb]{0,0,0}\makebox(0,0)[lb]{\smash{$\dots$}}}%
    \put(0.56680192,0.05324563){\color[rgb]{0,0,0}\makebox(0,0)[lb]{\smash{$\dots$}}}%
    \put(0.75254555,0.01566521){\color[rgb]{0,0,0}\makebox(0,0)[lb]{\smash{\scriptsize{$a,l,b;x,n,m$}}}}%
    \put(0.76116788,0.07494806){\color[rgb]{0,0,0}\makebox(0,0)[lb]{\smash{$\dots$}}}%
    \put(0.8847588,0.07620473){\color[rgb]{0,0,0}\makebox(0,0)[lb]{\smash{$\dots$}}}%
    \put(0.06513062,0.14433242){\color[rgb]{0,0,0}\makebox(0,0)[lb]{\smash{$A_
{t(x)}$}}}%
    \put(0.33138636,0.14090101){\color[rgb]{0,0,0}\makebox(0,0)[lb]{\smash{$A_
{t(x)}$}}}%
    \put(0.75623053,0.14705803){\color[rgb]{0,0,0}\makebox(0,0)[lb]{\smash{$A_
{t(x)}$}}}%
    \put(0.4946106,0.14227358){\color[rgb]{0,0,0}\makebox(0,0)[lb]{\smash{$A_
{s(x)}$}}}%
    \put(0.60031677,0.14295986){\color[rgb]{0,0,0}\makebox(0,0)[lb]{\smash{$A_
{s(x)}$}}}%
    \put(0.87619991,0.14568549){\color[rgb]{0,0,0}\makebox(0,0)[lb]{\smash{$A_
{s(x)}$}}}%
    \put(0.71591817,0.14857837){\color[rgb]{0,0,0}\makebox(0,0)[lb]{\smash{$\bar{X}
_x$}}}%
    \put(0.83084084,0.14698036){\color[rgb]{0,0,0}\makebox(0,0)[lb]{\smash{$X
_x$}}}%
    \put(0.43054541,0.14091661){\color[rgb]{0,0,0}\makebox(0,0)[lb]{\smash{$X
_x$}}}%
    \put(0.00666171,0.13954409){\color[rgb]{0,0,0}\makebox(0,0)[lb]{\smash{$\bar{X}
_x$}}}%
  \end{picture}%
\endgroup%

	\end{aligned}\ ,
	\label{eq:glue-defect-graphical}
\end{align}
		for every $a=a_0+a_1+a_2$, $n=n_1+n_2$, etc.
        	\label{cond:glue-defect}
        \item Glueing plaquette weights with and without defects:
\begin{align}
	\begin{aligned}
	\def\svgwidth{13cm}
\begingroup%
  \makeatletter%
  \providecommand\color[2][]{%
    \errmessage{(Inkscape) Color is used for the text in Inkscape, but the package 'color.sty' is not loaded}%
    \renewcommand\color[2][]{}%
  }%
  \providecommand\transparent[1]{%
    \errmessage{(Inkscape) Transparency is used (non-zero) for the text in Inkscape, but the package 'transparent.sty' is not loaded}%
    \renewcommand\transparent[1]{}%
  }%
  \providecommand\rotatebox[2]{#2}%
  \ifx\svgwidth\undefined%
    \setlength{\unitlength}{339.71508789bp}%
    \ifx\svgscale\undefined%
      \relax%
    \else%
      \setlength{\unitlength}{\unitlength * \real{\svgscale}}%
    \fi%
  \else%
    \setlength{\unitlength}{\svgwidth}%
  \fi%
  \global\let\svgwidth\undefined%
  \global\let\svgscale\undefined%
  \makeatother%
  \begin{picture}(1,0.16613657)%
    \put(0,0){\includegraphics[width=\unitlength]{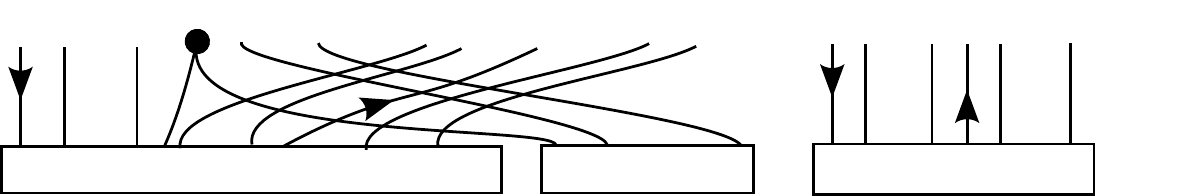}}%
    \put(0.14156703,0.1518231){\color[rgb]{0,0,0}\makebox(0,0)[lb]{\smash{\scriptsize{$(a_0;t(x))$}}}}%
    \put(0.64971365,0.07419874){\color[rgb]{0,0,0}\makebox(0,0)[lb]{\smash{$=$}}}%
    \put(0.14120145,0.01102698){\color[rgb]{0,0,0}\makebox(0,0)[lb]{\smash{\scriptsize{$a_1,l,b;x,n_1,m$}}}}%
    \put(0.06451207,0.07096455){\color[rgb]{0,0,0}\makebox(0,0)[lb]{\smash{$\dots$}}}%
    \put(0.16688042,0.04917063){\color[rgb]{0,0,0}\makebox(0,0)[lb]{\smash{$\dots$}}}%
    \put(0.49795684,0.01372143){\color[rgb]{0,0,0}\makebox(0,0)[lb]{\smash{\scriptsize{$a_2;s(x),n_2$}}}}%
    \put(0.52298471,0.04928335){\color[rgb]{0,0,0}\makebox(0,0)[lb]{\smash{$\dots$}}}%
    \put(0.75837289,0.01529631){\color[rgb]{0,0,0}\makebox(0,0)[lb]{\smash{\scriptsize{$a,l,b;x,n,m$}}}}%
    \put(0.74324302,0.07318309){\color[rgb]{0,0,0}\makebox(0,0)[lb]{\smash{$\dots$}}}%
    \put(0.86392348,0.07441018){\color[rgb]{0,0,0}\makebox(0,0)[lb]{\smash{$\dots$}}}%
    \put(0.06359685,0.14093351){\color[rgb]{0,0,0}\makebox(0,0)[lb]{\smash{$A_
{t(x)}$}}}%
    \put(0.29727066,0.1442302){\color[rgb]{0,0,0}\makebox(0,0)[lb]{\smash{$A_
{t(x)}$}}}%
    \put(0.73842195,0.14359494){\color[rgb]{0,0,0}\makebox(0,0)[lb]{\smash{$A_
{t(x)}$}}}%
    \put(0.55638744,0.14495033){\color[rgb]{0,0,0}\makebox(0,0)[lb]{\smash{$A_
{s(x)}$}}}%
    \put(0.85556615,0.14225472){\color[rgb]{0,0,0}\makebox(0,0)[lb]{\smash{$A_
{s(x)}$}}}%
    \put(0.69905891,0.14507948){\color[rgb]{0,0,0}\makebox(0,0)[lb]{\smash{$\bar{X}
_x$}}}%
    \put(0.81127525,0.1435191){\color[rgb]{0,0,0}\makebox(0,0)[lb]{\smash{$X
_x$}}}%
    \put(0.00650483,0.13625794){\color[rgb]{0,0,0}\makebox(0,0)[lb]{\smash{$\bar{X}
_x$}}}%
    \put(0.32453874,0.04775202){\color[rgb]{0,0,0}\makebox(0,0)[lb]{\smash{$\dots$}}}%
    \put(0.44775443,0.13768063){\color[rgb]{0,0,0}\makebox(0,0)[lb]{\smash{$X
_x$}}}%
  \end{picture}%
\endgroup%

	\end{aligned}\ ,
	\label{eq:glue-empty-defect-graphical}
\end{align}
for every $a=a_0+a_1+a_2$, $l,b\in\Rb_{>0}$, $n=n_1+n_2$.
		A similar condition needs to hold with the role of $s(x)$ and $t(x)$ exchanged.
        	\label{cond:glue-defect-empty}
	\item  ``Moving $\zeta$'s around'':
\begin{align}
	\begin{aligned}
	\def\svgwidth{13cm}
	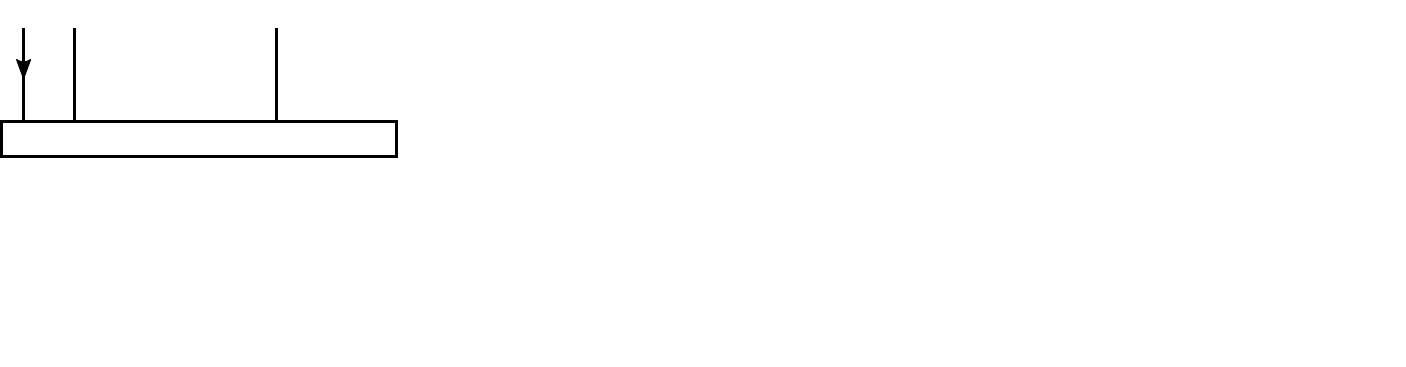
	\end{aligned}
	\label{eq:cond:W-zeta-defect}
\end{align}
for $a_1+a_2=a$, $b_1+b_2=b$, $l_1+l_2=l$, $n,m\ge0$.
		\label{cond:W-zeta-defect}
	\item The limit $\lim_{\underline{a}\to0}E_{\underline{a}}^{\underline{x}}$ exists, and $\lim_{a,b,l\to0}Q^{(x,\epsilon)}_{a,l,b}=\id_{X_x^{\epsilon}}$.
		\label{cond:defect-cylinder}

	\item For every $n,m\ge0$ with $n+m\ge1$, $(x_i,\epsilon_i)\in D_1\times\left\{ \pm \right\}$ for $i=1,\dots,n$,
		$p_j\in D_2$ for $j=1,\dots,m$
		the assignment 
		\begin{align}
			\begin{aligned}
				(\Rb_{\ge0})^{3n+m}&\to\Sc\left( \bigotimes_{i=1}^n X_{x_i}^{\epsilon_i}\otimes \bigotimes_{j=1}^{m}A_{p_j},
				\bigotimes_{i=1}^n X_{x_i}^{\epsilon_i}\otimes \bigotimes_{j=1}^{m}A_{p_j}\right)\\
				(a_1,l_1,b_1,\dots,a_n,l_n,b_n,c_1,\dots,c_m)&\mapsto
				\bigotimes_{i=1}^n Q^{(x_i,\epsilon_i)}_{a_i,l_i,b_i}\otimes \bigotimes_{j=1}^{m} P^{(p_j)}_{c_j}
			\end{aligned}
			\label{eq:cont-cond-defects}
		\end{align}
		is jointly continuous.
        	\label{cond:Q-cont}
\end{enumerate}

We have the analogue of Lemma~\ref{lem:D0-idempotent}, 
which can be proven using 
Conditions~\ref{cond:glue-defect},~\ref{cond:glue-defect-empty}~and~\ref{cond:defect-cylinder}.

\begin{lemma}
	For every defect list $\underline{x}$ of length $n\in\Zb_{\ge1}$ and $\underline{a},\underline{a}'\in(\Rb_{\ge0})^{n+1}$,
\begin{align}
	E_{\underline{a}}^{\underline{x}}\circ
	E_{\underline{a}'}^{\underline{x}}=
	E_{\underline{a}+\underline{a}'}^{\underline{x}} \ .
	\label{eq:Ea-E0}
\end{align} 
In particular, the morphism 
\begin{align}
	{E}_0^{\underline{x}}:=\lim_{\underline{a}\to0}{E}_{\underline{a}}^{\underline{x}}
	\in\Sc(X_{\underline{x}},X_{\underline{x}}) 
	\label{eq:E0-def}
\end{align}
is idempotent.
	\label{lem:E0-idempotent}
\end{lemma}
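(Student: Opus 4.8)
The plan is to mimic the proof of Lemma~\ref{lem:D0-idempotent} (which was deferred to Lemma~\ref{lem:d0-split-idempot-center}, but whose analogue here must be proved directly), adapting the additivity argument $D_a \circ D_b = D_{a+b}$ to the defect setting. The key point is that $E_{\underline a}^{\underline x}$ is built by gluing the plaquette weights $W^{x_i,1,1}$ (packaged into the $F^{(x_i,\eps_i)}$ of \eqref{eq:state-sum-Fx-defect}) around the defect list, contracting adjacent copies of $A_{y_i}$ via the contractions $\beta^{y_i}$, contracting the $X$/$\bar X$ tensor factors via $\beta^{x_i}$, and inserting the $\zeta$-maps at vertices. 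The composite $E_{\underline a}^{\underline x} \circ E_{\underline a'}^{\underline x}$ then represents two concentric ``rings'' of such plaquettes glued along a common circle of $X$- and $A$-edges.

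First I would set up the geometric picture: $E_{\underline a}^{\underline x}$ is the value of the state-sum construction on a cylinder $S_{\underline x} \times [0,1]$ with defect lines $x_1,\dots,x_n$ running parallel to the cylinder axis, decomposed into $n$ faces with defects. Composing two of these is the state-sum on the cylinder decomposed into $2n$ such faces. Then I would apply the gluing axioms to collapse the two rings into one: Condition~\ref{cond:glue-defect} (gluing two defect plaquette weights $W^{x,n_1,m_1}$ and $W^{x,n_2,m_2}$ into $W^{x,n,m}$) handles the faces-with-defects that sit on top of each other, Condition~\ref{cond:glue-defect-empty} handles the contractions along the $A_{y_i}$-edges that separate consecutive defect lines, and Condition~\ref{cond:W-zeta-defect} lets me slide the extra $\zeta^{x_i,\eps_i}$ and $\zeta^{y_i}$-insertions into a single insertion per vertex, absorbing the areas additively. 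Each axiom only cares about the sum of the area/length parameters involved, so the total areas add up: the ring glued from parameters $\underline a$ and the ring glued from $\underline a'$ produce a single ring with parameters $\underline a + \underline a'$, i.e.\ $E_{\underline a}^{\underline x} \circ E_{\underline a'}^{\underline x} = E_{\underline a + \underline a'}^{\underline x}$. This is really just Theorem~\ref{thm:state-sum-defect-aqft}'s functoriality on cylinders, established before the full theorem, so I would either invoke that or give the explicit local graphical computation as in the proof of Lemma~\ref{lem:data2rfa}.

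Once \eqref{eq:Ea-E0} is in hand, idempotency of $E_0^{\underline x}$ is immediate: by Condition~\ref{cond:defect-cylinder} the limit $E_0^{\underline x} = \lim_{\underline a \to 0} E_{\underline a}^{\underline x}$ exists, and by Condition~\ref{cond:Q-cont} together with separate continuity of composition in $\Sc$ one may pass to the limit in \eqref{eq:Ea-E0}: writing $E_0^{\underline x} \circ E_0^{\underline x} = \lim_{\underline a \to 0}\big(E_{\underline a}^{\underline x} \circ E_0^{\underline x}\big) = \lim_{\underline a \to 0} \lim_{\underline a' \to 0} E_{\underline a + \underline a'}^{\underline x} = \lim_{\underline a \to 0} E_{\underline a}^{\underline x} = E_0^{\underline x}$, where the inner limit uses separate continuity and the outer one is just the definition of $E_0^{\underline x}$.

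The main obstacle will be the bookkeeping in the gluing step: one must check that the cyclic-list structure (the compatibility condition $s(x_i,\eps_i) = t(x_{i+1},\eps_{i+1})$ and the $\pm$-signs distinguishing $X_x$ from $\bar X_x$) is preserved when the two rings are merged, and that the $\beta^{x_i}$-contractions of the $X$/$\bar X$ factors from the inner ring match up correctly with those of the outer ring so that Condition~\ref{cond:glue-defect} applies face-by-face. This is exactly the kind of diagrammatic verification that is ``straightforward but tedious,'' analogous to the cylinder computation in the proof of Theorem~\ref{thm:state-sum-aqft} Part~\ref{thm:aqft:2}; I would present the local picture for a single defect line and a single pair of stacked plaquettes in graphical calculus, note that the general case follows by tensoring over $i = 1,\dots,n$ and contracting, and omit the remaining routine details. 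Independence of the choice of representative of $\underline x$ (only relevant for stating $E_0^{\underline x} \in \Sc(X_{\underline x}, X_{\underline x})$ unambiguously up to conjugation by a cyclic permutation) is already noted after \eqref{eq:state-sum-Ex-defect} and needs no further comment.
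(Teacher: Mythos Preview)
Your proposal is correct and takes essentially the same approach as the paper, which gives no proof beyond the one-line remark that the lemma follows from Conditions~\ref{cond:glue-defect}, \ref{cond:glue-defect-empty}, and \ref{cond:defect-cylinder}. Your additional invocation of Conditions~\ref{cond:W-zeta-defect} and~\ref{cond:Q-cont} goes slightly beyond what the paper claims is needed, but this is at most harmless overkill; otherwise your detailed roadmap is exactly the cylinder-gluing argument the paper leaves implicit.
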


Let us fix
state-sum data with defects $\Ab(\Db)$.
In the rest of this section we define a symmetric monoidal functor $\funZ_{\Ab(\Db)}:\Borddef{\Db}\to\Sc$
using this data.

By our assumptions, the idempotents in \eqref{eq:E0-def} split.
Let $Z(X_{\underline{x}})\in\Sc$ denote the image and
write $\pi_{\underline{x}}$ and 	$\iota_{\underline{x}}$ for the projection and embedding, i.e.
\begin{align}
	\begin{aligned}
		{E}_0^{\underline{x}}=&
		\left[ X_{\underline{x}}\xrightarrow{\pi_{\underline{x}}}Z(X_{\underline{x}})\xrightarrow{\iota_{\underline{x}}}X_{\underline{x}} \right]\ ,&
		\id_{Z(X_{\underline{x}})}&= \left[ Z(X_{\underline{x}})\xrightarrow{\iota_{\underline{x}}}X_{\underline{x}}\xrightarrow{\pi_{\underline{x}}}Z(X_{\underline{x}}) \right]\ .
	\end{aligned}
	\label{eq:E0-split-idempot}
\end{align}
Note that different choices of representative in~\eqref{eq:defect-list-tensor} give the same image $Z(X_{\underline{x}})$ since the idempotents ${E}_0^{\underline{x}}$ commute with cyclic permutations.

We will also write 
$X_{()}^{(b)}=A_{d_2(b)}$,
\begin{align}
\iota_{()}^{(b)}=\iota_{A_{d_2(b)}}:
Z(A_{d_2(b)})\to A_{d_2(b)} \quad\text{ and }\quad
\pi_{()}^{(b)}=\pi_{A_{d_2(b)}}:A_{d_2(b)} \to Z(A_{d_2(b)})\ .
\label{eq:empty-list-defs}
\end{align}

\subsubsection*{Defining $\funZ_{\Ab(\Db)}$}

We define the {\aQFT} $\funZ_{\Ab(\Db)}$ on objects as follows:
Let $S\in\Borddef{\Db}$ and $c\in\pi_0(S)$. If $c\cap S_{[0]}=\emptyset$ then let $\underline{x}(c):=()$ be the empty list, and $Z(X_{()})^{(c)}:=Z(A_{d_2(c)})$.
Otherwise, for every $c\in\pi_0(S)$ let 
\begin{align}
	\underline{x}(c):=
	[(d_1(v),\epsilon(v))_{v\in c\cap S_{[0]}}]
	\label{eq:defect-list-circle}
\end{align}
be the defect list 
given by the defect labels $d_1(v)$ and orientations $\epsilon(v)$ of the defects in $c$ 
in the cyclic order determined by the orientation of $c$.
We define $\funZ_{\Ab(\Db)}$ on objects as
\begin{align}
	\funZ_{\Ab(\Db)}(S):=\bigotimes_{c\in \pi_0(S)}Z(X_{\underline{x}(c)})^{(c)}\ ,
	\label{eq:defect-aqft:obj}
\end{align}
where as in \eqref{eq:tft:obj-aqft} the superscript is used to label the tensor factors.

\medskip

The definition of $\funZ_{\Ab(\Db)}$ on morphisms is again more involved.
Let $(\Sigma,\Ac,\Lc):S\to T$ be a bordism with area and defects
and assume that it has no component with zero area or length.
Choose a PLCW decomposition with area and defects (with the same notation as in Section~\ref{sec:PLCW-dec-defect}) of
the surface with area and defects $(\Sigma,\Ac,\Lc)$.

\begin{figure}[tb]
	\centering
	\def\svgwidth{2.5cm}
\begingroup%
  \makeatletter%
  \providecommand\color[2][]{%
    \errmessage{(Inkscape) Color is used for the text in Inkscape, but the package 'color.sty' is not loaded}%
    \renewcommand\color[2][]{}%
  }%
  \providecommand\transparent[1]{%
    \errmessage{(Inkscape) Transparency is used (non-zero) for the text in Inkscape, but the package 'transparent.sty' is not loaded}%
    \renewcommand\transparent[1]{}%
  }%
  \providecommand\rotatebox[2]{#2}%
  \newcommand*\fsize{\dimexpr\f@size pt\relax}%
  \newcommand*\lineheight[1]{\fontsize{\fsize}{#1\fsize}\selectfont}%
  \ifx\svgwidth\undefined%
    \setlength{\unitlength}{208.89946722bp}%
    \ifx\svgscale\undefined%
      \relax%
    \else%
      \setlength{\unitlength}{\unitlength * \real{\svgscale}}%
    \fi%
  \else%
    \setlength{\unitlength}{\svgwidth}%
  \fi%
  \global\let\svgwidth\undefined%
  \global\let\svgscale\undefined%
  \makeatother%
  \begin{picture}(1,0.8240537)%
    \lineheight{1}%
    \setlength\tabcolsep{0pt}%
    \put(0,0){\includegraphics[width=\unitlength,page=1]{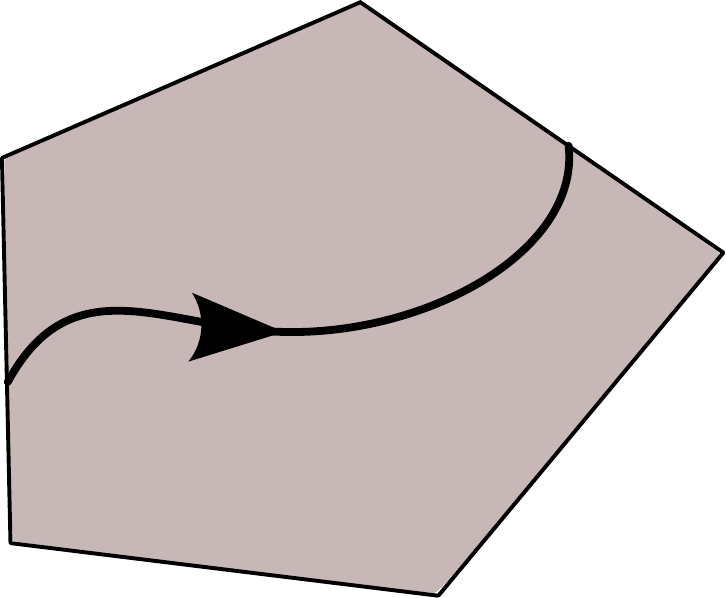}}%
    \put(0.61233935,0.23516009){\color[rgb]{0,0,0}\makebox(0,0)[lt]{\lineheight{0}\smash{\begin{tabular}[t]{l}$e$\end{tabular}}}}%
    \put(0,0){\includegraphics[width=\unitlength,page=2]{positive-crossing.pdf}}%
    \put(0.16729447,0.42234767){\color[rgb]{0,0,0}\makebox(0,0)[lt]{\lineheight{0}\smash{\begin{tabular}[t]{l}$c$\end{tabular}}}}%
  \end{picture}%
\endgroup%

	\caption{Positive crossing of an oriented edge $e$ and a defect line $c$.}
	\label{fig:positive-crossing}
\end{figure}
Let us choose a marked edge for every face in $\Sigma_2^\mathrm{empty}$ 
and for every face in $\Sigma_2^\mathrm{defect}$
let the marked edge be the one where the defect line leaves.
Also let us choose an orientation of every edge,
requiring that the orientation of edges in $\Sigma_1^\mathrm{defect}$ are such that the edges and the defect lines cross positively
as shown in Figure~\ref{fig:positive-crossing}.
	
We introduce the sets of sides of faces $F$ 
for faces and the set of sides of edges $E$ and
the bijection $\Phi:F\to E$ from \eqref{eq:face-side-edge-bijection}
as in Section~\ref{sec:latticedata}.
We choose the map $V:\Sigma_0\setminus \pi_0(T)\to E$ as in \eqref{eq:choose-vertex-edge}
so that the map $\bar{V}$ from \eqref{eq:choose-edge-to-vertex-V-bar} satisfies
	\begin{align}
		\bar{V}|_{\Sigma_0\setminus \pi_0(T)}=\left[ \Sigma_0\setminus \pi_0(T)\xrightarrow{V} E\xrightarrow{\text{forget}} \Sigma_1 \right]
		\ ,
		\label{eq:V-V-bar-compatibility}
	\end{align}
	where the map `forget' is $(e,x)\mapsto e$. In addition, $V$ has to satisfy 
that if $v$ is on the left side of the defect line crossing the edge $\bar{V}(v)$ then $V(v)=(\bar{V}(v),r)$, otherwise $V(v)=(\bar{V}(v),l)$.

\begin{figure}[tb]
	\centering
	\def\svgwidth{5cm}
	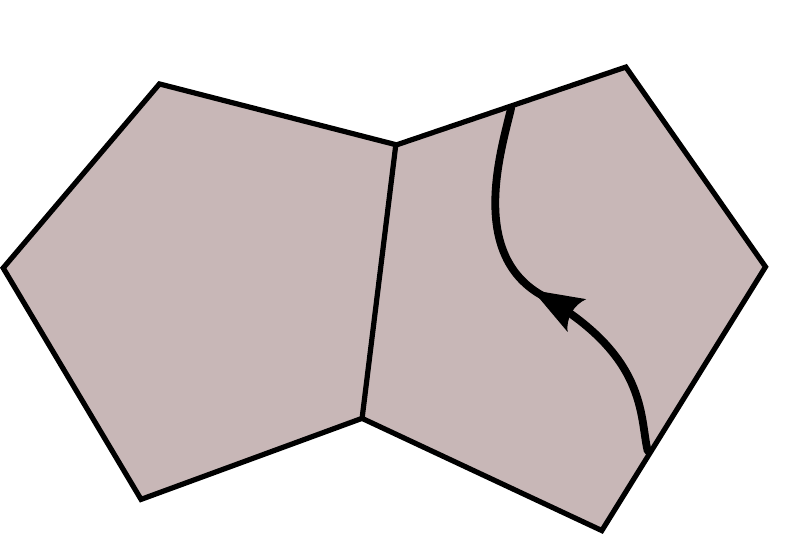
	\caption{Notation for phase labels of empty cells and defect line labels of cells with defects. The phase label of the surface component left to the defect line is $p$, the phase label of the surface component to the right is $q$ and the defect label is $x$, i.e. $t(x)=p$ and $s(x)=q$.
		The face $p_1$ and the edges $e_1$ and $e_2$ on the left are empty, i.e.\ $p_1\in\Sigma_2^\mathrm{empty}$ and $e_1,e_2\in\Sigma_1^\mathrm{empty}$.
		The corresponding phase labels are $d_2(f_1)=d_1(f_1)=d_2(e_1)=d_1(e_1)=d_2(e_2)=d_1(e_2)=p$.
			The face $f_2$ on the right and the edge $e_3$ on the right are intersected by a defect line, i.e.\ $f_2\in\Sigma_2^\mathrm{defect}$ and $e_3\in\Sigma_1^\mathrm{defect}$.
			The corresponding defect labels are $d_1(f_2)=d_1(e_3)=x$.
	}
	\label{fig:defect-label-of-cells}
\end{figure}

	It will be convenient for the state-sum construction to know
	the phase labels of surface
	components in which faces and edges that	
	are not intersected by defect lines lie.
	Similarly we will need to know the 
	defect line labels of components 
	intersected by faces and edges. Therefore
	we introduce the following for $k\in\{1,2\}$:
\begin{itemize}
	\item if $x\in\Sigma_k^\mathrm{empty}$ 
	we write $d_2(x)=d_1(x)=d_2(p)$ for the component
		$p\in\pi_0(\Sigma_{[2]})$ 
		in which $x$ lies,
	\item if $x\in\Sigma_k^\mathrm{defect}$ we write $d_1(x)=d_1(q)$ for the
		defect line $q\in\pi_0(\Sigma_{[1]})$ 
		intersected by $x$,
\end{itemize}
which we illustrate in Figure~\ref{fig:defect-label-of-cells}.

\begin{figure}[tb]
	\centering
	\def\svgwidth{4cm}
	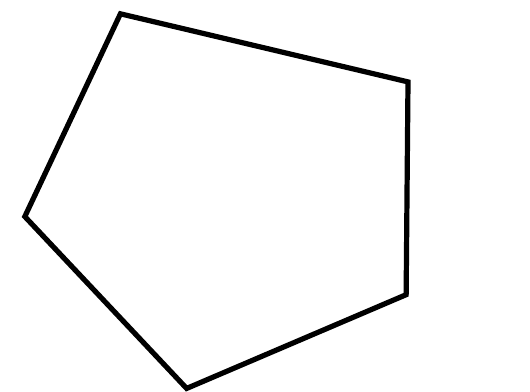
	\caption{
		Objects from the state sum data with defects assigned to edges crossed by a defect line with defect line label $x\in D_1$.}
	\label{fig:edge-labels}
\end{figure}
After introducing these notations we are ready to define $\funZ_{\Ab(\Db)}(\Sigma,\Ac,\Lc)$. We proceed with the following steps.
\begin{enumerate}
	\item Let $f\in\Sigma_2$ be a face with $n_f$ sides.
		If $f\in\Sigma_2^\mathrm{empty}$ then let $R^{(f,k)}:=A_{d_2(f)}$.
		If $f\in\Sigma_2^\mathrm{defect}$ then let $n_f^o$ be the number of the edge
		where the defect with label $x$ enters $f$. Then let 
		\begin{align}
			R^{(f,k)}:&=
			\begin{cases}
				\bar{X}_x & \text{if $k=1$,}\\
				A_{t(x)}&\text{if $1<k<n_f^o$,}\\
				X_x & \text{if $k=n_f^o$,}\\
				A_{s(x)}&\text{if $n_f^o<k$,}\\
			\end{cases}
			\label{eq:tensor-factors-R-F}
		\end{align}
		and for a side of an edge $(e,y)\in E$
		\begin{align}
			R^{(e,y)}:&=R^{\Phi^{-1}(e,y)}\ .
			\label{eq:tensor-factors-R-E}
		\end{align}
		For these conventions see Figure~\ref{fig:edge-labels}.
		
		Let us introduce the tensor products
		\begin{align}
			\begin{aligned}
			\Oc_{F}&:=\bigotimes_{(f,k) \in F}R^{(f,k)}\ ,&
			\Oc_{E}&:=\bigotimes_{(e,y) \in E}R^{(e,y)}\ ,\\
			\Oc_\mathrm{in}&:=\bigotimes_{b\in \pi_0(S)}X_{\underline{x}(b)}^{(b,in)}\ ,&
			\Oc_\mathrm{out}&:=\bigotimes_{c\in \pi_0(T)}X_{\underline{x}(c)}^{(c,out)}\ ,
			\end{aligned}
		\end{align}
	using the notation from \eqref{eq:E0-split-idempot} and \eqref{eq:tensor-factors-R-F}.
	The various superscripts will help us distinguish tensor factors 
	in the source and target objects of the morphisms we define in the remaining steps.

		\item We define the morphism
			\begin{align}
				\Cc:=
				\bigotimes_{e\in \Sigma_1\setminus \pi_0(T)} 
				\beta^{(e)}:
				\Oc_\mathrm{in}\otimes\Oc_E\to\Oc_\mathrm{out}\ ,
			\end{align}
			where $\beta^{(e)}=\beta^{d_1(e)}_{\Ac_1(e)}$ with the
			tensor factors given in Figure~\ref{fig:connect-aqft}.

	\item We define the morphism
		\begin{align}
			\Yc
			:=\prod_{v\in\Sigma_0\setminus \pi_0(T)}
			\zeta_{\Ac_0(v)}^{(V(v))}
			\in\Sc(\Oc_E,\Oc_E)\ ,
		\end{align}
		where 
		\begin{align}
			\zeta_a^{(e,y)} = 
			\begin{cases}
				\id \otimes \cdots \otimes \zeta_a^{d_2(e)} \otimes \cdots \otimes \id&\text{ ; if $e\in\Sigma_1^\mathrm{empty}$}\\
				\id \otimes \cdots \otimes \zeta_a^{d_1(e),+/-} \otimes \cdots \otimes \id&\text{ ; if $e\in\Sigma_1^\mathrm{defect}$}
			\end{cases}
			\in\Sc(\Oc_E,\Oc_E) \ ,
		\end{align}
		where $\zeta_a$ maps the tensor factor $R^{(e,y)}$ 
		to itself, and $a\in\Rb_{>0}$ or $a\in\Rb_{>0}^3$. 

	\item For $f\in\Sigma_2^\mathrm{defect}$ let
	$n_f$ and $n_f^o$ be as in step 1 and
		\begin{align}
		W_{\Ac_2(f)}^{f}:=W_{\Ac_2(f)}^{d_1(f),n_f-n_f^o,n_f^o-2}\ ; 
			\label{eq:step1a-defect}
		\end{align}
		for $f\in\Sigma_2^\mathrm{empty}$ let $n_f$ be as before and
		\begin{align}
		W_{\Ac_2(f)}^{f}:=W_{\Ac_2(f)}^{d_1(f),n_f}\ . 
			\label{eq:step1b-defect}
		\end{align}
		In both cases the labeling of 
		tensor factors is such that it matches \eqref{eq:tensor-factors-R-F}.
		Define the morphism
		\begin{align}
			\Fc:=\bigotimes_{f\in\Sigma_2}\left( W_{\Ac_2(f)}^{f}\right):\Ib\to\Oc_{F} \ .
			\label{eq:step1-defect}
		\end{align}

	\item 
		We  again put the above morphisms together as in Step~\ref{step3-aqft} of Section~\ref{sec:latticedata}:
	\begin{align}	
		\Kc&:=\left[\Ib\xrightarrow{\Fc}\Oc_F\xrightarrow{\Uppi_\Phi}\Oc_E
		\xrightarrow{\Yc}
		\Oc_E\right]\ ,\\
		\Lc&:=
		\left[ \Oc_\mathrm{in}\xrightarrow{\id_{\Oc_\mathrm{in}}\otimes\Kc} \Oc_\mathrm{in}\otimes\Oc_{E} \xrightarrow{\Cc} \Oc_\mathrm{out}  \right]\ ,
		\label{eq:step6a-defect}
	\end{align}
	where $\Uppi_\Phi$ is defined as in \eqref{eq:step3-aqft}.

	\item 
Using the embedding and projection maps from \eqref{eq:E0-split-idempot}
we construct the following morphisms:
		\begin{align}
			\Ec_\mathrm{in}:=& \bigotimes_{b\in \pi_0(S)}\iota_{\underline{x}(b)}^{(b)}:\funZ_{\Ab(\Db)}(S)\to\Oc_\mathrm{in}\ ,&
			\Ec_\mathrm{out}:=&\bigotimes_{c\in \pi_0(T)}\pi_{\underline{x}(c)}^{(c)}:\Oc_\mathrm{out}\to \funZ_{\Ab(\Db)}(T)\ .
		\end{align}
		We finally define the action of $\funZ_{\Ab(\Db)}$ on morphisms:		
		\begin{align}				\funZ_{\Ab(\Db)}(\Sigma,\Ac,\Lc)&:=
			\left[ \funZ_{\Ab(\Db)}(S)\xrightarrow{\Ec_\mathrm{in}} \Oc_\mathrm{in}\xrightarrow{\Lc} \Oc_\mathrm{out}\xrightarrow{\Ec_\mathrm{out}} \funZ_{\Ab(\Db)}(T)\right]\ .
			\label{eq:step6b-defect}
		\end{align}
\end{enumerate}

	\medskip

	We defined $\funZ_{\Ab(\Db)}$ on bordisms with defects with strictly positive area and 
	length and now we give the definition in the general case.
	Let $(\Sigma,\Ac,\Lc):S\to T$ be a bordism with area  and defects and 
let $\Sigma_+:S_+\to{T}_+$ denote the connected component of $(\Sigma,\Ac)$ with strictly positive area and length. 
The complement of $\Sigma_+$ 
again defines a permutation of tensor factors as in Section~\ref{sec:latticedata},
so we define:
\begin{align}
	\funZ_{\Ab(\Db)}(\Sigma,\Ac,\Lc):=\funZ_{\Ab(\Db)}(\Sigma\setminus{\Sigma}_+,0,0)\otimes \funZ_{\Ab(\Db)}({\Sigma}_+,\Ac_+,\Lc_+)\ ,
	\label{eq:za-defect-fulldef}
\end{align}
where $\Ac_+$ denotes the restriction of $\Ac$ to 
$\pi_0( (\Sigma_+)_{[k]})$, $k=1,2$, 
and $\Lc_+$ is defined similarly
and $\funZ_{\Ab(\Db)}({\Sigma}_+,\Ac_+,\Lc_+)$ is defined in \eqref{eq:step6b-defect}.

	We have the analogous theorem of Section~\ref{sec:latticedata}. 
\begin{theorem} \label{thm:state-sum-defect-aqft}
Let $\Ab(\Db)$ be state-sum data with defects.
	\begin{enumerate}
		\item The morphism defined in \eqref{eq:step6b-defect} is 
			independent of the choice of the PLCW decomposition with area and defects,
			the choice of marked edges of faces, the choice of orientation of edges
			and the assignment $V$. \label{thm:defect-aqft:1}
	\item The state-sum construction yields an 
		{\aQFT} $\funZ_{\Ab(\Db)}:\Borddef{\Db}\to\Sc$
		given by 
		\eqref{eq:defect-aqft:obj} and \eqref{eq:za-defect-fulldef}, respectively.  \label{thm:defect-aqft:2}
\end{enumerate}
\end{theorem}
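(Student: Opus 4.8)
The plan is to mirror the proof of Theorem~\ref{thm:state-sum-aqft}, upgrading each step to the setting with defects. For Part~\ref{thm:defect-aqft:1} (invariance), I would first fix a PLCW decomposition with area and defects and check that \eqref{eq:step6b-defect} does not depend on the auxiliary choices: independence of the marked edges and of the edge orientations follows from the cyclic-symmetry built into the plaquette weights (the invariance of $W^{y,n}_a$ under cyclic permutation of its legs, and the corresponding cyclic behaviour of $W^{x,n,m}_{a,l,b}$ under moving the defect strand around, which is encoded in Condition~\ref{cond:W-zeta} and Condition~\ref{cond:W-zeta-defect}); independence of the assignment $V$ (equivalently $\bar V$) follows from iterating the ``moving $\zeta$ around'' Conditions~\ref{cond:W-zeta} and~\ref{cond:W-zeta-defect}, exactly as in the defect-free case but now also for the two variants $\zeta^{x,\pm}_{a,l,b}$ on the defect edges. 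Then I would prove invariance under the elementary moves. The moves of Figure~\ref{fig:edgemove-aqft} away from defect lines are handled precisely as in Theorem~\ref{thm:state-sum-aqft}: the edge-removal move follows from Condition~\ref{cond:gluer}, and the bivalent-vertex move is reduced, via the trick from \cite[Lem.\,3.5]{Davydov:2011dt}, to adding/removing univalent vertices, which is Condition~\ref{cond:selfgluer}. The genuinely new moves are those of Figure~\ref{fig:elementary-defect}: removing an edge crossed by a defect line is Condition~\ref{cond:glue-defect}; removing an edge between an empty face and a face with a defect is Condition~\ref{cond:glue-defect-empty}; and splitting a defect edge by a new vertex reduces, by the same univalent-vertex trick inside a bigon, to Conditions~\ref{cond:glue-defect} and~\ref{cond:glue-defect-empty} together with the defect analogue of Condition~\ref{cond:selfgluer} (which one extracts from Condition~\ref{cond:glue-defect} applied with $n=m=0$, using Condition~\ref{cond:defect-cylinder} for the existence of the relevant limits). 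As in the defect-free case, all the conditions on $\Ab(\Db)$ only constrain sums of parameters, so the construction is also independent of how area and length are distributed among cells, i.e.\ of the maps $\Ac_k^\mathrm{empty}$, $\Ac_k^\mathrm{defect}$, $\Lc_k$. Finally, \cite[Lem.\,3.6]{Davydov:2011dt} guarantees that any two PLCW decompositions with defects are connected by these moves, which completes Part~\ref{thm:defect-aqft:1}.

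For Part~\ref{thm:defect-aqft:2} I would first establish the cylinder statement analogous to the beginning of the proof of Theorem~\ref{thm:state-sum-aqft}. For an in-out cylinder $(S\times[0,1],\Ac,\Lc)$, decomposing each component by a single rectangle (with the defect strand, if present, running straight through), the morphism $\Lc$ of \eqref{eq:step6a-defect} is, on a component with defect list $\underline x$ of length $n$, exactly the morphism $E^{\underline x}_{\underline a}$ of \eqref{eq:state-sum-Ex-defect}, and on an empty component it is $D_a$ from \eqref{eq:state-sum-data-notation}. Varying the area and length maps changes $\Lc$ only by pre/post-composition with $\bigotimes_i Q^{(x_i,\epsilon_i)}_{a_i,l_i,b_i}\otimes\bigotimes_j P^{(p_j)}_{c_j}$; hence by separate continuity of composition in $\Sc$ together with Conditions~\ref{cond:approxid} and~\ref{cond:Q-cont} the assignment $(\Ac,\Lc)\mapsto\funZ_{\Ab(\Db)}(S\times[0,1],\Ac,\Lc)$ is continuous for strictly positive parameters, and by Condition~\ref{cond:defect-cylinder} (together with Condition~\ref{cond:zerocylinder} for empty components) the zero-area/zero-length limit exists and, after composing with the projection/embedding maps $\pi_{\underline x},\iota_{\underline x}$ from \eqref{eq:E0-split-idempot}, equals a permutation of tensor factors, namely the one encoding the underlying diffeomorphism of the zero-area-and-length cylinder. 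By the defect analogue of Lemma~\ref{lem:enough-to-check-cylinders} (Remark~\ref{rem:daqft-enough-to-check-cylinders}: cut every surface with defects along circles meeting every defect line), this continuity on cylinders is all one needs.

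It remains to prove functoriality, monoidality and symmetry. Monoidality and symmetry are immediate from the construction, since $\funZ_{\Ab(\Db)}$ is defined component-by-component on disjoint unions and the permutation morphisms assigned to zero-area cylinders compose correctly. For functoriality, given composable bordisms $(\Sigma,\Ac_\Sigma,\Lc_\Sigma):S\to T$ and $(\Xi,\Ac_\Xi,\Lc_\Xi):T\to W$, I would, as in Theorem~\ref{thm:state-sum-aqft}, choose PLCW decompositions with defects so that near each boundary component of $T$ there is a single rectangle (with the strand running through it if that component carries defect points), glue the decompositions, and compare $\funZ_{\Ab(\Db)}(\Xi,\Ac_\Xi,\Lc_\Xi)\circ\funZ_{\Ab(\Db)}(\Sigma,\Ac_\Sigma,\Lc_\Sigma)$ with $\funZ_{\Ab(\Db)}(\Xi\circ\Sigma,\Ac_{\Xi\circ\Sigma},\Lc_{\Xi\circ\Sigma})$. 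The composition produces, for each component $c\in\pi_0(T)$, an extra factor of the idempotent $E^{\underline x(c)}_0$ (resp.\ $D_0$ for an empty component) coming from $\iota_{\underline x(c)}\circ\pi_{\underline x(c)}$ in \eqref{eq:E0-split-idempot}, while the glued decomposition already contains a factor of $E^{\underline x(c)}_{\underline a}$ (resp.\ $D_a$) for positive $\underline a$ (resp.\ $a$) from the rectangles straddling $c$; Lemma~\ref{lem:E0-idempotent} (and Lemma~\ref{lem:Da-additive}) gives $E^{\underline x}_{\underline a}\circ E^{\underline x}_0=E^{\underline x}_{\underline a}$, so the extra idempotent can be absorbed and the two morphisms agree. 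As before one reduces to the case of strictly positive area and length and then recovers the general case by taking zero-area/zero-length limits of in-out cylinders, using the cylinder analysis above. I expect the main obstacle to be purely bookkeeping: tracking the tensor-factor labelling through $\Phi$, $\bar V$, the sign conventions $X^\pm_x$, $s(x,\pm)$, $t(x,\pm)$, and the positive-crossing convention of Figure~\ref{fig:positive-crossing}, so that the defect analogues of Conditions~\ref{cond:cyclicsymm}--\ref{cond:selfgluer} (i.e.\ Conditions~\ref{cond:glue-defect}--\ref{cond:W-zeta-defect}) really match the elementary moves of PLCW decompositions with defects; the continuity and limit arguments are then straightforward adaptations of the defect-free proof via Lemma~\ref{lem:semigrp} and Conditions~\ref{cond:defect-cylinder} and~\ref{cond:Q-cont}.
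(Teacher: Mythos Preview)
Your proposal is correct and follows essentially the same approach as the paper's (sketch of) proof: reduce Part~\ref{thm:defect-aqft:1} to invariance under the elementary moves of Figure~\ref{fig:elementary-defect} via Conditions~\ref{cond:glue-defect}--\ref{cond:W-zeta-defect} and the univalent-vertex trick, and derive Part~\ref{thm:defect-aqft:2} from the identification $\Lc=E^{\underline{x}}_{\underline{a}}$ on cylinders together with Conditions~\ref{cond:defect-cylinder} and~\ref{cond:Q-cont}, exactly as in Theorem~\ref{thm:state-sum-aqft}. One small point: for move~$a)$ the paper combines moves~$b)$, $c)$ with the \emph{empty} univalent move of Figure~\ref{fig:univalent-aqft} (the added vertex sits off the defect line), so you do not need a separate ``defect analogue of Condition~\ref{cond:selfgluer}''.
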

\begin{proof}[Sketch of proof]
We only sketch some part of the proof of Part~\ref{thm:aqft:1}.
We will check invariance under the additional elementary moves in Figure~\ref{fig:elementary-defect}.
Invariance under moves $b)$ and $c)$ directly follow from 
Condition~\ref{cond:glue-defect}~and~\ref{cond:glue-defect-empty} respectively.
Invariance under move $a)$ can be shown using the same trick as in the proof of
Theorem~\ref{thm:state-sum-aqft} Part~\ref{thm:aqft:1} by combining 
the moves $b)$ and $c)$ together with the move in Figure~\ref{fig:univalent-aqft}.
We note that one needs to use Condition~\ref{cond:W-zeta-defect} to show independence of the choice of the map $V$.

Let $(C,\Ac,\Lc)$ be a cylinder over a circle with defects with defect list $\underline{x}$ and equal defect line lengths.
The morphism in \eqref{eq:step6a-defect} associated to $(C,\Ac,\Lc)$ is $E_{\underline{a}}^{\underline{x}}$ from \eqref{eq:state-sum-Ex-defect}.

The proof of Part~\ref{thm:defect-aqft:2}
goes along the same lines as the proof of Part~\ref{thm:aqft:2} of Theorem~\ref{thm:state-sum-defect-aqft}.
Joint continuity in the areas and lengths follows from Condition~\ref{cond:Q-cont}.
\end{proof}

\subsection{State-sum data with defects from bimodules}\label{sec:lattice:bimoduledata}

The purpose of this section is to give an algebraic characterisation of
state-sum data with defects. We show that given state-sum data with defects for some objects we get
a particular RFA and bimodule structure on the objects. This suggests that conversely given 
a particular RFA and bimodule structure on some objects we can get state-sum data on these objects.
As before, we keep the notation form the previous sections.

\begin{lemma}\label{lem:action-from-pdata}
For every $p\in D_2$ and $x\in D_1$ let us fix objects $A_p$ and $X_x$ in $\Sc$
and state-sum data with defects $\Ab(\Db)$ for these objects.
For $a=a_1+a_2+a_3$, $l=l_1+l_2$ and $b=b_1+b_2$ let
\begin{align}
\begin{aligned}
\def\svgwidth{14cm}
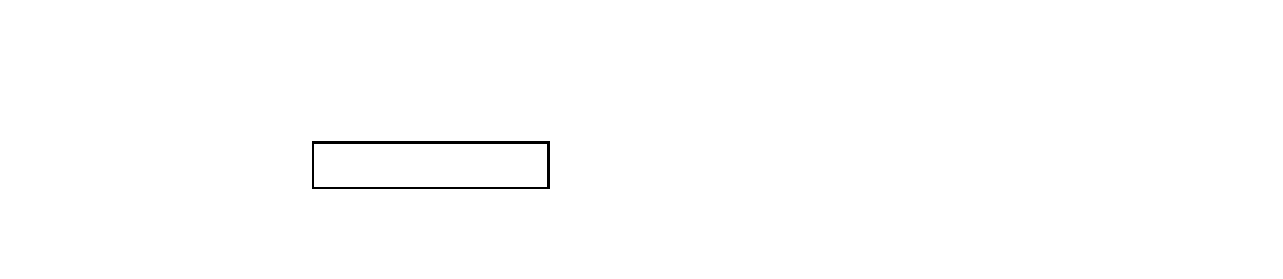
\end{aligned}\ .
\label{eq:action-from-pdata}
\end{align}

Then the state-sum data $\Ab(\Db)$ determines
\begin{itemize}
	\item a strongly separable symmetric RFA structure on
		$A_p$ for every $p\in D_2$
		as in Lemma~\ref{lem:data2rfa} and 
	\item a structure of a dual pair of 
		$A_{t(x)}$-$A_{s(x)}$-bimodules on
		$(X_x,\bar{X}_x)$ for every $x\in D_1$, where
		the actions on $X_x$ and $\bar X_x$ are given by $\rho_{a,l,b}^{x}$ and $\bar\rho_{a,l,b}^{x}$ from \eqref{eq:action-from-pdata} respectively, and
		the pairing by $\beta_{a,l,b}^{x}$ and the copairing by $\gamma_{a,l,b}^{x}:=W_{a,l,b}^{x,0,0}$.
\end{itemize}
\end{lemma}
\begin{proof}
Checking associativity \eqref{eq:ra:bimodule} can be easily done using the graphical calculus and
Conditions~\ref{cond:glue-defect}~and~\ref{cond:glue-defect-empty}.
The rest of the conditions on the action follow directly from the other conditions.
Also checking that the duality morphisms satisfy \eqref{eq:ra:dual} is straightforward.
\end{proof}
\begin{remark}
We note that contrary to the state-sum construction of topological field theories,
in general one cannot define left and right actions on the object $X_x$ as
the action \eqref{eq:action-from-pdata} always comes with three strictly positive additive parameters.
If for example the limit 
$\lim_{b\to0}\rho_{a,l,b_1}^x\circ(\id_{A_{t(x)\otimes X_x}}\otimes \eta_{b_2}^{A_{s(x)}})$ 
exists,
then one can define a left action on $X_x$, cf.\ Remark~\ref{rem:left-right-module-gives-bimodule}. 
Note, however, that these limits need not exist, see Appendix~\ref{app:bimod} for an example.
\end{remark}

The previous lemma indicates that one should be able to give state-sum data with defects from
a set of strongly separable symmetric RFAs and bimodules with duals.
The following lemma shows that if these bimodules 
satisfy some conditions pairwise,
then we indeed can obtain state-sum data, in particular the limits
$\lim_{\underline{a}\to0}E_{\underline{x}}^{\underline{a}}$ in Condition~\ref{cond:defect-cylinder} exists.

\begin{proposition}\label{prop:pdata-from-action}
For every $p\in D_2$ and $x\in D_1$ let 
$A_p$ be a strongly separable symmetric RFA and $(X_x,\bar{X}_x)$ a dual pair of
$A_{t(x)}$-$A_{s(x)}$-bimodules
with pairing $\beta_{a,l,b}^{x}$ and copairing $\gamma_{a,l,b}^{x}$.
For $n,m\in\Zb_{\ge0}$ set
\begin{align}
\begin{aligned}
\def\svgwidth{7cm}
\begingroup%
  \makeatletter%
  \providecommand\color[2][]{%
    \errmessage{(Inkscape) Color is used for the text in Inkscape, but the package 'color.sty' is not loaded}%
    \renewcommand\color[2][]{}%
  }%
  \providecommand\transparent[1]{%
    \errmessage{(Inkscape) Transparency is used (non-zero) for the text in Inkscape, but the package 'transparent.sty' is not loaded}%
    \renewcommand\transparent[1]{}%
  }%
  \providecommand\rotatebox[2]{#2}%
  \ifx\svgwidth\undefined%
    \setlength{\unitlength}{168.26191406bp}%
    \ifx\svgscale\undefined%
      \relax%
    \else%
      \setlength{\unitlength}{\unitlength * \real{\svgscale}}%
    \fi%
  \else%
    \setlength{\unitlength}{\svgwidth}%
  \fi%
  \global\let\svgwidth\undefined%
  \global\let\svgscale\undefined%
  \makeatother%
  \begin{picture}(1,0.71089978)%
    \put(0,0){\includegraphics[width=\unitlength]{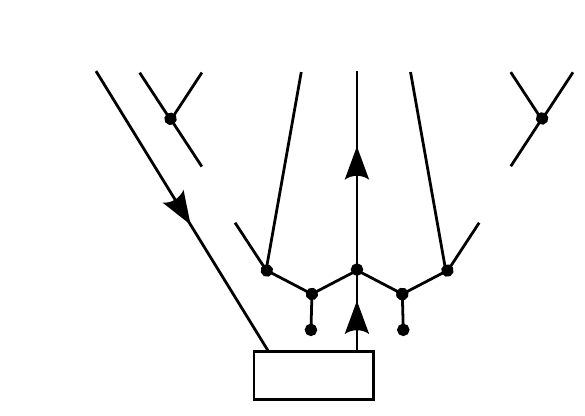}}%
    \put(0.59120803,0.6336786){\color[rgb]{0,0,0}\makebox(0,0)[lb]{\smash{$X_x$}}}%
    \put(0.45132057,1.79959482){\color[rgb]{0,0,0}\makebox(0,0)[lb]{\smash{
}}}%
    \put(0.46550592,0.05374805){\color[rgb]{0,0,0}\makebox(0,0)[lb]{\smash{$\gamma_
{a',l',b'}^{x}$}}}%
    \put(0.82387673,0.37971662){\color[rgb]{0,0,0}\makebox(0,0)[lb]{\smash{$\dots$}}}%
    \put(0.36004057,0.37261071){\color[rgb]{0,0,0}\makebox(0,0)[lb]{\smash{$\dots$}}}%
    \put(0.24298274,0.6336786){\color[rgb]{0,0,0}\makebox(0,0)[lb]{\smash{$\overbrace{{\qquad~~~}
\hphantom{6cm}}
^{A_{t(x)}^{\otimes n}}$}}}%
    \put(0.13615734,0.6336786){\color[rgb]{0,0,0}\makebox(0,0)[lb]{\smash{$\bar{X}_x$}}}%
    \put(-0.00315728,0.28563475){\color[rgb]{0,0,0}\makebox(0,0)[lb]{\smash{$W_
{a,l,b}^{x,n,m}:=$}}}%
    \put(0.70375426,0.6336786){\color[rgb]{0,0,0}\makebox(0,0)[lb]{\smash{$\overbrace
{{\qquad~~~}
\hphantom
{6cm}}
^{A_{s(x)}^
{\otimes m}}$}}}%
    \put(0.53303312,0.68200138){\color[rgb]{0,0,0}\makebox(0,0)[lb]{\smash{{}}}}%
  \end{picture}%
\endgroup%

\end{aligned}\ ,
\label{eq:pdata-from-action}
\end{align}
with some distribution of the parameters on the rhs which sums up to $a$, $b$ and $l$. 
Furthermore, let
\begin{align}
	\begin{aligned}
		\def\svgwidth{3.8cm}
\begingroup%
  \makeatletter%
  \providecommand\color[2][]{%
    \errmessage{(Inkscape) Color is used for the text in Inkscape, but the package 'color.sty' is not loaded}%
    \renewcommand\color[2][]{}%
  }%
  \providecommand\transparent[1]{%
    \errmessage{(Inkscape) Transparency is used (non-zero) for the text in Inkscape, but the package 'transparent.sty' is not loaded}%
    \renewcommand\transparent[1]{}%
  }%
  \providecommand\rotatebox[2]{#2}%
  \newcommand*\fsize{\dimexpr\f@size pt\relax}%
  \newcommand*\lineheight[1]{\fontsize{\fsize}{#1\fsize}\selectfont}%
  \ifx\svgwidth\undefined%
    \setlength{\unitlength}{99.83723551bp}%
    \ifx\svgscale\undefined%
      \relax%
    \else%
      \setlength{\unitlength}{\unitlength * \real{\svgscale}}%
    \fi%
  \else%
    \setlength{\unitlength}{\svgwidth}%
  \fi%
  \global\let\svgwidth\undefined%
  \global\let\svgscale\undefined%
  \makeatother%
  \begin{picture}(1,0.75095868)%
    \lineheight{1}%
    \setlength\tabcolsep{0pt}%
    \put(-0.00332573,0.42125253){\color[rgb]{0,0,0}\makebox(0,0)[lt]{\lineheight{0}\smash{\begin{tabular}[t]{l}$\zeta_{a,l,b}^{x,\epsilon}:=$\end{tabular}}}}%
    \put(0.54841605,0.51112782){\color[rgb]{0,0,0}\makebox(0,0)[lt]{\lineheight{0}\smash{\begin{tabular}[t]{l}{\scriptsize$(a_2,l_1,b_2;x,\epsilon)$}\end{tabular}}}}%
    \put(0,0){\includegraphics[width=\unitlength,page=1]{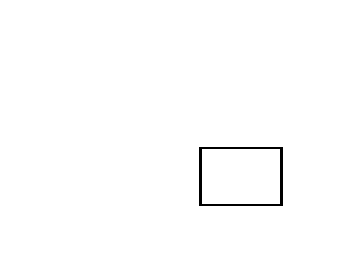}}%
    \put(0.60731005,0.21529851){\color[rgb]{0,0,0}\makebox(0,0)[lt]{\lineheight{0}\smash{\begin{tabular}[t]{l}$\tau^{-1}_{a_1}$\end{tabular}}}}%
    \put(0.44196287,0.70529842){\color[rgb]{0,0,0}\makebox(0,0)[lt]{\lineheight{0}\smash{\begin{tabular}[t]{l}$X_x^{\epsilon}$\end{tabular}}}}%
    \put(0,0){\includegraphics[width=\unitlength,page=2]{zeta-x-from-pdata.pdf}}%
    \put(0.06723323,0.16687229){\color[rgb]{0,0,0}\makebox(0,0)[lt]{\lineheight{0}\smash{\begin{tabular}[t]{l}{\scriptsize$(b_1;t(x,\epsilon))$}\end{tabular}}}}%
    \put(0.44196287,0.01417346){\color[rgb]{0,0,0}\makebox(0,0)[lt]{\lineheight{0}\smash{\begin{tabular}[t]{l}$X_x^{\epsilon}$\end{tabular}}}}%
  \end{picture}%
\endgroup%

	\end{aligned}\ ,
	\label{eq:zeta-x-from-pdata}
\end{align}
where $\tau_a^{-1}$ denotes the inverse of the window element of $A_{s(x,\epsilon)}$.
Suppose the following two conditions hold:
\begin{enumerate}
\item
Let $(x_1,\epsilon_1;x_2,\epsilon_2)\in\left(  D_1\times\left\{ \pm \right\} \right)^2$ 
be such that $s(x_1,\epsilon_{1})=t(x_{2},\epsilon_{2})$ from \eqref{eq:source-target-simple}.
Let $Y_i:=X_{x_i}^{\epsilon_i}$ for $i=1,2$ and recall the 
morphisms $D_{a,b,c,l}^{Y_i,Y_{i+1}}$ and $D_{a,l}^{Y_i}$ from \eqref{eq:D-definition-bimodule}.
We require that the limits 
\begin{align}
\lim_{a,b,c,l\to0}D_{a,b,c,l}^{Y_i,Y_{i+1}}
\quad\text{ and }\quad
\lim_{a,l\to0}D_{a,l}^{Y_i}
\label{eq:d0-bimodule}
\end{align}
exist.
\item
For every $n,m\in\Zb_{\ge0}$ with $n+m\ge1$, $(x_i,\epsilon_i)\in D_1\times\left\{ \pm \right\}$ for $i=1,\dots,n$,
$p_j\in D_2$ for $j=1,\dots,m$ the assignment 
\begin{align}
\begin{aligned}
	(\Rb_{> 0}^{3}\cup \left\{ 0 \right\})^{n} \times (\Rb_{\ge0})^{m}
&\to\Sc\left( \bigotimes_{i=1}^n X_{x_i}^{\epsilon_i}\otimes \bigotimes_{j=1}^{m}A_{p_j},
\bigotimes_{i=1}^n X_{x_i}^{\epsilon_i}\otimes \bigotimes_{j=1}^{m}A_{p_j}\right)\\
(a_1,l_1,b_1,\dots,a_n,l_n,b_n,c_1,\dots,c_m)&\mapsto
\bigotimes_{i=1}^n Q^{X_{x_i}^{\epsilon_i}}_{a_i,l_i,b_i}\otimes \bigotimes_{j=1}^{m} P^{A_{p_j}}_{c_j}
\end{aligned}
\label{eq:cont-cond-bimodules}
\end{align}
is jointly continuous.
\end{enumerate}
Then \eqref{eq:lem:rfa2data:1}, \eqref{eq:lem:rfa2data:2}, \eqref{eq:pdata-from-action},
\eqref{eq:zeta-x-from-pdata}
and $\beta_{a,l,b}^{x}$ define state-sum data with defects.
\end{proposition}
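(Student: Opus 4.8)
The plan is to check, one at a time, all the conditions of Section~\ref{sec:lattice:daqft} defining state-sum data with defects for the families $\Ab_p$ (for $p\in D_2$), together with $W^{x,n,m}_{a,l,b}$ of \eqref{eq:pdata-from-action}, $\zeta^{x,\epsilon}_{a,l,b}$ of \eqref{eq:zeta-x-from-pdata}, the given pairings $\beta^x_{a,l,b}$, and $\gamma^x_{a,l,b}:=W^{x,0,0}_{a,l,b}$. The state-sum data $\Ab_p$ attached to $p$ is $\Omega(A_p)$ as in Lemma~\ref{lem:rfa2data}, which applies once one knows that the morphism $\tilde D^{(p)}_a$ of \eqref{eq:lem:rfa2data:2} has an $a\to 0$ limit; this is the special case of the existence statements in hypothesis~(1) obtained from the regular $A_p$-bimodule (it is the idempotent $D^{A_p}_{a,l}$ of \eqref{eq:D-definition-bimodule}), so Conditions~\ref{cond:cyclicsymm}--\ref{cond:zerocylinder} hold for every $\Ab_p$ and it remains to verify Conditions~\ref{cond:glue-defect}--\ref{cond:Q-cont}.

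\textbf{The algebraic conditions.} I would establish Conditions~\ref{cond:glue-defect}, \ref{cond:glue-defect-empty} and \ref{cond:W-zeta-defect} by string-diagram computation, essentially reading the proof of Lemma~\ref{lem:action-from-pdata} backwards. The glueing rule \eqref{eq:glue-defect-graphical} for two defect plaquette weights follows from the definition \eqref{eq:pdata-from-action} of $W^{x,n,m}$ through $\gamma^x$, the duality relation \eqref{eq:ra:dual} (which converts a $\beta^x$--$\gamma^x$ pair into $Q^{X_x^\epsilon}$), bimodule associativity \eqref{eq:ra:bimodule} for $X_x$ and $\bar X_x$, and (co)associativity of the $A_p$. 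The mixed rule \eqref{eq:glue-empty-defect-graphical} uses the same inputs together with Condition~\ref{cond:gluer} for $\Ab_{s(x)}$ and $\Ab_{t(x)}$, since the $A$-legs of $W^{x,n,m}$ are produced by the bimodule action and iterated coproducts of $A_{s(x)}$, $A_{t(x)}$. For Condition~\ref{cond:W-zeta-defect} one uses in addition that $\zeta^{x,\epsilon}_{a,l,b}$ of \eqref{eq:zeta-x-from-pdata} is built from the inverse window element $\tau^{-1}_a$, which factors through the centre of $A_{s(x,\epsilon)}$ and hence commutes past the action maps, together with Lemma~\ref{lem:ra:properties} and the defining relations of $\tau_a$ and $\tau^{-1}_a$; this moves a $\zeta$ sitting at a defect vertex onto the neighbouring $A$-legs, which is exactly what \eqref{eq:cond:W-zeta-defect} asserts.

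\textbf{The limit conditions and the main obstacle.} The morphism $Q^{(x,\epsilon)}_{a,l,b}$ of \eqref{eq:state-sum-pa-defect} is identified with the bimodule semigroup $Q^{X_x^\epsilon}_{a,l,b}$ via one half of the duality relation \eqref{eq:ra:dual}; hence $\lim_{a,l,b\to0}Q^{(x,\epsilon)}_{a,l,b}=\id$ by the bimodule axiom, and the joint continuity in Condition~\ref{cond:Q-cont} is precisely hypothesis~(2) in \eqref{eq:cont-cond-bimodules} together with Condition~\ref{cond:approxid} for the RFAs $A_p$ (in $\Hilb$ this is automatic by iterating Lemma~\ref{lem:semigrp}). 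The remaining point, existence of $\lim_{\underline a\to0}E^{\underline x}_{\underline a}$ in Condition~\ref{cond:defect-cylinder}, is the heart of the matter: contracting the $A_{y_i}$-legs of two consecutive $F$'s in \eqref{eq:state-sum-Ex-defect} across a $\zeta^{y_i}$ reproduces the idempotents $D^{Y_i,Y_{i+1}}_{a,b,c,l}$ from \eqref{eq:D-definition-bimodule}, while the final cyclic closure produces a $D^{(\cdot)}$-idempotent; using the semigroup laws \eqref{eq:D-MN-semigroup}--\eqref{eq:D-VW-semigroup} and hypothesis~(1) each such factor converges, so $E^{\underline x}_{\underline a}$ converges to an idempotent, which by Proposition~\ref{prop:d0proj} and Lemma~\ref{lem:dual-pair-tensor-product} is the projector onto the iterated cyclic bimodule tensor product $\ctimes_{A_{y_n}}\!\big(X_{x_1}^{\epsilon_1}\otimes_{A_{y_1}}\cdots\otimes_{A_{y_{n-1}}}X_{x_n}^{\epsilon_n}\big)$. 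The main obstacle is that hypothesis~(1) is phrased for \emph{pairs} of the generating bimodules $Y_i=X_{x_i}^{\epsilon_i}$, whereas a defect list of length $n$ forces an $n$-fold iterated tensor product and a cyclic closure; one therefore has to argue by induction on $n$, at each stage checking that the partial tensor product $X_{x_1}^{\epsilon_1}\otimes_{A_{y_1}}\cdots\otimes_{A_{y_{k-1}}}X_{x_k}^{\epsilon_k}$ is again a dualisable bimodule whose $D$-limit against $Y_{k+1}$ exists and stays jointly continuous, so that Proposition~\ref{prop:d0proj} and Lemma~\ref{lem:dual-pair-tensor-product} keep applying, while keeping careful track of how the three parameters $(a,l,b)$ are distributed along \eqref{eq:state-sum-Ex-defect}. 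Once all of the above is checked, the data is state-sum data with defects by definition.
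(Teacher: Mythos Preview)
Your overall strategy is correct and matches the paper for the algebraic Conditions~\ref{cond:glue-defect}--\ref{cond:W-zeta-defect}, for the identification of $Q^{(x,\epsilon)}_{a,l,b}$ with the bimodule semigroup $Q^{X_x^\epsilon}_{a,l,b}$, and for Condition~\ref{cond:Q-cont}. You also correctly observe that $E^{\underline x}_{\underline a}$ decomposes as a cyclic composition of the pairwise operators $D^{Y_i,Y_{i+1}}$ from \eqref{eq:D-definition-bimodule}.

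However, what you flag as ``the main obstacle'' is not a real obstacle, and the inductive route through iterated tensor products you propose is unnecessary. The paper's argument sidesteps it entirely. Having written, for $n\ge 2$,
\[
E^{\underline x}_{\underline a}\;=\;\sigma\circ\big(D^{Y_n,Y_1}_{\ast}\otimes\id\big)\circ\sigma^{-1}\circ\prod_{i=1}^{n-1}\big(\id\otimes D^{Y_i,Y_{i+1}}_{\ast}\otimes\id\big)
\]
(only the pairwise $D$'s for the \emph{generating} bimodules appear), one uses hypothesis~(1) and the semigroup law to replace each $D^{Y_i,Y_{i+1}}_{a,b,c,l}$ by $(Q^{Y_i}\otimes Q^{Y_{i+1}})_{\ast}\circ D_0^{Y_i,Y_{i+1}}$. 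Collecting all the $Q$'s yields a single factorisation
\[
E^{\underline x}_{\underline a}\;=\;\Big(\bigotimes_{i=1}^{n}Q^{Y_i}_{p_i}\Big)\circ \widetilde E^{\underline x}_0\ ,
\]
where $\widetilde E^{\underline x}_0$ is parameter-free. Hypothesis~(2) is precisely the joint continuity of the tensor of $Q$'s, so the joint limit $\underline a\to 0$ exists and equals $\widetilde E^{\underline x}_0$. No induction, no iterated tensor products, no new $D$-limits beyond the pairwise ones in hypothesis~(1). Your phrase ``each such factor converges, so $E^{\underline x}_{\underline a}$ converges'' is not by itself valid (composition is only separately continuous), but this factorisation is exactly the missing step that makes it rigorous.

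Your inductive approach would instead require establishing, at each stage, that the partial tensor product $X^{\epsilon_1}_{x_1}\otimes_{A_{y_1}}\cdots\otimes_{A_{y_{k-1}}}X^{\epsilon_k}_{x_k}$ again satisfies a $D$-limit condition against $Y_{k+1}$; this is not given by the hypotheses and is real additional work (arguments of that flavour do appear later, in the proof of Theorem~\ref{thm:state-space-tensor-product}, but are not needed here).

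A minor point: your justification that $\lim_{a\to0}\tilde D^{(p)}_a$ exists ``as the special case of hypothesis~(1) for the regular $A_p$-bimodule'' does not work, since hypothesis~(1) is stated only for the defect bimodules $X^{\epsilon_i}_{x_i}$, and the regular $A_p$-bimodule need not be among them. The paper simply invokes Lemma~\ref{lem:rfa2data}, implicitly taking the existence of this limit as part of the input for each $A_p$.
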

\begin{proof}
From Lemma~\ref{lem:rfa2data} we get the part of the state-sum data for elements of $D_2$,
so we turn directly to the part of the data for $D_1$.

Checking the algebraic relations of 
Conditions~\ref{cond:glue-defect},~\ref{cond:glue-defect-empty}~and~\ref{cond:W-zeta-defect}
	can be done easily using the graphical calculus, and is similar to the case of RFAs and state-sum data without defects.

	The last part of Condition~\ref{cond:defect-cylinder}
	follows directly from $X_x^{\epsilon}$ being bimodules over RFAs.
	Condition~\ref{cond:Q-cont} is just \eqref{eq:cont-cond-bimodules}.

	The only thing left to show is the first part of Condition~\ref{cond:defect-cylinder},
	namely that for every defect list $\underline{x}$ of length $n\ge1$
	the limit $\lim_{\underline{a}\to0}E_{\underline{a}}^{\underline{x}}$ exists.
	Let us introduce the shorthand $Y_i:=X_{x_i}^{\epsilon_i}$.
	If $n=1$ then $E_{\underline{a}}^{\underline{x}}=D_{a+b,l}^{Y_i}$ from \eqref{eq:D-definition-bimodule}
		and the limit $a,b,l\to0$ exists by assumption. Now let $n\ge2$.
	First rewrite $E_{\underline{a}}^{\underline{x}}$ as
	\begin{align}
		E_{\underline{a}}^{\underline{x}}=\sigma_{Y_1\otimes\dots\otimes Y_{n-1},Y_n}
		\circ \left( D_{a_1,b_1,c_1,l_i}^{Y_n,Y_1}\otimes\id \right)\circ
		\sigma_{Y_1\otimes\dots\otimes Y_{n-1},Y_n}^{-1}\circ
		\prod_{i=1}^{n-1} \id\otimes D_{a_i,b_i,c_i,l_i}^{Y_i,Y_{i+1}}\otimes\id
		\label{eq:Ex-in-terms-of-Dx}
	\end{align}
	with the appropriate distribution of the parameters.
	Since the limits in \eqref{eq:d0-bimodule} exist, we can rewrite \eqref{eq:Ex-in-terms-of-Dx} as
	\begin{align}
	E_{\underline{a}}^{\underline{x}}=
	\left(Q^{Y_{n}}_{p_n}\otimes \bigotimes_{i=1}^{n-1} Q^{Y_i}_{p_i}\right)
	\circ \widetilde{E}_0^{\underline{x}}\ ,
		\label{eq:Ex-in-terms-of-Dx-Q-D0}
	\end{align}
	with some distribution of the parameters, where $\widetilde{E}_0^{\underline{x}}$ is the morphism obtained 
	by taking the limits in the parameters of $D_{a_i,b_i,c_i,l_i}^{Y_i,Y_{i+1}}$ to 0 in \eqref{eq:Ex-in-terms-of-Dx} separately.
	The joint continuity condition in \eqref{eq:cont-cond-bimodules} together with \eqref{eq:Ex-in-terms-of-Dx-Q-D0} shows that the joint limit exists and is given by $\widetilde{E}_0^{\underline{x}}$.
\end{proof}

\begin{remark}
For strongly separable symmetric RFAs and dual pairs of bimodules in $\Hilb$, 
conditions 1 and 2 in Proposition~\ref{prop:pdata-from-action} are automatically satisfied, 
see Lemmas~\ref{lem:semigrp} and~\ref{lem:D-limits}.
\end{remark}

\subsection{Defect fusion and tensor product of bimodules}\label{sec:fusion-of-defects}

In this section we are going to assume that the state-sum data is given in terms of strongly separable symmetric
RFAs and dual pairs of bimodules for which the conditions of Proposition~\ref{prop:pdata-from-action} hold.
In Theorem~\ref{thm:state-space-tensor-product} we show that 
the state spaces \eqref{eq:defect-aqft:obj}
can be explicitly computed in terms of tensor products of the bimodules over the intermediate RFAs,
and in Theorem~\ref{thm:fusion} we give the compatibility between the tensor product of bimodules and the fusion of defect lines.

\begin{theorem}
	Let $\Ab(\Db)$ be state-sum data 
	given in terms of RFAs and bimodules 
	as in Proposition~\ref{prop:pdata-from-action}, 
	$\funZ_{\Ab(\Db)}$ the state-sum {\aQFT} from Section~\ref{sec:lattice:daqft}
	and $S\in\Borddef{\Db}$ be connected
	with corresponding defect list $\underline{x}$ of length $n\ge1$. 
	Let us assume that for every $(x_i,\epsilon_i)\in D_1\times\left\{ \pm \right\}$ ($i=1,2$)
	satisfying $s(x_1,\epsilon_1)=t(x_2,\epsilon_2)$ of \eqref{eq:source-target-simple}
	the limit 
	\begin{align}
	\lim_{a\to0}\tilde{\rho}_{a,b,c,l}^{X_{x_1}^{\epsilon_1},X_{x_2}^{\epsilon_2}}
		\label{eq:thm-state-space-limit}
	\end{align}
	of the morphism in \eqref{eq:tensor-product-action} exists.
	Then with $B_i:=A_{s(x_i,\epsilon_i)}$ we have
	\begin{align}
	\funZ_{\Ab(\Db)}(S)=Z(X_{\underline{x}}) =
	\ctimes_{B_{n}}X_{x_1}^{\epsilon_1}\otimes_{B_{1}}\dots\otimes_{B_{n-1}}X_{x_n}^{\epsilon_n}\ .
		\label{eq:state-space-tensor-product}
	\end{align}
	\label{thm:state-space-tensor-product}
\end{theorem}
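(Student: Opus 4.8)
The plan is to identify the two expressions in \eqref{eq:state-space-tensor-product} by recognising that both are images of the same idempotent on $X_{\underline{x}}$, and then invoke uniqueness of idempotent splittings. Concretely, recall from \eqref{eq:E0-def} that $\funZ_{\Ab(\Db)}(S) = Z(X_{\underline{x}})$ is by definition the image of the idempotent $E_0^{\underline{x}} = \lim_{\underline{a}\to 0} E_{\underline{a}}^{\underline{x}}$, where $E_{\underline{a}}^{\underline{x}}$ is the morphism in \eqref{eq:state-sum-Ex-defect} built from the plaquette-weight data $W_{a,l,b}^{x,1,1}$ of Lemma~\ref{lem:action-from-pdata}. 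Since our state-sum data comes from bimodules via Proposition~\ref{prop:pdata-from-action}, the plaquette weights $W_{a,l,b}^{x,n,m}$ are given by \eqref{eq:pdata-from-action} in terms of the bimodule actions, pairings and copairings. First I would substitute these formulas into $E_{\underline{a}}^{\underline{x}}$ and, using the duality relations \eqref{eq:ra:dual}, rewrite $E_{\underline{a}}^{\underline{x}}$ in the form \eqref{eq:Ex-in-terms-of-Dx} from the proof of Proposition~\ref{prop:pdata-from-action}, i.e.\ as an (appropriately braided) composite of the morphisms $D_{a,b,c,l}^{Y_i,Y_{i+1}}$ and $D_{a,l}^{Y_i}$ from \eqref{eq:D-definition-bimodule}, with $Y_i := X_{x_i}^{\epsilon_i}$ and $B_i = A_{s(x_i,\epsilon_i)}$.

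Next I would take the $\underline{a}\to 0$ limit. By hypothesis the limits \eqref{eq:d0-bimodule} of each $D_{a,b,c,l}^{Y_i,Y_{i+1}}$ and $D_{a,l}^{Y_i}$ exist, so $E_0^{\underline{x}}$ is the corresponding composite of the idempotents $D_0^{Y_i,Y_{i+1}}$ and $D_0^{Y_n}$; the braiding isomorphisms drop out on the level of images up to the canonical cyclic permutation. By Proposition~\ref{prop:d0proj}, the image of $D_0^{Y_i,Y_{i+1}}$ is precisely the tensor product $Y_i \otimes_{B_i} Y_{i+1}$ (part~3), and the image of $D_0^{Y_n}$ is the cyclic tensor product $\ctimes_{B_n} Y_n$ (part~2) — here the extra hypothesis \eqref{eq:thm-state-space-limit} on the existence of $\lim_{a\to 0}\tilde\rho_{a,b,c,l}^{Y_1,Y_2}$ is exactly what is needed to apply part~3 of Proposition~\ref{prop:d0proj} and to make sense of the iterated tensor product as a bimodule so that the next $D_0$ can act on it. Iterating, the composite of these commuting-up-to-the-construction idempotents has image $\ctimes_{B_n} Y_1 \otimes_{B_1} \cdots \otimes_{B_{n-1}} Y_n$, which is the right-hand side of \eqref{eq:state-space-tensor-product}. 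Since $\Sc$ is idempotent complete, the splitting of $E_0^{\underline{x}}$ is unique up to unique isomorphism, so this image agrees with $Z(X_{\underline{x}})$, giving both equalities.

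The main obstacle I expect is bookkeeping rather than conceptual: one must carefully track how the iterated image $\im(D_0^{Y_1,Y_2}) = Y_1\otimes_{B_1} Y_2$ carries a $B_n$-$B_2$-bimodule structure (via $\tilde\rho$ from \eqref{eq:tensor-product-action}) so that $D_0^{Y_1\otimes_{B_1}Y_2, Y_3}$ is well defined, iterate this through all $n$ factors, and only at the last step apply the cyclic version $D_0^{(-)}$ to close the loop — verifying along the way that the various idempotents involved indeed commute (or more precisely that composing them in the order dictated by \eqref{eq:Ex-in-terms-of-Dx} yields an idempotent whose image is the claimed iterated tensor product). A secondary point to handle with care is the role of the braidings $\sigma$ in \eqref{eq:Ex-in-terms-of-Dx}: they implement the cyclic identification of the different representatives of $\underline{x}$ in \eqref{eq:defect-list-tensor}, and one should note, as remarked after \eqref{eq:E0-split-idempot}, that the resulting image $Z(X_{\underline{x}})$ is independent of this choice, so no ambiguity arises in \eqref{eq:state-space-tensor-product}. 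I would not grind through the string-diagram manipulations in detail; the key inputs — Proposition~\ref{prop:d0proj}, the duality relations, and idempotent completeness — are already available, and the proof is essentially assembling them.
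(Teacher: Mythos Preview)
Your proposal is correct and follows essentially the same route as the paper: decompose $E_{\underline{a}}^{\underline{x}}$ via \eqref{eq:Ex-in-terms-of-Dx} into the pairwise $D$'s, take the limit, and then use Proposition~\ref{prop:d0proj} iteratively to identify the image of $E_0^{\underline{x}}$ with the iterated (and finally cyclic) tensor product. The paper carries this out explicitly for $n=3$ by writing down the composite projections and embeddings $\pi = \pi^{\ctimes}\circ\pi^{123}\circ(\id\otimes\pi^{23})$ and $\iota$ and checking $\iota\circ\pi = E_0^{\underline{x}}$, $\pi\circ\iota=\id$ directly, which is exactly the ``bookkeeping'' you anticipate; your slight slip in referring to $D_0^{Y_n}$ as giving the cyclic tensor product (it is $D_0^{Y_1\otimes_{B_1}\cdots\otimes_{B_{n-1}}Y_n}$ that does so, as you say correctly later) is cosmetic.
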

\begin{proof}
	We will prove the theorem for $n=3$, for general $n$ the proof is similar.
	Let $Y_i:=X_{x_i}^{\epsilon_i}$ for $i=1,2,3$.
	Let $D^{23}:=\lim_{a,b,c,l\to0}D_{a,b,c,l}^{Y_2,Y_3}$ from \eqref{eq:D-definition-bimodule}
	and let $\pi^{23}$ and $\iota^{23}$ denote the corresponding projection and embedding of its image $Y_2\otimes_{B_2} Y_3$.
	By Proposition~\ref{prop:d0proj} $Y_2\otimes_{B_2} Y_3$ is a $B_1$-$B_3$-bimodule.
	We show that $D^{123}:=\lim_{a,b,c,l\to0}D_{a,b,c,l}^{Y_1,Y_2\otimes_{B_2} Y_3}$ exists:
\begin{align}
	\begin{aligned}
	&\lim_{a,b,c,l\to0}D_{a,b,c,l}^{Y_1,Y_2\otimes_{B_2} Y_3}=
	\begin{aligned}
		\def\svgwidth{7cm}
\begingroup%
  \makeatletter%
  \providecommand\color[2][]{%
    \errmessage{(Inkscape) Color is used for the text in Inkscape, but the package 'color.sty' is not loaded}%
    \renewcommand\color[2][]{}%
  }%
  \providecommand\transparent[1]{%
    \errmessage{(Inkscape) Transparency is used (non-zero) for the text in Inkscape, but the package 'transparent.sty' is not loaded}%
    \renewcommand\transparent[1]{}%
  }%
  \providecommand\rotatebox[2]{#2}%
  \ifx\svgwidth\undefined%
    \setlength{\unitlength}{118.69994202bp}%
    \ifx\svgscale\undefined%
      \relax%
    \else%
      \setlength{\unitlength}{\unitlength * \real{\svgscale}}%
    \fi%
  \else%
    \setlength{\unitlength}{\svgwidth}%
  \fi%
  \global\let\svgwidth\undefined%
  \global\let\svgscale\undefined%
  \makeatother%
  \begin{picture}(1,0.52214774)%
    \put(0,0){\includegraphics[width=\unitlength]{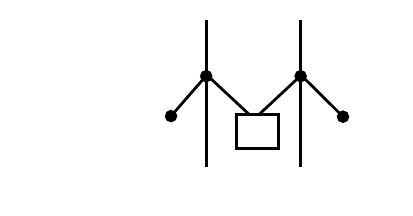}}%
    \put(-0.00335668,0.23092074){\color[rgb]{0,0,0}\makebox(0,0)[lb]{\smash{$\lim_{a,b,c,l\to0}$}}}%
    \put(0.78321845,0.33176269){\color[rgb]{0,0,0}\makebox(0,0)[lb]{\smash{\scriptsize
{$(a_3,l,c_1)$}}}}%
    \put(0.59450732,0.19022935){\color[rgb]{0,0,0}\makebox(0,0)[lb]{\smash{$e_{a_2}$}}}%
    \put(0.29162964,0.34717318){\color[rgb]{0,0,0}\makebox(0,0)[lb]{\smash{\scriptsize
{$(b_1,l,a_1)$}}}}%
    \put(0.31956525,0.23914244){\color[rgb]{0,0,0}\makebox(0,0)[lb]{\smash{\scriptsize
{$b_2$}}}}%
    \put(0.86485092,0.23705274){\color[rgb]{0,0,0}\makebox(0,0)[lb]{\smash{\scriptsize
{$c_2$}}}}%
    \put(0.62658745,0.06008454){\color[rgb]{0,0,0}\makebox(0,0)[lb]{\smash{$Y_2\otimes
_{B_2} Y_3$}}}%
    \put(0.63068211,0.49062813){\color[rgb]{0,0,0}\makebox(0,0)[lb]{\smash{$Y_2\otimes
_{B_2} Y_3$}}}%
    \put(0.464835,0.06008454){\color[rgb]{0,0,0}\makebox(0,0)[lb]{\smash{$Y_1$}}}%
    \put(0.464835,0.49142426){\color[rgb]{0,0,0}\makebox(0,0)[lb]{\smash{$Y_1$}}}%
  \end{picture}%
\endgroup%

	\end{aligned}\\&
	\begin{aligned}
		\def\svgwidth{14cm}
		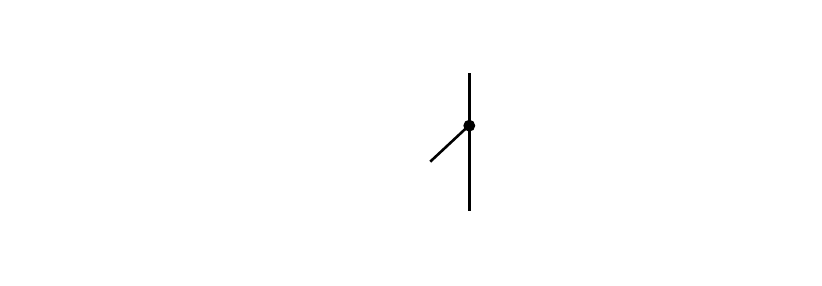
	\end{aligned}\ ,
	\end{aligned}
	\label{eq:D123}
\end{align}
	where in the last equation we used associativity of the action on $Y_2$ and took the limits separately
(the joint limit exists for the same reason as in the proof of Proposition~\ref{prop:pdata-from-action}).
	Let $\pi^{123}$ and $\iota^{123}$ denote the projection and embedding of the image of $D^{123}$ which
	is $Y_1\otimes_{B_1} Y_2 \otimes_{B_2} Y_3$.
	Note that the projectors for $(Y_1\otimes_{B_1} Y_2)\otimes_{B_2} Y_3$ and $Y_1\otimes_{B_1} (Y_2 \otimes_{B_2} Y_3)$ are the same, hence they have the same image and we can omit the brackets.

	Similarly one shows using \eqref{eq:Ex-in-terms-of-Dx} that
	\begin{align}
		\begin{aligned}
		\def\svgwidth{11cm}
\begingroup%
  \makeatletter%
  \providecommand\color[2][]{%
    \errmessage{(Inkscape) Color is used for the text in Inkscape, but the package 'color.sty' is not loaded}%
    \renewcommand\color[2][]{}%
  }%
  \providecommand\transparent[1]{%
    \errmessage{(Inkscape) Transparency is used (non-zero) for the text in Inkscape, but the package 'transparent.sty' is not loaded}%
    \renewcommand\transparent[1]{}%
  }%
  \providecommand\rotatebox[2]{#2}%
  \ifx\svgwidth\undefined%
    \setlength{\unitlength}{194.9539917bp}%
    \ifx\svgscale\undefined%
      \relax%
    \else%
      \setlength{\unitlength}{\unitlength * \real{\svgscale}}%
    \fi%
  \else%
    \setlength{\unitlength}{\svgwidth}%
  \fi%
  \global\let\svgwidth\undefined%
  \global\let\svgscale\undefined%
  \makeatother%
  \begin{picture}(1,0.48627659)%
    \put(0,0){\includegraphics[width=\unitlength]{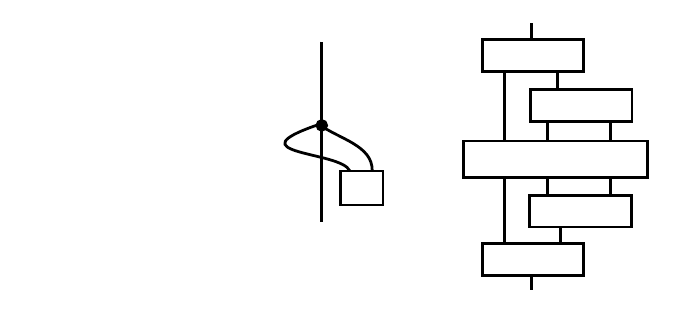}}%
    \put(0.6199792,0.25817389){\color[rgb]{0,0,0}\makebox(0,0)[lb]{\smash{=}}}%
    \put(0.76045204,0.38729259){\color[rgb]{0,0,0}\makebox(0,0)[lb]{\smash{$\pi^{123}$}}}%
    \put(0.76226725,0.08582003){\color[rgb]{0,0,0}\makebox(0,0)[lb]{\smash{$\iota^{123}$}}}%
    \put(0.67707312,0.46757025){\color[rgb]{0,0,0}\makebox(0,0)[lb]{\smash{$Y_1\otimes_{B_1}
Y_2\otimes
_{B_2} Y_3$}}}%
    \put(0.67707312,0.02438877){\color[rgb]{0,0,0}\makebox(0,0)[lb]{\smash{$Y_1\otimes_{B_1}Y_2\otimes
_{B_2} Y_3$}}}%
    \put(0.49970405,0.32289313){\color[rgb]{0,0,0}\makebox(0,0)[lb]{\smash{\scriptsize
{$(a_1,l,a_3)$}}}}%
    \put(0.507236,0.20069276){\color[rgb]{0,0,0}\makebox(0,0)[lb]{\smash{$e_{a_2}$}}}%
    \put(0,0.26038032){\color[rgb]{0,0,0}\makebox(0,0)[lb]{\smash{$D_0^{Y_1\otimes_{B_1} Y_2 \otimes_{B_2} Y_3}=
\lim_{a,l\to0}$}}}%
    \put(0.83240036,0.15697952){\color[rgb]{0,0,0}\makebox(0,0)[lb]{\smash{$\iota^{23}$}}}%
    \put(0.83169444,0.31302157){\color[rgb]{0,0,0}\makebox(0,0)[lb]{\smash{$\pi^{23}$}}}%
    \put(0.77738149,0.24015219){\color[rgb]{0,0,0}\makebox(0,0)[lb]{\smash{$E_0^{\underline{x}}$}}}%
    \put(0.36596243,0.43458677){\color[rgb]{0,0,0}\makebox(0,0)[lb]{\smash{$Y_1\otimes_{B_1}
Y_2\otimes
_{B_2} Y_3$}}}%
    \put(0.36596243,0.12271838){\color[rgb]{0,0,0}\makebox(0,0)[lb]{\smash{$Y_1\otimes_{B_1}
Y_2\otimes
_{B_2} Y_3$}}}%
  \end{picture}%
\endgroup%

		\end{aligned}\ .
		\label{eq:Dc123}
	\end{align}
	Let $\pi^{\ctimes}$ and $\iota^{\ctimes}$ denote the projection and embedding of the image of $D^{Y_1\otimes_{B_1} Y_2 \otimes_{B_2} Y_3}$ which
	is $\ctimes_{B_3}Y_1\otimes_{B_1} Y_2 \otimes_{B_2} Y_3$.
	Now a simple computation shows that 
	\begin{align}
		\begin{aligned}
		\def\svgwidth{10cm}
\begingroup%
  \makeatletter%
  \providecommand\color[2][]{%
    \errmessage{(Inkscape) Color is used for the text in Inkscape, but the package 'color.sty' is not loaded}%
    \renewcommand\color[2][]{}%
  }%
  \providecommand\transparent[1]{%
    \errmessage{(Inkscape) Transparency is used (non-zero) for the text in Inkscape, but the package 'transparent.sty' is not loaded}%
    \renewcommand\transparent[1]{}%
  }%
  \providecommand\rotatebox[2]{#2}%
  \ifx\svgwidth\undefined%
    \setlength{\unitlength}{168.88026123bp}%
    \ifx\svgscale\undefined%
      \relax%
    \else%
      \setlength{\unitlength}{\unitlength * \real{\svgscale}}%
    \fi%
  \else%
    \setlength{\unitlength}{\svgwidth}%
  \fi%
  \global\let\svgwidth\undefined%
  \global\let\svgscale\undefined%
  \makeatother%
  \begin{picture}(1,0.36206333)%
    \put(0,0){\includegraphics[width=\unitlength]{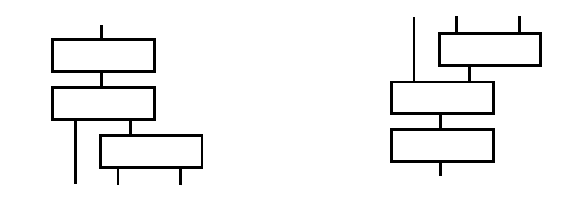}}%
    \put(0.56219573,0.15759262){\color[rgb]{0,0,0}\makebox(0,0)[lb]{\smash{$\iota:=$}}}%
    \put(0.72556326,0.17486328){\color[rgb]{0,0,0}\makebox(0,0)[lb]{\smash{$\iota^{123}$}}}%
    \put(0.62721586,0.02815419){\color[rgb]{0,0,0}\makebox(0,0)[lb]{\smash{$Y_1\otimes_{B_1}Y_2\otimes
_{B_2} Y_3$}}}%
    \put(0.80652435,0.25700922){\color[rgb]{0,0,0}\makebox(0,0)[lb]{\smash{$\iota^{23}$}}}%
    \put(0.72556326,0.09379001){\color[rgb]{0,0,0}\makebox(0,0)[lb]{\smash{$\iota^{\ctimes}$}}}%
    \put(0.68753995,0.34766703){\color[rgb]{0,0,0}\makebox(0,0)[lb]{\smash{$Y_1$}}}%
    \put(0.76143704,0.34766703){\color[rgb]{0,0,0}\makebox(0,0)[lb]{\smash{$Y_2$}}}%
    \put(0.86879876,0.34766703){\color[rgb]{0,0,0}\makebox(0,0)[lb]{\smash{$Y_3$}}}%
    \put(-0.00314572,0.15845046){\color[rgb]{0,0,0}\makebox(0,0)[lb]{\smash{$\pi:=$}}}%
    \put(0.13673393,0.16821781){\color[rgb]{0,0,0}\makebox(0,0)[lb]{\smash{$\pi^{123}$}}}%
    \put(0.0492916,0.34766703){\color[rgb]{0,0,0}\makebox(0,0)[lb]{\smash{$Y_1\otimes_{B_1}Y_2\otimes
_{B_2} Y_3$}}}%
    \put(0.21601735,0.08523325){\color[rgb]{0,0,0}\makebox(0,0)[lb]{\smash{$\pi^{23}$}}}%
    \put(0.13925051,0.24845231){\color[rgb]{0,0,0}\makebox(0,0)[lb]{\smash{$\pi^{\ctimes}$}}}%
    \put(0.10961569,0.00446877){\color[rgb]{0,0,0}\makebox(0,0)[lb]{\smash{$Y_1$}}}%
    \put(0.18351278,0.00446877){\color[rgb]{0,0,0}\makebox(0,0)[lb]{\smash{$Y_2$}}}%
    \put(0.2908745,0.00446877){\color[rgb]{0,0,0}\makebox(0,0)[lb]{\smash{$Y_3$}}}%
    \put(0.41759772,0.15759262){\color[rgb]{0,0,0}\makebox(0,0)[lb]{\smash{and}}}%
  \end{picture}%
\endgroup%

		\end{aligned}
		\label{eq:iota-pi}
	\end{align}
	satisfy $\iota\circ\pi=E_0^{\underline{x}}$ and 
	$\pi\circ\iota=\id_{\ctimes_{B_3}Y_1\otimes_{B_1} Y_2 \otimes_{B_2} Y_3}$,
	that is the image of $E_0^{\underline{x}}$ is exactly $\ctimes_{B_3}Y_1\otimes_{B_1} Y_2 \otimes_{B_2} Y_3$.
\end{proof}

	Let $\Sigma$ be a bordism with defects.
	We say that two defect lines $x_0,x_1\in\Sigma_{[1]}$ are \textsl{parallel} 
	if there is an isotopy rel boundary $t\mapsto x_t$ between them such that 
	for every $t\in(0,1)$ $x_t$ does not intersect any defect line.
Let us consider two bordisms with area and defects: 
\begin{enumerate}
	\item $(\Sigma,\Ac,\Lc)$, 
		which has two parallel defect lines 
		with length $l$ labeled with
		a $B$-$A$-bimodule $V$ and an $A$-$C$-bimodule $W$,
		and with a surface component
		with area $a$ between them and 
	\item $(\Sigma',\Ac',\Lc')$ which is the same as $(\Sigma,\Ac,\Lc)$ except that the defect line $x_0$ is removed from $\Sigma_{[1]}$ and the surface component between $x_0$ and $x_1$ is collapsed.
The remaining defect line $x_1$ is labeled by $V\otimes_A W$.
	The length of $x_1$ is $l$ and the area and length of the other defect lines and surface components are unchanged.
\end{enumerate}

\begin{theorem}\label{thm:fusion}
	Let $\Ab(\Db)$ be state-sum data satisfying the conditions of Theorem~\ref{thm:state-space-tensor-product}.
	Let us assume that, in addition, the limits
	\begin{align}
		\lim_{a\to0}\tilde{\gamma}_{a,b,c,l}^{V,W}\quad\text{ and }\quad\lim_{a\to0}\tilde{\beta}_{a,b,c,l}^{V,W}
		\label{eq:bkafdmvke}
	\end{align}
	exist for every $b,c,l\in\Rb_{>0}$
	Then we have
	\begin{align}
		\funZ_{\Ab(\Db)}(\Sigma',\Ac',\Lc')=\lim_{a\to0}\funZ_{\Ab(\Db)}(\Sigma,\Ac,\Lc).
		\label{eq:fusion}
	\end{align}
\end{theorem}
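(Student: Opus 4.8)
The plan is to compare the two state-sum evaluations plaquette by plaquette, using the fact (Theorem~\ref{thm:state-sum-defect-aqft}\,(1)) that $\funZ_{\Ab(\Db)}$ does not depend on the chosen PLCW decomposition with area and defects. First I would fix a PLCW decomposition of $(\Sigma,\Ac,\Lc)$ adapted to the two parallel lines: refine it so that in a neighbourhood of the line $x_0$ (labelled by the $B$-$A$-bimodule $V$) and the line $x_1$ (labelled by the $A$-$C$-bimodule $W$) there is a strip consisting of a row of faces crossed by $x_0$, a row of faces lying in the middle component of area $a$, and a row of faces crossed by $x_1$, with single internal $A$-edges between consecutive rows. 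For $(\Sigma',\Ac',\Lc')$ I would take the decomposition obtained by collapsing the middle rows, so that each former triple of faces becomes one face crossed by the fused line $x_1'$, which is labelled by $V\otimes_A W$. Since $\Ab(\Db)$ is built from strongly separable symmetric RFAs and dual pairs of bimodules (Proposition~\ref{prop:pdata-from-action}), and since the hypotheses of Theorem~\ref{thm:state-space-tensor-product} together with the assumed existence of $\lim_{a\to0}\tilde\gamma_{a,b,c,l}^{V,W}$ and $\lim_{a\to0}\tilde\beta_{a,b,c,l}^{V,W}$ are exactly those of Lemma~\ref{lem:dual-pair-tensor-product}, the pair $(V\otimes_A W,\bar W\otimes_A\bar V)$ is again a dual pair of bimodules; hence Proposition~\ref{prop:pdata-from-action} supplies legitimate state-sum data with defects for $\Sigma'$, and the fused line $x_1'$ may indeed be labelled by it.

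Next I would carry out the local computation in the strip. The state-sum assigns to this strip the composite of the plaquette weights $W^{x_0,\cdot,\cdot}$ for $V$, the contractions $\beta^{A}=\varepsilon\circ\mu$ on the internal $A$-edges, the separability idempotents $e_a$ of $A$ hidden in the $\zeta^{A}$-insertions at their vertices, and the plaquette weights $W^{x_1,\cdot,\cdot}$ for $W$. Using the expressions \eqref{eq:pdata-from-action} and \eqref{eq:zeta-x-from-pdata}, these weights are written through the copairings $\gamma^{x_0}$, $\gamma^{x_1}$ of $(V,\bar V)$, $(W,\bar W)$. I would then recognise that the part of this composite running over the internal $A$-line is precisely (a conjugate of) the morphism $D_{a,b,c,l}^{V,W}$ of \eqref{eq:D-definition-bimodule}, with the remaining $B$-, $C$- and $\bar X$-, $X$-legs carrying exactly the copairing $\tilde\gamma_{a,b,c,l}^{V,W}$ and the action $\tilde\rho_{a,b,c,l}^{V,W}$ from \eqref{eq:dual-pair-tensor-product} and \eqref{eq:tensor-product-action}. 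Thus, before any limit, the strip morphism equals the value that the $\Sigma'$-construction would assign to the collapsed strip, but with $D_{a,\dots}^{V,W}$, $\tilde\gamma_{a,\dots}^{V,W}$, $\tilde\beta_{a,\dots}^{V,W}$, $\tilde\rho_{a,\dots}^{V,W}$ in place of their $a\to0$ limits.

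Then I would take $a\to0$. Distributing all of the (vanishing) face-, edge- and vertex-areas of the strip as convenient — permissible since every condition on $\Ab(\Db)$ depends only on the sum of the parameters — and using separate continuity of composition in $\Sc$, the joint continuity \eqref{eq:cont-cond-bimodules} of the tensored semigroups $Q$ and $P$, and the assumed existence of $\lim_{a\to0}\tilde\rho^{V,W}$, $\lim_{a\to0}\tilde\gamma^{V,W}$ and $\lim_{a\to0}\tilde\beta^{V,W}$, the strip morphism converges. By Proposition~\ref{prop:d0proj}\,(3) the idempotent $D_0^{V,W}=\lim_{a,b,c,l\to0}D_{a,b,c,l}^{V,W}$ splits through $V\otimes_A W$ with action $\tilde\rho_{0,b,c,l}^{V,W}$, and by Lemma~\ref{lem:dual-pair-tensor-product} the limits of $\tilde\gamma$ and $\tilde\beta$ are the copairing and pairing of $(V\otimes_A W,\bar W\otimes_A\bar V)$. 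Combining these identifications, the limiting strip morphism is exactly the morphism that the $\Sigma'$-state-sum assigns to the corresponding face crossed by $x_1'$, and on the boundary circles adjacent to the strip the splitting $\iota\circ\pi=D_0^{V,W}$ of the projector agrees with the defining idempotent of the state space $Z(X_{\underline x})$ via Theorem~\ref{thm:state-space-tensor-product}.

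Finally, away from the strip the two chosen decompositions coincide and $\funZ_{\Ab(\Db)}$ assigns literally the same local morphisms; since the value on a bordism is the composite of its local pieces, $\funZ_{\Ab(\Db)}(\Sigma,\Ac,\Lc)$ differs from $\funZ_{\Ab(\Db)}(\Sigma',\Ac',\Lc')$ only through the strip piece. As the $a\to0$ limit of that piece equals the corresponding piece of $\funZ_{\Ab(\Db)}(\Sigma',\Ac',\Lc')$, and as $\funZ_{\Ab(\Db)}$ is continuous in the area parameters (Theorem~\ref{thm:state-sum-defect-aqft}\,(2)), we conclude $\lim_{a\to0}\funZ_{\Ab(\Db)}(\Sigma,\Ac,\Lc)=\funZ_{\Ab(\Db)}(\Sigma',\Ac',\Lc')$. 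I expect the main obstacle to be the bookkeeping in the middle paragraph: verifying at the level of the weights attached to the plaquettes of the strip that the composite of the $\gamma^{x_0}$'s, $\beta^A$'s, $e_a$'s, $\zeta$'s and $\gamma^{x_1}$'s really is the $\Sigma'$-value built from $D_{a,\dots}^{V,W}$ and $\tilde\gamma^{V,W}$, $\tilde\beta^{V,W}$ — essentially re-running, relative to a positive separating area, the identification used in the proof of Theorem~\ref{thm:state-space-tensor-product} that $E_0^{\underline x}$ splits through the iterated tensor product of bimodules.
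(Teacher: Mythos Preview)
Your approach is essentially the same as the paper's sketch of proof: choose PLCW decompositions adapted to the parallel defects, identify the local strip morphism with expressions built from $D_{a,b,c,l}^{V,W}$, $\tilde\rho^{V,W}$, $\tilde\gamma^{V,W}$, $\tilde\beta^{V,W}$, and take the $a\to0$ limit using Lemma~\ref{lem:dual-pair-tensor-product} and Proposition~\ref{prop:d0proj}. The only organisational difference is that the paper uses two directly adjacent rectangles (no intermediate row of empty faces) and explicitly separates four cases according to whether the endpoints of the parallel lines lie on ingoing boundary, outgoing boundary, or form closed loops, whereas you treat these uniformly; both are valid and lead to the same computation.
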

\begin{proof}[Sketch of proof]
	\begin{figure}[tb]
		\centering
		\def\svgwidth{16cm}
		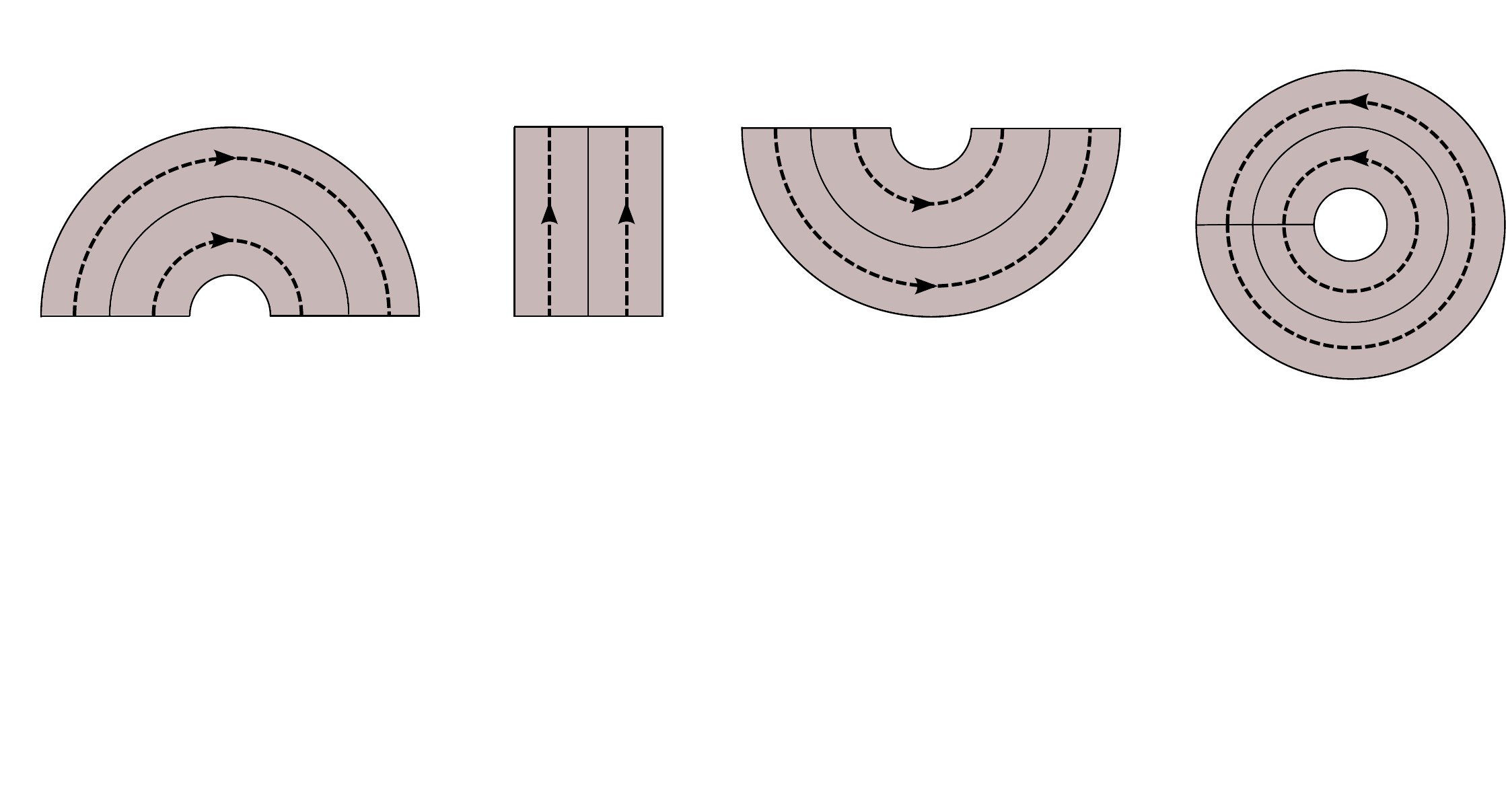
		\caption{Detail of the chosen PLCW decomposition of $\Sigma$ depending on the starting and ending point of the parallel defect lines
		(Parts~$a)$-$d)$) and the part of the corresponding morphisms given by the state-sum construction (Parts~$a')$-$d')$).
		In Part~$a)$ the defect lines start and end on an ingoing boundary component,
		in Part~$b)$ they start on an ingoing and end on an outgoing boundary component,
		in Part~$c)$ they start and end on an outgoing boundary component and
		in Part~$d)$ the defect lines are closed loops.
		In Parts~$a')$-$d')$ 
the numbers attached to the vertices for readability relate to the parameters as follows:
$1 \hat= (b_1,l_1,a_1;Y_1)$, 
$2 \hat= (b_2,l_2,a_4;Y_1)$,
$3 \hat= (a_3,l_2,c_2;Y_2)$,
$4 \hat= (a_5,l_1,c_1;Y_2)$,
$5 \hat= (b_3,l_3,a_6;Y_1)$,
$6 \hat= (a_7,l_3,c_3;Y_2)$.
	}
		\label{fig:defect-fusion-configurations}
	\end{figure}
	Let us choose a PLCW decomposition of $\Sigma$ which contains two rectangles, each of which is containing one of the parallel defect lines.
	For $\Sigma'$ we choose a PLCW decomposition which contains a rectangle containing the defect line.
	Depending on where the defect lines start and end we have 4 essentially different cases that we need to consider and which
we show in Figure~\ref{fig:defect-fusion-configurations} Parts~$a)$-$d)$.

	The corresponding detail of the morphism $\funZ_{\Ab(\Db)}(\Sigma,\Ac,\Lc)$
	is shown in Figure~\ref{fig:defect-fusion-configurations} Parts~$a')$-$d')$.
	We see that after taking the limit $a\to0$, 
	and looking at the definition of the bimodule $Y_1\otimes_A Y_2$ 
	and its dual $\bar{Y}_2\otimes_A \bar{Y}_1$
	in Definition~\ref{def:tensor-product-bimodule} and
	in Lemma~\ref{lem:dual-pair-tensor-product}, we obtain exactly
	$\funZ_{\Ab(\Db)}(\Sigma',\Ac',\Lc')$.
\end{proof}

\begin{remark}\label{rem:all-works-in-hilb}
For RFAs and bimodules in $\Hilb$ 
which are left and right modules as well and 
for which $Q_{a,l,b}$ is epi for every $a,l,b\in\Rb_{>0}$
(cf.\ Remark~\ref{rem:left-right-module-gives-bimodule} and 
Proposition~\ref{prop:tens-prod-lr-bimodules}),
the conditions of Theorem~\ref{thm:state-space-tensor-product} 
are automatically satisfied, since
the limits in \eqref{eq:d0-bimodule} exist by Lemma~\ref{lem:D-limits} and
those in \eqref{eq:thm-state-space-limit} by 
Proposition~\ref{prop:tens-prod-lr-bimodules}.
\\
We stress here that one should not necessarily expect that the limits in \eqref{eq:bkafdmvke} exist. It may happen that when one brings two defect lines near each other the correlators of the quantum field theory diverge,
for an example in conformal field theory see e.g. \cite{Bachas:2007fusion}.
\end{remark}

\section{Example: 2d Yang-Mills theory}\label{sec:2dym}

The state-sum construction of 2d Yang-Mills theory has been introduced by \cite{Migdal:1975re}, was further developed for $G=U(N)$ in \cite{Rusakov:1990wilson}, and has been summarised in \cite{Witten:1991gt}; for a review see \cite{Cordes:1995ym}.
There, partition functions and expectation values of Wilson loops were calculated.
The proof of convergence of the (Boltzmann) plaquette weights has been shown
in a different setting in \cite{Applebaum:2011cl}.
In this section we will heavily rely on the representation theory of compact Lie groups,
a standard reference is e.g.\ \cite{Knapp:2002lg}.

\subsection{Two RFAs from a compact group $G$}\label{sec:twoRFAsfromG}

Let $G$ be a compact semisimple
Lie-group and $\int\,dx$ the Haar integral
on $G$ with the normalisation $\int_{G}1\,dx=1$.
We denote with $L^2(G)$ the Hilbert space of square integrable complex functions on $G$,
where the scalar product of $f,g\in L^2(G)$ is given by
$\langle f,g\rangle:=\int f(x)^*g(x)dx$.

Let $\hat{G}$ denote representatives of isomorphism classes of 
finite dimensional simple unitary $G$-modules. 
Then for $V\in\hat{G}$ with inner product $\langle-,-\rangle_V$ and 
an orthonormal basis $\{e_i^V\}_{i=1}^{\dim(V)}$ let 
\begin{align}
	\begin{aligned}
	f_{ij}^V:G&\to \Cb\\
	g&\mapsto (\dim(V))^{1/2}\langle e_i^V,g.e_j^V\rangle_V
	\end{aligned}
	\label{eq:matrix-element-fn}
\end{align}
denote a \textsl{matrix element function} and 
let $M_V$ denote the linear span of these.
The matrix element functions are orthonormal \cite[Cor.\,4.2]{Knapp:2002lg}:
for $V,W\in\hat{G}$, $i,j\in\left\{ 1,\dots,\dim(V) \right\}$ 
and $k,l\in\left\{ 1,\dots,\dim(W) \right\}$
\begin{align}
	\langle f_{ij}^V, f_{kl}^W \rangle=\delta_{ik}\delta_{jl}\delta_{V,W}
	\label{eq:orthogonality}
\end{align}
where $\delta_{V,W}=1$ if $V=W$ and 0 otherwise.
The \textsl{character of $V$} is defined as
	\begin{align}
		\chi_V=(\dim(V))^{-1/2}\sum_{i=1}^{\dim(V)}f_{ii}^V\ .
		\label{eq:character}
	\end{align}
The Peter-Weyl theorem provides a complete orthonormal basis of $L^2(G)$ in terms of
matrix element functions and of the square integrable class functions $Cl^2(G)$ in terms of characters:
\begin{align}
	L^2(G)\cong\bigoplus_{V\in \hat{G}}M_V\quad\text{and}\quad  Cl^2(G)\cong\bigoplus_{V\in \hat{G}}\Cb .\chi_V
	\label{eq:L2-Cl2-decomposition}
\end{align}
as Hilbert space direct sums.
Note that $L^2(G)\otimes L^2(G)\cong L^2(G\times G)$ 
and $Cl^2(G)\otimes Cl^2(G)\cong Cl^2(G\times G)$
isometrically by mapping $f \otimes f'$ to the function $(g,g') \mapsto f(g)f'(g')$.
We will often use these isomorphisms without further notice.

\medskip

In the following we will define a $\dagger$-RFA structure on $L^2(G)$ and $Cl^2(G)$.
Let us start with defining the operator
\begin{align}
	\begin{aligned}
	\Delta:L^2(G)&\to L^2(G)\otimes L^2(G)\\
	f&\mapsto \Delta(f)=\left[ (x,y)\mapsto f(xy) \right]\ ,
	\end{aligned}
	\label{eq:def:delta}
\end{align}
which has norm 1. 
Let $\mu:=\Delta^{\dagger}:L^2(G)\otimes L^2(G)\to L^2(G)$ be its adjoint, 
which is given by the convolution product. For $F\in L^2(G)\otimes L^2(G)$
\begin{align}
	\begin{aligned}
		\mu(F)(y)=\int_G F(x,x^{-1}y)\,dx\ .
	\end{aligned}
	\label{eq:def:mu-conv}
\end{align}
Let $V\in\hat{G}$ and let us denote with $\sigma_V\in\Rb$ the value of the
Casimir operator of $G$ in the module $V$. We define for $a\in\Rb_{>0}$ 
\begin{align}
	\begin{aligned}
		\eta_a:\Cb&\to L^2(G)\\
		1&\mapsto\eta_a(1)=\sum_{V\in\hat{G}}e^{-a\sigma_V}\dim(V)\chi_V\ .
	\end{aligned}
	\label{eq:def:eta}
\end{align}

\begin{lemma}
	The sum in \eqref{eq:def:eta} is absolutely convergent for every $a\in\Rb_{>0}$.
	\label{lem:applebaum}
\end{lemma}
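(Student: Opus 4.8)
The plan is to show that the series $\sum_{V\in\hat G} e^{-a\sigma_V}\dim(V)\chi_V$ converges absolutely in $L^2(G)$, i.e.\ that $\sum_{V\in\hat G} e^{-a\sigma_V}\dim(V)\,\|\chi_V\| = \sum_{V\in\hat G} e^{-a\sigma_V}\dim(V) < \infty$, using that $\|\chi_V\|=1$ by \eqref{eq:orthogonality} and \eqref{eq:character}. The key is a growth estimate: the Casimir eigenvalues $\sigma_V$ grow fast enough (quadratically in the highest weight) while $\dim(V)$ grows only polynomially, so the exponential suppression wins.

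Concretely, I would parametrise $\hat G$ by dominant integral highest weights $\lambda$ in the weight lattice intersected with the closed positive Weyl chamber. By the Freudenthal formula the Casimir eigenvalue is $\sigma_\lambda = \langle\lambda,\lambda+2\rho\rangle = \|\lambda+\rho\|^2 - \|\rho\|^2$ (in a suitable normalisation of the invariant form), where $\rho$ is the half-sum of positive roots; in particular $\sigma_\lambda \geq c\,\|\lambda\|^2$ for some constant $c>0$ once $\|\lambda\|$ is large, since $G$ is semisimple so the form is positive definite on the real span of the roots. By the Weyl dimension formula $\dim V_\lambda = \prod_{\alpha>0}\frac{\langle\lambda+\rho,\alpha\rangle}{\langle\rho,\alpha\rangle}$, which is a polynomial in $\lambda$ of degree equal to the number of positive roots, hence $\dim V_\lambda \leq C(1+\|\lambda\|)^{N}$ for constants $C,N$ depending only on $G$. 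Therefore the general term is bounded by $C(1+\|\lambda\|)^N e^{-ac\|\lambda\|^2}$, and summing over the lattice points $\lambda$ this is dominated by a convergent Gaussian-type lattice sum $\sum_{\lambda} C(1+\|\lambda\|)^N e^{-ac\|\lambda\|^2}$, which one can bound by comparison with the integral $\int_{\Rb^r} C(1+\|x\|)^N e^{-ac\|x\|^2}\,dx < \infty$ (here $r$ is the rank). This proves absolute convergence for every $a>0$.

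I expect the main (and only real) obstacle to be making the lower bound $\sigma_\lambda \gtrsim \|\lambda\|^2$ precise and clean — one must be careful that $\langle\lambda,\lambda+2\rho\rangle$ is genuinely coercive in $\lambda$, which uses semisimplicity (so the Killing form restricted to the real span of weights is positive definite) together with the fact that for dominant $\lambda$ the cross term $\langle\lambda,2\rho\rangle\geq 0$; hence $\sigma_\lambda \geq \langle\lambda,\lambda\rangle \geq c\|\lambda\|^2$ with $c$ the smallest eigenvalue of the form, already without any large-$\|\lambda\|$ caveat. Once this is in hand, everything else is the routine polynomial-times-Gaussian comparison described above. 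An alternative, essentially equivalent route that avoids invoking Weyl's formulas explicitly is to cite a standard heat-kernel fact: $\eta_a(1)$ is (up to normalisation) the value at the identity of the heat kernel $e^{a\Delta_G}$ on $G$ expanded in the Peter--Weyl basis, and the heat kernel on a compact Riemannian manifold is smooth for $a>0$, so in particular lies in $L^2(G)$ and the Peter--Weyl series converges; I would mention this as a remark but give the elementary Weyl-formula argument as the actual proof since it is self-contained.
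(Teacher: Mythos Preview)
Your proof is correct and follows essentially the same approach as the paper: parametrise $\hat G$ by dominant weights, use the Weyl dimension formula for a polynomial upper bound $\dim V_\lambda \le C(1+|\lambda|)^N$, use the Casimir formula $\sigma_\lambda = \langle\lambda,\lambda+2\rho\rangle \ge |\lambda|^2$ (for dominant $\lambda$) for the quadratic lower bound, and then bound the resulting polynomial-times-Gaussian lattice sum. The only cosmetic differences are that the paper computes $\|\eta_a\|^2$ rather than $\sum_V e^{-a\sigma_V}\dim V$ directly (the former implies the latter by halving $a$ and using $\dim V\ge 1$), and bounds the lattice sum by explicit shell counting rather than integral comparison.
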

\begin{proof}
	This follows from \cite[Sec.\,3]{Applebaum:2011cl}, 
	which we explain now.
	Let us fix a maximal torus of $G$ and let $T$ denote its Lie algebra,
	let $\Lambda^{+}\subset T^*$ denote the set of dominant weights and let
	$(-,-)$ be the inner product on $T^*$ induced by the Killing form
	and $|-|$ the induced norm. 
	We will use that, since $G$ is semisimple, there is a bijection of sets
	\cite[Thm.\,5.5]{Knapp:2002lg}
	\begin{align}
		\begin{aligned}
			\hat{G}&\xrightarrow{\cong}\Lambda^{+}\\
		V&\mapsto\lambda_V\\
		V_{\lambda}&\mapsfrom\lambda
	\end{aligned}\ .
		\label{eq:irrep-highest-weight}
	\end{align}
	
	{}From \cite[(1.17)]{Sugiura:1971lg} and \cite[(3.2)]{Applebaum:2011cl}
	we have that (by the Weyl dimension formula)
	for $V\in\hat{G}$ with dominant weight $\lambda_V\in\Lambda^{+}$
	\begin{align}
		\dim(V)\le N |\lambda_V|^m\ ,
		\label{eq:dimension-estimate}
	\end{align}
	where $N\in\Rb_{>0}$ is a constant independent of $V$ and 
	$2m=\dim(G)-\rank(G)$.

	{}From \cite[Lem.\,1.1]{Sugiura:1971lg} we can express the value of the Casimir element in $V$ using the highest weight $\lambda_V$ of $V$ and
	the half sum of simple roots $\rho$ as
	\begin{align}
		\sigma_V=(\lambda_V,\lambda_V+2\rho)\ .
		\label{eq:Casimir-highest-weight}
	\end{align}
	It follows directly \cite[(3.5)]{Applebaum:2011cl} that
	\begin{align}
		|\lambda_V|^2\le\sigma_V\ .
		\label{eq:Casimir-estimate}
	\end{align}

	We can give an estimate for the norm of $\lambda_V$ as follows.
	The choice of simple roots gives a bijection $\Zb^{\rank(G)}\to\Lambda^{+}$ which we write as $n\mapsto\lambda(n)$.
	Using the proof of \cite[Lem.\,1.3]{Sugiura:1971lg} there are
	$C_1,C_2\in \Rb_{\ge0}$ such that for every 
	$n\in\Zb^{\rank(G)}$
	\begin{align}
		C_1\norm{n}\le|\lambda(n)|\le C_2 \norm{n}\ ,
		\label{eq:highest-weight-estimate}
	\end{align}
	where $\norm{n}^2=\sum_{i=1}^{\rank(G)}n_i^2$.	

	Let $b(j)$ denote the number of $n\in\Zb^{\rank(G)}$ with $\norm{n}^2=j$.
	We can easily give a (very rough)	
	estimate of this by the volume of the $\rank(G)$ dimensional cube
	with edge length $2j^{1/2}+1$:
	\begin{align}
		b(j)\le (2j^{1/2}+1)^{\rank(G)}\ .
		\label{eq:cube-estimate}
	\end{align}

	We compute the squared norm of $\eta_a$ following the computation in
	\cite[Ex.\,3.4.1]{Applebaum:2011cl}.
	\begin{align}
		\begin{aligned}
			\norm{\eta_a}^2=&\sum_{V\in\hat{G}}(\dim(V))^2 e^{-2a\sigma_V}
			\stackrel{\eqref{eq:irrep-highest-weight}}{=}
			\sum_{\lambda\in\Lambda^{+}}(\dim(V_{\lambda}))^2 e^{-2a\sigma_{V_{\lambda}}}\\
			\stackrel{\eqref{eq:dimension-estimate}}{\le}&
			N^2\sum_{\lambda\in\Lambda^{+}}|\lambda|^{2m} e^{-2a\sigma_{V_{\lambda}}}
			\stackrel{\eqref{eq:Casimir-estimate}}{\le}
			N^2\sum_{\lambda\in\Lambda^{+}}|\lambda|^{2m} e^{-2a|\lambda|^2}\\
			\stackrel{\eqref{eq:highest-weight-estimate}}{\le}&
			N^2C_2^{2m}\sum_{n\in\Zb^{\rank(G)}}\norm{n}^{2m} e^{-2aC_1\norm{n}^2}
			=N^2C_2^{2m}\sum_{j=1}^{\infty}  b(j)j^m e^{-2aC_1j}\\
			\stackrel{\eqref{eq:cube-estimate}}{\le}&
			N^2C_2^{2m}\sum_{j=1}^{\infty} (2j^{1/2}+1)^{\rank(G)}j^m e^{-2aC_1j}\ , 
		\end{aligned}
		\label{eq:eta-norm-long-estimate}
	\end{align}
	which converges.
\end{proof}
Finally we define the counit as
$\varepsilon_a:=\eta_a^{\dagger}:L^2(G)\to \Cb$.
Explicitly, for $f\in L^2(G)$,
\begin{align}
	\varepsilon_a(f)=\langle \eta_a,f\rangle=\sum_{V\in\hat{G}}e^{-a\sigma_V}\dim(V)\int_G \chi_V(x) f(x^{-1})\,dx\ .
	\label{eq:epsilon-explicit}
\end{align}
Again for $a\in\Rb_{>0}$ let
\begin{align}
	\begin{aligned}
	P_a:L^2(G)&\to L^2(G)\\
	f&\mapsto\mu(\eta_a\otimes f)\ ,
	\end{aligned}
	\label{eq:padef}
\end{align}
$\mu_a:=P_a\circ\mu$ and $\Delta_a:=\Delta\circ P_a$.

\begin{proposition}
	$L^2(G)$, together with the family of morphisms $\mu_a$, $\eta_a$,
	$\Delta_a$ and $\varepsilon_a$ for $a\in\Rb_{>0}$ defined above
	is a strongly separable symmetric $\dagger$-RFA in $\Hilb$.
	\label{prop:l2g-rfa}
\end{proposition}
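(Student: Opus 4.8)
The claim is that the data $(\mu_a,\eta_a,\Delta_a,\eps_a)$ defines a strongly separable symmetric $\dagger$-RFA on $L^2(G)$. The cleanest route is to pass through the Peter--Weyl decomposition $L^2(G)\cong\bigoplus_{V\in\hat G}M_V$ and recognise each summand $M_V$ as a finite-dimensional $\dagger$-RFA, then invoke Theorem~\ref{thm:daggerclassification} (equivalently Proposition~\ref{prop:directsumrfa}) to conclude that the Hilbert space direct sum is again an RFA provided the norm bounds hold. Concretely, first I would check that the structure maps respect the decomposition: since $\mu,\Delta,\eta_a,\eps_a$ are built from the convolution product, from $\Delta(f)(x,y)=f(xy)$, and from $\eta_a(1)=\sum_V e^{-a\sigma_V}\dim(V)\chi_V$, and since matrix element functions multiply within a fixed $V$ under convolution, one sees that $M_V\otimes M_V$ is mapped to $M_V$ by $\mu$ and $M_V$ to $M_V\otimes M_V$ by $\Delta$, while $\eta_a$ lands in $\bigoplus_V M_V$ with the $V$-component a multiple of $\chi_V\in M_V$. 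So each $M_V$ carries its own regularised structure, and $P_a$ acts on $M_V$ as multiplication by $e^{-a\sigma_V}$ (this is where the Casimir eigenvalue enters).

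Next I would identify the finite-dimensional RFA on $M_V$. Using the orthonormality relations \eqref{eq:orthogonality} one computes that, under $f^V_{ij}\leftrightarrow e_i\otimes e_j^*$, the convolution product $\mu$ on $M_V\cong V\otimes V^*$ is (up to the factor $(\dim V)^{-1/2}$ coming from the normalisation in \eqref{eq:matrix-element-fn}) the matrix multiplication, $\Delta$ is its adjoint comultiplication, $\eps$ evaluates against $\chi_V$, and $\eta$ produces $\dim(V)\chi_V$; this exhibits $M_V$ as the endomorphism Frobenius algebra $\mathrm{End}(V)$ rescaled, which is a $\dagger$-Frobenius algebra, and then $(M_V,\sigma_V\,\mathrm{id})$ is a finite-dimensional $\dagger$-RFA in the sense of Proposition~\ref{prop:findimraclass} with $P_a=e^{-a\sigma_V}\mathrm{id}$. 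Strong separability of each $M_V$ is immediate (full matrix algebras are strongly separable; $\tau_a=\eta_a$ on each summand, as asserted in Example~\ref{ex:2dym:l2}), and symmetry together with the $\dagger$-property follows from the explicit form of $\mu$ and $\eps$. Then I would verify the convergence conditions of Proposition~\ref{prop:directsumrfa}: $\sup_V\|\mu^V_a\|<\infty$, $\sup_V\|\Delta^V_a\|<\infty$, $\sum_V\|\eps^V_a\|^2<\infty$, $\sum_V\|\eta^V_a\|^2<\infty$. The summability conditions are exactly the content of Lemma~\ref{lem:applebaum} (whose estimate \eqref{eq:eta-norm-long-estimate} gives $\|\eta_a\|^2=\sum_V(\dim V)^2e^{-2a\sigma_V}<\infty$, and the same bound controls $\|\eps_a\|^2$ since $\eps_a=\eta_a^\dagger$); the two sup bounds follow because $\|\mu^V_a\|$ and $\|\Delta^V_a\|$ are $e^{-a\sigma_V}$ times the norm of a fixed matrix-multiplication map on $M_V$, whose operator norm on the Hilbert space $M_V\cong V\otimes V^*$ I need to bound uniformly in $V$ --- this is a short computation showing the convolution product has operator norm $1$ on each $M_V$ (indeed $\mu=\Delta^\dagger$ with $\|\Delta\|=1$), so $\|\mu^V_a\|\le e^{-a\sigma_V}\le 1$.

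Having checked these, Proposition~\ref{prop:directsumrfa} gives that $L^2(G)=\bigoplus_V M_V$ is an RFA, Theorem~\ref{thm:daggerclassification} (or a direct check that $\mu_a^\dagger=\Delta_a$, which holds summand-wise) gives that it is Hermitian, and since each summand is strongly separable symmetric with window element $\eta_a$ and these patch together with uniformly bounded inverses ($\tau_a^{-1}$ on $M_V$ is $e^{a\sigma_V}(\dim V)^2$ times a bounded map --- wait, this grows, so I must instead use Proposition~\ref{prop:symm-rfa-strongly-separable} via the separability idempotents $e_a=\Delta_{a_1}\circ\tau^{-1}_{a_2}$ and check that $\sup_V\|e^V_a\|<\infty$, which holds because $\|e^V_a\|$ involves $e^{-a\sigma_V}$ times bounded data and the would-be-divergent factors cancel in the separability idempotent itself). \textbf{The main obstacle} I anticipate is precisely this last point: establishing strong separability uniformly over $V$ requires care, since the window element inverse $\tau_a^{-1}$ is \emph{not} bounded on $L^2(G)$ as a whole (its $M_V$-component grows like $e^{a\sigma_V}$), so one cannot simply quote Proposition~\ref{prop:symm-rfa-strongly-separable} globally; instead one must exhibit the separability idempotent $e_a\in\mathcal B(\Cb,L^2(G)\otimes L^2(G))$ directly --- on $M_V$ it is $e^{-a\sigma_V}$ times the normalised maximally entangled element of $V\otimes V^*$ --- and check $\sum$/$\sup$ bounds making it a bounded map, together with $\sigma\circ e_a=e_a$ and $\mu_{a_1}\circ e_{a_2}=\eta_a$. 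The remaining verifications (coassociativity, Frobenius relation, the $a\to 0$ limit $\lim P_a=\mathrm{id}$ in the strong topology, and joint continuity, the latter automatic in $\Hilb$ by Corollary~\ref{cor:semigrp}) are then routine and summand-wise.
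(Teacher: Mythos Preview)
Your approach is essentially the same as the paper's: decompose via Peter--Weyl, verify that each $M_V$ is a finite-dimensional strongly separable symmetric $\dagger$-RFA (this is Lemma~\ref{lem:MV-RFA}), compute the norms $\|\eta_a^V\|$ and $\|\Delta_a^V\|$ (whence by Hermiticity also $\|\eps_a^V\|$ and $\|\mu_a^V\|$), and invoke Proposition~\ref{prop:directsumrfa} together with Lemma~\ref{lem:applebaum} for the summability. The paper does exactly this.

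Your one stumble is the worry about strong separability being uniform in $V$. The computation \eqref{eq:MV-window-elt-calc} gives $\tau_a^V = \eta_a^V$ on each summand, hence $\tau_a = \eta_a$ globally. But then the \emph{regularised} inverse (as defined before Proposition~\ref{prop:symm-rfa-strongly-separable}) is also $\tau_a^{-1} = \eta_a$: one has $\mu_{a_1}(\eta_{a_2} \otimes \eta_{a_3}) = P_{a_1+a_2}(\eta_{a_3}(1)) = \eta_{a_1+a_2+a_3}$ by Lemma~\ref{lem:ra:properties} Part~\ref{lem:ra:semigrp}. So $\tau_a^{-1}$ is bounded---it equals $\eta_a$, bounded by Lemma~\ref{lem:applebaum}---and Proposition~\ref{prop:symm-rfa-strongly-separable} applies directly. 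Your claim that ``$\tau_a^{-1}$ on $M_V$ grows like $e^{a\sigma_V}$'' is a miscomputation: you were presumably inverting $\tau_a(1)$ as an element of an ordinary algebra, which is not what the regularised definition asks for. The detour through exhibiting separability idempotents by hand is therefore unnecessary, though it would also work.
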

Before proving this proposition let us state a technical lemma.
Let $V\in\hat{G}$ and define
\begin{align}
	\begin{aligned}
	\mu_a^{V}&:=\mu_a|_{M_V\otimes M_V}\ ,& \eta_a^V&:= e^{-a\sigma_V}\dim_V \chi_V\ ,\\
	\Delta_{a}^{V}&:=\Delta|_{M_V}\ , & \varepsilon_a^V&:=\varepsilon_a|_{M_V}\ .
	\end{aligned}
	\label{eq:MV-structure-maps}
\end{align}
	From a computation using orthogonality of the $f_{ij}^V$ 
	we can obtain the following formulas:
	\begin{align}
		\label{eq:MV-formulas-Pa}
		P_a(f_{ij}^V)&=e^{-a\sigma_V}f_{ij}^V\in M_V\ ,\\
		\label{eq:MV-formulas-mu-a}
		\mu_a(f_{ij}^V\otimes f_{kl}^V)&=\delta_{jk} e^{-a\sigma_V} (\dim(V))^{-1/2}f_{il}^V\in M_V\ ,\\
		\label{eq:MV-formulas-Delta-a}
		\Delta_a(f_{ij}^V)&= e^{-a\sigma_V} (\dim(V))^{-1/2}\sum_{k=1}^{\dim(V)}f_{ik}^V \otimes f_{kj}^V\in M_V\otimes M_V\ ,\\
		\varepsilon_a(f_{ij}^V)&= e^{-a\sigma_V} (\dim(V))^{1/2}\delta_{ij}\ .
		\label{eq:MV-formulas-epsilon-a}
	\end{align}
\begin{lemma}
	Let $V\in\hat{G}$. Then $M_V$ is a strongly separable symmetric $\dagger$-RFA in $\Hilb$
	with the structure maps in \eqref{eq:MV-structure-maps}.
	\label{lem:MV-RFA}
\end{lemma}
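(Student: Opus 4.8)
The plan is to verify directly that $M_V$, equipped with the structure maps $\mu_a^V$, $\eta_a^V$, $\Delta_a^V$, $\varepsilon_a^V$ from \eqref{eq:MV-structure-maps}, satisfies all axioms of an RFA (Definition~\ref{def:rfa}), is symmetric and strongly separable, and carries a $\dagger$-structure. Since $M_V$ is finite-dimensional (it is the span of the $\dim(V)^2$ matrix element functions $f_{ij}^V$), the continuity conditions \eqref{eq:racont} and \eqref{eq:rcacont} are automatic, so the real content is purely algebraic and can be read off from the explicit formulas \eqref{eq:MV-formulas-Pa}--\eqref{eq:MV-formulas-epsilon-a}.

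First I would establish those explicit formulas themselves (the lemma states them just before it but they deserve a one-line justification): using the orthogonality relations \eqref{eq:orthogonality} for the $f_{ij}^V$ and the definitions of $\mu = \Delta^\dagger$, $\eta_a$, $P_a$, one checks that $P_a$ acts on $M_V$ as the scalar $e^{-a\sigma_V}$, that $\mu$ restricted to $M_V \otimes M_V$ is the ``matrix multiplication'' $f_{ij}^V \otimes f_{kl}^V \mapsto \delta_{jk}(\dim V)^{-1/2} f_{il}^V$, and that $\Delta$ restricted to $M_V$ is the corresponding ``comatrix'' map; the counit picks out the trace. In other words, up to the normalisation factor $(\dim V)^{\pm 1/2}$ and the scalar $e^{-a\sigma_V}$, the maps $\mu^V, \Delta^V, \eta^V, \varepsilon^V$ are exactly those of the finite-dimensional $\dagger$-Frobenius algebra $\mathrm{End}(V) \cong \mathrm{Mat}_{\dim V}(\Cb)$ with its Hilbert-Schmidt pairing. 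Then I would invoke Proposition~\ref{prop:findimraclass} (or rather its easy direct check): setting $P_a := e^{-a\sigma_V}\,\mathrm{id}_{M_V} = \exp(a H)$ with $H = -\sigma_V\,\mathrm{id} \in Z(M_V)$, and $\mu_a := P_a \circ \mu$, $\Delta_a := \Delta \circ P_a$, etc., one obtains an RFA precisely because $(M_V,\mu,\eta,\Delta,\varepsilon)$ is an ordinary Frobenius algebra and $H$ is central. So the bulk of the verification reduces to: $\mathrm{Mat}_{\dim V}(\Cb)$ with the above (co)multiplication is a Frobenius algebra — which is classical — and $-\sigma_V \mathrm{id}$ is obviously central.

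Next I would handle the three remaining adjectives. Symmetry: $\varepsilon_{a_1}\circ\mu_{a_2}\circ\sigma = \varepsilon_{b_1}\circ\mu_{b_2}$ follows from $\varepsilon_a(f_{ij}^V\,f_{kl}^V$-product$) \propto e^{-(a_1+a_2)\sigma_V}\delta_{jk}\delta_{il}$, which is symmetric in the two tensor factors and depends only on $a_1+a_2$. The $\dagger$-property $\mu_a^\dagger = \Delta_a$, $\eta_a^\dagger = \varepsilon_a$ is read off from \eqref{eq:MV-formulas-mu-a}--\eqref{eq:MV-formulas-epsilon-a}: the matrix element functions form an orthonormal basis, so the adjoint of the matrix-multiplication map is the comatrix map with matching scalar, and $\eta_a^\dagger = \varepsilon_a$ by construction since $\varepsilon_a = \eta_a^\dagger$ globally on $L^2(G)$ and $M_V$ is a $P_a$-stable orthogonal summand. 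Strong separability: by Proposition~\ref{prop:symm-rfa-strongly-separable} it suffices to check that the window element $\tau_a = \mu_{a_1}\circ\Delta_{a_2}\circ\eta_{a_3}$ is invertible; a short computation with \eqref{eq:MV-formulas-mu-a}--\eqref{eq:MV-formulas-Delta-a} gives $\tau_a = e^{-a\sigma_V}(\dim V)^{1/2}\chi_V$ (up to normalisation), which is a nonzero multiple of the unit $\eta_0 \propto \chi_V$ in the one-dimensional centre $\Cb\chi_V$, hence invertible with inverse a suitable multiple of $\chi_V$ — indeed one expects $\tau_a = \eta_a$ up to the scalar, matching the ``$\tau_a=\eta_a$'' remark for $L^2(G)$ in Example~\ref{ex:2dym:l2}.

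I do not anticipate a serious obstacle here: everything is finite-dimensional and the formulas \eqref{eq:MV-formulas-Pa}--\eqref{eq:MV-formulas-epsilon-a} do all the work. The only mildly delicate point is bookkeeping the normalisation constants $(\dim V)^{\pm 1/2}$ so that associativity, the Frobenius relation, and counitality come out exactly (rather than up to a scalar), and so that $\mu_a^\dagger = \Delta_a$ holds on the nose with the stated conventions; I would simply track the powers of $(\dim V)^{1/2}$ carefully through the composites. Once the Frobenius-algebra structure on $M_V \cong \mathrm{Mat}_{\dim V}(\Cb)$ and the centrality of $H = -\sigma_V\,\mathrm{id}$ are in place, Proposition~\ref{prop:findimraclass}, Proposition~\ref{prop:symm-rfa-strongly-separable}, and Definition~\ref{def:rfa:hermitean} assemble the claim immediately.
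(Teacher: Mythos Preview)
Your proposal is correct and follows essentially the same approach as the paper: direct verification of the RFA axioms from the explicit formulas \eqref{eq:MV-formulas-Pa}--\eqref{eq:MV-formulas-epsilon-a}, with strong separability obtained by computing the window element (the paper finds $\tau_a=\eta_a$ exactly, confirming your expectation). Your additional framing via Proposition~\ref{prop:findimraclass} and the identification $M_V\cong\mathrm{Mat}_{\dim V}(\Cb)$ is a nice conceptual shortcut the paper does not spell out, but the underlying verification is the same ``straightforward calculation'' the paper sketches.
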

\begin{proof}
	Checking the algebraic relations is a straightforward calculation.
	As an example, we compute the window element of $M_V$.
	\begin{align}
		\begin{aligned}
		\mu_{a_1}^V\circ\Delta_{a_2}^V\circ\eta_{a_3}^V&=
		\sum_{l=1}^{\dim(V)}\mu_{a_1}^V\circ\Delta_{a_2}^V(f_{ll}^V)e^{-a_3 \sigma_V} (\dim(V))^{1/2}\\
		&=\sum_{k,l=1}^{\dim(V)}\mu_{a_1}^V (f_{lk}^V\otimes f_{kl}^V)e^{-(a_2+a_3) \sigma_V}\\
		&= \sum_{k,l=1}^{\dim(V)}f_{ll}^V e^{-(a_1+a_2+a_3) \sigma_V}(\dim(V))^{-1/2}\\
		&=e^{-(a_1+a_2+a_3) \sigma_V} \dim(V) \chi_V=
			\eta_{a_1+a_2+a_3}^V\ ,
		\end{aligned}
		\label{eq:MV-window-elt-calc}
	\end{align}
	which is clearly invertible.
\end{proof}

\begin{proof}[Proof of Proposition~\ref{prop:l2g-rfa}]
	Let $V\in\hat{G}$ and let us compute the following norms.
	\begin{align}
		\begin{aligned}
		\norm{\eta_a^V}^2&=e^{-2a\sigma_V}(\dim(V))^2\langle\chi_V,\chi_V\rangle=
		e^{-2a\sigma_V}\dim(V) \sum_{k,l=1}^{\dim(V)}\langle f_{kk}^{V},f_{ll}^{V}\rangle\\&=
		e^{-2a\sigma_V}(\dim(V))^2\ .
		\end{aligned}
		\label{eq:etanorm}
	\end{align}
	Let $\varphi=\sum_{i,j=1}^{\dim(V)}\varphi_{ij}f_{ij}^V\in M_V$ and compute
	\begin{align}
		\begin{aligned}
			\norm{\Delta_a^V(\varphi)}^2=e^{-2a\sigma_V}(\dim(V))^{-1}\sum_{i,j,k=1}^{\dim(V)}
			|\varphi_{ij}|^2 \norm{f_{ik}^V\otimes f_{kj}^V}^2=e^{-2a\sigma_V}\norm{\varphi}^2\ ,
		\end{aligned}
	\label{eq:deltanorm}
	\end{align}
	so $\norm{\Delta_a^V}=e^{-a\sigma_V}$.
	Since $M_V$ is a $\dagger$-RFA, $\norm{\varepsilon_a^V}=\norm{\eta_a^V}$ and $\norm{\mu_a^V}=\norm{\Delta_a^V}$.

	We now would like to take the direct sum of the RFAs $M_V$ for all $V\in \hat{G}$, so
	we check the conditions of Proposition~\ref{prop:directsumrfa}:
	the sum is convergent since it is the squared norm of $\eta_a\in L^2(G)$ 
	and the supremum is clearly bounded. Therefore $L^2(G)$ is an RFA.

	Checking that $L^2(G)$ is strongly separable, symmetric and Hermitian is straightforward 
	using Lemma~\ref{lem:MV-RFA}.
\end{proof}

Now we turn to define an RFA structure on $Cl^2(G)$. 
\begin{proposition}
	The centre of $L^2(G)$ is $Cl^2(G)$ and it is a commutative $\dagger$-RFA.
	\label{prop:center-of-L2G}
\end{proposition}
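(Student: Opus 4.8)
The plan is to verify two things: first that $Cl^2(G)$, sitting inside $L^2(G)$ via the inclusion of the span of characters, satisfies the universal property of the centre of the regularised algebra $L^2(G)$; and second that the restricted structure maps make $Cl^2(G)$ into a commutative $\dagger$-RFA. By Lemma~\ref{lem:ra:center} the centre of a regularised algebra, when it exists and when the induced $\tilde P_a$ are suitably continuous with $\lim_{a\to0}\tilde P_a$ existing, is automatically a commutative regularised algebra; and by the decomposition results of Section~\ref{sec:dagger} (Theorem~\ref{thm:daggerclassification}, Corollary~\ref{cor:commdaggerclassification}) it will then be a $\dagger$-RFA since it is a Hilbert space direct sum of one-dimensional $\dagger$-RFAs. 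So the substantive work is identifying the centre with $Cl^2(G)$ and establishing the claimed coproduct formula~\eqref{eq:Cl2G-coproduct}.

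First I would work summand-by-summand using the Peter--Weyl decomposition $L^2(G)=\bigoplus_{V\in\hat G}M_V$ and the fact (Lemma~\ref{lem:MV-RFA}) that each $M_V$ is a finite-dimensional $\dagger$-RFA. On $M_V\cong V\otimes V^*$ the multiplication $\mu_a^V$ from \eqref{eq:MV-formulas-mu-a} is matrix multiplication rescaled by $e^{-a\sigma_V}(\dim V)^{-1/2}$, whose centre is one-dimensional, spanned by $\chi_V$. The computation $D_a\circ\sigma\circ\Delta_a$, or more directly checking condition~\eqref{eq:central}, localises to each $M_V$ because $P_a$ and all structure maps respect the direct sum decomposition; hence the centre of $L^2(G)$ is $\bigoplus_{V\in\hat G}\Cb\chi_V=Cl^2(G)$, with $i_B$ the inclusion. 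I would then invoke Remark~\ref{rem:fdRAdecomp}-style reasoning together with Corollary~\ref{cor:centercycltens} (the centre is $\ctimes_A A$ when $D_0$ exists) — but note $D_0 = \lim_{a\to0} D_a$ need not exist for $L^2(G)$ since $\eta_a$ has no $a\to0$ limit, so I would instead argue directly from the universal property as in the definition around~\eqref{eq:central2}, checking that any $f:C\to L^2(G)$ satisfying~\eqref{eq:central} has image in $Cl^2(G)$: writing $f$ component-wise, on each $M_V$-component the centrality condition forces the image to be a scalar multiple of $\chi_V$, and then $\tilde f$ is defined and unique by the direct-sum property. One must check $\tilde f$ is bounded, which follows since it is just $f$ corestricted to a closed subspace.

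Next I would pin down the induced RFA structure on $Cl^2(G)$. Multiplication, unit and counit of the centre are by definition the restrictions/corestrictions of those of $L^2(G)$, giving the same formulas as in~\eqref{eq:L2G-structure-morphisms}. The coproduct is the subtle one: it is \emph{not} simply the restriction of $\Delta_a$, since $\Delta_a(\chi_V)$ involves $\sum_k f_{ik}^V\otimes f_{kj}^V$ which leaves $Cl^2(G)\otimes Cl^2(G)$. Rather, the coproduct on the centre is determined by the Frobenius structure — via Proposition~\ref{lem:rfa:altdef}, using the non-degenerate pairing $\beta_a$ restricted to $Cl^2(G)$ and the corresponding copairing $\gamma_a=\tilde\Delta_{a_1}\circ\tilde\eta_{a_2}$. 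I would compute $\tilde\Delta_a$ on the basis $\{\chi_V\}$: using that $\pi_A\circ\Delta_a\circ\iota_A$ where $\pi_A$ projects onto characters, one gets a factor $(\dim V)^{-1}$ from the normalisations in~\eqref{eq:MV-formulas-Delta-a} and the definition of $\chi_V$ in~\eqref{eq:character}, yielding exactly~\eqref{eq:Cl2G-coproduct}. Strong separability with the stated window element $\tau_a(1)=\sum_V e^{-a\sigma_V}(\dim V)^{-1}\chi_V$ follows from Proposition~\ref{prop:symm-rfa-strongly-separable} once one checks $\tau_a$ is invertible with the claimed inverse $\sum_V e^{-a\sigma_V}(\dim V)^3\chi_V$; commutativity is immediate since $G$'s character ring is commutative, and the $\dagger$-property $\mu_a^\dagger=\tilde\Delta_a$, $\eta_a^\dagger=\eps_a$ follows from the $\dagger$-property of $L^2(G)$ restricted to the $\dagger$-closed subspace $Cl^2(G)$.

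The main obstacle I anticipate is the boundedness and convergence bookkeeping: one must confirm that the infinite-direct-sum RFA conditions of Proposition~\ref{prop:directsumrfa} hold for $Cl^2(G)=\bigoplus_V\Cb\chi_V$ with the \emph{modified} coproduct, i.e.\ that $\sup_V\|\tilde\Delta_a^V\|=\sup_V e^{-a\sigma_V}(\dim V)^{-1}<\infty$ and $\sum_V\|\tilde\eta_a^V\|^2<\infty$. The supremum is clear since $\dim V\ge 1$; the sum over $\|\tilde\eta_a^V\|^2$ requires re-examining~\eqref{eq:def:eta} — here $\tilde\eta_a(1)$ is the \emph{projection} of $\eta_a$ onto class functions, which is $\sum_V e^{-a\sigma_V}\dim(V)\chi_V$ itself (as $\eta_a$ is already a class function!), so $\|\tilde\eta_a\|^2=\sum_V e^{-2a\sigma_V}(\dim V)^2$, exactly the convergent sum controlled by Lemma~\ref{lem:applebaum}. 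One also needs the analogous estimate for $\tau_a^{-1}$, namely $\sum_V e^{-2a\sigma_V}(\dim V)^6<\infty$, which again follows from the polynomial dimension bound~\eqref{eq:dimension-estimate} and the exponential Casimir decay~\eqref{eq:Casimir-estimate} by the same computation as in~\eqref{eq:eta-norm-long-estimate} with a larger polynomial power. Finally, the continuity condition~\eqref{eq:racont} for the centre's $\tilde P_a$ is inherited from $L^2(G)$ via Lemma~\ref{lem:ra:center} and Corollary~\ref{cor:semigrp}. I would defer the detailed direct computations of the algebraic RFA axioms on $Cl^2(G)$ to a remark that they reduce, summand by summand, to Lemma~\ref{lem:MV-RFA}.
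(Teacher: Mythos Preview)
Your approach is correct but takes a substantially longer route than the paper, and this detour stems from a misconception worth flagging. You write that ``$D_0 = \lim_{a\to0} D_a$ need not exist for $L^2(G)$ since $\eta_a$ has no $a\to0$ limit,'' and therefore fall back on verifying the universal property \eqref{eq:central2} by hand. But $D_a = \zeta_{a_1}\circ\mu_{a_2}\circ\sigma\circ\Delta_{a_3}$ (with $\zeta_a = P_a$ here) does not involve a bare $\eta_a$, and in fact the paper's entire proof is an explicit computation of $D_a$ on the basis $\{f_{ij}^V\}$ showing that $D_a|_{Cl^2(G)} = P_a|_{Cl^2(G)}$ while $D_a|_{Cl^2(G)^\perp} = 0$, so that $D_0$ exists and equals the orthogonal projection onto $Cl^2(G)$. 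Lemma~\ref{lem:d0-split-idempot-center} then gives both that the centre is $Cl^2(G)$ and that it inherits an RFA structure; the $\dagger$-property follows in one line from self-adjointness of $D_0$.

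Your alternative route---checking centrality summand-wise on each $M_V$ (where the centre is $\Cb\chi_V$ since $M_V$ is a matrix algebra), assembling via the universal property, then rebuilding the RFA structure on $Cl^2(G)$ from scratch via Proposition~\ref{lem:rfa:altdef} and verifying the direct-sum bounds of Proposition~\ref{prop:directsumrfa}---is sound and has the virtue of making the coproduct formula \eqref{eq:Cl2G-coproduct} and the window-element inverse explicit. But it re-proves by hand what the state-sum machinery (specifically the existence of $D_0$ and Lemma~\ref{lem:d0-split-idempot-center}) already packages. The paper's computation of $D_a$ is about four lines and immediately settles both the identification of the centre and the existence of the limit needed for the state-sum construction (Proposition~\ref{prop:2dym-state-sum-data}), so it is the more economical path.
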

\begin{proof}
Let us compute the morphism $D_a$ from \eqref{eq:state-sum-data-notation}, 
which is the same as $\tilde{D}_a$ from \eqref{eq:lem:rfa2data:2} by Lemma~\ref{lem:rfa2data}.
For $\varphi=\sum_{V\in\hat{G}}\sum_{i,j=1}^{\dim(V)}\varphi_{ij}^V f_{ij}^V\in L^2(G)$ we find:
\begin{align}
	\begin{aligned}
		D_a(\varphi)&=\mu_{a_2}\circ \sigma_{L^2(G),L^2(G)}\circ\Delta_{a_1}(\varphi)\\
		&=\mu_{a_2}\circ \sigma_{L^2(G),L^2(G)}
		\left( \sum_{V\in\hat{G}}\sum_{i,j,k=1}^{\dim(V)}\varphi_{ij}^V f_{ik}^V\otimes f_{kj}^V \right)
		e^{-a_1\sigma_V}(\dim(V))^{-1/2}\\
		&=\sum_{V\in\hat{G}}\sum_{i,j,k=1}^{\dim(V)}\varphi_{ij}^V e^{-a\sigma_V}(\dim(V))^{-1}
		\delta_{ij}f_{kk}^V\\
		&=\sum_{V\in\hat{G}}\sum_{i=1}^{\dim(V)}\varphi_{ii}^V e^{-a\sigma_V} (\dim(V))^{-1/2}\chi_V\ .
	\end{aligned}
	\label{eq:l2g-da}
\end{align}
From this equation we immediately have that $D_a|_{Cl^2(G)}=P_a|_{Cl^2(G)}$. 
We now show that $D_a|_{(Cl^2(G))^{\perp}}=0$. Using \eqref{eq:L2-Cl2-decomposition}, we have that $\varphi\in(Cl^2(G))^{\perp}\subset L^2(G)$
if and only if for every $W\in\hat{G}$ $\langle\chi_W,\varphi\rangle=0$. We can compute this using the orthogonality relation \eqref{eq:orthogonality}
to get the following: $\varphi\in(Cl^2(G))^{\perp}$ if and only if for every $W\in\hat{G}$ we have that $\sum_{k=1}^{\dim(W)}\varphi^W_{kk}=0$.
By \eqref{eq:l2g-da} we get that $D_a(\varphi)=0$.

Altogether, this shows that the limit $\lim_{a\to0}D_a$ (in the strong operator topology)
exists and $D_0$ is an orthogonal projection onto $Cl^2(G)$.
Therefore by 
Lemma~\ref{lem:d0-split-idempot-center} the centre of $L^2(G)$ is $Cl^2(G)$.
It is a $\dagger$-RFA, since $L^2(G)$ is a $\dagger$-RFA and $D_0$ is self adjoint.
\end{proof}

For completeness we give the comultiplication $\Delta_a^{Cl^2(G)}$ of $Cl^2(G)$.
For $\varphi=\sum_{V\in\hat{G}}\varphi^V \chi_V\in Cl^2(G)$
\begin{align}
	\Delta_a^{Cl^2(G)}(\varphi)=\sum_{V\in\hat{G}}\varphi^V e^{-a\sigma_V} (\dim(V))^{-2}\chi_V\otimes\chi_V\ .
	\label{eq:Cl2-comult}
\end{align}

\begin{remark} \label{rem:l2g-a0-limits}
	Note that for both $L^2(G)$ and $Cl^2(G)$, 
	the $a\to0$ limit of the multiplication and comultiplication exists (by definition),
	but the $a\to0$ limit of the unit and counit does not.
\end{remark}

\subsection{State-sum construction of 2d Yang-Mills theory}

In this section we give state-sum data for the 2d~YM
theory following \cite{Witten:1991gt}.
The plaquette weights $W_a^k:\Cb\to(L^2(G))^{\otimes k}$ for $k\in\Zb_{\ge0}$ and $a\in\Rb>0$ are
\begin{align}
	W_a^{k}(1)(x_1,\dots,x_k)=\sum_{V\in \hat{G}}e^{-a\sigma_V} \dim(V) \chi_V(x_1\cdots x_k) \ ,
	\label{eq:boltzmannweight}
\end{align}
and the contraction and $\zeta_a$ are given by
\begin{align}
	\beta_a:=\left(W_a^{2}\right)^{\dagger}\ ,\quad\zeta_a:= P_a\ ,
	\label{eq:contraction}
\end{align}
where $P_a$ is as in \eqref{eq:padef}.

\begin{proposition}
	The morphisms \eqref{eq:boltzmannweight} and \eqref{eq:contraction} define state-sum data.
	\label{prop:2dym-state-sum-data}
\end{proposition}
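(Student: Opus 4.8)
The plan is to verify that the morphisms in \eqref{eq:boltzmannweight} and \eqref{eq:contraction} satisfy all six conditions in Definition~\ref{def:pdata}, using the $\dagger$-RFA structure on $L^2(G)$ established in Proposition~\ref{prop:l2g-rfa} together with the matrix element function formulas \eqref{eq:MV-formulas-Pa}--\eqref{eq:MV-formulas-epsilon-a}. In fact, the cleanest route is to show that the given data coincides with the state-sum data $\Omega(L^2(G))$ produced by Lemma~\ref{lem:rfa2data} from the strongly separable symmetric RFA $L^2(G)$: once this identification is made, Conditions~\ref{cond:cyclicsymm}--\ref{cond:zerocylinder} hold automatically by that lemma, provided we check the one remaining hypothesis of Lemma~\ref{lem:rfa2data}, namely that $\lim_{a\to0}\tilde D_a$ exists. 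But this limit was computed in the proof of Proposition~\ref{prop:center-of-L2G}: equation \eqref{eq:l2g-da} shows $\tilde D_a = D_a$ converges in the strong operator topology to the orthogonal projection $D_0$ onto $Cl^2(G)$. So the main work is the identification of the two sets of data.

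First I would record the separability idempotent of $L^2(G)$: since $L^2(G)$ is strongly separable symmetric with invertible window element $\tau_a = \eta_a$ (Proposition~\ref{prop:l2g-rfa} and Proposition~\ref{prop:symm-rfa-strongly-separable}), we have $e_a = \Delta_{a_1}\circ\tau_{a_2}^{-1}$; on each block $M_V$ this gives, using \eqref{eq:MV-window-elt-calc} and \eqref{eq:MV-formulas-Delta-a}, the explicit expression $e_a^V = e^{-a\sigma_V}(\dim V)^{-3/2}\sum_{k,l} f_{kl}^V\otimes f_{lk}^V$ (up to normalisation constants I will fix in the calculation). Then I would plug this into the formulas \eqref{eq:lem:rfa2data:1}--\eqref{eq:lem:rfa2data:2} of Lemma~\ref{lem:rfa2data}. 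For the plaquette weights one computes $W^n_{a_1+a_2} := \Delta^{(n)}_{a_1}\circ\eta_{a_2}$; iterating \eqref{eq:MV-formulas-Delta-a} on $\eta^V_{a_2} = e^{-a_2\sigma_V}\dim(V)\chi_V = e^{-a_2\sigma_V}(\dim V)^{1/2}\sum_i f^V_{ii}$ yields, block by block, exactly the function $(x_1,\dots,x_n)\mapsto e^{-a\sigma_V}\dim(V)\,\chi_V(x_1\cdots x_n)$ summed over $V$, matching \eqref{eq:boltzmannweight} — this is the standard computation that $\Delta^{(n)}$ of the convolution unit reproduces the heat-kernel-type plaquette weight. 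For the contraction, $\beta_{a_1+a_2} := \varepsilon_{a_1}\circ\mu_{a_2}$ equals $(\eta_{a_1})^\dagger\circ(\Delta_{a_2})^\dagger = (\Delta_{a_2}\circ\eta_{a_1})^\dagger = (W^2_{a_1+a_2})^\dagger$, which is \eqref{eq:contraction}. For $\zeta$, one checks $\zeta_{a_1+a_2+a_3} := (\varepsilon_{a_1}\otimes\mu_{a_2})\circ(e_{a_3}\otimes\id)$ reduces on each $M_V$ to a multiple of $P_a^V = e^{-a\sigma_V}\id$; assembling over $V$ gives $\zeta_a = P_a$ as in \eqref{eq:contraction} — here the explicit form of $e_a^V$ and the orthogonality relations \eqref{eq:orthogonality} do the work, and the $\dim(V)$ powers must cancel correctly.

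The main obstacle I anticipate is bookkeeping: tracking the various powers of $\dim(V)$ and the normalisation of matrix element functions and of the separability idempotent through the iterated coproduct $\Delta^{(n)}$, so that $\zeta_a$ comes out to exactly $P_a$ (and not some rescaling of it) and $W^n_a$ comes out to exactly \eqref{eq:boltzmannweight}. One must also verify convergence of the relevant sums over $\hat G$ in the appropriate operator topology — but this is supplied by Lemma~\ref{lem:applebaum} and the norm estimates \eqref{eq:etanorm}--\eqref{eq:deltanorm} already in hand, and by Corollary~\ref{cor:semigrp}, which reduces the joint continuity in Condition~\ref{cond:approxid} to continuity of $a\mapsto P_a$, itself immediate from \eqref{eq:MV-formulas-Pa} and Proposition~\ref{prop:directsumrfa}. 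Once the identification $(\,L^2(G),\zeta_a,\beta_a,W_a^n\,) = \Omega(L^2(G))$ is complete, Lemma~\ref{lem:rfa2data} closes the proof, and as a byproduct Theorem~\ref{thm:dataisrfa} gives $\kappa$ of this data back as $L^2(G)$, while Theorem~\ref{thm:latticecenter} and Proposition~\ref{prop:center-of-L2G} identify the resulting {\aQFT} with the one classified by $Cl^2(G)$.
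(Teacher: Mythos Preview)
Your proposal is correct and follows essentially the same strategy as the paper: identify the given data with $\Omega(L^2(G))$ from Lemma~\ref{lem:rfa2data} and then verify the one remaining hypothesis $\lim_{a\to0}\tilde D_a$ exists by invoking the computation in the proof of Proposition~\ref{prop:center-of-L2G}. The paper's only shortcut over your block-by-block computation of $\zeta_a$ is the observation that the window element of $L^2(G)$ equals $\eta_a$ (so $\tau_a^{-1}=\eta_a$), from which $\zeta_a=P_a$ follows without unpacking the separability idempotent on each $M_V$; the verifications of $W_a^n$ (by induction on $n$) and of $\beta_a=(W_a^2)^\dagger$ (via the $\dagger$-RFA property) are exactly as you outline.
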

\begin{proof}
	We prove this by showing that the morphisms in \eqref{eq:boltzmannweight} and \eqref{eq:contraction}
	can be obtained from the RFA $L^2(G)$ via Lemma~\ref{lem:rfa2data}.

	Since the inverse of the window element of $L^2(G)$ is simply $\eta_a$, we immediately get that $\zeta_a=P_a$.
	Let us look at the morphisms $W_a^{k}$. For $k=1$ we have $W_a^1=\eta_a$. Let us assume that for $k>1$ we have
	$W_{a_1+a_2}^{k}=\Delta_{a_1}^{(k)}\circ\eta_{a_2}$. Then for $k+1$ we see that
	\begin{align*}
		&\left( \id\otimes\dots\otimes\id\otimes\Delta_{a_3} \right)\circ W_{a_1+a_2}^{k}(1)\\=&
		\left( \id\otimes\dots\otimes\id\otimes\Delta_{a_3} \right)\left( \sum_{V\in \hat{G}}e^{-(a_1+a_2)\sigma_V} \dim(V) \chi_V(x_1\cdots x_k) \right)\\=&
		\sum_{V\in \hat{G}}e^{-(a_1+a_2+a_3)\sigma_V} \dim(V) \chi_V(x_1\cdots x_k x_{k+1})\ .
	\end{align*}
	That $\beta_a=\left(W_a^{2}\right)^{\dagger}$ follows from $L^2(G)$ being a $\dagger$-RFA.

	Finally we need to check that the limit $\lim_{a\to0}D_a$ exists, 
	which we have already checked in the proof of Proposition~\ref{prop:center-of-L2G}.
\end{proof}

After this preparation we are ready to define 2d~YM
theory, which maps $\Sb$ to the centre of $L^2(G)$.

\begin{definition}
	The \textsl{2-dimensional Yang-Mills (2d~YM)
	theory with gauge group $G$} is the area-dependent QFT
	\begin{align}
		\begin{aligned}
		\funZ_{\mathrm{YM}}^{G}:\Bordarea&\to\Hilb\\
		\Sb&\mapsto Cl^2(G)
		\end{aligned}
		\label{eq:def:2dym}
	\end{align}
	of Theorem~\ref{thm:state-sum-aqft}
	obtained from the state-sum data in \eqref{eq:boltzmannweight} and \eqref{eq:contraction}.
	\label{def:2dym}
\end{definition}

Next we compute $\funZ_{\mathrm{YM}}^{G}$ on connected surfaces with area and $b\ge 0$ outgoing boundary components. For $b=0$ the result agrees with \cite[Eqn.\,(2.51)]{Witten:1991gt} (see also \cite[Eqn.\,(27)]{Rusakov:1990wilson}).

\begin{proposition}
	Let $(\Sigma,a):(\Sb)^{\sqcup b_{\mathrm{in}}}\to(\Sb)^{\sqcup b_{\mathrm{out}}}$ 
	be a connected bordism 
	of genus $g$ with $b_{\mathrm{in}}$ ingoing and $b_{\mathrm{out}}$ outgoing boundary components and with area $a$. 
	Then for $V_j\in\hat{G}$ for $j=1,\dots,b_{\mathrm{out}}$ we have
	\begin{align}
		\begin{aligned}
			&\funZ_{\mathrm{YM}}^{G}\left( 
			\Sigma_ ,a \right) (\chi_{V_1}\otimes\dots\otimes\chi_{V_{b_{\mathrm{in}}}})\\=&
			\begin{cases}
			\sum_{V\in\hat{G}}e^{-a\sigma_V} (\dim(V))^{\chi( \Sigma)}\cdot
			(\chi_V)^{\otimes b_{\mathrm{out}}}
			&\text{if $b_{\mathrm{in}}=0$}\\
			e^{-a\sigma_{V_1}}(\dim(V_1))^{\chi(
			\Sigma)}\cdot (\chi_{V_1})^{\otimes b_{\mathrm{out}}}
			&\text{if $b_{\mathrm{in}}\ge1$ and $V_1=\dots= V_{b_{\mathrm{in}}}$}\\
			0&\text{else}
			\end{cases}
			\ ,
		\end{aligned}
		\label{eq:2dym-on-sigma-gb}
	\end{align}
where $\chi( \Sigma)=2-2g-b_{\mathrm{in}}-b_{\mathrm{out}}$ is the Euler characteristic of $\Sigma$.
For $b_{\mathrm{in}}=0$ ($b_{\mathrm{out}}=0$) the source (the target) is $\Cb$ and the factors of $\chi_V$ or $\chi_{V_j}$ are absent.
	\label{prop:2dym-on-sigma-gb}	
\end{proposition}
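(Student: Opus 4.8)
The plan is to compute $\funZ_{\mathrm{YM}}^G(\Sigma,a)$ directly using the closed formula from Lemma~\ref{lem:ZA-from-rfa} for the value of a state-sum {\aQFT} on a connected genus $g$ surface, specialised to the RFA $A=L^2(G)$. Recall from Proposition~\ref{prop:2dym-state-sum-data} that the state-sum data of 2d~YM theory comes from the RFA $L^2(G)$ via Lemma~\ref{lem:rfa2data}, so all the maps $\mu_a,\eta_a,\Delta_a,\varepsilon_a$ and their iterated versions are explicitly known on matrix element functions by \eqref{eq:MV-formulas-Pa}--\eqref{eq:MV-formulas-epsilon-a}. By Proposition~\ref{prop:center-of-L2G} the centre is $Cl^2(G)$, with the projection $\pi=\pi_A$ sending $f_{ij}^V\mapsto (\dim V)^{-1/2}\delta_{ij}\,\chi_V$ and the inclusion $\iota=\iota_A$ being the obvious one; these will be applied at the outgoing boundary components. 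The key input is that $Cl^2(G)=\bigoplus_{V\in\hat G}\Cb\chi_V$ is an orthogonal direct sum of $1$-dimensional RFAs, so the structure maps $\mu_a^{Cl^2},\Delta_a^{Cl^2},\eta_a,\varepsilon_a$ all act diagonally in $V$, scaling by the appropriate power of $\dim(V)$ together with the Boltzmann factor $e^{-a\sigma_V}$, as recorded in \eqref{eq:Cl2-comult} and Example~\ref{ex:2dym:cl2}.

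First I would reduce to the case of no ingoing boundary. For $b_{\mathrm{in}}\ge 1$, precomposing with $\iota^{\otimes b_{\mathrm{in}}}$ on the input $\chi_{V_1}\otimes\cdots\otimes\chi_{V_{b_{\mathrm{in}}}}$, one applies the iterated multiplication $\mu^{(b_{\mathrm{in}})}_{a_1}$ from \eqref{eq:nfold-mult}. Using \eqref{eq:MV-formulas-mu-a} and the fact that multiplying characters of distinct irreducibles in the convolution algebra gives zero (orthogonality \eqref{eq:orthogonality}), the result vanishes unless $V_1=\cdots=V_{b_{\mathrm{in}}}=:V$, in which case iterated convolution of $\chi_V$ with itself $b_{\mathrm{in}}$ times yields $(\dim V)^{1-b_{\mathrm{in}}}\,e^{-a_1\sigma_V}\chi_V$ (each convolution $\chi_V*\chi_V$ produces a factor $(\dim V)^{-1}$). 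This collapses the computation to a single summand $V$, and the remaining genus and outgoing-boundary part is handled exactly as in the $b_{\mathrm{in}}=0$ case but with the single fixed $V$ replacing the sum over $\hat G$.

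For $b_{\mathrm{in}}=0$ I would use formula \eqref{eq:ZA-sigma-gb-rfa}: $\funZ_{\Ab}(\Sigma_{g,b_{\mathrm{out}}},a)=\pi^{\otimes b_{\mathrm{out}}}\circ\Delta_{a_g}^{(b_{\mathrm{out}})}\circ\prod_{j}\varphi_{a_j}\circ(\zeta_{a'})^{1-b_{\mathrm{out}}}\circ\eta_{a_0}$, where $\varphi_a=\mu_a\circ\Delta_a$ is the handle operator. Starting from $\eta_{a_0}=\sum_V e^{-a_0\sigma_V}\dim(V)\chi_V$, each handle operator $\varphi_a$ acts on $\chi_V$ as multiplication by $e^{-a\sigma_V}(\dim V)^{-2}$ (from $\Delta$ producing $(\dim V)^{-2}\chi_V\otimes\chi_V$ by \eqref{eq:Cl2-comult} followed by convolution $\mu$ giving back $(\dim V)\cdot(\dim V)^{-2}\chi_V$ — net $(\dim V)^{-2}$ after accounting for the convolution normalisation; this bookkeeping needs to be done carefully once), so $g$ handles contribute $(\dim V)^{-2g}$. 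The iterated comultiplication $\Delta_{a_g}^{(b_{\mathrm{out}})}$ sends $\chi_V\mapsto (\dim V)^{1-b_{\mathrm{out}}}\chi_V^{\otimes b_{\mathrm{out}}}$, the $\zeta_{a'}=P_{a'}$ factors only contribute Boltzmann factors, and finally each projection $\pi$ fixes $\chi_V$ (it is already a class function). Collecting all powers of $\dim(V)$: $\eta$ gives $+1$, handles give $-2g$, comultiplication gives $1-b_{\mathrm{out}}$, for a total exponent $1-2g+1-b_{\mathrm{out}}=2-2g-b_{\mathrm{out}}=\chi(\Sigma)$ (with $b_{\mathrm{in}}=0$), and all the Boltzmann factors combine to $e^{-a\sigma_V}$ since $a$ is the total area. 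The main obstacle I expect is getting the powers of $\dim(V)$ exactly right — in particular keeping consistent track of the $(\dim V)^{\pm 1/2}$ normalisations hidden in $\chi_V=(\dim V)^{-1/2}\sum_i f_{ii}^V$ versus the $\dim(V)$ prefactor in $\eta_a$, and the extra $(\dim V)$ that the convolution product $\mu$ introduces relative to the ``topological'' normalisation — so I would first do the genus-$0$, $b_{\mathrm{out}}=1$ case (the disc) as a sanity check against $e^{-a\sigma_V}\dim(V)$, then the sphere $b_{\mathrm{out}}=0$ against $\sum_V e^{-a\sigma_V}(\dim V)^2$, before assembling the general formula.
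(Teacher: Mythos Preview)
Your approach is essentially the paper's: both use the closed formula \eqref{eq:ZA-sigma-gb-rfa} from Lemma~\ref{lem:ZA-from-rfa} specialised to $A=L^2(G)$, compute the handle operator $\varphi_a$ as multiplication by $e^{-a\sigma_V}(\dim V)^{-2}$, and collect powers of $\dim(V)$ into the Euler characteristic. One place where your bookkeeping slips is the splitting of the factor $(\dim V)^{1-b_{\mathrm{out}}}$ between $\Delta^{(b_{\mathrm{out}})}$ and $\pi^{\otimes b_{\mathrm{out}}}$: in the formula \eqref{eq:ZA-sigma-gb-rfa} the comultiplication is that of $L^2(G)$, so $\Delta^{(b)}(\chi_V)$ is the function $\chi_V(x_1\cdots x_b)$, which is \emph{not} a multiple of $\chi_V^{\otimes b}$; the entire factor $(\dim V)^{1-b}$ comes from applying the projections $\pi^{\otimes b}$ to this non-class function (this is exactly the computation \eqref{eq:pi-on-chi} in the paper). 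Your statements ``$\Delta^{(b)}$ sends $\chi_V\mapsto(\dim V)^{1-b}\chi_V^{\otimes b}$'' and ``each projection $\pi$ fixes $\chi_V$'' cannot both hold, though their composite is correct. Since you already flagged the $\dim(V)$ bookkeeping as the main obstacle and planned sanity checks, this would have come out in the wash.

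For $b_{\mathrm{in}}\ge1$ the paper takes a slightly different route: rather than precomposing with $\iota^{\otimes b_{\mathrm{in}}}$ and iterated multiplication, it first establishes the $b_{\mathrm{in}}=0$ case with all boundaries outgoing, then converts outgoing to ingoing by composing with in-in cylinders using Lemma~\ref{lem:ZA-from-rfa} Part~\ref{lem:ZA-from-rfa:3} and the value $\funZ^G_{\mathrm{YM}}(C,a)(\chi_U\otimes\chi_W)=e^{-a\sigma_U}\delta_{U,W}$. Your direct-multiplication approach is equally valid and arguably more natural; both exploit the same orthogonality of characters under convolution.
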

\begin{proof}
We first consider the case that $b:=b_{\mathrm{out}} \ge 1$ and $b_{\mathrm{in}}=0$.
	The map $\varphi_a$ from \eqref{eq:phi-rfa} is given by
	\begin{align}
		\begin{aligned}
		\varphi_a(f_{ij}^V)&=\mu\circ\left( \id\otimes (\mu\circ\sigma_{L^2(G),L^2(G)} \right)
			\left( \sum_{k,l=1}^{\dim(V)}e^{-a\sigma_V}(\dim(V))^{-1}
			f_{ik}^{V}\otimes f_{kl}^{V} \otimes f_{lj}^{V}\right)\\
			&=\sum_{k,l=1}^{\dim(V)}e^{-a\sigma_V}(\dim(V))^{-2} \delta_{jk}\delta_{kl}f_{il}^V
			=e^{-a\sigma_V}(\dim(V))^{-2}f_{ij}^V \ .
		\end{aligned}
		\label{eq:2dym-phi-a}
	\end{align}
Using this, we compute for 
	$a_0,\dots,a_{g+1}\in\Rb_{>0}$ with $a=\sum_{i=0}^{g+1}a_i$ that
	\begin{align}
		\begin{aligned}
			\Delta_{a_{g+1}}^{(b)}
			\circ\prod_{i=1}^{g}\varphi_{a_i}\circ\eta_{a_0}
			&=\Delta_{a_{g+1}}^{(b)}
			\left( \sum_{V\in\hat{G}}
		e^{-(a-a_{g+1})\sigma_V} 
			(\dim(V))^{1-2g}\chi_V \right)\\
			&=\left[ (x_1,\dots,x_b)\mapsto
			\left( \sum_{V\in\hat{G}}e^{-a\sigma_V} (\dim(V))^{1-2g}\chi_V(x_1\dots x_b)\right) \right]\ .
		\end{aligned}
		\label{eq:2dym-before-projection}
	\end{align}
	Finally, according to Part~\ref{lem:ZA-from-rfa:2} of Lemma~\ref{lem:ZA-from-rfa}, 
	we need to compose \eqref{eq:2dym-before-projection} with $\pi^{\otimes b}$ to get \eqref{eq:2dym-on-sigma-gb},
	where $\pi:L^2(G)\to Cl^2(G)$ is the projection onto the image of $D_0$.
To arrive at \eqref{eq:2dym-on-sigma-gb}, we further compute
	\begin{align}
		\begin{aligned}
			\pi^{\otimes b}(\chi_V(x_1\dots x_b))&=
			\pi^{\otimes b}\left( \sum_{k_1,\dots,k_b=1}^{\dim(V)}(\dim(V))^{-b/2}
			f_{k_1 k_2}^{V}(x_1)\dots f_{k_b k_1}^{V}(x_b)\right)\\
			&=(\dim(V))^{1-b}
			\chi_V(x_1)\dots\chi_V(x_b)\ .
		\end{aligned}
		\label{eq:pi-on-chi}
	\end{align}

	For the case $b_{\mathrm{in}}=b_{\mathrm{out}}=0$ we use functoriality. 
	Let $\Sigma'$ the surface obtained by cutting out a disk from $\Sigma$.
	Compose $\funZ_{\mathrm{YM}}^{G}\left( \Sigma' ,a-a' \right)$
	with $\varepsilon_{a'}$ and use \eqref{eq:MV-formulas-epsilon-a}.

	For the case $b_{\mathrm{in}}\neq0$ we need to turn back outgoing boundary components by composing with cylinders with two ingoing boundary components and with area $a$,
	which we denote with $(C,a)$.
	Using Part~\ref{lem:ZA-from-rfa:3} of Lemma~\ref{lem:ZA-from-rfa}, for $U,W\in\hat{G}$ we have
	\begin{align}
		\funZ_{\mathrm{YM}}^{G}(C,a)(\chi_{U},\chi_W)=e^{-a\sigma_U}\delta_{U,V}\ .
		\label{eq:2dym-in-in-cylinder}
	\end{align}
	Using the result for the $b_{\mathrm{in}}=0$ case and \eqref{eq:2dym-in-in-cylinder} we get the claimed expression.
\end{proof}

\begin{remark}
As already noted in Remark~\ref{rem:l2g-a0-limits}, $\eta_a$ and $\eps_a|_{Cl^2(G)}$, i.e.\ the value of $\funZ_{\mathrm{YM}}^{G}$ on a disc with one outgoing (resp.\ one ingoing) boundary component, do not have zero area limits. 
On the other hand,
the expression
\eqref{eq:2dym-on-sigma-gb} has a zero area limit 
if $g+\frac{b_{\mathrm{in}}+b_{\mathrm{out}}}{2} \ge 2$.
Indeed, the $\chi_V$ are orthogonal for different $V$ and have norm $\norm{\chi_V}=1$, 
and for a given $\alpha \in \Zb$ the sum $\sum_{V\in\hat{G}} (\dim(V))^{\alpha}$ converges if 
$\alpha \le -2$.
To see this, use the bijection from \eqref{eq:irrep-highest-weight} and the estimate from \eqref{eq:dimension-estimate} to get
	\begin{align}
		\sum_{V\in\hat{G}} (\dim(V))^{\alpha}\le 
		\sum_{\lambda\in\Lambda^+}(\dim(V_{\lambda}))^{\alpha}\le
		N\sum_{\lambda\in\Lambda^+}|\lambda|^{m\alpha}\ ,
		\label{eq:dim-sum-convergence}
	\end{align}
	which converges for $-m\alpha>\rank(G)$ by \cite[Lem.\,1.3]{Sugiura:1971lg}.
	Then use that $m=(\dim(G)-\rank(G))/2$ and that $3\rank(G)\le\dim(G)$ to get $\alpha<-1$, and since $\alpha$ is an integer $\alpha\le-2$.
These limits are related in \cite{Witten:1991gt} to volumes of moduli spaces of flat connections (see e.g.\ \cite{Komori:2010zf} for more results and references). For example for $G = SU(2)$ we have, for $g \ge 2$ and $b_\mathrm{in}=b_\mathrm{out}=0$,
\begin{align}
	\lim_{a \to 0} \funZ_{\mathrm{YM}}^{SU(2)}\left( 
	\Sigma ,a \right)(1) = \sum_{n=1}^\infty n^{-2g+2} = \zeta(2g-2) \ ,
\end{align}
where $\zeta$ is the Riemann zeta-function. For general $G$, these functions are also referred to as Witten zeta-functions, see e.g.\ \cite{Komori:2010zf}.
\end{remark}

\subsection{Wilson lines and other defects}\label{sec:2dym:wilson}

As we learned in Section~\ref{sec:lattice:bimoduledata}, defect lines in 
the state-sum construction can be obtained from some bimodules over
RFAs. In order to describe Wilson line observables in 2d~YM
theory, we are going to consider bimodules over $L^2(G)$ induced from finite dimensional unitary $G$-modules.

Let $V\in \hat{G}$ and consider the Hilbert space $V\otimes L^2(G)$.
Let us define a map
\begin{align}
	\begin{aligned}
	\xi:V\otimes L^2(G)&\to L^2(G)\otimes V\otimes L^2(G)\\
	v\otimes f\mapsto
	\left[ (x,y)\mapsto f(xy)\, y.v \right]
	\end{aligned}\ .
	\label{eq:left-xi}
\end{align}
One can easily check that $\norm{\xi}=1$. 
We define the left action of $L^2(G)$ on $V\otimes L^2(G)$ via the adjoint of $\xi$:
\begin{align}
	\begin{aligned}
		\rho_{0,0}^L:=\xi^{\dagger}:L^2(G)\otimes V\otimes L^2(G)&\to V\otimes L^2(G)\\
		\varphi\otimes v\otimes f&\mapsto 
		\left[ x\mapsto \int_G \varphi(y)\, y.v\, f(y^{-1}x)\,dy \right]\ ,
	\end{aligned}
\end{align}
and for $a,l\in\Rb_{>0}$
\begin{align}
	\begin{aligned}
		\rho_{a,l}^L&:=
		\rho_{0,0}^L(\eta_a\otimes -)\circ \rho_{0,0}^L\ ,
	\end{aligned}
	\label{eq:Wilson-left-action}
\end{align}
	with trivial $l$-dependence.
	In the rest of this section all length-dependence will be trivial, hence we drop the index $l$ from the notation:
	\begin{align}
		\rho_{a}^{L}:=\rho_{a,l}^{L}\ ,\quad Q_{a}^{L}:=Q_{a,l}^{L}\ ,\quad
		\text{ etc.}
		\label{eq:notation-no-l-index}
	\end{align}
In Proposition~\ref{prop:Wilson-bimodules} we prove that this is indeed an action,
however one can also understand this from a different argument.
If we consider $L^2(G)$ with pointwise multiplication and the same comultiplication $\Delta$, then it is a unital and non-counital Hopf algebra.
This Hopf algebra coacts on $V$ via $v\mapsto\left[ x\mapsto x.v \right]$, 
where we identified $L^2(G) \otimes V$ with $L^2(G,V)$.
Then $L^2(G)$ coacts on $V\otimes L^2(G)$ as in \eqref{eq:left-xi} and taking the adjoint gives the action \eqref{eq:Wilson-left-action}.

We define the right action of $L^2(G)$ on $V\otimes L^2(G)$ to be multiplication on the second factor:
\begin{align}
	\begin{aligned}
	\rho_{b}^R :V\otimes L^2(G)\otimes L^2(G)&\to V\otimes L^2(G)\\
	v\otimes f\otimes \varphi&\mapsto 
	v\otimes \mu_b(f\otimes \varphi)\ .
	\end{aligned}
	\label{eq:Wilson-right-action}
\end{align}
We will often write 
$\rho_{0}^L(\varphi\otimes v\otimes f)=\varphi.(v\otimes f)$, etc.
By acting with $\eta_a$ and $\eta_b$ from the left and right, respectively, we get
\begin{align}
	Q_{a,b} ^{V\otimes L^2(G)}(v\otimes f)(x)=
	\int_{G^2}\eta_a(y)\, y.v\, f(y^{-1}xz^{-1})\eta_b(z)\, dy\, dz\ .
	\label{eq:Wilson-explicit-Q}
\end{align}

Similarly as for $V\otimes L^2(G)$,
we define the left action of $L^2(G)$ on $L^2(G)\otimes V$ to be multiplication on the first tensor factor:
\begin{align}
	\begin{aligned}
		\bar{\rho}_{a}^L : L^2(G)\otimes L^2(G)\otimes V&\to L^2(G)\otimes V\\
	\varphi \otimes f\otimes v&\mapsto 
	\mu_a(\varphi\otimes f)\otimes v\ ,
	\end{aligned}
	\label{eq:Wilson-left-action-dual}
\end{align}
and we define the right action of $L^2(G)$ on $L^2(G)\otimes V$ as follows.
First let
\begin{align}
	\begin{aligned}
		\bar{\rho}_{0}^R :L^2(G)\otimes V\otimes L^2(G)&\to L^2(G)\otimes V\\
		f\otimes v \otimes\varphi&\mapsto 
		\left[ x\mapsto \int_G f(xy^{-1})\, y^{-1}.v\, \varphi(y)\,dy \right]\ ,
	\end{aligned}
\end{align}
and finally
\begin{align}
	\begin{aligned}
		\bar{\rho}_{b}^R&:= \bar{\rho}_{0}^R(- \otimes \eta_b)\circ \bar{\rho}_{0}^R\ .
	\end{aligned}
	\label{eq:Wilson-right-action-dual}
\end{align}

Next we define the duality morphisms
for the pair $(V\otimes L^2(G),V^*\otimes L^2(G))$ of bimodules.
Let $\{e_i^V\}_{i=1}^{\dim(V)}$ denote an orthonormal 
basis of $V$ as in Section~\ref{sec:twoRFAsfromG} and
$\{\vartheta^V_i\}_{i=1}^{\dim(V)}$ the dual basis.
Let
\begin{align}
	\begin{aligned}
	\gamma_{0,b}(1)&:= 
	\sum_{U\in\hat{G}}\sum_{k,l=1}^{\dim(U)}\sum_{j=1}^{\dim(V)}
	e^{-b\sigma_U}f_{kl}^U\otimes\vartheta^V_i \otimes e_i^V\otimes f_{lk}^U\ ,\\
	\gamma_{a,b}(1)&:= (\id_{L^2(G)\otimes V^*}\otimes \rho_{0}^L(\eta_a\otimes - ))\circ \gamma_{0,b}(1)\ ,
	\end{aligned}
	\label{eq:Wilson-copairing}
\end{align}
and
\begin{align}
	\begin{aligned}
	\beta_{0,b} (v\otimes f\otimes \vartheta \otimes g)&:=
	\vartheta(v) \int_{G^2} \eta_b(x)f(y)g(y^{-1}x^{-1})\,dy\,dx\ ,\\
	\beta_{a,b}&:=\beta_{0,b} \circ(\rho_{0}^L(\eta_a\otimes - )\otimes\id_{L^2(G)\otimes V^*})\ .
	\end{aligned}
	\label{eq:Wilson-pairing}
\end{align}

Recall that we identified 
$V\otimes L^2(G)$ with square integrable functions on $G$ with values in $V$, which we denote with $L^2(G,V)$.
We will be particularly interested in a subspace of $L^2(G,V)$ consisting of $G$-invariant functions:
\begin{align}
	L^2(G,V)^G:=\setc*{f\in L^2(G,V)}{g.f(g^{-1}xg)=f(x)\text{ for every }g,x\in G}\ .
	\label{eq:invariant-functions}
\end{align}
Note that $L^2(G,\Cb)^G=Cl^2(G)$.
\begin{proposition} \label{prop:Wilson-bimodules}
	Let $V,W\in\hat{G}$ and let $V^*$ 
	be the dual $G$-module of $V$.
	Then
	\begin{enumerate}
		\item $V\otimes L^2(G)$ is a bimodule over $L^2(G)$ via \eqref{eq:Wilson-left-action} and \eqref{eq:Wilson-right-action},
			$L^2(G)\otimes V$ is a bimodule over $L^2(G)$ via \eqref{eq:Wilson-left-action-dual} and \eqref{eq:Wilson-right-action-dual},
	\label{prop:Wilson-bimodules:1}
		\item $(V\otimes L^2(G))\otimes_{L^2(G)}(W\otimes L^2(G))=(V\otimes W)\otimes L^2(G)$,
	\label{prop:Wilson-bimodules:2}
		\item $\ctimes_{L^2(G)}(V\otimes L^2(G))= L^2(G,V)^G$,
	\label{prop:Wilson-bimodules:3}
		\item $(V\otimes L^2(G),L^2(G)\otimes V^*)$ is a dual pair of bimodules with duality morphisms
			given by \eqref{eq:Wilson-copairing} and \eqref{eq:Wilson-pairing}.
	\label{prop:Wilson-bimodules:5}
	\end{enumerate}
If furthermore $G$ is connected then
	\begin{enumerate}[resume]
		\item the bimodule $V\otimes L^2(G)$ is transmissive if and only if $V$ is the trivial $G$-module $V=\Cb$.
	\label{prop:Wilson-bimodules:4}
	\end{enumerate}
\end{proposition}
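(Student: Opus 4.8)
The plan is to establish each of the five claims by direct computation in the representation theory of $G$, using the RFA structure on $L^2(G)$ from Proposition~\ref{prop:l2g-rfa} and the Peter-Weyl decomposition \eqref{eq:L2-Cl2-decomposition}. For Part~\ref{prop:Wilson-bimodules:1}, I would verify the bimodule axioms of Definition~\ref{def:AB-bimodule-def} directly: the associativity relation \eqref{eq:ra:bimodule} for $V \otimes L^2(G)$ follows because the left action is (the adjoint of) a coaction of the Hopf-algebra structure on $L^2(G)$ commuting with right multiplication, exactly as sketched in the text after \eqref{eq:Wilson-right-action}, while the right action is just $\mu_a$ on the second tensor factor and hence obviously associative and compatible; joint continuity of $Q^{V \otimes L^2(G)}_{a,l,b}$ in \eqref{eq:Wilson-explicit-Q} follows from Lemma~\ref{lem:semigrp} once one notes the left and right actions come from strongly continuous semigroups, and then Lemma~\ref{lem:left-right-module-bimodule} applies. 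The case of $L^2(G) \otimes V$ is symmetric.

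For Part~\ref{prop:Wilson-bimodules:2}, I would first compute the idempotent $D_0^{V \otimes L^2(G), W \otimes L^2(G)}$ from \eqref{eq:D-definition-bimodule} by inserting the separability idempotent $e_a = \Delta_{a_1} \circ \tau^{-1}_{a_2}$ (with $\tau_a = \eta_a$, so $\tau^{-1}_a = \eta_a$) and taking $a, l \to 0$, working in the matrix-element basis $f^U_{ij}$ and using the explicit formulas \eqref{eq:MV-formulas-mu-a}--\eqref{eq:MV-formulas-Delta-a}. The image should be identified with $(V \otimes W) \otimes L^2(G)$ via the map that collapses the two copies of $L^2(G)$ by convolution and tensors the $V$ and $W$ factors; Proposition~\ref{prop:d0proj}(3) together with the existence of $\lim_{a \to 0}\mu_a = \mu$ (convolution, which exists by definition, cf.\ Remark~\ref{rem:l2g-a0-limits}) then gives the claimed $B$-$C$-bimodule isomorphism. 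Part~\ref{prop:Wilson-bimodules:3} is the same computation with $D_0^{V \otimes L^2(G)}$ for the cyclic tensor product via Proposition~\ref{prop:d0proj}(2): the coequaliser of left and right actions twisted by $\sigma$ identifies precisely the conjugation-invariant functions, so the image is $L^2(G,V)^G$; this specialises to the known fact $\ctimes_{L^2(G)} L^2(G) = Cl^2(G)$ of Proposition~\ref{prop:center-of-L2G}. For Part~\ref{prop:Wilson-bimodules:5}, I would check the two duality relations \eqref{eq:ra:dual} and the compatibility \eqref{eq:duality-compatibility} for the explicit $\gamma$ and $\beta$ in \eqref{eq:Wilson-copairing}--\eqref{eq:Wilson-pairing}; the zig-zag identities reduce, after expanding in the $f^U_{kl}$ basis and using orthogonality \eqref{eq:orthogonality} plus $\sum_k e^V_k \otimes \vartheta^V_k$ being the canonical copairing of $V$, to the semigroup identity $P_{a_1} \circ P_{a_2} = P_{a_1+a_2}$ and $\lim P_a = \id$, i.e.\ to the fact that the composite equals $Q^{V \otimes L^2(G)}_{a,l,b}$.

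For Part~\ref{prop:Wilson-bimodules:4}, assuming $G$ connected, transmissivity means $\rho_{a,l,b}$ depends only on $a+b$; equivalently, using the left and right $Q$-semigroups, that $Q^L_a$ and $Q^R_b$ "agree" in the sense made precise by the Remark after Proposition~\ref{prop:ramodule:findim} in the finite-dimensional case, but here I would argue directly. The point is that the left action \eqref{eq:Wilson-left-action} involves the nontrivial $G$-module structure on $V$ (through $y.v$), whereas the right action \eqref{eq:Wilson-right-action} does not touch $V$ at all. If $V = \Cb$ is trivial the left action also reduces to convolution on $L^2(G)$ and then $\rho_{a,l,b}$ manifestly depends only on $a+b$ since $P_a \circ P_b = P_{a+b}$. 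Conversely, I would show that if $V \otimes L^2(G)$ is transmissive then, by taking suitable $a \to 0$ or $b \to 0$ limits of the difference $\rho_{a+u, l, b-u} - \rho_{a, l, b}$ evaluated on elements of the form $v \otimes \chi_U$, one forces $\rho^L_0(\eta_u \otimes -)$ to act on the $V$-factor as it does on a trivial summand for all $u > 0$; differentiating the semigroup at $u = 0$ (legitimate since $P_a = e^{aH}$ with $H$ the Casimir-type generator from Proposition~\ref{prop:findimraclass} applied to each $M_U$) yields that the Casimir acts on $V \otimes M_U$ the same way on both tensor positions, which — using connectedness of $G$ so that the Casimir eigenvalue determines the isotypic component — forces $V$ to contain only trivial constituents, hence $V = \Cb$ by simplicity. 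The main obstacle I anticipate is making this last converse direction rigorous: one must carefully justify differentiating the operator semigroups (using norm-continuity of $a \mapsto P_a$ from Lemma~\ref{lem:patrclass} and \cite[Thm.\,II.4.29]{Engel:1999sg}) and extract from the single identity "$\rho$ depends only on $a+b$" a statement about the $G$-module $V$; the representation-theoretic input that does the work is that for connected $G$ the Casimir acts by the scalar $\sigma_V = (\lambda_V, \lambda_V + 2\rho)$ which vanishes only for the trivial highest weight.
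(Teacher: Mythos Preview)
Your proposal is correct and follows essentially the same route as the paper for Parts~\ref{prop:Wilson-bimodules:1}--\ref{prop:Wilson-bimodules:5}: verify the bimodule axioms and continuity directly (invoking Lemma~\ref{lem:left-right-module-bimodule}), compute the idempotents $D_0$ of \eqref{eq:D-definition-bimodule} and identify their images via Proposition~\ref{prop:d0proj}, and check the duality relations explicitly. The paper carries out these computations with integral formulas rather than in the matrix-element basis, but this is a cosmetic difference.

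For Part~\ref{prop:Wilson-bimodules:4} your idea is right but your execution is vaguer than necessary, and the paper resolves exactly the obstacle you flag. The paper observes that transmissivity is equivalent to the two one-parameter semigroups $Q^1_a:=Q_{a,0}$ and $Q^2_b:=Q_{0,b}$ coinciding, hence to their generators $H_1,H_2$ agreeing. It then tests on the single vector $e_k^V\otimes 1$, where $1\in M_{\Cb}$ is the constant function: one finds $H_2(e_k^V\otimes 1)=0$ because $\sigma_{\Cb}=0$, while a short computation gives $H_1(e_k^V\otimes 1)=-\sigma_V\,e_k^V\otimes 1$. Thus $H_1=H_2$ forces $\sigma_V=0$, which for connected semisimple $G$ means $V=\Cb$. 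This specific choice of test vector (trivial $L^2(G)$-component) is what makes the converse clean and avoids the vaguer ``Casimir acts the same way on both tensor positions'' formulation you were worried about.
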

\begin{proof}
	\textsl{Part~\ref{prop:Wilson-bimodules:1}:}\\
	We only treat the case of $V\otimes L^2(G)$, the proof for $L^2(G)\otimes V$ is similar.
We start by showing associativity of the left action.
	Let $\varphi_1,\varphi_2\in L^2(G)$ and $v\otimes \psi\in V\otimes L^2(G)$ and recall that we
	abbreviate 
	$\rho_{0}^L(\varphi_1\otimes v\otimes \psi)=\varphi_1.(v\otimes \psi)$. 
	Then
	\begin{align}
		\varphi_2.(\varphi_1.(v\otimes \psi))(x)&=\int_{G^2}\varphi_2(z)\varphi_1(y)\,zy.v\,\psi(y^{-1}z^{-1}x)\,dy\,dz
		\label{eq:left-assoc-1}\ ,\\
		\mu(\varphi_2\otimes \varphi_1).(v\otimes \psi))(x)&=\int_{G^2}\varphi_2(z)\varphi_1(z^{-1}w)\,w.v\,\psi(w^{-1}x)\,dw\,dz
	\label{eq:left-assoc-2}\ .
	\end{align}
	Changing the integration variable $y=z^{-1}w$ in \eqref{eq:left-assoc-1} we get \eqref{eq:left-assoc-2}.
	Using associativity of $\rho_{0}^L$ and the unitality of $\mu$, we get that
	$\lim_{a\to0}Q_{a}^{L}=\id_{V\otimes L^2(G)}$ for $Q_{a}^{L}=\rho_{a_1}\circ\left( \eta_{a_2}\otimes - \right)$. 
	Clearly, the assignment $a\to\rho_{a}^L$ is continuous, and $\rho_{a}^L$
	satisfies the associativity \eqref{eq:ra:module}.
	Therefore $V\otimes L^2(G)$ is a left $L^2(G)$-module.

	It is easy to see that $V\otimes L^2(G)$ is also a right $L^2(G)$-module, so we are left to check two conditions.
	First, that the two actions commute as in \eqref{eq:ra:bimod}, which can be shown similarly as associativity of $\rho_{a}^L$ before.
	Second, that the two sided action is jointly continuous in the 3 parameters, which can be shown by a similar argument as in the proof of Lemma~\ref{lem:semigrp}.

	\medskip

\noindent
	\textsl{Part~\ref{prop:Wilson-bimodules:2}:}\\
	Let $\tilde{V}:=V\otimes L^2(G)$, $\tilde{W}:=W\otimes L^2(G)$, $v\otimes f\in\tilde{V}$ and $w\otimes g\in\tilde{W}$.
	We compute from \eqref{eq:D-definition-bimodule} that
	\begin{align*}
		D_{a}^{\tilde{V},\tilde{W}} (v\otimes f \otimes w\otimes g)(x,y)=
		\int_{G^2} v\otimes t.w\, f(s)\eta_a(s^{-1}xt)g(t^{-1}y)\,ds\,dt\ .
	\end{align*}
	So using that $\lim_{a\to0}P_a=\id$, we get that 
	\begin{align}
		\begin{aligned}
			D_0^{\tilde{V},\tilde{W}}(v\otimes f \otimes w\otimes g)(x,y)=
			\int_G v\otimes t.w\,f(xt)g(t^{-1}y)\,dt\ .
		\end{aligned}
		\label{eq:wilson-D0}
	\end{align}
	By Proposition~\ref{prop:d0proj}, the image of the idempotent $D_0^{\tilde{V},\tilde{W}}$
	is the tensor product $\tilde{V}\otimes_{L^2(G)} \tilde{W}$.
	Let $\pi(v\otimes f \otimes w\otimes g):=v\otimes f.(w\otimes g)$ and
	$\iota(v\otimes w\otimes f)(x,y):=v\otimes x^{-1}.w f(xy)$.
	Then we have that $\pi\circ\iota=\id_{V\otimes W\otimes L^2(G)}$ and
	\begin{align*}
			\iota\circ\pi(v\otimes f \otimes w\otimes g)(x,y)=
			v\otimes \int_G x^{-1}t.w\, f(t)g(t^{-1}xy)\,dt\ ,
	\end{align*}
	which is equal to \eqref{eq:wilson-D0} after substituting $t':=x^{-1}t$.
	We have shown that $\pi$ and $\iota$ is the projection and embedding of 
	the image of $D_0^{\tilde{V},\tilde{W}}$, so in particular the image is
	$\tilde{V}\otimes_{L^2(G)} \tilde{W}=V\otimes W\otimes L^2(G)$.
		The induced action on $V\otimes W\otimes L^2(G)$ from \eqref{eq:tensor-product-action} is
		\begin{align}
			\tilde{\rho}^{\tilde{V},\tilde{W}}_{0,0}=\iota\circ (\rho^{\tilde{V},L}_{0}\otimes \rho^{\tilde{W},R}_{0})\circ D_0^{\tilde{V},\tilde{W}}\circ\pi\ ,
			\label{eq:Wilson-tensor-product-action}
		\end{align}
		which can be shown to agree with the action on $V\otimes W\otimes L^2(G)$ by a straightforward calculation.

	\medskip

\noindent
	\textsl{Part~\ref{prop:Wilson-bimodules:3}:}\\
	Recall $(V\otimes L^2(G))^G$ from \eqref{eq:invariant-functions}.
	Let $a\in\Rb_{>0}$, $v\in V$ and $f\in L^2(G)$. Then from \eqref{eq:D-definition-bimodule} we have
	\begin{align}
		\begin{aligned}
			D_{a}^{\tilde{V}}(v\otimes f)(x)
			=&\int_{G^2}\eta_a(yz^{-1}x)\,y.v\, f(y^{-1}z)\, dy\, dz\\
			\stackrel{w=yz^{-1}}{=}&\int_{G^2}\eta_a(wx)\,y.v\, f(y^{-1}wy)\, dy\, dw\ .
		\end{aligned}
		\label{eq:D-V-tilde}
	\end{align}
	If $v\otimes f\in (V\otimes L^2(G))^G$ then 
	$D_{a}^{\tilde{V}}(v\otimes f)=(\id_V\otimes P_a)(v\otimes f)$ 
	and hence $v\otimes f\in\im(D_{0}^{\tilde{V}})$. Let $h\in G$ and compute
	\begin{align}
		\begin{aligned}
		h.D_{a}^{\tilde{V}} (v\otimes f)(h^{-1}xh)=&\int_{G^2}\eta_a(wh^{-1}xh)\,hy.v\,f(y^{-1}wy)\, dy\, dw\\
		\stackrel{\eta_a\in Cl^2(G)}{=}&\int_{G^2}\eta_a(hwh^{-1}x)\,hy.v\,f(y^{-1}wy)\, dy\, dw\\
		\stackrel{z=hwh^{-1}}{=}&\int_{G^2}\eta_a(zx)\,hy.v\,f(y^{-1}h^{-1}why)\, dy\, dz\\
		\stackrel{q=hy}{=}&\int_{G^2}\eta_a(zx)\,q.v\,f(q^{-1}wq)\, dq\, dz\\
		=&D_{a}^{\tilde{V}} (v\otimes f)(x)\ .
		\end{aligned}
	\end{align}
	Since $h.(-):V\otimes L^2(G)\to V\otimes L^2(G)$ is continuous,
	we can exchange it with $\lim_{a\to0}(-)$, so $D_{0}^{\tilde{V}}(v\otimes f)\in(V\otimes L^2(G))^G$.
	Using the identification $V\otimes L^2(G)\cong L^2(G,V)$ we arrive at 
	$\im(D_{0}^{\tilde{V}})=L^2(G,V)^G$, which is, by Proposition~\ref{prop:d0proj}, $\ctimes_{L^2(G)}(V\otimes L^2(G))$.

	\medskip

\noindent
	\textsl{Part~\ref{prop:Wilson-bimodules:5}:}\\
	It is easy to see from the definition of $\beta_{a,b}$ and $\gamma_{a,b}$ 
	that  the zig-zag identities in \eqref{eq:ra:dual} hold.
	So we only need to show that $\beta_{a,b}$ 
	intertwines the actions as in \eqref{eq:duality-compatibility}.
	We compute
	\begin{align}
		\begin{aligned}
			\beta_{0,b} (\varphi.(v\otimes f)\otimes g\otimes \vartheta)
			=& \int_{G^3} \eta_b(z)\varphi(x)\vartheta(x.v)f(x^{-1}y)g(y^{-1}z^{-1})\, dx\, dy\, dz\ ,\\
			\beta_{0,b} (v\otimes f\otimes (g\otimes \vartheta).\varphi)
			=& \int_{G^3} \eta_b(z)f(y)g(y^{-1}z^{-1}x^{-1})(x^{-1}.\vartheta)(v)\varphi(x)\, dx\, dy\, dz\\
			\stackrel{\text{$G$ acts on $V^*$}}{=}& \int_{G^3} \eta_b(z)\varphi(x)\vartheta(x.v)f(y)g(y^{-1}z^{-1}x^{-1})\, dx\, dy\, dz \\
			\stackrel{\substack{y=x^{-1}u\\z=x^{-1}wx}}{=}& 
			 \eta_b(x^{-1}wx)\varphi(x)\vartheta(x.v)f(x^{-1}u)g(u^{-1}w^{-1})\, dx\, du\, dw \\
			\stackrel{\eta_b\in Cl^2(G)}{=}& \int_{G^3} \eta_b(w)\varphi(x)\vartheta(x.v)f(x^{-1}u)g(u^{-1}w^{-1})\, dx\, du\, dw\ , \\
		\end{aligned}
		\label{eq:Wilson-pairing-compatibility}
	\end{align}
	which are equal. 
	Composing with 
	$Q_{a,0}^{V\otimes L^2(G)}\otimes Q_{a,0}^{V^*\otimes L^2(G)}$ 
	shows that \eqref{eq:duality-compatibility} holds for every 
	$a,b\in\Rb_{>0}$ too.

\medskip

\noindent
\textsl{Part~\ref{prop:Wilson-bimodules:4}:}\\
Since $\rho_{a,b}=Q_{a,b}\circ\rho_{0,0}$, it is enough to consider $Q_{a,b}$.
As we already noted in \eqref{eq:ra:modsemigrp}, 
$Q_{-,-}:(\Rb_{\ge0})^2\to\Bc(\tilde{V})$
is a two parameter strongly continuous semigroup.
This defines two one parameter semigroups 
$Q^1_a:=Q_{a,0}$, $Q^2_b:=Q_{0,b}$ and $Q_{a,b}$ depends solely on $a+b$, 
if and only if these two one parameter semigroups are the same, see also the discussion before Definition~2.4 in \cite{AlSharif:2004sg}. 
One parameter semigroups are completely determined by their generators, so we calculate these now.

Let $v\in V$ and $W\in\hat{G}$. Then 
\begin{align}
\begin{aligned}
Q_{a,b}(v\otimes f_{ij}^W)(x)&=e^{-b\sigma_W}Q_{a,0}(v\otimes f_{ij}^W)(x)\\
&=e^{-b\sigma_W}\int_G\sum_{U\in\hat{G}}e^{-a\sigma_U}\dim(U)\chi_U(s)\,s.v\, f_{ij}^W(s^{-1}x)\,ds\ .
\end{aligned}
\label{eq:wilson-line-not-transmissive}
\end{align}
Using this, and writing $H_i$ for the generator of $Q^i$ for $i=1,2$, we have
\begin{align}
\begin{aligned}
H_1(v\otimes f_{ij}^W)(x)&=\frac{\mathrm{d}}{\mathrm{d}a} Q_a^1(v\otimes f_{ij}^W)|_{a=0}(x)\\&=
\lim_{a\to0}\sum_{U\in\hat{G}}-\sigma_U e^{-a\sigma_U}\dim(U)\int_G\chi_U(s)\,s.v\, f_{ij}^W(s^{-1}x)\,ds
\end{aligned}
\label{eq:generator-Q1}
\end{align}
and $H_2(v\otimes f_{ij}^W)=-\sigma_W\, v\otimes f_{ij}^W$.

Let $v:=e_k^V$ and $W:=\Cb$. Note that $M_{\Cb}$ are constant functions. Then
\begin{align*}
H_1(e_k^V\otimes 1)=(-\sigma_V)\dim(V)\int_G \chi_V(s)\,s.e_k^V\,ds=-\sigma_V e_k^V
\otimes 1
\ ,
\end{align*}
which is nonzero if and only if $V\not\cong\Cb$. Furthermore, $H_2(e_k^V\otimes 1)=0$. 
So if $V\not\cong\Cb$ then $V\otimes L^2(G)$ is not transmissive.

Clearly, if $V\cong\Cb$ then $\Cb\otimes L^2(G)=L^2(G)$ 
and by unitality of the product on $L^2(G)$ the bimodule $\Cb\otimes L^2(G)$ is transmissive.
	\end{proof}

In terms of Section~\ref{sec:fusion-of-defects},
we can interpret these results  as follows.
Let $(\Sb,V,+)$ be a circle with a positively oriented marked point where a Wilson line with label $V\in\hat{G}$ crosses. 
Then the corresponding state space is
\be
\funZ_\mathrm{YM}^G(\Sb,V,+)=L^2(G,V)^G \ .
\ee
Let $V,W\in\hat{G}$. Furthermore, the fusion of two Wilson lines with labels $V$ and $W$ is again a Wilson line with label $V\otimes W$.

\begin{figure}[tb]
	\centering
	\def\svgwidth{4cm}
\begingroup%
  \makeatletter%
  \providecommand\color[2][]{%
    \errmessage{(Inkscape) Color is used for the text in Inkscape, but the package 'color.sty' is not loaded}%
    \renewcommand\color[2][]{}%
  }%
  \providecommand\transparent[1]{%
    \errmessage{(Inkscape) Transparency is used (non-zero) for the text in Inkscape, but the package 'transparent.sty' is not loaded}%
    \renewcommand\transparent[1]{}%
  }%
  \providecommand\rotatebox[2]{#2}%
  \newcommand*\fsize{\dimexpr\f@size pt\relax}%
  \newcommand*\lineheight[1]{\fontsize{\fsize}{#1\fsize}\selectfont}%
  \ifx\svgwidth\undefined%
    \setlength{\unitlength}{261.95178829bp}%
    \ifx\svgscale\undefined%
      \relax%
    \else%
      \setlength{\unitlength}{\unitlength * \real{\svgscale}}%
    \fi%
  \else%
    \setlength{\unitlength}{\svgwidth}%
  \fi%
  \global\let\svgwidth\undefined%
  \global\let\svgscale\undefined%
  \makeatother%
  \begin{picture}(1,0.57304225)%
    \lineheight{1}%
    \setlength\tabcolsep{0pt}%
    \put(0,0){\includegraphics[width=\unitlength,page=1]{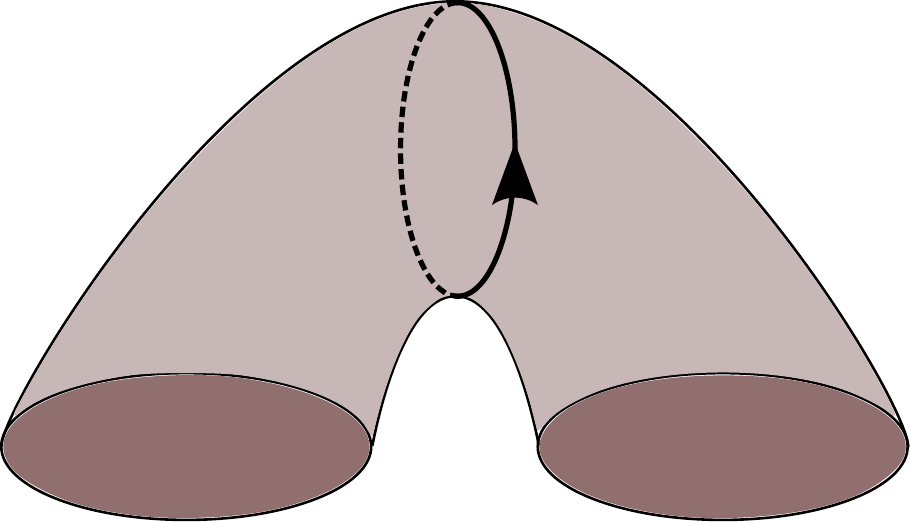}}%
    \put(0.24831201,0.25808516){\color[rgb]{0,0,0}\makebox(0,0)[lt]{\lineheight{0}\smash{\begin{tabular}[t]{l}$a^L$\end{tabular}}}}%
    \put(0.70641153,0.25808516){\color[rgb]{0,0,0}\makebox(0,0)[lt]{\lineheight{0}\smash{\begin{tabular}[t]{l}$a^R$\end{tabular}}}}%
  \end{picture}%
\endgroup%

	\caption{Cylinder $(C,a^L,a^R,V)$
	with ingoing boundaries and a Wilson line with label $V\in\hat{G}$.
	The area of the surface components left and right to the Wilson line is $a^L$ and $a^R$ respectively.
	}
	\label{fig:cut-out-cylinder}
\end{figure}

In the following we show that the value of $\funZ_\mathrm{YM}^G$ on closed surfaces with 
Wilson lines agrees with the expression in \cite[Sec.\,3.5]{Cordes:1995ym}.
Let $(\Sigma,\Ac)=(\Sigma,\Ac,\Lc)$ be a closed surface with area and defects with $\Lc=0$. 
Since $\Sigma$ is closed, the defect lines in $\Sigma$, denoted with $\Sigma_{[1]}$, are closed curves.
In order to compute $\funZ_\mathrm{YM}^G$ on $(\Sigma,\Ac)$ 
we decompose it into convenient smaller pieces as follows.
For every $x\in\Sigma_{[1]}$ with defect label $d_1(x)=V_x\otimes L^2(G)$ for some $V_x\in\hat{G}$
take a collar neighbourhood of $x$ in $\Sigma$, which is a cylinder with $x$ running around it.
Denote the corresponding bordism with area and defects with both boundary components ingoing with $(C_x,a_x^L,a_x^R,V_x)$,
where $a_x^L$ and $a_x^R$ are the area of the surface components to the left and right of $x$ respectively.
Denote with $(\Sigma',\Ac')$ 
the bordism with area with all outgoing components, 
which is formed by removing 
$\bigsqcup_{x\in\Sigma_{[1]}}(C_x,a_x^L,a_x^R,V_x)$ from $(\Sigma,\Ac)$.
We have
\begin{align}
	(\Sigma,\Ac)= \Big(\bigsqcup_{x\in\Sigma_{[1]}}(C_x,a_x^L,a_x^R,V_x) \Big) \circ (\Sigma',\Ac')\ .
	\label{eq:cut-out-Wilson-loops}
\end{align}
Note that $(\Sigma',\Ac')$ is a bordism with area but without defects, 
therefore using Proposition~\ref{prop:2dym-on-sigma-gb} and monoidality we can compute $\funZ_\mathrm{YM}^G$ on it.
The final ingredient we need is:

\begin{lemma} \label{lem:cylinder-Wilson-loop}
	Let 
	$(C,a,b,V)$ 
	be a cylinder with a Wilson line with label $V\in\hat{G}$ 
	as in Figure~\ref{fig:cut-out-cylinder}, $U\in\hat{G}$ and 
	let $U\otimes V\cong\bigoplus_{W\in\hat{G}}W^{N_{U,V}^W}$ be the decomposition into simple $G$-modules, for some integers $N_{U,V}^W$.
	Then
	\begin{align}
		\funZ_\mathrm{YM}^G(C,a,b,V)(\chi_U\otimes\chi_W)=
		e^{-a\sigma_U-b\sigma_W}
		N_{U,V}^W\ .
		\label{eq:cylinder-Wilson-loop}
	\end{align}
\end{lemma}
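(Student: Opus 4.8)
The plan is to compute $\funZ_\mathrm{YM}^G(C,a,b,V)$ directly from the state-sum construction of Section~\ref{sec:lattice:daqft}, using the explicit bimodule data for Wilson lines set up in Section~\ref{sec:2dym:wilson}. First I would choose a convenient PLCW decomposition with defects of the cylinder $C$: one rectangular plaquette crossed once by the defect line, with the two vertical edges of the rectangle identified (so that the defect line closes up into a loop running around the cylinder), and with one edge on each of the two ingoing boundary circles. Applying the construction gives a morphism $\Cb \to \Cb$, precomposed with $\iota \otimes \iota$ on the two ingoing boundary factors and postcomposed with nothing on the outgoing side (both boundaries are ingoing). By Theorem~\ref{thm:state-sum-defect-aqft}\,\ref{thm:defect-aqft:1} the answer is independent of this choice, and by the discussion in the sketch proof of that theorem the morphism $\Lc$ from \eqref{eq:step6a-defect} associated to a defect-cylinder with defect list $\underline x = [(V,+)]$ is precisely $E_{\underline a}^{\underline x}$ from \eqref{eq:state-sum-Ex-defect}, which for a length-one list equals $D_{a',l,b'}^{Y}$ with $Y = V \otimes L^2(G)$; its $a',l,b' \to 0$ limit is the idempotent $E_0^{\underline x}$ whose image is $Z(X_{\underline x}) = \ctimes_{L^2(G)}(V \otimes L^2(G)) = L^2(G,V)^G$ by Theorem~\ref{thm:state-space-tensor-product} and Proposition~\ref{prop:Wilson-bimodules}\,\ref{prop:Wilson-bimodules:3}.

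Next I would assemble $\funZ_\mathrm{YM}^G(C,a,b,V)$ as a bilinear form. Since both boundaries are ingoing, the relevant object is $\varepsilon_{a_0} \circ (\text{trace-like contraction}) \circ (\iota \otimes \iota)$ evaluated on $\chi_U \otimes \chi_W$, which by the same reasoning that produces \eqref{eq:state-sum-in-in-cylinder} in the defect-free case takes the form of a suitable composite of $\beta_{a,l,b}^V$, $\zeta$'s, $P_a$'s, the bimodule actions $\rho_{a,l,b}^V$, and the pairing with the boundary insertions $\iota(\chi_U)$ and $\iota(\chi_W)$ sitting in $L^2(G,V)^G \subset V \otimes L^2(G)$. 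Concretely I would feed in $\chi_U$ on the left-of-defect side and $\chi_W$ on the right, use the formulas \eqref{eq:Wilson-left-action}--\eqref{eq:Wilson-explicit-Q} for $Q_{a,b}^{V\otimes L^2(G)}$ together with \eqref{eq:wilson-line-not-transmissive}, and collapse the integrals using orthogonality of matrix element functions \eqref{eq:orthogonality} and the definition \eqref{eq:character} of the character. The key representation-theoretic input is that integrating $\chi_U(s)\, s.v$ against a matrix element function of $W$ and then pairing with a matrix element function of $W$ projects onto the $W$-isotypic component of $U \otimes V$, and the multiplicity with which this occurs is exactly $N_{U,V}^W = \dim \Hom_G(W, U \otimes V)$. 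The Casimir exponentials $e^{-a\sigma_U}$ and $e^{-b\sigma_W}$ come out as the eigenvalues of $P_a$ on $M_U$ and of the corresponding $Q$-semigroup on the $W$-part, as in \eqref{eq:MV-formulas-Pa}; here one uses that $\sigma_U$ is constant on the simple module $U$ and that the two area parameters $a$ (left of defect) and $b$ (right of defect) couple to $U$ and $W$ respectively, reflecting the non-transmissivity established in Proposition~\ref{prop:Wilson-bimodules}\,\ref{prop:Wilson-bimodules:4}.

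An alternative, and probably cleaner, route is to avoid re-deriving the state-sum formula and instead use functoriality together with already-computed pieces: cut $C$ into a "pair of pants with a Wilson line" and a cap, or realize $(C,a,b,V)$ as a composite of a bordism that creates the state $L^2(G,V)^G$ and one that annihilates it, and match against the tensor-product description of the state space. One can also observe that $\funZ_\mathrm{YM}^G(C,a,b,V)$ must be diagonal in the natural basis $\{\chi_U\}$ up to the Wilson insertion — i.e. the operator $Cl^2(G) \to Cl^2(G)$ obtained by capping one boundary is, on $\chi_U$, given by $e^{-a\sigma_U}$ times "tensor with $V$" followed by the heat-kernel evolution $e^{-b\,\square}$, whose matrix element against $\chi_W$ is $e^{-b\sigma_W} N_{U,V}^W$ by Schur orthogonality. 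I would present whichever is shorter, most likely the direct computation with the explicit $Q$-action.

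The main obstacle I anticipate is bookkeeping rather than anything deep: correctly tracking how the single area parameter on each of the two surface halves of the plaquette is distributed among the faces, (half-)edges and vertices, checking that the embeddings $\iota(\chi_U), \iota(\chi_W)$ into $V\otimes L^2(G)$ (rather than into $L^2(G,V)^G$ directly) do not introduce extra factors, and verifying that the various $\zeta$- and $\tau^{-1}$-insertions coming from \eqref{eq:zeta-x-from-pdata} for the Wilson bimodule cancel as they should — essentially the content of the strong-separability computation of $\ctimes_{L^2(G)}$ in the proof of Proposition~\ref{prop:Wilson-bimodules}\,\ref{prop:Wilson-bimodules:3}. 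Once the dust settles, the right-hand side of \eqref{eq:cylinder-Wilson-loop} is forced by Schur orthogonality and the identification $N_{U,V}^W = \dim\Hom_G(W,U\otimes V)$, and the only genuine check is that the two Casimir exponents attach to $U$ and $W$ with areas $a$ and $b$ respectively, which is exactly the non-transmissivity statement.
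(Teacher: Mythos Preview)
Your proposal is correct and follows essentially the same route as the paper: a direct state-sum computation on a single rectangular plaquette crossed by the defect, followed by character orthogonality. The paper's execution is slightly more streamlined than what you outline: rather than tracking matrix element functions and isotypic projections, it manipulates the Haar integrals coming from the diagram \eqref{eq:diag-cylinder-Wilson-loop} directly into the closed form
\[
\funZ_\mathrm{YM}^G(C,a,b,V)(\varphi\otimes\psi)=\int_{G^3}\eta_a(z)\,\varphi(z^{-1}y)\,\chi_V(y)\,\psi(y^{-1}p^{-1})\,\eta_b(p)\,dp\,dy\,dz,
\]
and then invokes the single identity $\int_G \chi_U(y)\chi_V(y)\chi_W(y^{-1})\,dy = N_{U,V}^W$ (equation \eqref{eq:character-fusion-rule}), which packages all the Schur-orthogonality bookkeeping you anticipate into one line. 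Your matrix-element approach would of course reproduce this, but if you aim for brevity it is worth noting that the integrals collapse to a product of three characters at a common group element before any basis expansion is needed.
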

\begin{proof}[Sketch of proof]
	The morphism $\funZ_\mathrm{YM}^G(C,a,b,V)$ is given by the diagram
	\begin{align}
		\begin{aligned}
			\def\svgwidth{4cm}
\begingroup%
  \makeatletter%
  \providecommand\color[2][]{%
    \errmessage{(Inkscape) Color is used for the text in Inkscape, but the package 'color.sty' is not loaded}%
    \renewcommand\color[2][]{}%
  }%
  \providecommand\transparent[1]{%
    \errmessage{(Inkscape) Transparency is used (non-zero) for the text in Inkscape, but the package 'transparent.sty' is not loaded}%
    \renewcommand\transparent[1]{}%
  }%
  \providecommand\rotatebox[2]{#2}%
  \newcommand*\fsize{\dimexpr\f@size pt\relax}%
  \newcommand*\lineheight[1]{\fontsize{\fsize}{#1\fsize}\selectfont}%
  \ifx\svgwidth\undefined%
    \setlength{\unitlength}{124.19913483bp}%
    \ifx\svgscale\undefined%
      \relax%
    \else%
      \setlength{\unitlength}{\unitlength * \real{\svgscale}}%
    \fi%
  \else%
    \setlength{\unitlength}{\svgwidth}%
  \fi%
  \global\let\svgwidth\undefined%
  \global\let\svgscale\undefined%
  \makeatother%
  \begin{picture}(1,0.97147857)%
    \lineheight{1}%
    \setlength\tabcolsep{0pt}%
    \put(0,0){\includegraphics[width=\unitlength,page=1]{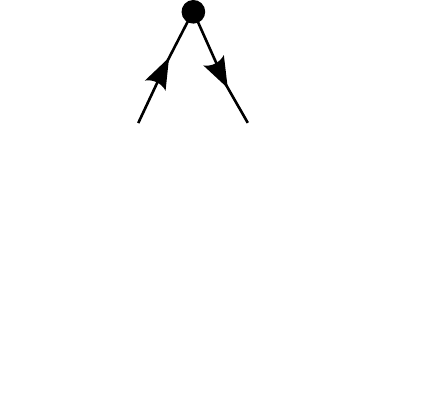}}%
    \put(0.52622862,0.92695085){\color[rgb]{0,0,0}\makebox(0,0)[lt]{\lineheight{0}\smash{\begin{tabular}[t]{l}\scriptsize{$(0,b_2;\tilde{V})$}\end{tabular}}}}%
    \put(0,0){\includegraphics[width=\unitlength,page=2]{diag-cylinder-Wilson-loop.pdf}}%
    \put(0.30460459,0.00569665){\color[rgb]{0,0,0}\makebox(0,0)[lt]{\lineheight{0}\smash{\begin{tabular}[t]{l}\scriptsize{$(0,b_1;\tilde{V})$}\end{tabular}}}}%
    \put(0,0){\includegraphics[width=\unitlength,page=3]{diag-cylinder-Wilson-loop.pdf}}%
    \put(0.64514444,0.41776828){\color[rgb]{0,0,0}\makebox(0,0)[lt]{\lineheight{0}\smash{\begin{tabular}[t]{l}\scriptsize{$(0,0;\tilde{V})$}\end{tabular}}}}%
    \put(0.00144066,0.6844723){\color[rgb]{0,0,0}\makebox(0,0)[lt]{\lineheight{0}\smash{\begin{tabular}[t]{l}\scriptsize{$(a,0;\tilde{V})$}\end{tabular}}}}%
    \put(-0.00401007,0.09645981){\color[rgb]{0,0,0}\makebox(0,0)[lt]{\lineheight{0}\smash{\begin{tabular}[t]{l}$Cl^2(G)$\end{tabular}}}}%
    \put(0.62315062,0.09841785){\color[rgb]{0,0,0}\makebox(0,0)[lt]{\lineheight{0}\smash{\begin{tabular}[t]{l}$Cl^2(G)$\end{tabular}}}}%
  \end{picture}%
\endgroup%

		\end{aligned}\ .
		\label{eq:diag-cylinder-Wilson-loop}
	\end{align}
	After a straightforward calculation and some manipulation of multiple integrals we get
	for $\varphi,\psi\in Cl^2(G)$ that
	\begin{align}
		\funZ_\mathrm{YM}^G(C,a,b,V)(\varphi\otimes\psi)=
		\int_{G^3}\eta_a(z)\varphi(z^{-1}y)\chi_V(y)\psi(y^{-1}p^{-1})\eta_b(p)\,dp\,dy\,dz\ .
	\end{align}
	Finally using
	\begin{align}
		\int_G \chi_U(y)\chi_V(y)\chi_W(y^{-1})=N_{U,V}^W\ ,
		\label{eq:character-fusion-rule}
	\end{align}
	which follows from basic properties of characters and character orthogonality,
	we get \eqref{eq:cylinder-Wilson-loop}.
\end{proof}

\begin{remark}
The computation of the defect cylinder in the above lemma allows one to interpret states of the 2d~YM
theory in terms of Wilson lines. Namely, let $(D,a,b,V)$ be a disc with outgoing 
boundary and with embedded defect circle oriented anti-clockwise and labeled by $V \in \hat G$.
The area inside the circle is $a$ and the one outside is $b$. The corresponding amplitude is
\begin{align}
	\begin{aligned}
	\big\langle \chi_W , \funZ_\mathrm{YM}^G(D,a,b,V) \rangle
	&= \funZ_\mathrm{YM}^G(C,\tfrac a2,b,V)(\eta_{\frac a2}\otimes\chi_W) 
	\\
	&= \sum_{U \in \hat G} e^{-a\sigma_U-b\sigma_W} \dim(U) N_{U,V}^W\ .
	\end{aligned}
\end{align}
For a given $W$, the sum is finite. One checks from this expression that
\be
\lim_{\substack{a\to \infty\\ b\to 0}} \funZ_\mathrm{YM}^G(D,a,b,V) = \chi_V \ .
\ee
Thus we can picture the state $\chi_V \in \funZ_\mathrm{YM}^G(\Sb)$ informally as
the disc $(D,\infty,0,V)$ with zero area outside of the circle and infinite area inside the circle (and which is hence not an allowed bordism with area).
From this point of view the action \eqref{eq:cylinder-Wilson-loop} of the cylinder is no surprise as by Theorem~\ref{thm:fusion} it amounts to the fusion of defect lines, which by Proposition~\ref{prop:Wilson-bimodules}\,(2) is given by the tensor product of $G$-representations.
\end{remark}

\begin{proposition} \label{prop:Wilson-line-expectation-value}
	For $x\in\Sigma_{[1]}$ let $\rho_R^x\in\pi_0(\Sigma')$ be the connected component which is glued to $C_x$ on the right side of $x$ in \eqref{eq:cut-out-Wilson-loops}
	and define $\rho_L^x\in\pi_0(\Sigma')$ similarly to be the connected component glued from the left.
	Using the notation from above we have
	\begin{align}
		\begin{aligned}
			\funZ_\mathrm{YM}^G(\Sigma,\Ac)=
			\prod_{\rho\in\pi_0(\Sigma')}\prod_{x\in\Sigma_{[1]}}\sum_{U_\rho\in\hat{G}}e^{-a_{\rho}\sigma_{U_\rho}}
			(\dim(U_{\rho})^{\chi(\rho)}N_{U_{\rho_L^x},V_x}^{U_{\rho_R^x}}\ ,
		\end{aligned}
		\label{eq:Wilson-line-expectation-value}
	\end{align}
	where $a_{\rho}\in\Rb_{>0}$ is the area of $\rho$.
\end{proposition}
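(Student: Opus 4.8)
The plan is to reduce everything to two computations already available: the value of $\funZ_\mathrm{YM}^G$ on the defect-free bordism $\Sigma'$ (Proposition~\ref{prop:2dym-on-sigma-gb}) and on the defect cylinder $(C,a,b,V)$ (Lemma~\ref{lem:cylinder-Wilson-loop}). I would start from the decomposition \eqref{eq:cut-out-Wilson-loops}, which exhibits $(\Sigma,\Ac)$ as the composition of the collar cylinders $(C_x,a_x^L,a_x^R,V_x)$, one for each defect circle $x\in\Sigma_{[1]}$, glued onto the defect-free bordism $(\Sigma',\Ac')$ with all boundary outgoing. Applying functoriality and monoidality of $\funZ_\mathrm{YM}^G$ then writes $\funZ_\mathrm{YM}^G(\Sigma,\Ac)$ as the composite $\big(\bigotimes_x \funZ_\mathrm{YM}^G(C_x,a_x^L,a_x^R,V_x)\big)\circ\big(\bigotimes_{\rho\in\pi_0(\Sigma')}\funZ_\mathrm{YM}^G(\rho,a_\rho')\big)$, up to the permutation of tensor factors prescribed by the gluing pattern, where $a_\rho'$ is the area $\rho$ carries inside $\Sigma'$. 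Since the Wilson-line bimodules $V\otimes L^2(G)$ have trivial length dependence (Section~\ref{sec:2dym:wilson}), the value is independent of $\Lc$, so setting $\Lc=0$ is harmless; and since $\funZ_\mathrm{YM}^G$ depends only on the total area of each surface component, the split of areas between each $\rho$ and its adjacent collars is immaterial.

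Next I would substitute the explicit formulas. Choosing the collars thin enough that no surface component is disconnected, $\pi_0(\Sigma')$ is in canonical bijection with $\pi_0(\Sigma_{[2]})$, and each $\rho\in\pi_0(\Sigma')$ has genus $g_\rho$ and only outgoing boundary circles, one for each collar end meeting $\rho$; by Proposition~\ref{prop:2dym-on-sigma-gb} (case $b_\mathrm{in}=0$), $\funZ_\mathrm{YM}^G(\rho,a_\rho')=\sum_{U_\rho\in\hat G}e^{-a_\rho'\sigma_{U_\rho}}(\dim U_\rho)^{\chi(\rho)}(\chi_{U_\rho})^{\otimes b_\rho}$ with $\chi(\rho)=2-2g_\rho-b_\rho$, so all boundary circles of a fixed $\rho$ carry the same character $\chi_{U_\rho}$. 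By Lemma~\ref{lem:cylinder-Wilson-loop}, the cylinder $C_x$ acts on $\chi_{U}\otimes\chi_{W}$ by $e^{-a_x^L\sigma_{U}-a_x^R\sigma_{W}}N_{U,V_x}^{W}$, where the two ingoing boundaries of $C_x$ are glued to a boundary circle of $\rho_L^x$ (its left side, contributing $U=U_{\rho_L^x}$) and of $\rho_R^x$ (its right side, contributing $W=U_{\rho_R^x}$). Since $\{\chi_U\}_{U\in\hat G}$ is an orthonormal Hilbert basis of $Cl^2(G)$, feeding the tensor of characters produced by the $\rho$-pieces into the cylinder maps collapses the composite to a sum over assignments $U\colon\pi_0(\Sigma')\to\hat G$ of the product of all scalar weights. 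Re-absorbing the collar areas into the $a_\rho$ (so that the full exponential weight of $\rho$ becomes $e^{-a_\rho\sigma_{U_\rho}}$) and collecting the dimension factors $(\dim U_\rho)^{\chi(\rho)}$ and the fusion coefficients $N_{U_{\rho_L^x},V_x}^{U_{\rho_R^x}}$ produces exactly \eqref{eq:Wilson-line-expectation-value}.

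The main obstacle is purely combinatorial: one must keep careful track of which boundary circle of which $\rho$ is glued to which end of which collar cylinder $C_x$, and of the orientations, so that the left and right sides of each defect line $x$ are correctly matched with $\rho_L^x$ and $\rho_R^x$ — this is what fixes which of $U_{\rho_L^x},U_{\rho_R^x}$ enters as the ``incoming'' and which as the ``outgoing'' index in $N_{U,V_x}^{W}$. One also has to confirm that the tensor-factor permutation in the functoriality step contributes no extra scalar, which holds because $Cl^2(G)$ is a commutative symmetric RFA and the symmetry of $\Hilb$ merely permutes basis vectors. A minor point to verify is the area accounting, namely that with thin enough collars the components of $\Sigma'$ and of $\Sigma_{[2]}$ correspond bijectively and the area of $\rho\in\pi_0(\Sigma')$ together with its adjacent collar areas equals $a_\rho$; the remaining items — that the exponents of the dimensions are the Euler characteristics $\chi(\rho)$ and that the cylinders carry no dimension factor — are read off directly from the two quoted formulas.
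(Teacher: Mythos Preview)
Your proposal is correct and is exactly the argument the paper intends: the paper does not spell out a proof for this proposition but instead sets up the decomposition \eqref{eq:cut-out-Wilson-loops} and supplies the two inputs you use, Proposition~\ref{prop:2dym-on-sigma-gb} for the defect-free pieces $\rho\in\pi_0(\Sigma')$ and Lemma~\ref{lem:cylinder-Wilson-loop} for each collar cylinder $C_x$, leaving the combination via functoriality and monoidality as the evident final step. Your care with the area redistribution and the left/right matching of $\rho_L^x,\rho_R^x$ to the two ingoing circles of $C_x$ is precisely the bookkeeping needed to make that step rigorous.
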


The expression in \eqref{eq:Wilson-line-expectation-value} matches the expression in \cite[(3.28)]{Cordes:1995ym}
(see also \cite[Sec.\,5]{Rusakov:1990wilson}).

\subsubsection*{Defects from automorphisms of $G$}

Another way of obtaining bimodules is by twisting the actions on the trivial bimodule by an algebra automorphism
as we saw in Example~\ref{ex:twistedaction}. In the rest of this section we will introduce automorphisms of $L^2(G)$ (seen as an RFA) using automorphisms of $G$.

	Let $\alpha\in\Aut(G)$, $V\in\hat{G}$ and denote with ${}_\alpha V$ 
the $G$-module obtained by
	precomposing the action on $V$ with $\alpha$.
	Let $H$ denote the Haar measure on $G$ and $\alpha^*H$ the induced measure. This is a left invariant normalised measure, hence by the uniqueness of such measures
	$\alpha^*H=H$. As a consequence, the Haar integral is invariant under $\Aut(G)$.

\begin{lemma}
	Let $\alpha\in\Aut(G)$. 
	Then precomposition with $\alpha$
	is an automorphism of the RFA $L^2(G)$ and
	defines a group homomorphism
	$\Aut(G)\to\Aut_{\RFrob{\Hilb}}(L^2(G))^\mathrm{op}$.
	\label{lem:L2G-automorphism}
\end{lemma}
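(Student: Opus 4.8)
The plan is to verify directly that precomposition with $\alpha\in\Aut(G)$ sends the structure maps of $L^2(G)$ from \eqref{eq:L2G-structure-morphisms} to themselves, and that this assignment respects composition and the group structure. Write $\alpha^*:L^2(G)\to L^2(G)$, $(\alpha^*f)(x):=f(\alpha(x))$. First I would check that $\alpha^*$ is a well-defined unitary operator on $L^2(G)$: since $\alpha$ is a diffeomorphism of $G$ and the Haar measure is $\alpha$-invariant (as noted just before the lemma, $\alpha^*H=H$ by uniqueness of the normalised left-invariant measure), the map $f\mapsto\alpha^*f$ preserves the $L^2$-norm and is invertible with inverse $(\alpha^{-1})^*$. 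In particular $\alpha^*\in\Hilb(L^2(G),L^2(G))$.

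Next I would check compatibility with each of the RFA structure maps, using the explicit formulas in \eqref{eq:L2G-structure-morphisms}. For the counit-free pieces this is a routine change of variables in the Haar integral: for the convolution product one has
\begin{align*}
	\mu((\alpha^*\otimes\alpha^*)(f\otimes g))(x)
	&=\int_G f(\alpha(y))\,g(\alpha(y^{-1}x))\,dy
	=\int_G f(y')\,g(y'^{-1}\alpha(x))\,dy'
	=(\alpha^*\mu(f\otimes g))(x)\ ,
\end{align*}
using $\alpha(y^{-1}x)=\alpha(y)^{-1}\alpha(x)$ and the substitution $y'=\alpha(y)$; and similarly $\Delta\circ\alpha^*=(\alpha^*\otimes\alpha^*)\circ\Delta$ since $\Delta(f)(x,y)=f(xy)$ and $\alpha$ is a group homomorphism. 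For the unit one uses that $\alpha$ permutes $\hat G$ by $V\mapsto {}_\alpha V$, that ${}_\alpha V$ is again simple unitary, that $\chi_{{}_\alpha V}=\alpha^*\chi_V$, that $\dim({}_\alpha V)=\dim(V)$, and crucially that the Casimir eigenvalue is invariant, $\sigma_{{}_\alpha V}=\sigma_V$ (the Casimir is built from an $\Ad$-invariant form on the Lie algebra, and $d\alpha$ is a Lie algebra automorphism, hence preserves the Casimir element up to the automorphism, which acts as a scalar on each simple module); reindexing the sum $\sum_V e^{-a\sigma_V}\dim(V)\chi_V$ by $V\mapsto{}_{\alpha^{-1}}V$ then gives $\alpha^*\eta_a=\eta_a$. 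Since $P_a=\mu(\eta_a\otimes-)$, it follows that $\alpha^*P_a=P_a\alpha^*$, hence $\mu_a=P_a\mu$, $\Delta_a=\Delta P_a$ are intertwined as well; and $\eps_a=\eta_a^\dagger$ composed with $\alpha^*$ unitary gives $\eps_a\circ\alpha^*=(\alpha^*{}^{\dagger}\eta_a)^\dagger=(\,(\alpha^{-1})^*\eta_a)^\dagger$, which — applying the $\eta$-invariance to $\alpha^{-1}$ — equals $\eps_a$. This shows $\alpha^*$ is a morphism of RFAs, and since it has inverse $(\alpha^{-1})^*$ which is also such a morphism, $\alpha^*\in\Aut_{\RFrob{\Hilb}}(L^2(G))$.

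Finally, the assignment $\alpha\mapsto\alpha^*$ satisfies $(\alpha\circ\beta)^*=\beta^*\circ\alpha^*$ directly from the definition, so it is a group homomorphism $\Aut(G)\to\Aut_{\RFrob{\Hilb}}(L^2(G))^{\mathrm{op}}$, as claimed. The only mildly non-routine point — and the place I would be most careful — is the invariance of the Casimir eigenvalue $\sigma_V$ under the twist $V\mapsto{}_\alpha V$, since the normalisation of the Casimir depends on a choice of invariant form on $\mathfrak g$; for $G$ semisimple the Killing form is canonical and $\Aut(\mathfrak g)$-invariant, so $\sigma_{{}_\alpha V}=\sigma_V$ holds, but this is the step that makes the whole argument work and should be stated explicitly rather than waved through. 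Everything else is bookkeeping with Haar-integral substitutions and the Peter–Weyl decomposition \eqref{eq:L2-Cl2-decomposition}.
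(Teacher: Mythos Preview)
Your proof is correct and follows essentially the same route as the paper: unitarity of $\alpha^*$ from Haar-measure invariance, compatibility with $\mu$ via the substitution $y'=\alpha(y)$, and $\alpha^*\eta_a=\eta_a$ from $\dim({}_\alpha V)=\dim(V)$ together with $\sigma_{{}_\alpha V}=\sigma_V$. The only difference is that the paper, having established that $\alpha^*$ is a unitary regularised \emph{algebra} morphism, invokes the $\dagger$-RFA structure of $L^2(G)$ to conclude it is an RFA morphism in one line (since $\Delta_a=\mu_a^\dagger$ and $\eps_a=\eta_a^\dagger$), whereas you verify compatibility with $\Delta$ and $\eps_a$ by hand; both are fine.
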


\begin{proof}
	Clearly, invariance of the Haar measure implies that $\alpha^*=(-)\circ\alpha$ is unitary.
	We first show that $\alpha^*$ 
	commutes with the product.
	Let $\varphi_1,\varphi_2\in L^2(G)$ and compute:
	\begin{align}
		\begin{aligned}
			\mu( (\varphi_1\circ\alpha)\otimes(\varphi_2\circ\alpha))(x)=&
			\int_{G}\varphi_1(\alpha(z))\varphi_2(\alpha(z^{-1}x))\,dz\\
			\stackrel{y=\alpha(z)}{=}&\int_{G}\varphi_1(y)\varphi_2(y^{-1}\alpha(x))\,dy 
			=\mu(\varphi_1\otimes\varphi_2)(\alpha(x))\ ,
		\end{aligned}
		\label{eq:alpha-mu}
	\end{align}
	where we used that the Haar measure on $G$ is invariant under $\alpha$.
	Next, we show that $\eta_a\circ\alpha=\eta_a$.
	\begin{align}
		\begin{aligned}
			\eta_a\circ\alpha
			=& \sum_{V\in\hat{G}}e^{-a\sigma_V}\dim(V)\chi_V\circ\alpha
			= \sum_{V\in\hat{G}}e^{-a\sigma_V}\dim(V)\chi_{\alpha V}\\
			=& \sum_{V\in\hat{G}}e^{-a\sigma_{ {}_{\alpha} V}}\dim({}_{\alpha} V)\chi_{\alpha V}
			=\eta_a\ ,
		\end{aligned}
		\label{eq:alpha-eta}
	\end{align}
	where we used that $\alpha V$ has the same dimension as $V$ and that the Casimir element is invariant under $\alpha$.
	The latter can be understood as follows. The Lie group automorphism $\alpha$ induces an automorphism on the Lie algebra of $G$,
	and the Casimir element is defined in terms of an orthonormal basis of the Lie algebra with respect to an invariant non-degenerate pairing,
	for example the Killing form.

	Since $L^2(G)$ is a $\dagger$-RFA and $\alpha^*$ is a unitary
	regularised algebra morphism, $\alpha^*$ is an RFA morphism.
\end{proof}

Let $L_{\alpha}:={}_{\alpha^*}L^2(G)_{\id}$ 
denote the transmissive twisted bimodule from Example~\ref{ex:twistedaction}.
By Examples~\ref{ex:twisted-bimodule-duals}~and~\ref{ex:twisted-bimodule-tensor-products} these bimodules have duals and can be tensored together,
i.e.\ we can label defect lines with them.
For convenience we list these results here. 
Then
\begin{itemize}
	\item $(L_{\alpha},L_{\alpha^{-1}})$ is a dual pair of bimodules,
	\item $L_{\alpha_1}\otimes_{L^2(G)}L_{\alpha_2}\cong L_{\alpha_2\circ\alpha_1}$,
		for $\alpha_1,\alpha_2\in\Aut(G)$,
	\item $\ctimes_{L^2(G)}L_{\alpha}\cong \setc*{f\in L^2(G)}{f(gx\alpha(g^{-1}))=f(x)\text{ for every $g,x\in G$}}$,
\end{itemize}
where the last equation can be be computed from $D_0^{L_{\alpha}}$ of \eqref{eq:D-definition-bimodule}.

The following lemma can be proven similarly as Lemma~\ref{lem:cylinder-Wilson-loop}.
\begin{lemma} \label{lem:twisted-bimodule-cut-out-cylinder}
Let $\alpha\in\Aut(G)$ and $(C,a,b,L_{\alpha})$ 
denote a cylinder as in Figure~\ref{fig:cut-out-cylinder} with the defect line labeled with $L_{\alpha}$.
	Then for $U,W\in\hat{G}$ we have
	\begin{align}
		\funZ_\mathrm{YM}^G(C,a,b,L_{\alpha})
		(\chi_U\otimes\chi_W)=e^{-(a+b)\sigma_W}\delta_{ {}_{\alpha} U,W}\ .
		\label{eq:twisted-bimodule-cut-out-cylinder}
	\end{align}
\end{lemma}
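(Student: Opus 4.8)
The plan is to follow the (sketched) proof of Lemma~\ref{lem:cylinder-Wilson-loop}, the only change being that the defect circle is now labeled by the transmissive twisted bimodule $L_\alpha = {}_{\alpha^*}L^2(G)_{\id}$, where $\alpha^* = (-)\circ\alpha \in \Aut_{\RFrob{\Hilb}}(L^2(G))$ is the RFA automorphism from Lemma~\ref{lem:L2G-automorphism} and the relevant dual pair is $(L_\alpha,L_{\alpha^{-1}})$ with duality morphisms given by \eqref{eq:twisted-bimodule-duals}. First I would fix a convenient PLCW decomposition of the defect cylinder $C$ with the defect circle running parallel to the two ingoing boundary circles, the analogue of the decomposition underlying \eqref{eq:diag-cylinder-Wilson-loop}, so that the state-sum construction of Section~\ref{sec:lattice:daqft} produces an explicit morphism $Cl^2(G)\otimes Cl^2(G)\to\Cb$ assembled from the embeddings $\iota:Cl^2(G)\hookrightarrow L^2(G)$, the RFA structure maps $\mu$, $\eta_a$, $\zeta_a=P_a$ of $L^2(G)$, and the pairing $\beta^{L_\alpha}$ and copairing $\gamma^{L_\alpha}$ of $L_\alpha$.

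Next I would unwind this morphism into a multiple Haar integral. Since $L_\alpha$ is transmissive and its $l$-dependence is trivial (Example~\ref{ex:twistedaction}), all length parameters drop out and the two area parameters on the two sides of the defect line can only enter through their sum $a+b$. Substituting $\beta_{a,l,b}^{L_\alpha}=\varepsilon_a\circ\mu_b\circ(\id\otimes\alpha^*)$ and $\gamma_{a,l,b}^{L_\alpha}=((\alpha^*)^{-1}\otimes\id)\circ\Delta_a\circ\eta_b$ from \eqref{eq:twisted-bimodule-duals}, together with the convolution formula \eqref{eq:def:mu-conv} and the expansion \eqref{eq:def:eta} of $\eta_a$, and manipulating the resulting integrals using left/right invariance of the Haar measure, its invariance under $\Aut(G)$ (established just before Lemma~\ref{lem:L2G-automorphism}) and the class-function property of $\eta_{a+b}$, I expect to arrive at an expression of the form
\begin{align*}
	\funZ_{\mathrm{YM}}^{G}(C,a,b,L_\alpha)(\varphi\otimes\psi)
	= \int_{G^2}\eta_{a+b}(z)\,\varphi\bigl(\alpha(z^{-1}y)\bigr)\,\psi(y^{-1})\,dy\,dz
\end{align*}
for $\varphi,\psi\in Cl^2(G)$, with the precise argument shifts and placement of $\alpha$ to be pinned down exactly as in the proof of Lemma~\ref{lem:cylinder-Wilson-loop}.

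Finally, evaluating on $\varphi=\chi_U$, $\psi=\chi_W$ and using $\chi_U\circ\alpha=\chi_{{}_{\alpha}U}$, the integration over $z$ against the central function $\eta_{a+b}$ contributes the coefficient $e^{-(a+b)\sigma_{{}_{\alpha}U}}$ of $\chi_{{}_{\alpha}U}$, while the integration over $y$ reduces to the character orthogonality relation $\int_G \chi_{{}_{\alpha}U}(y)\,\chi_W(y^{-1})\,dy=\delta_{{}_{\alpha}U,W}$, the analogue of \eqref{eq:character-fusion-rule}. On the support of $\delta_{{}_{\alpha}U,W}$ we have $\sigma_{{}_{\alpha}U}=\sigma_U=\sigma_W$ since the Casimir value is $\Aut(G)$-invariant (as in the proof of Lemma~\ref{lem:L2G-automorphism}), so the coefficient equals $e^{-(a+b)\sigma_W}$; this yields \eqref{eq:twisted-bimodule-cut-out-cylinder}.

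I expect the main obstacle to be purely organisational: correctly tracking through the chosen PLCW decomposition where the twist $\alpha^*$ is inserted and how the several area parameters attached to faces, edges and vertices on the two sides of the defect collapse into the single parameter $a+b$. There is no genuinely new analytic difficulty — convergence of $\eta_{a+b}$ is Lemma~\ref{lem:applebaum}, the bimodule actions are well defined because $L_\alpha$ is built from the strongly separable symmetric $\dagger$-RFA $L^2(G)$, and the only extra input beyond the Wilson-line computation of Lemma~\ref{lem:cylinder-Wilson-loop} is the $\Aut(G)$-invariance of the Haar measure and of the Casimir value, both already recorded in the proof of Lemma~\ref{lem:L2G-automorphism}.
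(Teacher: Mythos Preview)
Your proposal is correct and follows exactly the approach the paper indicates: the paper does not give a separate proof but simply states that the lemma ``can be proven similarly as Lemma~\ref{lem:cylinder-Wilson-loop}'', and you have carried out precisely that adaptation, replacing the Wilson-line bimodule data by the twisted bimodule $L_\alpha$ with its duality morphisms from \eqref{eq:twisted-bimodule-duals} and trading the fusion-rule identity \eqref{eq:character-fusion-rule} for character orthogonality together with the $\Aut(G)$-invariance of the Casimir value.
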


The following lemma shows that for some particular choices of $\alpha$, these bimodules could provide new examples.

\begin{lemma}
Let $\alpha\in\Aut(G)$ 
	and $V\in\hat{G}$. Then
	\begin{enumerate}
		\item $L_{\alpha}\cong L_{\id}=L^2(G)$ as bimodules if and only if $\alpha$ is inner,
			\label{lem:bimodule-isomorphisms:1}
	\end{enumerate}
	furthermore if $G$ is connected,
	\begin{enumerate}[resume]
		\item $L_{\alpha}\cong V\otimes L^2(G)$ as bimodules if and only if $\alpha$ is 
		inner and $V\cong \Cb$ as $G$-modules.
			\label{lem:bimodule-isomorphisms:2}
	\end{enumerate}
	\label{lem:bimodule-isomorphisms}
\end{lemma}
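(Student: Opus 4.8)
\textbf{Plan of proof for Lemma~\ref{lem:bimodule-isomorphisms}.}

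The strategy is to reduce statements about bimodule isomorphisms to statements about $G$-representations by using the explicit description of the relevant cyclic tensor products (equivalently, the state spaces $\funZ_\mathrm{YM}^G$ assigns to a circle with a defect point) and the defect-cylinder computations in Lemmas~\ref{lem:cylinder-Wilson-loop} and~\ref{lem:twisted-bimodule-cut-out-cylinder}. Since a bimodule isomorphism $\phi: M \to N$ between $L^2(G)$-$L^2(G)$-bimodules in particular induces an isomorphism of the cyclic tensor products $\ctimes_{L^2(G)} M \cong \ctimes_{L^2(G)} N$ (and more generally commutes with all the $Q_{a,b}$-semigroups), we can extract numerical invariants that distinguish the bimodules. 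First I would record, for the twisted bimodule $L_\alpha$, that $\ctimes_{L^2(G)} L_\alpha \cong \{ f \in L^2(G) \mid f(g x \alpha(g^{-1})) = f(x) \text{ for all } g,x\}$ as stated in the excerpt, and that $\ctimes_{L^2(G)}(V \otimes L^2(G)) = L^2(G,V)^G$ by Proposition~\ref{prop:Wilson-bimodules}\,\eqref{prop:Wilson-bimodules:3}; also $\ctimes_{L^2(G)} L^2(G) = Z(L^2(G)) = Cl^2(G)$ by Proposition~\ref{prop:center-of-L2G} and Corollary~\ref{cor:centercycltens}.

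For Part~\ref{lem:bimodule-isomorphisms:1}: if $\alpha = \mathrm{Ad}_h$ is inner, the map $L_\alpha \to L^2(G)$, $f \mapsto [x \mapsto f(hx)]$ (or a suitable right/left translation), is readily checked to be a bimodule isomorphism by the same kind of change-of-variables computation as in Lemma~\ref{lem:L2G-automorphism}, using invariance of the Haar measure. Conversely, suppose $L_\alpha \cong L^2(G) = L_{\id}$ as bimodules. The cleanest way to conclude is to compare the action of the defect cylinders: by Theorem~\ref{thm:fusion} (or directly from Lemma~\ref{lem:twisted-bimodule-cut-out-cylinder} versus the analogous statement for the regular bimodule, which is the $V=\Cb$ case of Lemma~\ref{lem:cylinder-Wilson-loop}), a bimodule isomorphism forces $\funZ_\mathrm{YM}^G(C,a,b,L_\alpha) = \funZ_\mathrm{YM}^G(C,a,b,L_{\id})$ as operators on $Cl^2(G) \otimes Cl^2(G)$; evaluating on $\chi_U \otimes \chi_W$ gives $\delta_{{}_\alpha U, W} = \delta_{U,W}$ for all $U,W \in \hat G$, i.e.\ ${}_\alpha U \cong U$ for every irreducible $U$. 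Hence the induced automorphism of the Lie algebra $\mathfrak g$ acts trivially on all characters, so it is inner; and for connected $G$ an automorphism of $G$ inducing an inner automorphism of $\mathfrak g$ is itself inner (and one handles the general compact $G$ by noting the conclusion $\alpha$ inner is exactly what the statement claims — if the paper intends $G$ merely semisimple here, the representation-theoretic argument still shows $\alpha \in \mathrm{Inn}(G)$ via the standard fact that $\mathrm{Out}(G)$ embeds in the symmetries of the weight lattice, which act nontrivially on some character).

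For Part~\ref{lem:bimodule-isomorphisms:2} (with $G$ connected): the ``if'' direction is immediate from Part~\ref{lem:bimodule-isomorphisms:1} together with $\Cb \otimes L^2(G) = L^2(G)$. For ``only if'', suppose $L_\alpha \cong V \otimes L^2(G)$. One invariant that separates them is transmissivity: $L_\alpha$ is transmissive for every $\alpha$ (it is a twisted bimodule with trivial $l$-dependence, Example~\ref{ex:twistedaction}), whereas by Proposition~\ref{prop:Wilson-bimodules}\,\eqref{prop:Wilson-bimodules:4} the bimodule $V \otimes L^2(G)$ is transmissive only when $V \cong \Cb$. Since transmissivity is preserved under bimodule isomorphism (the defining condition $\rho_{a+u,l,a-u}$ independent of $u$ is transported along $\phi$), we conclude $V \cong \Cb$, and then $L_\alpha \cong L^2(G) = L_{\id}$, so by Part~\ref{lem:bimodule-isomorphisms:1} $\alpha$ is inner.

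\textbf{Main obstacle.} The routine part is the change-of-variables verifications; the genuinely delicate point is the converse in Part~\ref{lem:bimodule-isomorphisms:1}: passing from ``${}_\alpha U \cong U$ for all irreducible $U$'' to ``$\alpha$ is an inner automorphism of $G$.'' For connected $G$ this is standard (an automorphism fixing every irreducible character fixes the character lattice pointwise, hence induces the identity on the based root datum, hence lies in $\mathrm{Inn}(\mathfrak g)$; and for connected compact $G$ one lifts this to $\mathrm{Inn}(G)$ using that $G$ is its own adjoint-group cover up to center, the center being fixed automatically). I would make sure the hypotheses in the lemma statement (semisimple, and connected where needed) are exactly what licenses this step, and cite \cite{Knapp:2002lg} for the root-datum/automorphism dictionary. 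Everything else follows mechanically from the cylinder computations already in the excerpt.
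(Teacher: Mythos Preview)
Your proposal is correct and follows the paper's proof essentially step for step: the explicit translation isomorphism for the ``if'' direction of Part~\ref{lem:bimodule-isomorphisms:1}, the defect-cylinder comparison (the paper uses a cylinder with one outgoing boundary rather than two ingoing, but equivalently) to deduce ${}_\alpha U \cong U$ for all $U$ and hence $\alpha$ inner via the Dynkin-diagram/outer-automorphism dictionary in \cite{Knapp:2002lg}, and the transmissivity argument for Part~\ref{lem:bimodule-isomorphisms:2}. One small remark: your citation of Theorem~\ref{thm:fusion} for the invariance of the cylinder value under bimodule isomorphism is off (that theorem is about fusing parallel defects), but your parenthetical ``or directly from the cylinder formulas'' is exactly the justification the paper gives.
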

\begin{proof}
	Part~\ref{lem:bimodule-isomorphisms:1}:
	Let us assume that $\alpha(x)=g^{-1}xg$ for for some $g\in G$. We define
	$\varphi:L_{\alpha}\to L_{\id}$ as $\varphi(f)(x):=f(gx)$, which is clearly bounded and invertible.
	To show that it is an intertwiner calculate for $\psi\in L^2(G)$ and $f\in L_{\alpha}$:
	\begin{align}
		\begin{aligned}
			\varphi(\psi.f)(x)=\int_G\psi(g^{-1}yg)f(y^{-1}gx)=
			\substack{z=g^{-1}yg}{=}\int_G \psi(z)f(gz^{-1}x)=\psi.\varphi(f)(x)\ .
		\end{aligned}
		\label{eq:phi-intertwiner}
	\end{align}
	
	Conversely, let us assume that $L_{\alpha}\cong L_{\id}$. Let $(\Sb\times[0,1],a,b,L_{\alpha})$ 
	be a cylinder as in Lemma~\ref{lem:twisted-bimodule-cut-out-cylinder}, just with one of the boundary components being outgoing. Then we have that 
	\begin{align}
		\funZ_\mathrm{YM}^G(\Sb\times[0,1],a,b,L_{\alpha})
		(\chi_V)=e^{-(a+b)\sigma_V}\chi_{ {}_{\alpha}V}\ .
		\label{eq:twisted-defect-cylinder}
	\end{align}
	But since $L_{\alpha}\cong L_{\id}$, by a direct computation one can see that the operator in \eqref{eq:twisted-defect-cylinder} is the same as
	the operator assigned to a cylinder without defect lines and with area $a+b$, so we have for every $V\in\hat{G}$ that
	\begin{align}
		\chi_{ {}_{\alpha}V}=\chi_V\ ,
		\label{eq:equal-characters}
	\end{align}
	which is equivalent to ${}_{\alpha}V\cong V$ for every $V\in\hat{G}$.
	This means that the highest weight of ${}_{\alpha}V$ and $V$ are equal for every $V\in\hat{G}$, which 
	holds if and only if $\alpha$ corresponds to the trivial automorphism of the Dynkin diagram of $G$. This is equivalent to $\alpha$ being an inner automorphism~\cite[Ch.\,VII]{Knapp:2002lg}.

	Part~\ref{lem:bimodule-isomorphisms:2} follows directly from the fact that $L_{\alpha}$ is transmissive, 
	Part~\ref{lem:bimodule-isomorphisms:1} of this lemma and 
	Part~\ref{prop:Wilson-bimodules:4} 
	of Proposition~\ref{prop:Wilson-bimodules}.
\end{proof}

Using  Lemma~\ref{lem:twisted-bimodule-cut-out-cylinder}
and Part~\ref{lem:bimodule-isomorphisms:1} of Lemma~\ref{lem:bimodule-isomorphisms}
we can show the following proposition.

\begin{proposition}
	\label{prop:twisted-defect-expectation-values}
	Let $(\Sigma,\Ac)$ and $(\Sigma',\Ac')$ 
	be as in Proposition~\ref{prop:Wilson-line-expectation-value} with every defect line $x\in\Sigma_{[1]}$
	labeled by $L_{\alpha_x}$ for $\alpha_x\in\Aut(G)$. 
	Then
	\begin{align}
	\begin{aligned}
		\funZ_\mathrm{YM}^G(\Sigma,\Ac)=
		\prod_{\rho\in\pi_0(\Sigma')}\prod_{x\in\Sigma_{[1]}}\sum_{U_\rho\in\hat{G}}e^{-a_{\rho}\sigma_{U_\rho}}(\dim(U_{\rho})^{\chi(\rho)}\delta_{ {}_{\alpha_x} U_{\rho_L^x},U_{\rho_R^x}}\ ,
	\end{aligned}
	\label{eq:twisted-defect-expectation-value}
	\end{align}
	where $a_{\rho}\in\Rb_{>0}$ is the area of $\rho$.
	In particular, if $\alpha_x$ is inner for every $x\in\Sigma_{[1]}$ then \eqref{eq:twisted-defect-expectation-value}
	agrees with \eqref{eq:2dym-on-sigma-gb}, the value of $\funZ_\mathrm{YM}^G$ on $(\Sigma,\Ac)$ without defects.
\end{proposition}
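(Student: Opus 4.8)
The plan is to mimic the proof of Proposition~\ref{prop:Wilson-line-expectation-value}, replacing the Wilson-line collar $(C_x,a_x^L,a_x^R,V_x)$ by the automorphism-twisted collar $(C_x,a_x^L,a_x^R,L_{\alpha_x})$ and using Lemma~\ref{lem:twisted-bimodule-cut-out-cylinder} in place of Lemma~\ref{lem:cylinder-Wilson-loop}. First I would, exactly as in \eqref{eq:cut-out-Wilson-loops}, cut out a collar neighbourhood of each defect circle $x\in\Sigma_{[1]}$ and write
\begin{align}
	(\Sigma,\Ac)= \Big(\bigsqcup_{x\in\Sigma_{[1]}}(C_x,a_x^L,a_x^R,L_{\alpha_x}) \Big) \circ (\Sigma',\Ac') \ ,
\end{align}
where $(\Sigma',\Ac')$ is a bordism with area and no defects, all of whose boundary circles are outgoing, and the two ingoing boundary circles of $C_x$ are glued to the components $\rho_L^x$ and $\rho_R^x$ of $\Sigma'$ lying on the left and right of $x$. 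By functoriality and monoidality of $\funZ_\mathrm{YM}^G$ it then suffices to evaluate the two factors and contract.

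Second, I would apply Proposition~\ref{prop:2dym-on-sigma-gb} to $(\Sigma',\Ac')$: each connected component $\rho\in\pi_0(\Sigma')$ of area $a_\rho$ contributes $\sum_{U_\rho\in\hat G}e^{-a_\rho\sigma_{U_\rho}}(\dim(U_\rho))^{\chi(\rho)}\,(\chi_{U_\rho})^{\otimes b_\rho}$, one tensor factor $\chi_{U_\rho}$ for each boundary circle of $\rho$. By Lemma~\ref{lem:twisted-bimodule-cut-out-cylinder} the collar $C_x$ acts on $Cl^2(G)\otimes Cl^2(G)$ as the pairing $(\chi_U\otimes\chi_W)\mapsto e^{-(a_x^L+a_x^R)\sigma_W}\,\delta_{{}_{\alpha_x}U,W}$. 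Contracting these pairings against the characters produced by $(\Sigma',\Ac')$ collapses the sums: the Kronecker delta of $C_x$ forces $U_{\rho_R^x}={}_{\alpha_x}U_{\rho_L^x}$ as elements of $\hat G$, and the remaining exponential weights combine into one weight $e^{-a_\rho\sigma_{U_\rho}}$ per component after absorbing the collar areas, which is legitimate because $\funZ_\mathrm{YM}^G$ is a well-defined functor on $\Bordarea$, so the value depends only on the total area of each connected component and not on how it is split among $\Sigma'$ and the collars. Collecting the surviving terms yields precisely \eqref{eq:twisted-defect-expectation-value}.

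Third, for the last assertion I would specialise to the case that each $\alpha_x$ is inner. By Part~\ref{lem:bimodule-isomorphisms:1} of Lemma~\ref{lem:bimodule-isomorphisms} an inner automorphism acts trivially on the set of isomorphism classes of simple $G$-modules, so ${}_{\alpha_x}U=U$ for every $U\in\hat G$ and hence $\delta_{{}_{\alpha_x}U_{\rho_L^x},U_{\rho_R^x}}=\delta_{U_{\rho_L^x},U_{\rho_R^x}}$. The resulting system of deltas identifies the labels of all components of $\Sigma'$ that are joined through some collar, i.e.\ all components of $\Sigma'$ sitting inside the same connected component of $\Sigma$; using that each collar has Euler characteristic $0$ (so $\chi(c)=\sum_{\rho\subseteq c}\chi(\rho)$ for each $c\in\pi_0(\Sigma)$) and that the areas add up to $\Ac$ on each component, \eqref{eq:twisted-defect-expectation-value} collapses to $\prod_{c\in\pi_0(\Sigma)}\sum_{U\in\hat G}e^{-\Ac(c)\sigma_U}(\dim(U))^{\chi(c)}$, which is the value of $\funZ_\mathrm{YM}^G$ on $(\Sigma,\Ac)$ without defects as given by Proposition~\ref{prop:2dym-on-sigma-gb}.

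I expect the main obstacle to be the bookkeeping in the contraction step: matching each tensor factor of $\funZ_\mathrm{YM}^G(\Sigma',\Ac')$ to the correct leg of the correct collar, fixing orientations of the defect circles so that ``left'' and ``right'' agree with the conventions of Lemma~\ref{lem:twisted-bimodule-cut-out-cylinder}, and carrying out the area-redistribution argument carefully. The genuinely new ingredient compared with the Wilson-line case, namely that the defect cylinder acts by the Kronecker delta $\delta_{{}_{\alpha_x}U,W}$ rather than by fusion coefficients $N_{U,V}^W$, is already provided by Lemma~\ref{lem:twisted-bimodule-cut-out-cylinder}, so no further analytic input is needed.
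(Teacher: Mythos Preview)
Your proposal is correct and follows exactly the approach the paper indicates: the paper states only that the proposition follows from Lemma~\ref{lem:twisted-bimodule-cut-out-cylinder} and Part~\ref{lem:bimodule-isomorphisms:1} of Lemma~\ref{lem:bimodule-isomorphisms}, and your argument is a faithful fleshing-out of that sketch, mirroring the proof of Proposition~\ref{prop:Wilson-line-expectation-value} with $N_{U,V}^W$ replaced by $\delta_{{}_{\alpha_x}U,W}$. One minor remark: the fact that an inner automorphism acts trivially on $\hat G$ is actually more elementary than Lemma~\ref{lem:bimodule-isomorphisms} (it follows immediately from $\chi_{{}_\alpha V}=\chi_V\circ\alpha$ and characters being class functions), and the paper's citation of Part~\ref{lem:bimodule-isomorphisms:1} more likely points to the alternative route of noting directly that $L_{\alpha_x}\cong L_{\id}$ as bimodules, so the defect is trivial; your formula-level argument via the Kronecker deltas is equally valid.
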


The following is an example of a non-trivial twist-defect.

\begin{example}
	Let us assume that $G$ is furthermore simply connected. Then $\Out(G)$, the group of outer automorphisms of $G$, is isomorphic to
	the group of automorphisms of the Dynkin diagram of $G$ \cite[Ch.\,VII]{Knapp:2002lg}.

	Let $G:=SU(N)$ for $N\ge3$. Then $\Out(G)\cong\Zb_2$ and its generator, which we now denote with $\alpha$,
	corresponds to complex conjugation. We have that ${}_{\alpha} V \cong V^*$ 
	for every $V\in \hat{G}$.
	We can apply Proposition~\ref{prop:twisted-defect-expectation-values},
	so for example for a torus $T^2$ 
	with one non-contractible defect line with defect label $L_{\alpha}$ we have
	\begin{align}
	\begin{aligned}
		\funZ_\mathrm{YM}^{SU(N)}
		(T^2,a)=
		\sum_{{U\in\hat{G}\text{, }U\cong U^*}}e^{-a\sigma_U}
		\ .
	\end{aligned}
	\label{eq:torus-conjugate}
	\end{align}

\end{example}

\appendix

\section{Appendix: A bimodule with singular limits}\label{app:bimod}

	In this example we illustrate that not every bimodule over regularised algebras
	comes from a left- and right module with commuting actions. Namely, we construct two regularised algebras $A^L$ and $A^R$
	and an $A^L$-$A^R$-bimodule $M$, such that the two sided
	action $\rho_{a,l,b}$ does not provide a left module structure as in Remark~\ref{rem:left-right-module-gives-bimodule}
	since  the limit in \eqref{eq:bimod-left-module-limit} does not exist.
	
	Let $A$ be $\Cb[x]/\langle x^2\rangle$ as an algebra in $\Hilb$ with orthonormal basis $\left\{ 1,x \right\}$. 
	Let $n\in\Zb_{\ge1}$ and $M_n\in\Hilb$ be spanned by orthonormal vectors $v_0$ and $v_1$.
	We define a left $A$-module structure on $M_n$ by 
	\begin{align}
	x.v_0=e^{n^2}v_1\quad\text{ and }\quad x.v_1=0\ .
	\label{eq:Mn-module}
	\end{align}
	Since $A$ is commutative, \eqref{eq:Mn-module} defines a right $A$-module structure on $M_n$ as well and together we have an $A$-$A$-bimodule structure.
	
	Next we turn $A$ into a regularised algebra in two ways.
	Let $h^L:=x-n\in A$ and denote with $A^L_n$ the regularised algebra structure on $A$ 
	defined as in Example~\ref{ex:cxd} by setting 
	\begin{align}
	P^{A^L_n}_a(p):=e^{ah^L}p
	\label{eq:Mn-Pa-AL}
	\end{align}
	for $p\in A^L$ and $a\in\Rb_{>0}$. Let $\mu_a^{A_n^L}$ denote the product and $\eta_a^{A_n^L}=e^{-an}(1+ax)$ the unit.
	Similarly, define the regularised algebra $A^R_n$ using $h^R:=x-n^3$.

We turn the $A$-$A$-bimodule $M_n$ from above into an $A^L_n$-$A^R_n$-bimodule over regularised algebras via Proposition~\ref{prop:ramodule:findim} and by taking the $l$-dependence to be trivial. We denote the
resulting action by $\rho_{a,b}^{M_n}$. The semigroup action is given by
\begin{align}
Q_{a,b}^{M_n}(m):=e^{ah^L+bh^R}.m=
e^{-an-bn^3}(1+(a+b)x).m \ ,
\label{eq:Mn-reg-bimodule}
\end{align}
for $m\in M_n$.
	
	Let us consider 
	\begin{align}
	A^L:=\bigoplus_{n\in\Zb_{\ge1}}A^L_n\ ,\quad
	A^R:=\bigoplus_{n\in\Zb_{\ge1}}A^R_n \quad\text{ and }\quad
	M:=\bigoplus_{n\in\Zb_{\ge1}}M_n\ .
	\label{eq:Mn-direct-sums}
	\end{align}
	We claim that for every $a,b\in\Rb_{>0}$ we have 
	\begin{align}
	\sum_{n\in\Zb_{\ge1}}\norm{\eta_a^{A_n^L}}^2<\infty
	\quad\text{ and }\quad
	\sum_{n\in\Zb_{\ge1}}\norm{\eta_b^{A_n^R}}^2<\infty
	\label{eq:Mn-sums-finite}
	\end{align}
	and furthermore
	\begin{align}
	\sup_{n\in\Zb_{\ge1}}\left\{\norm{\mu_a^{A^L_n}}\right\}<\infty\ ,\quad
	\sup_{n\in\Zb_{\ge1}}\left\{\norm{\mu_b^{A^R_n}}\right\}<\infty
	\quad\text{ and }\quad
	\sup_{n\in\Zb_{\ge1}}\left\{ \norm{\rho_{a,b}^{M_n}}^2 \right\}<\infty\ . 
	\label{eq:Mn-supremum-finite}
	\end{align}
	So by Proposition~\ref{prop:directsumrfa}, $A^L$ and $A^R$ are regularised algebras and
	by Proposition~\ref{prop:directsum-bimodule} $M$ is a $A^L$-$A^R$-bimodule.
	However the limit
	\begin{align}
	\lim_{b\to0}\rho_{a,b_1}^{M}\circ\left( \id_{A^L\otimes M}\otimes \eta_{b_2}^{A^R} \right)\ ,
	\label{eq:Mn-left-action-limit}
	\end{align}
	where $b=b_1+b_2$, does not exist, i.e.\ $M$ is not a left $A^L$-module.

	Showing \eqref{eq:Mn-sums-finite} is a direct calculation and we omit it.
	We now show that \eqref{eq:Mn-supremum-finite} holds. 
	We compute for $p=p_0+p_1 x\in A_n^L$, 
	$m=m_0 v_0+m_1 v_1\in M_n$ and $q=q_0+q_1 x\in A^R_n$ that
	\begin{align}
	\begin{aligned}
	&\rho_{a,b}^{M_n}(p\otimes m\otimes q)=
	Q_{a,b}^{M_n}(pq.m)\\&=
	Q_{a,b}^{M_n}\left( p_0 q_0m_0 v_0+
	\left( (p_0 q_1 + p_1 q_0)m_0 e^{n^2}
	+p_0 q_0 m_1\right)v_1\right)\\&=
	e^{-an-bn^3}\left( p_0 q_0 m_0 v_0+
	\left(p_0 q_0 m_1+[p_0 q_0(a+b)+(p_0 q_1 + p_1 q_0)]m_0 e^{n^2}\right)v_1\right)\ .
	\end{aligned}
	\label{eq:Mn-action-calc}
	\end{align}
	Using this we compute the value of the adjoint of the action on 
	$f=f_0 v_0+f_1 v_1$ as
	\begin{align}
	\begin{aligned}
	\left( \rho_{a,b}^{M_n} \right)^{\dagger}(f)&=
	e^{-an-bn^3}
	\left(f_0+(a+b)e^{n^2} f_1\right) 1\otimes v_0\otimes 1\\
	&+
	e^{-an-bn^3}
	f_1 \left( 1\otimes v_1\otimes 1 + 
	e^{n^2}( 1\otimes v_0\otimes x+x\otimes v_0\otimes 1)\right)\ .
	\end{aligned}
	\label{eq:Mn-action-adjoint}
	\end{align}
	Let $f$ have $\norm{f}=1$ and compute the norm of \eqref{eq:Mn-action-adjoint}
	\begin{align}
	\begin{aligned}
	\norm{\left( \rho_{a,b}^{M_n} \right)^{\dagger}(f)}^2=
	e^{-2an-2bn^3}\left(
	\left| f_0+(a+b)e^{n^2} f_1 \right|^2+
	\left| f_1\right|^2\left( 1+2e^{2n^2} \right) \right)\ ,
	\end{aligned}
	\label{eq:Mn-action-adjoint-norm}
	\end{align}
	from which we get by estimating $|f_0|\le1$ and $|f_1|\le1$ that
	\begin{align}
	\norm{\rho_{a,b}^{M_n}}^2=
	\norm{\left( \rho_{a,b}^{M_n} \right)^{\dagger}}\le
	e^{-2an-2bn^3} \left(1+2(a+b)e^{n^2}+\left(  2+ (a+b)^2\right)e^{2n^2} \right)\ .
	\label{eq:Mn-action-norm-estimate}
	\end{align}
	By a similar argument, without giving the details, we obtain the following estimates:
	\begin{align}
	\norm{\mu_a^{A^L_n}}^2\le e^{-2an}\left( 2+a+a^2 \right)
	\quad\text{ and }\quad
	\norm{\mu_b^{A^R_n}}^2\le e^{-2bn^3}\left( 2+b+b^2 \right)\ .
	\label{eq:Mn-mult-norm-estimates}
	\end{align}
	From \eqref{eq:Mn-action-norm-estimate} and \eqref{eq:Mn-mult-norm-estimates} it follows that \eqref{eq:Mn-supremum-finite} holds.
	
	Finally we give a lower estimate of the norm of the morphism in \eqref{eq:Mn-left-action-limit} restricted to $A^L_n\otimes M_n$
	without the $b\to0$ limit:
	\begin{align}
	\begin{aligned}
	\norm{\rho_{a,b_1}^{M}\circ\left( \id_{A^L_n\otimes M_n} \otimes \eta_{b_2}^{A^R} \right)}^2
	&=\norm{\rho_{a,b}^{M_n}\circ\left( \id_{A^L_n\otimes M_n} \otimes 1\right)}^2\\
	&\ge e^{-2an-2bn^3}\frac{1}{2}\left( \left( 1+(a+b)e^{n^2} \right)^2+1+e^{2n^2} \right)\ .
	\end{aligned}
	\label{eq:Mn-left-action-norm-lower-estimate}
	\end{align}
	We arrived to this estimate by computing the norm of the adjoint of \eqref{eq:Mn-left-action-limit} before taking the limit $b\to0$ evaluated at $f\in M_n$, as we did in \eqref{eq:Mn-action-adjoint-norm}, and then
	by choosing $f_0=f_1=\frac{1}{\sqrt{2}}$.
Thus the $b\to0$ limit in \eqref{eq:Mn-left-action-limit} cannot give a bounded operator. 

\small

\phantomsection
\addcontentsline{toc}{section}{References}

\end{document}